\documentclass[10pt, a4paper, reqno,dvipsnames]{article}
\usepackage{fullpage}
\usepackage{mathtools}

\usepackage{enumitem}

\RequirePackage[bookmarksopen=true, urlcolor=blue!80!black,colorlinks, linkcolor=green!40!black,citecolor=red!50!black]{hyperref}

%\makeatletter
%\def\@cite#1#2{[\textbf{#1\if@tempswa , #2\fi}]}
%\makeatother

\usepackage{amsthm,amsfonts,amssymb,euscript,mathrsfs}
\usepackage{yhmath,tikz,bbding}

\usepackage{xcolor,comment}
\usepackage{bm}
\usepackage{bbm}

%environnements

\theoremstyle{plain}

\newtheorem{lemma}{Lemma}[section]
\newtheorem{theorem}{Theorem}[section]
\newtheorem{prop}{Proposition}[section]
\newtheorem{corollary}{Corollary}[section]

\newtheorem{remark}{Remark}[section]
\numberwithin{equation}{section}

\theoremstyle{plain}
\newtheorem{notation}{Notation}[section]
\newtheorem{definition}{Definition}[section]
\newtheorem{example}{Example}[section]

\usepackage{stmaryrd}
\date{}

%symboles

\renewcommand{\d}{\mathrm{d}}
\newcommand{\R}{\mathbb{R}}
\newcommand{\N}{\mathbb{N}}
\newcommand{\Z}{\mathbb{Z}}
\newcommand{\C}{\mathbb{C}}
\newcommand{\ffi}{\varphi}

\newcommand{\e}{\varepsilon}
\newcommand{\dr}{\partial}

\newcommand{\half}{\frac{1}{2}}

\newcommand{\B}{\mathbf{B}}

\renewcommand{\a}{\alpha}
\renewcommand{\b}{\beta}
\newcommand{\De}{\Delta}
\newcommand{\de}{\delta}
\newcommand{\om}{\omega}
\newcommand{\Om}{\Omega}
\newcommand{\E}{\mathbb{E}}
\newcommand{\si}{\sigma}
\newcommand{\an}[1]{\langle #1 \rangle}
\newcommand{\ga}{\gamma}
\newcommand{\ka}{\kappa}
\newcommand{\La}{\Lambda}
\newcommand{\K}{\mathcal{K}}
\newcommand{\kfr}{\mathfrak{K}}
\newcommand{\Vect}{\mathrm{Vect}}

% symboles pour nodes

\newcommand{\non}{\nonumber}
\newcommand{\low}{\mathrm{low}}
\newcommand{\lowl}{{\mathrm{low-}\ell}}
\newcommand{\lowm}{{\mathrm{low-}m}}
\newcommand{\lowr}{{\mathrm{low-}r}}
\newcommand{\high}{\mathrm{high}}
\newcommand{\st}{\mathrm{st}}
\newcommand{\scb}{\mathrm{scb}}

% symboles pour parentalite et bush

\newcommand{\children}{\mathtt{children}}
\newcommand{\parent}{\mathtt{parent}}
\newcommand{\siblings}{\mathtt{siblings}}
\newcommand{\offspring}{\mathtt{offspring}}
\newcommand{\bush}{\mathrm{bush}}

%fonctions et parenthèses

\renewcommand{\l}{\left\|}
\renewcommand{\r}{\right\|}
\newcommand{\enstq}[2]{\left\{#1~\middle|~#2\right\}} 
\newcommand{\saut}{\par\leavevmode\par}
\newcommand{\pth}[1]{\left( #1 \right)}

\newcommand{\ps}[1]{\left\langle #1 \right\rangle}

\newcommand{\init}{{\rm in}}

\newcommand{\T}{\mathbb T}

%how to have a really wide hat

\usepackage{scalerel,stackengine}
\stackMath
\newcommand\reallywidehat[1]{%
\savestack{\tmpbox}{\stretchto{%
  \scaleto{%
    \scalerel*[\widthof{\ensuremath{#1}}]{\kern-.6pt\bigwedge\kern-.6pt}%
    {\rule[-\textheight/2]{1ex}{\textheight}}%WIDTH-LIMITED BIG WEDGE
  }{\textheight}% 
}{0.5ex}}%
\stackon[1pt]{#1}{\tmpbox}%
}
\parskip 1ex

%command for linear term
\newcommand{\lin}{\mathrm{lin}}

\newcommand{\bi}{\mathrm{bi}}
\newcommand{\ter}{\mathrm{ter}}
\newcommand{\tot}{\mathrm{tot}}

%integer part

\DeclarePairedDelimiter\floor{\lfloor}{\rfloor}

\newcommand{\Xres}[1]{X_{\mathtt{res},#1}}
\newcommand{\lef}{\mathtt{left}}
\newcommand{\righ}{\mathtt{right}}
\renewcommand{\Im}{\mathrm{Im}}

\DeclareFontFamily{U}{mathx}{\hyphenchar\font45}
\DeclareFontShape{U}{mathx}{m}{n}{
      <5> <6> <7> <8> <9> <10>
      <10.95> <12> <14.4> <17.28> <20.74> <24.88>
      mathx10
      }{}
\DeclareSymbolFont{mathx}{U}{mathx}{m}{n}
\DeclareFontSubstitution{U}{mathx}{m}{n}
\DeclareMathAccent{\widecheck}{0}{mathx}{"71}

\title{Wave turbulence for a semilinear Klein-Gordon system}

\author{Anne-Sophie de Suzzoni\footnote{Centre de Mathématiques Laurent Schwartz, Ecole Polytechnique, France (\href{mailto:anne-sophie.de-suzzoni@polytechnique.edu}{anne-sophie.de-suzzoni@polytechnique.edu})}, Annalaura Stingo\footnote{Centre de Mathématiques Laurent Schwartz, Ecole Polytechnique, France (\href{mailto:annalaura.stingo@polytechnique.edu}{annalaura.stingo@polytechnique.edu})}  \,and Arthur Touati\footnote{CNRS \& Institut de Mathématiques de Bordeaux, France (\href{mailto:arthur.touati@math.u-bordeaux.fr}{arthur.touati@math.u-bordeaux.fr})}}

\begin{document}

\maketitle

\begin{abstract}
In this article we consider a system of two Klein-Gordon equations, set on the $d$-dimensional box of size $L$, coupled through quadratic semilinear terms of strength $\e$ and evolving from well-prepared random initial data. We rigorously derive the effective dynamics for the correlations associated to the solution, in the limit where $L\rightarrow \infty$ and $\e\rightarrow 0$ according to some power law. The main novelty of our work is that, due to the absence of invariances, trivial resonances always take precedence over quasi-resonances. The derivation of the nonlinear effective dynamics is justified up time to $\delta T$, where $T=\e^{-2}$ is the appropriate timescale and $\delta$ is independent of $L$ and $\e$. 
We use Feynmann interaction diagrams, here adapted to a normal form reduction and to the coupled nature of our real-valued system. We also introduce a frequency decomposition at the level of the diagrammatic and develop a new combinatorial tool which allows us to work with the Klein-Gordon dispersion relation.
\end{abstract}

\tableofcontents

%%%%%%%%%%%%%%%%%%%%%%%%%%%%%%%%%%%%%%%%%%%%%%%%%%%%%%%%%%%%%%%%

\section{Introduction}

\subsection{Background and motivations}

In this article, we study the nonlinear evolution of correlations associated to a semilinear Klein-Gordon system, which models a wave turbulence phenomenon. We begin by reviewing the relevant literature and then highlight the main novelty of our work: the impact of trivial resonances on the effective dynamics.

\subsubsection{Wave turbulence}

Wave turbulence studies how the distribution of a solution to a wave equation evolves when the initial data are random variables or when the equation includes a random forcing term. The concept was first introduced in \cite{Peierls1,Brout-Prigo} and later, in the 1960s, emerged in the context of plasma physics \cite{Vedenov1967,ZS67} and fluid mechanics \cite{Hass1,Hass2,Benney}. Around the same time, it was popularized by Zakharov, one of his major contributions being the introduction of what is now called Kolmogorov--Zakharov spectrum, which models an energy transfer across spatial scales, see the article \cite{KZspectra}. Wave turbulence is a very active field in mathematical physics, as reflected in the work of Zakharov and others \cite{zakharovBook,Naz,Galtier}.

Starting in the late 2000s, the mathematical community began to take an interest in weak turbulence, a subfield of wave turbulence where the initial data are assumed to be small. A major discovery by the aforementioned physicists was that the statistics of certain solutions to wave equations could be described using the so-called kinetic equations. This led to the derivation and study of kinetic equations that emerge from the statistical properties of wave equations.
 The pioneer work \cite{LS11} describes a mixing effect for the dynamics of discrete Schr\"odinger equations. This was followed by works on atmospheric primitive equations \cite{dSTont,ASkineq} and  the Schrödinger equation \cite{BGHS,CoG19,denghani19,SchroQuint,hani2024inhomogeneous}. In parallel, a stochastic approach was developed in \cite{DyKuk1,DyKuk2,Dykuk3}. This line of research culminated in the works \cite{DHPropag,denghani2023,denghani2021,deng2023long}, where the authors rigorously derive the kinetic equation governing the evolution of correlations associated to the cubic Schrödinger equation and extend the derivation up to the time where the equation remains well-posed. The ideas developed in these papers were then applied to the long-time derivation of the Boltzmann equation in both Euclidean space and on the torus \cite{deng2024long,deng2025hilbert}. Meanwhile, in \cite{staffilanitran}, the authors derive a kinetic equation for atmospheric primitive equations.
 
The aforementioned mathematical works have been strongly influenced by the study of the Cauchy problem for various equations with random data, in particular the works \cite{bouPer,burq1,burq2,DNY22,bringmann2024invariant}; the derivation of collisional and diffusive regimes for the Schrödinger equation with random potentials, in particular by \cite{ELY,ELSMY}; and the derivation theorem for the Boltzmann equation for classical particles, for instance \cite{Boltzmann}.
 
In addition, several recent works address related issues. For example, we mention \cite{ampatzoglou2021derivation,HRST,faou2024scattering} in inhomogeneous settings,  \cite{ma2022almost,de2023discrete} in the context of fluids,  \cite{grande2024rigorous} with damping and forcing and \cite{dubach} for toy models in finite dimension. The study of the kinetic equation itself has been the subject of \cite{EMV1,EMV2}, \cite{EVbook}, \cite{collot2022stability,menegaki2022l2stability}, and \cite{Faou} for a stochastic version. For a linear turbulence model, we mention the work of \cite{beck}. Related to the present work, we also mention the derivation of the continuous resonant system \cite{faouCRequation}. Finally, some questions related to propagation of chaos were adressed in \cite{PropagChaos}; some related to graph theory in \cite{bruned2024cancellations}.

\subsubsection{Trivial resonances and discrete wave turbulence}

In the physics literature, many equations studied through the prism of wave turbulence are quadratic and real-valued, such as fluid primitive equations, see for instance the pioneer work on capillary waves by Zakharov and Filonenko \cite{Zakharov1967}. In the present work, we investigate the derivation of effective equations for the statistical dynamics of weakly interacting nonlinear waves starting from a \emph{system} of equations with \emph{real-valued} solutions. We believe that, in the generic case, there are not enough symmetries in the (system of) equation(s) to ensure that the effective dynamics is \emph{kinetic}. We believe that, instead, they behave more like in the \emph{discrete} wave turbulence or finite-box effects regime (see \cite{expFiniteBox,KARTASHOVA2,KARTASHOVA1,DymKukDiscrete,SchroQuint}) because trivial (exact) resonances do not cancel each other and play a significant part in the effective dynamics.

To better illustrate the significance of invariances for the correlations' dynamics, let us come back to the well-understood case of the cubic nonlinear Schrödinger equation. In \cite{denghani2021,denghani2023}, Deng and Hani explain the different roles played by exact resonances versus quasi-resonances in the derivation of the effective dynamics. The idea is to compare the ‘‘size" of the sets 
\[
E = \enstq{ (k_1,k_2,k_3)\in \frac1{L} \Z^d }{ \omega(k_1-k_2+k_3) - \omega(k_1) + \omega(k_2) -\omega(k_3) = 0  }
\]
and 
\[
Q =\enstq{ (k_1,k_2,k_3)\in \frac1{L} \Z^d }{ 0\neq | \omega(k_1-k_2+k_3) - \omega(k_1) + \omega(k_2) -\omega(k_3)| \leq \varepsilon^2  },
\]
which represent respectively exact resonances and quasi-resonances. Here, $L$ is the size of the box, $\varepsilon$ is the strength of the nonlinearity and $\omega(k) = |k|^2$ is the dispersion relation of the Schrödinger equation. We remark that if $L$ is fixed, the size of $E$ is larger than that of $Q$, which is actually empty for small enough $\varepsilon$. This explains why the precedence of exact resonances over quasi-resonances is called finite-box effects. Set $E$ always contains trivial resonances, that is $k_1 = k_2$ or $k_2 = k_3$. As it happens, due to the $U(1)$ invariance of the Schrödinger equation, the contributions of these trivial resonances cancel out and do not affect the effective dynamics. This is seen in the fact that the equation may be ‘‘Wick-renormalized" (see Section 2.1 in \cite{denghani2021}). In this case, the right set to consider is 
\[
\tilde E = \enstq{ (k_1,k_2,k_3)\in \frac1{L} \Z^d }{ \omega(k_1-k_2+k_3) - \omega(k_1) + \omega(k_2) -\omega(k_3) = 0 \, \wedge \, k_2 \neq k_1 \, \wedge\, k_2 \neq k_3 }.
\]
It is then possible to find regimes between $\varepsilon$ and $L$ for which the size of $\tilde E$ is smaller than the size of $Q$ and the effective dynamics is of kinetic type. Note that in the case of fluid primitive equations, the invariance under $U(1)$ is substituted by the invariance of the average (or $0$-mode) of the solution.

In the framework we consider here and which we describe in the next sections, these invariances no longer hold. Therefore, the set of exact resonances $E$ (or rather its analogue in our context) has to be considered instead of $\tilde E$. Due to the trivial resonances, \emph{there are no regimes in which $Q$ takes precedence over $E$.} This impacts dramatically the effective dynamics, which in particular is not of kinetic type. Still, this effective dynamics is \emph{nonlinear} and driven by the \emph{trivial} resonances.

\begin{remark}
Note that in the context of the Schrödinger equation, exact resonances also play a role in the long time dynamics of the correlations but on timescales longer than the kinetic one, see the works on the so-called CR equation \cite{faouCRequation,FaouMouzard}.
\end{remark}

\subsubsection{The toy model}

One possibility to investigate how the absence of invariance can affect the effective dynamics of correlations would be to consider two coupled nonlinear Schrödinger equations. However, we have chosen a different route motivated by the theoretical description in \cite{galtier2017turbulence} of a turbulent behaviour of gravitational waves in the early age of the universe. Gravitational waves are famous vacuum solutions of the field equations of general relativity, the so-called Einstein equations. Despite the energy cascade being potentially mitigated by the expansion of the early universe (see \cite{clough2018difficulty}), deriving an effective dynamics for such waves is a mathematical challenge due to the nature of the Einstein equations.

In some coordinates, the Einstein vacuum equations rewrite as a system of quasilinear wave equations for the metric of the spacetime, with quadratic semilinear interaction terms. A good toy model for the Einstein vacuum equations is thus a system of wave equations on Minkowski spacetime with quadratic nonlinearities. Nevertheless, such a wave system presents low-frequency singularities, which induce specific analytic difficulties on which we do not wish to focus.  
With this in mind, we instead study a system of two coupled quadratic Klein--Gordon equations, which may be viewed as a low-frequency regularization of wave equations. More precisely, we consider the following system 
\begin{equation}\label{first system}
    \begin{aligned}
    \pth{\partial_t^2  - \De + m} u & = \e Q^0( v, v),
    \\ \pth{\partial_t^2  - \De + m} v  & = \e Q^1( u, u),
    \end{aligned} 
\end{equation}
set on the $d$-dimensional torus of size $L$ and with $m>0$ (see Section \ref{section KG system} below for the precise assumptions on $Q^0$ and $Q^1$ and on the initial data). System \eqref{first system} obviously misses the geometric nature of the Einstein equations and understanding how general covariance might affect the effective dynamics of turbulent gravitational waves would be of great interest.

\begin{remark}
We chose to consider this specific class of nonlinearities because the resulting effective dynamics exhibit a particularly convenient structure. While other choices were possible, we selected this formulation to maintain clarity and avoid unnecessary complexity.
\end{remark}

\subsubsection{Presentation of the result}

Our main result (see Theorem \ref{main theorem} for a precise statement) is the rigorous derivation of the effective dynamics for the correlations associated with \eqref{first system}, in the limit $L\to\infty$, $\e\to 0$ and where the relation between $L$ and $\e$ is given in Theorem \ref{main theorem}. The Klein-Gordon system \eqref{first system} does not display exact resonances and a normal form argument allows us to transform it into an equivalent system with cubic nonlinearities. This new cubic system does display exact resonances and moreover trivial resonances drive the nonlinear effective dynamics. As explained above, this discrete regime is ultimately linked to the fact that we consider a system and not a scalar equation (as opposed for instance to \cite{Spohnphonon}, where a single Klein-Gordon equation is considered on a lattice).  

\begin{remark} Note that our result does not compare to 
\begin{itemize} 
\item \cite{galtier2017turbulence}, where \emph{kinetic} wave turbulence is predicted for gravitational waves. This is mainly due to the gauge choice for the Einstein vacuum equations made in \cite{galtier2017turbulence}, which reduces the dynamics to a scalar wave equation;
\item \cite{ampatzoglou2021derivation}, where, although there is no invariance under $U(1)$, kinetic wave turbulence is again observed. This is due to their setting being the Euclidean space, in which the set of exact resonances has zero measure.
\end{itemize}
\end{remark}

After the normal form procedure, the correct timescale $T$ to observe the effective dynamics can be proved to be $\e^{-2}$. One of the strength of our result is that we justify the approximation of the correlations by the effective dynamics up to times of order $\de T$, where $\de>0$ is a small quantity \emph{independent of $L$ and $\e$}. In other words, our derivation extends the validity of the effective dynamics up to a nontrivial timescale. In the context of wave turbulence, such a rigorous derivation has only been obtained for the Schrödinger equation in the pioneer work \cite{denghani2021} (and subsequently improved in \cite{denghani2023}), where the authors reach the so-called kinetic timescale.

An overview of our proof will be given in Section \ref{section sketch of proof}, but we can already say that our result follows from a precise diagrammatic representation of the solution to the Klein-Gordon system, as in the aforementioned articles on other equations. However, the new features of the system we consider force us to develop a new diagrammatic adapted to the normal form procedure and an analysis adapted to the Klein-Gordon dispersion relation (which, as opposed to the Schrödinger one, forbids the application of number theory results). For this last aspect, we rely on a low/high-frequency analysis and we develop new combinatorial tools that take this decomposition into account. 

\begin{remark}
Note that a low/high-frequency decomposition is also utilized in \cite{LS11} and in the work on inhomogeneous settings \cite{ampatzoglou2021derivation}, but in both cases it does not show up in the diagrammatic. See also Remark \ref{remark low LS} below.
\end{remark}

\subsection{The Klein-Gordon system}\label{section KG system}

In this section we present the system of two coupled Klein-Gordon equations we study in this article and introduce all the notations that are required to state Theorem \ref{main theorem}.

\paragraph{The system.} We consider the Cauchy problem
\begin{equation}\label{KG system}\tag{KG}
    \left\{ 
    \begin{aligned}
    \pth{\partial_t^2  - \De + m} u & = \e Q^0( v, v)
    \\ \pth{\partial_t^2  - \De + m} v  & = \e Q^1( u, u)
    \\ (u,\dr_t u)_{|_{\{t=0\}}} & = (u_\init,u'_\init)
    \\ (v,\dr_t v)_{|_{\{t=0\}}} & = (v_\init,v'_\init)
    \end{aligned} \right.
\end{equation}
posed on the torus $L\T^d$ of size $L>0$, with $m> 0$ and $\De=\sum_{i=1}^d\dr_i^2$ the flat Laplacian. The solutions $u$ and $v$ are real-valued and the small parameter $\e>0$ will be ultimately linked to $L$, see Theorem \ref{main theorem}.

\paragraph{The nonlinearities.} For $\eta\in\{0,1\}$, the nonlinearity $Q^\eta$ is a symmetric bilinear map defined in Fourier space by 
\begin{align*}
\widehat{Q^\eta (f,g)} (k) = \frac1{(2\pi L)^{ \frac{d}{2} }} \sum_{k_1+k_2 = k} Q^\eta(k_1,k_2) \hat f (k_1) \hat g(k_2),
\end{align*}
where we consider the following definition of the Fourier transform for complex-valued functions defined on $L\T^d$:
\begin{align*}
    \hat{u}(k) & = \frac{1}{(2\pi L)^{\frac{d}{2}}} \int_{L\T^d}e^{-ik\cdot y} u(y) \d y, \qquad u(x) = \frac{1}{(2\pi L)^{\frac{d}{2}}} \sum_{k\in \Z_L^d}e^{ik\cdot x}\hat{u}(k), 
\end{align*}
where $x\in L\T^d$ and $k\in \Z_L^d$ (with $\Z_L^d\vcentcolon = \frac{1}{L}\Z^d$). 

\begin{remark}
Note that, with this convention, convolution and Parseval formulas read
\begin{align*}
    \widehat{uv}(k)=\sum_{k_1+k_2=k}\frac{\hat{u}(k_1)\hat{v}(k_2)}{(2\pi L)^{\frac{d}{2}}}, \qquad \l u\r_{L^2(L\T^d)}=\l \hat{u}\r_{\ell^2(\Z_L^d)}.
\end{align*}
\end{remark}

The functions $Q^\eta(\xi_1,\xi_2)$ defined on $(\R^d)^2$ are assumed to be symmetric, bounded together with their gradient and satisfy
\begin{align*}
Q^\eta(-\xi_1,-\xi_2)=\overline{Q^\eta(\xi_1,\xi_2)},
\end{align*}
so that $Q^0( v, v)$ and $Q^1( u, u)$ in \eqref{KG system} are real-valued. We also assume
\begin{align}\label{assumption Q}
Q^{\eta}(\xi,-\xi) = 0,
\end{align}
to ensure that the zero-mode of the solution is not affected by the nonlinearities.

\begin{remark}
We impose the assumption that nonlinearities do not affect the zero-mode to avoid additional technical complications that do not contribute to a deeper understanding of the underlying phenomenon. Without this assumption, the effective dynamics and the final stages of the proof become significantly more complex (see Remark \ref{rm:FinStageZero}). Moreover, the main comparison estimates \eqref{main estimate 1}-\eqref{main estimate 2} fail to hold for $k=0$.
Nevertheless, the general analysis remains valid without \eqref{assumption Q} (see Remark \ref{rm:AnalysisZero}).
\end{remark}

\paragraph{The initial data.}
The initial data for $u$ and $v$ in \eqref{KG system} are defined as follows. We consider $\pth{\pth{\mu^0_k,\mu^1_k}}_{k\in \Z_L^d}$ a family of independent Gaussians variables in $\mathbb{C}^2$ such that, for all $k,k'\in\Z_L^d$ and $\eta,\eta'\in\{0,1\}$, we have
\begin{align}\label{assumption ID}
\E\pth{ \mu^\eta_k} & =0, & \E\pth{ \mu^\eta_k \mu^{\eta'}_{k'}} & = 0 , & \E\pth{ \mu^\eta_k\overline{ \mu^{\eta'}_{k'}} } & = \de_{k-k'} M^{\eta,\eta'}(k) ,
\end{align}
where $M^{\eta,\eta'}\in W^{1,\infty}(\R^d)\cap W^{1,1}(\R^d)$ and satisfy $M^{\eta,\eta'}=\overline{M^{\eta',\eta}}$ and $\left| M^{\eta,\eta'} \right|^2 \leq \left| M^{\eta,\eta} \right| \left| M^{\eta',\eta'} \right| $. We also assume that functions $M^{\eta,\eta'}$ are supported in the ball $B(0,R)$ centered at the origin of radius $R>0$. We then define $(u_\init,u'_\init)$ and $(v_\init,v'_\init)$ in Fourier by
\begin{equation}\label{ID pour uv}
\begin{aligned}
\widehat{u_\init}(k) & \vcentcolon = \frac{1}{2\ps{k}}\pth{ \mu^0_k + \overline{\mu^0_{-k}}}, & \widehat{u'_\init}(k) & \vcentcolon = \frac{1}{2i}\pth{ \mu^0_k - \overline{\mu^0_{-k}}},
\\ \widehat{v_\init}(k) & \vcentcolon = \frac{1}{2\ps{k}}\pth{ \mu^1_k + \overline{\mu^1_{-k}}}, & \widehat{v'_\init}(k) & \vcentcolon = \frac{1}{2i}\pth{ \mu^1_k - \overline{\mu^1_{-k}}},
\end{aligned}
\end{equation}
where the Japanese bracket is defined by $\an k \vcentcolon = \sqrt{m+|k|^2}$. This choice of initial data is such that $u_\init$, $u'_\init$, $v_\init$ and $v'_\init$ are real-valued.

\paragraph{First order formulation.} In order to transform the second-order system \eqref{KG system} into a first-order one, we introduce $H \vcentcolon= \sqrt{-\De +m}$ and 
\begin{align}\label{def U V}
    U & \vcentcolon = H u + i \dr_t u , \qquad V  \vcentcolon = H v + i \dr_t v. 
\end{align}
These new unknowns satisfy the following half-Klein-Gordon system
\begin{equation}\label{half wave system}
    \left\{
    \begin{aligned}
    i\dr_t U & = H U - \frac{\e}{4} Q^0\pth{H^{-1} \pth{V+\bar{V}},H^{-1} \pth{V+\bar{V}}},
    \\ i\dr_t V & = H V - \frac{\e}{4} Q^1\pth{H^{-1} \pth{U+\bar{U}},H^{-1} \pth{U+\bar{U}}},
    \\ (U,V)_{|_{\{t=0\}}} & =(U_\init,V_\init).
    \end{aligned}
    \right.
\end{equation}
From \eqref{ID pour uv} and \eqref{def U V} we deduce that the Fourier transforms of $U_\init$ and $V_\init$ are simply given by 
\begin{align*}
    \widehat{U_\init}(k) =  \mu_k^0 , \qquad \widehat{V_\init}(k) =  \mu_k^1 .
\end{align*}
In order to screen the linear evolution in system \eqref{half wave system}, we introduce 
\begin{align*}
X^{0}(t) \vcentcolon = e^{itH}U(t) , \qquad X^1(t) \vcentcolon = e^{itH}V(t).
\end{align*}
Moreover, in order to get the most concise system for $X^0$ and $X^1$, we make use of two notations: for $\eta\in\{0,1\}$ we set $\bar{\eta}=1-\eta$ and if $z\in\mathbb{C}$ and $\iota\in\{\pm\}$ then $z^\iota$ denotes $z$ if $\iota=+$ or $\bar{z}$ if $\iota=-$. Note that in Fourier space this translates to $\widehat{u^\iota}(k)=\widehat{u}(\iota k)^\iota$. Using these notations, system \eqref{half wave system} for $(U,V)$ rewrites as the following integral equation
\begin{equation}\label{fixed point}
\widehat{X^\eta}(t,k)  = \mu_k^\eta + \frac{i\e}{L^{\frac{d}{2}}} \int_0^t  \sum_{\substack{k_1+k_2=k\\\iota_1,\iota_2\in\{\pm\}}} q^\eta(k_1,k_2) e^{i\tau\De^{(\iota_1,\iota_2)}_{k_1,k_2}} \widehat{X^{\overline{\eta}}}(\tau,\iota_1k_1)^{\iota_1}\widehat{X^{\overline{\eta}}}(\tau,\iota_2k_2)^{\iota_2}\d\tau,
\end{equation}
where we defined
\begin{align}
    \De^{(\iota_1,\iota_2)}_{k_1,k_2} & \vcentcolon = \ps{k_1+k_2}-\iota_1\ps{k_1}-\iota_2\ps{k_2},\label{De double}
    \\ q^\eta(k_1,k_2) & \vcentcolon =  \frac{Q^\eta\pth{k_1,k_2}}{4(2\pi )^{\frac{d}{2}}\an{k_1}\an{k_2}} .\label{q double}
\end{align}

\paragraph{Normal form.} The choice of the Klein-Gordon dispersion relation with a non-zero mass implies the absence of first order resonances as shown in the next lemma.

\begin{lemma}\label{lem De double}
For all $\iota_1,\iota_2\in\{\pm\}$ and $a,b\in\R^d$ we have
\begin{align}\label{estim De}
\frac{1}{\left| \De^{(\iota_1,\iota_2)}_{a,b} \right|} \lesssim \an{a}^\half \an{b}^\half.
\end{align}
\end{lemma}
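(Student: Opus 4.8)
The plan is to show that the denominator $\De^{(\iota_1,\iota_2)}_{a,b} = \an{a+b} - \iota_1\an{a} - \iota_2\an{b}$ is bounded away from zero quantitatively, with a lower bound decaying no faster than $\an a^{-1/2}\an b^{-1/2}$. One splits into cases according to $(\iota_1,\iota_2)$, and within each case one distinguishes whether the frequencies are comparable or well-separated. The key algebraic tool is the elementary identity $\an x - \an y = \frac{|x|^2 - |y|^2}{\an x + \an y}$ (and its sign-flipped variants), which converts differences of Japanese brackets into rational expressions with explicit denominators.

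First I would handle the two \emph{elliptic} sign choices $(\iota_1,\iota_2) \in \{(-,-),(+,+)\}$, or rather the ones where the bracket of the sum competes against a sum of brackets. For $(\iota_1,\iota_2)=(-,-)$ one has $\De^{(-,-)}_{a,b} = \an{a+b} + \an a + \an b \geq 3\sqrt m$, so \eqref{estim De} is immediate. For $(\iota_1,\iota_2)=(+,+)$, writing $\De^{(+,+)}_{a,b}=\an{a+b}-\an a-\an b$, one uses that $\an{a+b} \le \an a + \an b - c\min(\an a,\an b)^{-1}$ type estimates are too weak; instead one notes $|\De^{(+,+)}_{a,b}|=\an a+\an b-\an{a+b}$ and bounds this below using $(\an a + \an b)^2 - \an{a+b}^2 = 2\an a\an b - 2a\cdot b + m \ge 2\an a \an b - 2|a||b| + m \ge m$, hence $|\De^{(+,+)}_{a,b}| \ge \frac{m}{\an a + \an b + \an{a+b}} \gtrsim \an a^{-1}\an b^{-1}\cdot(\text{something})$; here I would be a bit more careful and obtain a bound of the form $\gtrsim \langle a\rangle^{-1/2}\langle b\rangle^{-1/2}$ by treating separately the regime where $a$ and $b$ point in nearly opposite directions (where $2\an a\an b - 2a\cdot b$ is large) versus the regime where $|a+b|$ is comparable to $\max(|a|,|b|)$ (where $\an a + \an b + \an{a+b} \lesssim \an a^{1/2}\an b^{1/2}\max(\an a,\an b)^{1/2}$ can be beaten). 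For the mixed signs $(\iota_1,\iota_2)=(+,-)$ and $(-,+)$, which are symmetric in $a,b$ and $a\leftrightarrow a+b$ respectively, one writes $\De^{(+,-)}_{a,b} = \an{a+b} - \an a + \an b$; rationalizing $\an{a+b}-\an a = \frac{|a+b|^2-|a|^2}{\an{a+b}+\an a} = \frac{|b|^2 + 2a\cdot b}{\an{a+b}+\an a}$ one gets $\De^{(+,-)}_{a,b} = \frac{|b|^2+2a\cdot b + \an b(\an{a+b}+\an a)}{\an{a+b}+\an a}$, and since $\an b(\an{a+b}+\an a) \ge |b|\cdot|a+b| + |b|\,|a| + (\text{mass terms}) \ge -2a\cdot b - |b|^2 + m$ by Cauchy--Schwarz, the numerator is $\ge m$, giving $|\De^{(+,-)}_{a,b}| \ge \frac{m}{\an{a+b}+\an a} \gtrsim \an a^{-1/2}\an b^{-1/2}$ after noting $\an{a+b} \lesssim \an a \an b$.

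The main obstacle, and the step I expect to require the most care, is obtaining the precise power $\frac12$ on each of $\an a$ and $\an b$ in \eqref{estim De} uniformly over all sign configurations — a crude rationalization naturally produces denominators like $\an a + \an b$ or $\an{a+b}$, which give at best $\an a^{-1}$ (not symmetric, not matching the claimed exponent) in the worst regimes. Closing this gap means exploiting that when $\an a + \an b$ is large \emph{relative} to $\an{a+b}$ one already has a gain, and interpolating: $\frac{1}{\an a + \an b} \le \frac{1}{\an a^{1/2}\an b^{1/2}} \cdot \frac{\an a^{1/2}\an b^{1/2}}{\an a + \an b} \le \frac{1}{\an a^{1/2}\an b^{1/2}}$ by AM--GM, which in fact makes the bound \emph{better} than claimed in the separated regime, so the claimed exponent is comfortably absorbed; the only genuinely tight regime is $|a|\sim|b|\sim|a+b|$ large with the vectors in generic position, where $\an a\an b \sim \an{a+b}^2$ and all three brackets are comparable, and there the numerator-is-$\gtrsim m$ argument combined with $\an a + \an b \lesssim \an{a+b} \lesssim \an a^{1/2}\an b^{1/2}$ closes it. I would organize the write-up as: (i) the trivial case $(-,-)$; (ii) reduce $(+,-)$, $(-,+)$, $(+,+)$ to lower-bounding a numerator by the mass $m$ via rationalization and Cauchy--Schwarz; (iii) convert the resulting denominators to the symmetric form $\an a^{1/2}\an b^{1/2}$ using AM--GM and the triangle inequality $\an{a+b}\le\an a+\an b$, $\an{a+b} \ge 1$ (so $\an{a+b}\lesssim \an a\an b$ is never needed in a lossy way).
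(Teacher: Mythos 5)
There is a genuine gap, concentrated in the mixed-sign cases and, to a lesser extent, in the $(+,+)$ case. For $(+,-)$ your rationalization gives $\De^{(+,-)}_{a,b} = \frac{|b|^2+2a\cdot b+\an{b}(\an{a+b}+\an{a})}{\an{a+b}+\an{a}}$, and even granting that the numerator is $\geq m$, the resulting bound $\frac{1}{|\De^{(+,-)}_{a,b}|}\leq\frac{\an{a+b}+\an{a}}{m}$ is \emph{not} $\lesssim \an{a}^{1/2}\an{b}^{1/2}$: take $a=(T,0,\dots,0)$ and $b=(-1,0,\dots,0)$ with $T$ large, so that $\an{a+b}+\an{a}\sim T$ while $\an{a}^{1/2}\an{b}^{1/2}\sim T^{1/2}$. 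Your closing step ``after noting $\an{a+b}\lesssim\an{a}\an{b}$'' does not repair this, since what is needed is $\an{a+b}+\an{a}\lesssim\an{a}^{1/2}\an{b}^{1/2}$, which fails whenever $\an{a}\gg\an{b}$. The paper avoids this by splitting on $|a|\lessgtr|b|$: in one sub-case $\De^{(-,+)}_{a,b}\geq\an{a+b}\geq\sqrt{m}$ trivially, and in the other it uses the algebraic reduction $\De^{(-,+)}_{a,b}=-\De^{(+,+)}_{a+b,-a}$ to inherit the $(+,+)$ bound, the ordering of $|a|$ and $|b|$ being exactly what lets one replace $\an{a+b}^{1/2}$ by $\an{b}^{1/2}$ at the end. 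Some such case split (or an equivalent refinement of the numerator lower bound beyond the constant $m$) is unavoidable, and it is absent from your argument.

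In the $(+,+)$ case your starting point, $(\an{a}+\an{b})^2-\an{a+b}^2=m+2(\an{a}\an{b}-a\cdot b)$, is exactly the paper's identity \eqref{De++ De --}, but the step you flag as needing care is precisely the whole content of the lemma, and the dichotomy you propose (``nearly opposite directions'' versus ``$|a+b|\sim\max(|a|,|b|)$'') is neither clearly exhaustive nor does the second branch close as stated, since knowing $|a+b|\sim\max(|a|,|b|)$ gives no lower bound on $\an{a}\an{b}-a\cdot b$ beyond the trivial one. (Incidentally, the genuinely tight regime is not ``generic position'' but $a,b$ nearly parallel and of comparable size, where $\an{a}\an{b}-a\cdot b\sim m$.) The clean way to finish, which is what the paper does, is a second conjugate multiplication: $\an{a}\an{b}-|a||b|=\frac{\an{a}^2\an{b}^2-|a|^2|b|^2}{\an{a}\an{b}+|a||b|}=\frac{m^2+m(|a|^2+|b|^2)}{\an{a}\an{b}+|a||b|}\gtrsim\frac{\an{a}^2+\an{b}^2}{\an{a}\an{b}}$, whence $\frac{1}{|\De^{(+,+)}_{a,b}|}\lesssim\frac{(\an{a}+\an{b})\an{a}\an{b}}{\an{a}^2+\an{b}^2}\lesssim\frac{\an{a}\an{b}}{\an{a}+\an{b}}\leq\an{a}^{1/2}\an{b}^{1/2}$, with no case analysis at all. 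Your $(-,-)$ case and the general organizational plan match the paper, but as written the proof does not close.
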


\begin{proof}
If $(\iota_1,\iota_2)=(-,-)$ then \eqref{estim De} simply follows from $\an{a}\geq \sqrt{m}$, which implies both $\De^{(-,-)}_{a,b}\geq 3\sqrt{m}$ and $\an{a}^\half \an{b}^\half\geq \sqrt{m}$. For $(\iota_1,\iota_2)=(+,+)$, we first note the identity
\begin{align}\label{De++ De --}
-\De^{(+,+)}_{a,b} \De^{(-,-)}_{a,b} &  = m  + 2 \pth{ \ps{a}\ps{b} - a\cdot b }.
\end{align}
Using also the triangle inequality $\ps{a+b}\lesssim \ps{a}+\ps{b}$ to get $\De^{(-,-)}_{a,b}\lesssim  \ps{a}+\ps{b}$, \eqref{De++ De --} implies
\begin{align*}
\frac{1}{\left| \De^{(+,+)}_{a,b} \right|}  & \lesssim \frac{ \ps{a}+\ps{b}}{ \ps{a}\ps{b} - |a| |b| }.
\end{align*}
Multiplying by the conjugate quantity and using $ |a| |b| \leq \ps{a}\ps{b}$ we get
\begin{align*}
\frac{1}{\left| \De^{(+,+)}_{a,b} \right|}  & \lesssim \frac{ \pth{\ps{a}+\ps{b}} \ps{a}\ps{b}}{ \ps{a}^2\ps{b}^2 - |a|^2 |b|^2 } \lesssim \frac{ \pth{\ps{a}+\ps{b}} \ps{a}\ps{b}}{ \ps{a}^2+\ps{b}^2  },
\end{align*}
where we also used $\ps{a}^2+\ps{b}^2 \leq 2 (m + |a|^2 +  |b|^2)$. Using now $\pth{\ps{a}+\ps{b}}^2\leq 2( \ps{a}^2+\ps{b}^2)$ and $2\ps{a}^\half \ps{b}^\half\leq \ps{a}+\ps{b}$ we finally obtain
\begin{align*}
\frac{1}{\left| \De^{(+,+)}_{a,b} \right| } & \lesssim \frac{  \ps{a}\ps{b}}{ \ps{a}+\ps{b}  } \lesssim \ps{a}^\half\ps{b}^\half,
\end{align*}
which proves \eqref{estim De} in the case $(\iota_1,\iota_2)=(+,+)$. Now for $(\iota_1,\iota_2)=(-,+)$, we distinguish two cases. First if $|b|\leq |a|$, then $\De^{(-,+)}_{a,b}\geq \ps{a+b} \geq \sqrt{m}$ so that \eqref{estim De} holds. If $|a|\leq |b|$ then we use $\De^{(-,+)}_{a,b} = - \De^{(+,+)}_{a+b,-a}$ and \eqref{estim De} in the case $(\iota_1,\iota_2)=(+,+)$, which gives
\begin{align*}
\frac{1}{\left| \De^{(-,+)}_{a,b} \right|}  \lesssim \ps{a+b}^\half \ps{a}^\half \lesssim \ps{b}^\half \ps{a}^\half.
\end{align*}
The case $(\iota_1,\iota_2)=(+,-)$ is deduced from the case $(\iota_1,\iota_2)=(-,+)$ using $\De^{(+,-)}_{a,b}=\De^{(-,+)}_{b,a}$. This concludes the proof of \eqref{estim De}.
\end{proof}

The above lemma allows us to perform a normal form procedure on \eqref{fixed point}. More precisely, using integration by parts
one can show that \eqref{fixed point} is equivalent to the fixed point problem
\begin{equation}\label{fixed point after normal form}
\begin{aligned}
&\widehat{X^\eta}(t,k) 
\\& =  \widehat{X^\eta}_\init(k)
\\&\quad +  \frac{\e}{L^{\frac{d}{2}}}  \sum_{\substack{k_1+k_2=k\\\iota_1,\iota_2\in\{\pm\}}} \frac{q^\eta(k_1,k_2)}{\De^{(\iota_1,\iota_2)}_{k_1,k_2}}\pth{   e^{it\De^{(\iota_1,\iota_2)}_{k_1,k_2}} \widehat{X^{\overline{\eta}}}(t,\iota_1k_1)^{\iota_1}\widehat{X^{\overline{\eta}}}(t,\iota_2k_2)^{\iota_2} - \widehat{X^{\overline{\eta}}}_\init(\iota_1k_1)^{\iota_1}\widehat{X^{\overline{\eta}}}_\init(\iota_2k_2)^{\iota_2}}
\\&\quad -\frac{i\e^2}{L^{d}} \int_0^t    \sum_{\substack{k_1+k_2+k_3=k \\\iota_1,\iota_2,\iota_3\in\{\pm\}}} e^{i\tau\De^{(\iota_1,\iota_2,\iota_3)}_{k_1,k_2,k_3}} q^\eta_{\iota_2}(k_1,k_2,k_3) \widehat{X^\eta}(\tau,\iota_1k_1)^{\iota_1}\widehat{X^{\overline{\eta}}}(\tau,\iota_2k_2)^{\iota_2}\widehat{X^\eta}(\tau,\iota_3k_3)^{\iota_3}\d\tau,
\end{aligned}
\end{equation}
where $  \widehat{X^\eta}_\init(k)\vcentcolon = \mu^\eta_k$ and where we defined
\begin{align}
    \De^{(\iota_1,\iota_2,\iota_3)}_{k_1,k_2,k_3} & \vcentcolon = \ps{k_1+k_2+k_3}-\iota_1\ps{k_1}-\iota_2\ps{k_2}-\iota_3\ps{k_3}, \label{De triple}
    \\q^\eta_{\iota}(k_1,k_2,k_3) & \vcentcolon =  \frac{4 \ps{k_1+k_3}}{ m+2\pth{(k_1+k_2+k_3)\cdot k_2-\iota\ps{k_1+k_2+k_3}\ps{k_2}} } q^{\overline{\eta}}(k_1,k_3)q^\eta(k_1+k_3,k_2) .\label{q triple}
\end{align}

\begin{remark}
Lemma \ref{lem De double} shows that there are no exact first order resonances but also that there might be high-frequency first order quasi-resonances. In this article they play no role since they are compensated by the decay of $q^\eta(a,b)$, which follows from the boundedness of $Q^\eta(a,b)$ in \eqref{q double}.
\end{remark}

\subsection{Statement of the main theorem}

Our main result concerns the derivation of an effective dynamics for the correlations associated to \eqref{fixed point after normal form}. This dynamics is governed by the following system satisfied by functions $\rho^\eta$ (for $\eta=0,1$) and $\rho^\times$ defined on $\R^d$: 
\begin{equation}\label{cross system} 
\left\{
\begin{aligned}
\dr_t \rho^0 (t,\xi) & = 4 \rho^0 (t,\xi) \int_{\R^d} \mathrm{Im}\pth{ N^0(\xi,\zeta) \overline{\rho^\times(t,\zeta)}}  \d\zeta ,
\\ \dr_t \rho^1 (t,\xi) & = 4 \rho^1 (t,\xi) \int_{\R^d} \mathrm{Im}\pth{ N^1(\xi,\zeta) \rho^\times(t,\zeta)}  \d\zeta ,
\\ \dr_t \rho^\times (t,\xi) & = 2i \rho^\times (t,\xi) \int_{\R^d}  \pth{ P_+(\xi,\zeta) \overline{\rho^\times (t,\zeta)} +  P_-(\xi,\zeta) \rho^\times (t,\zeta)} \d\zeta,
\\ (\rho^0,\rho^1,\rho^\times)_{|_{\{t=0\}}} & = \pth{ M^{0,0},M^{1,1}, M^{0,1}},
\end{aligned}
\right.
\end{equation}
where the kernels $N^\eta$ and $P_\pm$ are defined by
\begin{align}
N^\eta(\xi,\zeta) & \vcentcolon =  q^\eta_{+}(\xi,\zeta,-\zeta) - \overline{q^\eta_{-}(\xi,-\zeta,\zeta)} \label{kernel N},
\\ P_\pm(\xi,\zeta) & \vcentcolon =   \overline{q^1_{\pm}(\xi,\pm\zeta,\mp\zeta)} - q^0_{\pm}(\xi,\pm\zeta,\mp\zeta). \label{kernel P} 
\end{align}
We now state the main result of our article. 

\begin{theorem}\label{main theorem}
Let $s>\frac{d}{2}$, $\b>2$ and $A>0$. There exist $\de_0,L_0>0$ such that if $0<\de\leq \de_0$, $L\geq L_0$ and $\varepsilon \vcentcolon = L^{-\frac1\beta}$, the following holds:
\begin{itemize}
    \item[(i)] There exists a set $\mathcal{E}_{L,A}$ of probability larger than $1-L^{-A}$ such that, if the initial datum $(U_\init, V_\init) $ introduced in the previous subsection belongs to $\mathcal{E}_{L,A}$, equation \eqref{fixed point after normal form} admits a unique solution $X^\eta\in \mathcal{C}\pth{ [0,\de\e^{-2}], H^s(L\T^d) }$;
    \item[(ii)] There exists a unique solution $\pth{\rho^\eta,\rho^\times}\in \mathcal{C}\pth{ [0,\de], W^{1,\infty}(\R^d)\cap W^{1,1}(\R^d)}$ of \eqref{cross system};
    \item[(iii)] We have
    \begin{align}\label{main estimate 1}
    \lim_{L\to \infty}\sup_{t\in[0,\de]}\sup_{k\in\Z_L^d}\left| \E\pth{ \mathbbm{1}_{\mathcal{E}_{L,A}} \left| \widehat{X^\eta}(\e^{-2}t,k) \right|^2 } - \rho^\eta(t,k) \right| & = 0,
    \end{align}
    and
    \begin{align}\label{main estimate 2}
    \lim_{L\to \infty}\sup_{t\in[0,\de]}\sup_{k\in\Z_L^d}\left| \E\pth{ \mathbbm{1}_{\mathcal{E}_{L,A}} \widehat{X^0}(\e^{-2}t,k) \overline{\widehat{X^1}(\e^{-2}t,k)} } - \rho^\times(t,k) \right| & = 0.
    \end{align}
\end{itemize}
\end{theorem}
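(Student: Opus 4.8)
The plan is to run the by-now-standard random-data/diagrammatic machinery, but adapted to the cubic system \eqref{fixed point after normal form} produced by the normal form, and with the low/high-frequency combinatorial refinement needed for the Klein--Gordon dispersion relation. First I would set up the iteration: expand the solution $\widehat{X^\eta}$ of \eqref{fixed point after normal form} as a truncated sum of multilinear terms in the Gaussian data $(\mu^0,\mu^1)$, indexed by ternary trees (the cubic term contributes a node with three children, the boundary quadratic term contributes binary nodes), carrying the oscillatory factors $e^{i\tau\De^{(\iota_1,\iota_2,\iota_3)}}$, the coefficients $q^\eta_{\iota}$, the signs $\iota_j\in\{\pm\}$ and the labels $\eta,\bar\eta$ tracking which component sits at each node. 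I would prove a deterministic a priori bound on each multilinear term and a probabilistic bound on the remainder of the expansion, so that the solution exists on $[0,\de\e^{-2}]$ in $\mathcal C([0,\de\e^{-2}],H^s)$ off an exceptional set of probability $\le L^{-A}$; this handles \emph{(i)}. Part \emph{(ii)} is a comparatively soft ODE statement: \eqref{cross system} has a locally Lipschitz nonlinearity on $W^{1,\infty}\cap W^{1,1}$ (the kernels $N^\eta,P_\pm$ are built from $q^\eta_\iota$, hence bounded, using that $Q^\eta$ and $\nabla Q^\eta$ are bounded and the denominators in \eqref{q triple} are controlled exactly as in Lemma~\ref{lem De double}), so Picard--Lindelöf gives a unique solution on a time interval depending only on the size of the data $(M^{0,0},M^{1,1},M^{0,1})$; shrinking $\de_0$ if necessary makes this interval contain $[0,\de]$, and one checks the $W^{1,1}$ norm stays finite because the support hypothesis on $M^{\eta,\eta'}$ propagates.

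The heart is \emph{(iii)}. Taking $\E(\mathbbm 1_{\mathcal E_{L,A}}\cdot)$ of $|\widehat{X^\eta}|^2$ and of $\widehat{X^0}\overline{\widehat{X^1}}$ and pairing the Gaussian factors by Wick's theorem reorganizes the expansion into a sum over pairings of the leaves of two trees (a ``Feynman interaction diagram''), each diagram evaluating to a multidimensional integral/sum over internal frequencies in $\Z_L^d$ of a product of coefficients, a product of $\de_{k-k'}M^{\eta,\eta'}$ factors from the covariance \eqref{assumption ID}, and oscillatory integrals in the time variables of phases $\sum\tau_i\De_i$. The key structural point, which is where the absence of $U(1)$-invariance enters, is to classify the frequency constraints imposed by a pairing: the ``trivial'' pairings force $k_1=-k_3$ type identities that collapse $\De^{(\iota_1,\iota_2,\iota_3)}$ to $\De^{(\iota_2,\cdot)}$ at a lower-order frequency and survive as $O(1)$ contributions (these assemble exactly into the right-hand side of \eqref{cross system}, via the kernels $N^\eta$ in \eqref{kernel N} — coming from the two sign choices $(\iota_1,\iota_3)=(+,-)$ and $(-,+)$ that make $k_1+k_3$ cancel — and $P_\pm$ in \eqref{kernel P}), while all other (``non-trivial'', including quasi-resonant) pairings must be shown negligible as $L\to\infty$. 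For the latter I would run a counting argument: each non-trivial diagram has a gain from either an extra free summation cut by a near-resonance condition $|\De|\lesssim\e^2$, or from a genuinely non-resonant oscillation that is integrated by parts in time against the $\e^{-2}t$ rescaling; the Klein--Gordon dispersion relation does not allow Diophantine/lattice-point input as in the Schrödinger case, so here one invokes Lemma~\ref{lem De double} together with the announced new combinatorial tool, decomposing every frequency dyadically into low and high regimes and bounding the number of lattice points satisfying $|\De^{(\iota_1,\iota_2)}_{a,b}|\le\e^2$ on each regime, using that $\De^{(\iota_1,\iota_2)}$ is a genuine (non-degenerate) function away from the trivial loci. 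Summing the diagram bounds, the leading-order (trivial-resonance) contribution converges to the solution of \eqref{cross system} by a discrete-to-continuum Riemann-sum comparison (justified by the $W^{1,1}$ regularity of the kernels and of $\rho^\eta,\rho^\times$), and the remainder — both the higher-order terms of the finite expansion and the contribution of $\mathcal E_{L,A}^c$ — is $o(1)$; this yields \eqref{main estimate 1} and \eqref{main estimate 2} uniformly in $k$ and in $t\in[0,\de]$.

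The main obstacle, as the authors themselves flag, is the combinatorial/analytic estimate bounding the non-trivial diagrams: one must simultaneously (a) organize the ternary-tree diagrams arising from the normal-formed cubic system — whose vertices carry both $\pm$ signs and an $\eta/\bar\eta$ alternation, doubling the usual bookkeeping for a scalar complex equation; (b) extract, for each non-trivial pairing, a quantitative smallness factor, which requires carefully separating exact-resonance loci (the trivial ones, which are \emph{not} small and must be peeled off by hand) from quasi-resonances; and (c) do the lattice-point counting for $\{|\De^{(\iota_1,\iota_2)}_{a,b}|\le\e^2\}$ \emph{without} number theory, which forces the low/high-frequency splitting to be pushed into the diagrammatic itself so that the bounds on $|1/\De^{(\iota_1,\iota_2)}_{a,b}|$ from Lemma~\ref{lem De double} (which degrade at high frequency) can be matched against the decay of the coefficients $q^\eta$. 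Controlling the accumulation of these losses over diagrams of all orders — i.e. getting a bound summable in the tree size, uniformly for times up to the full $\de\e^{-2}$ rather than just a short time — is the delicate point and is presumably what forces $\de$ (though not its dependence on $L,\e$) to be taken small.
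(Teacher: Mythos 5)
Your outline reproduces the general architecture of the paper's argument (Dyson/tree expansion, Wick pairing into couplings, trivial versus non-trivial resonances, low/high frequency splitting inside the diagrammatic, Riemann-sum passage to \eqref{cross system}), but it is missing the two ideas on which the proof actually turns. The first is how to beat the factorial number of pairings. A coupling of size $n$ admits on the order of $(n/2)!$ Wick pairings, so even a uniform bound $\de^{n/2}$ per diagram -- and a fortiori your proposed "each non-trivial diagram gains a smallness factor" -- does not produce a convergent series on the timescale $\de\e^{-2}$ with $\de$ independent of $L,\e$. Worse, the per-diagram gain you postulate is simply false for an important class of diagrams: a low ternary node whose two marked subtrees are paired entirely with each other (a \emph{self-coupled bush} in the paper's terminology) yields no gain at all, because the low-frequency constraint $|k_1+k_2|\le\e^\gamma$ is then satisfied identically and no oscillation survives. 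The paper's resolution is structural rather than analytic: such gainless nodes rigidify the pairing, so the number of couplings with $q$ self-coupled bushes is at most $\La^{n}\,(\tfrac n2+2-q)!$ (Lemma \ref{counting self-coupled bushes}), and this is played off against the gain $\e^{\alpha(\frac n2 - q)}$ extracted from the remaining nodes (Proposition \ref{prop |FC|}), with the expansion truncated at $n\le|\ln\e|^{3}$ so that $\e^{\alpha m}(m+3)!$ is summable. Without this mechanism -- or an equivalent one -- neither part (i) nor part (iii) closes.

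The second gap is in part (i). After truncating the Dyson series at $N(L)\sim\ln L$, the remainder $v$ solves $(\mathrm{Id}-\mathfrak L)v=\mathfrak S+\mathfrak N(v)$ where the linear operator $\mathfrak L$ is built from the $X_{\le N(L)}$, whose $H^s$ norms are of size $L^{d/2}$ (the data are spatially homogeneous); consequently $\|\mathfrak L\|_{\mathrm{op}}\sim L^{d+2A}\gg1$ and $\mathrm{Id}-\mathfrak L$ cannot be inverted by a Neumann series. Your "probabilistic bound on the remainder" does not address this. The paper's fix is to expand the iterates $\mathfrak L^{m}$ over a second class of (linear) trees and show $\|\mathfrak L^{N(L)}\|_{\mathrm{op}}<1$ for $\de$ small (Proposition \ref{prop norme itérées}), which then gives invertibility via $(\mathrm{Id}-\mathfrak L)^{-1}=(\mathrm{Id}-\mathfrak L^{N(L)})^{-1}\sum_{m<N(L)}\mathfrak L^{m}$. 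Two lesser points: the pairing $k_1+k_3=0$ you single out as producing $N^\eta$ in fact corresponds to middle-resonant nodes, which vanish identically by \eqref{assumption Q}; and since the limit system is nonlinear, matching it with the expansion requires identifying the fully resonant couplings of \emph{every} order with the Picard iterates of \eqref{cross system} (the paper does this via a bijection with ordered binary trees), not merely the first-order trivial resonances.
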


\begin{remark}
We remark that, although $U_\init$ and $V_\init$ belong to any Sobolev space, they are homogeneous. In particular, their $H^s$ norms are of order $L^{\frac{d}{2}}$. Consequently, relying solely on deterministic methods would only allow us to solve \eqref{fixed point after normal form} up to the time scale $\e^{-2}L^{-d}$, which vanishes as $L\to\infty$, in contrast to the timescale achieved in Theorem 1.1.
\end{remark}

\noindent We now make some comments on Theorem \ref{main theorem}.

\paragraph{The correlations of the Klein-Gordon system.} We can deduce from \eqref{main estimate 1}-\eqref{main estimate 2} the behavior of the correlations associated to the initial system \eqref{KG system}. First, note that the proof of \eqref{main estimate 1}-\eqref{main estimate 2} can be easily adapted to show that, for all $\eta,\eta'\in\{0,1\}$, 
\begin{align}\label{main estimate 3}
    \lim_{L\to \infty}\sup_{t\in[0,\de]}\sup_{k,k'\in\Z_L^d} \E\pth{ \mathbbm{1}_{\mathcal{E}_{L,A}}  \widehat{X^\eta}(\e^{-2}t,k) \widehat{X^{\eta'}}(\e^{-2}t, k')  } & = 0.
\end{align}
 Using \eqref{main estimate 1}-\eqref{main estimate 2}-\eqref{main estimate 3} and \eqref{def U V} we can obtain in the limit $L\to \infty$ the following:
\begin{align*}
    \E\pth{\mathbbm{1}_{\mathcal{E}_{L,A}}  \left| \hat{u}(\e^{-2}t,k) \right|^2 } &\longrightarrow \frac{\rho^0(t,k)+\rho^0(t,-k)}{4\ps{k}^2} ,
    \\ \E\pth{\mathbbm{1}_{\mathcal{E}_{L,A}}  \left| \hat{v}(\e^{-2}t,k) \right|^2 }& \longrightarrow \frac{\rho^1(t,k)+\rho^1(t,-k)}{4\ps{k}^2} ,
    \\ \E\pth{\mathbbm{1}_{\mathcal{E}_{L,A}}   \hat{u}(\e^{-2}t,k) \overline{ \hat{v}(\e^{-2}t,k) }  } &\longrightarrow \frac{\rho^\times(t,k)+\overline{\rho^\times(t,-k)}}{4\ps{k}^2},
\end{align*}
where all the above limits are uniform in $t\in[0,\de]$ and $k\in\Z_L^d$. Note that the exact same limits hold if we replace $\hat{u}(\e^{-2}t,k)$ (resp. $\hat{v}(\e^{-2}t,k)$) by $\ps{k}^{-1} \widehat{\dr_t u}(\e^{-2}t,k)$ (resp. $\ps{k}^{-1} \widehat{\dr_t v}(\e^{-2}t,k)$). We also obtain
\begin{align*}
    \E\pth{\mathbbm{1}_{\mathcal{E}_{L,A}} \hat{u}(\e^{-2}t,k) \overline{\widehat{\dr_t u}(\e^{-2}t,k)} } & \longrightarrow \frac{\rho^0(t,-k) - \rho^0(t,k)}{4i\ps{k}},
    \\ \E\pth{\mathbbm{1}_{\mathcal{E}_{L,A}} \hat{v}(\e^{-2}t,k) \overline{\widehat{\dr_t v}(\e^{-2}t,k)} } & \longrightarrow \frac{\rho^1(t,-k) - \rho^1(t,k)}{4i\ps{k}},
    \\ \E\pth{\mathbbm{1}_{\mathcal{E}_{L,A}} \hat{u}(\e^{-2}t,k) \overline{\widehat{\dr_t v}(\e^{-2}t,k)} } & \longrightarrow \frac{\overline{\rho^\times(t,-k)}-\rho^\times(t,k)}{4i\ps{k}},
    \\ \E\pth{\mathbbm{1}_{\mathcal{E}_{L,A}} \hat{v}(\e^{-2}t,k) \overline{\widehat{\dr_t u}(\e^{-2}t,k)} } & \longrightarrow \frac{\rho^\times(t,-k)-\overline{\rho^\times(t,k)}}{4i\ps{k}}.
\end{align*}

\paragraph{The effective dynamics.} System \eqref{cross system} has a triangular structure since the third equation for $\rho^\times$ does not involve $\rho^0$ and $\rho^1$, and $\rho^\times$ only appears in the source term of the first and second equations. We can actually solve these equations and obtain $(\rho^0,\rho^1)$ in terms of $\rho^\times$:
\begin{equation}\label{expression rho0 rho1}
\begin{aligned}
    \rho^0(t,\xi) & = M^{0,0}(\xi) \exp\pth{ 4 \int_0^t  \int_{\R^d} \mathrm{Im}\pth{ N^0(\xi,\zeta) \overline{\rho^\times(\tau,\zeta)}}  \d\zeta  \d\tau },
    \\ \rho^1(t,\xi) & = M^{1,1}(\xi) \exp\pth{ 4 \int_0^t    \int_{\R^d} \mathrm{Im}\pth{ N^1(\xi,\zeta) \rho^\times(\tau,\zeta)}  \d\zeta  \d\tau },
\end{aligned}
\end{equation}
where using \eqref{q double}, \eqref{q triple} and \eqref{kernel N} one can show that
\begin{align*}
    N^\eta(\xi,\zeta) & = \frac1{4(2\pi)^{d}\an \xi \an{\zeta}^2} \pth{ \frac{Q^{\overline \eta}(\xi,-\zeta)Q^{\eta}(\xi-\zeta,\zeta)}{m+2(\xi\cdot \zeta - \an \xi \an \zeta)} - \frac{\overline{Q^{\overline \eta}(\xi,\zeta)Q^{\eta}(\xi+\zeta,-\zeta)}}{m-2(\xi\cdot \zeta - \an \xi \an \zeta)}}.
\end{align*}
The expressions \eqref{expression rho0 rho1} suggests exponential decay or growth for $\rho^0$ and $\rho^1$, which might be seen as an obstruction to a kinetic regime.

Regarding the equation for $\rho^\times$, we note that the kernels $P_\pm$ are generically non-trivial since, using \eqref{q double}, \eqref{q triple} and \eqref{kernel P}, one can show that
\begin{align*}
    4(2\pi )^{d} P_\pm(\xi,\zeta) & = \frac{  \an{\xi}^{-1}\an{\zeta}^{-2}}{ m\pm2\pth{\xi\cdot \zeta-\ps{\xi}\ps{\zeta}} }  \pth{ \overline{Q^0\pth{\xi,\mp\zeta}}   \overline{Q^1\pth{\xi\mp\zeta, \pm\zeta}} -  Q^1\pth{\xi,\mp\zeta}  Q^0\pth{\xi\mp\zeta, \pm\zeta} }.
\end{align*}
Definitions \eqref{kernel N}-\eqref{kernel P} actually imply the relation $N^0-\overline{N^1}=\overline{P_-}-P_+$, which in turn yields that any solution $(\rho^0,\rho^1,\rho^\times)$ of \eqref{cross system} satisfies
\begin{align}\label{angle constant}
    \dr_t \pth{ \frac{\left| \rho^\times \right|}{\sqrt{ \rho^0\rho^1}} } & = 0,
\end{align}
on the support of $M^{0,0}M^{0,1}$. This suggests that, despite the exponential decay or growth for $\rho^0$ and $\rho^1$, the statistical correlation between $\widehat{X^0}$ and $\widehat{X^1}$ has constant modulus.  

The expressions \eqref{expression rho0 rho1} and the identity \eqref{angle constant} crucially rely on the triangular structure of \eqref{cross system}, which would not hold without assumption \eqref{assumption Q} (see Section \ref{section heuristic} below) or if we had considered different coupling terms in \eqref{KG system} such as $(Q^0(u,v),Q^1(u,v))$. In these other situations, the effective dynamics one obtains in the limiting process is much more involved than \eqref{cross system}.

\subsection{Sketch of proof}\label{section sketch of proof}

The proof of Theorem \ref{main theorem} is based on a diagrammatic representation of the solution $X^\eta$ to \eqref{fixed point after normal form} in terms of trees. It is spread over four sections:
\begin{itemize}
\item In Section \ref{subsec:diag} we define a sequence of approximate solutions to \eqref{fixed point after normal form} and introduce our main combinatorial tools, that is signed coloured trees, couplings and bushes;
\item In Section \ref{section convergence} we prove the main estimate for the sequence of approximate solutions;
\item In Section \ref{section remainder} we construct an exact solution to \eqref{fixed point after normal form} as an appropriately truncated Dyson series plus a remainder term;
\item In Section \ref{section resonant system} we solve \eqref{cross system} and prove \eqref{main estimate 1}-\eqref{main estimate 2}.
\end{itemize}
In Sections \ref{section intro approximate}, \ref{section intro exact}, \ref{section intro correlations} and \ref{section heuristic} below we give an overview of the proof and provide a heuristic derivation of the \eqref{cross system}.

\subsubsection{Approximate solution}\label{section intro approximate}

We start by presenting the construction of a sequence of approximate solutions to \eqref{fixed point after normal form}, thus covering the general diagrammatic from Section \ref{subsec:diag} and the estimates from Section \ref{section convergence}. 

The standard way to exploit the randomness of the initial data, and in particular \eqref{assumption ID}, is to formally expand the solution $X^\eta$ to \eqref{fixed point after normal form} and obtain the so-called Dyson series $\sum_{n\geq 0} X^\eta_n$, which is a formal solution to \eqref{fixed point after normal form}. In order to estimate each term $X^\eta_n$ and prove convergence of the Dyson series, we write each $X^\eta_n$ as a sum over trees representing the combinatorial structure of the nonlinear terms in \eqref{fixed point after normal form} when iterating Duhamel's formula. Compared to the case of the cubic Schr\"odinger equation, we need to introduce new features to the diagrammatic in order to capture the peculiarities of our system:
\begin{itemize}
    \item[(i)] our trees are made of binary and/or ternary nodes, as \eqref{fixed point after normal form} contains both quadratic (coming from the normal form) and cubic interaction terms;
    \item[(ii)] we consider colour maps $c$ that associate ``colours'' in $\{0,1\}$ to each node. This is because we started with a \textit{system} of equations and need to distinguish between $X^0$ and $X^1$ in \eqref{fixed point after normal form};
    \item[(iii)] we consider sign maps $\varphi$ that associate a sign $\{\pm\}$ to each node. This is because we started with a system with real-valued solutions and need to take complex conjugation into account when passing to the first order half-Klein-Gordon system \eqref{half wave system} and to \eqref{fixed point after normal form}.
\end{itemize}
In conclusion, the natural combinatorial objects representing $X^\eta_n$ are \emph{signed coloured trees with binary and ternary nodes}, and in Proposition \ref{prop:XtoF} we prove that
\begin{align*}
X^{\eta,\iota}_n = \sum_{(A,\ffi,c) \in \mathcal A_n^{\eta,\iota}} F_{(A,\ffi,c)} ,
\end{align*}
where $\mathcal A_n^{\eta,\iota}$ is the set of signed coloured trees of size $n$ whose root has colour $\eta$ and sign $\iota$, and where $F_{(A,\ffi,c)}$ is defined recursively in Definition \ref{def FA}. The estimate for $X^\eta_n$ is inferred from the estimate of the correlations $\E\pth{ \big| \widehat{X^\eta_n} \big|^2}$, which are computed from the representation formula written above. In fact, the structure of the initial data and the Wick formula (see Equation \eqref{eq:Wick} or \cite{Janson}) for the product of Gaussians allow us to represent the correlations as a sum over \emph{couplings}, i.e. ordered pairs of signed coloured trees, with the property that the number of overall positive leaves equals that of overall negative leaves, equipped with a coupling map $\si$ pairing leaves of different sign. In Proposition \ref{prop:correlation} we obtain
\begin{align*}
\E\pth{ \left| \widehat{X^\eta_n}(\e^{-2}t,k) \right|^2 } & = \sum_{C\in\mathcal{C}^{\eta,\eta,+,-}_{n,n}} \widehat{F_C}(\e^{-2}t,k),
\end{align*}
where $\mathcal{C}^{\eta,\eta,+,-}_{n,n}$ is a set of couplings. Naively estimating $\widehat{F_C}(\e^{-2}t,k)$ over the time interval $t\in [0,\de]$ by $\de^n$ would lead to
\begin{align}\label{naive bound intro}
    \E\pth{ \left| \widehat{X^\eta_n}(\e^{-2}t,k) \right|^2} \lesssim \de^{n} n!,
\end{align}
where the factorial comes from counting the number of couplings of size $n$. This factorial growth, which is now a standard obstruction in the wave turbulence literature, cannot be beaten by the geometric factor $\de^n$ and this bound is useless to prove the convergence of the Dyson series $\sum_{n\geq 0} X^\eta_n$ over relevant timescales. 

To get a more refined estimate than \eqref{naive bound intro}, with $\de$ still independent of $L$ and $\e$, is the hardest part of the proof. Similarly to \cite{denghani2021}, our goal is to identify the contributions that lead to a systematic $\e$ gain in the estimate for $\widehat{F_C}$. As for trees, couplings have binary and/or ternary nodes. The quadratic terms in \eqref{fixed point after normal form} show that each binary node naturally contributes to an $\e$ gain in the estimate for $\widehat{F_C}$. This is \textit{a priori} not the case for ternary nodes and we employ instead a low/high-frequency argument based on a cut-off function located at $\e^\ga$ (for some $\ga\in (0,2)$). Introduced at the level of the diagrammatic, it distinguishes between \emph{low and high ternary nodes}. For instance, the first node in the example below is low while the second is high:
\begin{center}
\begin{tikzpicture}
\node[label=above:{$k_1+k_2+k_3$}]{\textup{low}}
    child{node{$k_1$}}
    child{node[label=below:{$|k_1 + k_2| \leq \e^\gamma$}]{$k_2$}}
    child{node{$k_3$}}
;
\node at (5,0) [label=above:{$k_1+k_2+k_3$}]{\textup{high}}
    child{node{$k_1$}}
    child{node[label=below:{$|k_1 + k_2|>\e^{\gamma}$}]{$k_2$}}
    child{node{$k_3$}}
;
\end{tikzpicture}
\end{center}

\noindent Contributions of high and low nodes are estimated differently:
\begin{itemize}
    \item on the one hand, we estimate high nodes following the strategy of \cite{LS11}, namely via oscillating integrals and spanning trees. We show that most high node contributes to $\widehat{F_C}$ with a gain of $\e^{2-\gamma}$. Note that when estimating oscillating integrals, we cannot rely on number theory arguments as in \cite{BGHS} and subsequent works since the Klein-Gordon dispersion relation is inhomogeneous;
    \item on the other hand, we estimate low nodes using the constraint $|k_1 + k_2| \leq \e^\gamma$, which in most cases yields a gain of $\e^{\gamma d}$. However, when the pair of trees below the children contributing to the low-frequency interaction forms a coupling, $k_1 + k_2$ vanishes structurally and the low-frequency constraint does not provide any gain. It is then crucial to identify the non-trivial and independent low-frequency constraints. This is done with a new combinatorial object we call \emph{bush} (see Section \ref{section bushes}), which moreover replaces the notion of skeleton from \cite{denghani2021}. Low ternary nodes with a self-coupled bush -- i.e. a bush where all leaves are paired together -- do not produce any gain in $\e$ and are thus the main obstruction to the convergence of the Dyson series. Nevertheless, such nodes induce structural constraints on the coupling and do not contribute to the factorial loss of \eqref{naive bound intro}.
\end{itemize}
Using these ideas, we obtain the following key estimate for $\widehat{F_C}(\e^{-2}t,k)$ (see Proposition \ref{prop |FC|}): 
\begin{align}\label{refined estimate FC}
\left| \widehat{F_C}(\e^{-2}t,k) \right| \lesssim \delta^{n} \varepsilon^{\alpha \pth{n-n_\scb}},
\end{align}
where $n$ is the number of nodes, $n_\scb$ is the number of self-coupled bushes and $\a>0$ is universal. The refined estimate \eqref{refined estimate FC} holds for $n\leq |\ln\e|^3$ and ultimately removes the factorial growth in the estimate for the correlations. This leads to the following estimate with high probability for $X^\eta_n$ (see Corollary \ref{coro Xn}):
\begin{equation*}
    \l X^\eta_n \r_{\mathcal{C}([0,\e^{-2}\de],H^s(L\T^d))}\lesssim \delta^{\frac{n}{2}}L^{\frac{d}{2}+\frac1\beta + A},
\end{equation*}
where $\b$ and $A$ are as in Theorem \ref{main theorem}. This estimate shows that truncating the Dyson series $\sum_{n\geq 0} X^\eta_n$ at some $N\sim \ln L$ indeed produces an approximate solution of \eqref{fixed point after normal form}. 

\begin{remark}\label{remark low LS}
In \cite{LS11}, the authors use a low/high-frequency cut-off similar to ours, except that it is not integrated in the diagrammatic but only in the equation. This is only possible because in \cite{LS11} low-frequency interactions always yield a gain in $\e$, due to the continuous nature of the phase space there.
\end{remark}

\subsubsection{Exact solution}\label{section intro exact}

Here we give an overview of Section \ref{section remainder}, which is concerned with the resolution of \eqref{fixed point after normal form} thus proving the first part of Theorem \ref{main theorem}.

We look for a solution $X^\eta$ of \eqref{fixed point after normal form} of the form 
\begin{align*}
   X^\eta =  X_{\leq N}^\eta + v^\eta, \quad \eta=0,1
\end{align*}
where $X_{\leq N} \vcentcolon =\sum_{n\leq N}X_n$ and $v$ is a remainder that solves an equation of the form
\begin{align*}
(\mathrm{Id}-\mathfrak{L})(v) & = \mathfrak{S} + \mathfrak{N}(v).
\end{align*}
In the above equation, $\mathfrak{L}$ is a linear operator, $\mathfrak{S}$ is shown to be a small source term and $\mathfrak{N}$ contains nonlinear terms, all depending on $X_{\leq N}$. The norm of the linear operator $\mathfrak{L}$ is of order $N^{s+d}L^{\frac{1}{4}+d+A}$, which \textit{a priori} forbids the inversion of $\mathrm{Id}-\mathfrak{L}$. This obstruction was already present in \cite{denghani2021} and we use the same strategy to overcome it, namely we show that the operator norm of the $N$-th iterates $\mathfrak{L}$ can be made arbitrarily small if the truncation threshold $N$ grows logarithmically with $L$. More precisely, we can show (see Proposition \ref{prop norme itérées}) that for an arbitrary truncation threshold $N$, for $m\geq 0$ and $s>0$ we have
\begin{align}\label{estim norm op intro}
\l \mathfrak{L}^m \r_{\mathrm{op}} \lesssim {\de}^{\frac{m}{2}} m^{s+d} N^{s+d} L^{\frac{1}{4} + d + A},
\end{align}
where the operator norm refers to the action of $\mathfrak{L}$ on $\mathcal{C}([0,\e^{-2}\de],H^s(L\T^d))$. Therefore, if both $N$ and $m$ are of order $\ln L$, we can choose $\de$ small enough to have $\l \mathfrak{L}^m \r_{\mathrm{op}} \lesssim L^{-\mu} (\ln L)^{2(s+d)}$ for some $\mu >0$. This allows us to invert $\mathrm{Id}-\mathfrak{L}$ for $L$ large enough and to solve for $v$. We thus get a solution $X^\eta$ to \eqref{fixed point after normal form}.

The proof of \eqref{estim norm op intro} is the key part of Section \ref{section remainder}, and it is once again based on a tree expansion of the iterates of $\mathfrak{L}$:
\begin{align*}
\mathfrak{L}^{n}(v)^{\eta,\iota} & = \sum_{(A,\ffi,c)\in \mathcal{A}^{\lin}_{n,\eta,\iota}} F^{\lin}_{(A,\ffi,c)}.
\end{align*}
Here, $\mathcal{A}^{\lin}_{n,\eta,\iota}$ is a set of signed coloured \emph{linear} trees (see Definition \ref{def sign map colour map}), i.e. trees where the contribution of one of the leaves is $v$. The above representation formula encapsulates the transition between the recursive structure naturally associated to the iterates of $\mathfrak{L}$ to the recursive structure of mixed trees with binary and ternary nodes. We adapt the diagrammatic to linear trees and estimate $\mathfrak{L}^{n}(v)^{\eta,\iota}$ similarly to  $X^{\eta,\iota}_n$.

\subsubsection{Dynamics of the correlations}\label{section intro correlations}

Finally, we discuss the content of Section \ref{section resonant system}, which is concerned with the last two parts of Theorem \ref{main theorem}.

Our first task is to identify the correct resonant structures in the correlations that ultimately dominate in the limit $L\to \infty$. For this, we define \emph{resonant nodes} and \emph{resonant couplings}. On the one hand, resonant nodes are low ternary nodes whose bush is self-coupled and whose children satisfy particular sign conditions (we have already introduced them in Section \ref{section intro approximate} as obstructions to the convergence of the Dyson series). On the other hand, resonant couplings are couplings containing only resonant nodes. These are the relevant couplings to look at since couplings having at least one non-resonant node always allow an $\e$ gain in the estimates, see Proposition \ref{prop nonresonant}, and their contribution becomes negligible in the limit. Up to symmetries and cancellations, all nodes in a resonant coupling $C = (A,A',\sigma)$ have the following form
\begin{center}
\begin{tikzpicture}
\node[label=above:{$\iota_1$}]{low}
child{node(A1)[label=below:{$\iota_2$}]{$A_1$}}
child{node(A2)[label=below:{$-\iota_2$}]{$A_2$}}
child{node[label=below:{$\iota_1$}]{$A_1$}}
;
\path (A1) edge [bend right,<->,blue] (A2) ; 
\end{tikzpicture}
\end{center}
where $\iota_1$ and $\iota_2$ are signs and where the node's bush is self-coupled. The triplet $(A_1,A_2,\sigma')$, where $\sigma'$ is the restriction of $\sigma$ to the leaves of $A_1$ and $A_2$, is itself a coupling (see Lemma \ref{lemma 1 resonant nodes}), which we represent with a blue arrow in the above figure. In Lemma \ref{lemma res coupling reduction} and Proposition \ref{prop:reduceRes} we show that resonant coupling have almost a recursive structure. Namely, a resonant coupling is either trivial or of one the following non-exclusive forms:

\begin{center}
\begin{tikzpicture}
\node[label=above:{$\iota_1$}]{low}
child{node(A1)[label=below:{$\iota_2$}]{$A_1$}}
child{node(A2)[label=below:{$-\iota_2$}]{$A_2$}}
child{node(A3)[label=below:{$\iota_1$}]{$A_3$}}
;
\path (A1) edge [bend right,<->,blue] (A2) ; 
\draw (3,0) node(A')[label=above:{$-\iota_1$}]{$A'$} ; 
\path (A3) edge [bend right,<->,blue] (A') ;

\node at (9,0) [label=above:{$\iota_1$}]{low}
child{node(A1)[label=below:{$\iota_2$}]{$A_1$}}
child{node(A2)[label=below:{$-\iota_2$}]{$A_2$}}
child{node(A3)[label=below:{$\iota_1$}]{$A_3$}}
;
\path (A1) edge [bend right,<->,blue] (A2) ; 
\draw (6,0) node(A')[label=above:{$-\iota_1$}]{$A'$} ; 
\path (A') edge [bend right=90,<->,blue] (A3) ;
\end{tikzpicture}
\end{center}
where $(A_1,A_2)$ and $(A_3,A')$ together with the restriction of coupling map to their leaves form resonant couplings. To get a truly recursive structure, we show in Section \ref{section btor rtob} that resonant couplings with ordered nodes are in bijection with binary trees with ordered nodes. As a consequence, resonant couplings can be endowed with the recursive structure of binary trees. This structure is the one of quadratic equations, which explains the quadratic nature of the effective dynamics described by \eqref{cross system}. Finally, note that the quantities in \eqref{main estimate 1}-\eqref{main estimate 2} are proved to converge to zero polynomially with respect to $L$.

\subsubsection{Heuristic derivation of the effective dynamics}\label{section heuristic}

We conclude this introduction by providing a heuristic derivation of the effective dynamics \eqref{cross system}. We look at the solution to \eqref{fixed point after normal form} at the appropriate timescale and introduce $\widehat{Z^\eta}(t,k) \vcentcolon = \widehat{X^\eta}(\e^{-2}t,k)$ together with 
\begin{align*}
    \rho^\eta(t,k) & \vcentcolon = \E\pth{ \left| \widehat{Z^\eta}(t,k) \right|^2 }, \qquad \rho^\times(t,k)  \vcentcolon = \E \pth{ \widehat{Z^0}(t,k) \overline{\widehat{Z^1}(t,k)}  }.
\end{align*}
We write $\widehat{Z^\eta}(t,k)$ as the sum of the following three terms:
\begin{align*}
    \widehat{Z^\eta}_0(t,k) & \vcentcolon=  \widehat{Z^\eta}_\init(k) \\
      \widehat{Z^\eta}_1(t,k) & \vcentcolon= \frac{\e}{L^{\frac{d}{2}}}  \sum_{\substack{k_1+k_2=k\\\iota_1,\iota_2\in\{\pm\}}} \frac{q^\eta(k_1,k_2)}{\De^{(\iota_1,\iota_2)}_{k_1,k_2}}\pth{   e^{i\e^{-2}t\De^{(\iota_1,\iota_2)}_{k_1,k_2}} \widehat{Z^{\overline{\eta}}}(t,\iota_1k_1)^{\iota_1}\widehat{Z^{\overline{\eta}}}(t,\iota_2k_2)^{\iota_2} - \widehat{Z^{\overline{\eta}}}_\init(\iota_1k_1)^{\iota_1}\widehat{Z^{\overline{\eta}}}_\init(\iota_2k_2)^{\iota_2}},
    \\ \widehat{Z^\eta}_2(t,k) & \vcentcolon =  -\frac{i}{L^{d}} \int_0^t    \sum_{\substack{k_1+k_2+k_3=k \\\iota_1,\iota_2,\iota_3\in\{\pm\}}} e^{i\e^{-2}\tau\De^{(\iota_1,\iota_2,\iota_3)}_{k_1,k_2,k_3}} q^\eta_{\iota_2}(k_1,k_2,k_3) \widehat{Z^\eta}(\tau,\iota_1k_1)^{\iota_1}\widehat{Z^{\overline{\eta}}}(\tau,\iota_2k_2)^{\iota_2}\widehat{Z^\eta}(\tau,\iota_3k_3)^{\iota_3}\d\tau.
\end{align*}
In the limit $\e\to0$, $\widehat{Z^\eta}_1(t,k)$ becomes negligible and can be discarded. The same can be said for the contributions to $\widehat{Z^\eta}_2$ for which the resonant factor $\De^{(\iota_1,\iota_2,\iota_3)}_{k_1,k_2,k_3}$ is non-zero, since integration by parts produces a factor $\e^2$. While quasi-resonances would only be observed at times of order $\e^{-4}$ (since the strength of the cubic nonlinearities is $\e^2$), the dynamics of $\widehat{Z^\eta}$ is lead by the exact resonances $\De^{(\iota_1,\iota_2,\iota_3)}_{k_1,k_2,k_3}=0$, so we can assume that
\begin{align*}
    \dr_t \widehat{Z^\eta}(t,k) & = -\frac{i}{L^{d}}  \sum_{\substack{k_1+k_2+k_3=k \\\iota_1,\iota_2,\iota_3\in\{\pm\}\\ \De^{(\iota_1,\iota_2,\iota_3)}_{k_1,k_2,k_3}=0}}  q^\eta_{\iota_2}(k_1,k_2,k_3) \widehat{Z^\eta}(t,\iota_1k_1)^{\iota_1}\widehat{Z^{\overline{\eta}}}(t,\iota_2k_2)^{\iota_2}\widehat{Z^\eta}(t,\iota_3k_3)^{\iota_3}.
\end{align*}
Furthermore, Wick formula and propagation of chaos suggest that only trivial resonances contribute to the dynamics of the correlations $\rho^\eta$ and  $\rho^\times$. Trivial resonances are such that $k_a+k_b=0$, $k_c=k$, $\iota_a=-\iota_b$ and $\iota_c=+$ where $a,b,c\in\{1,2,3\}$ are all distinct. Therefore, the above sum can be split into three terms and we obtain
\begin{align*}
    \dr_t\widehat{Z^\eta}(t,k) & =   -\frac{i}{L^d} \widehat{Z^\eta}(t,k) \sum_{\ell,\iota}   q^\eta_\iota(k,\ell,-\ell)    \widehat{Z^{\overline{\eta}}}(t,\iota\ell)^{\iota} \widehat{Z^\eta}(t,\iota\ell)^{-\iota} 
    \\&\quad -\frac{i}{L^d} \widehat{Z^{\overline{\eta}}}(t,k) \sum_{\ell,\iota}   q^\eta_+(\ell,k,-\ell)   \widehat{Z^\eta}(t,\iota\ell)^{\iota}  \widehat{Z^\eta}(t,\iota\ell)^{-\iota} 
    \\&\quad -\frac{i}{L^d} \widehat{Z^\eta}(t,k) \sum_{\ell,\iota}   q^\eta_{-\iota}(\ell,-\ell,k)   \widehat{Z^\eta}(t,\iota\ell)^{\iota} \widehat{Z^{\overline{\eta}}}(t,\iota\ell)^{-\iota}. 
\end{align*}
The above right hand side can be written in a more compact form. We first expand the sum over $\iota$, make a change of variable $\ell'=-\ell$ in the sums where $\iota=-$, and then regroup terms using the symmetry of $q$ in its first and third variables to get:
\begin{align*}
    \dr_t\widehat{Z^\eta}(t,k) & =    -\frac{2i}{L^d} \widehat{Z^\eta}(t,k) \sum_{\ell}  q^\eta_+(k,\ell,-\ell)     \widehat{Z^{\overline{\eta}}}(t,\ell) \overline{\widehat{Z^\eta}(t,\ell)} 
    \\
     & - \frac{2i}{L^d} \widehat{Z^{\overline{\eta}}}(t,k) \sum_{\ell}   q^\eta_+(\ell,k,-\ell)  \left|\widehat{Z^{\eta}}(t,\ell)\right|^2 
    \\& -\frac{2i}{L^d} \widehat{Z^\eta}(t,k) \sum_{\ell}  q^\eta_-(k,-\ell,\ell)  \overline{\widehat{Z^{\overline{\eta}}}(t,\ell)} \widehat{Z^\eta}(t,\ell) .
\end{align*}
We now compute the equations satisfied by the correlations:
\begin{align*}
    \dr_t\rho^0 (t,k) & =  2 \mathrm{Re}\pth{ \E\pth{\overline{\widehat{Z^0}(t,k)} \dr_t\widehat{Z^0}(t,k)}}
    \\ & =  \frac{4}{L^d}   \rho^0 (t,k) \sum_{\ell}\mathrm{Im}\pth{ q^0_+(k,\ell,-\ell)\overline{\rho^\times(\ell)} + q^0_-(k,-\ell,\ell) \rho^\times(\ell) } 
    \\&\quad + \frac4{L^d}\mathrm{Im}\pth{ \overline{\rho^\times (t,k)} \sum_\ell q_+^0 (\ell, k ,-\ell) \rho^0 (t,l)}.
\end{align*}
Recalling \eqref{kernel N}, we recognize the kernel $N^0(k,\ell)$ in the following expression
\begin{align*}
    \mathrm{Im}\Big( q^0_+(k,\ell,-\ell)\overline{\rho^\times(\ell)} + q^0_-(k,-\ell,\ell) \rho^\times(\ell) \Big)  & =  \mathrm{Im}\pth{N^0(k,\ell) \overline{\rho^\times(\ell)}},
\end{align*}
which leads to
\begin{align} \label{eq intro rho0}
    \dr_t \rho^0 (t,k) & =  \frac{4}{L^d}   \rho^0 (t,k) \sum_{\ell}\mathrm{Im}\pth{ N^0(k,\ell)\overline{\rho^\times(t,\ell)}  }  + \frac4{L^d}\mathrm{Im}\pth{ \overline{\rho^\times (t,k)} \sum_\ell q_+^0 (\ell, k ,-\ell) \rho^0 (t,\ell)}.
\end{align}
Exchanging the roles of $0$ and $1$, we obtain similarly the equation satisfied by $\rho^1$
\begin{align}\label{eq intro rho1}
    \dr_t \rho^1 (t,k) & =  \frac{4}{L^d}   \rho^1 (t,k) \sum_{\ell}\mathrm{Im}\pth{ N^1(k,\ell)\rho^\times(t,\ell)  }  + \frac4{L^d}\mathrm{Im}\pth{ \rho^\times (t,k)\sum_\ell q_+^1 (\ell, k ,-\ell) \rho^1 (t,\ell)}.
\end{align}
As for the equation satisfied by $\rho^\times$, we obtain
\begin{equation}\label{eq intro rhotimes}
    \begin{aligned}
    \dr_t\rho^\times (t,k) & =  \E\pth{ \widehat{Z^0}(t,k) \overline{\dr_t\widehat{Z^1}(t,k)}  } + \E\pth{ \overline{\widehat{Z^1}(t,k)} \dr_t \widehat{Z^0}(t,k)   }
    \\ & = \frac{2i}{L^d} \rho^\times (t,k) \sum_{\ell}\pth{ P_+(k,\ell) \overline{\rho^\times (t,\ell)} + P_-(k,\ell) \rho^\times (t,\ell)}
    \\&\quad +\frac{2i}{L^d} \sum_\ell \pth{\rho^0(t,k)\overline{q_+^1(\ell, k, -\ell)}\rho^1(t,\ell) - \rho^1(t,k) q_+^0(\ell, k, -\ell)\rho^0(t,\ell)}
    \end{aligned}
\end{equation}
where we recognized $P_+(k,\ell)$ and $P_-(k,\ell)$ as defined in \eqref{kernel P}. Collecting \eqref{eq intro rho0}, \eqref{eq intro rho1} and \eqref{eq intro rhotimes} we have formally derived a system for the correlations $(\rho^0,\rho^1,\rho^\times)$. Formally replacing the Riemann sums in these equations by integrals in the limit $L\to \infty$, we get
\begin{equation}\label{CS2}
\left\{
    \begin{aligned}
    \dr_t \rho^0 (t,\xi) & =  4   \rho^0 (t,\xi) \int_{\R^d}\mathrm{Im}\pth{ N^0(\xi,\zeta)\overline{\rho^\times(t,\zeta)}  }\d\zeta  + 4\mathrm{Im}\pth{ \overline{\rho^\times (t,\xi)} \int_{\R^d} q_+^0 (\zeta, \xi ,-\zeta) \rho^0 (t,\zeta)\d\zeta},
    \\ \dr_t \rho^1 (t,\xi) & = - 4   \rho^1 (t,\xi) \int_{\R^d}\mathrm{Im}\pth{ \overline{N^1(\xi,\zeta)}\overline{\rho^\times(\zeta)}  }\d\zeta  + 4\mathrm{Im}\pth{ \rho^\times (t,\xi)\int_{\R^d} q_+^1 (\zeta, \xi ,-\zeta) \rho^1 (t,\zeta)\d\zeta},
    \\ \dr_t\rho^\times (t,\xi) & =  2i \rho^\times (t,\xi) \int_{\R^d}\pth{ P_+(\xi,\zeta) \overline{\rho^\times (t,\zeta)} + P_-(\xi,\zeta) \rho^\times (t,\zeta)}\d\zeta
    \\&\quad +2i \int_{\R^d} \pth{\rho^0(t,\xi)\overline{q_+^1(\zeta, \xi, -\zeta)}\rho^1(t,\zeta) - \rho^1(t,\xi) q_+^0(\zeta, \xi, -\zeta)\rho^0(t,\zeta)}\d\zeta.
    \end{aligned}
\right.
\end{equation}
Compared to \eqref{cross system}, we see that the system \eqref{CS2} contains extra coupling terms all involving the quantity $q^\eta_+(\zeta,\xi,-\zeta)$. Thanks to \eqref{q double} and \eqref{q triple} we have
\begin{align*}
q^\eta_+(\zeta,\xi,-\zeta) = - \frac{Q^{\overline \eta}(\zeta,-\zeta) Q^\eta(0,\xi)}{4(2\pi)^{d} m\an{\xi}\an{\zeta}^2}.
\end{align*}
Therefore, our assumption \eqref{assumption Q} on the nonlinearities $Q^0$ and $Q^1$ appearing in the starting system \eqref{first system} reduces indeed \eqref{CS2} to \eqref{cross system}. As suggested after the statement of Theorem \ref{main theorem}, the interactions between $\rho^0$, $\rho^1$ and $\rho^\times$, though simplified by \eqref{assumption Q}, remain non-trivial.

%%%%%%%%%%%%%%%%%%%%%%%%%%%%%%%%%%%%%%%%%%%%%%%%%%%%%%%%%%%%%%%%%%%%%%%%%%%%%%%%%%%%%%

\section{Diagrammatic}\label{subsec:diag}

In this section we introduce our main tools to represent nonlinear interactions in the system \eqref{fixed point after normal form} and to build an approximate solution:
\begin{itemize}
    \item In Section \ref{section formal expansion} we define a formal solution $X^\eta=\sum_{n\geq 0}X^\eta_n$. In Section \ref{section representation Xn}, each $X^\eta_n$ is expressed as a sum over signed and coloured trees previously introduced in Sections \ref{section trees} and \ref{section signed coloured trees}.
    \item In Sections \ref{section decoration} and \ref{section fourier FA} we introduce and use the necessary materials to go into Fourier space.
    \item In Section \ref{section coupling between trees} we introduce couplings of signed and coloured trees, which will be used to estimate with high probability each $X^\eta_n$ as well as the correlations.
    \item Finally, in Section \ref{section bushes} we introduce bushes which will play a major combinatorics role in our proof.
\end{itemize}

\subsection{Formal series expansion}\label{section formal expansion}

In order to analyse the fixed point \eqref{fixed point after normal form} and in particular the cubic interactions we will distinguish between the frequency regimes, high or low, of the interactions. In the case of the low frequency interactions, we also distinguish cases according to the position of the low interaction. To this purpose we introduce four smooth cutoff functions in frequency space. 

\begin{definition}\label{def cutoff}
Given a fixed $\ga>0$ and a fixed smooth function $\chi:\R_+\longrightarrow[0,1]$ supported on $[0,2]$ and equal to $1$ on $[0,1]$, we define
\begin{align*}
\chi_{\lowl} (k_1,k_2,k_3) & \vcentcolon = \chi(\varepsilon^{-\gamma}|k_2+k_3|)\times(1-\chi)(\varepsilon^{-\gamma}|k_2+k_1|)\times(1-\chi)(\varepsilon^{-\gamma}|k_3+k_1|),\\
\chi_{\lowm} (k_1,k_2,k_3) & \vcentcolon = \chi(\varepsilon^{-\gamma}|k_3+k_1|)\times(1-\chi)(\varepsilon^{-\gamma}|k_3+k_2|)\times(1-\chi)(\varepsilon^{-\gamma}|k_1+k_2|),\\
\chi_{\lowr} (k_1,k_2,k_3) & \vcentcolon = \chi(\varepsilon^{-\gamma}|k_1+k_2|)\times(1-\chi)(\varepsilon^{-\gamma}|k_1+k_3|)\times(1-\chi)(\varepsilon^{-\gamma}|k_2+k_3|) , \\
\chi_\high & \vcentcolon = 1-\chi_{\lowl} - \chi_{\lowm} - \chi_{\lowr}.
\end{align*}
\end{definition}

Note that 
\begin{align*}
\chi_\high (k_1,k_2,k_3) = & (1-\chi)(\varepsilon^{-\gamma}|k_1+k_2|)\times(1-\chi)(\varepsilon^{-\gamma}|k_1+k_3|)\times(1-\chi)(\varepsilon^{-\gamma}|k_2+k_3|) \\ & +
\chi(\varepsilon^{-\gamma}|k_1+k_2|)\times\chi(\varepsilon^{-\gamma}|k_1+k_3|)\times\chi(\varepsilon^{-\gamma}|k_2+k_3|)\\ & +
\chi(\varepsilon^{-\gamma}|k_1+k_2|)\times\chi(\varepsilon^{-\gamma}|k_1+k_3|)\times(1-\chi)(\varepsilon^{-\gamma}|k_2+k_3|)\\ & +\chi(\varepsilon^{-\gamma}|k_2+k_3|)\times\chi(\varepsilon^{-\gamma}|k_2+k_1|)\times(1-\chi)(\varepsilon^{-\gamma}|k_3+k_1|)\\ & +\chi(\varepsilon^{-\gamma}|k_3+k_1|)\times\chi(\varepsilon^{-\gamma}|k_3+k_2|)\times(1-\chi)(\varepsilon^{-\gamma}|k_1+k_2|).
\end{align*}
The cutoffs $\chi_{\lowl}$, $\chi_{\lowm}$ and $\chi_{\lowr}$ allow only one of the combinations to be low frequency. The case of $\chi_\high$ is the negation of this, that is either all the combinations are in high frequencies or at least two of them are in low frequencies. The cutoff $\chi_\high$ thus selects either high or very low frequencies. 

\saut
We introduce the following time-dependent bilinear and trilinear operators acting on functions defined on the torus of size $L$:
\begin{align}
\widehat{B^\eta_{(\iota_1,\iota_2)}}(t,f,g) (k) & \vcentcolon =  \frac1{L^{\frac{d}{2}}} \sum_{k_1+k_2 = k}  \frac{q^\eta (k_1,k_2)}{\De_{k_1,k_2}^{(\iota_1,\iota_2)}} e^{it \Delta_{k_1,k_2}^{(\iota_1,\iota_2)}} \hat f( k_1) \hat g( k_2),\label{def B Fourier}
\\ \widehat{C^\eta_{*,(\iota_1,\iota_2,\iota_3)}}(t,f,g,h) (k) & \vcentcolon = \frac1{L^{d}} \sum_{k_1+k_2+k_3 = k} \chi_*(k_1,k_2,k_3) q_{\iota_2}^\eta (k_1,k_2,k_3)e^{it \Delta_{k_1,k_2,k_3}^{(\iota_1,\iota_2,\iota_3)}} \hat f( k_1) \hat g( k_2) \hat h(k_3), \label{def C Fourier}
\end{align}
where $*$ stands either for $\lowl$, $\lowm$, $\lowr$ or $\high$.

\begin{remark}
Throughout the article, a sum over $*$ always denote a sum over $\lowl$, $\lowm$, $\lowr$ or $\high$, i.e
\begin{align*}
\sum_{*}= \sum_{*\in \{\lowl,\lowm,\lowr,\high\}}.
\end{align*}
This also applies to other mathematical symbols such as $\bigsqcup$.
\end{remark}

We can now rewrite \eqref{fixed point after normal form} in a more schematic form:
\begin{align*}
X^\eta(t) & = X^\eta_\init +  \e \sum_{\substack{\iota_1,\iota_2\in\{\pm\}}} \pth{B^\eta_{(\iota_1,\iota_2)}\pth{t, X^{\bar{\eta},\iota_1}(t),X^{\bar{\eta},\iota_2}(t) } - B^\eta_{(\iota_1,\iota_2)}\pth{0, X^{\bar{\eta},\iota_1}_\init,X^{\bar{\eta},\iota_2}_\init }}
\\&\quad - i \e^2 \int_0^t \sum_*\sum_{\iota_1,\iota_2,\iota_3\in\{\pm\}}  C^\eta_{*,(\iota_1,\iota_2,\iota_3)}\pth{ \tau , X^{\eta,\iota_1}(\tau),X^{\bar{\eta},\iota_2}(\tau),X^{\eta,\iota_3}(\tau) } \d\tau,
\end{align*}
and define a formal solution as follows. 

\begin{definition}\label{def X_n}
The sequence $\pth{X^{\eta}_n}_{n\in\mathbb{N},\eta=0,1}$ is defined recursively by
\begin{align}
X_0^\eta(t) & \vcentcolon = X_\init^\eta,\label{def X0}
\\ X_1^\eta (t) & \vcentcolon = \varepsilon \sum_{\iota_1,\iota_2\in\{\pm\}}\pth{B^\eta_{(\iota_1,\iota_2)}\pth{t,X_0^{\overline\eta,\iota_1}(t),X_0^{\overline \eta, \iota_2}(t)} - B^\eta_{(\iota_1,\iota_2)}\pth{0,X_0^{\overline\eta,\iota_1}(t),X_0^{\overline \eta, \iota_2}(t)}},\label{def X1}
\\ X_{n+2}^\eta(t) & \vcentcolon = -i \varepsilon^2 \int_{0}^t \sum_{*}\sum_{\substack{n_1+n_2+n_3=n\\ \iota_1,\iota_2,\iota_3\in\{\pm\}}} C^\eta_{*, (\iota_1,\iota_2,\iota_3)} \pth{\tau,X_{n_1}^{\eta,\iota_1}(\tau), X_{n_2}^{\overline\eta, \iota_2 }(\tau),X_{n_3}^{\eta, \iota_3}(\tau)}\d\tau \label{def Xn+2}
\\&\quad + \varepsilon \sum_{\substack{n_1+n_2 = n+1\\ \iota_1,\iota_2\in\{\pm\}}} B_{(\iota_1,\iota_2)}^\eta \pth{t, X_{n_1}^{\bar{\eta},\iota_1}(t), X_{n_2}^{\bar{\eta},\iota_2}(t)},\non
\end{align}
for $n\geq 0$.
\end{definition}

\begin{remark} 
\begin{itemize}
    \item One can indeed check easily that $X^\eta(t) = \sum_{n\geq 0} X^\eta_n(t)$ formally solves \eqref{fixed point after normal form}.
    \item By induction and thanks to our assumption \eqref{assumption Q} on the nonlinearities $Q^\eta$, for any $n\geq 1$, $\eta\in \{0,1\}$ and $t\in \R$, we have $\widehat{X_n^\eta} (t,0) = 0$.
\end{itemize}
\end{remark}

\subsection{Trees}\label{section trees}

We define recursively (rooted) trees adapted to our problem. They have four types $T_{\lowl}$, $T_{\lowm}$, $T_{\lowr}$, and $T_\high$  of ternary nodes and two types $B_\st$ (standard), $B_\init$ (initial) of binary nodes. We write $\bot$ the trivial tree (reduced to one leaf) and we set
\begin{align}
\tilde{ \mathcal A}_0 & \vcentcolon = \{\bot \},\label{def Atilde0}
\\ \tilde{\mathcal A}_1 & \vcentcolon = \{B_\init (\bot,\bot),B_\st(\bot,\bot)\},\label{def Atilde1}
\end{align}
and 
\begin{equation}\label{def Atilden+2}
\begin{aligned}
\tilde{\mathcal A}_{n+2} & \vcentcolon = \bigsqcup_* \enstq{ T_*(A_1,A_2,A_3) }{(A_1,A_2,A_3) \in \tilde{ \mathcal A}_{n_1} \times \tilde{ \mathcal A}_{n_2} \times\tilde{ \mathcal A}_{n_3} , \; n_1+n_2+n_3 = n}
\\&\quad \sqcup \enstq{ B_\st(A_1,A_2) }{ (A_1,A_2) \in \tilde{ \mathcal A}_{n_1} \times \tilde{ \mathcal A}_{n_2}  , \; n_1+n_2 = n+1} ,
\end{aligned}
\end{equation}
for $n\geq 0$. The following standard definition introduces the different types of nodes and the leaves of a tree $A$.

\begin{definition}\label{def ensembles liés à un arbre}
Let $A\in\tilde{\mathcal{A}}_n$ be a tree. We define
\begin{align*}
\mathcal N_\st(A) & \vcentcolon = \left\{ \text{standard binary nodes of $A$} \right\},
\\ \mathcal N_\init(A) & \vcentcolon = \left\{ \text{initial binary nodes of $A$} \right\},
\\ \mathcal N_B(A) & \vcentcolon = \mathcal N_\init (A) \sqcup \mathcal N_\st (A).
\end{align*}
For $*\in\{ \lowl, \lowm,\lowr,\high \}$ we also define
\begin{align*}
\mathcal N_*(A) & \vcentcolon = \left\{ \text{ternary nodes of $A$ of type $*$} \right\},
\end{align*}
and
\begin{align*}
\mathcal N_\low(A) & \vcentcolon = \mathcal N_\lowl(A) \sqcup \mathcal N_\lowm(A) \sqcup \mathcal N_\lowr(A),
\\ \mathcal N_T(A) & \vcentcolon = \mathcal N_\high(A)\sqcup \mathcal N_\low(A).
\end{align*}
Moreover, we define
\begin{align*}
\mathcal{L}(A) & \vcentcolon = \left\{ \text{leaves of $A$} \right\}.
\end{align*}
We denote by  $n_\st(A)$, $n_\init(A)$, $n_B(A)$, $n_*(A)$, $n_\low(A)$, $n_T(A)$ and $n_\ell(A)$ their respective cardinals. Finally, we define
\begin{align*}
    \mathcal N(A) & \vcentcolon =  \mathcal N_B(A) \sqcup \mathcal N_T(A),
\\ \mathcal{R}(A) & \vcentcolon = \left\{ r_A \right\},
\end{align*}
where $r_A$ denotes the root of $A.$
\end{definition}

\begin{remark} \label{remark tree cardinal}
By induction, if $A\in \tilde{\mathcal{A}}_n$ then we have $n_\ell(A) = n+1$ and $n = n_B(A) +  2n_T(A)$. Therefore there exists a constant $\Lambda>0$ such that for all $n\in \N$ we have
\begin{align*}
\#\tilde{\mathcal A}_n\leq \La^n.
\end{align*}
Indeed, a tree in $\tilde{\mathcal A}_n$ is uniquely determined by a sequence of symbols with $n_B$ symbols of binary nodes and $n_T$ symbols of ternary nodes and $n+1$ symbols of leaves. Since there are two types of binary nodes and four types of ternary nodes, we deduce that the cardinal of  $\tilde{\mathcal{A}}_n$ is bounded by  
\begin{align*}
\sum_{n_B+2n_T=n} 2^{n_B}4^{n_T}\begin{pmatrix} n_B+n_T+n+1\\ n_B\end{pmatrix} \begin{pmatrix} n_T+n+1\\ n_T\end{pmatrix}\leq \sum_{n_B+2n_T=n} 2^n 2^{n_B+2n_T + 2n+2} \leq 4 (n+1)16^n. 
\end{align*}
We deduce that the cardinal is bounded by $4 (32)^n$. For $n\geq 1$, this is less that $(128)^n$. For $n=0$, the cardinal of $A\in \tilde{\mathcal{A}}_0$ is $1$ such that we get the bound $\Lambda \geq 128$.
\end{remark}

\begin{definition}
Let $A\in\tilde{A}_n$ be a tree.
\begin{itemize}
\item If $j\in\mathcal{N}(A)$, we denote by
\begin{align*}
\mathtt{children}(j)
\end{align*}
the set of its children. More precisely, if $j = B_*(A_1, A_2)$ for $* = \st, \init$ then $\mathtt{children}(j) =\{r_{A_1}, r_{A_2}\}$; if $j=T_*(A_1, A_2, A_3)$ for $*=\lowl, \lowm,\lowr, \high$ then $\children(j) = \{r_{A_1}, r_{A_2}, r_{A_3}\}$.
\item If $j\in(\mathcal{N}(A)\sqcup\mathcal{L}(A))\setminus \mathcal{R}(A)$, we denote by
\begin{align*}
\mathtt{parent}(j)
\end{align*}
the parent of $j$ in $A$, and if $\mathcal{S}\subset (\mathcal{N}(A)\sqcup\mathcal{L}(A))\setminus \mathcal{R}(A)$ we use the notation
\begin{align*}
\mathtt{parent}(\mathcal{S})=\enstq{\mathtt{parent}(j)}{j\in\mathcal{S}}.
\end{align*}
\item If $j\in(\mathcal{N}(A)\sqcup\mathcal{L}(A))\setminus \mathcal{R}(A)$, we define
\begin{align*}
\mathtt{siblings}(j)=\mathtt{children}\pth{\mathtt{parent}(j)}.
\end{align*}
Note that $j\in \siblings(j)$.
\end{itemize} 
\end{definition}

\begin{remark}\label{remark parent}
We note that if $\mathcal{S}\subset (\mathcal{N}(A)\sqcup\mathcal{L}(A))\setminus \mathcal{R}(A)$ then
\begin{align*}
\#\mathcal{S} & = \# \mathtt{p}(\mathcal{S})_1 + 2 \#\mathtt{p}(\mathcal{S})_2  + 3 \#\mathtt{p}(\mathcal{S})_3,
\\ \#\mathtt{parent}(\mathcal{S}) & =  \# \mathtt{p}(\mathcal{S})_1 +  \#\mathtt{p}(\mathcal{S})_2  +  \#\mathtt{p}(\mathcal{S})_3,
\end{align*}
where $\mathtt{p}(\mathcal{S})_i=\enstq{ j \in \mathcal{N}(A)}{\#\pth{ \mathtt{children}(j)\cap\mathcal{S}}=i}$.
\end{remark}

We introduce the standard parentality partial order relation $<$ on the nodes and leaves of a tree.

\begin{definition}
Let $A\in\tilde{\mathcal{A}}_n$ be a tree. 
\begin{itemize}
\item For $j,j'\in\mathcal{N}(A)$, we say that $j<j'$ if there exists $n\in \N^*$ and $j_1,\dots, j_{n+1}\in \mathcal{N}(A)$ with $j_1=j$, $j_{n+1}=j'$ and such that
\begin{align*}
\forall i\in\llbracket 1,n \rrbracket, \; j_i\in \mathtt{children}{(j_{i+1})}.
\end{align*}
\item Moreover, if $\ell\in\mathcal{L}(A)$ and $j\in\mathcal{N}(A)$ we say that $\ell<j$ if there exists $j'\in\mathcal{N}(A)$ such that $\ell\in\mathtt{children}(j')$ and $j'\leq j$.
\end{itemize}
\end{definition}

Note that if $j,j'\in\mathcal{N}(A)\sqcup\mathcal{L}(A)$ are such that $j< j'$, then the sequence $j_2,\dots,j_{n}\in\mathcal{N}(A)$ linking $j$ to $j'$ (i.e such that $j\in\mathtt{children}(j_2)$, $j_i\in\mathtt{children}(j_{i+1})$ and $j_n\in\mathtt{children}(j')$) is unique. For this reason, for a given $j_0\in \mathcal N(A) \sqcup \mathcal L(A)$, the set $\{j\in \mathcal{N}(A) : j_0\le j\}$ -- which corresponds to the path from $j_0$ to the root -- is totally ordered. 

\subsection{Signed and coloured trees}\label{section signed coloured trees}

To incorporate the fact that we work with a system of equations, with complex conjugates in the nonlinearities, we introduce the notion of a signed and coloured tree. Recall that if $\eta\in\{0,1\}$, then $\overline{\eta}=1-\eta$.

\begin{definition}\label{def sign map colour map}
Let $A\in\tilde{\mathcal{A}}_n$ be a tree. 
\begin{itemize}
\item A \textbf{sign map} on $A$ is a map 
\begin{align*}
\ffi: \mathcal{L}(A)\sqcup \mathcal{N}(A) \longrightarrow \{\pm\}.
\end{align*}
If $j$ is a leaf or a branching node of $A$, then $\ffi(j)$ is the sign of $j$.
\item A \textbf{colour map} on $A$ is a map
\begin{align*}
c: \mathcal{L}(A)\sqcup \mathcal{N}(A) \longrightarrow \{0,1\},
\end{align*}
such that
\begin{itemize}
\item[(i)] if $j$ is a binary node, then $c(j')=\overline{c(j)}$ where $j'$ is any child of $j$,
\item[(ii)] if $j$ is a ternary node, then $c(\ell)=c(r)=c(j)$ and $c(m)=\overline{c(j)}$ where $\ell$, $m$ and $r$ denote respectively the left, middle and right child of $j$.
\end{itemize}
If $j\in\mathcal{L}(A)\sqcup \mathcal{N}(A)$, then $c(j)$ is the colour of $j$.
\item If $\ffi$ and $c$ are a sign map and a colour map on $A$, then $(A,\ffi,c)$ is called a \textbf{signed and coloured tree}.
\item For $\iota\in\{\pm\}$ and $\eta\in\{0,1\}$ we set
\begin{align*}
\mathcal A^{\eta,\iota}_n & \vcentcolon =\enstq{ (A, \ffi,c) \text{ signed and coloured tree}} {A\in \tilde{\mathcal{A}}_n, \; \ffi(r_A)=\iota, \; c(r_A)=\eta }  .
\end{align*}
\end{itemize}
\end{definition}

\begin{remark}\label{rm:cardinal}
We note the following simple facts:
\begin{itemize}
\item A colour map $c$ on a tree $A$ is uniquely determined by its value $c(r_A)$ at the root of $A$, opposite to a sign map $\ffi$ on $A$ which does not satisfy any recursive property (in particular there is no relation between the sign of a node and that of its children).
\item There exists some constant $\Lambda>0$ such that for all $n\in \N$, the cardinal of $\mathcal A_n^{\eta, \iota}$ is bounded by $\Lambda^n$. Indeed, the sign map is determined on the root, but there remains $n_B+n_T+n$ signs to attribute. We deduce that $\#\mathcal A_n^{\eta, \iota}\leq 4^n \# \tilde{\mathcal A}_n$ from which we deduce the bound.
\end{itemize}
\end{remark}

\begin{example}\label{premier exemple} Below we give a graphical representation of the signed and coloured tree 
\begin{align*}
(T_\high(B_\init(\bot,\bot),B_\st(\bot,\bot),B_\st(\bot,\bot)),\ffi, c_0)
\end{align*}
where $c_0$ is the colour map obtained by attributing the colour $0$ to the root. We have chosen the colour red to encode $\eta=0$, the colour yellow to encode $\eta=1$. The choice of the signs of the nodes and leaves is completely arbitrary.
\begin{center}
\begin{tikzpicture}[level 1/.style={sibling distance=2cm},
                    level 2/.style={sibling distance=1cm}]
\node[label=above:{$+$}]{\textcolor{red}{$\textup{high}$}}
child { node[label=above:{$+$}]{\textcolor{red}{ \textup{in}}}
    child{node[label=below:{$-$}]{\textcolor{Dandelion}{\textleaf}}}
    child{node[label=below:{$-$}]{\textcolor{Dandelion}{\textleaf}}}
}
child { node[label=right: {$+$}]{\textcolor{Dandelion}{$\textup{st}$}}
    child{node[label=below:{$+$}]{\textcolor{red}{\textleaf}}}
    child{node[label=below:{$+$}]{\textcolor{red}{\textleaf}}}
}
child { node[label=above: {$+$}]{\textcolor{red}{$\textup{st}$}}
    child{node[label=below:{$-$}]{\textcolor{Dandelion}{\textleaf}}}
    child{node[label=below:{$+$}]{\textcolor{Dandelion}{\textleaf}}}
}
;
\end{tikzpicture}
\end{center}
\end{example}

\subsection{\texorpdfstring{Representation of $X_n$ in terms of trees}{Representation of Xn in terms of trees}} \label{section representation Xn}

The introduction of all this diagrammatic allows us to represent each element of the sequence $\pth{ X\eta_n}_{n\in\mathbb{N}}$ defined in \ref{def X_n} as the sum over trees representing nonlinear interactions. We first define recursively the contribution of a given coloured and signed tree.

\begin{definition}\label{def FA}
If $(A,\ffi,c)$ is a signed and coloured tree, we define $F_{(A,\ffi,c)}(t)$ recursively:
\begin{itemize}
\item We set 
\begin{align}\label{def Fbot}
F_{(\bot,\iota,\eta)}(t) & \vcentcolon = X^{\eta,\iota}_\init ,
\end{align}
where the sign and colour maps are identified with their single values $\iota\in\{\pm\}$ and $\eta\in\{0,1\}$, and
\begin{align}\label{def FBinbot}
F_{(B_\init(\bot,\bot),\ffi,c)} (t) & \vcentcolon = - \varepsilon   B^{\eta, \iota_r}_{\iota_r(\iota_1,\iota_2)} \pth{0,X^{\bar{\eta},\iota_r\iota_1}_\init,X^{\bar{\eta},\iota_r\iota_2}_\init } ,
\end{align}
where $\iota_r$, $\iota_1$, $\iota_2$ are the signs of the root, the left and right child, and $\eta$ is the colour of the root.
\item If $A = B_\st(A_1,A_2)$, we set
\begin{align}\label{def FBst}
F_{\pth{A,\ffi, c}} (t) & \vcentcolon = \varepsilon  B_{\iota_r(\iota_1,\iota_2)}^{\eta,\iota_r}\pth{t, F_{\pth{A_1,\iota_r \ffi_{|_{A_1}},c_{|_{A_1}}}} (t), F_{\pth{A_2,\iota_r \ffi_{|_{A_2}},c_{|_{A_2}}}}(t)},
\end{align}
where $\iota_r$ and $\iota_1$, $\iota_2$ are respectively the signs of the root of $A$ and of the roots of $A_1$, $A_2$ in $A$, and $\eta$ is the colour of the root of $A$.
\item If $A = T_*(A_1,A_2,A_3)$, we set 
\begin{align}
&F_{\pth{A,\ffi, c}} (t) \non
\\& \vcentcolon = -i \iota_r \varepsilon^2 \int_0^t C_{*,\iota_r(\iota_1,\iota_2,\iota_3)}^{\eta,\iota_r}\pth{\tau, F_{\pth{A_1,\iota_r \ffi_{|_{A_1}},c_{|_{A_1}}}} (\tau), F_{\pth{A_2,\iota_r \ffi_{|_{A_2}},c_{|_{A_2}}} }(\tau), F_{\pth{A_3,\iota_r \ffi_{|_{A_3}}, c_{|_{A_3}}}} (\tau)}\d\tau ,\label{def FT}
\end{align}
where $\iota_r$ and $\iota_1$, $\iota_2$, $\iota_3$ are the signs of the root of $A$ and the roots of $A_1$, $A_2$, $A_3$ in $A$, and $\eta$ is the colour of the root of $A$.
\end{itemize}
In \eqref{def FBst} and \eqref{def FT}, we used as a shortcut the notations $c_{|_{A_i}}$ (resp. $\varphi_{|_{A_i}}$) for $i=1,2,3$ to denote $c_{|_{\mathcal N(A_i)\sqcup \mathcal L(A_i)}}$ (resp. $\varphi_{|_{\mathcal N(A_i)\sqcup \mathcal L(A_i)}}$).
\end{definition}

The following lemma will be useful in the proof of Proposition \ref{prop:FFourier} and it simply follows from the dependence of $F_{(A,\ffi,c)}$ on $\ffi$.

\begin{lemma}
If $(A,\ffi,c)$ is a signed and coloured tree and if $\iota\in\{\pm\}$, then
\begin{align}\label{F^iota}
F_{\pth{A,\ffi,c}}^\iota = F_{\pth{A,\iota\ffi,c}}.
\end{align}
\end{lemma}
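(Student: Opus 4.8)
The plan is to prove \eqref{F^iota} by structural induction on the tree $A$, mirroring the recursive structure of Definition~\ref{def FA}. The only property of the sign map actually used is how the base cases $F_{(\bot,\iota,\eta)}$ and $F_{(B_\init(\bot,\bot),\ffi,c)}$ depend on signs, together with the way the operators $B^{\eta,\iota_r}$ and $C^{\eta,\iota_r}_{*}$ in \eqref{def FBst}--\eqref{def FT} are built from the root sign $\iota_r$ and the relative signs $\iota_r\iota_i$ of the children.

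First I would treat the trivial tree: by \eqref{def Fbot}, $F_{(\bot,\iota,\eta)} = X^{\eta,\iota}_\init$, and since $\bigl(X^{\eta,\iota}_\init\bigr)^{\iota'} = X^{\eta,\iota'\iota}_\init$ by the Fourier rule $\widehat{u^{\iota'}}(k) = \widehat{u}(\iota' k)^{\iota'}$ recalled above \eqref{fixed point}, this equals $F_{(\bot,\iota'\iota,\eta)}$, which is exactly \eqref{F^iota} in this case. For the base case $A = B_\init(\bot,\bot)$, I would note that in \eqref{def FBinbot} the right-hand side depends on the sign map only through the root sign $\iota_r$, in the combinations $\iota_r(\iota_1,\iota_2)$ and $\iota_r\iota_1$, $\iota_r\iota_2$; replacing $\ffi$ by $\iota\ffi$ changes $\iota_r$ to $\iota\iota_r$ while leaving $\iota_1,\iota_2$ unchanged, so all the combinations $\iota_r\iota_1$, $\iota_r\iota_2$ are unchanged, and the only effect is $B^{\eta,\iota_r}_{\iota_r(\cdots)} \mapsto B^{\eta,\iota\iota_r}_{\iota\iota_r(\cdots)}$; taking the $\iota$-operation on $F$ produces exactly this, giving \eqref{F^iota}. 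Observe this is the crucial bookkeeping point: the relative signs seen by the children are invariant under global sign flip, and only the root's own sign (hence whether a bar is applied at the root) changes.

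For the inductive step with $A = B_\st(A_1,A_2)$, I would apply the definition \eqref{def FBst}: $F_{(A,\iota\ffi,c)}$ is, by the same observation, $\varepsilon B^{\eta,\iota\iota_r}_{\iota\iota_r(\iota_1,\iota_2)}(t, F_{(A_1,\iota\iota_r\ffi_{|A_1},c_{|A_1})}, F_{(A_2,\iota\iota_r\ffi_{|A_2},c_{|A_2})})$, since the children's subtree signs in $A$ are $\iota_1,\iota_2$ regardless of the global flip. Now $\iota_r\mapsto \iota\iota_r$ simultaneously flips the operator's $\iota$-type and, through $\iota_r\ffi_{|A_i}\mapsto \iota\iota_r\ffi_{|A_i}$, the signs of the two subtree contributions. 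But by the induction hypothesis, $F_{(A_i,\iota\iota_r\ffi_{|A_i},c_{|A_i})} = F_{(A_i,\iota_r\ffi_{|A_i},c_{|A_i})}^{\iota}$, so the whole expression equals $\varepsilon B^{\eta,\iota\iota_r}_{\iota\iota_r(\iota_1,\iota_2)}(t, F^\iota, F^\iota)$, and one checks directly from the Fourier definition \eqref{def B Fourier} that $B^{\eta,\iota\iota_r}_{\iota\iota_r(\iota_1,\iota_2)}(t,f^\iota,g^\iota) = \bigl(B^{\eta,\iota_r}_{\iota_r(\iota_1,\iota_2)}(t,f,g)\bigr)^\iota$ — this is the computational heart of the step and the place where one must be careful with the conjugation/reflection symmetry $Q^\eta(-\xi_1,-\xi_2) = \overline{Q^\eta(\xi_1,\xi_2)}$ of the coefficients and the antisymmetry $\De^{(-\iota_1,-\iota_2)}_{-k_1,-k_2} = -\De^{(\iota_1,\iota_2)}_{k_1,k_2}$ of the phases. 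Hence $F_{(A,\iota\ffi,c)} = F_{(A,\ffi,c)}^\iota$. The ternary case $A = T_*(A_1,A_2,A_3)$ is entirely analogous using \eqref{def FT} and \eqref{def C Fourier}, with the extra prefactor $-i\iota_r\mapsto -i\iota\iota_r$ absorbed correctly because taking $(\cdot)^\iota$ of $-i\iota_r(\cdots)$ conjugates the $-i$ to $+i$ exactly when $\iota=-$, matching $-i\iota\iota_r$.

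The main obstacle — really the only non-bookkeeping point — is verifying the operator identities $\bigl(B^{\eta,\iota_r}_{\iota_r(\iota_1,\iota_2)}(t,f,g)\bigr)^\iota = B^{\eta,\iota\iota_r}_{\iota\iota_r(\iota_1,\iota_2)}(t,f^\iota,g^\iota)$ and its ternary analogue from the Fourier-space definitions \eqref{def B Fourier}--\eqref{def C Fourier}: one writes out $\bigl(B(\cdots)\bigr)^\iota(k) = \widehat{B(\cdots)}(\iota k)^\iota$, changes summation variables $k_j\mapsto \iota k_j$, and uses the parity/conjugation properties of $q^\eta$, $q^\eta_\iota$, $\De^{(\cdots)}$ and $\chi_*$ (note $\chi_*$ is even in all arguments, so it is unaffected). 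Once these are in place, the induction closes immediately. I would state these operator identities as a short preliminary computation (or absorb them into the body of the proof) and then present the induction in the three cases above.
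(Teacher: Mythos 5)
Your base case for $\bot$ is fine, but the inductive step rests on a mis-application of Definition~\ref{def FA} to the flipped sign map, and the Fourier-level operator identity you then invoke is false. When you replace $\ffi$ by $\iota\ffi$, \emph{every} sign is multiplied by $\iota$: the root sign becomes $\iota\iota_r$ \emph{and} the children's signs become $\iota\iota_1,\iota\iota_2$ — they are not invariant, contrary to what you assert (your $B_\init$ discussion is already internally inconsistent: if $\iota_1,\iota_2$ were unchanged and $\iota_r\mapsto\iota\iota_r$, the products $\iota_r\iota_i$ would change). Reading \eqref{def FBst} with the sign map $\iota\ffi$, the subscript is $(\iota\iota_r)(\iota\iota_1,\iota\iota_2)=\iota_r(\iota_1,\iota_2)$ and the arguments are $F_{(A_i,(\iota\iota_r)(\iota\ffi)_{|_{A_i}},c_{|_{A_i}})}=F_{(A_i,\iota_r\ffi_{|_{A_i}},c_{|_{A_i}})}$: both are \emph{unchanged}, and the only effect of the flip is $\iota_r\mapsto\iota\iota_r$ in the superscript, i.e.\ in the outermost conjugation. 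Your expansion instead attaches an extra $\iota$ to the subscript and to the arguments' sign maps, and the identity $\bigl(B^{\eta,\iota_r}_{\iota_r(\iota_1,\iota_2)}(t,f,g)\bigr)^\iota = B^{\eta,\iota\iota_r}_{\iota\iota_r(\iota_1,\iota_2)}(t,f^\iota,g^\iota)$ needed to compensate reduces, after unwinding the superscripts, to $B^{\eta}_{(\nu_1,\nu_2)}(t,f,g)=B^{\eta}_{(-\nu_1,-\nu_2)}(t,f^-,g^-)$, which fails since $\De^{(\nu_1,\nu_2)}_{k_1,k_2}\neq\De^{(-\nu_1,-\nu_2)}_{k_1,k_2}$. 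Relatedly, the ``antisymmetry'' $\De^{(-\iota_1,-\iota_2)}_{-k_1,-k_2}=-\De^{(\iota_1,\iota_2)}_{k_1,k_2}$ you invoke is false: by \eqref{De double} the term $\an{k_1+k_2}$ always carries a $+$ sign, so the correct relation is the evenness $\De^{(\iota_1,\iota_2)}_{-k_1,-k_2}=\De^{(\iota_1,\iota_2)}_{k_1,k_2}$ recorded in \eqref{De(iota)}, with no flip of the $\iota$'s.

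Once the expansion is done correctly, no Fourier-space computation and no symmetry of $Q^\eta$, $\De$ or $\chi_*$ is needed: $F_{(A,\ffi,c)}$ is of the form $Q^{\ffi(r_A)}\pth{A,\ffi(r_A)\ffi,c}$ (with an extra prefactor $\ffi(r_A)i$ in the ternary case), where $Q$ depends on the sign map only through the normalized map $\ffi(r_A)\ffi$. Applying $(\cdot)^\iota$ flips the exponent to $\iota\ffi(r_A)=(\iota\ffi)(r_A)$, while $\ffi(r_A)\ffi=(\iota\ffi)(r_A)\cdot(\iota\ffi)$ is invariant, and $(iz)^\iota=\iota i z^\iota$ handles the ternary prefactor; this is exactly $F_{(A,\iota\ffi,c)}$. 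That is the paper's argument — direct, not even inductive. If you prefer to keep an induction, correct the expansion as above; the step then closes with the trivial identity $\pth{w^{\iota_r}}^\iota=w^{\iota\iota_r}$ applied with identical subscripts and identical arguments, and the preliminary operator lemma you propose becomes unnecessary.
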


\begin{proof}
We prove \eqref{F^iota} directly for each tree $A$ using definitions \eqref{def Fbot}-\eqref{def FBst}. If $A=\bot$ and if we denote by $\iota'$ the single value of its sign map (and $\eta$ the single value of its colour map) we use \eqref{def Fbot} and get
\begin{align*}
F_{\pth{\bot,\iota',c}}^\iota & = \pth{X^{\eta,\iota'}_\init  }^\iota = X^{\eta,\iota\iota'}_\init = F_{\pth{\bot,\iota\iota',c}}.
\end{align*}
If $A=B_\st(A_1,A_2)$ or $A=B_\init(\bot,\bot)$, \eqref{def FBinbot} and \eqref{def FBst} imply that $F_{(A,\ffi,c)}$ is of the form
\begin{align*}
F_{(A,\ffi,c)}=Q^{\ffi(r_A)}(A,\ffi(r_A)\ffi,c),
\end{align*}
for some quantity $Q$. Therefore, using $\iota^2=1$ we obtain
\begin{align*}
F^\iota_{(A,\ffi,c)} = Q^{\iota\ffi(r_A)}(A,\ffi(r_A)\ffi,c) = Q^{\iota\ffi(r_A)}(A,\iota\ffi(r_A)\iota\ffi,c) = F_{(A,\iota\ffi,c)}.
\end{align*}
Similarly, if $A=T_*(A_1,A_2,A_3)$, \eqref{def FT} implies that $F_{(A,\ffi,\eta)}$ is of the form
\begin{align*}
F_{(A,\ffi,c)}=\ffi(r_A)iQ^{\ffi(r_A)}(A,\ffi(r_A)\ffi,c),
\end{align*}
for some quantity $Q$. Therefore, using again $\iota^2=1$ and $\pth{iz}^\iota=\iota i z^\iota$ we obtain
\begin{align*}
F^\iota_{(A,\ffi,c)} = \iota\ffi(r_A)iQ^{\iota\ffi(r_A)}(A,\ffi(r_A)\ffi,c) = \iota\ffi(r_A)iQ^{\iota\ffi(r_A)}(A,\iota\ffi(r_A)\iota\ffi,c) =  F_{(A,\iota\ffi,c)},
\end{align*}
which concludes the proof of the lemma.
\end{proof}

The following key proposition gives a representation of each element of the sequence $(X^\eta_n)_{n\in\mathbb{N}}$ from Definition \ref{def X_n} as a sum over signed and coloured trees.

\begin{prop}\label{prop:XtoF} 
For all $\eta \in \{0,1\}$, $\iota \in \{\pm\}$ and $n\in \N$, we have
\begin{align}
X^{\eta,\iota}_n = \sum_{(A,\ffi,c) \in \mathcal A_n^{\eta,\iota}} F_{(A,\ffi,c)} .  \label{XtoF}
\end{align}
\end{prop}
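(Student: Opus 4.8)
The statement is a clean compatibility between the recursive \emph{analytic} Definition~\ref{def X_n} of the sequence $(X^\eta_n)$ and the recursive \emph{combinatorial} Definition~\ref{def FA} of the tree contributions $F_{(A,\ffi,c)}$. The plan is to prove \eqref{XtoF} by strong induction on $n$, using in an essential way the characterization of $\mathcal A_n^{\eta,\iota}$ via the recursive description \eqref{def Atilden+2} of $\tilde{\mathcal A}_n$, together with the identity \eqref{F^iota} to move the root sign onto the subtrees. The base cases $n=0$ and $n=1$ are handled by inspection.

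\emph{Base cases.} For $n=0$, we have $\tilde{\mathcal A}_0=\{\bot\}$ so $\mathcal A_0^{\eta,\iota}=\{(\bot,\iota,\eta)\}$ (identifying the sign and colour maps with their single values), and $F_{(\bot,\iota,\eta)}=X^{\eta,\iota}_\init=X_0^{\eta,\iota}$ by \eqref{def Fbot} and \eqref{def X0} (recalling $X_0^\eta=X^\eta_\init$, hence $X_0^{\eta,\iota}=(X^\eta_\init)^\iota=X^{\eta,\iota}_\init$). For $n=1$, $\tilde{\mathcal A}_1=\{B_\init(\bot,\bot),B_\st(\bot,\bot)\}$, the colour map is forced by its value $\eta$ at the root (Remark~\ref{rm:cardinal}), and the sign map is free at the two leaves and at the root; here the root sign is fixed to $\iota$. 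Summing \eqref{def FBinbot} over the two leaf signs $\iota_1,\iota_2\in\{\pm\}$ and using $X^{\bar\eta,\iota\iota_j}_\init$ ranges over both signs, one recovers exactly the two terms of \eqref{def X1} (the $-\e B^\eta_{(\cdot,\cdot)}(0,\cdot,\cdot)$ part coming from $B_\init$ and the $+\e B^\eta_{(\cdot,\cdot)}(t,\cdot,\cdot)$ part coming from $B_\st$), after relabelling $\iota_r\iota_j\mapsto\iota_j$. One must be slightly careful that in \eqref{def X1} the arguments of $B^\eta_{(\iota_1,\iota_2)}(0,\cdot,\cdot)$ are $X_0^{\bar\eta,\iota_j}(t)$ rather than $X_\init^{\bar\eta,\iota_j}$, but these coincide since $X_0^{\bar\eta}=X^{\bar\eta}_\init$.

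\emph{Inductive step.} Assume \eqref{XtoF} holds for all indices $\le n+1$; we prove it for $n+2$. By \eqref{def Atilden+2}, a tree $A\in\tilde{\mathcal A}_{n+2}$ is either $T_*(A_1,A_2,A_3)$ with $A_i\in\tilde{\mathcal A}_{n_i}$, $n_1+n_2+n_3=n$ and $*\in\{\lowl,\lowm,\lowr,\high\}$, or $B_\st(A_1,A_2)$ with $A_i\in\tilde{\mathcal A}_{n_i}$, $n_1+n_2=n+1$; note no $B_\init$ node occurs at the root for $n+2\ge 2$, consistently with \eqref{def Xn+2}. A signed coloured tree $(A,\ffi,c)\in\mathcal A_{n+2}^{\eta,\iota}$ thus decomposes uniquely as a choice of type, of the subtrees $A_i$ with their restricted colour maps (forced once the root colour $\eta$ is fixed, by conditions (i)--(ii) in Definition~\ref{def sign map colour map}), and of the signs: the root sign is $\iota$, and the signs $\iota_i$ of the roots of the $A_i$ together with the internal signs of each $A_i$ are free. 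Crucially, by condition (i) the children of a $B_\st$ root carry colour $\bar\eta$, and by condition (ii) the three children of a $T_*$ root carry colours $\eta,\bar\eta,\eta$; this matches exactly the colour pattern in \eqref{def Xn+2}. Now apply \eqref{def FBst} (resp.\ \eqref{def FT}), and use the key identity \eqref{F^iota} in the form $F_{(A_i,\iota_r\ffi_{|A_i},c_{|A_i})}=F_{(A_i,\ffi_{|A_i},c_{|A_i})}^{\iota_r}$ — here $\iota_r=\iota$ — to rewrite each subtree contribution as $F_{(A_i,\psi_i,c_{|A_i})}^{\iota}$ where $\psi_i=\iota\ffi_{|A_i}$ now ranges over \emph{all} sign maps on $A_i$ whose root sign is $\iota\iota_i$, independently for each $i$. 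Summing over all $(A,\ffi,c)\in\mathcal A_{n+2}^{\eta,\iota}$ therefore factorizes: the sum over the type $*$ and the partition $(n_1,n_2,n_3)$ (resp.\ $(n_1,n_2)$) matches the outer sums in \eqref{def Xn+2}; the sum over $\iota_i$ becomes the sum over $\iota_1,\iota_2,\iota_3$ (resp.\ $\iota_1,\iota_2$) after relabelling $\iota\iota_i\mapsto\iota_i$ (legitimate since $\iota^2=1$); and the remaining sum over the internal signs and the subtree shapes collapses, by the induction hypothesis applied to each $A_i$ at level $n_i\le n+1$, to $X^{\bar\eta,\iota_i}_{n_i}$ (resp.\ $X^{\eta,\iota_1}_{n_1},X^{\bar\eta,\iota_2}_{n_2},X^{\eta,\iota_3}_{n_3}$ in the ternary case). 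Matching the scalar prefactors ($\e$ for $B_\st$; $-i\iota\e^2$ with $\iota=\iota_r$ for $T_*$, and the integral $\int_0^t(\cdots)\d\tau$) with those in \eqref{def Xn+2} finishes the step. Combining the contributions of $T_*$-rooted and $B_\st$-rooted trees reproduces exactly the two lines of \eqref{def Xn+2}, proving \eqref{XtoF} at level $n+2$.

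\emph{Main obstacle.} The proof is essentially bookkeeping, and the only genuinely delicate point is the sign handling: one must check that ``pushing'' the root sign $\iota_r$ onto each subtree via \eqref{F^iota} produces, after the substitution $\iota_i\leftrightarrow\iota_r\iota_i$, a clean reparametrization of the set of signed trees — i.e.\ that the map $(\ffi_{|A_i})\mapsto(\iota_r\ffi_{|A_i})$ is an involution on sign maps, so that summing over $(A,\ffi,c)\in\mathcal A^{\eta,\iota}_{n+2}$ and then applying the induction hypothesis to each $A_i$ truly yields the $X$'s with the signs appearing in \eqref{def Xn+2}. A secondary point is to make sure the colour bookkeeping — that the subtree colours are forced to be $\bar\eta$ (binary) or $\eta,\bar\eta,\eta$ (ternary) — lines up with the colour superscripts in \eqref{def Xn+2}; this is immediate from Definition~\ref{def sign map colour map} but should be stated explicitly. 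Everything else (prefactors, the time-integral in the ternary case, the sum over $*$) is a direct comparison of \eqref{def FBst}--\eqref{def FT} with \eqref{def Xn+2}.
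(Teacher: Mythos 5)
Your proof is correct and follows essentially the same route as the paper's: strong induction on $n$ with the base cases $n=0,1$ checked by inspection, and an inductive step that matches the recursive decomposition \eqref{def Atilden+2} of trees against \eqref{def Xn+2}, using the sign relabelling $\iota_i\leftrightarrow\iota_r\iota_i$ and the forced colour pattern at the root. The only cosmetic difference is that you run the computation from the tree sum down to $X^{\eta,\iota}_{n+2}$ and invoke \eqref{F^iota} to push the root sign onto the subtrees, whereas the paper expands $X^{\eta,\iota}_{n+2}$ upward and absorbs the sign change directly into the reparametrization of sign maps; these are the same argument.
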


\begin{proof} 
We first introduce the following notation: if $A\in\tilde{\mathcal{A}}_n$, $\iota\in\{\pm\}$ and $\eta\in\{0,1\}$ we denote by $\mathrm{SM}(A,\iota)$ the set of sign maps defined on $A$ such that the sign of the root of $A$ is $\iota$. If $Q$ is now any quantity depending on a tree $A$, a sign map $\ffi$ and a colour map $c$, we have
\begin{align}\label{fact SM 2}
\sum_{(A,\ffi,c)\in\mathcal{A}_n^{\eta,\iota}} Q(A,\ffi,c) = \sum_{A\in\tilde{\mathcal{A}}_n}\sum_{\ffi\in\mathrm{SM}(A,\iota)} Q(A,\ffi,c_\eta),
\end{align}
where $c_\eta$ is the unique colour map on $A$ such that $c(r_A)=\eta$.

\saut
We now prove \eqref{XtoF} by induction over $n\in\mathbb{N}$. For $n=0$, \eqref{XtoF} is proved by using \eqref{def X0}, \eqref{def Atilde0} and \eqref{def Fbot}. For $n=1$, we start by expanding the LHS using \eqref{def X1} and \eqref{def X0}:
\begin{align*}
X^{\eta,\iota}_1(t) & = \varepsilon \sum_{\iota'_1,\iota'_2 \in \{\pm\}}\pth{B^{\eta,\iota}_{\pth{\iota_1',\iota_2'}}\pth{t,X_\init^{\overline\eta,\iota_1'},X_\init^{\overline \eta, \iota_2'}} - B^{\eta,\iota}_{\pth{\iota_1',\iota_2'}}\pth{0,X_\init^{\overline\eta,\iota_1'},X_\init^{\overline \eta, \iota_2'}}}
\end{align*}
We perform the change of variable $\iota_i=\iota \iota'_i$ for $i=1,2$ and use \eqref{def Fbot}, \eqref{def FBinbot} and \eqref{def FBst} to get
\begin{align*}
X^{\eta,\iota}_1(t) & = \varepsilon \sum_{\iota_1,\iota_2 \in \{\pm\}}\pth{B^{\eta,\iota}_{\iota\pth{\iota_1,\iota_2}}\pth{t,F_{(\bot,\iota\iota_1,\bar\eta)}(t),F_{(\bot,\iota\iota_2,\bar\eta)}(t)} - B^{\eta,\iota}_{\iota\pth{\iota_1,\iota_2}}\pth{0,X_\init^{\overline\eta,\iota\iota_1},X_\init^{\overline \eta,\iota\iota_2}}}
\\& =  \sum_{\ffi\in\mathrm{SM}(B_\st(\bot,\bot),\iota)}F_{\pth{B_\st(\bot,\bot),\ffi,c_\eta}}(t)+ \sum_{\ffi\in\mathrm{SM}(B_\init(\bot,\bot),\iota)}F_{\pth{B_\init(\bot,\bot),\ffi,c_\eta}}(t)
\\& = \sum_{(A,\ffi,c)\in\mathcal{A}^{\eta,\iota}_1}F_{(A,\ffi,c)}(t),
\end{align*}
where we used the definition \eqref{def Atilde1} of $\tilde{\mathcal{A}}_1$ and \eqref{fact SM 2}. This concludes the proof of \eqref{XtoF} in the case $n=1$. Now assume that \eqref{XtoF} holds for all $k\in\llbracket0,n+1\rrbracket$ for some $n\in\mathbb{N}$. Thanks to \eqref{def Xn+2} we have
\begin{align*}
X_{n+2}^{\eta,\iota}(t) & = - \iota i \varepsilon^2 \int_{0}^t \sum_{\substack{n_1+n_2+n_3=n\\ \iota'_1,\iota'_2,\iota'_3 \in \{\pm\}\\ * }} C^{\eta,\iota}_{*, (\iota'_1,\iota'_2,\iota'_3)} \pth{\tau,X_{n_1}^{\eta,\iota'_1}(\tau), X_{n_2}^{\overline\eta, \iota'_2 }(\tau),X_{n_3}^{\eta, \iota'_3}(\tau)}\d\tau 
\\&\quad + \varepsilon \sum_{\substack{n_1+n_2 = n+1\\ \iota'_1,\iota'_2 \in \{\pm\}}} B_{\pth{\iota'_1,\iota'_2}}^{\eta,\iota} \pth{t, X_{n_1}^{\bar{\eta},\iota'_1}(t), X_{n_2}^{\bar{\eta},\iota'_2}(t)}.
\end{align*}
Since in both sums each $n_i$ satisfies $n_i\leq n+1$ we can use our induction assumption and use \eqref{XtoF}, \eqref{fact SM 2} and the change of variable $\iota_i=\iota \iota'_i$ in both terms to obtain
\begin{align*}
X_{n+2}^{\eta,\iota}(t) & = - \hspace{-0.2cm}\sum_{\substack{n_1+n_2+n_3=n \\ A_1\in\tilde{\mathcal{A}}_{n_1} \\ A_2\in\tilde{\mathcal{A}}_{n_2}\\ A_3\in\tilde{\mathcal{A}}_{n_3} \\ * }}\iota i \varepsilon^2 \int_{0}^t \sum_{\substack{\iota_1,\iota_2,\iota_3 \in \{\pm\}\\ \ffi_1\in\mathrm{SM}(A_1,\iota\iota_1) \\ \ffi_2\in\mathrm{SM}(A_2,\iota\iota_2) \\ \ffi_3\in\mathrm{SM}(A_3,\iota\iota_3) }} C^{\eta,\iota}_{*, \iota(\iota_1,\iota_2,\iota_3)} \pth{\tau, F_{\pth{A_1,\ffi_1,c_\eta}}(t) , F_{\pth{A_2,\ffi_2,c_{\bar{\eta}}}}(t) , F_{\pth{A_3,\ffi_3,c_\eta}}(t) } \d\tau 
\\&\quad + \varepsilon \sum_{\substack{n_1+n_2 = n+1\\ A_1\in\tilde{\mathcal{A}}_{n_1} \\ A_2\in\tilde{\mathcal{A}}_{n_2} }} \sum_{\substack{\iota_1,\iota_2 \in \{\pm\}\\ \ffi_1\in\mathrm{SM}(A_1,\iota\iota_1) \\ \ffi_2\in\mathrm{SM}(A_2,\iota\iota_2)}} B_{\iota\pth{\iota_1,\iota_2}}^{\eta,\iota} \pth{t, F_{\pth{A_1,\ffi_1,c_{\bar{\eta}}}}(t) , F_{\pth{A_2,\ffi_2,c_{\bar{\eta}}}}(t) } .
\end{align*}
We now perform a change of variables in the sums over sign maps, by setting $\ffi'_i=\iota \ffi_i$ in both terms. We get
\begin{align*}
X_{n+2}^{\eta,\iota}(t) & = - \hspace{-0.3cm}\sum_{\substack{n_1+n_2+n_3=n \\ A_1\in\tilde{\mathcal{A}}_{n_1} \\ A_2\in\tilde{\mathcal{A}}_{n_2}\\ A_3\in\tilde{\mathcal{A}}_{n_3} \\ * }}\iota i \varepsilon^2 \int_{0}^t\hspace{-0.1cm} \sum_{\substack{\iota_1,\iota_2,\iota_3 \in \{\pm\}\\ \ffi'_1\in\mathrm{SM}(A_1,\iota_1) \\ \ffi'_2\in\mathrm{SM}(A_2,\iota_2) \\ \ffi'_3\in\mathrm{SM}(A_3,\iota_3) }} C^{\eta,\iota}_{*, \iota(\iota_1,\iota_2,\iota_3)} \pth{\tau, F_{\pth{A_1,\iota\ffi_1,c_\eta}}(t) , F_{\pth{A_2,\iota\ffi_2,c_{\bar{\eta}}}}(t) , F_{\pth{A_3,\iota\ffi_3,c_\eta}}(t) } \d\tau 
\\&\quad + \varepsilon \sum_{\substack{n_1+n_2 = n+1\\ A_1\in\tilde{\mathcal{A}}_{n_1} \\ A_2\in\tilde{\mathcal{A}}_{n_2} }} \sum_{\substack{\iota_1,\iota_2 \in \{\pm\}\\ \ffi'_1\in\mathrm{SM}(A_1,\iota_1) \\ \ffi'_2\in\mathrm{SM}(A_2,\iota_2)}} B_{\iota\pth{\iota_1,\iota_2}}^{\eta,\iota} \pth{t, F_{\pth{A_1,\iota\ffi_1,c_{\bar{\eta}}}}(t) , F_{\pth{A_2,\iota\ffi_2,c_{\bar{\eta}}}}(t) } .
\end{align*}
We now use the following facts: 
\begin{align}\label{fact SM bi}
&\sum_{\substack{\iota_1,\iota_2 \in \{\pm\} \\ \ffi'_1\in\mathrm{SM}(A_1,\iota_1) \\ \ffi'_2\in\mathrm{SM}(A_2,\iota_2)}} P\pth{\iota_1,\iota_2,\ffi'_1,\ffi'_2}  = \sum_{\substack{\ffi\in\mathrm{SM}\pth{ B_\st(A_1,A_2),\iota}}} P\pth{ \ffi\pth{\text{root of $A_1$}}, \ffi\pth{\text{root of $A_2$}},\ffi_{|_{A_1}},\ffi_{|_{A_2}}}
\end{align}
and 
\begin{align}
& \sum_{\substack{\iota_1,\iota_2,\iota_3 \in \{\pm\} \\ \ffi'_1\in\mathrm{SM}(A_1,\iota_1) \\ \ffi'_2\in\mathrm{SM}(A_2,\iota_2) \\ \ffi'_3\in\mathrm{SM}(A_3,\iota_3)}} P\pth{\iota_1,\iota_2,\iota_3,\ffi'_1,\ffi'_2,\ffi'_3}  \label{fact SM ter}
\\&\hspace{2cm} = \sum_{\substack{\ffi\in\mathrm{SM}\pth{ T_*(A_1,A_2,A_3),\iota}}} P\pth{ \ffi\pth{\text{root of $A_1$}}, \ffi\pth{\text{root of $A_2$}},\ffi\pth{\text{root of $A_3$}},\ffi_{|_{A_1}},\ffi_{|_{A_2}},\ffi_{|_{A_3}}},\nonumber
\end{align}
where $P$ denotes any quantity depending on the signs and the sign maps. We thus obtain, recalling also the definitions \eqref{def FT} and \eqref{def FBst} and the recursive rules defining a colour map depending on the nature of a node:
\begin{align*}
X_{n+2}^{\eta,\iota}(t) & = \sum_{\substack{n_1+n_2+n_3=n \\ A_1\in\tilde{\mathcal{A}}_{n_1} \\ A_2\in\tilde{\mathcal{A}}_{n_2}  \\ A_3\in\tilde{\mathcal{A}}_{n_3}  \\ * }} \hspace{0.4cm}\sum_{\substack{\ffi\in\mathrm{SM}\pth{ T_*(A_1,A_2,A_3),\iota}}} \hspace{-0.2cm}F_{\pth{ T_*(A_1,A_2,A_3),\ffi,c_\eta }}(t) 
\\&\quad + \sum_{\substack{ n_1+n_2 = n+1 \\ A_1\in\tilde{\mathcal{A}}_{n_1} \\ A_2\in\tilde{\mathcal{A}}_{n_2}  }}\hspace{0.4cm} \sum_{\substack{\ffi\in\mathrm{SM}\pth{ B_\st(A_1,A_2),\iota}}} \hspace{-0.2cm}F_{\pth{ B_\st(A_1,A_2),\ffi,c_\eta }}(t).
\end{align*}
Using now the definition \eqref{def Atilden+2} of $\tilde{\mathcal{A}}_{n+2}$ and \eqref{fact SM 2} we finally obtain
\begin{align*}
X^{\eta,\iota}_{n+2}(t) = \sum_{(A,\ffi,c) \in \mathcal A_{n+2}^{\eta,\iota}} F_{(A,\ffi,c)}(t),
\end{align*}
which proves that \eqref{XtoF} holds for $n+2$. This concludes the induction and the proof of the proposition.
\end{proof}

\subsection{Decoration of a tree}\label{section decoration}

In order to express the Fourier transform $\widehat{X^{\eta,\iota}_n}$ using \eqref{XtoF}, we need to associate wavenumbers to the leaves and nodes of a tree. We talk of decorations of a tree.

\begin{definition}\label{def decoration tree}
Let $A\in\tilde{\mathcal{A}}_n$ be a tree. 
\begin{itemize}
\item A \textbf{decoration map} of $A$ is a map 
\begin{align*}
\ka : \mathcal{L}(A)\sqcup \mathcal{N}(A) \longrightarrow \R^d
\end{align*}
satisfying that for any node $j$ of $A$ 
\begin{align}\label{récurrence décoration}
\ka(j) & = \sum_{j' \in \mathtt{children}(j)}\ka(j').
\end{align}
\item For $k\in\Z_L^d$, we say that a decoration $\ka$ of $A$ is a $k$-decoration of $A$ if $\ka(r_A)=k$ and $\ka\pth{\mathcal{L}(A)}\subset \Z_L^d$. We define
\begin{align*}
\mathcal{D}_k(A) \vcentcolon = \left\{ \text{$k$-decoration of $A$} \right\}.
\end{align*}
\end{itemize}
\end{definition}

Thanks to \eqref{récurrence décoration}, a decoration map $\ka$ of $A$ is uniquely determined by its restriction $\ka_{|_{\mathcal{L}(A)}}$ to the leaves of $A$. More precisely, for a given tree, the map $\ka \longmapsto \ka_{|_{\mathcal{L}(A)}}$ is a bijection.

\begin{example}
We consider the same tree as in Example \ref{premier exemple} and decorate it to illustrate Definition \ref{def decoration tree}. We remove all signs from leaves and nodes for clarity.
\begin{center}
\begin{tikzpicture}[level 1/.style={sibling distance=2cm},
                    level 2/.style={sibling distance=1cm}]
\node[label=above:{\textcolor{blue}{$k_1+k_2+k_3+k_4+k_5+k_6$}}]{\textcolor{red}{$\textup{high}$}}
child { node[label=above:{\textcolor{blue}{$k_1+k_2$}}]{\textcolor{red}{ \textup{in}}}
    child{node[label=below:{$k_1$}]{\textcolor{Dandelion}{\textleaf}}}
    child{node[label=below:{$k_2$}]{\textcolor{Dandelion}{\textleaf}}}
}
child { node[label=right: {\textcolor{blue}{$k_3+k_4$}}]{\textcolor{Dandelion}{$\textup{st}$}}
    child{node[label=below:{$k_3$}]{\textcolor{red}{\textleaf}}}
    child{node[label=below:{$k_4$}]{\textcolor{red}{\textleaf}}}
}
child { node[label=above: {\textcolor{blue}{$k_5+k_6$}}]{\textcolor{red}{$\textup{st}$}}
    child{node[label=below:{$k_5$}]{\textcolor{Dandelion}{\textleaf}}}
    child{node[label=below:{$k_6$}]{\textcolor{Dandelion}{\textleaf}}}
}
;
\end{tikzpicture}
\end{center}
Note that the blue wavenumbers are deduced from the black ones following the recursive rule \eqref{récurrence décoration}.
\end{example}

In the following definition, we associate to each node a double or triple $\De$. 

\begin{definition}\label{def:deltaj}
Let $A\in\tilde{\mathcal{A}}_n$ be a tree, $\ffi$ a sign map on $A$ and $\ka$ a decoration map on $A$.
\begin{itemize}
\item If $j\in\mathcal{N}_T(A)\sqcup \mathcal{N}_\st(A)$, we define 
\begin{align}
\De_j (A,\ffi,\ka) \vcentcolon = \ffi(j) \an{\ka(j)} - \sum_{j' \in \mathtt{children}(j)} \ffi(j') \an{\ka(j')}.\label{def De j}
\end{align}
\item If $j\in\mathcal{N}_\init(A)$, we define
\begin{align}
\De_j (A,\ffi,\ka) & \vcentcolon = 0,\label{def De j init}
\\ \De'_j(A,\ffi,\ka) & \vcentcolon = \ffi(j) \an{\ka(j)} - \sum_{j' \in \mathtt{children}(j)} \ffi(j') \an{\ka(j')}.\label{def De' j init}
\end{align}
\end{itemize}
\end{definition}

As it can be seen in the Definition \ref{def X_n} of the sequence $(X^\eta_n)_{n\in\mathbb{N}}$ or in \ref{def FA} of $F_A$, interactions associated to binary nodes are not directly integrated in time, while interactions associated to ternary nodes are. However, they are integrated in the sense that there might have a ternary node above them in the tree. 
We associate to any binary node the closest ternary node above them to make this clear.

\begin{definition}\label{def int}
Let $A\in\tilde{\mathcal{A}}_n$ and let $j\in\mathcal{N}(A)$.
\begin{itemize}
\item If the set
\begin{align}\label{set intermediaire qui définit int}
\enstq{j\le j' }{ j'\in \mathcal N_T(A)}
\end{align}
is empty we define $\mathtt{int}(j) = -1$.
\item Otherwise, the above set is totally ordered and we define
\begin{align*}
\mathtt{int}(j)=\min \enstq{j\le j' }{ j'\in \mathcal N_T(A) }.
\end{align*}
\end{itemize}
\end{definition}

\begin{remark} 
Definition \ref{def int} actually defines a map $\mathtt{int}:\mathcal{N}(A)\longrightarrow \mathcal{N}_T(A)\sqcup\{-1\}$ such that $\mathtt{int}_{|_{ \mathcal{N}_T(A) }}$ is the identity. For $j\in\mathcal{N}_T(A)\sqcup\{-1\}$, we will also use the notation (for instance in Section \ref{section induced orders})
\begin{align}\label{def int-1}
\mathtt{int}^{-1}(j)\vcentcolon=\enstq{j'\in\mathcal{N}_B(A)}{\mathtt{int}(j')=j}.
\end{align}
Note that this differs slightly from the usual preimage of a given $j\in\mathcal{N}_T(A)$, which is equal to $\mathtt{int}^{-1}(j)\sqcup \{j\}$, where $\mathtt{int}^{-1}(j)$ is given by \eqref{def int-1}. This is due to the fact that we have chosen to define the map $\mathtt{int}$ on $\mathcal{N}(A)$ and not just on $\mathcal{N}_B(A)$.
\end{remark}

With this notation we can define the moduli of resonance that will be integrated.

\begin{definition}\label{def:Omegaj}
Let $A\in\tilde{\mathcal{A}}_n$ be a tree, $\ffi$ a sign map on $A$ and $\ka$ a decoration map on $A$.
\begin{itemize}
\item If $j\in\mathcal{N}_T(A)$, we set
\begin{equation}\label{Omegaj}
\begin{aligned}
    \Omega_j(A,\ffi,\ka) &\vcentcolon = \Delta_j(A,\ffi,\ka)+ \sum_{\substack{j'\in\mathcal{N}_B(A) \\\text{with }\mathtt{int}(j')=j}} \Delta_{j'}(A,\ffi,\ka) 
    \\&= \sum_{\substack{j'\in\mathcal{N}(A) \\\text{with }\mathtt{int}(j')=j}} \Delta_{j'}(A,\ffi,\ka).
\end{aligned}
\end{equation}
\item We also define
\begin{equation}\label{Omega-1}
   \Omega_{-1}(A,\ffi,\ka) \vcentcolon = \sum_{\substack{j\in\mathcal{N}_B(A)\\\text{with }\mathtt{int}(j)=-1}} \Delta_{j}(A,\ffi,\ka). 
\end{equation}
\end{itemize}
\end{definition}

Finally, we also need to associate nonlinear interactions between wavenumbers to every node of a tree, depending on their nature.

\begin{definition}\label{def:qj}
Let $A\in\tilde{\mathcal{A}}_n$ be a tree, $\ffi$ a sign map on $A$, $c$ a colour map of $A$ and $\ka$ a decoration map on $A$. We associate quadratic or cubic interations between wavenumbers  to all nodes in the following way:
\begin{itemize}
    \item If $j\in\mathcal{N}_\st(A)$ we set 
        \begin{align}\label{qj Bst}
        q_j(A,\ffi,c,\ka) \vcentcolon = \ffi(j)\frac{q^{c(j)}(\ka(\ell),\ka(r))}{\Delta_j(A,\ffi,\ka)},
        \end{align}
    where $\ell$ and $r$ are respectively the left and right child of $j$.
    \item If $j\in\mathcal{N}_\init(A)$ we set
        \begin{align}\label{qj Bin}
        q_j(A,\ffi,c,\ka) \vcentcolon = -\ffi(j)\frac{q^{c(j)}(\ka(\ell),\ka(r))}{\Delta'_j(A,\ffi,\ka)} ,
        \end{align}
    where $\ell$ and $r$ are respectively the left and right child of $j$.
    \item If $j\in\mathcal{N}_*(A)$ with $*\in\{ \lowl, \lowm,\lowr,\high \}$ we set
        \begin{align}\label{qj T}
        q_j(A,\ffi,c,\ka) & \vcentcolon = \ffi(j) \chi_j(A,\ka) q_{\ffi(j)\ffi(m)}^{c(j)}(\ka(\ell),\ka(m),\ka(r)),
        \\ \chi_j (A,\ka) & \vcentcolon = \chi_*(\ka(\ell),\ka(m),\ka(r)), \label{chi j}
        \end{align}
    where $\ell$, $m$ and $r$ are respectively the left, middle and right child of $j$.
\end{itemize}
\end{definition}

Based on the assumptions we made on the nonlinearity of the original Klein-Gordon system \eqref{KG system}, we estimate each $q_j$ in the following lemma.

\begin{lemma}\label{lem qj}
Let $A\in\tilde{\mathcal{A}}_n$ be a tree, $\ffi$ a sign map on $A$, $c$ a colour map of $A$ and $\ka$ a decoration map on $A$. If $j\in\mathcal{N}_B(A)$ then
\begin{align*}
\left| q_j(A,\ffi,c,\ka) \right| & \lesssim \prod_{j'\in\children(j)}\ps{\ka(j')}^{-\half}.
\end{align*}
If $j\in\mathcal{N}_T(A)$ then
\begin{align*}
\left| q_j(A,\ffi,c,\ka) \right| & \lesssim \chi_j(A,\ka) \prod_{j'\in\children(j)}\ps{\ka(j')}^{-1}.
\end{align*}
\end{lemma}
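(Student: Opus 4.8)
The plan is to prove the two estimates separately, in each case unwinding the definitions of $q_j$ from Definition \ref{def:qj} and reducing everything to the bound \eqref{estim De} on inverse phases from Lemma \ref{lem De double} together with the boundedness assumptions on $Q^\eta$.

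\medskip

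First I would treat the case $j\in\mathcal{N}_B(A)$. Whether $j$ is standard or initial, formulas \eqref{qj Bst}--\eqref{qj Bin} express $q_j$ as $\pm\ffi(j)\,q^{c(j)}(\ka(\ell),\ka(r))$ divided by either $\De_j$ or $\De'_j$. In both cases the denominator has the form $\ffi(j)\ps{\ka(j)}-\ffi(\ell)\ps{\ka(\ell)}-\ffi(r)\ps{\ka(r)}$, which by \eqref{récurrence décoration} equals $\pm\De^{(\iota_1,\iota_2)}_{\ka(\ell),\ka(r)}$ for suitable signs $\iota_1,\iota_2\in\{\pm\}$ (precisely $\iota_i=\ffi(j)\ffi(\cdot)$, after factoring out the global sign $\ffi(j)$; note $\ka(j)=\ka(\ell)+\ka(r)$). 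Hence Lemma \ref{lem De double} gives $|\De_j|^{-1}\lesssim \ps{\ka(\ell)}^{\half}\ps{\ka(r)}^{\half}$, and likewise for $\De'_j$. On the other hand, recalling the definition \eqref{q double}, $q^\eta(k_1,k_2)=Q^\eta(k_1,k_2)/\big(4(2\pi)^{d/2}\ps{k_1}\ps{k_2}\big)$, and since $Q^\eta$ is bounded by hypothesis we get $|q^{c(j)}(\ka(\ell),\ka(r))|\lesssim \ps{\ka(\ell)}^{-1}\ps{\ka(r)}^{-1}$. Multiplying the two bounds yields $|q_j|\lesssim \ps{\ka(\ell)}^{-\half}\ps{\ka(r)}^{-\half}=\prod_{j'\in\children(j)}\ps{\ka(j')}^{-\half}$, as claimed.

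\medskip

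For the case $j\in\mathcal{N}_T(A)$, formula \eqref{qj T} gives $q_j=\ffi(j)\chi_j(A,\ka)\,q^{c(j)}_{\ffi(j)\ffi(m)}(\ka(\ell),\ka(m),\ka(r))$, so $|q_j|=\chi_j(A,\ka)\,\big|q^{c(j)}_{\iota}(\ka(\ell),\ka(m),\ka(r))\big|$ with $\iota=\ffi(j)\ffi(m)\in\{\pm\}$. It remains to show that $|q^\eta_\iota(a,b,c)|\lesssim \ps{a}^{-1}\ps{b}^{-1}\ps{c}^{-1}$ for all $\iota$, $\eta$ and $a,b,c\in\R^d$. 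By definition \eqref{q triple}, $q^\eta_\iota(a,b,c)$ equals $4\ps{a+c}$ divided by $m+2\big((a+b+c)\cdot b-\iota\ps{a+b+c}\ps{b}\big)$, times $q^{\bar\eta}(a,c)\,q^\eta(a+c,b)$. The denominator is exactly $-\De^{(+,\iota')}_{a+b+c,\ldots}$-type expression: indeed, recalling identity \eqref{De++ De --} and the analysis in the proof of Lemma \ref{lem De double}, one has $m+2(\xi\cdot\zeta-\ps{\xi}\ps{\zeta})=-\De^{(+,+)}_{\xi-\zeta,\zeta}\De^{(-,-)}_{\xi-\zeta,\zeta}$ up to sign conventions, so that $|$denominator$|^{-1}\lesssim \ps{a+b+c}^{\half}\ps{b}^{\half}\lesssim (\ps{a}\ps{b}\ps{c})^{\half}\ps{b}^{\half}$ after using the triangle inequality $\ps{a+b+c}\lesssim \ps{a}\ps{b}\ps{c}$ (in fact $\lesssim \ps{a}+\ps{b}+\ps{c}$). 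Combining with $|q^{\bar\eta}(a,c)|\lesssim \ps{a}^{-1}\ps{c}^{-1}$ and $|q^\eta(a+c,b)|\lesssim \ps{a+c}^{-1}\ps{b}^{-1}\lesssim \ps{b}^{-1}$, and the trivial bound $\ps{a+c}\lesssim \ps{a}\ps{c}$ in the numerator, one checks after collecting powers that $|q^\eta_\iota(a,b,c)|\lesssim \ps{a}^{-1}\ps{b}^{-1}\ps{c}^{-1}$. Since $\chi_j(A,\ka)\le 1$, this gives the stated bound.

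\medskip

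The only genuinely delicate point is the ternary estimate: one must verify that the algebraic denominator appearing in \eqref{q triple} is never small, i.e. that it is controlled below by a negative power of the Japanese brackets. This is precisely the content of Lemma \ref{lem De double} (via the factorization \eqref{De++ De --}), which is why that lemma was proved first; once it is invoked correctly with the right identification of arguments and signs, the rest is bookkeeping of powers of $\ps{\cdot}$ using $|a|\le\ps{a}$ and the triangle inequality $\ps{a+b}\lesssim\ps{a}+\ps{b}$. I would present the argument by first stating and proving the auxiliary pointwise bound $|q^\eta_\iota(a,b,c)|\lesssim \ps{a}^{-1}\ps{b}^{-1}\ps{c}^{-1}$ as a short sublemma or inline computation, then concluding both cases of the lemma in one line each.
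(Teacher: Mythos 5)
Your treatment of the binary case is correct and identical to the paper's: bound $q^{c(j)}$ by $\ps{\ka(\ell)}^{-1}\ps{\ka(r)}^{-1}$ using the boundedness of $Q^\eta$, and invert the phase using \eqref{estim De}.

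The ternary case has a genuine gap, on two levels. First, your identification of the denominator of \eqref{q triple} with $-\De^{(+,+)}_{\xi-\zeta,\zeta}\De^{(-,-)}_{\xi-\zeta,\zeta}$ via \eqref{De++ De --} is incorrect: with $a=\xi-\zeta$, $b=\zeta$, identity \eqref{De++ De --} produces $m+2\pth{\ps{\xi-\zeta}\ps{\zeta}-(\xi-\zeta)\cdot\zeta}$, whereas the denominator involves $\ps{\xi}\ps{\zeta}$ and $\xi\cdot\zeta$ with $\xi=k_1+k_2+k_3$, $\zeta=k_2$; the correct factorization is $\De^{(+,\iota)}_{k_1+k_3,k_2}\cdot\pth{\ps{\xi}+\ps{k_1+k_3}-\iota\ps{\zeta}}$, and neither factor is individually bounded below by a constant. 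Second, and more importantly, even granting your bound $|\text{denominator}|^{-1}\lesssim\ps{a+b+c}^{\half}\ps{b}^{\half}$, the power count does not close: multiplying the numerator $\ps{a+c}$, the inverse denominator, $|q^{\bar\eta}(a,c)|\lesssim\ps{a}^{-1}\ps{c}^{-1}$ and $|q^\eta(a+c,b)|\lesssim\ps{a+c}^{-1}\ps{b}^{-1}$ gives at best $\ps{a+b+c}^{\half}\ps{b}^{-\half}\ps{a}^{-1}\ps{c}^{-1}$ (and strictly worse if, as you propose, you discard the factor $\ps{a+c}^{-1}$ and bound the numerator by $\ps{a}\ps{c}$). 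Taking $b$ large with $a,c$ bounded shows this is off by $\ps{b}^{\half}$ from the claimed $\ps{a}^{-1}\ps{b}^{-1}\ps{c}^{-1}$. The correct and much simpler route, which is the paper's, is to observe that the denominator is bounded below \emph{by the constant} $m$: the Cauchy--Schwarz inequality in $\R^{d+1}$ applied to $(\sqrt{m},\xi)$ and $(\sqrt{m},\zeta)$ gives $|m+\xi\cdot\zeta|\le\ps{\xi}\ps{\zeta}$, whence $\left|m+2\pth{\xi\cdot\zeta-\iota\ps{\xi}\ps{\zeta}}\right|\ge m$ for both signs $\iota$. One then keeps the factor $\ps{k_1+k_3}^{-1}$ from $q^\eta(k_1+k_3,k_2)$ to cancel exactly the numerator $4\ps{k_1+k_3}$, and the bound $\ps{k_1}^{-1}\ps{k_2}^{-1}\ps{k_3}^{-1}$ follows immediately. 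Note in particular that Lemma \ref{lem De double} is \emph{not} the right tool here, contrary to what you assert in your closing remark: its lower bound on the resonance modulus degenerates at high frequency, and only the constant lower bound on the full product saves the estimate.
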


\begin{proof}
Since $Q^\eta(k_1,k_2)$ is bounded by assumption, \eqref{q double} implies that 
\begin{align}\label{estim q double}
|q^\eta(k_1,k_2)| \lesssim  \ps{k_1}^{-1} \ps{k_2}^{-1}.
\end{align}
If $j\in\mathcal{N}_B(A)$ and $\ell$ and $r$ denote its left and right children, then \eqref{qj Bst}, \eqref{qj Bin}, \eqref{def De j} and \eqref{def De' j init} imply that
\begin{align*}
\left| q_j(A,\ffi,c,\ka) \right| & \lesssim \left| \frac{q^\eta(\ka(\ell),\ka(r))}{\De^{(\iota_1,\iota_2)}_{\ka(\ell),\ka(r)}} \right|
\end{align*}
for some $\eta$, $\iota_1$ and $\iota_2$ depending on $j$. Using now \eqref{estim q double} and \eqref{estim De} we obtain 
\begin{align*}
\left| q_j(A,\ffi,c,\ka) \right| & \lesssim \ps{\ka(\ell)}^{-\half} \ps{\ka(r)}^{-\half}.
\end{align*}
If now $j\in\mathcal{N}_T(A)$ and $\ell$, $m$ and $r$ denote its left, middle and right children, then \eqref{q triple}, \eqref{qj T} and \eqref{estim q double} imply that
\begin{align*}
\left| q_j(A,\ffi,c,\ka) \right| & \lesssim \chi_j(A,\ka) \ps{\ka(\ell)}^{-1}\ps{\ka(m)}^{-1}\ps{\ka(r)}^{-1}
\\&\quad \times \Big| m+2\pth{(\ka(\ell)+\ka(m)+\ka(r))\cdot \ka(m) - \ffi(j)\ffi(m) \ps{\ka(\ell)+\ka(m)+\ka(r)}\ps{\ka(m)}} \Big|^{-1}.
\end{align*}
Note that applying the Cauchy-Schwarz inequality in $\R^{d+1}$ to $(\sqrt{m},a)$ and $(\sqrt{m},b)$ (for some $a,b\in\R^d$) gives $a\cdot b + m \leq \ps{a}\ps{b}$. In particular, this shows that 
\begin{align*}
&\Big| m+2\pth{(\ka(\ell)+\ka(m)+\ka(r))\cdot \ka(m) - \ffi(j)\ffi(m) \ps{\ka(\ell)+\ka(m)+\ka(r)}\ps{\ka(m)}} \Big| \geq m,
\end{align*}
so that we obtain
\begin{align*}
\left| q_j(A,\ffi,c,\ka) \right| & \lesssim \chi_j(A,\ka) \ps{\ka(\ell)}^{-1}\ps{\ka(m)}^{-1}\ps{\ka(r)}^{-1}.
\end{align*}
This concludes the proof of the lemma.
\end{proof}

\begin{example}
We compute the different $q_j$s and $\De_j$s for the tree in Example \ref{premier exemple}. We arbitrarily label each node with blue integers, recall the signs of nodes and leaves but remove the wavenumbers for seek of readability.
\begin{center}
\begin{tikzpicture}[level 1/.style={sibling distance=2cm},
                    level 2/.style={sibling distance=1cm}]
\node[label=above:{\textcolor{blue}{$4$},$+$}]{\textcolor{red}{$\textup{high}$}}
child { node[label=above:{\textcolor{blue}{$3$},$+$}]{\textcolor{red}{ \textup{in}}}
    child{node[label=below:{$k_1,-$}]{\textcolor{Dandelion}{\textleaf}}}
    child{node[label=below:{$k_2,-$}]{\textcolor{Dandelion}{\textleaf}}}
}
child { node[label=right: {\textcolor{blue}{$2$},$+$}]{\textcolor{Dandelion}{$\st$}}
    child{node[label=below:{$k_3,+$}]{\textcolor{red}{\textleaf}}}
    child{node[label=below:{$k_4,+$}]{\textcolor{red}{\textleaf}}}
}
child { node[label=above: {\textcolor{blue}{$1$},$+$}]{\textcolor{red}{$\st$}}
    child{node[label=below:{$k_5,-$}]{\textcolor{Dandelion}{\textleaf}}}
    child{node[label=below:{$k_6,+$}]{\textcolor{Dandelion}{\textleaf}}}
}
;
\end{tikzpicture}
\end{center}
We have 
\begin{align*}
\Delta_{\textcolor{blue}{1}} & = \an{k_5+k_6}+\an{k_5}-\an{k_6}, & \Delta_{\textcolor{blue}{2}} & = \an{k_3+k_4}-\an{k_3}-\an{k_4}  ,
\\ \Delta_{\textcolor{blue}{3}} & =  0, &  \Delta'_{\textcolor{blue}{3}} & = \an{k_1+k_2}+\an{k_1}+\an{k_2}, 
\end{align*}
\begin{align*}
\Delta_{\textcolor{blue}{4}} & = \an{k_1+k_2+k_3+k_4+k_5+k_6}-\an{k_1+k_2}-\an{k_3+k_4}-\an{k_5+k_6},
\end{align*}
and
\begin{align*}
q_{\textcolor{blue}{1}} & =  \frac{q^0(k_5,k_6)}{\Delta_{\textcolor{blue}{1}}} , & q_{\textcolor{blue}{2}} & =  \frac{q^1(k_3,k_4)}{\Delta_{\textcolor{blue}{2}}},
\\ q_{\textcolor{blue}{3}} & = -\frac{q^0(k_1,k_2)}{\Delta'_{\textcolor{blue}{3}}}, & q_{\textcolor{blue}{4}} & =  q_{+}^{0}(k_1+k_2,k_3+k_4,k_5+k_6)\chi_\high(k_1+k_2,k_3+k_4,k_5+k_6).
\end{align*}
\end{example}

\begin{example}\label{exemple Omega}
To provide an example of how to compute $\Omega_j$, we consider the following tree. We remove from the graphical representation all the unnecessary information, in particular the nature of the binary or ternary nodes, and arbitrarily label each node by blue integers.
\begin{center}
\begin{tikzpicture}[level 1/.style={sibling distance=4cm},
                    level 2/.style={sibling distance=1.5cm},
                    level 3/.style={sibling distance=1cm}]
\node[label=above:{\textcolor{blue}{$6$}}]{\textcolor{Dandelion}{$\bullet$}}
    child{ node[label=left:{\textcolor{blue}{$5$}}]    {\textcolor{red}{$\bullet$}}
        child{node{\textcolor{red}{\textleaf}}}
        child{node[label=left:{\textcolor{blue}{$4$}}]{\textcolor{Dandelion}{$\bullet$}}
            child{node[label=left:{\textcolor{blue}{$3$}}]{\textcolor{red}{$\bullet$}}
                child{node{\textcolor{red}{\textleaf}}}
                child{node{\textcolor{Dandelion}{\textleaf}}}
                child{node{\textcolor{red}{\textleaf}}}
            }
            child{node{\textcolor{red}{\textleaf}}}
        }
        child{node{\textcolor{red}{\textleaf}}}
    }
    child{ node[label=right:{\textcolor{blue}{$2$}}]{\textcolor{red}{$\bullet$}}
        child{node{\textcolor{Dandelion}{\textleaf}}}
        child{ node[label=right:{\textcolor{blue}{$1$}}]{\textcolor{Dandelion}{$\bullet$}}
            child{node{\textcolor{Dandelion}{\textleaf}}}
            child{node{\textcolor{red}{\textleaf}}}
            child{node{\textcolor{Dandelion}{\textleaf}}}
        }
    }
;
\end{tikzpicture}
\end{center}
The nodes \textcolor{blue}{$5,3,1$} are ternary nodes and the nodes \textcolor{blue}{$6,4,2$} are standard binary nodes. We have 
\begin{align*}
    \mathtt{int}(\textcolor{blue}{6}) =  -1, \qquad \mathtt{int}(\textcolor{blue}{4}) =  \textcolor{blue}{5}, \qquad \mathtt{int}(\textcolor{blue}{2}) = -1 .
\end{align*}
Therefore, we have 
\begin{align*}
\Omega_{-1} & = \Delta_{\textcolor{blue}{6}}+\Delta_{\textcolor{blue}{2}} , \qquad   \Omega_{\textcolor{blue}{5}}  =   \Delta_{\textcolor{blue}{5}}+\Delta_{\textcolor{blue}{4}}, \qquad \Omega_{\textcolor{blue}{3}}  =   \Delta_{\textcolor{blue}{3}}, \qquad \Omega_{\textcolor{blue}{1}}  =   \Delta_{\textcolor{blue}{1}}.
\end{align*}

\end{example}

\subsection{\texorpdfstring{Fourier transform of $F_{(A,\ffi,c)}$}{Fourier transform of the contribution of each tree}}\label{section fourier FA}

We can now express the Fourier transform of each $F_{(A,\ffi,c)}$ defined in Definition \ref{def FA}.

\begin{prop}\label{prop:FFourier} 
For all trees $A\in\tilde{\mathcal{A}}_n$, $\ffi$ and $c$ sign and colour maps on $A$, all $t\in \R$ and all $k\in \Z_L^d$, we have
\begin{align}
\widehat{F_{(A,\ffi,c)}}(t,k) & =  \frac{\e^n}{L^{\frac{nd}{2}}}   (-i)^{ n_T(A)} \sum_{\ka\in\mathcal D_k(A)}  e^{i\Om_{-1}(A,\ffi,\ka)t}\label{fourier of F}
\\&\hspace{2cm}\times\prod_{j\in \mathcal N(A)} q_j(A,\ffi,c,\ka) \int_{I_A(t)} \prod_{j\in  \mathcal N_T(A)} e^{i \Om_j(A,\ffi,\ka) t_j} \d t_j \prod_{\ell\in \mathcal L(A)} \reallywidehat{X^{c(\ell), \ffi(\ell)}_\init} (\ka(\ell)) \non
\end{align}
where 
\begin{align*}
I_A(t) \vcentcolon= \enstq{ (t_j)_{j\in  \mathcal N_T(A)} \in [0,t]^{n_T(A)}}{ j\leq j' \Rightarrow t_{j}\leq t_{j'} }
\end{align*}
and where we use the convention that the integral over $I_A(t)$ equals 1 if $I_A(t)$ is empty.
\end{prop}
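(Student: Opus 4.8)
The plan is to prove \eqref{fourier of F} by strong induction on the size $n$ of $A$, after first reducing to the case where the root sign $\iota_r:=\ffi(r_A)$ equals $+$. For the reduction, observe that \eqref{F^iota} reads in Fourier $\widehat{F_{(A,\ffi,c)}}(t,k)=\overline{\widehat{F_{(A,-\ffi,c)}}(t,-k)}$ and that $(A,-\ffi,c)$ has root sign $+$, so it suffices to check that the right-hand side of \eqref{fourier of F} enjoys the same symmetry. This follows from the elementary parities of its ingredients: $\an{\cdot}$, $\De^{(\iota_1,\iota_2)}_{\cdot}$, $\De^{(\iota_1,\iota_2,\iota_3)}_{\cdot}$ and the cut-offs $\chi_*$ are even, while $\overline{q^\eta(-\xi_1,-\xi_2)}=q^\eta(\xi_1,\xi_2)$ and, by \eqref{q triple}, $\overline{q^\eta_\iota(-\xi_1,-\xi_2,-\xi_3)}=q^\eta_\iota(\xi_1,\xi_2,\xi_3)$. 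Performing the change of decoration $\ka\mapsto-\ka$ (a bijection of $\mathcal D_{-k}(A)$ onto $\mathcal D_k(A)$ by \eqref{récurrence décoration}) turns $\De_j,\Om_j$ into their opposites, leaves $q_j$ unchanged at binary nodes and flips its sign at ternary nodes, and sends $\widehat{X^{c(\ell),-\ffi(\ell)}_\init}(\ka(\ell))$ to $\widehat{X^{c(\ell),\ffi(\ell)}_\init}$; the global factor $(-1)^{n_T(A)}$ produced by the ternary $q_j$'s is cancelled by $\overline{(-i)^{n_T(A)}}=(-1)^{n_T(A)}(-i)^{n_T(A)}$, and the phases are restored by the conjugation. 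Hence both sides obey the symmetry and we may assume $\iota_r=+$.

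The base cases $n=0,1$ are then immediate. For $A=\bot$ one has $\mathcal N(A)=\emptyset$, $\Om_{-1}=0$, $I_A(t)$ empty and $\mathcal D_k(\bot)$ a single point, so \eqref{fourier of F} is \eqref{def Fbot}. For $A=B_\st(\bot,\bot)$ or $B_\init(\bot,\bot)$ one computes the Fourier transform of \eqref{def FBst}, resp. \eqref{def FBinbot}, via \eqref{def B Fourier} and matches it term by term with \eqref{fourier of F}, using that $\ffi(r_A)=+$ forces $\De_{r_A}(A,\ffi,\ka)=\De^{(\iota_1,\iota_2)}_{\ka(\ell),\ka(r)}$, resp. $\De'_{r_A}(A,\ffi,\ka)=\De^{(\iota_1,\iota_2)}_{\ka(\ell),\ka(r)}$, together with the definitions \eqref{qj Bst}, \eqref{qj Bin} of $q_{r_A}$ and the fact that $\De_{r_A}=0$ on an initial node (so the phase $e^{i\Om_{-1}t}$ is $1$ there, in agreement with the evaluation at $t=0$ in \eqref{def FBinbot}).

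For the inductive step, a tree $A\in\tilde{\mathcal A}_n$ with $n\ge2$ has root $B_\st(A_1,A_2)$ or $T_*(A_1,A_2,A_3)$; since $\iota_r=+$ the sign twist in \eqref{def FBst}/\eqref{def FT} is trivial, so $F_{(A,\ffi,c)}(t)=\e\,B^\eta_{(\iota_1,\iota_2)}\pth{t,F_{(A_1,\ffi|_{A_1},c|_{A_1})}(t),F_{(A_2,\ffi|_{A_2},c|_{A_2})}(t)}$, resp. $F_{(A,\ffi,c)}(t)=-i\e^2\int_0^t C^\eta_{*,(\iota_1,\iota_2,\iota_3)}\pth{\tau,F_{(A_1,\ffi|_{A_1},c|_{A_1})}(\tau),F_{(A_2,\ffi|_{A_2},c|_{A_2})}(\tau),F_{(A_3,\ffi|_{A_3},c|_{A_3})}(\tau)}\d\tau$. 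I take the Fourier transform with \eqref{def B Fourier}/\eqref{def C Fourier}, substitute the induction hypothesis for each $\widehat{F_{(A_i,\ffi|_{A_i},c|_{A_i})}}$, and reorganize. The bijection $\mathcal D_k(A)\simeq\bigsqcup_{m_1+\dots=k}\prod_i\mathcal D_{m_i}(A_i)$ turns the convolution sums into the sum over $\ka\in\mathcal D_k(A)$; the partitions $\mathcal N(A)=\{r_A\}\sqcup\bigsqcup_i\mathcal N(A_i)$, $\mathcal L(A)=\bigsqcup_i\mathcal L(A_i)$ and $\mathcal N_T(A)=\bigsqcup_i\mathcal N_T(A_i)$ (binary case), resp. $\{r_A\}\sqcup\bigsqcup_i\mathcal N_T(A_i)$ (ternary case), handle the products; the kernel of $B$, resp. of $C$, equals $q_{r_A}(A,\ffi,c,\ka)$ since $\ffi(r_A)=+$ identifies $\De_{r_A}$, resp. the triple $\De_{r_A}$, with $\De^{(\iota_1,\iota_2)}_{\cdot}$, resp. $\De^{(\iota_1,\iota_2,\iota_3)}_{\cdot}$; and the scalars $\e$, $\e^2$, $L^{-d/2}$, $L^{-d}$ and the extra $-i$ of the ternary node (with $-i\iota_r=-i$) assemble into $\e^n L^{-nd/2}(-i)^{n_T(A)}$.

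The genuinely delicate point, and the one I expect to be the main obstacle, is the reorganization of the phases and of the time-ordered integral. In the binary case it is routine: each binary node of $A_i$ keeps the same nearest ternary ancestor in $A$ as in $A_i$, so $\Om_j(A,\ffi,\ka)=\Om_j(A_i,\ffi|_{A_i},\ka|_{A_i})$ for $j\in\mathcal N_T(A_i)$, one gets $\Om_{-1}(A,\ffi,\ka)=\De_{r_A}(A,\ffi,\ka)+\sum_i\Om_{-1}(A_i,\ffi|_{A_i},\ka|_{A_i})$, and $I_A(t)=I_{A_1}(t)\times I_{A_2}(t)$ because the ternary nodes of $A_1$ and $A_2$ are pairwise incomparable. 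In the ternary case $\Om_{-1}(A,\ffi,\ka)=0$ (every node lies under the ternary root), and writing $\tau=t_{r_A}$ the factor $e^{i\tau\De_{r_A}}$ from \eqref{def C Fourier} multiplies $\prod_i e^{i\Om_{-1}(A_i,\ffi|_{A_i},\ka|_{A_i})\tau}$ to give $e^{i\Om_{r_A}(A,\ffi,\ka)\tau}$ by \eqref{Omegaj}, since the binary nodes $j$ of $A_i$ with $\mathtt{int}_{A_i}(j)=-1$ satisfy $\mathtt{int}_A(j)=r_A$ (cf. \eqref{def int-1}); finally the nested domain $\int_0^t\pth{\prod_i\int_{I_{A_i}(\tau)}}\d\tau$ coincides with $\int_{I_A(t)}$ because, as observed after Definition \ref{def int}, the path from any node to the root is totally ordered, so the inequalities cutting out $I_A(t)$ are exactly $t_{r_A}\le t$, the inequalities $t_j\le t_{r_A}$ for $j\in\mathcal N_T(A_i)$, and the inequalities internal to each $A_i$. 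This closes the induction and yields \eqref{fourier of F}.
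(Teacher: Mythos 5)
Your proof is correct and, apart from the treatment of signs, follows the same route as the paper: induction on the size of $A$, the bijection between $k$-decorations of $A$ and tuples of decorations of the subtrees, the partitions of $\mathcal N(A)$, $\mathcal L(A)$ and $\mathcal N_T(A)$, and the identification of the nested time integrals with $I_A(t)$ (including the key point that $\De_{r_A}+\sum_i\Om_{-1}(A_i,\cdot)=\Om_{r_A}(A,\cdot)$ at a ternary root) are all exactly the paper's steps. Where you genuinely diverge is the handling of the root sign. The paper keeps $\iota_r$ arbitrary and, at every step of the induction, absorbs the twist $\iota_r\ffi|_{A_i}$ via \eqref{F^iota} together with the change of summation variable $k_i\mapsto\iota_r k_i$ and the parity identities \eqref{q(iota)}--\eqref{De(iota)}. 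You instead establish once that the closed-form right-hand side of \eqref{fourier of F} satisfies the conjugation symmetry $\widehat{F_{(A,\ffi,c)}}(t,k)=\overline{\widehat{F_{(A,-\ffi,c)}}(t,-k)}$, reduce to $\iota_r=+$ within the strong induction (the minus-root case at size $n$ being deduced from the plus-root case at the same size, while the inductive step still invokes the hypothesis for children of arbitrary root sign), and then run a twist-free recursion. The two are equivalent in content — your global symmetry check amounts to applying the paper's local parity identities simultaneously at every node — but yours concentrates the sign bookkeeping in one place and makes the recursion cleaner, at the cost of a somewhat longer one-time verification. One imprecision to fix in that verification: under $\ka\mapsto-\ka$ and $\ffi\mapsto-\ffi$ alone, $q_j$ is conjugated at binary nodes and conjugated-and-negated at ternary nodes (by \eqref{q double} and \eqref{q triple}); it is only after composing with the outer complex conjugation that $q_j$ is ``unchanged'' at binary nodes and ``sign-flipped'' at ternary nodes as you assert, and similarly the leaf factor should read $\reallywidehat{X^{c(\ell),-\ffi(\ell)}_\init}(-\ka(\ell))$ before conjugation.
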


\begin{proof}
We prove \eqref{fourier of F} by induction over the size $n\in\mathbb{N}$ of $A$. If $n=0$, we only need to consider $A=\bot$ and we can identify its sign and colour maps with their single values $\iota\in\{\pm\}$ and $\eta\in\{0,1\}$. The definition \eqref{def Fbot} simply implies
\begin{align*}
\widehat{F_{(A,\iota,\eta)}}=\widehat{X^{\eta,\iota}_\init}
\end{align*}
which matches the RHS of \eqref{fourier of F}. Indeed, in this case there are no ternary nodes, $\Om_{-1}(\bot)=0$ and the only $k$-decoration is the map associating $k$ to the single leaf. 

\saut
If $n=1$ there are two cases to consider according to \eqref{def Atilde1}. We assume first that $A=B_\init(\bot,\bot)$ and denote by $\iota_r$, $\iota_1$ and $\iota_2$ respectively the signs of the root $r$, left child $\ell_1$ and right child $\ell_2$, and by $\eta$ the colour of the root. On the one hand, \eqref{def FBinbot} and \eqref{def B Fourier} imply 
\begin{align*}
\widehat{F_{(A,\ffi,c)}}(t,k)  & = - \frac{\e}{L^{\frac{d}{2}}} \sum_{\iota_rk_1+\iota_rk_2 = k}  \frac{q^\eta (k_1,k_2)^{\iota_r}}{\De_{k_1,k_2}^{\iota_r(\iota_1,\iota_2)}}  \widehat{X^{\overline{\eta}}_\init}( \iota_r\iota_1k_1)^{\iota_1} \widehat{X^{\overline{\eta}}_\init }( \iota_r\iota_2k_2)^{\iota_2}.
\end{align*}
Performing the change of variable $k'_i=\iota_rk_i$ and using
\begin{align}
q^\eta (\iota_rk'_1,\iota_rk'_2)^{\iota_r} & = q^\eta (k'_1,k'_2),\label{q(iota)}
\\ \De_{\iota_rk'_1,\iota_rk'_2}^{\iota_r(\iota_1,\iota_2)} & =\De_{k'_1,k'_2}^{\iota_r(\iota_1,\iota_2)},\label{De(iota)}
\end{align}
the above identities being consequences of \eqref{De double} and \eqref{q double}, we get
\begin{align*}
\widehat{F_{(A,\ffi,c)}}(t,k)  & =  - \frac{\e}{L^{\frac{d}{2}}} \sum_{k'_1+ k'_2 = k}  \frac{q^\eta (k'_1,k'_2)}{\De_{k'_1,k'_2}^{\iota_r(\iota_1,\iota_2)}}  \widehat{X^{\overline{\eta},\iota_1}_\init}( k'_1) \widehat{X^{\overline{\eta},\iota_2}_\init }( k'_2).
\end{align*}
On the other hand, for any quantity $P$ we have
\begin{align*}
\sum_{k'_1+k'_2=k}P(k'_1,k'_2) = \sum_{\ka\in\mathcal{D}_k(A)}P(\ka(\ell_1),\ka(\ell_2)) 
\end{align*}
where $\ell_1, \ell_2$ are the only two leaves of the tree. Together with $\De_{\ka(\ell_1),\ka(\ell_2)}^{\iota_r(\iota_1,\iota_2)} = \iota_r\De'_r$, $c(r)=\eta$, $c(\ell_1)=\overline{\eta}$, $c(\ell_2)=\overline{\eta}$ and \eqref{qj Bin}, this implies that
\begin{align*}
\widehat{F_{(A,\ffi,c)}}(t,k)  & = \frac{\e}{L^{\frac{d}{2}}} \sum_{\ka\in\mathcal{D}_k(A)} q_r(A,\ffi,c,\ka)  \widehat{X^{c(\ell_1),\ffi(\ell_1)}_\init}( \ka(\ell_1)) \widehat{X^{c(\ell_2),\ffi(\ell_2)}_\init }( \ka(\ell_2)),
\end{align*}
which matches the RHS of \eqref{fourier of F} since $\Om_{-1}(B_\init(\bot,\bot))=\De_r(B_\init(\bot,\bot))=0$ (the root is an initial binary node, see \eqref{def De j init}) and there are no ternary nodes.

\saut
If $A=B_\st(\bot,\bot)$, we use \eqref{def FBst}, \eqref{def Fbot} and \eqref{def B Fourier} and obtain (with the same notations as for the first case above):  
\begin{align*}
\widehat{F_{\pth{A,\ffi, c}}} (t,k) = \frac{\e}{L^{\frac{d}{2}}} \sum_{ k_1+  k_2 = k}  \frac{q^\eta (k_1,k_2)}{\De_{k_1,k_2}^{\iota_r(\iota_1,\iota_2)}} e^{i t \iota_r  \Delta_{k_1,k_2}^{\iota_r(\iota_1,\iota_2)}} \widehat{X^{\overline{\eta},\iota_1}_\init}(k_1) \widehat{X^{\overline{\eta},\iota_2}_\init}(k_2),
\end{align*}
where we performed the same change of variable and used \eqref{q(iota)} and \eqref{De(iota)}. The end of the proof is identical to the above case, using \eqref{qj Bst} instead of \eqref{qj Bin} and noting that $\Om_{-1}(B_\st(\bot,\bot))=\De_r(B_\st(\bot,\bot))=\iota_r  \Delta_{k_1,k_2}^{\iota_r(\iota_1,\iota_2)}$ (see \eqref{def De j}).

\saut
Assume now that \eqref{fourier of F} holds for all $k\in\llbracket0,n+1\rrbracket$ for some $n\in\mathbb{N}$ and let $A\in\tilde{\mathcal{A}}_{n+2}$. According to \eqref{def Atilden+2}, there are two cases to consider. If $A=B_\st(A_1,A_2)$ with $(A_1,A_2)\in\tilde{\mathcal{A}}_{n_1}\times \tilde{\mathcal{A}}_{n_2}$ and $n_1+n_2=n+1$, we denote by $\iota_r$, $\iota_1$ and $\iota_2$ the signs of the root of $A$, of the root of $A_1$ and of the root of $A_2$ in $A$, and also by $\eta$ the colour of the root of $A$. Then \eqref{def FBst}, \eqref{def B Fourier}, the change of variable $k'_i=\iota_r k_i$ (with $k'_i$ being immediately relabeled $k_i$) and \eqref{q(iota)}-\eqref{De(iota)} imply
\begin{align*}
\widehat{F_{\pth{A,\ffi, c}}} (t,k) & =  \frac{\e}{L^{\frac{d}{2}}} \sum_{k_1 + k_2 = k}  \frac{q^\eta (k_1,k_2)}{\De_{k_1,k_2}^{\iota_r(\iota_1,\iota_2)}} e^{it\iota_r \Delta_{k_1,k_2}^{\iota_r(\iota_1,\iota_2)}} \reallywidehat{ F^{\iota_r}_{\pth{A_1,\iota_r \ffi_{|_{A_1}},c_{|_{A_1}}}} }(t, k_1) \reallywidehat{ F^{\iota_r}_{\pth{A_2,\iota_r \ffi_{|_{A_2}},c_{|_{A_2}}}} }( t,k_2)
\\& =  \frac{\e}{L^{\frac{d}{2}}} \sum_{k_1 + k_2 = k}  \frac{q^\eta (k_1,k_2)}{\De_{k_1,k_2}^{\iota_r(\iota_1,\iota_2)}} e^{it\iota_r \Delta_{k_1,k_2}^{\iota_r(\iota_1,\iota_2)}} \reallywidehat{ F_{\pth{A_1, \ffi_{|_{A_1}},c_{|_{A_1}}}} }(t, k_1) \reallywidehat{ F_{\pth{A_2, \ffi_{|_{A_2}},c_{|_{A_2}}}} }( t,k_2) 
\end{align*}
where we used the property \eqref{F^iota}. Since $n_1,n_2\leq n+1$, we can use our induction assumption \eqref{fourier of F} on $ \reallywidehat{ F_{\pth{A_1, \ffi_{|_{A_1}},c_{|_{A_1}}}} }$ and $ \reallywidehat{ F_{\pth{A_2, \ffi_{|_{A_2}},c_{|_{A_2}}}} }$. This gives
\begin{align*}
&\widehat{F_{\pth{A,\ffi, c}}} (t,k) 
\\& =  \pth{\frac{\e}{L^{\frac{d}{2}}}}^{1+n_1+n_2} (-i)^{ n_T(A_1)+n_T(A_2)} \sum_{\substack{k_1 + k_2 = k \\ \ka_1\in\mathcal{D}_{k_1}(A_1) \\ \ka_2\in\mathcal{D}_{k_2}(A_2) }}  \frac{q^\eta (k_1,k_2)}{\De_{k_1,k_2}^{\iota_r(\iota_1,\iota_2)}} e^{it\pth{\iota_r \Delta_{k_1,k_2}^{\iota_r(\iota_1,\iota_2)} + \Om_{-1}(A_1,\ka_1) + \Om_{-1}(A_2,\ka_2) }}
\\&\quad\times \prod_{\substack{j_1\in \mathcal N(A_1) \\ j_2\in \mathcal N(A_2)}}q_{j_1}(A_1,\ka_1)q_{j_2}(A_2,\ka_2) \int_{I_{A_1}(t) \times I_{A_2}(t)}\prod_{\substack{ j_1\in  \mathcal N_T(A_1) \\ j_2\in  \mathcal N_T(A_2) }} e^{i \Om_{j_1}(A_1,\ka_1) t_{j_1}}e^{i \Om_{j_2}(A_2,\ka_2) t_{j_2}}  \d t_{j_1} \d t_{j_2}
\\&\quad \times   \prod_{\substack{\ell_1\in \mathcal L(A_1)\\ \ell_2 \in \mathcal L(A_2)}} \reallywidehat{X^{c(\ell_1), \ffi(\ell_1)}_\init} (\ka_1(\ell_1))\reallywidehat{X^{c(\ell_2), \ffi(\ell_2)}_\init} (\ka_2(\ell_2)),
\end{align*}
where in the various $\Om_{-1}$, $\Om_j$ and $q_j$ appearing in the formula, we only kept their dependence on the decoration maps, since the sign and colour maps are simply the restriction of $\ffi$ and $c$ to either $A_1$ or $A_2$. Now, if we denote by $r$ the root of $A$ and by $r_1,r_2$ the roots of $A_1$ and $A_2$ in $A$, a decoration $\ka$ on $A$ satisfies $\ka(r)=\ka(r_1)+\ka(r_2)$ and $\ka_{|_{A_1}}$ and $\ka_{|_{A_2}}$ are decorations of $A_1$ and $A_2$. Conversely, if $(\ka_1,\ka_2)\in \mathcal{D}_{k_1}(A_1)\times \mathcal{D}_{k_2}(A_2)$ then there exists a unique $\ka\in \mathcal{D}_{k_1+k_2}(A)$ such that $\ka_{|_{A_1}}=\ka_1$ and $\ka_{|_{A_2}}=\ka_2$. This shows that
\begin{align*}
\sum_{\substack{k_1 + k_2 = k \\ \ka_1\in\mathcal{D}_{k_1}(A_1) \\ \ka_2\in\mathcal{D}_{k_2}(A_2) }} Q(k_1,k_2,\ka_1,\ka_2) = \sum_{\ka\in \mathcal{D}_k(A)} Q\pth{\ka(r_1),\ka(r_2),\ka_{|_{A_1}},\ka_{|_{A_2}}}
\end{align*}
which in turn implies, since $\mathcal{N}(A)=\mathcal{N}(A_1)\sqcup\mathcal{N}(A_2)\sqcup\{r\}$ and $\mathcal{L}(A)=\mathcal{L}(A_1)\sqcup\mathcal{L}(A_2)$, that
\begin{align*}
\widehat{F_{\pth{A,\ffi, c}}} (t,k) 
 = & \pth{\frac{\e}{L^{\frac{d}{2}}}}^{1+n_1+n_2} (-i)^{ n_T(A_1)+n_T(A_2)}\\
 &\times \sum_{\ka\in \mathcal{D}_k(A)}  \frac{q^\eta (\ka(r_1),\ka(r_2))}{\De_{\ka(r_1),\ka(r_2)}^{\iota_r(\iota_1,\iota_2)}} e^{it\pth{\iota_r \Delta_{\ka(r_1),\ka(r_2)}^{\iota_r(\iota_1,\iota_2)} + \Om_{-1}\pth{A_1,\ka_{|_{A_1}}} + \Om_{-1}\pth{A_2,\ka_{|_{A_2}}} }}
\\&\times \prod_{j\in\mathcal{N}(A)\setminus\{r\}}q_{j}(A,\ka)\int_{I_{A_1}(t) \times I_{A_2}(t)}\prod_{\substack{ j_1\in  \mathcal N_T(A_1) \\ j_2\in  \mathcal N_T(A_2) }} e^{i \Om_{j_1}(A_1,\ka_{|_{A_1}}) t_{j_1}}e^{i \Om_{j_2}(A_2,\ka_{|_{A_2}}) t_{j_2}}  \d t_{j_1} \d t_{j_2} 
\\&\times \prod_{\ell\in\mathcal{L}(A)} \reallywidehat{X^{c(\ell), \ffi(\ell)}_\init} (\ka(\ell)).
\end{align*}
Thanks to \eqref{qj Bst} and \eqref{def De j} we have $\frac{q^\eta (\ka(r_1),\ka(r_2))}{\De_{\ka(r_1),\ka(r_2)}^{\iota_r(\iota_1,\iota_2)}}=q_r(A,\ka)$ and $\iota_r \Delta_{\ka(r_1),\ka(r_2)}^{\iota_r(\iota_1,\iota_2)}=\De_r(A,\ka)$. Moreover, since the root of $A$ is a binary node we have $\mathcal{N}_T(A)=\mathcal{N}_T(A_1)\sqcup\mathcal{N}_T(A_2)$ and $I_{A_1}(t) \times I_{A_2}(t)=I_A(t)$. Thus we obtain
\begin{align*}
\widehat{F_{\pth{A,\ffi, c}}} (t,k) & =  \pth{\frac{\e}{L^{\frac{d}{2}}}}^{1+n_1+n_2} (-i)^{ n_T(A)} \sum_{\ka\in \mathcal{D}_k(A)}  e^{it\pth{\De_r(A,\ka) + \Om_{-1}\pth{A_1,\ka_{|_{A_1}}} + \Om_{-1}\pth{A_2,\ka_{|_{A_2}}} }}
\\&\quad\times \prod_{j\in\mathcal{N}(A)}q_{j}(A,\ka)\int_{I_{A}(t) }\prod_{j\in  \mathcal{N}_T(A)} e^{i \Om_{j}(A,\ka) t_{j}} \d t_{j}   \prod_{\ell\in\mathcal{L}(A)} \reallywidehat{X^{c(\ell), \ffi(\ell)}_\init} (\ka(\ell)).
\end{align*}
Using then $n_1+n_2=n+1$ and $\Om_{-1}(A,\ka) = \De_r(A,\ka) + \Om_{-1}\pth{A_1,\ka_{|_{A_1}}} + \Om_{-1}\pth{A_2,\ka_{|_{A_2}}}$ concludes the proof of \eqref{fourier of F} for $A$.

\saut
Similarly, if $A=T_*(A_1,A_2,A_3)$ with $(A_1,A_2,A_3)\in\tilde{\mathcal{A}}_{n_1}\times \tilde{\mathcal{A}}_{n_2}\times\tilde{\mathcal{A}}_{n_3}$ and $n_1+n_2+n_3=n$, we use \eqref{def FT}, \eqref{def C Fourier}, \eqref{F^iota} and our induction assumption applied to $A_1$, $A_2$ and $A_3$ to get
\begin{align*}
&\widehat{F_{\pth{A,\ffi, c}}} (t,k) 
\\& =   \pth{\frac{\e}{L^{\frac{d}{2}}}}^{n_1+n_2+n_3+2} (-i)^{ n_T(A_1)+n_T(A_2)+n_T(A_3)+1}  \sum_{\substack{k_1+k_2+k_3 = k \\\ka_1\in\mathcal D_{k_1}(A_1)\\\ka_2\in\mathcal D_{k_2}(A_2) \\\ka_3\in\mathcal D_{k_3}(A_3) }} \iota_r\chi_*(k_1,k_2,k_3) q_{\iota_r\iota_2}^\eta (k_1,k_2,k_3) 
 \\&\quad \times\prod_{\substack{j_1\in \mathcal N(A_1)\\ j_2\in \mathcal N(A_2) \\ j_3\in \mathcal N(A_3) }} q_{j_1}(A_1,\ka_1)q_{j_2}(A_2,\ka_2)q_{j_3}(A_3,\ka_3)
\\&\quad \times \int_0^te^{i\tau\pth{\iota_r \Delta_{k_1,k_2,k_3}^{\iota_r(\iota_1,\iota_2,\iota_3)}+\Om_{-1}(A_1,\ka_1)+\Om_{-1}(A_2,\ka_2)+\Om_{-1}(A_3,\ka_3)}}
\\&\hspace{1cm}\times\pth{\int_{I_{A_1}(\tau)\times I_{A_2}(\tau)\times I_{A_3}(\tau)} \prod_{\substack{j_1\in  \mathcal N_T(A_1)\\ j_2\in  \mathcal N_T(A_2)\\ j_3\in  \mathcal N_T(A_3)}} e^{i \Om_{j_1}(A_1,\ka_1) t_{j_1}} e^{i \Om_{j_2}(A_2,\ka_2) t_{j_2}} e^{i \Om_{j_3}(A_3,\ka_3) t_{j_3}} \d t_{j_1} \d t_{j_2} \d t_{j_3}} \d\tau
\\&\quad \times \prod_{\substack{\ell_1\in \mathcal L(A_1) \\ \ell_2\in \mathcal L(A_2) \\ \ell_3\in \mathcal L(A_3)}} \widehat{X^{c(\ell_1), \ffi(\ell_1)}_\init} (\ka_1(\ell_1))\widehat{X^{c(\ell_2), \ffi(\ell_2)}_\init} (\ka_2(\ell_2))\widehat{X^{c(\ell_3), \ffi(\ell_3)}_\init} (\ka_3(\ell_3)),
\end{align*}
where we also used 
\begin{align*}
\chi_*(\iota_rk_1,\iota_rk_2,\iota_rk_3) & = \chi_*(k_1,k_2,k_3),
\\ q_{\iota_r\iota_2}^\eta (\iota_rk_1,\iota_rk_2,\iota_rk_3) & = q_{\iota_r\iota_2}^\eta (k_1,k_2,k_3),
\\  \Delta_{\iota_rk_1,\iota_rk_2,\iota_rk_3}^{\iota_r(\iota_1,\iota_2,\iota_3)} & =  \Delta_{k_1,k_2,k_3}^{\iota_r(\iota_1,\iota_2,\iota_3)},
\end{align*}
which follow from Definition \ref{def cutoff}, \eqref{De triple} and \eqref{q triple}. Now, as in the case of $B_\st(A_1,A_2)$ above, we have
\begin{align*}
\sum_{\substack{k_1 + k_2 + k_3 = k \\ \ka_1\in\mathcal{D}_{k_1}(A_1) \\ \ka_2\in\mathcal{D}_{k_2}(A_2) \\ \ka_3\in\mathcal{D}_{k_3}(A_3) }} Q(k_1,k_2,k_3,\ka_1,\ka_2,\ka_3) = \sum_{\ka\in \mathcal{D}_k(A)} Q\pth{\ka(r_1),\ka(r_2),\ka(r_3),\ka_{|_{A_1}},\ka_{|_{A_2}},\ka_{|_{A_3}}}
\end{align*}
where $r_1$, $r_2$ and $r_3$ are the roots of $A_1$, $A_2$ and $A_3$ in $A$. This implies, also using 
\begin{align*}
n_1+n_2+n_3+2 & = n+2,
\\ n_T(A_1)+n_T(A_2)+n_T(A_3)+1 & = n_T(A),
\\ \mathcal{N}(A) & =\mathcal{N}(A_1)\sqcup\mathcal{N}(A_2)\sqcup\mathcal{N}(A_3)\sqcup\{r\},
\\ \mathcal{L}(A) & =\mathcal{L}(A_1)\sqcup\mathcal{L}(A_2)\sqcup\mathcal{L}(A_3),
\end{align*}
that
\begin{align*}
&\widehat{F_{\pth{A,\ffi, c}}} (t,k) 
\\& =   \pth{\frac{\e}{L^{\frac{d}{2}}}}^{n+2} (-i)^{ n_T(A)}  \sum_{\ka\in \mathcal{D}_k(A)} \prod_{j\in \mathcal{N}(A)} q_{j}(A,\ka)
\\&\quad \times \int_0^te^{i\Om_r(A,\ka)\tau}
\\&\quad \times\pth{\hspace{-0.2cm}\int_{I_{A_1}(\tau)\times I_{A_2}(\tau)\times I_{A_3}(\tau)} \prod_{\substack{j_1\in  \mathcal N_T(A_1)\\ j_2\in  \mathcal N_T(A_2)\\ j_3\in  \mathcal N_T(A_3)}} e^{i \Om_{j_1}\pth{A_1,\ka_{|_{A_1}}} t_{j_1}} e^{i \Om_{j_2}\pth{A_2,\ka_{|_{A_1}}} t_{j_2}} e^{i \Om_{j_3}\pth{A_3,\ka_{|_{A_1}}} t_{j_3}} \d t_{j_1} \d t_{j_2} \d t_{j_3}\hspace{-0.2cm}} \d\tau
\\&\quad \times \prod_{\ell\in \mathcal L(A) } \widehat{X^{c(\ell), \ffi(\ell)}_\init} (\ka(\ell)),
\end{align*}
Note that we used the following facts 
\begin{align*}
\iota_r \Delta_{\ka(r_1),\ka(r_2),\ka(r_3)}^{\iota_r(\iota_1,\iota_2,\iota_3)}+\Om_{-1}\pth{A_1,\ka_{|_{A_1}}}+\Om_{-1}\pth{A_2,\ka_{|_{A_2}}}+\Om_{-1}\pth{A_3,\ka_{|_{A_3}}} & = \Om_r(A,\ka),
\\ \iota_r\chi_*(\ka(r_1),\ka(r_2),\ka(r_3)) q_{\iota_r\iota_2}^\eta (\ka(r_1),\ka(r_2),\ka(r_3)) & = q_r(A,\ka),  
\end{align*}
which follows from \eqref{def De j}, \eqref{Omegaj} and \eqref{qj T}. Now, since $\mathcal{N}_T(A) =\mathcal{N}_T(A_1)\sqcup\mathcal{N}_T(A_2)\sqcup\mathcal{N}_T(A_3)\sqcup\{r\}$ and $j<r$ for all $j\in \mathcal{N}_T(A_1)\sqcup\mathcal{N}_T(A_2)\sqcup\mathcal{N}_T(A_3)$ we have
\begin{align*}
I_A(t) = \enstq{(t_j)_{j\in\mathcal{N}_T(A)}\in[0,t]^{n_T(A)}}{t_r\in[0,t], \; \forall i=1,2,3, (t_j)_{j\in\mathcal{N}_T(A_i)}\in I_{A_i}(t_r)}.
\end{align*}
This implies, after having identified the variable $\tau$ with $t_r$, that
\begin{align*}
\widehat{F_{\pth{A,\ffi, c}}} (t,k) & =   \pth{\frac{\e}{L^{\frac{d}{2}}}}^{n+2} (-i)^{ n_T(A)}  \sum_{\ka\in \mathcal{D}_k(A)} \prod_{j\in \mathcal{N}(A)} q_{j}(A,\ka) 
\\&\hspace{3cm}\times\int_{I_A(t)}\prod_{j\in\mathcal{N}_T(A)} e^{i\Om_j(A,\ka)t_j}\d t_j  \prod_{\ell\in \mathcal L(A) } \widehat{X^{c(\ell), \ffi(\ell)}_\init} (\ka(\ell)).
\end{align*}
Since $\Om_{-1}(A,\ka)=0$ (because the root is a ternary node), this shows that \eqref{fourier of F} holds for any $A\in\tilde{\mathcal{A}}_{n+2}$, thus concluding our induction and the proof of the proposition.
\end{proof}

\begin{example}
On the tree $A_1$ of Example \ref{exemple Omega}, we have 
\begin{align*}
I_{A_1}(t) = \{(t_j)_{j\in \{\textcolor{blue}{1,3,5}\}} \; |\; t_{\textcolor{blue}{1}} \in [0,t] \, \wedge \, 0 \leq t_{\textcolor{blue}{3}} \leq t_{\textcolor{blue}{5}} \leq t \} .
\end{align*}
\end{example}

\begin{remark}\label{remark prop Fourier F}
We make two important comments on Proposition \ref{prop:FFourier}:
\begin{itemize}
\item[(i)] Thanks to our assumption \eqref{assumption Q}, if $n\geq 1$ then for any $\ka\in\mathcal{D}_0(A)$, $q_r(A,\ffi,c,\ka)=0$ and
\begin{align}\label{F(0)=0}
\widehat{F_{(A,\ffi,c)}}(t,0) = 0.
\end{align}
\item[(ii)]After a change of variables, we get
\begin{align}
&\widehat{F_{(A,\ffi,c)}}(\e^{-2}t,k) \label{fourier F e-2}
\\& \; = \frac{\e^{n_B(A)}}{L^{\frac{nd}{2}}}   (-i)^{ n_T(A)} \sum_{\ka\in\mathcal D_k(A)}  e^{i\e^{-2}\Om_{-1}t}\prod_{j\in \mathcal N(A)} q_j \int_{I_A(t)} \prod_{j\in  \mathcal N_T(A)} e^{i \Om_j \e^{-2} t_j} \d t_j \prod_{\ell\in \mathcal L(A)} \widehat{X^{c(\ell), \ffi(\ell)}_\init} (\ka(\ell)),\non
\end{align}
where we completely dropped the dependence on $A$, $\ffi$, $c$ and $\ka$ of the $\Om_{-1}$, $q_j$ and $\Om_j$ for brevity, and where we used $n=n_B(A)+2n_T(A)$.
\end{itemize}
\end{remark}

\subsection{Coupling between trees}\label{section coupling between trees}

In order to study the evolution of the correlations, we need to estimate quantities of the form
\begin{align*}
\E\pth{ \widehat{F_{(A_1,\ffi_1,c_1)}}(\e^{-2}t,k_1) \widehat{F_{(A_2,\ffi_2,c_2)}}(\e^{-2}t,k_2) }  . 
\end{align*}
According to \eqref{fourier of F}, this is will involve the following expectation
\begin{align*}
 \E\pth{ \prod_{\substack{\ell_1\in \mathcal L(A_1) \\ \ell_2\in \mathcal L(A_2) }} \widehat{X^{c_1(\ell), \ffi_1(\ell)}_\init} (\ka_1(\ell_1))  \widehat{X^{c_2(\ell), \ffi_2(\ell)}_\init} (\ka_2(\ell_2)) }.
\end{align*}
Since $\widehat{X^\eta_\init}(k)=\mu^\eta_k$, 
Wick's formula implies that the above expectation vanishes unless the leaves in $\mathcal{L}(A_1)\cup \mathcal{L}(A_2)$ are paired in a particular fashion.
To better explain this phenomenon and its consequences, we introduce below the notion of coupling.

\begin{definition}[Coupling]\label{def:coupling}
Let $(A_1,\ffi_1,c_1)$ and $(A_2,\ffi_2,c_2)$ be two signed and coloured trees and consider a map $\si:\mathcal{L}(A_1)\sqcup\mathcal{L}(A_2)\longrightarrow \mathcal{L}(A_1)\sqcup\mathcal{L}(A_2)$. 
\begin{itemize}
\item We say that the triplet $C=\big( (A_1,\ffi_1,c_1),(A_2,\ffi_2,c_2),\si \big)$ is a coupling if the following two conditions hold:
\begin{itemize}
\item[(i)] The cardinal of 
\begin{align}\label{def LC-}
\mathcal{L}(C)_- \vcentcolon = \enstq{\ell \in \mathcal{L}(A_1) }{ \ffi_1(\ell)=- }\sqcup \enstq{\ell \in \mathcal{L}(A_2) }{ \ffi_2(\ell)=- },
\end{align}
is equal to the cardinal of 
\begin{align}\label{def LC+}
\mathcal{L}(C)_+ \vcentcolon = \enstq{\ell \in \mathcal{L}(A_1) }{ \ffi_1(\ell)=+ }\sqcup \enstq{\ell \in \mathcal{L}(A_2) }{ \ffi_2(\ell)=+ }.
\end{align}
\item[(ii)] The map $\si$ is an involution and a bijection from $\mathcal{L}(C)_-$ to $\mathcal{L}(C)_+$.
\end{itemize}
\item For $n_1,n_2\in\mathbb{N}$, $\eta_1,\eta_2\in\{0,1\}$ and $\iota_1,\iota_2\in\{\pm\}$ we define
\begin{align*}
    \mathcal{C}^{\eta_1,\eta_2,\iota_1,\iota_2}_{n_1,n_2} & \vcentcolon = \enstq{\big( (A_1,\ffi_1,c_1),(A_2,\ffi_2,c_2),\si \big) \text{ coupling}}{(A_1,\ffi_1,c_1)\in\mathcal{A}^{\eta_1,\iota_1}_{n_1},\; (A_2,\ffi_2,c_2)\in\mathcal{A}^{\eta_2,\iota_2}_{n_2}  },
    \\ \mathcal{C}_{n_1,n_2} & \vcentcolon = \bigsqcup_{\substack{ \eta_1,\eta_2\in\{0,1\} \\ \iota_1,\iota_2\in\{\pm\} }} \mathcal{C}^{\eta_1,\eta_2,\iota_1,\iota_2}_{n_1,n_2}.
\end{align*}
\item If $C=\big( (A_1,\ffi_1,c_1),(A_2,\ffi_2,c_2),\si \big)$ is a coupling, then we define 
\begin{align*}
\mathcal{N}_X(C) & \vcentcolon = \mathcal{N}_X(A_1) \sqcup \mathcal{N}_X(A_2), 
\\ \mathcal{L}(C) & \vcentcolon = \mathcal{L}(A_1) \sqcup \mathcal{L}(A_2),
\\ \mathcal{N}(C) & \vcentcolon = \mathcal{N}(A_1) \sqcup \mathcal{N}(A_2),
\\ \mathcal{R}(C) & \vcentcolon = \mathcal{R}(A_1)\sqcup \mathcal{R}(A_2)
\end{align*}
for $X \in \{\init,\st,B, \high,\lowl,\lowm,\lowr, \low,T\}$ and $ n_X(C)$, $n_\ell(C)$ the respective cardinals of $\mathcal{N}_X(C)$, $\mathcal{L}(C) $. Moreover, if $C\in \mathcal{C}_{n_1,n_2}$ then we define
\begin{align*}
n(C)\vcentcolon = n_1+n_2.
\end{align*}
\end{itemize}
\end{definition}

\begin{example}\label{ex:coupling}
Set $A=B_\st(T_\high(\bot,\bot,\bot),\bot)$ with the colour of the root $0$ and the following sign map 
\begin{align*}
\ffi(r) & = -, \quad \ffi(rg)  = -, \quad \ffi(rd)  = +,\quad \ffi(rgg)  = -, \quad \ffi(rgm)  = +, \quad \ffi(rgd)  = -,
\end{align*}
where $r$ is the root, $rg$ is the left child of the root, $rd$ is the right child of the root, $rgg$ is the left child of $rg$, $rgm$ is the middle child of $rg$ and $rgd$ is the right child of $rg$. Set $A' = B_\st(\bot,\bot)$ with the colour of the root $1$ and the following sign map
\begin{align*}
\ffi'(r') = +, \quad \ffi'(r'g') = -, \quad \ffi'(r'd') = +,
\end{align*}
where $r'$ is the root of $A'$, $r'g'$ is its left child and $r'd'$ its right child. Define $\sigma$ by
\begin{align*}
\sigma(rgg)= rd , \quad \sigma(rgd)= r'd' , \quad \sigma(rg')= rgm . 
\end{align*}
The triplet $C=\big((A,\ffi,c_0),(A',\ffi',c_1),\sigma\big)$ forms a coupling, which we represent graphically below. The minus leaves are associated to the plus leaves by means of the same symbol. The signs of the branching nodes are omitted for readibility.

\begin{center} 
\begin{tikzpicture}[level 1/.style={sibling distance=2cm},
                    level 2/.style={sibling distance=1cm}]
\node{\textcolor{red}{$\st$}}
    child{node{\textcolor{Dandelion}{$\textup{high}$}}
        child{node[label=below:{$-$}]{\textcolor{Dandelion}{\FourClowerSolid}}}
        child{node[label=below:{$+$}]{\textcolor{red}{\FiveFlowerPetal}}}
        child{node[label=below:{$-$}]{\textcolor{Dandelion}{\SixFlowerAltPetal}}}
    }
    child{node[label=below:{$+$}]{\textcolor{Dandelion}{\FourClowerSolid}}}
;
\node at (4,0) {\textcolor{Dandelion}{$\st$}}
    child{node[label=below:{$-$}]{\textcolor{red}{\FiveFlowerPetal}}}
    child{node[label=below:{$+$}]{\textcolor{red}{\SixFlowerAltPetal}}}
;
\end{tikzpicture}
\end{center}
\end{example}

We introduce the following definition of decoration of a coupling, i.e a decoration of two trees compatible with the coupling map $\si$ and Wick's formula.

\begin{definition}[Decoration of a coupling]\label{def:decocoup} 
Let $C=\big( (A_1,\ffi_1,c_1),(A_2,\ffi_2,c_2),\si \big)$ be a coupling.
\begin{itemize}
\item A decoration map of $C$ is a map
\begin{align*}
\ka : \mathcal{L}(C)\sqcup \mathcal{N}(C) \longrightarrow \R^d
\end{align*}
such that $\ka_{|_{\mathcal{L}(A_1)\sqcup \mathcal{N}(A_1)}}$ and $\ka_{|_{\mathcal{L}(A_2)\sqcup \mathcal{N}(A_2)}}$ are decoration maps on $A_1$ and $A_2$ in the sense of Definition \ref{def decoration tree} and such that
\begin{align}\label{decoration coupling rule}
\forall \ell\in\mathcal{L}(C), \; \ka(\ell) + \ka(\si(\ell)) = 0.
\end{align} 
\item For $k\in\Z_L^d$, we say that a decoration $\ka$ of $C$ is a $k$-decoration of $C$ if $\ka(r_{A_1})=k$ (where $r_{A_1}$ is the root of $A_1$) and $\ka\pth{\mathcal{L}(C)}\subset \Z_L^d$. We define
\begin{align*}
\mathcal{D}_k(C) \vcentcolon = \left\{\text{$k$-decoration of $C$}\right\}.
\end{align*} 
\item If $\ka$ is a decoration map of $C$ and if we denote by $\ka_i = \ka_{|_{\mathcal{L}(A_i)\sqcup \mathcal{N}(A_i)}}$ for $i=1,2$, then we define
\begin{align*}
q_j & \vcentcolon = q_j(A_i,\ffi,c_i,\ka_i), \qquad \text{for $j\in\mathcal{N}(A_i)$},
\\ \Om_j & \vcentcolon = \Om_j(A_i,\ffi,\ka_i), \qquad \text{for $j\in\mathcal{N}_T(A_i)$},
\\ \Om_{-1}(C) & \vcentcolon = \Om_{-1}(A_1,\ffi_1,\ka_1)+\Om_{-1}(A_2,\ffi_2,\ka_2).
\end{align*}
\end{itemize}
\end{definition}

\begin{remark}\label{remark post def decocoup}
We make two comments to illustrate the previous definition.
\begin{itemize}
\item[(i)] A decoration map $\ka$ of a coupling $C=\big( (A_1,\ffi_1,c_1),(A_2,\ffi_2,c_2),\si \big)$ is uniquely determined by its restriction $\ka_{|_{\mathcal{L}(C)_-}}$ to the negative (or the positive) leaves of $C$. Indeed, if the image of the negative leaves is given, \eqref{decoration coupling rule} and the fact that $\si:\mathcal{L}(C)_-\longrightarrow\mathcal{L}(C)_+$ is a bijection allow us to define uniquely $\ka$ on $\mathcal{L}(A_1)\sqcup\mathcal{L}(A_2)$. Since $\ka_{|_{\mathcal{L}(A_1)\sqcup \mathcal{N}(A_1)}}$ and $\ka_{|_{\mathcal{L}(A_2)\sqcup \mathcal{N}(A_2)}}$ are asked to be decoration maps of $A_1$ and $A_2$, \eqref{récurrence décoration} then completely defines $\ka$.
\item[(ii)] If $\ka$ is a decoration map of a coupling $C=\big( (A_1,\ffi_1,c_1),(A_2,\ffi_2,c_2),\si \big)$, then $\ka(r_{A_1})+\ka(r_{A_2})=0$, where $r_{A_1}$ and $r_{A_2}$ are the roots of $A_1$ and $A_2$. In particular, if $\ka$ is a $k$-decoration of $C$ then $\ka(r_{A_2})=-k$.
\end{itemize}
\end{remark}

\begin{example}
Here is an example of decoration map for the coupling $C$ of Example \ref{ex:coupling}: 
\begin{center} 
\begin{tikzpicture}[level 1/.style={sibling distance=3cm},
                    level 2/.style={sibling distance=1.5cm}]
\node[label=above:{\textcolor{blue}{$k_1-k_3$}}]{\textcolor{red}{$\st$}}
    child{node[label=left:{\textcolor{blue}{$k_1-k_2-k_3$}}]{\textcolor{Dandelion}{$*$}}
        child{node[label=below:{$-,\textcolor{blue}{-k_2}$}]{\textcolor{Dandelion}{\FourClowerSolid}}}
        child{node[label=below:{$+,k_1$}]{\textcolor{red}{\FiveFlowerPetal}}}
        child{node[label=below:{$-,\textcolor{blue}{-k_3}$}]{\textcolor{Dandelion}{\SixFlowerAltPetal}}}
    }
    child{node[label=below:{$+,k_2$}]{\textcolor{Dandelion}{\FourClowerSolid}}}
;
\node at (6,0) [label=above:{\textcolor{blue}{$k_3-k_1$}}]{\textcolor{Dandelion}{$\st$}}
    child{node[label=below:{$-,\textcolor{blue}{-k_1}$}]{\textcolor{red}{\FiveFlowerPetal}}}
    child{node[label=below:{$+,k_3$}]{\textcolor{red}{\SixFlowerAltPetal}}}
;
\end{tikzpicture}
\end{center}
\end{example}

%\begin{notation} Given a coupling $C = (A,A',\sigma)$ and a decoration of $C$, we write \[\Omega_{-1} = \Omega_{-1}(A) + \Omega_{-1}(A')\] where $\Omega_{-1}(A)$ (resp. $\Omega_{-1}(A')$) is obtained from $A$ (resp. $A'$) and the decoration $(k^A_\ell)_{\ell\in \mathcal L(A)}$ (resp. $(k^{A'}_\ell)_{\ell\in \mathcal L(A')}$) as in \eqref{Omega-1}.
%\end{notation}

The following proposition makes fully explicit the role played by couplings in the computation of correlations, as introduced at the beginning of this section.

\begin{prop}\label{prop:correlation}
For all $\eta_1,\eta_2\in\{0,1\}$, $\iota_1,\iota_2\in\{\pm\}$, $n_1,n_2\in\mathbb{N}$ and $k\in\Z_L^d$ we have
\begin{align}\label{correlations to coupling}
\E\pth{ \widehat{X^{\eta_1,\iota_1}_{n_1}}(\e^{-2}t,k)\widehat{X^{\eta_2,\iota_2}_{n_2}}(\e^{-2}t,-k)  }& = \sum_{C \in \mathcal C_{n_1,n_2}^{\eta_1,\eta_2,\iota_1,\iota_2}} \widehat{F_C}(\e^{-2}t,k),
\end{align}
where $F_C$ is defined in Fourier space by
\begin{equation}\label{fourier of FC}
\begin{aligned}
\widehat{F_C}(\e^{-2}t,k) & \vcentcolon =  \frac{\e^{n_B(C)}}{L^{\frac{n(C)d}{2}}}   (-i)^{ n_T(C)}  \sum_{\ka\in\mathcal{D}_k(C)} e^{i\e^{-2}\Om_{-1}(C)t} \prod_{j\in \mathcal N(C)} q_{j}(C) 
\\&\hspace{2cm}\quad \times\int_{I_{C}(t)} \prod_{j\in  \mathcal N_T(C)}  e^{i \Om_{j} \e^{-2} t_{j}} \d t_{j} \prod_{\ell\in\mathcal{L}(C)_-}  M^{c(\ell),c(\si(\ell))}(\ffi(\ell)\ka(\ell))^{\ffi(\ell)}.
\end{aligned}
\end{equation}
where if $C=\big( (A_1,\ffi_1,c_1),(A_2,\ffi_2,c_2),\si \big)$ is a coupling then
\begin{align*}
I_C(t) \vcentcolon = I_{A_1}(t)\times I_{A_2}(t)
\end{align*}
and where we use the convention that the integral over $I_C(t)$ equals 1 if $I_C(t)$ is empty.
\end{prop}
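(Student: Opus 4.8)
The plan is to combine three facts that are already available: the tree expansion $X^{\eta,\iota}_n=\sum_{(A,\ffi,c)\in\mathcal A_n^{\eta,\iota}}F_{(A,\ffi,c)}$ from Proposition \ref{prop:XtoF}, the explicit Fourier formula \eqref{fourier F e-2} for each $\widehat{F_{(A,\ffi,c)}}(\e^{-2}t,\cdot)$ from Proposition \ref{prop:FFourier}, and Wick's formula \eqref{eq:Wick} for the centred complex Gaussian family $(\mu^\eta_k)$. First I would use Proposition \ref{prop:XtoF} and bilinearity of $\E$ to write the left-hand side of \eqref{correlations to coupling} as a sum over pairs of signed and coloured trees $(A_1,\ffi_1,c_1)\in\mathcal A_{n_1}^{\eta_1,\iota_1}$, $(A_2,\ffi_2,c_2)\in\mathcal A_{n_2}^{\eta_2,\iota_2}$ of the quantities $\E\pth{\widehat{F_{(A_1,\ffi_1,c_1)}}(\e^{-2}t,k)\,\widehat{F_{(A_2,\ffi_2,c_2)}}(\e^{-2}t,-k)}$. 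For a fixed pair I would plug in \eqref{fourier F e-2} for each factor: the prefactors, the phases $e^{i\e^{-2}\Om_{-1}t}$, the products of the $q_j$'s and the time integrals over $I_{A_i}(t)$ are deterministic and come out of the expectation, so only the product of leaf factors $\prod_{\ell}\widehat{X^{c(\ell),\ffi(\ell)}_\init}(\ka_i(\ell))$ remains inside. Using $\widehat{X^\eta_\init}(k)=\mu^\eta_k$ and the convention $\widehat{u^\iota}(k)=\widehat{u}(\iota k)^\iota$, each such factor equals $\big(\mu^{c(\ell)}_{\ffi(\ell)\ka(\ell)}\big)^{\ffi(\ell)}$, i.e. $\mu^{c(\ell)}_{\ka(\ell)}$ on a positive leaf and $\overline{\mu^{c(\ell)}_{-\ka(\ell)}}$ on a negative leaf.

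Next I would apply Wick's formula to the expectation of this product over $\mathcal L(A_1)\sqcup\mathcal L(A_2)$. Since by \eqref{assumption ID} the $\mu^\eta_k$ are centred with $\E(\mu^\eta_k\mu^{\eta'}_{k'})=0$, a perfect matching contributes nothing unless it pairs each $\mu$ with a $\bar\mu$, i.e. each positive leaf with a negative leaf. Such a matching exists only when $\#\mathcal L(C)_-=\#\mathcal L(C)_+$ (condition (i) of Definition \ref{def:coupling}), and then it is precisely the datum of an involution $\si$ of $\mathcal L(A_1)\sqcup\mathcal L(A_2)$ restricting to a bijection $\mathcal L(C)_-\to\mathcal L(C)_+$, that is, of a coupling map making $C=\big((A_1,\ffi_1,c_1),(A_2,\ffi_2,c_2),\si\big)$ a coupling. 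For such a $\si$, a second application of \eqref{assumption ID} gives, for $\ell\in\mathcal L(C)_-$,
\[ \E\pth{\overline{\mu^{c(\ell)}_{-\ka(\ell)}}\,\mu^{c(\si(\ell))}_{\ka(\si(\ell))}}=\de_{\ka(\ell)+\ka(\si(\ell))}\,M^{c(\si(\ell)),c(\ell)}(\ka(\si(\ell))). \]

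Then I would resum over decorations. The Kronecker deltas $\de_{\ka(\ell)+\ka(\si(\ell))}$, $\ell\in\mathcal L(C)_-$, cut the sum over $(\ka_1,\ka_2)\in\mathcal D_k(A_1)\times\mathcal D_{-k}(A_2)$ down to exactly the pairs arising from a $k$-decoration of the coupling $C$: this is the content of the compatibility rule \eqref{decoration coupling rule} together with Remark \ref{remark post def decocoup}, the constraint on the negative leaves propagating automatically to the positive ones since $\si$ is an involution and forcing $\ka(r_{A_2})=-k$ via \eqref{récurrence décoration}. Finally I would match notations: with $M^{\eta,\eta'}=\overline{M^{\eta',\eta}}$ one rewrites, for $\ell\in\mathcal L(C)_-$ (where $\ffi(\ell)=-$), $M^{c(\si(\ell)),c(\ell)}(\ka(\si(\ell)))=M^{c(\si(\ell)),c(\ell)}(-\ka(\ell))=M^{c(\ell),c(\si(\ell))}(\ffi(\ell)\ka(\ell))^{\ffi(\ell)}$, and one collects the prefactors via $n_B(A_1)+n_B(A_2)=n_B(C)$ and $n_T(A_1)+n_T(A_2)=n_T(C)$, the phase via $\Om_{-1}(A_1,\ffi_1,\ka_1)+\Om_{-1}(A_2,\ffi_2,\ka_2)=\Om_{-1}(C)$, the $q_j$-product over $\mathcal N(A_1)\sqcup\mathcal N(A_2)=\mathcal N(C)$, and the time integrals over $I_{A_1}(t)\times I_{A_2}(t)=I_C(t)$. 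This reproduces exactly $\widehat{F_C}(\e^{-2}t,k)$ as in \eqref{fourier of FC}, and summing over the trees and over the admissible $\si$ yields \eqref{correlations to coupling}.

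The substitutions and the regrouping of factors are routine; the one genuinely delicate point is the Wick step, namely verifying that the sign/conjugation bookkeeping (the conventions $z^\iota$ and $\widehat{u^\iota}(k)=\widehat{u}(\iota k)^\iota$, and the frequency arguments $\ffi(\ell)\ka(\ell)$) turns a surviving Gaussian pairing into precisely a coupling map $\si$ in the sense of Definition \ref{def:coupling}, and that the $\de$-constraints it induces on the two tree decorations combine into a single $k$-decoration of $C$ in the sense of Definition \ref{def:decocoup}. Once these two identifications are made, the rest is a matter of carrying the exponents, the powers of $-i$, and the $M$-factors through the algebra.
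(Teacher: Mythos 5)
Your proposal is correct and follows essentially the same route as the paper's proof: expand via Proposition \ref{prop:XtoF} and \eqref{fourier F e-2}, apply Wick's formula to the product of leaf factors, observe that only pairings matching negative to positive leaves survive (which is exactly the datum of a coupling map), and resum the $\de$-constrained pairs of tree decorations into $\mathcal{D}_k(C)$. The sign/conjugation bookkeeping you flag as the delicate point is handled exactly as in the paper, with your form $M^{c(\si(\ell)),c(\ell)}(\ka(\si(\ell)))$ converting to the paper's $M^{c(\ell),c(\si(\ell))}(\ffi(\ell)\ka(\ell))^{\ffi(\ell)}$ via the Hermitian symmetry $M^{\eta,\eta'}=\overline{M^{\eta',\eta}}$.
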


\begin{proof}
We first use \eqref{XtoF} to go from the sequence $X^{\eta,\iota}_n$ to trees:
\begin{align*}
\E \pth{ \widehat{X^{\eta_1,\iota_1}_{n_1}}(\e^{-2}t,k)\widehat{X^{\eta_2,\iota_2}_{n_2}}(\e^{-2}t,-k) } = \sum_{\substack{ (A_1,\ffi_1,c_1)\in\mathcal{A}_{n_1}^{\eta_1,\iota_1} \\ (A_2,\ffi_2,c_2)\in\mathcal{A}_{n_2}^{\eta_2,\iota_2} }}\E\pth{\widehat{F_{\pth{A_1,\ffi_1,c_1}}}(\e^{-2}t,k)\widehat{F_{\pth{A_2,\ffi_2,c_2}}}(\e^{-2}t,-k)}.
\end{align*}
Then we use \eqref{fourier F e-2} to express the Fourier transforms on the RHS:
\begin{equation}\label{proof EX2}
\begin{aligned}
&\E \pth{ \widehat{X^{\eta_1,\iota_1}_{n_1}}(\e^{-2}t,k)\widehat{X^{\eta_2,\iota_2}_{n_2}}(\e^{-2}t,-k) } 
\\&\hspace{1cm} = \sum_{\substack{ (A_1,\ffi_1,c_1)\in\mathcal{A}_{n_1}^{\eta_1,\iota_1} \\ (A_2,\ffi_2,c_2)\in\mathcal{A}_{n_2}^{\eta_2,\iota_2} }}\frac{\e^{n_B(A_1)+n_B(A_2)}}{L^{\frac{(n_1+n_2)d}{2}}}   (-i)^{ n_T(A_1)+n_T(A_2)}  \sum_{\substack{\ka_1\in\mathcal D_k(A_1)\\\ka_2\in\mathcal D_{-k}(A_2)}}  e^{i\e^{-2}\pth{\Om_{-1}(A_1)+\Om_{-1}(A_2)}t}
\\&\hspace{1cm}\quad \times \prod_{\substack{j_1\in \mathcal N(A_1)\\j_2\in \mathcal N(A_2)}} q_{j_1}(A_1)q_{j_2}(A_2) \int_{I_{A_1}(t)\times I_{A_2}(t)} \prod_{\substack{j_1\in  \mathcal N_T(A_1)\\ j_2\in  \mathcal N_T(A_2)}}  e^{i \Om_{j_1}(A_1) \e^{-2} t_{j_1}}e^{i \Om_{j_2}(A_2) \e^{-2} t_{j_2}} \d t_{j_1}\d t_{j_2}
\\&\hspace{1cm}\quad \times \E\pth{\prod_{\substack{\ell_1\in \mathcal L(A_1)\\ \ell_2\in \mathcal L(A_2)}} \reallywidehat{X^{c_1(\ell_1), \ffi_1(\ell_1)}_\init} (\ka_1(\ell_1)) \reallywidehat{X^{c_2(\ell_2), \ffi_2(\ell_2)}_\init} (\ka_2(\ell_2))},
\end{aligned}
\end{equation}
where the various $\Om_{-1}(A_i)$, $q_j(A_i)$ and $\Om_j(A_i)$ stand for $\Om_{-1}(A_i,\ffi_i,\ka_i)$, $q_j(A_i,\ffi_i,c_i,\ka_i)$ and $\Om_j(A_i,\ffi_i,\ka_i)$ for $i=1,2$. In order to unify the notations, we define maps $\ffi$, $c$, $\ka$ from $\mathcal{L}(A_1)\sqcup\mathcal{N}(A_1)\sqcup\mathcal{L}(A_2)\sqcup\mathcal{N}(A_2)$ to $\{\pm\}$, $\{0,1\}$, $\R^d$ respectively so that their restriction to $\mathcal{L}(A_i)\sqcup\mathcal{N}(A_i)$ are given by $\ffi_i$, $c_i$, $\ka_i$ respectively, for $i=1,2$. This allows us to write
\begin{align*}
\E\pth{\prod_{\substack{\ell_1\in \mathcal L(A_1)\\ \ell_2\in \mathcal L(A_2)}} \reallywidehat{X^{c_1(\ell_1), \ffi_1(\ell_1)}_\init} (\ka_1(\ell_1)) \reallywidehat{X^{c_2(\ell_2), \ffi_2(\ell_2)}_\init} (\ka_2(\ell_2))} & = \E \pth{ \prod_{\ell\in \mathcal L(A_1)\sqcup \mathcal L(A_2)} \widehat{X^{c(\ell), \ffi(\ell)}_\init} (\ka(\ell)) }.
\end{align*}
Now Wick's formula implies
\begin{align}\label{eq:Wick}
\E \pth{ \prod_{\ell\in \mathcal L(A_1)\sqcup \mathcal L(A_2)} \widehat{X^{c(\ell), \ffi(\ell)}_\init} (\ka(\ell)) } & = \sum_{\substack{\si \text{ pairing of}\\ \mathcal{L}(A_1)\sqcup\mathcal{L}(A_2)}} \prod_{\ell\in S_\si^-} \E\pth{  \widehat{X^{c(\ell), \ffi(\ell)}_\init} (\ka(\ell)) \reallywidehat{X^{c(\si(\ell)), \ffi(\si(\ell))}_\init} (\ka(\si(\ell)))}
\\ \non & = \sum_{\substack{\si \text{ pairing of}\\ \mathcal{L}(A_1)\sqcup\mathcal{L}(A_2)}} \prod_{\ell\in S_\si^-}   \de_{\ffi(\ell)+\ffi(\si(\ell))} \de_{\ka(\ell)+\ka(\si(\ell))} M^{c(\ell),c(\si(\ell))}(\ffi(\ell)\ka(\ell))^{\ffi(\ell)},
\end{align}
where a pairing of a finite set is an involution without fixed point, where $S_\si^-$ is a subset of $\mathcal{L}(A_1)\sqcup\mathcal{L}(A_2)$ such that $S_\si^-\sqcup \si\pth{S_\si^-}=\mathcal{L}(A_1)\sqcup\mathcal{L}(A_2)$ and where we also used our assumptions on the initial data. This shows that in order to contribute, a pairing $\si$ of $\mathcal{L}(A_1)\sqcup\mathcal{L}(A_2)$ has to be such that 
\begin{align}\label{condition pairing}
\ffi(\ell)+\ffi(\si(\ell))=0 \quad \text{and} \quad \ka(\ell)+\ka(\si(\ell))=0
\end{align}
for all $\ell\in S_\si^-$. Together with the fact that each pairing is an involution, the first condition in \eqref{condition pairing} implies that $\si\pth{\mathcal{L}(C)_-}=\mathcal{L}(C)_+$, where $\mathcal{L}(C)_-$ and $\mathcal{L}(C)_+$ are defined in \eqref{def LC-}-\eqref{def LC+}. This shows that we are allowed to consider $S_\si^-=\mathcal{L}(C)_-$ and that $\si:\mathcal{L}(C)_-\longrightarrow\mathcal{L}(C)_+$ is a bijection. In particular, $C=\big( (A_1,\ffi_1,c_1),(A_2,\ffi_2,c_2),\si \big)$ is a coupling as defined in Definition \ref{def:coupling}. Therefore, after having switched the sum over the coupling maps $\si$ and the decoration map $\ka_1$ and $\ka_2$ we obtain
\begin{align*}
&\E \pth{ \widehat{X^{\eta_1,\iota_1}_{n_1}}(\e^{-2}t,k)\widehat{X^{\eta_2,\iota_2}_{n_2}}(\e^{-2}t,-k) } 
\\&\hspace{2cm} = \sum_{ C=\big( (A_1,\ffi_1,c_1),(A_2,\ffi_2,c_2),\si \big) \in \mathcal{C}^{\eta_1,\eta_2,\iota_1,\iota_2}_{n_1,n_2} }\frac{\e^{n_B(C)}}{L^{\frac{(n_1+n_2)d}{2}}}   (-i)^{ n_T(C)}
\\&\hspace{2cm}\quad \times \sum_{\substack{\ka_1\in\mathcal D_k(A_1)\\\ka_2\in\mathcal D_{-k}(A_2)\\ \text{$\ka(\ell)+\ka(\si(\ell))=0$, $\forall \ell\in\mathcal{L}(C)_-$} }} e^{i\e^{-2}\Om_{-1}(C)t} \prod_{j\in \mathcal N(C)} q_{j}(C) 
\\&\hspace{2cm}\quad \times\int_{I_{C}(t)} \prod_{j\in  \mathcal N_T(C)}  e^{i \Om_{j} \e^{-2} t_{j}} \d t_{j} \prod_{\ell\in\mathcal{L}(C)_-}  M^{c(\ell),c(\si(\ell))}(\ffi(\ell)\ka(\ell))^{\ffi(\ell)},
\end{align*}
where $\ffi$, $c$ and $\ka$ are defined as above based on $\ffi_1$, $\ffi_2$, $c_1$, $c_2$, $\ka_1$ and $\ka_2$. Finally, note that Definition \ref{def:decocoup} implies that there is a bijection between $\mathcal{D}_k(C)$ and the set 
\begin{align*}
\enstq{ (\ka_1,\ka_2)\in \mathcal{D}_k(A_1)\times\mathcal{D}_{-k}(A_2)}{ \forall \ell\in\mathcal{L}(C)_-, \; \ka(\ell)+\ka(\si(\ell))=0}.
\end{align*}
Therefore we finally obtain
\begin{align*}
&\E \pth{ \widehat{X^{\eta_1,\iota_1}_{n_1}}(\e^{-2}t,k)\widehat{X^{\eta_2,\iota_2}_{n_2}}(\e^{-2}t,-k) } 
\\&\hspace{1cm} = \sum_{ C=\big( (A_1,\ffi_1,c_1),(A_2,\ffi_2,c_2),\si \big) \in \mathcal{C}^{\eta_1,\eta_2,\iota_1,\iota_2}_{n_1,n_2} }\frac{\e^{n_B(C)}}{L^{\frac{(n_1+n_2)d}{2}}}   (-i)^{ n_T(C)}
\\&\hspace{1cm}\quad \times \sum_{\ka\in\mathcal{D}_k(C)} e^{i\e^{-2}\Om_{-1}(C)t} \prod_{j\in \mathcal N(C)} q_{j}(C) \int_{I_{C}(t)} \prod_{j\in  \mathcal N_T(C)}  e^{i \Om_{j} \e^{-2} t_{j}} \d t_{j} \prod_{\ell\in\mathcal{L}(C)_-}  M^{c(\ell),c(\si(\ell))}(\ffi(\ell)\ka(\ell))^{\ffi(\ell)},
\end{align*}
which concludes the proof of the proposition.
\end{proof}

\begin{remark}\label{remark couplings}
We make two comments on Proposition \ref{prop:correlation}:
\begin{itemize}
\item[(i)] As for the first comment in Remark \ref{remark prop Fourier F}, the assumption \eqref{assumption Q} implies that
\begin{align}\label{FC(0)=0}
\widehat{F_C}(\e^{-2}t,0) & = 0.
\end{align}
\item[(ii)] The result of Proposition \ref{prop:correlation} will be particularly useful to prove the convergence of the series $\sum_{n\in\mathbb{N}}X^\eta_n$ over a large probability set. Indeed, \eqref{correlations to coupling} implies that
\begin{align}\label{correlation in terms of couplings}
\E\pth{ \left| \widehat{X^\eta_n}(\e^{-2}t,k) \right|^2 } & = \sum_{C\in\mathcal{C}^{\eta,\eta,+,-}_{n,n}} \widehat{F_C}(\e^{-2}t,k),
\end{align}
so that estimating $\widehat{F_C}(\e^{-2}t,k)$ will allow us to estimate $\widehat{X^\eta_n}(\e^{-2}t,k)$.
\end{itemize}
\end{remark}

\subsection{Bushes}\label{section bushes}

In this section we introduce the notion of prebush and bush associated to particular nodes of trees and couplings. The definitions below only depend on the nature of nodes themselves, not on their signs/colours, nor on the involution $\si$ in the case of a coupling. They will be mainly useful in the case of couplings.

\saut
Definition \ref{def marked} below allows us to distinguish between the children of low ternary nodes that participate to the single low frequency interaction.

\begin{definition}[Marked and unmarked] \label{def marked}
Let $C$ be a coupling or a tree. 
\begin{itemize}
\item We say that $j\in \mathcal{N}(C)\sqcup \mathcal{L}(C)$ is \textbf{marked} if one of the following conditions hold:
\begin{itemize}
\item[(i)] $j$ is the middle or right child of a $\lowl$ node,
\item[(ii)] $j$ is the left or right child of a $\lowm$ node,
\item[(iii)] $j$ is the left or middle child of a $\lowr$ node, 
\end{itemize}
We say that $j$ is \textbf{unmarked} if it is not marked.
\item If $j\in\mathcal{N}_\low(C)$, we define
\begin{align}\label{def offsprings}
\offspring(j) \vcentcolon = \bigsqcup_{\substack{j'\in\mathtt{children}(j)\\\text{$j'$ marked }}} \enstq{\ell\in\mathcal{L}(C)}{\ell\leq j' }.
\end{align}
\end{itemize}
\end{definition}

It follows from Definition \ref{def marked} that the children of a binary node or a high ternary node are unmarked and that all branching nodes have at least one unmarked child. Moreover, the offspring of a low ternary node are precisely the leaves ultimately contributing to the low-frequency interaction of this node.

\begin{definition}[Prebush]\label{def prebush}
Let $C$ be a coupling or a tree. We define the \textbf{prebush} of any $j\in\mathcal{N}(C)\sqcup\mathcal{L}(C)$ recursively:
\begin{itemize}
\item If $\ell\in\mathcal{L}(C)$, then
\begin{align}\label{def prebush leaf}
\mathtt{prebush}(\ell) \vcentcolon = \{\ell\}.
\end{align}
\item If $j\in\mathcal{N}(C)$, then
\begin{align}\label{def prebush node}
\mathtt{prebush}(j) \vcentcolon = \bigsqcup_{\substack{j'\in\mathtt{children}(j)\\\text{$j'$ unmarked }}}\mathtt{prebush}(j').
\end{align}
\end{itemize}
\end{definition}

The following lemma contains basic properties of prebushes.

\begin{lemma}\label{lem prebush}
Let $C$ be a coupling or a tree.
\begin{itemize}
\item If $j\in\mathcal{N}(C)\sqcup\mathcal{L}(C)$, then
\begin{align}\label{prebush caracterisation}
\mathtt{prebush}(j) = \enstq{\ell\in\mathcal{L}(C)}{ \text{$\ell \leq j$ and there exists no marked node or leaf $j'$ such that $\ell \leq j' < j$} }.
\end{align}
In particular, $\mathtt{prebush}(j)\neq \emptyset$.
\item If $j,j'\in \mathcal{N}(C)\sqcup\mathcal{L}(C)$ are both marked, then 
\begin{align}\label{disjoint prebush}
j\neq j'\Longrightarrow \mathtt{prebush}(j)\cap\mathtt{prebush}(j')=\emptyset.
\end{align}
\end{itemize}
\end{lemma}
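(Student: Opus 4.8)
The plan is to prove the two items of Lemma~\ref{lem prebush} by induction on the structure of the tree (or on the size of the coupling, treating the two trees of a coupling separately), relying entirely on the recursive definitions \eqref{def prebush leaf}--\eqref{def prebush node} and on the basic facts about marked/unmarked children noted right after Definition~\ref{def marked}.

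\textbf{Characterisation \eqref{prebush caracterisation}.} First I would prove \eqref{prebush caracterisation} by induction on $j$ following the parentality order. The base case $j\in\mathcal{L}(C)$ is immediate: the right-hand set is $\{j\}$ since $\ell\le j$ forces $\ell=j$, and there is no $j'$ with $\ell\le j'<j$, so the condition is vacuous. For the inductive step, let $j$ be a branching node with children $j_1,\dots,j_p$ ($p=2$ or $3$). By \eqref{def prebush node}, $\mathtt{prebush}(j)=\bigsqcup_{j_i\text{ unmarked}}\mathtt{prebush}(j_i)$, and by the induction hypothesis each $\mathtt{prebush}(j_i)$ equals the set of leaves $\ell\le j_i$ with no marked node/leaf strictly between $\ell$ and $j_i$. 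I then check the two inclusions. For $\subseteq$: if $\ell\in\mathtt{prebush}(j_i)$ with $j_i$ unmarked, then $\ell\le j_i<j$; any marked $j'$ with $\ell\le j'<j$ satisfies either $j'=j_i$ (impossible, $j_i$ is unmarked) or $j'<j_i$ (excluded by the induction hypothesis), so $\ell$ lies in the right-hand set. For $\supseteq$: if $\ell\le j$ with no marked $j'$ in $[\ell,j)$, then since $\ell<j$ we have $\ell\le j_i$ for exactly one child $j_i$ (using the uniqueness of the path from $\ell$ to the root, noted at the end of Section~\ref{section trees}); this $j_i$ cannot be marked (else $j'=j_i$ would violate the hypothesis, as $j_i<j$ and $j_i$ is marked when $\ell\le j_i$ and $j_i\ne\ell$; if $j_i=\ell$ then $\ell$ itself would be a marked leaf with $\ell\le\ell<j$ — also excluded), and no marked node lies in $[\ell,j_i)\subseteq[\ell,j)$, so $\ell\in\mathtt{prebush}(j_i)\subseteq\mathtt{prebush}(j)$. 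Non-emptiness then follows: starting from $j$ and repeatedly descending into an unmarked child (one always exists, by the remark after Definition~\ref{def marked}) one reaches a leaf that belongs to $\mathtt{prebush}(j)$; alternatively, descend along unmarked children until hitting a leaf — that leaf witnesses $\mathtt{prebush}(j)\ne\emptyset$.

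\textbf{Disjointness \eqref{disjoint prebush}.} For the second item, let $j\ne j'$ be both marked. Using \eqref{prebush caracterisation}, suppose for contradiction that some leaf $\ell$ lies in $\mathtt{prebush}(j)\cap\mathtt{prebush}(j')$. Then $\ell\le j$ and $\ell\le j'$, so both $j$ and $j'$ lie on the (totally ordered) path from $\ell$ to the root; hence they are comparable, say $j<j'$ without loss of generality. But then $j$ is a marked node with $\ell\le j<j'$, which contradicts $\ell\in\mathtt{prebush}(j')$ by the characterisation. Hence the intersection is empty.

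\textbf{Main obstacle.} The genuinely delicate point is the $\supseteq$ inclusion in the inductive step, specifically the bookkeeping around whether $\ell$ equals the child $j_i$ it descends into and whether that child is marked: one must treat the case where a child of $j$ is itself a (marked) leaf. This is handled by observing that the condition defining the right-hand side of \eqref{prebush caracterisation} applies to marked \emph{nodes or leaves}, so a marked leaf child is excluded just as a marked branching-node child is. Once this case distinction is set up cleanly, everything else is a routine unwinding of the recursion, and the disjointness statement is then an easy corollary via the total order on root-paths.
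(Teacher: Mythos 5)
Your proof is correct and follows essentially the same route as the paper: a structural induction establishing the characterisation \eqref{prebush caracterisation} (the paper proves the $\supseteq$ inclusion directly by exhibiting the chain of unmarked nodes from $\ell$ to $j$, which is just the unwound form of your induction, and proves $\subseteq$ by the same induction you use), followed by the identical comparability argument for disjointness. No gaps.
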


\begin{proof}
We start by the first part of the lemma.
Let $\ell$ be an element of the set on the RHS of \eqref{prebush caracterisation}. By definition, $\ell$ is unmarked and there exists a family $(j_i)_{i\in\llbracket 1,n\rrbracket}$ of unmarked nodes such that $\ell\in\mathtt{children}(j_1)$, $j_i\in\mathtt{children}(j_{i+1})$ and $j_n\in\mathtt{children}(j)$. Thanks to \eqref{def prebush leaf} and \eqref{def prebush node}, this implies that 
\begin{align*}
\ell \in \mathtt{prebush}(j_1) \subset \cdots \subset \mathtt{prebush}(j_n) \subset \mathtt{prebush}(j),
\end{align*}
which shows the inclusion from right to left in \eqref{prebush caracterisation}. 

The inclusion from left to right is proved by induction. If $j\in\mathcal{L}(C)$, \eqref{prebush caracterisation} is a rewriting of \eqref{def prebush leaf}. Now, let $j\in\mathcal{N}(C)$ and assume that the inclusion from left to right in \eqref{prebush caracterisation} holds for all $j'\in\mathtt{children}(j)$, and let us prove that it also holds for $j$. Let $\ell\in\mathtt{prebush}(j)$, thanks to \eqref{def prebush node} there exists $j'\in\mathtt{children}(j)$ such that $j'$ is unmarked and $\ell\in\mathtt{prebush}(j')$. Therefore, our induction assumption implies that $(a)$ $\ell\leq j'$ and that $(b$) there is no marked node or leaf $j''$ such that $\ell\leq j''<j'$. Since $j'\in\mathtt{children}(j)$, the first point $(a)$ already implies $\ell\leq j$. Moreover, if there exists $j'''$ marked such that $\ell\leq j'''<j$ then by uniqueness of the path linking a leaf to a node we either have $j'''=j'$, which is impossible since $j'$ is unmarked, or $\ell\leq j'''<j'$, which is impossible by the second point $(b)$. This proves the inclusion from left to right for $j$, and concludes the proof of \eqref{prebush caracterisation}. Since each branching nodes have at least one unmarked child, \eqref{prebush caracterisation} proves that $\mathtt{prebush}(j)$ is never empty.

We now prove the second part. By contradiction, if $\ell\in \mathtt{prebush}(j)\cap\mathtt{prebush}(j')$, then \eqref{prebush caracterisation} implies $\ell\leq j$ and $\ell\leq j'$. This shows that $j$ and $j'$ are comparable and that $\ell\leq j < j'$ or $\ell\leq j'<j$, which contradicts the fact that both $j$ and $j'$ are marked. This concludes the proof of \eqref{disjoint prebush} and of the lemma.
\end{proof}

\begin{definition}[Bush]\label{def bush}
Let $C$ be a coupling or a tree.
\begin{itemize}
\item We define the \textbf{bush} of any $j\in\mathcal{N}_\low(C)$ by
\begin{align}\label{bush}
\mathtt{bush}(j) \vcentcolon = \bigsqcup_{\substack{j'\in\mathtt{children}(j)\\\text{$j'$ marked }}} \mathtt{prebush}(j').
\end{align}
\item We define
\begin{align*}
\mathtt{Bushes}(C)  \vcentcolon = \enstq{\mathtt{bush}(j) }{ j\in \mathcal N_\low(C)}.
\end{align*}
\end{itemize}
\end{definition}

\begin{example}\label{ex:bushes}
We illustrate here Definitions \ref{def marked}, \ref{def  prebush} and \ref{def bush}. Consider the following tree.
\begin{center}
\begin{tikzpicture}[level 1/.style={sibling distance=3cm},
                   level 2/.style={sibling distance=1.5cm}]
\node[label=above:{\textcolor{blue}{$1$}}]{$\lowl$}
    child{ node[label=left:{\textcolor{blue}{$2$}}]{$\st$}
        child{node{\textcolor{magenta}{$2$}}}
        child{node[label=right:{\textcolor{blue}{$4$}}]{$\lowm$}
            child{node{\fbox{\textcolor{magenta}{$5$}}}}
            child{node{\textcolor{magenta}{$6$}}}
            child{node[label=right:{\textcolor{blue}{$6$}}]{\fbox{$\init$}}
                child{node{\textcolor{magenta}{$8$}}}
                child{node{\textcolor{magenta}{$9$}}}
            }
        }
    }
    child{node{\fbox{\textcolor{magenta}{$1$}}}}
    child{node[label=right:{\textcolor{blue}{$3$}}]{\fbox{$\high$}}
        child{node{\textcolor{magenta}{$3$}}}
        child{node{\textcolor{magenta}{$4$}}}
        child{node[label=right:{\textcolor{blue}{$5$}}]{$\lowr$}
            child{node[label=left:{\textcolor{blue}{$7$}}]{\fbox{$\init$}}
                child{node{\textcolor{magenta}{$10$}}}
                child{node{\textcolor{magenta}{$11$}}}
            }
            child{node{\fbox{\textcolor{magenta}{$7$}}}}
            child{node[label=right:{\textcolor{blue}{$8$}}]{$\high$}
                child{node{\textcolor{magenta}{$12$}}}
                child{node{\textcolor{magenta}{$13$}}}
                child{node{\textcolor{magenta}{$14$}}}
            }
        }
    }
;
\end{tikzpicture}
\end{center}
We have labelled arbitrarily the nodes by \textcolor{blue}{blue} numbers and the leaves by \textcolor{magenta}{magenta} numbers. The marked children of low nodes are put into \fbox{boxes}. We have
\begin{align*}
    \mathtt{prebush}(\textcolor{blue}{1}) & = \mathtt{prebush}(\textcolor{blue}{2}) = \{\textcolor{magenta}{2},\textcolor{magenta}{6}\}, &  \mathtt{prebush}(\textcolor{blue}{3}) & =  \{\textcolor{magenta}{3},\textcolor{magenta}{4},\textcolor{magenta}{12},\textcolor{magenta}{13},\textcolor{magenta}{14}\}, &  \mathtt{prebush}(\textcolor{blue}{4}) & =  \{\textcolor{magenta}{6}\}, \\
    \mathtt{prebush}(\textcolor{blue}{5}) & =  \{\textcolor{magenta}{12},\textcolor{magenta}{13},\textcolor{magenta}{14}\}, & \mathtt{prebush}(\textcolor{blue}{6}) & =  \{\textcolor{magenta}{8},\textcolor{magenta}{9}\}, & \mathtt{prebush}(\textcolor{blue}{7}) &  =  \{\textcolor{magenta}{10},\textcolor{magenta}{11}\}.
\end{align*}
Note that $\mathtt{prebush}(\textcolor{blue}{3})$ does not intersect $\mathtt{prebush}(\textcolor{blue}{6})$ or $\mathtt{prebush}(\textcolor{blue}{7})$. Moreover we have
\begin{align*}
     \mathtt{bush}(\textcolor{blue}{1}) =  \{\textcolor{magenta}{1},\textcolor{magenta}{3},\textcolor{magenta}{4},\textcolor{magenta}{12},\textcolor{magenta}{13},\textcolor{magenta}{14}\},\quad     \mathtt{bush}(\textcolor{blue}{4}) =  \{\textcolor{magenta}{5},\textcolor{magenta}{8},\textcolor{magenta}{9}\}, \quad \mathtt{bush}(\textcolor{blue}{5}) =   \{\textcolor{magenta}{7},\textcolor{magenta}{10},\textcolor{magenta}{11}\}.
\end{align*}
\end{example}

The following proposition gathers important properties of bushes.

\begin{prop}\label{prop bush}
Let $C$ be a coupling or a tree.
\begin{itemize}
\item[(i)] The following properties hold:
\begin{align}
\forall j \in\mathcal{N}_\low(C),& \; \mathtt{bush}(j)\neq \emptyset, \label{bush non empty}
\\ \forall j,j'\in\mathcal{N}_\low(C),& \; j\neq j'\Longrightarrow \mathtt{bush}(j)\cap\mathtt{bush}(j')=\emptyset.\label{bush disjoint}
\end{align}
\item[(ii)] Moreover, for $j\in\mathcal{N}(C)\sqcup\mathcal{L}(C)$ we have
\begin{align}\label{leaf below}
\enstq{ \ell\in\mathcal{L}(C) }{\ell\leq j} =  \bigsqcup_{\substack{ j'\in\mathcal{N}_\low(C) \\ j'\leq j}}\mathtt{bush}(j') \sqcup \mathtt{prebush}(j) ,
\end{align}
and for $j\in\mathcal{N}_\low(C)$ we have
\begin{align}\label{lien offsprings bushes}
\offspring (j) & = \bigsqcup_{\substack{ j'\in\mathcal{N}_\low(C) \\ \text{$j'\leq j$ and $j'\nleq j_{\mathrm{unmarked}}$} }} \mathtt{bush}(j'),
\end{align}
where $j_{\mathrm{unmarked}}$ is the unmarked child of $j$.
\end{itemize}
\end{prop}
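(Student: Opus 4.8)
The plan is to prove the four displays in the order stated, relying only on the prebush characterization \eqref{prebush caracterisation} and the disjointness \eqref{disjoint prebush} of prebushes of distinct marked elements from Lemma \ref{lem prebush}; nothing else is needed, and the argument never distinguishes whether $C$ is a tree or a coupling. For \eqref{bush non empty}, observe that by Definition \ref{def marked} a low ternary node has exactly two marked children while every other branching node has none, so by \eqref{bush} $\mathtt{bush}(j)$ is a disjoint union of the two prebushes of those marked children, each nonempty by Lemma \ref{lem prebush}. For \eqref{bush disjoint}, if $\ell\in\mathtt{bush}(j)\cap\mathtt{bush}(j')$ then $\ell\in\mathtt{prebush}(m)\cap\mathtt{prebush}(m')$ for some marked children $m$ of $j$ and $m'$ of $j'$; then \eqref{disjoint prebush} forces $m=m'$, and since the parent of a node is unique we get $j=\mathtt{parent}(m)=\mathtt{parent}(m')=j'$.

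Next I would prove \eqref{leaf below} by structural induction on $j\in\mathcal N(C)\sqcup\mathcal L(C)$. The base case $j\in\mathcal L(C)$ is immediate, both sides reducing to $\{j\}$. If $j$ is a branching node, write $\{\ell\in\mathcal L(C):\ell\le j\}=\bigsqcup_{j'\in\children(j)}\{\ell\le j'\}$, apply the induction hypothesis to each child, and split on whether $j$ is a low ternary node. If it is not, all its children are unmarked, $\mathtt{prebush}(j)=\bigsqcup_{j'\in\children(j)}\mathtt{prebush}(j')$, there is no $\mathtt{bush}(j)$ contribution, and regrouping yields the claim, using $\{j''\in\mathcal N_\low(C):j''<j\}=\{j''\in\mathcal N_\low(C):j''\le j\}$ since $j\notin\mathcal N_\low(C)$. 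If $j$ is low with unmarked child $j_{\mathrm{unmarked}}$ and marked children $m_1,m_2$, then $\mathtt{prebush}(j)=\mathtt{prebush}(j_{\mathrm{unmarked}})$ and $\mathtt{bush}(j)=\mathtt{prebush}(m_1)\sqcup\mathtt{prebush}(m_2)$ by \eqref{bush}; substituting the induction hypothesis for the three children and recognizing these two pieces gives exactly $\bigsqcup_{j''\in\mathcal N_\low(C),\,j''\le j}\mathtt{bush}(j'')\sqcup\mathtt{prebush}(j)$. The union is disjoint by \eqref{bush disjoint}, by the disjointness of the subtrees below distinct children, and because, by \eqref{prebush caracterisation}, $\mathtt{prebush}(j)$ contains no leaf having a marked ancestor strictly below $j$, hence meets no $\mathtt{bush}(j'')$ with $j''\le j$ low.

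Finally I would derive \eqref{lien offsprings bushes} from \eqref{leaf below} and \eqref{bush}. By \eqref{def offsprings}, $\offspring(j)=\bigsqcup_{m}\{\ell\in\mathcal L(C):\ell\le m\}$ over the marked children $m$ of $j$; inserting \eqref{leaf below} for each such $m$ and using $\mathtt{bush}(j)=\bigsqcup_m\mathtt{prebush}(m)$ to collect the prebush terms, one obtains $\offspring(j)=\mathtt{bush}(j)\sqcup\bigsqcup_{j''}\mathtt{bush}(j'')$, where $j''$ ranges over the low nodes lying strictly below some marked child of $j$. It then remains to identify this index set, augmented by $\{j\}$, with $\{j''\in\mathcal N_\low(C):j''\le j,\ j''\nleq j_{\mathrm{unmarked}}\}$: a low node $j''\le j$ either equals $j$ or lies in exactly one of the three pairwise disjoint subtrees rooted at the children of $j$, and the condition $j''\nleq j_{\mathrm{unmarked}}$ precisely discards the subtree rooted at $j_{\mathrm{unmarked}}$.

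I expect the main obstacle to lie in the bookkeeping of the inductive step for \eqref{leaf below} — correctly tracking which unions are disjoint and cleanly distinguishing low ternary nodes (which alone contribute a genuine $\mathtt{bush}(j)$ term) from all other branching nodes — rather than in anything conceptually delicate.
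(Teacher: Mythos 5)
Your proposal is correct and follows essentially the same route as the paper: both parts of (i) and the identity \eqref{lien offsprings bushes} are derived exactly as in the paper from Lemma \ref{lem prebush} and the definition \eqref{bush}. The only difference is that you prove \eqref{leaf below} by structural induction on $j$, whereas the paper argues directly by sending each leaf $\ell\leq j$ either to $\mathtt{prebush}(j)$ or to the prebush of the minimal marked element between $\ell$ and $j$ via \eqref{prebush caracterisation}; this is a cosmetic variation, and your extra disjointness bookkeeping in the inductive step is handled correctly.
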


\begin{proof}
We start with the first part of the proposition:
\begin{itemize}
\item The first part of Lemma \ref{lem prebush} says that all prebushes are non-empty, which thanks to \eqref{bush} implies in turn that all bushes are non-empty, thus proving \eqref{bush non empty}.
\item Consider now $j,j'\in\mathcal{N}_\low(C)$ with $j\neq j'$ and assume that there is $\ell\in\mathtt{bush}(j)\cap\mathtt{bush}(j')$. According to \eqref{bush}, there exists $j_1\in\mathtt{children}(j)$ and $j_1'\in\mathtt{children}(j')$ both marked such that $\ell\in\mathtt{prebush}(j_1)\cap\mathtt{prebush}(j'_1)$ which contradicts the second part of Lemma \ref{lem prebush}. This proves \eqref{bush disjoint}.
\end{itemize}
We now look at the second part of the proposition:
\begin{itemize}
\item If $\ell\leq j$, then either there exists no marked node or leaf in between $\ell$ and $j$ or there exists at least one. In the first situation, \eqref{prebush caracterisation} implies that $\ell\in\mathtt{prebush}(j)$. In the second situation, let $j'$ the minimal such marked node or leaf such that $\ell\leq j'<j$. By minimality, \eqref{prebush caracterisation} again implies that $\ell\in\mathtt{prebush}(j')$. We have thus proved
\begin{align*}
\enstq{ \ell\in\mathcal{L}(C) }{\ell\leq j} & = \mathtt{prebush}(j)\sqcup \bigsqcup_{\substack{j'<j\\\text{$j'$ marked}}} \mathtt{prebush}(j')
\\&= \mathtt{prebush}(j)\sqcup \bigsqcup_{\substack{ j''\in\mathcal{N}_\low(C) \\ j''\leq j}}\mathtt{bush}(j''),
\end{align*}
where we have used the definition of bushes \eqref{bush}. This proves \eqref{leaf below}.
\item Finally, if $j\in\mathcal{N}_\low(C)$ then \eqref{def offsprings} implies
\begin{align*}
\offspring (j) & = \bigsqcup_{j'\in\mathtt{children}(j)\setminus \{j_{\mathrm{unmarked}}\}} \enstq{\ell\in\mathcal{L}(C)}{\ell\leq j' }
\\& = \bigsqcup_{j'\in\mathtt{children}(j)\setminus \{j_{\mathrm{unmarked}}\}} \pth{ \mathtt{prebush}(j')\sqcup \bigsqcup_{\substack{ j''\in\mathcal{N}_\low(C) \\ j''\leq j'}}\mathtt{bush}(j'') }
\end{align*}
where we applied \eqref{leaf below} to each $j'\in\mathtt{children}(j)\setminus \{j_{\mathrm{unmarked}}\}$. We obtain
\begin{align*}
\offspring (j) & = \bigsqcup_{j'\in\mathtt{children}(j)\setminus \{j_{\mathrm{unmarked}}\}} \mathtt{prebush}(j') \sqcup \bigsqcup_{\substack{ j'\in\mathcal{N}_\low(C) \\ \text{$j'< j$ and $j'\nleq j_{\mathrm{unmarked}}$} }}\mathtt{bush}(j'') 
\\& = \bigsqcup_{\substack{ j'\in\mathcal{N}_\low(C) \\ \text{$j'\leq j$ and $j'\nleq j_{\mathrm{unmarked}}$} }}\mathtt{bush}(j'') ,
\end{align*}
where we have used \eqref{bush}.
\end{itemize}
This concludes the proof of the proposition.
\end{proof}

\begin{example} We remark that in Example \ref{ex:bushes}, the bushes do not intersect each other and match
\begin{align*}
    \offspring(\textcolor{blue}{1}) = \mathtt{bush}(\textcolor{blue}{1}) \sqcup \mathtt{bush}(\textcolor{blue}{5}), \quad \offspring(\textcolor{blue}{4}) = \mathtt{bush}(\textcolor{blue}{4}),\quad \offspring(\textcolor{blue}{5}) = \mathtt{bush}(\textcolor{blue}{5}).
\end{align*}
\end{example}

As opposed to the above definitions, the next one does depend on the involution $\sigma$ and therefore cannot be applied to a tree but only to a coupling.

\begin{definition}[Self-coupled bushes] \label{def self-coupled bushes}
Let $C$ be a coupling and $\sigma$ its coupling map.
\begin{itemize}
\item We say that $B\in\mathtt{Bushes}(C)$ is \textbf{self-coupled} if the set
\begin{align*}
B_+ \vcentcolon = B\cap\mathcal{L}(C)_+ 
\end{align*}
has the same cardinal as the set 
\begin{align*}
B_- \vcentcolon = B\cap\mathcal{L}(C)_-
\end{align*}
and if $\sigma (B_+)= B_-$.
\item We define
\begin{align*}
n_{\scb}(C) & \vcentcolon = \# \enstq{B\in\mathtt{Bushes}(C)}{\text{$B$ is self-coupled}}.
\end{align*}
\end{itemize}
\end{definition}

The following lemma gives a natural bound on the number of couplings with a given number of self-coupled bushes. In its proof, we will need a simple fact about factorials: if $(a_i)_{i\in\llbracket 1,p\rrbracket}$ are some non-zero integers, then
\begin{align}\label{fact factorial}
\prod_{i=1}^p a_i! \leq \pth{ \sum_{i=1}^p a_i - p + 1}!
\end{align}

\begin{lemma} \label{counting self-coupled bushes}
Let $n,q\in\mathbb{N}$ and let
\begin{align*}
c_{n,q} := \# \enstq{C\in\mathcal{C}_{n_1,n_2}}{ n_1+n_2=n,\; n_{\scb}(C)=q}.
\end{align*}
There exists $\La>0$ such that
\begin{align*}
c_{n,q} \leq \La^{n+1} \pth{\frac{n}{2}+2-q}!.
\end{align*}
\end{lemma}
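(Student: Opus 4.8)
The strategy is to count couplings $C \in \mathcal{C}_{n_1,n_2}$ with $n_1+n_2=n$ and $n_{\scb}(C)=q$ by bounding separately the combinatorial data that determine $C$: the two underlying bare trees $A_1, A_2$, the sign and colour maps, and the coupling involution $\sigma$. The first three pieces are already controlled by $\Lambda^n$-type bounds (Remark~\ref{remark tree cardinal} and Remark~\ref{rm:cardinal}), so all the work is in bounding the number of admissible $\sigma$, and the factorial $(\tfrac n2 + 2 - q)!$ must come out of that count. The key point, flagged in the sketch of proof after \eqref{naive bound intro}, is that leaves lying in a \emph{self-coupled bush} are not free to be paired arbitrarily: once the bare trees are fixed, each self-coupled bush imposes that $\sigma$ restricts to an involution \emph{within} the leaves of that bush, which both reduces the count and, more importantly, does not contribute to the factorial growth.

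**Main steps.** First I would fix a pair of bare trees $(A_1,A_2) \in \tilde{\mathcal A}_{n_1}\times\tilde{\mathcal A}_{n_2}$ with $n_1+n_2=n$; by Remark~\ref{remark tree cardinal} there are at most $\Lambda^{n_1}\Lambda^{n_2} = \Lambda^n$ such pairs, and by Remark~\ref{rm:cardinal} at most $\Lambda^n$ choices of sign and colour maps on top. Note $n_\ell(A_1)+n_\ell(A_2) = n_1 + n_2 + 2 = n+2$, so the total number of leaves is $n+2$ and the number of pairs in $\sigma$ is $\tfrac{n+2}{2}$ (nonzero only when $n$ is even, which is the only case where $\mathcal C_{n_1,n_2}$ with $n_1+n_2=n$ can be nonempty after $\sigma$ exists). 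The number of \emph{arbitrary} involutions on $n+2$ leaves pairing minus-leaves to plus-leaves is at most $\big(\tfrac{n+2}{2}\big)! = \big(\tfrac n2 + 1\big)!$, which is already close to the target but still needs improvement by the self-coupled bushes. To get the $q$-dependence: the bare trees $A_1,A_2$ determine the set $\mathtt{Bushes}(C)$ and hence, for any given choice of which $q$ of them will be self-coupled (at most $2^{n}$ such choices since there are at most $O(n)$ bushes), the leaves split as $\mathcal L(C) = \bigsqcup_{i=1}^{q} B^{(i)} \sqcup \mathcal{L}_{\mathrm{free}}$ where $B^{(1)},\dots,B^{(q)}$ are the chosen self-coupled bushes (disjoint by \eqref{bush disjoint}) and $\mathcal L_{\mathrm{free}}$ collects the rest. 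An admissible $\sigma$ must preserve each $B^{(i)}$ and act freely on $\mathcal L_{\mathrm{free}}$, so the number of such $\sigma$ is at most $\prod_{i=1}^{q}\big(\tfrac{\#B^{(i)}}{2}\big)! \cdot \big(\tfrac{\#\mathcal L_{\mathrm{free}}}{2}\big)!$. Writing $a_0 = \tfrac{\#\mathcal L_{\mathrm{free}}}{2}$ and $a_i = \tfrac{\#B^{(i)}}{2}$ for $i\ge 1$, these are $q+1$ positive integers (each bush is nonempty by \eqref{bush non empty}, hence $a_i\ge 1$; if some $\#\mathcal L_{\mathrm{free}} = 0$ drop that factor and set $p = q$ instead of $q+1$) summing to $\tfrac{n+2}{2} = \tfrac n2 + 1$, so the factorial fact \eqref{fact factorial} gives
\begin{align*}
\prod_{i=0}^{q}a_i! \;\le\; \Big(\sum_{i=0}^{q} a_i - (q+1) + 1\Big)! \;=\; \Big(\tfrac n2 + 1 - q\Big)! \;\le\; \Big(\tfrac n2 + 2 - q\Big)!.
\end{align*}
Multiplying the bounds $\Lambda^n$ (trees) $\times\, \Lambda^n$ (signs/colours) $\times\, 2^n$ (choice of which bushes are self-coupled) $\times\, \big(\tfrac n2 + 2 - q\big)!$ (coupling maps) and absorbing constants into a single $\Lambda^{n+1}$ yields $c_{n,q} \le \Lambda^{n+1}\big(\tfrac n2 + 2 - q\big)!$.

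**Main obstacle.** The delicate point is the claim that \emph{every} admissible coupling $\sigma$ (one with exactly the prescribed $q$ self-coupled bushes among a fixed bare-tree pair) must restrict to an involution within each chosen self-coupled bush — this is essentially the definition of self-coupled (Definition~\ref{def self-coupled bushes}: $\sigma(B_+) = B_-$), so the real care is in the bookkeeping: showing the chosen self-coupled bushes together with the free leaves genuinely partition $\mathcal L(C)$ (using \eqref{bush disjoint} for disjointness and being careful that $\mathcal L_{\mathrm{free}}$ may itself contain leaves lying in \emph{non}-self-coupled bushes, whose pairings are then unconstrained and correctly counted by the single factor $a_0!$), and in handling the parity/edge cases ($n$ odd, or $\mathcal L_{\mathrm{free}} = \emptyset$, or $q=0$) so that \eqref{fact factorial} is applied to a genuine list of nonzero integers. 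I also need to confirm that summing over $n_1+n_2 = n$ only costs a further factor of $n+1 \le \Lambda^n$, which is harmless. None of these is deep, but the partition argument is where an error would most naturally creep in.
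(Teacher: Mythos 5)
Your proposal is correct and follows essentially the same route as the paper: fix the signed coloured trees (costing $\Lambda^n$), choose which bushes are self-coupled (the paper uses $\binom{M}{q}\le\Lambda^n$ where you use $2^n$), bound the admissible pairings by $\prod_i m_i!\times(\tfrac{n+2}{2}-\sum_i m_i)!$, and apply the factorial inequality \eqref{fact factorial}, taking care of the case where the free part is empty to land on $(\tfrac n2+2-q)!$. No gaps.
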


\begin{proof}  
Let $C=((A_1,\ffi_1,c_1)),(A_2,\ffi_2,c_2), \sigma) \in \mathcal{C}_{n_1, n_2}$ be a coupling with $n_1+n_2=n$ and $q$ self-coupled bushes. Define
\begin{align*}
M\vcentcolon = \#\enstq{B\in\mathtt{Bushes}(A_1)\sqcup\mathtt{Bushes}(A_2)}{\#B_+=\#B_-},
\end{align*}
where $\#B_+$ and $\#B_-$ are introduced in Definition \ref{def self-coupled bushes}, and assume $M>0$. Recall that for a self-coupled bush $B$, we have that $\#B_+ = \#B_-$ and the pairing $\si$ is a bijection from $B_+$ to $B_-$. Therefore $q\le M$. 

Let $\pth{\mathtt{bush}(j_i)}_{i\in\llbracket 1,q\rrbracket}$ be an enumeration of the self-coupled bushes of $C$ and $m_i=\#\mathtt{bush}(j_i)_+$ their number of positive leaves. The number of different possible pairings $\si$ for which $\mathtt{bush}(j_i)$ is self-coupled for all $i\in\llbracket 1,q\rrbracket$ is equal to
\begin{align}\label{si pairing q bushes}
\prod_{i=1}^q m_i ! \times \pth{\frac{n+2}{2} - \sum_{i=1}^q m_i}!
\end{align}
where $\frac{n+2}{2} - \sum_{i=1}^q m_i$ corresponds to the number of positive leaves in $A_1\sqcup A_2$ that do not belong to any $\mathtt{bush}(j_i)$. We now apply \eqref{fact factorial} to \eqref{si pairing q bushes}, in particular noting that $\frac{n+2}{2} - \sum_{i=1}^q m_i$ might vanish, and obtain
\begin{align*}
\#\left\{ \text{$\si$ pairing such that $\pth{\mathtt{bush}(j_i)}_{i\in\llbracket 1,q\rrbracket}$ are self-coupled} \right\} \leq \pth{ \frac{n+2}{2} - q + 1 }!.
\end{align*}
Since the choice of the $q$ self-coupled bushes among the $M$ bushes satisfying $\#B_+=\#B_-$ is arbitrary, we obtain
\begin{align*}
\#\left\{ \text{$\si$ pairing such that $n_\scb(C)=q$} \right\} & \leq \binom{M}{q} \pth{ \frac{n+2}{2} - q + 1 }! \leq \La_1^n \pth{ \frac{n+2}{2} - q + 1 }!
\end{align*}
with $\La_1 \geq \sqrt 2$. In the above inequality we have used that $M\leq n_T(C) \leq \frac{n}{2}$. Finally, we have
\begin{align*}
c_{n,q} & \leq \La_1^n \pth{ \frac{n+2}{2} - q + 1 }! 
\\&\quad \times \# \enstq{((A_1,\ffi_1,c_1)),(A_2,\ffi_2,c_2)) \in \mathcal{A}_{n_1}^{\eta_1,\iota_1} \times \mathcal{A}_{n_2}^{\eta_2,\iota_2}}{\eta_1,\eta_2\in\{0,1\}, \; \iota_1,\iota_2\in\{\pm\}, \; n_1+n_2=n}.
\end{align*}
According to Remark \ref{rm:cardinal}, there exists $\La_2$ such that we have $\#\mathcal{A}_{n_i}^{\eta_i,\iota_i} \leq \La_2^{n_i}$. Thus we get
\begin{align*}
c_{n,q} & \leq \La_1^n \pth{ \frac{n+2}{2} - q + 1 }! \sum_{n_1+n_2=n}\La_2^{n_1}\La_2^{n_2} \leq (2\La_1 \La_2)^n \pth{ \frac{n+2}{2} - q + 1 }! ,
\end{align*}
which concludes the proof of the lemma.
\end{proof}

%%%%%%%%%%%%%%%%%%%%%%%%%%%%%%%%%%%%%%%%%%%%%%%%%%%%%%%%%%%%%%%%%%%%%%%%%%%%%%%%%%%%%%%%%%%%%%%%%%%%%

\section{Convergence of the series}\label{section convergence}

The aim of this section is to prove the following proposition.

\begin{prop}\label{prop |FC|}
There exists $\La>0$ such that for all $C\in\mathcal{C}^{\eta_1,\eta_2,\iota_1,\iota_2}_{n_1,n_2}$ we have
\begin{align}\label{estimate good}
 \sup_{t\in[0,\de]}\sup_{k\in\Z_L^d}\left|\widehat{F_C}(\e^{-2}t,k) \right| \leq \La (\La\delta)^{\frac{n(C)}{2}}.
\end{align}
Moreover, there exists $\a>0$ such that if $C$ satisfies 
\begin{align*}
    n(C)-2n_\scb(C)& \geq 80, \qquad n(C)\leq |\ln\e|^{3},
\end{align*}
then for all $\e\leq \de$ we have
\begin{align}\label{estimate FC}
 \sup_{t\in[0,\de]}\sup_{k\in\Z_L^d}\left|\widehat{F_C}(\e^{-2}t,k) \right| \leq \La (\La\delta)^{\frac{n(C)}{2}} \varepsilon^{\alpha \pth{\frac{n(C)}{2}-n_\scb(C)}}.
\end{align}
\end{prop}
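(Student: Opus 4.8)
The proof of Proposition \ref{prop |FC|} starts from the explicit formula \eqref{fourier of FC} for $\widehat{F_C}(\e^{-2}t,k)$ and bounds it term by term. The three sources of size are (a) the prefactor $\e^{n_B(C)}L^{-n(C)d/2}$, (b) the sum over decorations $\ka\in\mathcal{D}_k(C)$, whose cardinality is controlled by the number of free parameters left after imposing the constraints coming from $\mathcal{D}_k(A_i)$ and the coupling rule \eqref{decoration coupling rule}, and (c) the time integral over $I_C(t)$, which produces at worst $t^{n_T(C)}/n_T(C)!$ and, when a phase $\Om_j$ is large, an extra gain from non-stationary phase (integration by parts). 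The basic coarse estimate \eqref{estimate good}: using $|q_j|\lesssim \prod_{j'\in\children(j)}\ps{\ka(j')}^{-1/2}$ for binary nodes (which gives a genuine $\e$ per binary node after the prefactor is distributed) and $|q_j|\lesssim \chi_j\prod\ps{\ka(j')}^{-1}$ for ternary nodes (Lemma \ref{lem qj}), together with the $W^{1,1}$ bound on $M^{\eta,\eta'}$ and the combinatorial bound $\#\mathcal{D}_k(C)\lesssim L^{(\#\text{free leaves})d}$, one checks that every binary node contributes $\lesssim \e$ and every ternary node $\lesssim \de$ after integrating in time; since $n(C)=n_B(C)+2n_T(C)$, this yields $\La(\La\de)^{n(C)/2}$ (the crude bound $\int_{I_C(t)}\le t^{n_T}\le \de^{n_T}$ absorbing the relevant powers). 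The factorial $n_T!$ from the time-ordered simplex is here harmless because $\de<1$; it only becomes an issue when combined with the counting of couplings, which is dealt with later via Lemma \ref{counting self-coupled bushes}.

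\textbf{The refined estimate.} For \eqref{estimate FC} the goal is to extract a systematic power $\e^{\a}$ from each node that is \emph{not} part of a self-coupled bush, i.e. to gain $\e^{\a(n(C)/2-n_\scb(C))}$. The accounting is done node by node. For each standard binary node the quotient $q_j = \ffi(j)q^{c(j)}(\cdot,\cdot)/\De_j$ already carries the prefactor $\e$, so after redistributing $\e^{n_B(C)}$ one power of $\e$ is left over per binary node; combined with $L^{-d}$ per node and the constraint it imposes, this is a net gain. For ternary nodes one splits according to type: high ternary nodes ($\mathcal{N}_\high$) are handled by the oscillatory-integral/spanning-tree mechanism of \cite{LS11} — on the support of $\chi_\high$ at least two of the pairwise sums $|k_a+k_b|$ exceed $\e^\ga$, which after integration by parts in the time variable $t_j$ (using $\Om_j$ or one of the $\Om_{j'}$ with $\mathtt{int}(j')=j$) produces a gain of $\e^{2-\ga}$, at the cost of differentiating the symbol, which is controlled by the $W^{1,\infty}\cap W^{1,1}$ assumption on $M$ and on $\nabla Q^\eta$; one must be careful that the integration-by-parts is performed along a spanning tree of the "resonance graph" so that each high node is charged exactly once. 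Low ternary nodes ($\mathcal{N}_\low$) are handled via the constraint $|k_a+k_b|\le \e^\ga$ attached to the node's bush: if the bush is \emph{not} self-coupled then this constraint is non-trivial and independent of the coupling constraints \eqref{decoration coupling rule} (this is exactly the content of Proposition \ref{prop bush}, parts \eqref{bush disjoint} and \eqref{leaf below}, which guarantee the bushes partition the relevant leaves and hence the constraints do not overlap), so summing over the corresponding $d$ frequency components restricted to a ball of radius $\e^\ga$ costs $\e^{\ga d}$ instead of $L^d$, a gain of order $(\e^\ga L^{-1})^{d}\cdot L^d = \e^{\ga d}$ relative to the unconstrained count; choosing $\a$ small compared to $\min(\ga d,\, 2-\ga,\, 1)$ makes every such node yield $\gtrsim \e^\a$. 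Only low ternary nodes with a \emph{self-coupled} bush produce no gain, and these are precisely excluded from the exponent. Summing the per-node gains: $n_B(C)$ binary nodes, $n_\high(C)$ high ternary nodes, and $n_\low(C)-n_\scb(C)$ "productive" low ternary nodes each give $\e^\a$, and since $n_B+2n_\high+2(n_\low-n_\scb) = n(C)-2n_\scb(C)-2(n_\scb^{\mathrm{not counted}})$... more precisely $n_B+2n_T = n(C)$ and $n_T = n_\high+n_\low$, so the total gain exponent is $\gtrsim \a\bigl(n_B/2 + n_\high + n_\low - n_\scb\bigr) \ge \a(n(C)/2 - n_\scb(C))$ after harmlessly adjusting $\a$. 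The hypothesis $n(C)-2n_\scb(C)\ge 80$ is a technical threshold ensuring enough "productive" nodes are present to overcome fixed losses (e.g. from the finitely many integrations by parts near the root, or from low-frequency configurations of $\chi_\high$), and $n(C)\le|\ln\e|^3$ guarantees the polynomially-many logarithmic losses $C^{n(C)}$, $n_T(C)!$ etc. are dominated by $\e^{-\a n(C)/2}$-type terms only up to this size.

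\textbf{Main obstacle.} The hardest point is the careful bookkeeping that makes the per-node gains genuinely \emph{independent} and \emph{simultaneous}: one must show that the frequency constraints used to gain $\e^{\ga d}$ at distinct low nodes, the spanning-tree integration-by-parts charging distinct high nodes, and the $\e$-factor distribution at binary nodes do not "compete" for the same free frequency variable or the same time variable. This is exactly where the bush formalism does the work — Proposition \ref{prop bush} shows $\mathtt{bush}(j)\cap\mathtt{bush}(j')=\emptyset$ for distinct low nodes and that the bushes together with the prebush of the root partition $\mathcal{L}(C)$, so the low-frequency constraints sit on disjoint blocks of leaves — but turning this combinatorial disjointness into a clean product bound requires choosing a good order in which to resolve the sum over $\ka$ (roughly: innermost bushes first, then work upward), and simultaneously choosing the spanning tree for the high-node integrations compatibly with that order. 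A secondary difficulty is controlling the derivatives of the full symbol $\prod_j q_j \prod_\ell M^{\cdots}$ that appear after repeated integration by parts for the high nodes; here one uses that each $q_j$ and each $M^{\eta,\eta'}$ is Lipschitz with the relevant $\ell^1$-type bounds, so the derivative hits at most one factor and the remaining product structure (hence the other nodes' gains) is preserved. Once \eqref{estimate FC} is established, the convergence of the Dyson series follows by inserting it into \eqref{correlation in terms of couplings} and using Lemma \ref{counting self-coupled bushes} to see that the factorial $(n/2+2-q)!$ from counting couplings with $q$ self-coupled bushes is exactly balanced by the $\de^{n/2}$ and the surviving $\e$-gains, with the self-coupled bushes contributing structural constraints rather than entropy.
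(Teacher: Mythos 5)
Your sketch correctly identifies the three mechanisms the paper uses (a factor $\e$ per binary node, an oscillatory gain $\e^{2-\ga}$ at high ternary nodes, a counting gain $\e^{\ga d}$ at low ternary nodes with non-self-coupled bush, nothing from self-coupled bushes), and the weak bound \eqref{estimate good} is obtained essentially as you describe. But for \eqref{estimate FC} your proposed architecture — extract all per-node gains \emph{simultaneously} by choosing a basis of frequency variables and a spanning tree ``compatibly'' — is not what the paper does, and the difficulty you yourself flag as the ``main obstacle'' is left unresolved rather than solved. The paper sidesteps simultaneity entirely by a dichotomy (Propositions \ref{prop low nodes} and \ref{prop high nodes}): if $n_B(C)+2(n_\low(C)-n_\scb(C))\geq \frac{1}{10}(n(C)-2n_\scb(C))$ it bounds the oscillating integral crudely by $\de^{n_T}$ and exploits \emph{only} the bush constraints; otherwise it bounds all low cutoffs not attached to the selected family $\mathcal{N}''(C)$ by $1$ and exploits \emph{only} the oscillation. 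In each regime a single mechanism accounts for a fixed fraction of $\frac{n(C)}{2}-n_\scb(C)$, so no compatibility between the two changes of variables is ever needed. Producing such a compatible basis directly would be a genuinely new (and nontrivial) argument: the space $V(C)$ has dimension only $\frac{n}{2}+1$, and the maps $\mathfrak{K}_\bush(j)$ and the $K(j)$'s needed for the oscillatory sums compete for the same coordinates.

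A second, quantitative gap: your accounting charges a gain to \emph{every} binary node, \emph{every} high node and \emph{every} non-self-coupled low node, but the paper's lemmas show this is too optimistic. Lemma \ref{lem rank} only gives that the family $(\mathfrak{K}_\bush(j))_{j\in\mathcal{N}'_\low(C)}$ has rank $\geq n'_\low/2$ (a leaf can lie in two of the sets $B_j$), so only about half of the non-self-coupled low nodes yield an independent constraint; likewise in the high regime the productive set $\mathcal{N}''(C)$ has cardinality only $\geq n/4$, and within it the cases $\mathbf{C3}$–$\mathbf{C4}$ of Lemma \ref{lem individual sums} (low nodes in $\mathcal{P}_1$, high nodes with $\mathtt{int}^{-1}(\parent(j))\neq\emptyset$) produce no $\e$ at all — the closing numerology needs the regime hypothesis to bound these unproductive cases. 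Two smaller inaccuracies: on the support of $\chi_\high$ it is \emph{not} true that at least two pairwise sums exceed $\e^\ga$ (the cutoff also contains the configurations with two or three low interactions, which the paper treats by counting, as the terms $B_1,B_2,B_3$ in case $\mathbf{C5}$); and the oscillatory gain is not obtained by integrating by parts in $t_j$ (the phases $\Om_j$ are entangled through the time-ordering), but through the resolvent representation of Lemma \ref{lemma:oscillations}, the Hölder decoupling of Lemma \ref{lem:holder}, and the explicit Klein--Gordon integral of Lemma \ref{lem:intTheta}; the sum over orderings $\rho$ then costs $n_T!$, which must be absorbed by the factors $(\de/n_T)^{n_T}$ — none of which appears in your outline.
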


We will deduce the following key corollary, whose proof is detailed in Section \ref{section coro Xn}. 

\begin{corollary}\label{coro Xn}
There exists $\Lambda>0$ such that, for any fixed $A>0$, with probability $\ge 1-L^{-A}$ we have 
\begin{equation} \label{est X_n}
    \l X_n \r_{\mathcal{C}([0,\e^{-2}\de],H^s(L\T^d))}\le (\Lambda \delta)^{\frac{n}{4}+\frac12}L^{\frac{d}{2}+\frac1\beta + A}
\end{equation}
for any $0\le n \le |\ln \e |^{3}$ and any $s\ge 0$.
\end{corollary}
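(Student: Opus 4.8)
The plan is to combine the representation formula \eqref{correlation in terms of couplings} with the bounds \eqref{estimate good}--\eqref{estimate FC} from Proposition \ref{prop |FC|} and the counting estimate of Lemma \ref{counting self-coupled bushes}, and then pass from the (spatial Fourier, pointwise in $k$) control of $\E(|\widehat{X^\eta_n}(\e^{-2}t,k)|^2)$ to a control of the $H^s$ norm of $X^\eta_n$ via a union bound over $k\in\Z_L^d$ and over a fine time net in $[0,\e^{-2}\de]$. First I would fix $t\in[0,\e^{-2}\de]$, $k\in\Z_L^d$ and estimate, using \eqref{correlation in terms of couplings},
\begin{align*}
\E\pth{\left|\widehat{X^\eta_n}(t,k)\right|^2} \le \sum_{C\in\mathcal C_{n,n}^{\eta,\eta,+,-}} \left|\widehat{F_C}(t,k)\right| \le \sum_{q\ge 0} c_{2n,q}\, \La(\La\de)^{n}\e^{\a(n-q)},
\end{align*}
where I have split the sum over couplings according to the number $q=n_\scb(C)$ of self-coupled bushes (note $n(C)=2n$ here), used \eqref{estimate FC} when $2n-2q\ge 80$ and the weaker bound \eqref{estimate good} otherwise (the latter contributes only when $n-q\le 40$, i.e. a bounded number of ``exceptional'' terms that do not affect the geometric estimate, using $\e\le\de$). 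By Lemma \ref{counting self-coupled bushes}, $c_{2n,q}\le \La^{2n+1}(n+2-q)!$. Setting $p=n-q$, the sum becomes $\lesssim \La^{Cn}\de^{n}\sum_{p\ge 0}(p+2)!\,\e^{\a p}$; since $n\le|\ln\e|^3$ we have $\e^{\a p}(p+2)!\le \e^{\a p/2}$ for all $p$ once $L$ (hence $\e^{-1}$) is large (because $(p+2)!\le e^{(p+2)\ln(p+2)}$ and $\e^{\a p/2}$ kills the factorial as soon as $p\ln p \ll p|\ln\e|$, which holds for $p\le n\le |\ln\e|^3$ — here one uses crucially that the factorial is in $p=n-q$, the ``reduced'' size, not in $n$). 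The geometric series $\sum_p \e^{\a p/2}$ converges, so I obtain $\E(|\widehat{X^\eta_n}(t,k)|^2)\le (\La\de)^{n/2}$ after absorbing the factor $\La^{Cn}$ into a redefinition of $\La$ and of $\de$ (taking $\de$ small enough, independent of $L,\e$); note that the power $n/2$ rather than $n$ is forced because the weakest relevant coupling contributes $\de^{n}\e^{\a\cdot O(1)}$, but summing and reorganizing one only keeps $\de^{n/2}$ with room to spare — more carefully one keeps $(\La\de)^{n/2}$ which is what is needed.

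Next I would upgrade this pointwise-in-$k$ second moment bound to an $H^s$ bound with high probability. Since $\widehat{X^\eta_n}(t,\cdot)$ is a polynomial of fixed degree $\le 2n$ in the Gaussians $(\mu^\eta_k)$, hypercontractivity (Wick/Gaussian chaos, e.g. \cite{Janson}) gives $\E(|\widehat{X^\eta_n}(t,k)|^{2m})\le (Cm)^{2nm}\,\E(|\widehat{X^\eta_n}(t,k)|^2)^m$ for every integer $m$, hence very strong concentration. I would then choose a time net $\mathcal T\subset[0,\e^{-2}\de]$ of cardinality $\le L^{B}$ for a large fixed $B$ (polynomial in $L$ suffices since $\widehat{X^\eta_n}$ is Lipschitz in $t$ with a crude bound like $e^{O(n)}L^{O(d)}$, controlled using \eqref{fourier F e-2} and $|\Om|\lesssim$ a polynomial), and for each fixed $t\in\mathcal T$ use Markov with a high power $m\sim \ln L$ together with a union bound over $k\in\Z_L^d$ (there are $\lesssim (RL)^d$ relevant $k$ since the $M^{\eta,\eta'}$ are supported in $B(0,R)$, and $\widehat{F_C}$ vanishes unless all leaf-decorations lie in $B(0,R)$, so in fact $\widehat{X^\eta_n}(t,k)=0$ for $|k|$ larger than $O(n R)$ — this keeps the $\ell^2_k$ sum finite and polynomial in $L$). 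This yields, on an event of probability $\ge 1-L^{-A}$, that $|\widehat{X^\eta_n}(t,k)|\le (\La\de)^{n/4}L^{C_1}$ simultaneously for all $t\in\mathcal T$ and all $k$, with $C_1$ depending only on $d,A,B,R$. Interpolating in $t$ between net points using the Lipschitz bound (and choosing $B$ large enough that the net is fine enough) extends this to all $t\in[0,\e^{-2}\de]$ with the same exponent up to constants, and finally summing $\ps{k}^{2s}|\widehat{X^\eta_n}(t,k)|^2$ over the $\lesssim L^d$ admissible $k$ (each $\ps{k}^{2s}\lesssim (nR)^{2s}$, absorbed into $\La^n$) gives $\l X^\eta_n\r_{H^s}\le (\La\de)^{n/4}L^{\frac d2+C_1}$; tracking constants one arranges $C_1=\frac1\beta+A$ as in \eqref{est X_n}, and the extra factor from $n=0,1$ is absorbed into the ``$+\frac12$'' in the exponent and the prefactor $\La^{1/2}$.

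The main obstacle is the first step: extracting a genuinely summable series from $\sum_q c_{2n,q}\,\e^{\a(n-q)}$. The factorial $(n+2-q)!$ from Lemma \ref{counting self-coupled bushes} grows like $e^{(n-q)\ln(n-q)}$, which is \emph{not} beaten by the fixed geometric factor $\de^{n}$ alone — this is exactly the ``factorial obstruction'' flagged around \eqref{naive bound intro}. It is defeated only because the $\e$-gain in \eqref{estimate FC} is $\e^{\a(n-q)}$, indexed by the same quantity $n-q$ that controls the factorial, together with the a priori restriction $n\le|\ln\e|^3$ which guarantees $(n-q)\ln(n-q)\le |\ln\e|^4 \ll \a(n-q)|\ln\e|$ only in the regime $n-q$ not too small — so I must treat separately the bounded range $n-q\le 40$ (using \eqref{estimate good} and paying a harmless $\de^{-O(1)}$), and in the complementary range carefully verify that $\e^{\a(n-q)}(n-q+2)!\le \e^{\a(n-q)/2}$, i.e. $(n-q)\ln(n-q)\le \tfrac{\a}{2}(n-q)|\ln\e|^{-1}\cdot|\ln\e|$... wait, one needs $\ln(n-q)\le \tfrac{\a}{2}|\ln\e|$, which holds since $n-q\le n\le|\ln\e|^3$ gives $\ln(n-q)\le 3\ln|\ln\e|\ll |\ln\e|$ for $\e$ small. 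Getting every constant to line up so that the final exponent on $L$ is exactly $\frac d2+\frac1\beta+A$ and the power of $\de$ is $\frac n4+\frac12$ is bookkeeping, but the conceptual crux is this interplay between the combinatorial factorial, the $\e$-gain, and the logarithmic truncation threshold.
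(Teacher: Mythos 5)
Your first step — the pointwise-in-$(t,k)$ bound $\E\big(|\widehat{X^\eta_n}(\e^{-2}t,k)|^2\big)\le \La(\La\de)^{\frac n2}$ obtained by splitting the sum over couplings according to $q=n_\scb(C)$, using \eqref{estimate good} in the range where $\tfrac{n(C)}{2}-q$ is bounded and \eqref{estimate FC} otherwise, and beating the factorial $(\tfrac{n(C)}{2}+2-q)!$ of Lemma \ref{counting self-coupled bushes} with $\e^{\a(\frac{n(C)}{2}-q)}$ via Stirling and the restriction $n\le|\ln\e|^3$ — is correct and is exactly the paper's argument for \eqref{expectation Xn(k)}.

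The second step contains a genuine gap. Your concentration argument uses hypercontractivity with $m\sim\ln L$ moments, which produces the factor $(Cm)^{2nm}$ in $\E|\widehat{X^\eta_n}|^{2m}$ and hence forces the threshold $\lambda$ to carry a factor $(Cm)^{n}\sim(\ln L)^{n}$. Since $\de$ is a constant independent of $L$ while $n$ may be as large as $|\ln\e|^3\sim(\ln L)^3$, this factor is $e^{cn\ln\ln L}$, which cannot be absorbed into $(\La\de)^{n/4}L^{O(1)}$ (one would need $\ln\ln L\lesssim|\ln\de|$, false for $L$ large); even restricted to $n\le\ln L$ it is super-polynomial in $L$. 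Moreover the time-net step requires a pathwise Lipschitz bound: the crude bound from \eqref{fourier F e-2} is of size $L^{O(nd)}$ (the sum over decorations has $\sim(RL)^{nd}$ terms against a prefactor $L^{-nd/2}$), not $e^{O(n)}L^{O(d)}$, so for $n\sim(\ln L)^3$ the net must have quasi-exponential cardinality, which again cannot be compensated. The paper avoids both problems by never taking a supremum over a net: it establishes the same second-moment bound for $\dr_t X^\eta_n$ (this is the purpose of Section \ref{section time derivative Xn}, i.e.\ the decomposition \eqref{correlation dtX}, Proposition \ref{prop estimate FCjmax} and Remark \ref{remark counting time derivative} — a component absent from your proposal), uses the compact support in $k$ to pass to $\E\|X^\eta_n(t)\|_{H^s}^2+\E\|\dr_tX^\eta_n(t)\|_{H^s}^2\lesssim(\La\de)^{\frac n2}n^{2s+d}L^d$, controls $\E\big(\sup_t\|X^\eta_n\|_{H^s}^2\big)$ through the one-dimensional Sobolev embedding $H^1([0,\e^{-2}\de])\hookrightarrow L^\infty$, and concludes with a single second-moment Markov inequality, the factor $L^{2A}$ in the threshold accounting for the probability $1-L^{-A}$. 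To repair your argument you would in any case have to prove the $\dr_tX_n$ moment estimates, at which point the paper's route is both simpler and sharper.
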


\begin{remark}
The proof of corollary \ref{coro Xn} will also require us to study the time derivative of $X_n$. Its tree decomposition will be performed in Section \ref{section time derivative Xn}, where the equivalent of Proposition \ref{prop |FC|} is also stated, see Proposition \ref{prop estimate FCjmax}.
\end{remark}

Proving Proposition \ref{prop |FC|} will be done by distinguishing two regimes: one regime where low ternary nodes are dominating and one where high ternary nodes are dominating. The first one corresponds to the case where 
\begin{align*}
n_B(C) + 2(n_\low(C) - n_\scb(C)) \geq \frac{n(C)-2n_\scb(C)}{10},
\end{align*}
and will be treated in Section \ref{section low nodes regime}, while the second one corresponds to the case where
\begin{align*}
n_B(C) + 2(n_\low(C) - n_\scb(C)) < \frac{n(C)-2n_\scb(C)}{10},
\end{align*}
and will be treated in Section \ref{section high nodes regime}. 

The study of these two regimes will involve two combinatorial objects associated to couplings: bushes for the low nodes regime and spanning trees for the high nodes regime. However, in both cases we will need a sort of dual point of view on the decoration maps $\kappa$ introduced for trees in Definition \ref{def decoration tree} and for couplings in Definition \ref{def:decocoup}. Recall the first part of Remark \ref{remark post def decocoup}, which says that a decoration map of a given coupling $C$ can be uniquely defined from its restriction to $\mathcal{L}(C)_+$. In other words, there exists a natural bijection from the set of decoration maps of $C$ and $(\R^d)^{\mathcal{L}(C)_+}$, which is the set of maps from $\mathcal{L}(C)_+$ to $\R^d$.

\begin{definition}\label{def linear map}
Let $C$ be a coupling.
\begin{itemize}
\item If $\ell\in\mathcal{L}(C)_+$, we consider the linear map $K(\ell) : (\R^d)^{\mathcal{L}(C)_+} \longrightarrow \R^d$ defined by
\begin{align}\label{def K(ell)}
K(\ell)\pth{(\ka(\ell'))_{\ell'\in\mathcal{L}(C)_+}}  = \ka(\ell).
\end{align}
\item If $\ell\in\mathcal{L}(C)_-$, we define
\begin{align}
K(\ell)\vcentcolon = - K(\si(\ell)),
\end{align}
where $\si$ is the coupling map of $C$, and if $j\in\mathcal{N}(C)$, we define
\begin{align}\label{def K(j)}
K(j) \vcentcolon = \sum_{\substack{\ell\in\mathcal{L}(C)\\ \ell < j}} K(\ell).
\end{align}
\item We define a linear subspace of $L\pth{  (\R^d)^{\mathcal{L}(C)_+} , \R^d }$ by
\begin{align}\label{def V(C)}
V(C) \vcentcolon = \mathrm{Vect}\enstq{ K(\ell) }{\ell \in\mathcal{L}(C)_+}.
\end{align}
\end{itemize}
\end{definition}

\begin{remark}
We note two simple facts:
\begin{itemize}
\item By definition we have $K(j)\in V(C)$ for all $j\in\mathcal{N}(C)\sqcup\mathcal{L}(C)$.
\item If $\ka$ is a decoration of $C$ as in Definition \ref{def:decocoup}, then $\ka$ can be seen as an element of $(\R^d)^{\mathcal{L}(C)_+}$ and \eqref{def K(ell)}-\eqref{def K(j)} imply that for all $j\in\mathcal{N}(C)\sqcup\mathcal{L}(C)$ we have
\begin{align*}
K(j)(\ka)=\ka(j).
\end{align*}
\end{itemize}
\end{remark}

\subsection{Low nodes regime}\label{section low nodes regime}

The goal of this section is to estimate $\widehat{F_C}$ in the low nodes regime.

\begin{prop}\label{prop low nodes}
There exists $\alpha_\low,\La>0$ such that if $C\in\mathcal{C}^{\eta_1,\eta_2,\iota_1,\iota_2}_{n_1,n_2}$ is a coupling satisfying 
\begin{align}\label{low nodes regime}
n_B(C) + 2(n_\low(C) - n_\scb(C)) \geq \frac{n(C)-2n_\scb(C)}{10},
\end{align}
and 
\begin{align}\label{n-2nscb>80}
n(C)-2n_\scb(C) \geq 80,
\end{align}
then for any $\e\leq \de $ and any $k\in \Z_L^d$, we have 
\begin{align}\label{main estim FC low nodes}
\sup_{t\in[0,\de]}\left|\widehat{F_C}(\e^{-2}t,k) \right| \lesssim (\La\delta)^{\frac{n(C)}{2}} \varepsilon^{\alpha_\low \pth{\frac{n(C)}{2}-n_\scb(C)}}.
\end{align}
\end{prop}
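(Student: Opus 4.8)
The strategy is to use the explicit Fourier formula \eqref{fourier F e-2} for $\widehat{F_C}(\e^{-2}t,k)$ together with the estimates on $q_j$ from Lemma \ref{lem qj}, extracting a power of $\e$ from every source available in the low nodes regime. There are three such sources. First, every binary node carries a factor $\e$ explicitly (recall $n_B(C)$ powers of $\e$ sit in front of the sum in \eqref{fourier F e-2}), and moreover the binary-node factors $q_j = \ffi(j) q^{c(j)}/\De_j$ come with the decay $\an{\cdot}^{-1/2}\an{\cdot}^{-1/2}$ on the children (Lemma \ref{lem qj}), which together with the compact support of the $M$'s localises the corresponding frequencies. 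Second, each \emph{non-self-coupled} low ternary node $j$ carries the cutoff $\chi_j(A,\ka)=\chi_*(\ka(\ell),\ka(m),\ka(r))$ forcing the marked combination of frequencies (equivalently, the sum over $\mathtt{bush}(j)$ of the leaf-decorations, via \eqref{bush} and Proposition \ref{prop bush}) to have size $\lesssim \e^\gamma$; when the bush is \emph{not} self-coupled this is a nontrivial linear constraint on the free variables $\ka\in(\R^d)^{\mathcal L(C)_+}$, and restricting the $\de$-many lattice points $\ka(\ell)\in\Z_L^d$ to lie in an $\e^\gamma$-neighbourhood of a proper affine subspace gains a factor $\lesssim (\e^\gamma L^{-1})^{?}\cdot L^{?}$ — more precisely, summing the Riemann sum $L^{-nd/2}\sum_{\ka}$ against the compactly supported, bounded $M$'s, the volume lost is $\e^{\gamma d}$ per independent low constraint. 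Third, once these gains are collected, the remaining sum is a genuine Riemann sum of an $L^1$ function, bounded uniformly in $L$, and the time integral over $I_C(t)\subset[0,\de]^{n_T}$ contributes $\de^{n_T}/n_T!\le \de^{n_T}$, absorbing the crude $\La^{n(C)}$ from $\#\mathcal C_{n_1,n_2}$-type counting and the constants in Lemma \ref{lem qj}.

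\textbf{Key steps, in order.} (1) Start from \eqref{fourier F e-2} for the coupling, pull out absolute values, bound $\prod_j |q_j|$ by Lemma \ref{lem qj}, and bound $\big|\int_{I_C(t)}\prod e^{i\Om_j\e^{-2}t_j}\,\d t_j\big|\le |t|^{n_T(C)}\le \de^{n_T(C)}$ — here we deliberately throw away all oscillation, which is legitimate precisely because in the low regime we do not need it. (2) Identify the set of non-self-coupled low bushes; by \eqref{low nodes regime} and $n_\low = n_\lowl+n_\lowm+n_\lowr$ there are at least $\gtrsim (n(C)/2 - n_\scb(C))/C_0$ "productive" units, counting one per binary node and one per non-self-coupled low node (this is exactly why the hypothesis is phrased as $n_B + 2(n_\low - n_\scb)\ge \frac{n-2n_\scb}{10}$). (3) For the low-node gains, use the linear-algebra formalism of Definition \ref{def linear map}: the cutoff on node $j$ constrains $K(\mathtt{bush}(j))(\ka) := \sum_{\ell\in\mathtt{bush}(j)}K(\ell)$, a linear form on $(\R^d)^{\mathcal L(C)_+}$; when $j$'s bush is not self-coupled this form is not identically zero (its "$+$''-part does not cancel against its "$\si$-image''), so by \eqref{disjoint prebush}-type disjointness of bushes these forms are linearly independent, and restricting each to an $\e^\gamma$-ball gains $\e^{\gamma d}$ in the Riemann-sum volume while the rest of the sum stays $O(1)$ by the $W^{1,1}$-bound on $M$. (4) For the binary-node gains, simply keep the explicit $\e^{n_B(C)}$. (5) Collect: we obtain $\widehat{F_C}(\e^{-2}t,k)\lesssim \La^{n(C)}\de^{n_T(C)}\,\e^{n_B(C)}\,\e^{\gamma d\cdot\#\{\text{non-self-coupled low nodes}\}}$, then use $n_B + 2(n_\low-n_\scb)\ge \frac{n-2n_\scb}{10}$ and $\gamma\in(0,2)$, $d\ge1$ to convert this into $\e^{\alpha_\low(n(C)/2 - n_\scb(C))}$ for a suitable universal $\alpha_\low>0$ (absorbing $\de^{n_T}\le (\La\de)^{n(C)/2}$), which is \eqref{main estim FC low nodes}. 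Hypothesis \eqref{n-2nscb>80} is used only to make the productive-unit count strictly positive and to swallow the $\La$ prefactor.

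\textbf{Main obstacle.} The delicate point is step (3): showing that the low-frequency cutoffs attached to the non-self-coupled low nodes give genuinely \emph{independent} constraints with a \emph{quantifiable} volume gain $\e^{\gamma d}$ each, rather than a single global gain. This requires (a) the structural disjointness of distinct bushes, \eqref{bush disjoint} and \eqref{disjoint prebush}, to guarantee that the linear forms $K(\mathtt{bush}(j))$ for different $j$ involve "new'' leaf-variables and are therefore linearly independent over the free module $(\R^d)^{\mathcal L(C)_+}$; and (b) a careful case analysis of when $K(\mathtt{bush}(j))$ vanishes identically — precisely when $\si$ pairs $\mathtt{bush}(j)_+$ with $\mathtt{bush}(j)_-$, i.e. when the bush is self-coupled — so that on non-self-coupled bushes the constraint $|K(\mathtt{bush}(j))(\ka)|\le 2\e^\gamma$ really does cut a proper affine slab and the lattice-point count drops by the claimed factor. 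Making (a)–(b) precise, uniformly in the (unbounded) number of nodes, and then tracking the combinatorics so that the number of independent gains is bounded below by a fixed fraction of $n(C)/2 - n_\scb(C)$, is where the real work lies; everything else is bookkeeping with Lemma \ref{lem qj} and the compact support of $M$.
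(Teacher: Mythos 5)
Your overall strategy coincides with the paper's: discard the oscillation by bounding the time integral by $\de^{n_T(C)}$, keep the explicit $\e^{n_B(C)}$, and extract a volume gain $\e^{\ga d}$ from each productive low-frequency cutoff (this is Lemma \ref{lem:récriturelow} followed by Sections \ref{section non-self-coupled-bushes}--\ref{section successive summations}). However, your step (3) contains a genuine gap: the claim that disjointness of bushes implies linear independence of the forms $\mathfrak{K}_\bush(j)$ is false. Writing $\mathfrak{K}_\bush(j)=\sum_{\ell\in B_j}\iota_j(\ell)K(\ell)$ with $B_j\subset\mathcal{L}(C)_+$ as in \eqref{k_bush avec B_j}, a given positive leaf $\ell$ can belong to $B_{j_1}\cap B_{j_2}$ for two \emph{distinct} low nodes --- once because $\ell\in\mathtt{bush}(j_1)$ and once because $\si(\ell)\in\mathtt{bush}(j_2)$ --- so even though the bushes themselves are pairwise disjoint, the associated linear forms share basis vectors $K(\ell)$ and need not be independent. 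The paper's Lemma \ref{lem rank} only extracts, by a greedy selection, a linearly independent subfamily of cardinality $\geq n'_\low/2$, and this factor $1/2$ is then absorbed into $\a_\low$. Without this (or an equivalent) argument your count of independent gains is unjustified.

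Two further points are elided. First, the cutoff attached to a low node $j$ constrains $\sum_{\ell\in\offspring(j)}\ka(\ell)$, not $\mathfrak{K}_\bush(j)(\ka)$; by \eqref{lien offsprings bushes} the former equals $\mathfrak{K}_\bush(j)(\ka)$ plus the bush sums of all low nodes strictly below $j$ and not below its unmarked child. One therefore needs the triangular enumeration of Lemmas \ref{lem basis of V(C)} and \ref{lem:summationslow}, so that after the change of variables each constraint reads $|x_i+\Xi_{J_i}(x_1,\dots,x_{i-1})|\le 2\e^\ga$ and the successive summations each yield $L^d\e^{\ga d}$ via \eqref{counting}. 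Second, the bookkeeping of powers of $L$ must be exact: one has to complete the selected bush forms into a basis of $V(C)$ (of dimension $\frac{n}{2}+1$) and use the root constraint $K(r)(\ka)=k$ with $k\neq 0$ to remove one free summation variable, which is where the two cases of Section \ref{section successive summations} arise; your assertion that ``the rest of the sum stays $O(1)$'' skips precisely the step where the $L$-powers cancel.
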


\begin{remark}
Thanks to \eqref{FC(0)=0}, the estimate \eqref{main estim FC low nodes} is obviously true if $k=0$. Therefore, in the rest of Section \ref{section low nodes regime} we assume that $k\neq 0$.
\end{remark}

The following lemma gives a first rough estimate for $\widehat{F_C}(\e^{-2}t,k)$.

\begin{lemma}\label{lem:récriturelow}
We have
\begin{equation}\label{estimée intermédiaire FC low nodes}
\begin{aligned}
&\sup_{t\in[0,\de]}\left| \widehat{F_C}(\e^{-2}t,k) \right|
\\& \leq \La^{n+1}\varepsilon^{n_B(C)} \delta^{n_T(C)} L^{-\frac{nd}{2}} \sum_{\substack{\ka\in\mathcal{D}_k(C)\\ \ka(\mathcal{L}(C)_+)\subset B(0,R)}} \prod_{j\in \mathcal N_\low(C)} \chi\pth{\left|\sum_{\ell\in\offspring(j)} \ka(\ell)\right|\varepsilon^{-\gamma}}.
\end{aligned}
\end{equation}
\end{lemma}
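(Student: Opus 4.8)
The plan is to start from the explicit Fourier formula \eqref{fourier of FC} for $\widehat{F_C}(\e^{-2}t,k)$ and bound everything in absolute value in the crudest reasonable way, keeping only the low-frequency cutoffs which genuinely constrain the decoration. First I would take absolute values inside the sum over $\ka\in\mathcal D_k(C)$: the prefactor $\e^{n_B(C)}L^{-n(C)d/2}$ is already a modulus, and $(-i)^{n_T(C)}$ has modulus $1$. The oscillating exponentials $e^{i\e^{-2}\Om_{-1}(C)t}$ and $e^{i\Om_j\e^{-2}t_j}$ all have modulus $1$, so they are simply discarded at this stage — no oscillatory-integral argument is used in this lemma, it is purely a counting bound. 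The time integral $\int_{I_C(t)}\prod_{j\in\mathcal N_T(C)}\d t_j$ over the simplex $I_C(t)\subset[0,t]^{n_T(C)}$ with $t\le\de$ has volume at most $\de^{n_T(C)}/n_T(C)!\le \de^{n_T(C)}$, which produces the factor $\delta^{n_T(C)}$.

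Next I would handle the product $\prod_{j\in\mathcal N(C)}q_j(C)$ using Lemma \ref{lem qj}: each binary node contributes $\lesssim\prod_{j'\in\children(j)}\ps{\ka(j')}^{-1/2}$ and each ternary node contributes $\lesssim\chi_j(A,\ka)\prod_{j'\in\children(j)}\ps{\ka(j')}^{-1}$. All the Japanese-bracket factors $\ps{\ka(j')}^{-1/2}$ or $\ps{\ka(j')}^{-1}$ are $\le 1$ and can simply be dropped (the decay is not needed here — it will be needed elsewhere). What must be retained from the ternary nodes is the cutoff $\chi_j(A,\ka)=\chi_*(\ka(\ell),\ka(m),\ka(r))$. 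For a low node $j\in\mathcal N_*(C)$ with $*\in\{\lowl,\lowm,\lowr\}$, by Definition \ref{def cutoff} the factor $\chi_*$ is bounded by $\chi(\e^{-\ga}|k_a+k_b|)$ where $k_a,k_b$ are the wavenumbers of the two marked children, i.e. by $\chi\pth{\bigl|\sum_{j'\text{ marked child of }j}\ka(j')\bigr|\e^{-\ga}}$. Since $\ka$ is a decoration map, $\sum_{j'\text{ marked child of }j}\ka(j')=\sum_{\ell\in\offspring(j)}\ka(\ell)$ by \eqref{def offsprings} and the recursion \eqref{récurrence décoration}, which gives exactly the factor $\chi\pth{\bigl|\sum_{\ell\in\offspring(j)}\ka(\ell)\bigr|\e^{-\ga}}$ appearing on the right-hand side of \eqref{estimée intermédiaire FC low nodes}. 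The cutoffs attached to high ternary nodes are simply bounded by $1$.

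Then I would deal with the leaf factors $\prod_{\ell\in\mathcal L(C)_-}M^{c(\ell),c(\si(\ell))}(\ffi(\ell)\ka(\ell))^{\ffi(\ell)}$. Since each $M^{\eta,\eta'}$ is bounded (it is in $W^{1,\infty}$) and supported in $B(0,R)$, each factor is $\lesssim \mathbbm1_{B(0,R)}(\ka(\ell))$; using the coupling rule $\ka(\si(\ell))=-\ka(\ell)$ and the fact that $\si:\mathcal L(C)_-\to\mathcal L(C)_+$ is a bijection, the product of these indicator functions restricts the sum over $\ka$ to those decorations with $\ka(\mathcal L(C)_+)\subset B(0,R)$, which is the constraint written under the sum in \eqref{estimée intermédiaire FC low nodes}. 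Finally I would collect the combinatorial constants: each of the $n(C)=n$ nodes contributes an $O(1)$ multiplicative constant from Lemma \ref{lem qj}, and the number of leaves is $n+1$, so the total can be absorbed into $\La^{n+1}$ for a suitable universal $\La$. Using $n_T(C)\le n/2$ and keeping $\e^{n_B(C)}\delta^{n_T(C)}L^{-nd/2}$ yields precisely \eqref{estimée intermédiaire FC low nodes}. There is essentially no hard obstacle here — the only point requiring a little care is the bookkeeping identifying $\sum_{j'\text{ marked child of }j}\ka(j')$ with $\sum_{\ell\in\offspring(j)}\ka(\ell)$, which follows from Definition \ref{def marked}, \eqref{def offsprings} and additivity of $\ka$ along the tree.
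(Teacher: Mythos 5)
Your proposal is correct and follows essentially the same route as the paper's proof: bound the oscillating integral by the volume $\delta^{n_T(C)}$ of $I_C(t)$, use Lemma \ref{lem qj} to bound the $q_j$ by constants while retaining only the low-node cutoffs, use the compact support of the $M^{\eta,\eta'}$ to restrict to $\ka(\mathcal{L}(C)_+)\subset B(0,R)$, and convert the marked-children constraint into the offspring sum via \eqref{def offsprings}. The only (harmless) imprecisions are that the bracket factors $\ps{\ka(j')}^{-1/2}$ are bounded by $m^{-1/4}$ rather than by $1$ when $m<1$, and that $I_C(t)$ is a product of two partially ordered domains rather than a single simplex; neither affects the bound.
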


\begin{proof}
We recall \eqref{fourier of FC}:
\begin{align*}
\widehat{F_C}(\e^{-2}t,k) & = \varepsilon^{n_B(C)} (-i)^{n_T(C)}L^{-\frac{nd}{2}}
\\&\quad \times \sum_{\ka\in\mathcal D_k(C)} e^{-i\Omega_{-1}\varepsilon^{-2}t}\prod_{j\in \mathcal N(C)} q_j \int_{I_C(t)} \prod_{j\in \mathcal N_T(C)}e^{-i\Omega_j\varepsilon^{-2} t} \d t_j \prod_{\ell\in\mathcal{L}(C)_-}  M^{c(\ell),c(\si(\ell))}(\ffi(\ell)\ka(\ell))^{\ffi(\ell)}. 
\end{align*}
We use that the oscillating integral is less than $\delta^{n_T(C)}$ if $t\in[0,\de]$, that the maps $M^{\eta,\eta'}$ are continuous maps with compact support included in $B(0,R)$ and that the $q_j$ are uniformly bounded (see Lemma \ref{lem qj}) to get that there exists a constant $\La$ independant of the coupling such that
\begin{align*}
\sup_{t\in[0,\de]}\left| \widehat{F_C}(\e^{-2}t,k) \right| \leq \La^{n+1}\varepsilon^{n_B(C)} \delta^{n_T(C)} L^{-\frac{nd}{2}} \sum_{\substack{\ka\in\mathcal{D}_k(C)\\ \ka(\mathcal{L}(C)_+)\subset B(0,R)}} \prod_{j\in \mathcal N_\low(C)} \chi_j ,
\end{align*}
where we also used the simple fact that $\ka(\mathcal{L}(C)_+)\subset B(0,R)$ implies $\ka(\mathcal{L}(C))\subset B(0,R)$ and where $\chi_j$ satisfies $\chi_j \leq \chi\pth{ \left| \kappa(j_1) + \kappa(j_2)\right|\e^{-\ga}}$ with $j_1$ and $j_2$ the marked children of $j$. Therefore, \eqref{def offsprings} implies that
\begin{align*}
\chi_j & \leq \chi\pth{\left|\sum_{\ell\in\offspring(j)} \ka(\ell)\right|\varepsilon^{-\gamma}},
\end{align*}
which concludes the proof of the lemma.
\end{proof}

Following this lemma, Proposition \ref{prop low nodes} is proved in the following sections. In Section \ref{section non-self-coupled-bushes}, we study the properties of non-self-coupled bushes and in particular obtain a triangular structure for the product in \eqref{estimée intermédiaire FC low nodes}. In Section \ref{section basis of V(C) and change of variable}, we define a basis of $V(C)$ and use it to perform a change of variables in the sum over decorations. Benefiting from both the triangular structure of the product and the change of variables, we finally estimate $\widehat{F_C}(\e^{-2}t,k)$ in Section \ref{section successive summations}.

\subsubsection{Non-self-coupled bushes}\label{section non-self-coupled-bushes}

For $j\in\mathcal{N}_\low(C)$, we define
\begin{align}\label{def k bush}
\mathfrak{K}_\bush(j) \vcentcolon = \sum_{\ell\in \mathtt{bush}(j)}K(\ell).
\end{align}
By definition of a self-coupled bush, $\mathfrak{K}_\bush(j)$ vanishes as a linear map if and only if $\mathtt{bush}(j)$ is self-coupled. To make things more precise, we define
\begin{align*}
\mathcal{N}'_\low(C)\vcentcolon = \left\{  \text{low nodes whose bush is not self-coupled} \right\}, \qquad n'_\low \vcentcolon=\#\mathcal{N}'_\low (C),
\end{align*}
so that in particular we have $\mathfrak{K}_\bush(j)\neq 0$ for all $j\in\mathcal{N}'_\low (C)$. We enumerate the elements in $\mathcal{N}'_\low(C)$ in a way compatible with the order in $C$, that is we write
\begin{align*}
\mathcal{N}'_\low(C) = \enstq{j_i}{i \in \llbracket 1,n'_\low \rrbracket},
\end{align*}
with $j_{p_1}< j_{p_2}$ implies $p_1 < p_2$. By using the decomposition with the $\mathfrak{K}_\bush(j)$'s over positive leaves, the following lemma constructs a linearly independent subfamily of large cardinal, making crucial use of the non-self-coupledness.

\begin{lemma}\label{lem rank}
The rank of the family $(\mathfrak K_\bush(j))_{j\in \mathcal N'_\low(C)}$ is grater or equal to $\frac{n'_\low}{2}$.
\end{lemma}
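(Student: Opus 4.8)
The idea is to build, by a greedy selection process, a linearly independent subfamily of $(\mathfrak{K}_\bush(j))_{j\in\mathcal{N}'_\low(C)}$ of size at least $n'_\low/2$, using the fact that each $\mathfrak{K}_\bush(j)$ is a non-zero sum of the linear maps $K(\ell)$ over the (pairwise disjoint, by \eqref{bush disjoint}) bushes. First I would fix the enumeration $j_1<\dots<j_{n'_\low}$ compatible with the order on $C$, and process the nodes \emph{from the largest index down to the smallest} (equivalently, from the top of the tree towards the leaves). For each $j_i$, since $\mathtt{bush}(j_i)$ is not self-coupled, $\mathfrak{K}_\bush(j_i)\ne 0$, so there is at least one positive leaf $\ell\in\mathcal{L}(C)_+$ on which $\mathfrak{K}_\bush(j_i)$ has a non-trivial coefficient; more precisely the relevant variable is $\ka(\ell)$ for some $\ell\in\mathtt{bush}(j_i)\cap\mathcal L(C)_+$ or $\ell=\si(\ell')$ for some $\ell'\in\mathtt{bush}(j_i)\cap\mathcal L(C)_-$ with $\si(\ell')\notin\mathtt{bush}(j_i)$. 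The key geometric input is that the bushes are disjoint subsets of $\mathcal L(C)$, so the "support" of $\mathfrak{K}_\bush(j_i)$ in terms of positive-leaf variables is controlled by $\mathtt{bush}(j_i)$ together with the $\si$-images of its negative leaves.

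The main obstacle — and the heart of the argument — is the interaction through $\si$: a negative leaf of $\mathtt{bush}(j_i)$ may be coupled by $\si$ to a positive leaf lying in \emph{another} bush $\mathtt{bush}(j_{i'})$, so the variables $K(\ell)$ appearing in distinct $\mathfrak{K}_\bush$ are not automatically disjoint. To handle this I would associate to each $j_i$ a leaf $\ell_i$ witnessing non-self-coupledness, and argue that one can choose these witnesses so that each positive-leaf variable is "used" by at most two of the $j_i$'s. Concretely: for each $j_i$, the non-self-coupledness means $\#(\mathtt{bush}(j_i)\cap\mathcal L(C)_+)\ne\#(\mathtt{bush}(j_i)\cap\mathcal L(C)_-)$ or $\si$ does not map one onto the other; in either case there is a positive-leaf variable $\ka(\ell_i)$ (with $\ell_i\in\mathcal L(C)_+$) that enters $\mathfrak{K}_\bush(j_i)$ but such that either $\ell_i\in\mathtt{bush}(j_i)$ or $\si(\ell_i)\in\mathtt{bush}(j_i)$. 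Since the bushes are disjoint, the assignment $j_i\mapsto \ell_i$ is such that each $\ell\in\mathcal L(C)_+$ is hit at most twice (once possibly because $\ell$ itself lies in a bush, once possibly because $\si(\ell)$ lies in a bush).

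Given such an assignment, I would then run the greedy independence argument: maintain a set $S$ of "available" positive leaves, initially all of them; process $j_{n'_\low}, j_{n'_\low-1},\dots$; if the witness $\ell_i$ is still available, add $\mathfrak{K}_\bush(j_i)$ to the chosen subfamily and remove $\ell_i$ and $\si^{-1}(\ell_i)\cap\mathcal L(C)_+$-type partners (at most two leaves) from $S$. Because each $\ell$ is a witness for at most two of the $j_i$'s and removing it blocks at most those two, a standard counting argument shows at least $n'_\low/2$ of the $j_i$'s get selected. Linear independence of the selected $\mathfrak{K}_\bush(j_i)$ follows because, ordering the selected indices decreasingly, the $i$-th selected map contains the variable $\ka(\ell_i)$ with non-zero coefficient while none of the previously selected maps (processed earlier, i.e. with larger index, but now we re-examine them in the final list) involves $\ka(\ell_i)$ — this is where processing top-down matters, since a bush lower in the tree cannot reach a leaf that was removed on behalf of a bush higher up. I would make this triangularity precise by checking that if $\ell_i$ was removed when processing $j_i$, no later-processed (smaller-index, lower) $j_{i'}$ has $\ell_i$ in $\mathtt{bush}(j_{i'})$ or $\si(\ell_i)\in\mathtt{bush}(j_{i'})$ available — which holds because we removed exactly those partners. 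The resulting matrix of coefficients of the selected subfamily, in the basis dual to $(\ka(\ell_i))$, is (after reordering) triangular with non-zero diagonal, hence the subfamily has full rank $\ge n'_\low/2$, giving the claim.
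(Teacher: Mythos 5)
Your overall strategy is the same as the paper's: use the disjointness of bushes to observe that each positive leaf is ``relevant'' to at most two of the maps $\mathfrak K_\bush(j)$, then greedily extract at least $n'_\low/2$ of them and prove linear independence by a triangularity argument in the selected witness variables. However, your blocking rule is too weak and the linear-independence step contains a genuine gap. When you select $j_i$ and consume its witness $\ell_i$, you only block the (at most one) other node whose \emph{designated witness} equals $\ell_i$. What is needed --- and what the paper does via the sets $B_j=\enstq{\ell\in\mathcal L(C)_+}{\text{exactly one of }\ell,\si(\ell)\text{ lies in }\mathtt{bush}(j)}$ and the removal rule $\mathcal R_{i+1}=\mathcal R_i\setminus\enstq{r}{\ell^{j_{R_i}}_1\in B_{j_r}}$ --- is to block \emph{every} node $j'$ for which $K(\ell_i)$ has a nonzero coefficient in $\mathfrak K_\bush(j')$, i.e.\ every $j'$ with $\ell_i\in B_{j'}$, whether or not $\ell_i$ is its designated witness. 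With your rule two selected maps can still share all their relevant variables: take $\mathtt{bush}(j)=\{\ell_a,\ell_b\}$ and $\mathtt{bush}(j')=\{\ell_c,\ell_d\}$ with $\ell_a,\ell_c$ positive, $\si(\ell_b)=\ell_c$ and $\si(\ell_a)=\ell_d$; then $\mathfrak K_\bush(j)=K(\ell_a)-K(\ell_c)=-\mathfrak K_\bush(j')$, both bushes are non-self-coupled, the witnesses $\ell_a$ and $\ell_c$ are distinct, and your greedy procedure selects both maps even though they are linearly dependent.

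Relatedly, the justification you give for triangularity --- ``a bush lower in the tree cannot reach a leaf that was removed on behalf of a bush higher up'', with the processing done top-down --- is not correct and is not the right mechanism. The variable $K(\ell)$ enters $\mathfrak K_\bush(j')$ either because $\ell\in\mathtt{bush}(j')$ or because $\si(\ell)\in\mathtt{bush}(j')$, and the coupling map $\si$ connects leaves across arbitrary parts of the two trees, so the parentality order between $j$ and $j'$ gives no control here; only the disjointness of bushes does (it bounds by two the number of nodes $j'$ with $\ell\in B_{j'}$). Indeed the paper's proof of this lemma makes no use of the tree order at all (it processes the minimal remaining index at each step); the order compatibility only becomes relevant in the subsequent Lemma \ref{lem basis of V(C)}. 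Once you replace your blocking rule by ``discard all nodes $j'$ with $\ell_i\in B_{j'}$'' (which removes at least one and at most two nodes per step, so the count $\geq n'_\low/2$ survives), the triangularity becomes correct: the witness of each selected node appears in none of the subsequently selected maps, and linear independence follows as in the paper.
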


\begin{proof}
If $\mathcal{N}'_\low(C)=\emptyset$ the lemma is trivial. If not, for each $j\in \mathcal N'_\low(C)$ we define the following subset of $\mathcal L(C)_+$:
\begin{align*}
B_j \vcentcolon = \enstq{ \ell \in \mathcal{L}(C)_+ }{ \pth{\ell\in \mathtt{bush}(j)\, \mathrm{and} \, \sigma(\ell) \notin \mathtt{bush}(j)} \,\mathrm{or}\, \pth{\sigma(\ell) \in \mathtt{bush}(j) \,\mathrm{and}\, \ell\notin \mathtt{bush}(j)} }.
\end{align*}
The set $B_j$ is non-empty because $\mathtt{bush}(j)$ is non-empty (recall \eqref{bush non empty}) and non-self-coupled by definition. Since bushes are disjoint (recall \eqref{bush disjoint}), we also observe that for any $\ell \in \mathcal L(C)_+$ there exist at most two nodes $j_1, j_2\in \mathcal N'_\low(C)$ such that $\ell \in B_{j_1}\cap B_{j_2}$. Furthermore,
\begin{align}\label{k_bush avec B_j}
\mathfrak{K}_\bush (j) = \sum_{\ell\in B_j} \iota_j(\ell) K(\ell),
\end{align}
where $\iota_j(\ell) = 1$ if $\ell\in \mathtt{bush}(j)$ and $\iota_j(\ell) = -1$ if $\sigma(\ell) \in \mathtt{bush}(j)$.

We set $b_j:=\#B_j$ and choose an enumeration of the elements of $B_j$, $B_j = \enstq{\ell^j_i}{i\in\llbracket1,b_j\rrbracket}$. We build the sets $\mathcal N_i \subseteq \mathcal N'_\low(C)$ and $\mathcal R_i \subseteq \llbracket 1,n'_\low \rrbracket$ by induction as follows. For $i=0$, we set $\mathcal N_0\vcentcolon = \emptyset$ and $\mathcal R_0\vcentcolon = \llbracket 1, n'_\low \rrbracket$. If $(\mathcal{N}_0,\dots,\mathcal{N}_i)$ and $(\mathcal{R}_0,\dots,\mathcal{R}_i)$ are constructed, then define $\mathcal{N}_{i+1}$ and $\mathcal{R}_{i+1}$ as follows:
\begin{itemize}
\item if $\mathcal{R}_i=\emptyset$, then $\mathcal{R}_{i+1}\vcentcolon=\emptyset$ and $\mathcal{N}_{i+1}\vcentcolon=\mathcal{N}_i$,
\item otherwise, define $R_i=\min\mathcal{R}_i$ and set
\begin{align*}
\mathcal{N}_{i+1} & \vcentcolon = \mathcal{N}_i \cup \{j_{R_i}\},
\\ \mathcal{R}_{i+1} & \vcentcolon = \mathcal{R}_i \setminus \enstq{ r \in \mathcal{R}_i}{\ell^{j_{R_i}}_1\in B_{j_r} }.
\end{align*}
\end{itemize} 
As observed above, given $\ell\in\mathcal{L}(C)_+$ we have $\#\enstq{j\in\mathcal{N}'_\low(C)}{\ell\in B_j}\leq 2$. Therefore, as long as $\mathcal{R}_i\neq\emptyset$ we have
\begin{align*}
1\leq \enstq{ r \in \mathcal{R}_i}{\ell^{j_{R_i}}_1\in B_{j_r} } \leq 2,
\end{align*}
which implies
\begin{align*}
\#\mathcal{R}_i - 2 \leq \#\mathcal{R}_{i+1} \leq \#\mathcal{R}_i -1.
\end{align*}
The inequality on the right implies that $\mathcal{R}_{n'_\low}=\emptyset$ and the one on the left that $\#\mathcal{R}_i\geq n'_\low - 2i$. We define $i_0\vcentcolon = \min\enstq{i\in\llbracket 1,n'_\low\rrbracket}{\mathcal{R}_i=\emptyset}$ and thus have $i_0\geq \frac{n'_\low}{2}$. Moreover, $\mathcal{N}_i\cap \enstq{j_r}{r\in\mathcal{R}_i}=\emptyset$ for any $i\in \N$ which implies that $\#\mathcal{N}_{i+1}=\#\mathcal{N}_i+1$ as long as $\mathcal{R}_i\neq \emptyset$. Hence 
\begin{align*}
\#\mathcal{N}_{i_0}= i_0  \geq \frac{n'_\low}{2}.
\end{align*}
Now we prove that the family $\pth{ \mathfrak{K}_\bush(j) }_{j\in\mathcal{N}_{i_0}}$ is linearly independent. We fix an enumeration $\mathcal{N}_{i_0}=\enstq{J_i}{i\in\llbracket 1,i_0\rrbracket}$ and let $(r_i)_{i\in\llbracket 1,i_0\rrbracket} \in \R^{i_0}$ be such that
\begin{align}\label{famille libre lol}
\sum_{i=1}^{i_0}r_i \mathfrak{K}_\bush(J_i) = 0.
\end{align}
We assume that the $r_i$s are not all zeros and set $i_1:=\min\enstq{i\in\llbracket 1,i_0\rrbracket}{r_i\neq 0}$. If $i_1=i_0$, then \eqref{famille libre lol} simply becomes $r_{i_1}\mathfrak{K}_\bush(J_{i_1})=0$, implying $r_{i_1}=0$ since $\mathfrak{K}_\bush(J_{i_1})\neq 0$. Then necessarily $i_1<i_0$, and we note that by construction we have $\ell^{J_{i_1}}_1\notin B_{J_i}$ for all $i\in\llbracket i_1+1,i_0\rrbracket$. Since \eqref{famille libre lol} can be rewritten
\begin{align*}
\sum_{i=i_1}^{i_0}r_i \sum_{\ell\in B_{J_i}} \iota_{J_i}(\ell) K(\ell) = 0
\end{align*}
with the help of \eqref{k_bush avec B_j}, we see that $K\pth{ \ell^{J_{i_1}}_1 }$ only appears once in the sum with $r_{i_1}\iota_{J_{i_1}}\pth{\ell^{J_{i_1}}_1}$ as coefficient. Since the family $(K(\ell))_{\ell\in \mathcal{L}(C)_+}$ is linearly independent, we again deduce that $r_{i_1}=0$ and reach a contradiction. Therefore $r_i=0$ for all $i$. This concludes the proof of the lemma since we have constructed a linearly independent subfamily of  $(\mathfrak K_\bush(j))_{j\in \mathcal N'_\low(C)}$ of cardinal grater or equal to $\frac{n'_\low}{2}$. 
\end{proof}

The following lemma constructs a basis of $\Vect\left\{\kfr_\bush(j) \, |\, j\in\mathcal{N}'_\low(C)\right\}$ compatible, in some way, with the parentality order.

\begin{lemma}\label{lem basis of V(C)}
Assume that $\mathcal{N}'_\low(C)\neq\emptyset$. There exists a subset $\mathcal{N}''_\low(C)\subseteq \mathcal{N}'_\low(C)$ such that
\begin{itemize}
\item[(i)] its cardinal $n''_\low\vcentcolon = \#\mathcal{N}''_\low(C)$ satisfies $n''_\low \geq \frac{n'_\low}{2}$,
\item[(ii)] the family $\pth{ \mathfrak{K}_\bush(j) }_{j\in\mathcal{N}''_\low(C)}$ is linearly independent in $V(C)$,
\item[(iii)] there exists an enumeration of $\mathcal{N}''_\low(C)=\enstq{J_i}{i\in\llbracket 1, n''_\low \rrbracket}$ such that
\begin{itemize}
    \item[a.] there exists no $j\in\mathcal{N}'_\low(C)$ satisfying $j<J_1$;
    \item[b.] if $n''_\low\geq 2$, for $i\in\llbracket 2,n''_\low\rrbracket$ and $j\in\mathcal{N}'_\low(C)$ we have
\begin{align}\label{flag 2 énoncé}
j<J_{i}\Longrightarrow\,  \mathfrak{K}_\bush(j)\in \mathrm{Vect}\enstq{\mathfrak{K}_\bush(J_{i'})}{i'\in\llbracket 1,i-1\rrbracket}.
\end{align}
\end{itemize}
 
\end{itemize}
\end{lemma}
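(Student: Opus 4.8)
The plan is to build $\mathcal{N}''_\low(C)$ by a greedy linear-independence selection carried out \emph{along the parentality order}. Concretely, I would reuse the enumeration $\mathcal{N}'_\low(C)=\{j_1,\dots,j_{n'_\low}\}$ fixed just before Lemma \ref{lem rank}, which is compatible with the order of $C$ in the sense that $j_p<j_q$ implies $p<q$. I then scan $j_1,\dots,j_{n'_\low}$ in this order and put $j_p$ into $\mathcal{N}''_\low(C)$ exactly when $\mathfrak{K}_\bush(j_p)\notin\mathrm{Vect}\{\mathfrak{K}_\bush(j_q)\mid q<p,\ j_q\in\mathcal{N}''_\low(C)\}$. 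Writing $\mathcal{N}''_\low(C)=\{J_1,\dots,J_{n''_\low}\}$ in the order in which elements are added, so $J_i=j_{p_i}$ with $p_1<\dots<p_{n''_\low}$, the family $(\mathfrak{K}_\bush(J_i))_i$ is, by the standard greedy basis extraction, a basis of $\mathrm{Vect}\{\mathfrak{K}_\bush(j)\mid j\in\mathcal{N}'_\low(C)\}$. Note also that each $\mathfrak{K}_\bush(j)$ lies in $V(C)$: its summands $K(\ell)$ with $\ell\in\mathcal{L}(C)_+$ are by definition in $V(C)$, and those with $\ell\in\mathcal{L}(C)_-$ equal $-K(\si(\ell))$ with $\si(\ell)\in\mathcal{L}(C)_+$, hence also lie in $V(C)$; and clearly $\mathcal{N}''_\low(C)\subseteq\mathcal{N}'_\low(C)$.

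Properties (i) and (ii) then follow immediately: (ii) holds by construction, and since $(\mathfrak{K}_\bush(J_i))_i$ is a basis of $\mathrm{Vect}\{\mathfrak{K}_\bush(j)\mid j\in\mathcal{N}'_\low(C)\}$, its cardinal $n''_\low$ equals the rank of $(\mathfrak{K}_\bush(j))_{j\in\mathcal{N}'_\low(C)}$, which is $\geq n'_\low/2$ by Lemma \ref{lem rank}. I would also record that $J_1=j_1$: since $j_1\in\mathcal{N}'_\low(C)$ has a non-self-coupled bush we have $\mathfrak{K}_\bush(j_1)\neq 0$, so $j_1$ is selected at the very first step.

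For (iii)a: if some $j\in\mathcal{N}'_\low(C)$ satisfied $j<J_1=j_1$, then $j=j_p$ with $p<1$ by compatibility of the enumeration, which is impossible. For (iii)b, fix $i\geq 2$ and $j\in\mathcal{N}'_\low(C)$ with $j<J_i=j_{p_i}$; compatibility gives $j=j_p$ with $p<p_i$. If $j_p$ was selected, then $j_p=J_m$ with $p_m=p<p_i$, hence $m<i$ and $\mathfrak{K}_\bush(j)=\mathfrak{K}_\bush(J_m)\in\mathrm{Vect}\{\mathfrak{K}_\bush(J_{m'})\mid m'\leq i-1\}$. If $j_p$ was not selected, the selection rule gives $\mathfrak{K}_\bush(j_p)\in\mathrm{Vect}\{\mathfrak{K}_\bush(j_q)\mid q<p,\ j_q\in\mathcal{N}''_\low(C)\}$, and every such $q$ satisfies $q<p<p_i$, so corresponds to some $J_m$ with $m<i$; this yields the claim.

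This lemma is essentially careful bookkeeping once Lemma \ref{lem rank} is in hand, so there is no genuine analytic obstacle here. The only point that really needs care — and the reason the construction works — is that the greedy selection is run along an order that refines the parentality partial order of $C$; this is precisely what forces the flag property (iii)b, since a not-yet-selected $\mathfrak{K}_\bush(j_p)$ is then automatically a combination of $\mathfrak{K}_\bush$'s of strictly smaller indices, hence of strictly earlier $J_m$'s.
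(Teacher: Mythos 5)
Your proposal is correct and follows essentially the same route as the paper: a greedy linear-independence selection run along the enumeration of $\mathcal{N}'_\low(C)$ that refines the parentality order, with (i) deduced from Lemma \ref{lem rank} via the basis property and (iii) deduced from the compatibility of the enumeration with the order. The only cosmetic difference is that you justify $J_1=j_1$ by noting $\mathfrak{K}_\bush(j_1)\neq 0$, whereas the paper simply initializes the selected set with the first index, which amounts to the same thing.
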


\begin{proof}
We first build a subset $P\subseteq \llbracket1,n'_\low \rrbracket$ by induction in the following way: $P_1 \vcentcolon = \{1\}$ and if $P_1 \subseteq \dots \subseteq P_m$ for $m\geq 1$ are defined then 
\begin{itemize}
\item if $\mathfrak{K}_\bush(j_{m+1})$ is a linear combination of $(\mathfrak{K}_\bush(j_p))_{p\in P_m}$, we set $P_{m+1} \vcentcolon = P_m$,
\item otherwise, we set $P_{m+1} \vcentcolon = P_m\sqcup \{m+1\}$.
\end{itemize}
We define $P\vcentcolon = P_{n'_\low}$ and 
\begin{align*}
\mathcal{N}''_\low(C) \vcentcolon = \enstq{j_m}{m\in P}.
\end{align*}
Statement $(ii)$ is satisfied since the family of linear maps $\pth{\mathfrak{K}_\bush(j)}_{j\in\mathcal{N}''_\low(C)}$ is linearly independent by construction. We also have that for any $i\in\llbracket 1,n'_\low\rrbracket$ 
\begin{align}\label{flag 1}
\mathfrak{K}_\bush(j_i) \in \mathrm{Vect}\enstq{ \mathfrak{K}_\bush(j_{i'}) }{i'\in P \cap \llbracket 1,i\rrbracket}.
\end{align}
This shows that $\pth{\mathfrak{K}_\bush(j)}_{j\in\mathcal{N}''_\low(C)}$ is a basis of $\Vect\left\{\kfr_\bush(j) \, |\, j\in\mathcal{N}'_\low(C)\right\}$ and proves statement $(i)$ thanks to Lemma \ref{lem rank}. 

It only remains to prove $(iii)$. Note that it is always possible to enumerate $\mathcal{N}''_\low(C)$ 
\begin{align*}
\mathcal{N}''_\low(C) = \enstq{J_i}{ i\in \llbracket1,n''_\low\rrbracket},
\end{align*}
in such a way that for all $I,I' \in \llbracket 1,n''_\low \rrbracket$, writing $J_I = j_i$ and $J_{I'} = j_{i'}$ with $i,i'\in \llbracket 1,n'_\low \rrbracket$, we have that $i<i'$ is equivalent to $I<I'$. 
Since $j_1\in\mathcal{N}''_\low(C)$ by construction, this enumeration is such that $J_1=j_1$ and there exists no $j\in\mathcal{N}'_\low(C)$ satisfying $j<J_1$, which is point $a.$ of the third statement. Furthermore, if $n''_\low\geq 2$ we observe the following. Let $i\in\llbracket 2,n''_\low \rrbracket$ and $j_m\in \mathcal{N}'_\low(C)$ for some $m\in\llbracket 1,n'_\low\rrbracket$. Thanks to \eqref{flag 1}, there exists $M\in\llbracket 1,n''_\low \rrbracket$ such that $J_M=j_{\max P\cap\llbracket 1,m\rrbracket}$ and
\begin{align}\label{flag 3}
\mathfrak{K}_\bush(j_m) \in \mathrm{Vect} \enstq{ \mathfrak{K}_\bush(J_{i'})}{i'\in \llbracket 1,M \rrbracket}. 
\end{align} 
Assume that $j_m<J_i$ and write $J_i=j_{m'}$ for $m'\in\llbracket 1,n'_\low\rrbracket$. Since $j_m<j_{m'}$ we have that $m<m'$, hence $\max P\cap\llbracket 1,m\rrbracket<m'$, which in turn implies $M<i$. This implies $M\leq i-1$ and together with \eqref{flag 3} this concludes the proof of \eqref{flag 2 énoncé}.
\end{proof}

The third point of the previous lemma allows us to rewrite \eqref{estimée intermédiaire FC low nodes} in a more suitable form.

\begin{lemma}\label{lem:summationslow}
There exist functions $\Xi_{J_i}$ for $i\in\llbracket 1,n''_\low\rrbracket$ such that
\begin{align}
\sup_{t\in[0,\de]}\left| \widehat{F_C}(\e^{-2}t,k) \right| & \leq \La^{n+1}\varepsilon^{n_B(C)} \delta^{n_T(C)} L^{-\frac{nd}{2}}\label{estimée intermédiaire FC low nodes 2}
\\&\quad \times \sum_{\substack{\ka\in\mathcal{D}_k(C)\\ \ka(\mathcal{L}(C)_+)\subset B(0,R)}} \prod_{i=1}^{n''_\low}  \chi\pth{ \left| \mathfrak{K}_\bush(J_{i})(\ka) + \Xi_{J_{i}}\pth{ \pth{ \mathfrak{K}_\bush(J_{i'})(\ka) }_{i'\in\llbracket 1,i-1\rrbracket} } \right| \e^{-\ga} }.\non
\end{align}
\end{lemma}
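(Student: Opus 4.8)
The starting point is the bound \eqref{estimée intermédiaire FC low nodes} from Lemma \ref{lem:récriturelow}, in which the product over low nodes $j\in\mathcal N_\low(C)$ of the cutoffs $\chi\pth{\left|\sum_{\ell\in\offspring(j)}\ka(\ell)\right|\varepsilon^{-\gamma}}$ appears. The plan is to keep only the low nodes whose bush is \emph{not} self-coupled, i.e.\ to restrict the product from $\mathcal N_\low(C)$ to $\mathcal N'_\low(C)$ (this only increases the sum, since $0\le\chi\le1$), and then to rewrite each surviving cutoff in terms of the linear maps $\mathfrak K_\bush(j)$ rather than the offspring sums. The first step is therefore to show that for $j\in\mathcal N'_\low(C)$ the quantity $\sum_{\ell\in\offspring(j)}\ka(\ell)$ differs from $\mathfrak K_\bush(j)(\ka)$ only by a linear combination of the $\mathfrak K_\bush(j')(\ka)$ for strictly smaller $j'\in\mathcal N'_\low(C)$. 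This is essentially \eqref{lien offsprings bushes} of Proposition \ref{prop bush}: $\offspring(j)=\bigsqcup_{j'\in\mathcal N_\low(C),\,j'\le j,\,j'\nleq j_{\mathrm{unmarked}}}\mathtt{bush}(j')$, so summing $\ka(\ell)=K(\ell)(\ka)$ over $\ell\in\offspring(j)$ gives $\sum_{\ell\in\offspring(j)}\ka(\ell)=\sum_{j'}\mathfrak K_\bush(j')(\ka)$ where the sum runs over those $j'$. Splitting off the $j'=j$ term, the remainder is $\sum_{j'<j}\mathfrak K_\bush(j')(\ka)$, and the self-coupled ones among these contribute zero (by definition $\mathfrak K_\bush(j')=0$ when $\mathtt{bush}(j')$ is self-coupled), so only $j'\in\mathcal N'_\low(C)$ with $j'<j$ survive.

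**Reduction to the basis $\mathcal N''_\low(C)$.** Having written each cutoff argument as $\mathfrak K_\bush(j)(\ka)+\sum_{j'\in\mathcal N'_\low(C),\,j'<j}\mathfrak K_\bush(j')(\ka)$, I next restrict the product further to $j\in\mathcal N''_\low(C)=\{J_1,\dots,J_{n''_\low}\}$, the basis extracted in Lemma \ref{lem basis of V(C)} (again using $\chi\le1$ to drop the other factors). For each $J_i$, property (iii) of Lemma \ref{lem basis of V(C)} guarantees that every $j\in\mathcal N'_\low(C)$ with $j<J_i$ satisfies $\mathfrak K_\bush(j)\in\Vect\{\mathfrak K_\bush(J_{i'}):i'\le i-1\}$; combining this with \eqref{flag 1} (which expresses \emph{every} $\mathfrak K_\bush(j_m)$ in the basis) one sees that the correction term $\sum_{j'<J_i}\mathfrak K_\bush(j')(\ka)$ is itself a linear combination of $\mathfrak K_\bush(J_1)(\ka),\dots,\mathfrak K_\bush(J_{i-1})(\ka)$. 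Defining $\Xi_{J_i}$ to be precisely that linear map $(\R^d)^{i-1}\to\R^d$ — reading off the coefficients from the change-of-basis relations — we obtain that the argument of the $J_i$-cutoff equals $\mathfrak K_\bush(J_i)(\ka)+\Xi_{J_i}\pth{\pth{\mathfrak K_\bush(J_{i'})(\ka)}_{i'\le i-1}}$, which is exactly the form claimed in \eqref{estimée intermédiaire FC low nodes 2}. For $i=1$ there is no $j<J_1$ in $\mathcal N'_\low(C)$ by part (iii)(a), so $\Xi_{J_1}$ may be taken to be $0$ and the first factor is simply $\chi\pth{|\mathfrak K_\bush(J_1)(\ka)|\varepsilon^{-\gamma}}$.

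**Assembling the estimate.** Putting the three reductions together with Lemma \ref{lem:récriturelow}, the prefactor $\La^{n+1}\varepsilon^{n_B(C)}\delta^{n_T(C)}L^{-\frac{nd}{2}}$ is unchanged and the product over $\mathcal N_\low(C)$ of offspring cutoffs has been bounded above by the product over $i\in\llbracket1,n''_\low\rrbracket$ of $\chi\pth{\left|\mathfrak K_\bush(J_i)(\ka)+\Xi_{J_i}\pth{\pth{\mathfrak K_\bush(J_{i'})(\ka)}_{i'\le i-1}}\right|\varepsilon^{-\gamma}}$, which is \eqref{estimée intermédiaire FC low nodes 2}. The only genuinely delicate point is bookkeeping: one must check that the linear-algebraic identities of Lemma \ref{lem basis of V(C)} are applied to the \emph{right} index set, i.e.\ that "$j'<J_i$ in $C$" together with \eqref{flag 2 énoncé} really does confine all correction terms to indices $i'\le i-1$ and never to $i'\ge i$; this is where the compatibility of the enumeration $(J_i)$ with the parentality order (built into the proof of Lemma \ref{lem basis of V(C)}) is essential. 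I do not expect any analytic difficulty here — the lemma is purely a rewriting step preparing the change of variables carried out in Section \ref{section basis of V(C) and change of variable} — but one should be careful that the functions $\Xi_{J_i}$ depend only on the coupling $C$ (through the fixed basis decomposition) and not on $\ka$, which is what makes the subsequent summation over decorations tractable.
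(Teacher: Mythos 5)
Your proposal is correct and follows essentially the same route as the paper: drop the cutoffs outside $\mathcal{N}''_\low(C)$, rewrite each surviving offspring sum via \eqref{lien offsprings bushes} as $\mathfrak{K}_\bush(J_i)(\ka)$ plus a sum of $\mathfrak{K}_\bush(j')(\ka)$ over smaller non-self-coupled low nodes, and absorb that correction into $\Xi_{J_i}$ using part (iii) of Lemma \ref{lem basis of V(C)}. The only slip is cosmetic: the remainder after splitting off $j'=j$ runs over $j'<J_i$ with $j'\nleq J_{i,\mathrm{unmarked}}$, not over all $j'<J_i$, but since every such term still lies in $\mathrm{Vect}\enstq{\mathfrak{K}_\bush(J_{i'})}{i'\leq i-1}$ the definition of $\Xi_{J_i}$ and the conclusion are unaffected.
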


\begin{proof}
We start from \eqref{estimée intermédiaire FC low nodes} and first bound by $1$ the $\chi_j$ for $j\in\mathcal{N}_\low(C)\setminus\mathcal{N}''_\low(C)$. Using in addition \eqref{lien offsprings bushes} we obtain
\begin{align*}
&\sup_{t\in[0,\de]}\left| \widehat{F_C}(\e^{-2}t,k) \right| 
\\& \leq \La^{n+1}\varepsilon^{n_B(C)} \delta^{n_T(C)} L^{-\frac{nd}{2}} 
\\&\quad \times \sum_{\substack{\ka\in\mathcal{D}_k(C)\\ \ka(\mathcal{L}(C)_+)\subset B(0,R)}} \prod_{i=1}^{n''_\low} \chi\pth{ \left| \mathfrak{K}_\bush(J_{i})(\ka) + \sum_{\substack{ j\in\mathcal{N}'_\low(C) \\ \text{$j< J_{i}$ and $j\nleq J_{i,\mathrm{unmarked}}$} }} \mathfrak{K}_\bush(j)(\ka) \right| \e^{-\ga} },
\end{align*}
where $J_{i,\mathrm{unmarked}}$ is the unmarked children of $J_{i}$ and where we also used the fact that $\mathfrak{K}_\bush(j)=0$ if $j\in\mathcal{N}_\low(C)\setminus \mathcal{N}'_\low(C)$. The third point of the previous lemma shows that there exists a function $\Xi_{J_i}$ such that 
\begin{align*}
\sum_{\substack{ j\in\mathcal{N}'_\low(C) \\ \text{$j< J_{i}$ and $j\nleq J_{i,\mathrm{unmarked}}$} }} \mathfrak{K}_\bush(j)(\ka) = \Xi_{J_i}\pth{ \pth{\mathfrak{K}_\bush(J_{i'})(\ka)}_{i'\in\llbracket1,i-1\rrbracket} },
\end{align*}
with the convention that $\Xi_{J_1}=0$ since the sum on the left is a sum over the emptyset in the case $i=1$. This concludes the proof of the lemma.
\end{proof}

\subsubsection{A basis of $V(C)$ and a change of variable}\label{section basis of V(C) and change of variable}

Since the family of linear maps $\pth{ \mathfrak{K}_\bush(j) }_{j\in\mathcal{N}''_\low(C)}$ is linearly independent in $V(C)$, we can complete it into a basis. Given that $V(C)$ has dimension $\#\mathcal{L}(C)_+=\frac{n}{2}+1$, we set $p:= \frac{n}{2}+1 - n''_\low$ and let  
\begin{align*}
\mathfrak{L}\vcentcolon = \enstq{\ell_i}{i\in\llbracket 1,p \rrbracket}\subseteq  \mathcal{L}(C)_+
\end{align*}
be such that
\begin{align}\label{basis of V(C)}
\pth{\mathfrak{K}_\bush(j), K(\ell) \, | \, j\in \mathcal N''_\low(C),  \ell\in \mathfrak{L} }
\end{align}
forms a basis of $V(C)$. We will use this basis to make a change of variables in \eqref{estimée intermédiaire FC low nodes 2}.

We first benefit from the natural bijection between the set of decoration maps of $C$ and $(\R^d)^{\mathcal{L}(C)_+}$, given by the restriction to $\mathcal{L}(C)_+$ of all decorations, and rewrite the sum on $\ka$ in \eqref{estimée intermédiaire FC low nodes 2} as a sum over $B(0,R)^{\mathcal{L}(C)_+}$ (we still denote the elements of this set by $\ka$). This gives
\begin{align*}
&\sup_{t\in[0,\de]}\left| \widehat{F_C}(\e^{-2}t,k) \right| 
\\& \leq \La^{n+1}\varepsilon^{n_B(C)} \delta^{n_T(C)} L^{-\frac{nd}{2}}
\\&\quad \times \sum_{\ka\in B(0,R)^{\mathcal{L}(C)_+}} \prod_{i=1}^{n''_\low}  \chi\pth{ \left| \mathfrak{K}_\bush(J_{i})(\ka) + \Xi_{J_{i}}\pth{ \pth{ \mathfrak{K}_\bush(J_{i'})(\ka) }_{i'\in\llbracket 1,i-1\rrbracket} } \right| \e^{-\ga} } \mathbbm{1}_{K(r)(\ka)=k},
\end{align*}
where $r$ is the root of the first tree in the coupling $C$. The fact that \eqref{basis of V(C)} is a basis of $V(C)$ implies that the map
\begin{align*}
\Phi : \ka \in (\R^d)^{\mathcal{L}(C)_+} \longmapsto \pth{\mathfrak{K}_\bush(J_i)(\ka), K(\ell_{i'})(\ka) \, | \, i\in\llbracket 1,n''_\low \rrbracket, i'\in \llbracket 1,p\rrbracket  }\in(\R^d)^{\frac{n}{2}+1}
\end{align*}
is a bijection and moreover satisfies $\Phi\pth{ \pth{\Z_L^d}^{\mathcal{L}(C)_+} } \subset \pth{\Z_L^d}^{\frac{n}{2}+1}$ after the definition \eqref{def k bush} of $\mathfrak{K}_\bush$. This allows us to make the change of variable $\pth{x_1,\dots,x_{n''_\low},y_1,\dots,y_{p}}=\Phi(\ka)$ and obtain
\begin{equation}\label{estim FC intermediate}
\begin{aligned}
&\sup_{t\in[0,\de]}\left| \widehat{F_C}(\e^{-2}t,k) \right| 
\\& \leq \La^{n+1} \varepsilon^{n_B(C)} \delta^{n_T(C)} L^{-\frac{nd}{2}}
\\&\quad \times \sum_{\substack{\pth{x_1,\dots,x_{n''_\low}}\in\pth{\Z_L^d}^{n''_\low}\\\pth{y_1,\dots,y_{p}}\in \pth{\Z_L^d\cap B(0,R)}^{ p }}} \prod_{i=1}^{n''_\low}  \chi\pth{ \left| x_i + \Xi_{J_{i}}\pth{\pth{ x_{i'} }_{i'\in\llbracket 1,i-1\rrbracket}}  \right| \e^{-\ga} }  \mathbbm{1}_{\tilde{K}(r)\pth{x_1,\dots,x_{n''_\low},y_1,\dots,y_{p}}=k}
\end{aligned}
\end{equation}
where $\tilde{K}(r)$ is a map defined on $(\R^d)^{\frac{n}{2}+1}$ by 
\begin{align*}
\tilde{K}(r)\pth{x_1,\dots,x_{n''_\low},y_1,\dots,y_{p}} = K(r)\pth{ \Phi^{-1}\pth{x_1,\dots,x_{n''_\low},y_1,\dots,y_{p}}}.
\end{align*}
Since $K(r)\in V(C)$, there exist some real numbers $(a_i)_{i\in\llbracket 1,n''_\low\rrbracket}$ et $(b_i)_{i\in\llbracket 1,p\rrbracket}$ such that
\begin{align*}
K(r) = \sum_{i=1}^{n''_\low}a_i\mathfrak{K}_\bush(J_i) + \sum_{i=1}^p b_i K(\ell_i).
\end{align*}
This implies the following expression for $\tilde{K}(r)$:
\begin{align*}
\tilde{K}(r)\pth{x_1,\dots,x_{n''_\low},y_1,\dots,y_{p}} & = \sum_{i=1}^{n''_\low}a_ix_i + \sum_{i=1}^p b_i y_i,
\end{align*}
where we also used the definition of the change of variable $\Phi$. Therefore \eqref{estim FC intermediate} becomes
\begin{equation}\label{estim FC intermediate bis}
\begin{aligned}
&\sup_{t\in[0,\de]}\left| \widehat{F_C}(\e^{-2}t,k) \right| 
\\& \leq \La^{n+1} \varepsilon^{n_B(C)} \delta^{n_T(C)} L^{-\frac{nd}{2}}
\\&\quad \times \sum_{\substack{\pth{x_1,\dots,x_{n''_\low}}\in\pth{\Z_L^d}^{n''_\low}\\\pth{y_1,\dots,y_{p}}\in \pth{\Z_L^d\cap B(0,R)}^{ p }}}  \prod_{i=1}^{n''_\low}  \chi\pth{ \left| x_i + \Xi_{J_{i}}\pth{\pth{ x_{i'} }_{i'\in\llbracket 1,i-1\rrbracket}}  \right| \e^{-\ga} }  \mathbbm{1}_{\sum_{i=1}^{n''_\low}a_ix_i + \sum_{i=1}^p b_i y_i=k}.
\end{aligned}
\end{equation}

\subsubsection{Proof of Proposition \ref{prop low nodes}}\label{section successive summations}

We are now ready to conclude the proof of Proposition \ref{prop low nodes}.

\begin{proof}[Proof of Proposition \ref{prop low nodes}] Starting from \eqref{estim FC intermediate bis}, we distinguish two cases. The first case is when $a_i=0$ for all $i\in\llbracket 1,n''_\low \rrbracket$. If this holds, then the situation where all the $b_i$'s vanish, i.e when $\tilde{K}(r)=0$, gives a zero contribution because of the characteristic function in \eqref{estim FC intermediate bis} and $k\neq 0$. Therefore we can assume that there exists $i_0\in\llbracket 1,p\rrbracket$ such that $b_{i_0}\neq0$. This implies that
\begin{align*}
&\sum_{\pth{y_1,\dots,y_{p}}\in \pth{\Z_L^d\cap B(0,R)}^{ p }} \mathbbm{1}_{\tilde{K}(r)\pth{x_1,\dots,x_{n''_\low},y_1,\dots,y_{p}}=k} 
\\&\qquad = \sum_{(y_i)_{i\in\llbracket 1,p\rrbracket\setminus\{i_0\}}\in \pth{\Z_L^d\cap B(0,R)}^{ p-1 }} \mathbbm{1}_{b_{i_0}y_{i_0}=k-\sum_{i\in\llbracket 1,p\rrbracket\setminus\{i_0\}} b_i y_i}.
\end{align*}
We thus obtain
\begin{align}\label{estim somme tilde K =k}
\sum_{\pth{y_1,\dots,y_{p}}\in \pth{\Z_L^d\cap B(0,R)}^{ p }} \mathbbm{1}_{\tilde{K}(r)\pth{x_1,\dots,x_{n''_\low},y_1,\dots,y_{p}}=k} \lesssim \La^{(p-1)d}L^{(p-1)d}R^{(p-1)d}.
\end{align}
Plugging \eqref{estim somme tilde K =k} into \eqref{estim FC intermediate} gives
\begin{align}\label{estim FC intermediate 2}
\sup_{t\in[0,\de]}\left| \widehat{F_C}(\e^{-2}t,k) \right| & \leq \La^{n} \varepsilon^{n_B(C)} \delta^{n_T(C)} L^{\pth{p-1-\frac{n}{2}}d} 
\\&\quad \times \sum_{\pth{x_1,\dots,x_{n''_\low}}\in\pth{\Z_L^d}^{n''_\low}}\prod_{i=1}^{n''_\low}  \chi\pth{ \left| x_i + \Xi_{J_{i}}\pth{\pth{ x_{i'} }_{i'\in\llbracket 1,i-1\rrbracket}}  \right| \e^{-\ga} } ,\non
\end{align}
where we also absorbed $R$ into the universal constant $\La$. In order to estimate the sum over $\pth{\Z_L^d}^{n''_\low}$, we benefit from the triangular structure of the product and write
\begin{align*}
&\sum_{\pth{x_1,\dots,x_{n''_\low}}\in\pth{\Z_L^d}^{n''_\low}}\prod_{i=1}^{n''_\low}  \chi\pth{ \left| x_i + \Xi_{J_{i}}\pth{\pth{ x_{i'} }_{i'\in\llbracket 1,i-1\rrbracket}}  \right| \e^{-\ga} }
\\&\hspace{1cm} = \sum_{x_1\in \Z_L^d}\chi\pth{ \left| x_1 \right| \e^{-\ga} } \pth{ \cdots \pth{ \sum_{x_{n''_\low}\in\Z_L^d}\chi\pth{ \left| x_{n''_\low} + \Xi_{J_{n''_\low}}\pth{\pth{ x_{i'} }_{i'\in\llbracket 1,n''_\low-1\rrbracket}}  \right| \e^{-\ga} } }\cdots}.
\end{align*}
Since $L\e^{\ga}=\e^{\ga-\b}$ tends to infinity if $\e$ tends to 0 (since $\b>2$ and $\ga$ is small), for any $\xi\in\R^d$ we have 
\begin{align}\label{counting}
\#\enstq{ x \in \Z_L^d}{|x+\xi|\lesssim 2\e^\ga} \leq \La^d L^d \e^{\ga d},
\end{align}
we gain a factor $\La^d L^d \e^{\ga d}$ for each sum over $x_i\in \Z_L^d$ for $i\in\llbracket 1,n''_\low\rrbracket$. This implies
\begin{align*}
\sum_{\pth{x_1,\dots,x_{n''_\low}}\in\pth{\Z_L^d}^{n''_\low}}\prod_{i=1}^{n''_\low}  \chi\pth{ \left| x_i + \Xi_{J_{i}}\pth{\pth{ x_{i'} }_{i'\in\llbracket 1,i-1\rrbracket}}  \right| \e^{-\ga} } \leq \La^n L^{n''_\low d} \e^{\ga n''_\low d }.
\end{align*}
By combining this estimate with \eqref{estim FC intermediate 2} we obtain 
\begin{align}\label{estim FC low nodes A}
\sup_{t\in[0,\de]}\left| \widehat{F_C}(\e^{-2}t,k) \right| & \leq \La^{n+1} \e^{n_B(C)+\ga n''_\low d } \delta^{n_T(C)}.
\end{align}
where we crucially used that $p=\frac{n}{2}+1-n''_\low$ so that $L$ disappears from the estimate. 

\saut
The second case is when there exists at least one non-zero $a_i$. We set $i_0=\max\enstq{i\in\llbracket 1,n''_\low \rrbracket}{a_i\neq 0}$. Again, we start from \eqref{estim FC intermediate} and benefit from the triangular structure of the product to rewrite \eqref{estim FC intermediate} as
\begin{equation*}
\begin{aligned}
&\sup_{t\in[0,\de]}\left| \widehat{F_C}(\e^{-2}t,k) \right| 
\\& \leq \La^{n+1} \varepsilon^{n_B(C)} \delta^{n_T(C)} L^{-\frac{nd}{2}}
\\&\quad \times \sum_{\substack{\pth{y_1,\dots,y_{p}}\in \pth{\Z_L^d\cap B(0,R)}^{ p }\\ \pth{x_1,\dots,x_{i_0}}\in\pth{\Z_L^d}^{i_0} }}  \prod_{i=1}^{i_0}  \chi\pth{ \left| x_i + \Xi_{J_{i}}\pth{\pth{ x_{i'} }_{i'\in\llbracket 1,i-1\rrbracket}}  \right| \e^{-\ga} }  \mathbbm{1}_{a_{i_0}x_{i_0}  =k-f(y)-\sum_{i=1}^{i_0-1}a_ix_i}
\\&\quad \times \sum_{x_{i_0+1}\in\Z_L^d}\chi\pth{ \left| x_{i_0+1} + \Xi_{J_{i_0+1}}\pth{\pth{ x_{i'} }_{i'\in\llbracket 1,i_0\rrbracket}}  \right| \e^{-\ga} }
\\&\hspace{1cm}\times \pth{ \cdots \pth{ \sum_{x_{n''_\low}\in\Z_L^d}\chi\pth{\left| x_{n''_\low} + \Xi_{J_{n''_\low}}\pth{\pth{ x_{i'} }_{i'\in\llbracket 1,n''_\low-1\rrbracket}}  \right| \e^{-\ga}} } \cdots }
\end{aligned}
\end{equation*}
where for clarity we used the notation $f(y)=\sum_{i=1}^p b_i y_i$. As above, each sum over the $x_i$'s for $i\in\llbracket i_0+1,n''_\low\rrbracket$ gives a factor $\La^d L^d \e^{\ga d}$ so that we obtain
\begin{equation*}
\begin{aligned}
&\sup_{t\in[0,\de]}\left| \widehat{F_C}(\e^{-2}t,k) \right| 
\\& \leq \La^{n+1} \delta^{n_T(C)}   L^{\pth{n''_\low - i_0-\frac{n}{2}}d} \e^{n_B(C)+\ga (n''_\low - i_0)d}
\\&\quad \times  \sum_{\substack{\pth{y_1,\dots,y_{p}}\in \pth{\Z_L^d\cap B(0,R)}^{ p }\\ \pth{x_1,\dots,x_{i_0}}\in\pth{\Z_L^d}^{i_0} }} \prod_{i=1}^{i_0}  \chi\pth{ \left| x_i + \Xi_{J_{i}}\pth{\pth{ x_{i'} }_{i'\in\llbracket 1,i-1\rrbracket}}  \right| \e^{-\ga} }  \mathbbm{1}_{a_{i_0}x_{i_0}  =k-f(y)-\sum_{i=1}^{i_0-1}a_ix_i}.
\end{aligned}
\end{equation*}
Since $a_{i_0}\neq 0$, the sum over $x_{i_0}$ does not contribute and if we simply bound the corresponding cutoff function and the characteristic function by $1$  we obtain
\begin{equation*}
\begin{aligned}
\sup_{t\in[0,\de]}\left| \widehat{F_C}(\e^{-2}t,k) \right| & \leq \La^{n+1} \delta^{n_T(C)}   L^{\pth{n''_\low - i_0-\frac{n}{2}}d} \e^{n_B(C)+\ga (n''_\low - i_0)d}
\\&\quad \times \sum_{\substack{\pth{y_1,\dots,y_{p}}\in \pth{\Z_L^d\cap B(0,R)}^{ p }\\ \pth{x_1,\dots,x_{i_0-1}}\in\pth{\Z_L^d}^{i_0-1} }} \prod_{i=1}^{i_0-1}  \chi\pth{ \left| x_i + \Xi_{J_{i}}\pth{\pth{ x_{i'} }_{i'\in\llbracket 1,i-1\rrbracket}}  \right| \e^{-\ga} } .
\end{aligned}
\end{equation*}
Using again the triangular structure for the product as in the first case, we finally obtain
\begin{align}
\sup_{t\in[0,\de]}\left| \widehat{F_C}(\e^{-2}t,k) \right| & \leq \La^{n+1} \delta^{n_T(C)}   L^{\pth{n''_\low - i_0-\frac{n}{2}+p + i_0-1}d} \e^{n_B(C)+\ga (n''_\low - i_0)d+\ga(i_0-1)d}\non
\\&\leq  \La^{n+1} \delta^{n_T(C)}  \e^{n_B(C)+\ga (n''_\low - 1)d},\label{estim FC low nodes B}
\end{align}
where we again used $p=\frac{n}{2}+1-n''_\low$.

\saut
We conclude the estimate of $\widehat{F_C}(\e^{-2}t,k)$, by distinguishing between $n'_\low$ vanishing or not. If $n'_\low=0$, then \eqref{estim FC low nodes A} holds with $n''_\low=0$. Then, using first $\e\leq \de$ and $n(C)=n_B(C)+2n_T(C)$ and then \eqref{low nodes regime} together with $n_\low-n_\scb=n'_\low=0$ we obtain
\begin{align*}
\sup_{t\in[0,\de]}\left| \widehat{F_C}(\e^{-2}t,k) \right| & \leq \La^{n+1} \e^{n_B(C) } \delta^{n_T(C)}
\\&\leq \La^{n+1} \e^{\frac{n_B(C)}{2} } \delta^{\frac{n(C)}{2}}
\\&\leq \La^{n+1} \e^{\frac{1}{10}\pth{\frac{n(C)}{2}-n_\scb(C)} } \delta^{\frac{n(C)}{2}}.
\end{align*}
If $n'_\low \geq 1$, then the estimates \eqref{estim FC low nodes A} and \eqref{estim FC low nodes B} show that we have
\begin{align*}
\sup_{t\in[0,\de]}\left| \widehat{F_C}(\e^{-2}t,k) \right| \leq  \La^{n+1} \delta^{\frac{n(C)}{2}}  \e^{\frac{n_B(C)}{2}+\ga (n''_\low - 1)d},
\end{align*}
where we already used $\e\leq \de$ and $n(C)=n_B(C)+2n_T(C)$. Using now the first point of Lemma \ref{lem basis of V(C)} we obtain
\begin{align*}
\frac{n_B(C)}{2}+\ga (n''_\low - 1)d & \geq \half \min\pth{ 1, \frac{\ga d}{2}}\pth{ n_B(C)+2n'_\low -4}
\\& \geq \half \min\pth{ 1, \frac{\ga d}{2}}\pth{ \frac{n(C)-2n_\scb(C)}{10} -4}.
\end{align*}
Using \eqref{n-2nscb>80} we obtain $\frac{n(C)-2n_\scb(C)}{10} -4\geq \frac{1}{10}\pth{ \frac{n(C)}{2}-n_\scb(C)}$ and therefore
\begin{align*}
\sup_{t\in[0,\de]}\left| \widehat{F_C}(\e^{-2}t,k) \right| \leq  \La^n \delta^{\frac{n(C)}{2}}  \e^{\frac{1}{20} \min\pth{ 1, \frac{\ga d}{2}}\pth{ \frac{n(C)}{2}-n_\scb(C)}}.
\end{align*}
Setting $\a_\low\vcentcolon = \min\pth{ \frac{1}{10}, \frac{1}{20} \min\pth{ 1, \frac{\ga d}{2}} }$ concludes the proof of Proposition \ref{prop low nodes}.
\end{proof}

\begin{remark}
As a byproduct of this proof, we can easily obtain the weak bound \eqref{estimate good}. Indeed, the estimates \eqref{estim FC low nodes A}-\eqref{estim FC low nodes B} (which don't yet make any use of the assumptions \eqref{low nodes regime}-\eqref{n-2nscb>80}) imply in particular that
\begin{align*}
\sup_{t\in[0,\de]}\sup_{k\in\Z_L^d}\left| \widehat{F_C}(\e^{-2}t,k) \right| \leq \La^{n+1} \e^{n_B(C)} \de^{n_T(C)}.
\end{align*}
Since $\e\leq \de$ and $n(C)=n_B(C)+2n_T(C)$ we clearly have $\e^{n_B(C)} \de^{n_T(C)}\leq \de^{\frac{n(C)}{2}}$.
\end{remark}

\subsection{High nodes regime}\label{section high nodes regime}

The goal of this section is to estimate $\widehat{F_C}$ in the high nodes regime.

\begin{prop}\label{prop high nodes}
There exists $\a_\high,\La>0$ such that if $C\in\mathcal{C}^{\eta_1,\eta_2,\iota_1,\iota_2}_{n_1,n_2}$ is a coupling satisfying 
\begin{align}
n_B(C) + 2(n_\low(C) - n_\scb(C)) & < \frac{n(C)-2n_\scb(C)}{10},\label{high nodes regime}
\\ n(C) - 2 n_\scb(C) & \geq 10,\label{n-2nscb>10}
\\ n(C) & \leq |\ln\e|^{3},\label{n<logepsilon}
\end{align}
then for any $\e\leq \de $ and any $k\in \Z_L^d$ we have
\begin{align}\label{main estim FC high nodes}
\sup_{t\in[0,\de]}\left|\widehat{F_C}(\e^{-2}t,k) \right| \leq \La (\La\delta)^{\frac{n(C)}{2}} \varepsilon^{\alpha \pth{\frac{n(C)}{2}-n_\scb(C)}}.
\end{align}
\end{prop}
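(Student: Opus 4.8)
The plan is to follow the strategy of \cite{LS11}: bound the oscillatory time integral in \eqref{fourier of FC} by successive integrations by parts, organised along a spanning tree of the coupling, so that most high ternary nodes carry a frequency-separated phase and hence produce a gain of a positive power of $\e$. If $k=0$ the estimate follows from \eqref{FC(0)=0}, so assume $k\neq 0$. Starting from \eqref{fourier of FC}, Lemma \ref{lem qj} and the boundedness and compact support of the $M^{\eta,\eta'}$ (which force all momenta $\ka(j)$ to lie in a fixed ball, so that the brackets $\ps{\ka(j)}$ are comparable to $1$) give
\[
\sup_{t\in[0,\de]}\left|\widehat{F_C}(\e^{-2}t,k)\right|\lesssim \La^{n(C)}\,\e^{n_B(C)}\,L^{-\frac{n(C)d}{2}}\sum_{\substack{\ka\in\mathcal D_k(C)\\ \ka(\mathcal L(C)_+)\subset B(0,R)}}\ \prod_{j\in\mathcal N_T(C)}\chi_j\ \left|\mathcal I_C(t,\ka)\right|,
\]
where $\mathcal I_C(t,\ka)\vcentcolon=\int_{I_C(t)}\prod_{j\in\mathcal N_T(C)}e^{i\Om_j\e^{-2}t_j}\,\d t_j$. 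Since the linear constraint $\ka(r_{A_1})=k$ is non-trivial (otherwise the sum is empty), it removes one free momentum, so the number of decorations is $\lesssim(\La L)^{\frac{d\,n(C)}{2}}$ and the prefactor $L^{-n(C)d/2}$ is absorbed; it then remains to extract a gain from $\e^{n_B(C)}$, from $\mathcal I_C$, and from the cutoffs $\chi_j$.

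Next I would bound $\mathcal I_C$ by resolving the nested time simplex $I_C(t)$: integrating the innermost variables first and iterating an integration by parts, $\mathcal I_C$ is controlled by $\La^{n_T(C)}\,n(C)^{c\,n(C)}$ times $\de^{n_T(C)}$, reweighted for each node $j$ in a suitably chosen subset $S\subseteq\mathcal N_T(C)$ by a factor $\e^2/|\Psi_j(\ka)|$, where $\Psi_j$ is a telescoping sum of the $\Om_{j'}$'s determined by a spanning tree of $C$. The point is that the spanning tree can be chosen so that, whenever $j\in S$ is a \emph{fully high} node --- i.e. all three pairwise sums of its children's momenta exceed $\e^\ga$, which is the first term in the expansion of $\chi_\high$ after Definition \ref{def cutoff} and in particular excludes any trivial resonance --- the phase $\Psi_j$ reduces, up to the (controlled, via \eqref{estim De}) contributions of the binary nodes $j'$ with $\mathtt{int}(j')=j$, to a difference of Japanese brackets of a momentum and of that momentum shifted by a frequency that $\chi_\high$ keeps above $\e^\ga$; the identity \eqref{De++ De --} and the lower bounds of Lemma \ref{lem De double}, applied to this two-frequency split, then yield $|\Psi_j(\ka)|\gtrsim\e^{\ga'}$ on the support of $\prod_j\chi_j$ for some fixed $\ga'>0$ depending only on $\ga$, hence $\e^2/|\Psi_j|\lesssim\e^{2-\ga'}$. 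For the remaining high nodes --- the \emph{very low} configurations (two or three pairwise sums below $\e^\ga$) contained in $\chi_\high$ --- one instead uses the extra low-frequency constraint $|\ka(a)+\ka(b)|\le\e^\ga$ to confine a free summation momentum to a ball of radius $\e^\ga$, gaining $\e^{\ga d}$ as in the proof of Lemmas \ref{lem:récriturelow}--\ref{lem:summationslow}. In either case the gain is at least $\e^{c_0}$ with $c_0\vcentcolon=\min(2-\ga,\ga d)>0$ per high node, and under \eqref{n<logepsilon} the combinatorial loss $n(C)^{c\,n(C)}$ is absorbed by $\e^{c_0 n(C)/2}$.

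Collecting the gains over at least $n_\high(C)-O(1)$ high nodes and using $\e\le\de$ together with $n(C)=n_B(C)+2n_T(C)$, one rewrites the bound in the form $(\La\de)^{n(C)/2}\,\e^{\frac{n_B(C)}{2}+c_1 n_\high(C)}$ for some fixed $c_1>0$, and in particular $\sup_{t\in[0,\de]}|\widehat{F_C}(\e^{-2}t,k)|\lesssim(\La\de)^{n(C)/2}\e^{c_1 n_\high(C)}$. Finally, rewriting the hypothesis \eqref{high nodes regime} as
\[
n(C)-2n_\scb(C)=\big(n_B(C)+2(n_\low(C)-n_\scb(C))\big)+2n_\high(C)<\tfrac1{10}\big(n(C)-2n_\scb(C)\big)+2n_\high(C)
\]
gives $n_\high(C)>\tfrac9{20}\big(n(C)-2n_\scb(C)\big)=\tfrac9{10}\big(\tfrac{n(C)}{2}-n_\scb(C)\big)$, while \eqref{n-2nscb>10} makes the $O(1)$ loss harmless; this yields \eqref{main estim FC high nodes} with, say, $\a_\high\vcentcolon=\tfrac{c_1}{2}$.

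The main obstacle is the construction in the second step: among the combinatorially many ways to resolve the time simplex by integration by parts, one must exhibit one for which the accumulated telescoped phases $\Psi_j$ do not cancel and each genuinely witnesses the frequency separation imposed by $\chi_\high$. This is where the spanning-tree construction and the combinatorial tool adapted to the Klein--Gordon bracket $\ps{\cdot}$ do the essential work, the inhomogeneity of $\ps{\cdot}$ ruling out the number-theoretic counting arguments available for the Schrödinger dispersion relation in \cite{BGHS} and subsequent works.
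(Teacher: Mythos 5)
Your proposal reproduces the correct global architecture (spanning tree \`a la \cite{LS11}, a per-node gain for high ternary nodes, momentum confinement for the very-low configurations inside $\chi_\high$, and the final numerology converting \eqref{high nodes regime} into a count of high nodes), but the central analytic step is wrong. You claim that on the support of $\prod_j\chi_j$ a fully high node satisfies a \emph{pointwise} lower bound $|\Psi_j(\ka)|\gtrsim \e^{\ga'}$, deduced from \eqref{De++ De --} and Lemma \ref{lem De double}. This is false: Lemma \ref{lem De double} gives a lower bound only for the \emph{quadratic} phase $\De^{(\iota_1,\iota_2)}$, which never vanishes because of the mass, whereas the \emph{cubic} phase $\De^{(\iota_1,\iota_2,\iota_3)}$ does admit exact resonances (this is the whole premise of the paper), and the cutoff $\chi_\high$ only excises the trivial resonances $k_a+k_b=0$, not the nontrivial exact or quasi-resonances. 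On the lattice $\Z_L^d$ with $L\to\infty$ there are decorations with all pairwise sums of order one for which $\Om_j$ is as small as one likes, so the factor $\e^2/|\Psi_j|$ obtained by integration by parts cannot be bounded pointwise. The mechanism that actually works, and the one the paper uses, is an \emph{averaged} gain: one keeps the regularized resolvent factors $|n_T\de^{-1}-i(\xi+\om_j^\rho)|^{-1}$ from the representation of the oscillating integral (Lemma \ref{lemma:oscillations}), applies H\"older in the decoration sum, and then estimates the Riemann sum over the one free momentum $x_i$ attached to the node. The key point (Lemma \ref{lem:intTheta}) is that the identity $\ps{a}-\ps{b}=\frac{|a|^2-|b|^2}{\ps{a}+\ps{b}}$ linearizes the phase in the direction of $\K=K(j_{i,1})+K(j_{i,2})$ after a polar change of variables, so that the measure of the quasi-resonant set in $x_i$ is $O(\e^2|\K|^{-1})$; the constraint $|\K|\geq\e^\ga$ then converts this into the gain $\e^{2-\ga}$. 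Without such a measure estimate your argument has no way to handle the quasi-resonant decorations.

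A secondary overreach: you assert a gain from ``at least $n_\high(C)-O(1)$'' high nodes. The momenta at distinct nodes are not independent --- only roughly half of the nodes carry a free summation variable after the spanning-tree reduction (the family $\mathcal N''(C)$ has cardinal about $n/4$, see Lemma \ref{lem N''}), and among those only the ones in the favourable configurations (cases $\mathbf{C2}$ and $\mathbf{C5}$ in the paper) yield a gain. The correct count is $2(n''_{\mathrm{gain}}+n_B)\geq\frac{n-2n_\scb}{10}$, which still suffices for \eqref{main estim FC high nodes} but requires the bookkeeping of Lemma \ref{lem N''} and the case analysis of Lemma \ref{lem individual sums}; your cruder count would not survive the dependence between nodes.
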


As in the low nodes regime of Proposition \ref{prop low nodes}, \eqref{FC(0)=0} implies that \eqref{main estim FC high nodes} holds if $k=0$, so that we can already assume $k\neq 0$. Moreover, note that the high nodes regimes implies $n_T(C)\geq 1$, since otherwise \eqref{high nodes regime} would imply $n_B(C)<\frac{n_B(C)}{10}$.

\saut
Proposition \ref{prop high nodes} is proved in the following sections. In Section \ref{section oscillating integrals}, we express the oscillating integrals in terms of total orders of ternary nodes. In Section \ref{section induced orders}, we define total orders on the whole coupling based on total orders on ternary nodes. In Sections \ref{section N''} and \ref{subsubsec:changevar}, we give properties of a special family $\mathcal{N}''(C)$ and use it to perform a change of variable. In Section \ref{section individual sums} we estimate the sums originating in the oscillating integrals and finally in Section \ref{section proof high nodes} we conclude the proof of Proposition \ref{prop high nodes}.

\subsubsection{Representation of oscillating integrals}\label{section oscillating integrals}

In this subsubsection we explain how to pass from an oscillating integral to a manageable function of the dispersion relation. We first introduce notions related to order relations on finite sets. Recall that if $<_1$ and $<_2$ are two order relations (partial or not) on a given set, we say that $<_2$ is compatible with $<_1$ if $x<_1 y$ implies $x<_2 y$.

\begin{definition}\label{def mathfrak(M)}
Let $A$ be a nonempty finite set with a partial order relation $<_A$. 
\begin{itemize}
\item If $\rho:\llbracket 1,\#A\rrbracket\longrightarrow A$ is a bijection, we define the total order relation $<_\rho$ on $A$ by
\begin{align*}
a_1 <_\rho a_2 \iff \rho^{-1} (a_1)<\rho^{-1}(a_2).
\end{align*}
\item We denote by $\mathfrak{M}(A,<_A)$ the set of bijections $\rho : \llbracket 1, \#A \rrbracket \longrightarrow A$ such that $<_\rho$ is compatible with $<_A$.
\end{itemize}
\end{definition}

\begin{remark}\label{remark total order}
If $<$ is a total order relation on $A$, then $A$ can be written under the form 
\begin{align*}
A=\left\{ a_1 < \cdots < a_{\#A} \right\}
\end{align*}
If we define $\rho(i)=a_i$ for $i\in\llbracket 1,\#A\rrbracket$ then we have $<_\rho=<$. This shows that  
\begin{align*}
\left\{ \text{bijection from $\llbracket 1,\#A\rrbracket$ to $A$} \right\}  & \longrightarrow \left\{ \text{total order relation on $A$} \right\}
\\ \rho \qquad & \longmapsto \qquad <_\rho
\end{align*}
is a bijection. 
\end{remark}

In the sequel, the notation $\mathfrak{M}(A,<_A)$ will always be used with $A$ a subset of leaves and branching nodes of a coupling and with $<_A$ the natural parentality partial order. Therefore, we will use the shorter notation $\mathfrak{M}(A)$ instead of $\mathfrak{M}(A, <_A)$.

\begin{lemma}\label{lemma:oscillations} 
For any coupling $C$, any decoration $\kappa$ of $C$ and any real positive number $\delta>0$, we have 
\begin{align*}
&\int_{I_C(t)} \prod_{j\in \mathcal N_T(C)} e^{i\Omega_j \varepsilon^{-2}t_j} \d t_j 
\\&\hspace{2cm} = \sum_{\rho\in\mathfrak{M}\pth{\mathcal{N}_T(C)}} \frac{e^{n_T(C)\de^{-1} t}}{2\pi}  \int_\R \frac{ e^{ - i\xi t} }{ n_T(C)\de^{-1} - i\xi }\prod_{j\in\mathcal{N}_T(C)} \frac{1}{n_T(C)\delta^{-1}-i\pth{\xi + \omega^\rho_{j}}}  \d\xi,
\end{align*}
where 
\begin{align}\label{def omega rho j}
\omega^\rho_{j} \vcentcolon = \e^{-2}\sum_{k=\rho^{-1}(j)}^{n_T(C)} \Omega_{\rho(k)}.
\end{align}
\end{lemma}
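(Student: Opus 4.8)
The plan is to evaluate the time-ordered oscillatory integral over the simplex $I_C(t)$ by first replacing the single constraint set $I_C(t)$ with a sum over total orders of the ternary nodes, and then recognizing each resulting iterated integral as a convolution that can be diagonalized by a Fourier (or Laplace) transform. First I would recall that $I_C(t)=I_{A_1}(t)\times I_{A_2}(t)$, and that by definition an element of $I_{A_i}(t)$ is a tuple $(t_j)_{j\in\mathcal N_T(A_i)}$ in $[0,t]^{n_T(A_i)}$ satisfying $j\le j'\Rightarrow t_j\le t_{j'}$, where $\le$ is the parentality partial order. The key combinatorial observation is that the region $\{(t_j)_{j\in\mathcal N_T(C)}\in[0,t]^{n_T(C)}: j\le j'\Rightarrow t_j\le t_{j'}\}$ decomposes (up to measure-zero overlaps, which do not affect the integral) as the disjoint union over $\rho\in\mathfrak M(\mathcal N_T(C))$ of the genuine simplices $\{0\le t_{\rho(1)}\le t_{\rho(2)}\le\cdots\le t_{\rho(n_T(C))}\le t\}$: indeed every point with pairwise distinct coordinates lies in exactly one such simplex, and the order it induces is automatically compatible with the parentality order precisely because the point lies in $I_C(t)$. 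Hence
\[
\int_{I_C(t)}\prod_{j\in\mathcal N_T(C)}e^{i\Omega_j\varepsilon^{-2}t_j}\,\d t_j=\sum_{\rho\in\mathfrak M(\mathcal N_T(C))}\int_{0\le s_1\le\cdots\le s_{n}\le t}\prod_{\ell=1}^{n}e^{i\Omega_{\rho(\ell)}\varepsilon^{-2}s_\ell}\,\d s_\ell,
\]
writing $n=n_T(C)$ and $s_\ell=t_{\rho(\ell)}$.

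Next I would compute the ordered integral for a fixed $\rho$. Introduce the partial sums $\omega^\rho_j=\varepsilon^{-2}\sum_{k=\rho^{-1}(j)}^{n}\Omega_{\rho(k)}$ as in \eqref{def omega rho j}; then performing the substitution that groups the exponentials from the innermost integral outward, the iterated integral $\int_{0\le s_1\le\cdots\le s_n\le t}\prod_\ell e^{i\Omega_{\rho(\ell)}\varepsilon^{-2}s_\ell}\d s_\ell$ equals a multiple convolution of the functions $s\mapsto \mathbbm 1_{s\ge 0}e^{i\Omega_{\rho(\ell)}\varepsilon^{-2}s}$ evaluated in a way that produces the phases $\omega^\rho_j$. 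The cleanest route is to insert a convergence factor: multiply and divide by $e^{-n\delta^{-1}s_\ell}$ in each integrand so that each factor becomes $\mathbbm 1_{s\ge0}\,e^{-n\delta^{-1}s}e^{i\Omega_{\rho(\ell)}\varepsilon^{-2}s}$, which is integrable on $\R_+$. Its Fourier transform in the variable dual to $t$ is $\xi\mapsto \bigl(n\delta^{-1}-i(\xi+\text{(appropriate partial sum)})\bigr)^{-1}$; the nested structure of the simplex turns the iterated integral into an ordinary product of such resolvent factors, and the outermost integration against the constant on $[0,t]$ contributes the factor $\bigl(n\delta^{-1}-i\xi\bigr)^{-1}$. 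Undoing the Fourier transform (i.e., writing the value at time $t$ as $\frac{e^{n\delta^{-1}t}}{2\pi}\int_\R e^{-i\xi t}(\cdots)\,\d\xi$, the prefactor $e^{n\delta^{-1}t}$ compensating the $n$ convergence factors $e^{-\delta^{-1}s_\ell}$ each of which I want to remove from the final answer) yields exactly
\[
\int_{0\le s_1\le\cdots\le s_n\le t}\prod_{\ell=1}^n e^{i\Omega_{\rho(\ell)}\varepsilon^{-2}s_\ell}\d s_\ell=\frac{e^{n\delta^{-1}t}}{2\pi}\int_\R\frac{e^{-i\xi t}}{n\delta^{-1}-i\xi}\prod_{j\in\mathcal N_T(C)}\frac{1}{n\delta^{-1}-i(\xi+\omega^\rho_j)}\,\d\xi.
\]
Summing over $\rho$ gives the claimed identity.

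The routine but slightly delicate bookkeeping — and thus the main obstacle — is verifying that the phases appearing after the nested integration are precisely the partial sums $\omega^\rho_j$ of \eqref{def omega rho j} and not some shifted variant, and that the indexing of resolvent factors matches $j\in\mathcal N_T(C)$ exactly once each (plus the extra factor $(n\delta^{-1}-i\xi)^{-1}$ for the top-level integration up to $t$). This is best handled by induction on $n=n_T(C)$: for $n=0$ both sides equal $1$; for the inductive step one peels off the innermost variable $s_1=t_{\rho(1)}$, whose exponential phase $\Omega_{\rho(1)}\varepsilon^{-2}$ accumulates into all the partial sums indexed by $k\ge 1$, matching the definition of $\omega^\rho_j$. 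One should also check that the Fourier-inversion manipulations are legitimate: each integrand $\bigl(n\delta^{-1}-i(\xi+\omega^\rho_j)\bigr)^{-1}$ is $O(|\xi|^{-1})$ so the product over $j$ together with the factor $(n\delta^{-1}-i\xi)^{-1}$ is $O(|\xi|^{-(n+1)})$, hence absolutely integrable for $n\ge 1$, while for $n=0$ the single factor $(n\delta^{-1}-i\xi)^{-1}$ must be interpreted via the standard $\frac{1}{2\pi}\int_\R \frac{e^{-i\xi t}}{c-i\xi}\d\xi=\mathbbm 1_{t\ge0}e^{-ct}$ contour computation, which recovers the empty-integral convention stated in the lemma.
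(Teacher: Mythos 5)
Your proposal is correct and follows essentially the same route as the paper's proof: decompose $I_C(t)$ up to a null set into the strictly ordered simplices indexed by $\rho\in\mathfrak{M}(\mathcal{N}_T(C))$, pass to increment variables so that the phases become the tail partial sums $\omega^\rho_j$, insert the convergence factor $n_T(C)\delta^{-1}$, and diagonalize the resulting $(n_T(C)+1)$-fold convolution by Fourier inversion (the paper packages this last step as a Dirac-delta/Parseval computation, which is the same calculation). The only detail to drop is your remark about $n_T(C)=0$: there $n_T(C)\delta^{-1}=0$ and the right-hand side degenerates, so the identity is only meaningful for $n_T(C)\ge 1$, which is the only case in which the paper uses it.
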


\begin{proof}
To make notations lighter, we will simply denote $n_T(C)$ by $n_T$ in what follows. First, note that there exists a negligible set $\mathcal{Z}_C\subset [0,t]^{n_T}$ such that
\begin{align}\label{I_C(t) up to negligible}
I_C(t)  = \mathcal{Z}_C \sqcup \bigsqcup_{\rho\in\mathfrak{M}\pth{\mathcal{N}_T(C)}} \enstq{(t_j)_{j\in \mathcal N_T(C)}\in[0,t]^{n_T} }{ 0\leq t_{\rho(1)} < \cdots < t_{\rho(n_T)} \leq t}.
\end{align}
We thus get that
\begin{align*}
\int_{I_C(t)} \prod_{j\in \mathcal N_T(C)} e^{-i\Omega_j \varepsilon^{-2}t_j} \d t_j & = \sum_{\rho\in\mathfrak{M}\pth{\mathcal{N}_T(C)}} \int_{0\leq t_{\rho(1)} < \cdots < t_{\rho(n_T)} \leq t} \prod_{k=1}^{n_T} e^{i\Omega_{\rho(k)} \varepsilon^{-2}t_{\rho(k)}} \d t_{\rho(k)}.
\end{align*}
We make the change of variables $s_1=t_{\rho(1)}$ and $s_k=t_{\rho(k)}-t_{\rho(k-1)}$ for $k\in\llbracket 2 , n_T \rrbracket$. Using $\sum_{\ell=1}^ks_\ell = t_{\rho(k)}$ and the fact that the Jacobian determinant of this change of variables is 1 we obtain, for a given $\rho\in\mathfrak{M}\pth{\mathcal{N}_T(C)}$,:
\begin{align*}
\int_{0\leq t_{\rho(1)} < \cdots < t_{\rho(n_T)} \leq t} \prod_{k=1}^{n_T} e^{i\Omega_{\rho(k)} \varepsilon^{-2}t_{\rho(k)}} \d t_{\rho(k)} & = \int_{(\R_+)^{n_T}}\mathbbm{1}_{[0,t]}\pth{ \sum_{k=1}^{n_T}s_k}\prod_{k=1}^{n_T} e^{i\omega^\rho_{\rho(k)} s_k} \d s_k.
\end{align*}
where $\omega^\rho_{\rho(k)}$ is defined in \eqref{def omega rho j}.
For any $x>0$ and some fixed $\eta>0$, we now use the identity 
\[
\mathbbm{1}_{[0,t]}(x)=\int_\R \de(t'-x)\mathbbm{1}_{[-\infty,t]}(t')e^{\eta (t'-x)}\d t'
\]
(here $\de$ is the Dirac distribution) and Fubini's theorem to obtain
\begin{align*}
\int_{0\leq t_{\rho(1)} < \cdots < t_{\rho(n_T)} \leq t} &\prod_{k=1}^{n_T} e^{i\Omega_{\rho(k)} \varepsilon^{-2}t_{\rho(k)}} \d t_{\rho(k)} 
\\& = \int_{\R\times (\R_+)^{n_T}} \mathbbm{1}_{[-\infty,t]}(t')e^{\eta (t'-\sum_{k=1}^{n_T}s_k)}  \de\pth{t'-\sum_{k=1}^{n_T}s_k} \prod_{k=1}^{n_T} e^{i\omega^\rho_{\rho(k)} s_k} \d s_k \d t'.
\end{align*}
The change of variables $(t', s_1, \dots, s_{n_T})\mapsto (t'-\sum_{k=1}^{n_T}s_k, s_1, \dots, s_{n_T})$, followed by another application of Fubini's theorem, yields that what above
\[
= \int_\R \delta(t')  \underbrace{e^{\eta t'}\pth{\int_{(\R_+)^{n_T}} \mathbbm{1}_{[-\infty,t]}\pth{ t'+\sum_{k=1}^{n_T}s_k }\prod_{k=1}^{n_T} e^{i\omega^\rho_{\rho(k)} s_k} \d s_k }}_{F(t')} dt' = \langle \de, F\rangle_{L^2}
\]
Observe that $F, \hat{F}\in L^1$ since 
\[
\begin{aligned}
\hat{F}(\xi) & = \frac{1}{\sqrt{2\pi}} \int_\R e^{-(i\xi - \eta)t'}\pth{\int_{(\R_+)^{n_T}} \mathbbm{1}_{[-\infty,t]}\pth{ t'+\sum_{k=1}^{n_T}s_k }\prod_{k=1}^{n_T} e^{i\omega^\rho_{\rho(k)} s_k} \d s_k }dt' \\
& = \frac{1}{\sqrt{2\pi}} \int_{\R \times (\R_+)^{n_T}} e^{-(i\xi -\eta)(t'-\sum_{k=1}^{n_T} s_k )} \mathbbm{1}_{[-\infty, t]}(t') \prod_{k=1}^{n_T}e^{i\omega^\rho_{\rho(k)} s_k} \d s_k dt' \\
& = \frac{1}{\sqrt{2\pi}} \int_\R e^{-(i\xi -\eta)t'}\mathbbm{1}_{[-\infty, t]}(t') dt'\,  \prod_{k=1}^{n_T} \int_{\R_+} e^{(i\xi -\eta + i\omega^\rho_{\rho(k)})s_k} ds_k \\
& = \frac{1}{\sqrt{2\pi}} \frac{e^{(\eta-i\xi)t}}{\eta-i\xi }\prod_{k=1}^{n_T} \frac{1}{\eta- i(\xi+ \omega^\rho_{\rho(k)})}
\end{aligned}
\]
so recalling that $\sqrt{2\pi}\hat{\delta}=1$ we finally get
\[
\langle \de, F\rangle_{L^2} = \frac{1}{2\pi}\int_\R \frac{e^{(\eta -i\xi )t}}{\eta- i\xi }\prod_{k=1}^{n_T} \frac{1}{ \eta-i(\xi+ \omega^\rho_{\rho(k)})}\, d\xi.
\]
The choice $\eta = n_T \delta^{-1}$ concludes the proof.
\end{proof}

\begin{remark}
Note that $\om^\rho_j$ defined in \eqref{def omega rho j} can alternatively be expressed as
\begin{align}\label{alternative omega rho j}
\omega^\rho_j = \e^{-2} \sum_{\substack{j'\in\mathcal{N}_T(C)\\ j\leq_\rho j'}}\Om_{j'}.
\end{align}
\end{remark}

This representation formula for oscillating integrals already allows us to start the estimate of $\widehat{F_C}(\e^{-2}t,k)$.

\begin{lemma}\label{lem:holder}
The following decomposition holds
\begin{align}\label{decomposition FC}
\widehat{F_C}(\e^{-2}t,k)  =  \sum_{\rho\in\mathfrak{M}\pth{\mathcal{N}_T(C)}} \widehat{F^\rho_C}(\e^{-2}t,k) 
\end{align}
where $\widehat{F^\rho_C}(\e^{-2}t,k)$ satisfies
\begin{align}\label{estim FC en fonction A}
\sup_{t\in[0,\de]}\left| \widehat{F^\rho_C}(\e^{-2}t,k) \right| & \lesssim \La^{n+1} \varepsilon^{n_B(C)} \int_\R \frac{ A_C^\rho(k,\xi)^{\frac{1}{p}} }{ \left| n_T(C)\de^{-1} - i\xi \right|}   \d\xi
\end{align}
for some $p>2d$ and where we defined
\begin{align}\label{def ACrho}
A_C^\rho(k,\xi) & \vcentcolon = L^{-\frac{nd}{2}} \sum_{\substack{\ka\in\mathcal D_k(C)\\ \ka(\mathcal{L}(C)_+)\subset B(0,R) }} \prod_{j\in (\mathcal{N}(C)\sqcup\mathcal{L}(C))\setminus \mathcal{R}(C)} \ps{\ka(j)}^{-\frac{p}{2}}  \prod_{j\in\mathcal{N}_T(C)} \frac{\chi_j^p}{\left|n_T(C)\delta^{-1}-i(\xi + \omega^\rho_{j})\right|^p} .
\end{align}
\end{lemma}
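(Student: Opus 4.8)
The starting point is the closed formula \eqref{fourier of FC} for $\widehat{F_C}(\e^{-2}t,k)$. The plan is to substitute into it the representation of the oscillating integral given by Lemma \ref{lemma:oscillations}, which rewrites $\int_{I_C(t)}\prod_{j\in\mathcal N_T(C)}e^{i\Om_j\e^{-2}t_j}\,\d t_j$ as a sum over $\rho\in\mathfrak M(\mathcal N_T(C))$, and then simply \emph{define} $\widehat{F^\rho_C}(\e^{-2}t,k)$ to be the $\rho$-th term of the resulting sum; the decomposition \eqref{decomposition FC} then holds by construction. Explicitly, $\widehat{F^\rho_C}(\e^{-2}t,k)$ carries the prefactor $\frac{\e^{n_B(C)}}{L^{n(C)d/2}}(-i)^{n_T(C)}\frac{e^{n_T(C)\de^{-1}t}}{2\pi}$, a sum over $\ka\in\mathcal D_k(C)$ of $e^{i\e^{-2}\Om_{-1}(C)t}\prod_{j\in\mathcal N(C)}q_j(C)$ multiplied by $\int_\R\frac{e^{-i\xi t}}{n_T(C)\de^{-1}-i\xi}\prod_{j\in\mathcal N_T(C)}\frac{\d\xi}{n_T(C)\de^{-1}-i(\xi+\omega^\rho_j)}$ and by $\prod_{\ell\in\mathcal L(C)_-}M^{c(\ell),c(\si(\ell))}(\ffi(\ell)\ka(\ell))^{\ffi(\ell)}$.

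Next I would collect the elementary bounds on $[0,\de]$. The phases $e^{i\e^{-2}\Om_{-1}(C)t}$, $e^{-i\xi t}$ have modulus one, $|(-i)^{n_T(C)}|=1$, and $e^{n_T(C)\de^{-1}t}\le e^{n_T(C)}\le\La^n$ because $t\le\de$. The kernels $M^{\eta,\eta'}$ are bounded by a universal constant and supported in $B(0,R)$; since a decoration of a coupling satisfies $\ka(\ell)+\ka(\si(\ell))=0$, nonvanishing of the $M$-product forces $\ka(\mathcal L(C)_+)\subset B(0,R)$, so I may insert that cutoff and bound $\prod_{\ell\in\mathcal L(C)_-}|M^{c(\ell),c(\si(\ell))}(\ffi(\ell)\ka(\ell))|\le\La^n$. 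For the nonlinear weights, Lemma \ref{lem qj} gives $\prod_{j\in\mathcal N(C)}|q_j(C)|\lesssim\La^n\prod_{j\in\mathcal N_T(C)}\chi_j\prod_{j\in(\mathcal N(C)\sqcup\mathcal L(C))\setminus\mathcal R(C)}\ps{\ka(j)}^{-\half}$: every non-root node or leaf is a child of exactly one node, the associated exponent is $\half$ or $1$ according to whether that parent is binary or ternary, and $\ps\cdot\ge\sqrt m$ lets one downgrade a power $1$ to $\half$ at the cost of a universal constant per node. Taking moduli inside the $\ka$-sum and the $\xi$-integral and using Fubini (all quantities now nonnegative) yields a bound of the shape $\sup_{t\in[0,\de]}|\widehat{F^\rho_C}(\e^{-2}t,k)|\lesssim\La^{n+1}\e^{n_B(C)}L^{-nd/2}\int_\R\frac{1}{|n_T(C)\de^{-1}-i\xi|}\big(\sum_\ka\prod_{j}\ps{\ka(j)}^{-\half}\prod_{j\in\mathcal N_T(C)}\frac{\chi_j}{|n_T(C)\de^{-1}-i(\xi+\omega^\rho_j)|}\big)\,\d\xi$, the $\ka$-sum running over $\mathcal D_k(C)$ with $\ka(\mathcal L(C)_+)\subset B(0,R)$.

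The heart of the lemma is the passage to the $p$-th power. Fixing $p>2d$ once and for all, I would write every factor of the $\ka$-summand as a $p$-th power, so that the summand equals $a_\ka(\xi)^{1/p}$ with $a_\ka(\xi)=\prod_{j}\ps{\ka(j)}^{-p/2}\prod_{j\in\mathcal N_T(C)}\chi_j^p|n_T(C)\de^{-1}-i(\xi+\omega^\rho_j)|^{-p}$, and then apply Hölder in $\ka$: $\sum_\ka a_\ka(\xi)^{1/p}\le(\#\mathcal K)^{1-1/p}\big(\sum_\ka a_\ka(\xi)\big)^{1/p}=(\#\mathcal K)^{1-1/p}L^{nd/(2p)}A_C^\rho(k,\xi)^{1/p}$, where $\mathcal K$ is the set of admissible decorations. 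It remains to count $\mathcal K$: a decoration is determined by its restriction to $\mathcal L(C)_+$, subject to $K(r_{A_1})(\ka)=k$; since $k\ne0$ this is impossible when $K(r_{A_1})\equiv 0$, and otherwise some coefficient of the linear form $K(r_{A_1})$ is nonzero, which pins down one positive-leaf coordinate in terms of the others, whence $\#\mathcal K\le\#(\Z_L^d\cap B(0,R))^{\#\mathcal L(C)_+-1}\lesssim(\La L^d)^{n/2}$ because $\#\mathcal L(C)_+=\frac n2+1$. Substituting, the powers of $L$ cancel exactly, $L^{-nd/2}\cdot(L^{nd/2})^{1-1/p}\cdot L^{nd/(2p)}=1$, leaving only a factor $\La^n$, and \eqref{estim FC en fonction A} follows.

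The delicate point will be this exact cancellation of the powers of $L$: it is precisely what forces the systematic use of the root constraint $K(r_{A_1})(\ka)=k$ together with $k\ne0$ (rather than counting all decorations supported in $B(0,R)$) and the precise identity $\#\mathcal L(C)_+=\frac n2+1$. One must also keep the factor $|n_T(C)\de^{-1}-i\xi|^{-1}$ \emph{outside} the Hölder step — only the product over ternary nodes is absorbed into $A_C^\rho$ — which is exactly the shape of \eqref{estim FC en fonction A}. Convergence of the $\xi$-integral is not needed at this stage, being inherited from Lemma \ref{lemma:oscillations}; it will anyway be recovered later from the $|\xi|^{-p}$ decay carried by $A_C^\rho$ when $n_T(C)\ge1$, as holds in the high nodes regime.
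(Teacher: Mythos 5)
Your proposal is correct and follows essentially the same route as the paper's proof: both define $\widehat{F^\rho_C}$ as the $\rho$-th term produced by Lemma \ref{lemma:oscillations}, bound the phases, the compactly supported kernels $M^{\eta,\eta'}$ and the weights $q_j$ via Lemma \ref{lem qj}, and then apply Hölder in the decoration sum, using the root constraint $K(r_{A_1})(\ka)=k$ with $k\neq 0$ and $\#\mathcal{L}(C)_+=\frac{n}{2}+1$ to count $\lesssim \La^{n+1}L^{nd/2}$ admissible decorations so that the powers of $L$ cancel exactly. The paper packages the Hölder step as $f_C^\rho(\xi)\lesssim \La^{n+1}L^{nd/2}\pth{L^{-nd/2}\sum_\ka f_C^\rho(\xi,\ka)^p}^{1/p}$, which is the same computation you carry out.
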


\begin{proof}
Applying Lemma \ref{lemma:oscillations} to \eqref{fourier of FC} we obtain the decomposition \eqref{decomposition FC} with
\begin{align*}
\widehat{F^\rho_C}(\e^{-2}t,k)  & \vcentcolon =  \varepsilon^{n_B(C)} (-i)^{n_T(C)}L^{-\frac{nd}{2}} \sum_{\ka\in\mathcal D_k(C)} e^{-i\Omega_{-1}\varepsilon^{-2}t}\prod_{j\in \mathcal N(C)} q_j 
\\&\quad  \times \frac{e^{n_T(C)\de^{-1} t}}{2\pi}  \int_\R \frac{ e^{ - i\xi t} }{ n_T(C)\de^{-1} - i\xi }\prod_{j\in\mathcal{N}_T(C)} \frac{1}{n_T(C)\delta^{-1}-i(\xi + \omega^\rho_{j})}   \d\xi
\\&\quad \times \prod_{\ell\in\mathcal{L}(C)_-}  M^{c(\ell),c(\si(\ell))}(\ffi(\ell)\ka(\ell))^{\ffi(\ell)}.
\end{align*}
We use the fact that the maps $M^{\eta,\eta'}$ are continuous with compact support included in $B(0,R)$ and the results of Lemma \ref{lem qj} to obtain 
\begin{align*}
\sup_{t\in[0,\de]}\left| \widehat{F^\rho_C}(\e^{-2}t,k) \right| & \leq \La^{n+1} \varepsilon^{n_B(C)} L^{-\frac{nd}{2}} \int_\R \frac{ f_C^\rho(\xi) }{ \left| n_T(C)\de^{-1} - i\xi \right|}  \d\xi,
\end{align*}
where we used $t\leq \de$ to absorb the term $e^{n_T(C)\de^{-1} t}$ in the universal constant $\La$ and where we used the notations
\begin{align*}
f_C^\rho(\xi) & \vcentcolon = \sum_{\substack{\ka\in\mathcal D_k(C)\\ \ka(\mathcal{L}(C)_+)\subset B(0,R) }} f_C^\rho(\xi,\ka) ,
\\ f_C^\rho(\xi,\ka) & \vcentcolon = \prod_{j\in (\mathcal{N}(C)\sqcup\mathcal{L}(C))\setminus \mathcal{R}(C)} \ps{\ka(j)}^{-\half}  \prod_{j\in\mathcal{N}_T(C)} \frac{\chi_j}{\left|n_T(C)\delta^{-1}-i(\xi + \omega^\rho_{j})\right|}.
\end{align*}
We now wish to apply Hölder's inequality on the sum over $\ka$ defining $f_C^\rho(\xi)$. For this, we proceed similarly to what done in Section \ref{section basis of V(C) and change of variable} and first rewrite $f_C^\rho(\xi)$ as 
\begin{align*}
f_C^\rho(\xi) & = \sum_{\substack{\ka\in B(0,R)^{\mathcal{L}(C)_+}\\K(r)(\ka)=k}}f_C^\rho(\xi,\ka),
\end{align*} 
where $r$ denotes the root of the first tree in the coupling $C$. Since $k\neq 0$, we can assume that there exist numbers $b_\ell$ non all vanishing such that $K(r)  = \sum_{\ell\in \mathcal{L}(C)_+}b_\ell K(\ell)$. By performing a change of variable similar to the one of Section \ref{section basis of V(C) and change of variable} and a counting argument similar to the one at the beginning of Section \ref{section successive summations} we can prove that
\begin{align*}
\#\enstq{\ka\in B(0,R)^{\mathcal{L}(C)_+}}{K(r)(\ka)=k} \leq \La^{n+1} L^{\frac{nd}{2}},
\end{align*}
where we also used $\#\mathcal{L}(C)_+=\frac{n}{2}+1$. We can thus apply Hölder's inequality with $\frac{1}{p}+\frac{1}{p'}=1$:
\begin{align*}
f_C^\rho(\xi) & \lesssim \La^{n+1} L^{\frac{nd}{2}}  \pth{ L^{-\frac{nd}{2}} \sum_{\substack{\ka\in\mathcal D_k(C)\\ \ka(\mathcal{L}(C)_+)\subset B(0,R) }} f_C^\rho(\xi,\ka)^p }^{\frac{1}{p}}.
\end{align*}
Plugging the expression of $f_C^\rho(\xi,\ka)$ into this inequality concludes the proof of the lemma.
\end{proof}

Proving Proposition \ref{prop high nodes} thus reduces to estimating $A_C^\rho(k,\xi)$ defined by \eqref{def ACrho}.

\subsubsection{Induced orders from $\mathcal{N}_T(C)$}\label{section induced orders}

The previous section shows that we need to estimate $A_C^\rho(k,\xi)$ for each $\rho\in\mathfrak{M}(\mathcal{N}_T(C))$. In this section, we define a map from $\mathfrak{M}(\mathcal{N}_T(C))$ to $\mathfrak{M}(\mathcal{N}(C) )$ and $\mathfrak{M}(\mathcal{N}(C) \sqcup \mathcal{L}(C))$ with particular properties. Following Remark \ref{remark total order}, it is equivalent to define a total order relation on a given set or a bijection enumerating this set. Moreover, in \eqref{def rho hat 1} and \eqref{def rho tilde 1} below we use the following abuse of notation: if $A$ is a set with a total order relation $<$, $A'\subset A$ is a subset of $A$ and $a\in A\setminus A'$, we write $A'<a$ if $a'<a$ for all $a'\in A$.

\begin{definition}\label{def rho hat}
Let $C$ be a coupling and $<_\rho$ a total order relation on  $\mathcal{N}_T(C)$, which we enumerate as
\begin{align*}
\mathcal{N}_T(C) = \{ j_1 <_\rho \cdots <_\rho j_{n_T(C)} \}.
\end{align*}
We define a total order relation $<_{\hat{\rho}}$ on $\mathcal{N}(C)$ in the following way:
\begin{align}\label{def rho hat 1}
\mathcal{N}(C) = \left\{ \mathtt{int}^{-1}(j_1)  <_{\hat{\rho}} j_1 <_{\hat{\rho}}  \cdots <_{\hat{\rho}} \mathtt{int}^{-1}(j_{n_T(C)}) <_{\hat{\rho}} j_{n_T(C)} <_{\hat{\rho}} \mathtt{int}^{-1}(-1) \right\}, 
\end{align}
where $\mathtt{int}^{-1}(j)$ for $j\in\mathcal{N}_T(C)\sqcup\{-1\}$ is defined in \eqref{def int-1} and where $<_{\hat{\rho}}$ restricted to each $\mathtt{int}^{-1}(j)$ is any total order relation compatible with the parentality order.
\end{definition}

The following lemma shows in particular that the previous definition defines a map from $\mathfrak{M}(\mathcal{N}_T(C))$ to $\mathfrak{M}(\mathcal{N}(C) )$.

\begin{lemma}\label{lem rho hat}
Let $C$ be a coupling, $\rho\in\mathfrak{M}(\mathcal{N}_T(C))$ and $\hat{\rho}$ defined from $\rho$ as in Definition \ref{def rho hat}.
\begin{itemize}
\item[(i)] For all $j,j'\in \mathcal{N}(C)$ we have
\begin{align}
 \mathtt{int}(j') <_\rho \mathtt{int}(j) & \Longrightarrow j'<_{\hat{\rho}} j, \label{three cases induced on nodes 1}
\\  \mathtt{int}(j) = -1 \;\text{ and }\;  \mathtt{int}(j')\neq -1 & \Longrightarrow j'<_{\hat{\rho}} j, \label{three cases induced on nodes 2}
\\  \mathtt{int}(j)=\mathtt{int}(j')  \;\text{ and }\; j'<j & \Longrightarrow j'<_{\hat{\rho}} j. \label{three cases induced on nodes 3}
\end{align}
\item[(ii)] The relation $<_{\hat{\rho}}$ is compatible with the parentality order. 
\end{itemize}
\end{lemma}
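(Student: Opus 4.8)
The statement to prove is Lemma~\ref{lem rho hat}: starting from $\rho\in\mathfrak{M}(\mathcal{N}_T(C))$ we must check that the total order $<_{\hat\rho}$ on $\mathcal{N}(C)$ built in Definition~\ref{def rho hat} satisfies the three implications \eqref{three cases induced on nodes 1}--\eqref{three cases induced on nodes 3}, and then that $<_{\hat\rho}$ is compatible with the parentality order $<$ on $\mathcal{N}(C)$. The first part is essentially a direct unwinding of the definition. The enumeration \eqref{def rho hat 1} groups the nodes of $\mathcal{N}(C)$ into consecutive blocks $\mathtt{int}^{-1}(j_1), \{j_1\}, \mathtt{int}^{-1}(j_2), \{j_2\}, \dots, \mathtt{int}^{-1}(j_{n_T}), \{j_{n_T}\}, \mathtt{int}^{-1}(-1)$, where the blocks appear in the $\rho$-order of the ternary nodes they are attached to (with the $-1$ block last), and inside each block any order compatible with parentality is chosen. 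So I would argue: for \eqref{three cases induced on nodes 1}, if $\mathtt{int}(j')<_\rho\mathtt{int}(j)$ then the block containing $j'$ (namely $\mathtt{int}^{-1}(\mathtt{int}(j'))$ together with $\mathtt{int}(j')$ itself, if $j'$ is ternary) precedes the block containing $j$, hence $j'<_{\hat\rho}j$. For \eqref{three cases induced on nodes 2}, if $\mathtt{int}(j)=-1$ then $j$ lies in the final block $\mathtt{int}^{-1}(-1)$ (note $j$ is necessarily a binary node here, since $\mathtt{int}$ restricted to $\mathcal{N}_T(C)$ is the identity, so $\mathtt{int}(j)=-1$ forces $j\in\mathcal{N}_B(C)$), while $\mathtt{int}(j')\neq-1$ puts $j'$ in an earlier block, so $j'<_{\hat\rho}j$. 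For \eqref{three cases induced on nodes 3}, if $\mathtt{int}(j)=\mathtt{int}(j')=:j_0$ and $j'<j$, then both $j$ and $j'$ lie in the same pair of consecutive blocks $\mathtt{int}^{-1}(j_0)\sqcup\{j_0\}$; since $j_0$ is the unique ternary node with $\mathtt{int}(j_0)=j_0$ and everything in $\mathtt{int}^{-1}(j_0)$ is a binary node below $j_0$, we have $j'\le j_0$ and, if $j=j_0$ this already gives $j'<_{\hat\rho}j$; otherwise both $j,j'\in\mathtt{int}^{-1}(j_0)$ and the order within that block was chosen compatible with parentality, so $j'<j$ implies $j'<_{\hat\rho}j$.

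For part (ii) I would take $j',j\in\mathcal{N}(C)$ with $j'<j$ (parentality) and show $j'<_{\hat\rho}j$, splitting into exactly the three cases above according to the relationship between $\mathtt{int}(j')$ and $\mathtt{int}(j)$. The key structural fact to invoke is that $\mathtt{int}$ is order-preserving for the parentality order on the nodes that sit below a common ternary node: more precisely, if $j'<j$ then either $j$ and $j'$ have the same $\mathtt{int}$, or $\mathtt{int}(j')<\mathtt{int}(j)$ in the parentality order on $\mathcal{N}_T(C)\sqcup\{-1\}$ (with the convention that everything is $<-1$), which follows from the definition of $\mathtt{int}$ as $\min\{j\le j'' : j''\in\mathcal N_T(C)\}$ and the fact that the set of ternary ancestors of $j$ is a subset of the set of ternary ancestors of $j'$ when $j'<j$. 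Once this is established: if $\mathtt{int}(j')=\mathtt{int}(j)$ we are in case \eqref{three cases induced on nodes 3}; if $\mathtt{int}(j')<\mathtt{int}(j)$ with $\mathtt{int}(j)\neq -1$, then since $\rho\in\mathfrak{M}(\mathcal{N}_T(C))$ the relation $<_\rho$ is compatible with parentality on $\mathcal N_T(C)$, hence $\mathtt{int}(j')<_\rho\mathtt{int}(j)$ and we are in case \eqref{three cases induced on nodes 1}; if $\mathtt{int}(j)=-1$ (and necessarily $\mathtt{int}(j')\neq -1$, since $\mathtt{int}(j')<\mathtt{int}(j)=-1$ cannot hold with both being $-1$), we are in case \eqref{three cases induced on nodes 2}. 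In all cases $j'<_{\hat\rho}j$, which is what compatibility demands.

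The only mildly delicate point — and hence the ``main obstacle'', though it is really a bookkeeping matter — is making the claim ``$j'<j \Rightarrow \mathtt{int}(j')\le\mathtt{int}(j)$ in the extended parentality order, with equality allowed'' fully rigorous, in particular handling the sign conventions around $-1$ and the case where one of $j,j'$ is itself ternary (so equals its own $\mathtt{int}$). I would dispatch this by recalling from the text right after Definition~\ref{def int} that $\{j''\in\mathcal N(A) : j_0\le j''\}$ is totally ordered for any $j_0$, so the ``path to the root'' through ternary ancestors is a chain; $\mathtt{int}(j)$ is the bottom of that chain (or $-1$ if empty), and the chain for $j$ is a final segment of the chain for $j'$ whenever $j'<j$, which immediately yields the comparison. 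With that fact in hand, parts (i) and (ii) are short case analyses and no real computation is involved. I would also remark once, to avoid repetition, that whenever $\mathtt{int}(j)=-1$ the node $j$ must be binary (because $\mathtt{int}$ is the identity on $\mathcal N_T(C)$), which is used implicitly in \eqref{three cases induced on nodes 2}.
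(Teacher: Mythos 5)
Your proposal is correct and follows essentially the same route as the paper: part (i) by directly unwinding the block structure of the enumeration \eqref{def rho hat 1}, and part (ii) by showing that $j'<j$ forces one of the three hypotheses of part (i) via the comparison of $\mathtt{int}(j')$ and $\mathtt{int}(j)$ along the chain of ternary ancestors. Your explicit handling of the $-1$ convention and of the observation that $\mathtt{int}(j)=-1$ forces $j$ binary is, if anything, slightly more careful than the paper's.
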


\begin{proof}
Let $j,j'\in \mathcal{N}(C)$ such that $\mathtt{int}(j') <_\rho \mathtt{int}(j)$. We distinguish three cases:
\begin{itemize}
\item If $j,j'\in\mathcal{N}_T(C)$, the assumption rewrites $j'<_\rho j$ and by \eqref{def rho hat 1} we get $j'<_{\hat{\rho}} j$.
\item If $j\in\mathcal{N}_B(C)$ and $j'\in\mathcal{N}_T(C)$ then the assumption rewrites $j' <_\rho \mathtt{int}(j)$. By definition we have $j\in \mathtt{int}^{-1}(\mathtt{int}(j))$ so that the assumption and \eqref{def rho hat 1} also implies $j'<_{\hat{\rho}}j$.
\item If $j\in\mathcal{N}_T(C)$ and $j'\in\mathcal{N}_B(C)$ then the assumption rewrites $\mathtt{int}(j') <_\rho j$ and since $j'\in \mathtt{int}^{-1}(\mathtt{int}(j'))$, \eqref{def rho hat 1} again implies $j'<_{\hat{\rho}}j$.
\end{itemize}
In conclusion, this proves \eqref{three cases induced on nodes 1}. Moreover, if $j, j'\in\mathcal{N}(C)$ are such that $\mathtt{int}(j) = -1$ and $\mathtt{int}(j')\neq -1$, then $j\in \mathtt{int}^{-1}(-1)$ and $j'\notin \mathtt{int}^{-1}(-1)$ so that \eqref{def rho hat 1} implies $j'<_{\hat{\rho}} j$, thus proving \eqref{three cases induced on nodes 2}. Finally, \eqref{three cases induced on nodes 3} follows from the compatibility of $<_{\hat{\rho}}$ restricted to each $\mathtt{int}^{-1}(j)$ for $j\in\mathcal{N}_T(C)\sqcup\{-1\}$ with the parentality order. 

We now turn to the compatibility of $<_{\hat{\rho}}$ with the parentality order. Let $j,j'\in\mathcal{N}(C)$ such that $j'<j$. This already implies $\mathtt{int}(j')\leq \mathtt{int}(j)$ and we distinguish three cases:
\begin{itemize}
\item If $\mathtt{int}(j')< \mathtt{int}(j')$, then the compatibility of $<_{\rho}$ with the parentality order implies that $\mathtt{int}(j')<_\rho \mathtt{int}(j)$, which in turn implies $j'<_{\hat{\rho}}j$ according to \eqref{def rho hat 1}.
\item If $\mathtt{int}(j)=\mathtt{int}(j')$ and $j\in\mathcal{N}_T(C)$ and $j'\in\mathcal{N}_B(C)$, then $j'\in\mathtt{int}^{-1}(j)$ and \eqref{def rho hat 1} implies $j' <_{\hat{\rho}} j$.
\item If $\mathtt{int}(j)=\mathtt{int}(j')=\vcentcolon j''$ and $j,j'\in\mathcal{N}_B(C)$, then both $j,j'\in\mathtt{int}^{-1}(j'')$. The fact that $\mathtt{int}^{-1}(j'')$ has been ordered for $<_{\hat{\rho}}$ in a way compatible with the parentality order gives $j'<_{\hat{\rho}}j$.
\end{itemize}
This concludes the proof of the lemma.
\end{proof}

We now define a total order on $\mathcal{N}(C) \sqcup \mathcal{L}(C)$.

\begin{definition}\label{def rho tilde}
Let $C$ be a coupling and $<_{\hat{\rho}}$ a total order relation on $\mathcal{N}(C)$, which we enumerate as
\begin{align*}
\mathcal{N}(C) = \{ j_1 <_{\hat{\rho}} \cdots  <_{\hat{\rho}} j_{n(C)} \}.
\end{align*}
We define a total order relation $<_{\tilde{\rho}}$ on $\mathcal{N}(C)\sqcup \mathcal{L}(C)$ in the following way:
\begin{align}\label{def rho tilde 1}
\mathcal{N}(C)\sqcup \mathcal{L}(C) = \left\{ \children(j_1) <_{\tilde{\rho}} \cdots <_{\tilde{\rho}} \children(j_{n(C)}) <_{\tilde{\rho}} r_2 <_{\tilde{\rho}} r_1 \right\}, 
\end{align}
where $r_1$ and $r_2$ denote the roots of the first and second tree in $C$ respectively, and where $<_{\tilde{\rho}}$ restricted to each $\children(j)$ is arbitrary if $j\notin\mathcal{N}_\low(C)$ and defined by
\begin{align*}
\children(j) = \{ j_{\mathrm{unmarked}} <_{\tilde{\rho}} \{ \text{marked children of $j$} \} \}, 
\end{align*}
if $j\in\mathcal{N}_\low(C)$, where $j_{\mathrm{unmarked}}$ is the unmarked children of $j$ and $<_{\tilde{\rho}}$ restricted to $\{ \text{marked children of $j$} \}$ is arbitrary.
\end{definition}

The following lemma shows in particular that the combination of the two previous definitions defines a map from $\mathfrak{M}(\mathcal{N}_T(C))$ to $\mathfrak{M}(\mathcal{N}(C) \sqcup \mathcal{L}(C))$. 

\begin{lemma}\label{lem rho tilde}
Let $C$ be a coupling, $\rho\in\mathfrak{M}(\mathcal{N}_T(C))$, $\hat{\rho}$ defined from $\rho$ as in Definition \ref{def rho hat} and $\tilde{\rho}$ defined from $\hat{\rho}$ as in Definition \ref{def rho tilde}.
\begin{itemize}
\item[(i)] For all $j,j'\in(\mathcal{N}(C)\sqcup \mathcal{L}(C))\setminus \mathcal{R}(C)$ we have 
\begin{align}\label{j<roots}
j<_{\tilde{\rho}} r_2 <_{\tilde{\rho}} r_1,
\end{align}
where $r_1$ and $r_2$ denote the roots of the first and second tree in $C$ respectively, and
\begin{align}\label{parent rho 1 tilde rho}
\mathtt{parent}(j')<_{\hat{\rho}}\mathtt{parent}(j) \Longrightarrow j' <_{\tilde \rho} j,
\end{align}
and
\begin{equation}\label{last property induced orders}
\left.
\begin{aligned}
&\mathtt{parent}(j)=\mathtt{parent}(j')\in \mathcal N_\low(C)
\\ & \text{$j'$ is unmarked}
\\ & \text{$j$ is marked}
\end{aligned}
\right\}
\Longrightarrow j'<_{\tilde \rho} j.
\end{equation}
\item[(ii)] The relation $<_{\tilde{\rho}}$ is compatible with the parentality order.
\end{itemize}
\end{lemma}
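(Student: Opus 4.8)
The plan is to verify the four claimed implications \eqref{j<roots}, \eqref{parent rho 1 tilde rho}, \eqref{last property induced orders} for part (i) by directly unwinding Definition \ref{def rho tilde}, and then to assemble these into the compatibility statement (ii). First I would observe that \eqref{j<roots} is immediate from \eqref{def rho tilde 1}: any $j\in(\mathcal N(C)\sqcup\mathcal L(C))\setminus\mathcal R(C)$ belongs to $\children(j_i)$ for some $i\in\llbracket 1,n(C)\rrbracket$ (since $j$ is not a root, it has a parent in $\mathcal N(C)$), so by the very ordering written in \eqref{def rho tilde 1} we have $j<_{\tilde\rho}r_2<_{\tilde\rho}r_1$.

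Next I would prove \eqref{parent rho 1 tilde rho}. Let $j,j'\in(\mathcal N(C)\sqcup\mathcal L(C))\setminus\mathcal R(C)$ with $\mathtt{parent}(j')<_{\hat\rho}\mathtt{parent}(j)$; write $p'=\mathtt{parent}(j')$ and $p=\mathtt{parent}(j)$. Enumerating $\mathcal N(C)=\{j_1<_{\hat\rho}\cdots<_{\hat\rho}j_{n(C)}\}$, we have $p'=j_a$ and $p=j_b$ with $a<b$. Since $j'\in\children(j_a)$ and $j\in\children(j_b)$, the block structure of \eqref{def rho tilde 1} — where the entire set $\children(j_a)$ precedes the entire set $\children(j_b)$ whenever $a<b$ — gives $j'<_{\tilde\rho}j$. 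For \eqref{last property induced orders}, suppose $\mathtt{parent}(j)=\mathtt{parent}(j')=:p\in\mathcal N_\low(C)$ with $j'$ unmarked and $j$ marked. Then $j$ and $j'$ lie in the same block $\children(p)$, and inside this block Definition \ref{def rho tilde} prescribes exactly $\children(p)=\{j_{\mathrm{unmarked}}<_{\tilde\rho}\{\text{marked children of }p\}\}$; since a low ternary node has a unique unmarked child (as noted after Definition \ref{def marked}), $j'$ is that unique unmarked child $p_{\mathrm{unmarked}}$, so $j'<_{\tilde\rho}j$.

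For part (ii), I would take $j,j'\in\mathcal N(C)\sqcup\mathcal L(C)$ with $j'<j$ in the parentality order and show $j'<_{\tilde\rho}j$. If $j\in\mathcal R(C)$, then either $j=r_1$, in which case $j'<_{\tilde\rho}j$ holds for every $j'\neq r_1$ by \eqref{j<roots} together with the trivial observation $r_2<_{\tilde\rho}r_1$; or $j=r_2$, and then $j'$ must lie strictly below $r_2$, hence $j'\in\children(j_i)$ for some $i$, so again \eqref{def rho tilde 1} gives $j'<_{\tilde\rho}r_2$. So assume $j\notin\mathcal R(C)$, and therefore $j'\notin\mathcal R(C)$ as well (anything below $j$ is below a root). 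Set $p=\mathtt{parent}(j)$, $p'=\mathtt{parent}(j')$. From $j'<j$ one gets $p'\le p$ in the parentality order: indeed $p'$ lies on the path from $j'$ to the root and, since $j'\le j' $, $p'$ is comparable with every node above $j'$, in particular with $j$ and hence with $p$. If $p'<p$ strictly, then part (ii) of Lemma \ref{lem rho hat} — the compatibility of $<_{\hat\rho}$ with the parentality order — gives $p'<_{\hat\rho}p$, and \eqref{parent rho 1 tilde rho} yields $j'<_{\tilde\rho}j$. If $p'=p$, then $j$ and $j'$ are siblings with $j'<j$ in the parentality order; but siblings are parentality-incomparable unless one equals the other, so this forces $j'=j$, contradicting $j'<j$ — unless $p=p'\in\mathcal N_\low(C)$ and we are in the configuration where one sibling being ``below'' another cannot happen either. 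More carefully: the only way $j'<j$ with $\mathtt{parent}(j')=\mathtt{parent}(j)$ is impossible for distinct $j,j'$, so the case $p'=p$ does not arise for strict $j'<j$, and the proof is complete. The main obstacle I anticipate is bookkeeping the edge cases around the roots $r_1,r_2$ and making sure the ``block'' argument for \eqref{parent rho 1 tilde rho} is stated cleanly given the abuse of notation $A'<a$ introduced just before Definition \ref{def rho hat}; once that is set up, everything else is a direct translation of Definitions \ref{def rho hat}–\ref{def rho tilde}.
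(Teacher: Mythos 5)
Your proposal is correct and follows essentially the same route as the paper: part (i) is read off directly from Definition \ref{def rho tilde}, and part (ii) reduces to the case of non-roots, where one passes to parents, invokes the compatibility of $<_{\hat{\rho}}$ from Lemma \ref{lem rho hat}, and applies \eqref{parent rho 1 tilde rho}. The only cosmetic difference is that the paper observes directly that $j'<j$ forces $\mathtt{parent}(j')<\mathtt{parent}(j)$ strictly, whereas you first establish $\mathtt{parent}(j')\leq\mathtt{parent}(j)$ and then rule out equality via the sibling argument; both are valid.
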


\begin{proof}
The properties \eqref{j<roots}-\eqref{last property induced orders} directly follow from \eqref{def rho tilde 1} and the way that each $\children(j)$ for $j\in\mathcal{N}_\low(C)$ has been ordered for $<_{\tilde{\rho}}$. It remains to prove the compatibility of $<_{\tilde{\rho}}$ with the parentality order. For this, consider $j_1,j_2\in\mathcal{N}(C)\sqcup \mathcal{L}(C)$ and assume that $j_1 > j_2$. If $j_1\in\mathcal{R}(C)$, then $j_2\notin\mathcal{R}(C)$ and \eqref{def rho tilde 1} implies $j_2<_{\tilde{\rho}} j_1$. Now, if $j_1$ is not a root, $j_2$ is not a root either and $j_2 < j_1$ implies $\mathtt{parent}(j_2)< \mathtt{parent}(j_1)$. Thanks to the second point of Lemma \ref{lem rho hat}, we get $\mathtt{parent}(j_2) <_{\hat{\rho}} \mathtt{parent}(j_1)$ and thus $j_2 <_{\tilde{\rho}} j_1$ thanks to \eqref{def rho tilde 1}. This concludes the proof of the lemma.
\end{proof}

\begin{remark}
Note that the properties \eqref{three cases induced on nodes 1}-\eqref{three cases induced on nodes 3} are equivalent to the definition of $\hat{\rho}$ from $\rho$ in Definition \ref{def rho hat}. Similarly the properties \eqref{j<roots}-\eqref{last property induced orders} are equivalent to the definition of $\tilde{\rho}$ from $\hat{\rho}$ in Definition \ref{def rho tilde}. However, the compatibility of $\hat{\rho}$ and $\tilde{\rho}$ with the parentality order is a consequence of how they are defined but also of the compatibility of $\rho$ itself with the parentality order.
\end{remark}

Now that we can induce a total order $<_{\tilde{\rho}}$ on $\mathcal{N}(C) \sqcup \mathcal{L}(C)$ from any total order $<_{\rho}$ on $\mathcal{N}_T(C)$, we need to be able to extract a basis of $V(C)$ compatible with any such $<_{\tilde{\rho}}$. This is the content of Proposition \ref{prop LukSpo} below, which can be found in Section 5 of \cite{LS11}. Since the proof of Proposition \ref{prop LukSpo} requires some minor adaptations from \cite{LS11}, we postpone its proof to Appendix \ref{appendix LukSpo}. First a definition.

\begin{definition}
Let $C$ be a coupling. We say that $K\in V(C)$ is a \textbf{signed combination} of the finite family $(K(j))_{j\in J}$ of $V(C)$ if
\begin{align*}
K = \sum_{j\in J} s_j K(j)
\end{align*}
where $s_j \in \{-1,0,1\}$.
\end{definition}

\begin{remark}
Thanks to Definition \ref{def linear map}, for any $j\in\mathcal{N}(C)\sqcup \mathcal{L}(C)$, the linear map $K(j)$ is a signed combination of $(K(\ell))_{\ell\in\mathcal{L}(C)_+}$.
\end{remark}

\begin{prop} \label{prop LukSpo}
Let $C\in \mathcal{C}_{n_1,n_2}$ be a coupling and let
\begin{align*}
\tilde{\rho} : \llbracket 1,n(C) + n_\ell(C) \rrbracket \longrightarrow \mathcal{N}(C) \sqcup \mathcal{L}(C)
\end{align*}
be a bijection with $<_{\tilde{\rho}}$ its order relation. There exists a subset $\mathcal{N}'(C) \subset \mathcal{N}(C) \sqcup \mathcal{L}(C)$ such that
\begin{itemize}
\item $\#\mathcal{N}'(C)= \frac{n}{2}+1$,
\item the linear maps $(K(j))_{j\in \mathcal{N}'(C)}$ are linearly independent,
\item for all $j\in \pth{\mathcal{N}(C) \sqcup \mathcal{L}(C)}\setminus \mathcal{N}'(C)$, $K(j)$ is a signed combination of 
\begin{align*}
(K(j'))_{j'\in \mathcal{N}'(C), j <_{\tilde{\rho}} j'}.
\end{align*}
\end{itemize}
\end{prop}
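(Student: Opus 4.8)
The statement to prove, Proposition \ref{prop LukSpo}, is a "greedy basis extraction compatible with a total order" lemma, essentially Lemma/Proposition from Section 5 of \cite{LS11} adapted to the present notation. The strategy is a standard greedy (matroid-type) algorithm run from the \emph{top} of the total order $<_{\tilde\rho}$ downwards, exploiting the fact that the ambient space $V(C)$ has dimension $\#\mathcal L(C)_+ = \tfrac n2+1$ (see Definition \ref{def V(C)} and the remark before Section \ref{section low nodes regime}) and that $(K(\ell))_{\ell\in\mathcal L(C)_+}$ is a basis of it.

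\medskip

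\textbf{Step 1: set up the greedy selection.} Enumerate $\mathcal N(C)\sqcup\mathcal L(C) = \{j_1 <_{\tilde\rho} \cdots <_{\tilde\rho} j_{N}\}$ where $N = n(C)+n_\ell(C)$, so $j_N = r_1$ by \eqref{j<roots}. I would build $\mathcal N'(C)$ by scanning $m = N, N-1, \dots, 1$ and putting $j_m$ into $\mathcal N'(C)$ precisely when $K(j_m)$ is \emph{not} in the linear span of $\{K(j_{m'}) : m' > m,\ j_{m'}\in\mathcal N'(C)\}$. This immediately gives two of the three required properties: $(K(j))_{j\in\mathcal N'(C)}$ is linearly independent by construction, and for every $j=j_m\notin\mathcal N'(C)$, $K(j)$ lies in $\mathrm{Vect}\{K(j') : j'\in\mathcal N'(C),\ j <_{\tilde\rho} j'\}$ — that is the failure condition that excluded $j$ from $\mathcal N'(C)$.

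\medskip

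\textbf{Step 2: the signed-combination refinement.} The only nontrivial point is upgrading "linear combination" to "\emph{signed} combination" (coefficients in $\{-1,0,1\}$), which is where the special structure of the vectors $K(j)$ enters and where the adaptation from \cite{LS11} is needed. Each $K(j)$ is a $\{-1,0,1\}$-combination of the coordinate maps $(K(\ell))_{\ell\in\mathcal L(C)_+}$ (see \eqref{def K(ell)}--\eqref{def K(j)} and the Remark preceding Proposition \ref{prop LukSpo}); moreover $K(j) = \sum_{\ell\le j,\,\ell\in\mathcal L(C)}K(\ell)$, and the leaves below a given node, partitioned by sign, contribute $\pm 1$ each with no repetition because distinct leaves carry distinct coordinate maps up to the sign flip $K(\ell) = -K(\sigma(\ell))$. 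Hence the set $\{K(j) : j\in\mathcal N(C)\sqcup\mathcal L(C)\}$ together with the chosen basis $(K(j))_{j\in\mathcal N'(C)}$ forms a totally unimodular configuration: the change-of-basis matrix expressing any $K(j)$ in the basis $(K(j'))_{j'\in\mathcal N'(C)}$ has entries obtained by Cramer's rule from $0/\pm1$ matrices whose relevant minors are all in $\{-1,0,1\}$. This is exactly the combinatorial content of \cite{LS11} — the "spanning tree" / incidence-matrix total unimodularity — and I would either quote it directly or reprove it by observing that, after the greedy selection, one can further reduce: if $K(j) = \sum_{j'\in\mathcal N'(C),\, j<_{\tilde\rho}j'} c_{j'}K(j')$, then reading this identity in coordinates $(\ka(\ell))_{\ell\in\mathcal L(C)_+}$ one sees that each $c_{j'}\in\{-1,0,1\}$ because the coordinate system itself is "tree-structured" (each $K(j')$ in the basis introduces at least one new leaf-coordinate with coefficient $\pm1$, so back-substitution never produces coefficients outside $\{-1,0,1\}$). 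The compatibility of $<_{\tilde\rho}$ with the parentality order (Lemma \ref{lem rho tilde}(ii)) is what guarantees that "at least one new leaf appears" when we add a basis vector scanning downward.

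\medskip

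\textbf{Step 3: the cardinality count.} Finally $\#\mathcal N'(C) = \dim V(C) = \#\mathcal L(C)_+ = \tfrac n2+1$, since $\mathcal N'(C)$ is by construction a maximal linearly independent subfamily of a spanning family of $V(C)$ (the family $(K(\ell))_{\ell\in\mathcal L(C)_+}\subset \{K(j)\}$ already spans). I expect the main obstacle to be Step 2: writing out cleanly why back-substitution keeps coefficients in $\{-1,0,1\}$, i.e. the total-unimodularity statement, which is precisely why the paper defers the proof to Appendix \ref{appendix LukSpo} and why "some minor adaptations from \cite{LS11}" are needed — the adaptation being that our leaf set carries the sign involution $\sigma$ (so $K(\ell) = -K(\sigma(\ell))$) and the low-ternary-node marking, neither of which is present in \cite{LS11}, but both of which only affect signs and not the underlying incidence structure.
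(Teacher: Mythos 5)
Your Steps 1 and 3 are fine: a greedy scan from the top of $<_{\tilde\rho}$ downwards does produce a maximal linearly independent subfamily of the spanning family $\{K(j)\}$, of cardinality $\dim V(C)=\frac n2+1$, with every excluded $K(j)$ a \emph{linear} combination of the selected $K(j')$ with $j<_{\tilde\rho}j'$. The genuine gap is Step 2, which you correctly identify as the crux but do not actually prove. The mechanism you invoke --- ``each basis vector introduces at least one new leaf-coordinate with coefficient $\pm1$, so back-substitution never produces coefficients outside $\{-1,0,1\}$'' --- is false for general $\{0,\pm1\}$-vectors with exactly that triangular structure. Concretely, take $v_1=e_1$, $v_2=e_1+e_2$, $v_3=e_2+e_3$ (each introduces one new coordinate with coefficient $1$) and $w=e_1+e_3$: then $w=2v_1-v_2+v_3$, with a coefficient $2$. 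So the fact that each $K(j)$ is a $\{0,\pm1\}$-combination of the $(K(\ell))_{\ell\in\mathcal L(C)_+}$ does not by itself yield total unimodularity, and ``quoting \cite{LS11}'' is circular here since the proposition \emph{is} the adaptation of that result to the present setting.

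What actually makes the signed-combination property work is the graph structure underlying the $K(j)$'s, and this is exactly what the paper's appendix supplies and what is missing from your argument. The paper builds a \emph{momentum graph} $G$ from $C$ (adding a root vertex and one fusion vertex per coupled pair of leaves), so that the maps $\mathfrak K(e)$ attached to edges satisfy a Kirchhoff rule at every vertex. It then runs the greedy construction not on the vectors but on the \emph{edges} of $G$, in increasing order of an order $<_G$ extending $<_{\tilde\rho}$, producing a spanning tree $G_R$; the set $\mathcal N'(C)$ corresponds to the complement $F=E\setminus E_R$. The signed-combination property is then read off from the explicit telescoping identity
$\mathfrak K(e)=-\sigma_v(e)\sum_{v'\le_{G_R}v}\sum_{f\in E(v')\cap F}\sigma_{v'}(f)\mathfrak K(f)$,
in which a given $\mathfrak K(f)$ can appear at most twice and, when it appears twice, the two occurrences carry opposite signs and cancel --- so every surviving coefficient is in $\{-1,0,1\}$. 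The triangularity with respect to $<_{\tilde\rho}$ then comes from a separate loop argument showing that any $f<_G e$ appearing in this formula must cancel. To repair your proof you would need to either reproduce this spanning-tree/Kirchhoff argument, or give an independent proof that the configuration $\{K(j)\}$ is totally unimodular (e.g.\ by exhibiting it as a network matrix of the momentum graph); as written, Step 2 asserts the conclusion rather than establishing it.
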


\subsubsection{The family $\mathcal{N}''(C)$}\label{section N''}

The goal of this section is to make an appropriate change of variables in $A^\rho_C(k,\xi)$, whose expression we recall from \eqref{def ACrho}: 
\begin{align*}
A_C^\rho(k,\xi) & = L^{-\frac{nd}{2}} \sum_{\substack{\ka\in\mathcal D_k(C)\\ \ka(\mathcal{L}(C)_+)\subset B(0,R) }} \prod_{j\in (\mathcal{N}(C)\sqcup\mathcal{L}(C))\setminus \mathcal{R}(C)} \ps{\ka(j)}^{-\frac{p}{2}}  \prod_{j\in\mathcal{N}_T(C)} \frac{\chi_j^p}{\left|n_T\delta^{-1}-i\pth{\xi + \omega^\rho_{j}}\right|^p} ,
\end{align*}
where $\rho$ is a fixed element of $\mathfrak{M}(\mathcal{N}_T(C))$, and where from now on we write $n_T$ instead of $n_T(C)$ for brevity (the same applies for similar notations).

\saut
We define $\hat{\rho}$ from $\rho$ as in Definition \ref{def rho hat} and $\tilde{\rho}$ from $\hat{\rho}$ as in Definition \ref{def rho tilde}. We apply Proposition \ref{prop LukSpo} to $\tilde{\rho}$ and get a family $\mathcal{N}'(C)\subset \mathcal{N}(C)\sqcup\mathcal{L}(C)$ of cardinal $\frac{n}{2}+1$ such that the family $(K(j))_{j\in \mathcal{N}'(C)}$ is linearly independent and any $K(j)$ is a signed combination of $(K(j'))_{j'\in \mathcal{N}'(C),j \leq_{\tilde \rho}j'} $. From $\mathcal{N}'(C)$, we can define the important family $\mathcal{N}''(C)$.

\begin{definition}\label{def N'' Pi}
Let $\tilde{\rho}$ and $\mathcal{N}'(C)$ as above. We define
\begin{align}\label{def N''}
\mathcal{N}''(C) \vcentcolon = \enstq{\min_{<_{\tilde{\rho}}}\pth{ \mathcal{N}'(C)\cap\siblings(j) }}{j\in\mathcal{N}'(C)\setminus\mathcal{R}(C)}.
\end{align}
For $i=1,2,3$ we also define
\begin{align*}
\mathcal{P}_i \vcentcolon = \enstq{j\in \mathcal{N}(C)}{\#\pth{\mathtt{children}(j)\cap\mathcal{N}'(C)}=i}.
\end{align*}
\end{definition}

Roughly speaking, we go from $\mathcal{N}'(C)$ to its subset $\mathcal{N}''(C)$ by selecting the smallest (for the order defined by $\tilde{\rho}$) element of $\mathcal{N}'(C)$ in each $\children$ sets. We enumerate $\mathcal{N}''(C)$ as follows:
\begin{align*}
\mathcal{N}''(C)= \{ j_1 <_{\tilde{\rho}} \cdots <_{\tilde{\rho}} j_{n''} \}, \qquad n''\vcentcolon = \#\mathcal{N}''(C).
\end{align*}
The following lemma contains important properties concerning $\mathcal{N}''(C)$ and $\mathcal{P}_i$, which will be used throughout the next sections.

\begin{lemma}\label{lem N''}
The following holds:
\begin{itemize}
\item[(i)] If $r_1$ and $r_2$ denote the roots of the first and second tree respectively in the coupling $C$, then $r_1\in\mathcal{N}'(C)\setminus\mathcal{N}''(C)$ and $r_2\notin\mathcal{N}'(C)$;
\item[(ii)] The $\mathcal{P}_i$s satisfy 
\begin{align}
\mathcal{P}_3 & = \emptyset, \label{P3vide}
\\ \mathcal{P}_2 & \subset \mathcal{N}_T(C),\label{P2ternaire}
\\ \#\mathcal{P}_1+2\#\mathcal{P}_2 & = \frac{n}{2} \label{cardP1P2}
\end{align}
\item[(iii)] The set $\parent(\mathcal{N}''(C))$ can be ordered as follows
\begin{align}\label{parent(N'')}
\parent(j_1) <_{\hat{\rho}} \cdots <_{\hat{\rho}} \parent(j_{n''}),
\end{align}
and we have
\begin{align}
n'' & \geq \frac{n}{4} \label{card N''}
\end{align}
\item[(iv)] There exists at least one $j\in\mathcal{N}''(C)$ such that $\parent(j)\in\mathcal{N}_T(C)$ and $\mathtt{int}^{-1}(\parent(j))=\emptyset$.
\end{itemize}
\end{lemma}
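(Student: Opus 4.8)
\textbf{Plan of proof for Lemma \ref{lem N''}.}

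The plan is to prove the four statements in order, exploiting the properties of $\tilde{\rho}$, $\hat{\rho}$ collected in Lemmas \ref{lem rho hat} and \ref{lem rho tilde}, together with the construction of $\mathcal{N}'(C)$ via Proposition \ref{prop LukSpo} and the definition \eqref{def N''} of $\mathcal{N}''(C)$. Throughout, the guiding structural fact is the relation from Remark \ref{remark parent}, namely that for a set of non-root nodes/leaves $\mathcal{S}$ one has $\#\mathcal{S} = \#\mathtt{p}(\mathcal{S})_1 + 2\#\mathtt{p}(\mathcal{S})_2 + 3\#\mathtt{p}(\mathcal{S})_3$ and $\#\mathtt{parent}(\mathcal{S}) = \#\mathtt{p}(\mathcal{S})_1 + \#\mathtt{p}(\mathcal{S})_2 + \#\mathtt{p}(\mathcal{S})_3$; applying this to $\mathcal{S}=\mathcal{N}'(C)\setminus\mathcal{R}(C)$ will give both (ii) and (iii). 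First I would treat (i): since $(K(j))_{j\in\mathcal{N}'(C)}$ is linearly independent and the root $r_1$ of the first tree satisfies $K(r_1)\ne 0$ in $V(C)$ (it is a nonzero signed combination of the $K(\ell)$, and more precisely one checks $K(r_1)=\sum_{\ell\in\mathcal{L}(A_1)}K(\ell)$, which is nonzero because the positive leaves are independent), $r_1$ must lie in $\mathcal{N}'(C)$ — indeed if $r_1\notin\mathcal{N}'(C)$, Proposition \ref{prop LukSpo} would write $K(r_1)$ as a signed combination of $(K(j'))_{j'\in\mathcal{N}'(C),\, r_1<_{\tilde\rho}j'}$, but by \eqref{j<roots} there is no $j'\in\mathcal{N}(C)\sqcup\mathcal{L}(C)$ with $r_1<_{\tilde\rho}j'$, so that combination would be empty and $K(r_1)=0$, a contradiction. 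That $r_1\notin\mathcal{N}''(C)$ follows because $r_1$ has no siblings other than itself (it is a root) and, in the definition \eqref{def N''}, $\mathcal{N}''(C)$ only collects minima of $\mathcal{N}'(C)\cap\siblings(j)$ for non-root $j$; since $r_1$ is never a sibling of a non-root node, it is never selected. Finally $r_2\notin\mathcal{N}'(C)$: one uses $K(r_2)=-K(r_1)$ (Remark \ref{remark post def decocoup}(ii) at the level of linear maps), so if both $r_1,r_2\in\mathcal{N}'(C)$ the family would not be independent.

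Next I would establish (ii). The inclusion $\mathcal{P}_2\subset\mathcal{N}_T(C)$ is immediate: a binary node has only two children, and if both children of a binary node $j$ were in $\mathcal{N}'(C)$ then $K(j)=\pm K(j_1)\pm K(j_2)$ would give a linear dependence among $(K(j'))_{j'\in\mathcal{N}'(C)}$ once one also notes $K(j)$ itself or one of the children can be eliminated — more cleanly, the key point is that a binary node $j$ satisfies $K(\text{left child})+K(\text{right child})=K(j)$, and combining this with independence of $(K(j))_{j\in\mathcal N'(C)}$ forces at most one child of a binary node to be in $\mathcal{N}'(C)$, hence also $\mathcal{P}_3=\emptyset$ for the same reason applied to ternary nodes (a ternary node $j$ has $K(\ell)+K(m)+K(r)=K(j)$, again a single linear relation, so all three children plus possibly $j$ cannot all be independent; a short argument rules out exactly $i=3$). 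For \eqref{cardP1P2}: apply Remark \ref{remark parent} to $\mathcal{S}=\mathcal{N}'(C)\setminus\mathcal{R}(C)$, which by (i) has cardinal $\#\mathcal{N}'(C)-1 = \frac n2$ (only $r_1$ is removed), and note $\mathtt{p}(\mathcal{S})_i=\mathcal{P}_i$ by definition; with $\mathcal{P}_3=\emptyset$ this reads $\#\mathcal{P}_1+2\#\mathcal{P}_2=\frac n2$.

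Then (iii): the elements of $\mathcal{N}''(C)$ are in bijection with the sets $\children(j)$ that meet $\mathcal{N}'(C)$, i.e.\ with $\parent(\mathcal{N}'(C)\setminus\mathcal{R}(C)) = \mathcal{P}_1\sqcup\mathcal{P}_2$ (by (i) the only root among parents is $r_1$'s, excluded); since $\tilde\rho$ is compatible with the parentality order (Lemma \ref{lem rho tilde}(ii)) and, by \eqref{parent rho 1 tilde rho}, the ordering of $\mathcal{N}''(C)$ under $<_{\tilde\rho}$ is governed by the $<_{\hat\rho}$-ordering of the parents, we obtain \eqref{parent(N'')} — with the caveat that distinct elements of $\mathcal{N}''(C)$ have distinct parents (two siblings yield the same minimum), so the map $j\mapsto\parent(j)$ on $\mathcal{N}''(C)$ is injective and $n''=\#\mathcal{P}_1+\#\mathcal{P}_2$. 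For the bound \eqref{card N''}: from $\#\mathcal{P}_1+2\#\mathcal{P}_2=\frac n2$ we get $n''=\#\mathcal{P}_1+\#\mathcal{P}_2\ge \frac12(\#\mathcal{P}_1+2\#\mathcal{P}_2)=\frac n4$. Finally (iv): I would argue by contradiction. Suppose every $j\in\mathcal{N}''(C)$ with $\parent(j)\in\mathcal{N}_T(C)$ had $\mathtt{int}^{-1}(\parent(j))\ne\emptyset$, i.e.\ every ternary parent appearing carries at least one binary node below it mapped to it by $\mathtt{int}$; combined with the high-nodes hypotheses \eqref{high nodes regime}--\eqref{n-2nscb>10} (which force many high ternary nodes and few binary nodes), one derives a contradiction by counting: the set $\mathcal{P}_2$ of ternary nodes with two $\mathcal{N}'(C)$-children must be large (since $\#\mathcal{P}_1$ is controlled by $n_B$-type quantities), yet each such ternary node would need a private binary descendant via $\mathtt{int}$, and there are not enough binary nodes. \emph{The main obstacle} I anticipate is precisely this last point (iv): carefully turning the "not enough binary nodes" heuristic into a rigorous pigeonhole count requires tracking which ternary nodes can appear as $\parent(j)$ for $j\in\mathcal N''(C)$ and relating $\#\mathtt{int}^{-1}(\cdot)$ to $n_B(C)$, and making sure the high-nodes inequalities are invoked with the right constants; the other three parts are essentially bookkeeping with the order-compatibility lemmas and the parent/children counting identity.
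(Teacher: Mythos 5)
Your treatment of parts (i)--(iii) follows essentially the same route as the paper: (i) via the emptiness of $\enstq{j}{r_1<_{\tilde\rho} j}$ and $K(r_2)=-K(r_1)$; (ii) by playing the relation $K(j)=\sum_{j'\in\children(j)}K(j')$ against the signed-combination property of Proposition \ref{prop LukSpo} (the larger elements of $\mathcal{N}'(C)$ being disjoint from $\children(j)$ by order-compatibility), then applying the counting identity of Remark \ref{remark parent} to $\mathcal{N}'(C)\setminus\{r_1\}$; (iii) by injectivity of $\parent$ on $\mathcal{N}''(C)$ together with $\#\mathcal{P}_1+2\#\mathcal{P}_2=\frac n2$. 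One correction in (i): your justification that $K(r_1)\neq 0$ --- that $K(r_1)=\sum_{\ell\in\mathcal{L}(A_1)}K(\ell)$ is ``nonzero because the positive leaves are independent'' --- is false as stated, since for $\ell\in\mathcal{L}(A_1)\cap\mathcal{L}(C)_-$ one has $K(\ell)=-K(\sigma(\ell))$ and the whole sum cancels whenever $\sigma$ pairs the leaves of $A_1$ among themselves. The correct way out is that $k\neq 0$ is assumed throughout this regime, so one may assume $K(r_1)\neq 0$: otherwise the constraint $K(r_1)(\ka)=k$ is never met and the quantity being estimated vanishes identically.

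The genuine gap is part (iv), which you yourself flag as the main obstacle, and the count you sketch is misdirected: $\mathcal{P}_1$ contains ternary as well as binary nodes, so $\#\mathcal{P}_1$ is \emph{not} controlled by $n_B(C)$, and no useful lower bound on $\#\mathcal{P}_2$ comes out of that route. The argument that works does not separate $\mathcal{P}_1$ from $\mathcal{P}_2$ at all. Assume for contradiction that every $j\in\mathcal{N}''(C)$ either has $\parent(j)\in\mathcal{N}_B(C)$, or has $\parent(j)\in\mathcal{N}_T(C)$ with $\mathtt{int}^{-1}(\parent(j))\neq\emptyset$. Define $f:\mathcal{N}''(C)\to\mathcal{N}_B(C)$ by $f(j)=\parent(j)$ in the first case and $f(j)=$ any fixed element of $\mathtt{int}^{-1}(\parent(j))$ in the second. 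Since $\parent$ is injective on $\mathcal{N}''(C)$ by (iii), at most one preimage of a given binary node comes from the first case; and since $\mathtt{int}$ is a function, the sets $\mathtt{int}^{-1}(\parent(j))$ for distinct ternary parents are disjoint, so at most one preimage comes from the second case. Hence every fiber of $f$ has at most two elements and $n''\le 2n_B(C)$. Combined with $n''\ge \frac n4$ this forces $n_B(C)\ge \frac n8$, contradicting $n_B(C)<\frac{n}{10}$, which follows from the high-nodes assumption \eqref{high nodes regime} together with $n_\low(C)\ge n_\scb(C)$ and $n(C)-2n_\scb(C)\le n(C)$. This is the pigeonhole you were looking for; note it uses only \eqref{high nodes regime}, not \eqref{n-2nscb>10}.
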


\begin{proof}
We prove each point of the lemma successively:
\begin{itemize}
\item Since $k\neq 0$, we can assume $K(r_1)\neq 0$. Therefore, if $r_1\notin\mathcal{N}'(C)$ then $K(r_1)$ is a signed combination of $(K(j'))_{j'\in \mathcal{N}'(C),r_1< _{\tilde \rho}j'} $. However, Lemma \ref{lem rho tilde} implies that the set $\enstq{ j \in \mathcal{N}(C)\sqcup\mathcal{L}(C)}{r_1<_{\tilde{\rho}} j}$ is empty. Therefore, we must have $r_1\in\mathcal{N}'(C)$. Moreover, since $K(r_2)=-K(r_1)$, we must also have $r_2\notin\mathcal{N}'(C)$. The fact that $r_1\notin \mathcal{N}''(C)$ follows from the definition of $\mathcal{N}''(C)$ itself.
\item Assume that there exists $j\in\mathcal{N}(C)$ such that $\mathtt{children}(j)\subset\mathcal{N}'(C)$. On the one hand, by definition of the decoration of a coupling we have 
\begin{align}\label{interA}
K(j)=\sum_{j'\in \mathtt{children}(j)}K(j'),
\end{align} 
which already implies that $j\notin\mathcal{N}'(C)$ since otherwise \eqref{interA} would contradict the linear independence of $(K(j'))_{j'\in \mathcal{N}'(C)}$. On the other hand, from the definition of $\mathcal{N}'(C)$ and $j\notin\mathcal{N}'(C)$, we also have
\begin{align}\label{interB}
K(j)=\sum_{\substack{j''\in\mathcal{N}'(C)\\ j<_{\tilde{\rho}}j''}} s_{j''}K(j''),
\end{align} 
for some $s_{j''}\in\{-1,0,1\}$. Equating the RHS of \eqref{interA} and \eqref{interB} we obtain
\begin{align}\label{interC}
\sum_{j'\in \mathtt{children}(j)}K(j') - \sum_{\substack{j''\in\mathcal{N}'(C)\\ j<_{\tilde{\rho}}j''}} s_{j''}K(j'') & = 0.
\end{align}
Since $\tilde{\rho}$ is compatible with the parentality order (recall Lemma \ref{lem rho tilde}), we have $\mathtt{children}(j)\cap \enstq{j''\in\mathcal{N}'(C)}{j<_{\tilde{\rho}}j''}=\emptyset$. Therefore \eqref{interC} contradicts again the linear independence of the family $(K(j'))_{j'\in \mathcal{N}'(C)}$. Therefore there exists no $j\in\mathcal{N}(C)$ such that $\mathtt{children}(j)\subset\mathcal{N}'(C)$, which proves simultaneously \eqref{P3vide} and \eqref{P2ternaire} since $j\in\mathcal{P}_2$ cannot belong to $\mathcal{N}_B(C)$. For \eqref{cardP1P2}, we apply Remark \ref{remark parent} to $\mathcal{S}=\mathcal{N}'(C)\setminus\{r_1\}$: thanks to \eqref{P3vide} and $\#\mathcal{N}'(C)=\frac{n}{2}+1$ we directly obtain \eqref{cardP1P2}.
\item According to the construction of $\tilde{\rho}$ from $\hat{\rho}$ and in particular the property \eqref{parent rho 1 tilde rho}, we have
\begin{align*}
\parent(j_1) \leq_{\hat{\rho}} \parent(j_2) \leq_{\hat{\rho}} \cdots \leq_{\hat{\rho}} \parent(j_{n''}).
\end{align*}
Moreover, since each $j\in\parent(\mathcal{N}''(C))$ has exactly one children in $\mathcal{N}''(C)$, the equality case in the above string is impossible so that \eqref{parent(N'')} holds. For \eqref{card N''}, note first that \eqref{parent(N'')} already implies that $n''=\#\parent(\mathcal{N}''(C))$. Now, since $\parent(\mathcal{N}'(C))=\mathcal{P}_1\sqcup\mathcal{P}_2$, \eqref{cardP1P2} implies 
\begin{align*}
n'' & = \#\mathcal{P}_1 + \#\mathcal{P}_2 = \frac{n}{4} + \half \#\mathcal{P}_1 \geq \frac{n}{4}.
\end{align*}
\item For the last point of the lemma, assume by contradiction that for all $j\in\mathcal{N}''(C)$ we have $j\in I\sqcup II$ where
\begin{align*}
I & \vcentcolon = \parent^{-1}(\mathcal{N}_B(C)), 
\\ II & \vcentcolon = \enstq{j\in\mathcal{N}(C)\sqcup\mathcal{L}(C)}{\parent(j)\in\mathcal{N}_T(C) \text{ and }\mathtt{int}^{-1}(\parent(j))\neq\emptyset}.
\end{align*}
We define a map $f$ from $\mathcal{N}''(C)$ to $\mathcal{N}_B(C)$ in the following way: if $j\in I$, set $f(j)=\parent(j)$, and if $j\in II$, set $f(j)$ any element of $\mathtt{int}^{-1}(\parent(j))$. Since $\parent_{|_{\mathcal{N}''(C)}}$ is an injection (which follows for instance from \eqref{parent(N'')}), we have $\#f^{-1}(j)\leq 2$ for every $j\in\mathcal{N}_B(C)$. This implies $n''\leq 2n_B$, and also $2n_B \geq \frac{n}{4}$ thanks to \eqref{card N''}. However, the high nodes regime assumption \eqref{high nodes regime} implies $n_B<\frac{n}{10}$, thus leading to a contradiction.
\end{itemize}
This concludes the proof of the lemma.
\end{proof}

In the next section, the sets $\mathcal{N}'(C)$ and $\mathcal{N}''(C)$ will allow us to define a well-chosen change of variables in $A_C^\rho(k,\xi)$. First, we rearrange the two products appearing in $A_C^\rho(k,\xi)$. For the first product, we use $\ps{k}\geq \sqrt{m}$ and $r_2\notin\mathcal{N}'(C)$ to obtain
\begin{align*}
\prod_{j\in (\mathcal{N}(C)\sqcup\mathcal{L}(C))\setminus \mathcal{R}(C)} \ps{\ka(j)}^{-\frac{p}{2}} & \leq \La^{n+1} \prod_{j\in \mathcal{N}'(C)\setminus ( \mathcal{N}''(C)\sqcup \{r_1\}) } \ps{\ka(j)}^{-\frac{p}{2}} \times \prod_{j\in \mathcal{N}''(C) } \ps{\ka(j)}^{-\frac{p}{2}}. 
\end{align*}
For the second product, we use $\left|n_T\delta^{-1}-i\pth{\xi + \omega^\rho_{j}}\right|\geq n_T\delta^{-1}$ and $\chi_j\leq 1$ to obtain
\begin{align*}
\prod_{j\in\mathcal{N}_T(C)} \frac{\chi_j^p}{\left|n_T\delta^{-1}-i\pth{\xi + \omega^\rho_{j}}\right|^p} & \leq  \pth{\frac{\de}{n_T}}^{p\pth{n_T-n''_T}} \prod_{j\in\mathcal{N}_T(C)\cap\parent(\mathcal{N}''(C))} \frac{\chi_j^p}{\left|n_T\delta^{-1}-i\pth{\xi + \omega^\rho_{j}}\right|^p} 
\\&\leq \pth{\frac{\de}{n_T}}^{p\pth{n_T-n''_T}} \prod_{\substack{j\in\mathcal{N}''(C)\\ \parent(j)\in\mathcal{N}_T(C)}} \frac{\chi_{\parent(j)}^p}{\left|n_T\delta^{-1}-i\pth{\xi + \omega^\rho_{\parent(j)}}\right|^p},
\end{align*}
where $n''_T\vcentcolon = \#\pth{\mathcal{N}_T(C)\cap\parent(\mathcal{N}''(C))}$ and where we changed variables in the product over $\mathcal{N}_T(C)\cap\parent(\mathcal{N}''(C))$ (using the fact that each $j\in\parent(\mathcal{N}''(C))$ has exactly one child in $\mathcal{N}''(C)$). Putting everything together, we get
\begin{align}\label{ACrho avec Aj}
A_C^\rho(k,\xi) \leq \La^{n+1}  L^{-\frac{nd}{2}} \pth{\frac{\de}{n_T}}^{p\pth{n_T-n''_T}} \sum_{\substack{\ka\in\mathcal D_k(C)\\ \ka(\mathcal{L}(C)_+)\subset B(0,R) }}  \prod_{j\in \mathcal{N}'(C)\setminus ( \mathcal{N}''(C)\sqcup \{r_1\}) } \ps{\ka(j)}^{-\frac{p}{2}}  \prod_{j\in\mathcal{N}''(C)} A_j(\ka,\xi) ,
\end{align}
where $A_j(\ka,\xi)$ is defined by
\begin{equation}\label{def Aj}
A_j(\ka,\xi) \vcentcolon =
\left\{
\begin{aligned}
\ps{\kappa(j)}^{-\frac{p}{2}} &, \quad \text{if $\mathtt{parent}(j)\in\mathcal{N}_B(C)$,}
\\ \ps{\kappa(j)}^{-\frac{p}{2}} \chi_{\parent(j)}^p \pth{\frac{\delta}{n_T}}^p & , \quad \text{if $\parent(j)\in\mathcal{N}_T(C)$ and $\mathtt{int}^{-1}(\parent(j))\neq\emptyset$,}
\\ \frac{\chi_{\parent(j)}^p \ps{\ka(j)}^{-\frac{p}{2}}}{\left|n_T\delta^{-1}-i\pth{\xi + \omega^\rho_{\parent(j)}}\right|^p} & , \quad \text{if $\parent(j)\in\mathcal{N}_T(C)$ and $\mathtt{int}^{-1}(\parent(j))=\emptyset$.}
\end{aligned}
\right.
\end{equation}

\subsubsection{A change of variable in $A_C^\rho(k,\xi) $}\label{subsubsec:changevar}

The following lemma exhibits an important structure in the quantities $A_{j_i}(\ka,\xi)$ for $j_i\in\mathcal{N}''(C)$, which is mainly a consequence of the definition of $\hat{\rho}$ and $\tilde{\rho}$.

\begin{lemma}\label{triangle}
For $i\in\llbracket 1,n''\rrbracket$, $A_{j_i}(\ka,\xi)$ is a function of $(K(j_{i'})(\kappa))_{i\leq i'\leq n''}$ and $(K(j)(\kappa))_{j \in \mathcal{N}'(C)\setminus \mathcal{N}''(C)}$.
\end{lemma}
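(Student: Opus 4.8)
\textbf{Proof plan for Lemma \ref{triangle}.} The plan is to unwind the definition \eqref{def Aj} of $A_{j_i}(\ka,\xi)$ case by case and check that, in each case, all the wavenumbers appearing are of the form $K(j)(\ka)$ with $j\in\mathcal N'(C)$ and $j_i\leq_{\tilde\rho}j$, possibly together with the frequency variable $\xi$ (which is harmless since the statement allows $A_{j_i}$ to depend on $\xi$). The only quantities that enter \eqref{def Aj} are $\ps{\ka(j_i)}$, the cutoff $\chi_{\parent(j_i)}$ (i.e. $\chi_*$ evaluated at the three children of $\parent(j_i)$), and the oscillation denominator $\left|n_T\de^{-1}-i(\xi+\om^\rho_{\parent(j_i)})\right|$. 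So the task splits into three sub-claims: (a) $\ka(j_i)=K(j_i)(\ka)$ and $j_i\in\mathcal N''(C)\subset\mathcal N'(C)$, which is immediate; (b) each child $j'$ of $\parent(j_i)$ satisfies $\ka(j')=K(j')(\ka)$, and $K(j')$ is a signed combination of $(K(j))_{j\in\mathcal N'(C),\,j'\leq_{\tilde\rho}j}$ with $j_i\leq_{\tilde\rho}j'$; (c) $\om^\rho_{\parent(j_i)}=\e^{-2}\sum_{\parent(j_i)\leq_\rho j'}\Om_{j'}$ is, via \eqref{Omegaj} and \eqref{def De j}, a linear combination of $\ps{\ka(j)}$ over nodes $j$ whose parent is $\geq_\rho$-related to $\parent(j_i)$, and each such $\ka(j)=K(j)(\ka)$ is in turn a signed combination of $(K(j'))_{j'\in\mathcal N'(C),\,j\leq_{\tilde\rho}j'}$ with $j_i\leq_{\tilde\rho}j'$.

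The heart of the argument is sub-claim (b). First I would observe that $j_i=\min_{<_{\tilde\rho}}(\mathcal N'(C)\cap\siblings(j_i))$ by the very definition \eqref{def N''} of $\mathcal N''(C)$; hence for any sibling $j'$ of $j_i$ — equivalently any child $j'$ of $\parent(j_i)$ — we have $j_i\leq_{\tilde\rho}j'$. (Here I use that $\siblings(j_i)=\children(\parent(j_i))$ and that $j_i$ itself is a sibling of $j_i$.) Next, by Proposition \ref{prop LukSpo} applied to $\tilde\rho$, for any $j'\in(\mathcal N(C)\sqcup\mathcal L(C))\setminus\mathcal N'(C)$ the map $K(j')$ is a signed combination of $(K(j))_{j\in\mathcal N'(C),\,j'<_{\tilde\rho}j}$, while if $j'\in\mathcal N'(C)$ then $K(j')$ is trivially a signed combination of itself; in both cases $K(j')$ is a signed combination of $(K(j))_{j\in\mathcal N'(C),\,j'\leq_{\tilde\rho}j}$, and combined with $j_i\leq_{\tilde\rho}j'$ and transitivity of $<_{\tilde\rho}$ this gives the assertion. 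The relation $\ka(j')=K(j')(\ka)$ for a genuine decoration $\ka$ (viewed as an element of $(\R^d)^{\mathcal L(C)_+}$) is the second bullet of the Remark following Definition \ref{def linear map}.

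For sub-claim (c), I would expand $\om^\rho_{\parent(j_i)}$ using the alternative formula \eqref{alternative omega rho j}: it is $\e^{-2}\sum_{j''\in\mathcal N_T(C),\,\parent(j_i)\leq_\rho j''}\Om_{j''}$, and by \eqref{Omegaj} each $\Om_{j''}$ is a signed sum of $\ffi(j)\ps{\ka(j)}$ over all $j\in\mathcal N(C)$ with $\mathtt{int}(j)=j''$, plus the corresponding children's brackets $\ps{\ka(j')}$ for $j'\in\children(j)$ via \eqref{def De j}. The key point is that every node $j$ and every leaf/node $j'$ occurring this way has $\mathtt{parent}(j')$ or $j'$ itself sitting at $\mathtt{int}$-value $\geq_\rho\parent(j_i)$ in the sense of $<_\rho$; by \eqref{three cases induced on nodes 1}–\eqref{three cases induced on nodes 3} and \eqref{parent rho 1 tilde rho} (the defining properties of $\hat\rho$ and $\tilde\rho$) this forces $j_i\leq_{\tilde\rho}j'$. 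Then one applies Proposition \ref{prop LukSpo} exactly as in (b) to write each $\ka(j')=K(j')(\ka)$ as a signed combination of $(K(j))_{j\in\mathcal N'(C),\,j_i\leq_{\tilde\rho}j}$. The cutoff $\chi_{\parent(j_i)}$ is handled by the same reasoning as in (b), since it only involves the three children of $\parent(j_i)$.

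The main obstacle I anticipate is purely bookkeeping: making precise, in case (c), exactly which nodes $j'$ enter the expansion of $\om^\rho_{\parent(j_i)}$ and verifying that each of them lies $\geq_{\tilde\rho}j_i$. One has to be careful that $\mathtt{int}(j')$ for such $j'$ can equal $\parent(j_i)$ itself (when $j'$ is a child of a binary node immediately below $\parent(j_i)$, or when $\parent(j_i)\in\mathcal N_T(C)$ and $\mathtt{int}^{-1}(\parent(j_i))\neq\emptyset$), in which case one uses \eqref{three cases induced on nodes 3} / \eqref{last property induced orders} rather than \eqref{three cases induced on nodes 1}. In the third branch of \eqref{def Aj} we have $\mathtt{int}^{-1}(\parent(j_i))=\emptyset$, which slightly simplifies things, but the general structure of the sum $\sum_{\parent(j_i)\leq_\rho j''}\Om_{j''}$ still ranges over many nodes and one must track them all; this is where the compatibility of $\hat\rho$ and $\tilde\rho$ with the parentality order (Lemmas \ref{lem rho hat} and \ref{lem rho tilde}) does the real work. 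Once this is organized, the three cases assemble immediately into the statement of the lemma.
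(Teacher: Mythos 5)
Your overall strategy coincides with the paper's: unwind \eqref{def Aj} case by case, reduce the cutoff and the phase $\omega^\rho_{\parent(j_i)}$ to the wavenumbers of children of nodes comparable to $\parent(j_i)$, and invoke Proposition \ref{prop LukSpo} together with the order properties of $\hat\rho$ and $\tilde\rho$ to control which basis elements can appear. However, your justification of sub-claim (b) contains a genuine gap. You assert that $j_i\leq_{\tilde\rho}j'$ for \emph{every} sibling $j'$ of $j_i$, deducing this from $j_i=\min_{<_{\tilde\rho}}\pth{\mathcal{N}'(C)\cap\siblings(j_i)}$. This deduction is false: by \eqref{def N''}, $j_i$ is minimal only among the siblings that belong to $\mathcal{N}'(C)$. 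A sibling $j'\notin\mathcal{N}'(C)$ may perfectly well satisfy $j'<_{\tilde\rho}j_i$; indeed, if $\parent(j_i)\in\mathcal{N}_\low(C)$ and $j_i$ is marked, the unmarked sibling is forced to precede $j_i$ by \eqref{last property induced orders}, and for non-low parents the intra-sibling order is arbitrary by Definition \ref{def rho tilde}. Consequently the transitivity step ($j_i\leq_{\tilde\rho}j'$ and $j'\leq_{\tilde\rho}j$ give $j_i\leq_{\tilde\rho}j$) breaks down exactly at the delicate point of the lemma.

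The conclusion of (b) is nevertheless true, and the repair is the argument the paper uses. For a sibling $j'\in\mathcal{N}'(C)\setminus\{j_i\}$ one argues directly: $j'$ cannot belong to $\mathcal{N}''(C)$ (each siblings set contributes at most one element to $\mathcal{N}''(C)$), so $K(j')$ is already one of the allowed variables. For a sibling $j'\notin\mathcal{N}'(C)$, Proposition \ref{prop LukSpo} expresses $K(j')$ as a signed combination of $(K(j''))_{j''\in\mathcal{N}'(C),\,j''>_{\tilde\rho}j'}$, and one checks that any such $j''$ lying in $\mathcal{N}''(C)$ is either itself a sibling of $j'$, in which case $j''=j_i$, or has $\parent(j'')>_{\hat\rho}\parent(j_i)$ by the block structure \eqref{def rho tilde 1}, in which case \eqref{parent rho 1 tilde rho} gives $j''>_{\tilde\rho}j_i$, i.e.\ $j''=j_{i'}$ with $i'>i$. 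Your sub-claim (c) is sound once (b) is repaired: there the relevant children genuinely satisfy $j''>_{\tilde\rho}j_i$ because their parents are $>_{\hat\rho}\parent(j_i)$, so transitivity applies legitimately.
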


\begin{proof}
We fix $i\in\llbracket 1,n''\rrbracket$ and first prove that each $K(j)$ for $j\in\siblings(j_i)$ is a function of 
\begin{align*}
\pth{ \pth{K(j_{i'})}_{i\leq i'\leq n''}, \pth{K(j)}_{j\in\mathcal{N}'(C)\setminus\mathcal{N}''(C)} }.
\end{align*}
This is obviously true for $j_i$, so consider $j'\in\siblings(j_i)\setminus\{j_i\}$. If $j'\in\mathcal{N}'(C)$, then $j'\notin\mathcal{N}''(C)$ since $j'\neq j_i$ and $j_i$ cannot have another sibling in $\mathcal{N}''(C)$. The claim thus holds in the case $j'\in\mathcal{N}'(C)$. If $j'\notin\mathcal{N}'(C)$, then by definition of $\mathcal{N}'(C)$, $K(j')$ is a signed combination of $\pth{ K(j'') }_{j''\in \mathcal{N}'(C), j'' >_{\tilde{\rho}} j'}$. However, if $j''\in\mathcal{N}''(C)$ is such that $j'' >_{\tilde{\rho}} j'$, then either $j''\in\siblings(j')\cap\mathcal{N}''(C)=\{j_i\}$ or $j''>_{\tilde{\rho}} j_i$ (recall \eqref{def rho tilde 1}) so that $j=j_{i'}$ with $i'>i$. The claim thus also holds in the case $j'\notin\mathcal{N}'(C)$.

In order to prove the lemma, it remains to prove that $A_{j_i}(\ka)$ only depends on $K(j)$ for $j\in\siblings(j_i)$. This is obvious if $\parent(j_i)\in\mathcal{N}_B(C)$. In the case where $\parent(j_i)\in\mathcal{N}_T(C)$ and $\mathtt{int}^{-1}(\parent(j_i))\neq\emptyset$, then $A_{j_i}(\ka)$ involves also $\chi_{\parent(j_i)}$, which thanks to \eqref{chi j} depends on $K(j)$ for $j\in\siblings(j_i)$. Finally, if $\parent(j_i)\in\mathcal{N}_T(C)$ and $\mathtt{int}^{-1}(\parent(j_i))=\emptyset$, then $A_{j_i}(\ka)$ involves also $\omega^\rho_{\parent(j_i)}$. From \eqref{alternative omega rho j} we obtain
\begin{align*}
\e^2\omega^\rho_{\parent(j_i)} = \De_{\parent(j_i)} + \sum_{\substack{j'\in\mathcal{N}_T(C)\\ j'>_\rho \parent(j_i)}}\Om_{j'},
\end{align*}
where we also used \eqref{Omegaj} and $\mathtt{int}^{-1}(\parent(j_i))=\emptyset$ to write $\Om_{\parent(j_i)}=\De_{\parent(j_i)}$. Using \eqref{Omegaj} again we get
\begin{align*}
\e^2\omega^\rho_{\parent(j_i)} = \De_{\parent(j_i)} + \sum_{\substack{j'\in\mathcal{N}(C)\\\mathtt{int}(j')>_{\rho} \parent(j_i)}}\De_{j'}.
\end{align*}
By definition of $\hat{\rho}$ (recall \eqref{def rho hat 1}), $\mathtt{int}(j')>_{\rho} \parent(j_i)$ is equivalent to $j'>_{\hat{\rho}} \parent(j_i)$ so that
\begin{align*}
\e^2\omega^\rho_{\parent(j_i)} = \De_{\parent(j_i)} + \sum_{\substack{j'\in\mathcal{N}(C)\\ j'>_{\hat{\rho}} \parent(j_i)}}\De_{j'}.
\end{align*}
First, note that \eqref{def De j} and $K(\parent(j_i))=\sum_{j\in\siblings(j_i)}K(j)$ imply that $\De_{\parent(j_i)}$ depends on $K(j)$ for $j\in\siblings(j_i)$. For the same reason, if $j'\in\mathcal{N}(C)$ then $\De_{j'}$ is a function of $K(j'')$ for $j''\in\children(j')$. Moreover, if $j'>_{\hat{\rho}} \parent(j_i)$, then \eqref{parent rho 1 tilde rho} implies that $j''>_{\tilde{\rho}}j_i$ for all $j''\in\children(j')$. Therefore, also using the definition of $\mathcal{N}'(C)$, $\De_{j'}$ is a function of $\pth{ \pth{K(j''')}_{j'''\in\mathcal{N}'(C), j'''>_{\tilde{\rho}}j_i} }$, which concludes the proof of the lemma.
\end{proof}

As in Section \ref{section basis of V(C) and change of variable}, we will now perform a change of variables in the sum over $\ka$ in \eqref{ACrho avec Aj}, chosen so that we benefit from the structure exhibited in the previous lemma. Recall that a decoration map of a coupling can be seen as an element of $(\R^d)^{\mathcal{L}(C)_+}$. Therefore the sum over $\ka\in \mathcal{D}_k(C)$ with the constraint $\ka(\mathcal{L}(C)_+)\subset B(0,R)$ can be written as a sum over $\ka\in B(0,R)^{\mathcal{L}(C)_+}$ with the constraint that $K(r_1)(\ka)=k$. Now consider the map
\begin{align*}
\Psi : \ka\in (\R^d)^{\mathcal{L}(C)_+} \longmapsto \pth{ \pth{K(j_i)(\ka) }_{i\in\llbracket 1,n''\rrbracket} , \pth{K(j)(\ka)}_{j\in\mathcal{N}'(C)\setminus(\mathcal{N}''(C)\sqcup\{r_1\})}, K(r_1)(\ka) } \in (\R^d)^{\frac{n}{2}+1}.
\end{align*}
Since $r_1\in\mathcal{N}'(C)$ and $\pth{ K(j)}_{j\in\mathcal{N}'(C)}$ is a basis of $V(C)$, $\Psi$ is a bijection. Moreover it satisfies $\Psi\pth{ \pth{\Z_L^d}^{\mathcal{L}(C)_+} } \subset \pth{\Z_L^d}^{\frac{n}{2}+1}$. This allows us to make the change of variables
\begin{align*}
\pth{ x_1,\dots, x_{n''}, (k_j)_{j\in\mathcal{N}'(C)\setminus(\mathcal{N}''(C)\sqcup\{r_1\})}, k_{r_1}}=\Psi(\ka)
\end{align*}
and obtain
\begin{align*}
&A_C^\rho(k,\xi) 
\\& \leq \La^{n+1}  L^{-\frac{nd}{2}} \pth{\frac{\de}{n_T}}^{p\pth{n_T-n''_T}}  \sum_{\substack{(k_j)_{j\in\mathcal{N}'(C)\setminus(\mathcal{N}''(C)\sqcup\{r_1\})}\in\pth{\Z_L^d}^{\frac{n}{2}-n''}\\ \pth{x_1,\dots,x_{n''}} \in\pth{\Z_L^d}^{n''}}}   \prod_{j\in \mathcal{N}'(C)\setminus ( \mathcal{N}''(C)\sqcup \{r_1\}) } \ps{k_j}^{-\frac{p}{2}}  \prod_{i=1}^{n''} \tilde{A}_{j_i} ,
\end{align*}
where we defined the quantities $\tilde{A}_{j_i}$ by
\begin{equation}\label{def A tilde ji}
\begin{aligned}
&\tilde{A}_{j_i}\pth{(x_{i'})_{i\leq i'\leq n''},  (k_j)_{j\in\mathcal{N}'(C)\setminus(\mathcal{N}''(C)\sqcup\{r_1\})}, k,\xi} 
\\&\qquad \vcentcolon = A_{j_i}\pth{\Psi^{-1}\pth{ (x_{i'})_{1\leq i'\leq n''}, (k_j)_{j\in\mathcal{N}'(C)\setminus(\mathcal{N}''(C)\sqcup\{r_1\})}, k},\xi}.
\end{aligned}
\end{equation}
Note that in \eqref{def A tilde ji} we already took into account the constraint $k_{r_1}=k$ and also already used the result of Lemma \ref{triangle}, since $\tilde{A}_{j_i}$ does not depend on $x_{i'}$ if $i'<i$. This last fact allows us to rewrite the above estimate of $A_C^\rho(k,\xi)$ in the following way
\begin{equation}\label{ACrho successive sums}
\begin{aligned}
&A_C^\rho(k,\xi) 
\\& \leq \La^{n+1}  L^{-\frac{nd}{2}} \pth{\frac{\de}{n_T}}^{p\pth{n_T-n''_T}} 
\\&\quad  \sum_{(k_j)_{j\in\mathcal{N}'(C)\setminus(\mathcal{N}''(C)\sqcup\{r_1\})}\in\pth{\Z_L^d}^{\frac{n}{2}-n''}}   \prod_{j\in \mathcal{N}'(C)\setminus ( \mathcal{N}''(C)\sqcup \{r_1\}) } \ps{k_j}^{-\frac{p}{2}}
\\&\hspace{6cm} \times \pth{\sum_{x_{n''}\in\Z_L^d}\tilde{A}_{j_{n''}}\pth{  \cdots \pth{ \sum_{x_1\in\Z_L^d}\tilde{A}_{j_1} } \cdots    } }  .
\end{aligned}
\end{equation}

\subsubsection{Estimating individual sums}\label{section individual sums}

In this section, we estimate individually each sum $\sum_{x_i\in \Z_L^d}\tilde{A}_{j_i}$ appearing in \eqref{ACrho successive sums}. This will be done by highlighting the dependence of $\tilde{A}_{j_i}$ on the variable $x_i$. The estimates will crucially depend on the nature of $\parent(j_i)$ and we distinguish five cases:
\begin{align*}
&\text{$\mathbf{C1}$: $\parent(j_i)\in\mathcal{N}_B(C)$,}
\\&\text{$\mathbf{C2}$: $\parent(j_i)\in\mathcal{N}_\low(C)\cap\mathcal{P}_2$,}
\\&\text{$\mathbf{C3}$: $\parent(j_i)\in\mathcal{N}_\low(C)\cap\mathcal{P}_1$,}
\\&\text{$\mathbf{C4}$: $\parent(j_i)\in\mathcal{N}_\high(C)$ and $\mathtt{int}^{-1}(\parent(j_i))\neq \emptyset$,}
\\&\text{$\mathbf{C5}$: $\parent(j_i)\in\mathcal{N}_\high(C)$ and $\mathtt{int}^{-1}(\parent(j_i)) = \emptyset$.}
\end{align*}
Since $\parent(\mathcal{N}''(C))=\mathcal{P}_1\sqcup\mathcal{P}_2$, the five above cases cover all possibilities.

\saut
In Lemma \ref{lem individual sums} below, we will need to bound and approximate sums over $\Z_L^d$, this will be done with the following technical lemma.

\begin{lemma}\label{lem:RiemanntoInt}
Let $f:\R^d\longrightarrow \R_+$ be a smooth function and let $q>d$. 
\begin{itemize}
\item[(i)] If $f(x)\lesssim \ps{x}^{-q}$, then
\begin{align}\label{riemann 1}
    \sum_{x\in\Z_L^d}f(x) \lesssim L^d,
\end{align}
where the implicit constant only depends on $q$;
\item[(ii)] If $\int_{\R^d} f$ is finite and $C_{x, L} = B(x,\frac{1}{2L})$ is the $d$-dimensional ball for the $\infty$ norm, then
\begin{equation} \label{riemann2.1}
    \left| \frac{1}{L^d}\sum_{x\in\Z_L^d}f(x) - \int_{\R^d} f\right|  \lesssim \frac{1}{L^{d+1}} \sum_{x\in\Z_L^d}  \sup_{C_{x,L}} \left| \nabla f\right|.
\end{equation}
If in addition $|\nabla f(x)|\lesssim \ps{x}^{-q}$, then
\begin{align}\label{riemann 2}
\sum_{x\in\Z_L^d}f(x) \lesssim L^{d-1} + L^d \int_{\R^d} f,
\end{align}
where the implicit constant only depends on $q$.
\end{itemize}
\end{lemma}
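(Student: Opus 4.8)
The plan is a routine comparison between Riemann sums over the lattice $\Z_L^d=\tfrac1L\Z^d$ and Lebesgue integrals over $\R^d$, using that the cubes $C_{x,L}$ (which are $\infty$-balls of side $\tfrac1L$) tile $\R^d$ up to a null set and each has volume $L^{-d}$. Throughout one works in the regime $L\ge 1$, which is the relevant one here and is genuinely needed: for $L\to 0$ the bounds in $(i)$ and in the second part of $(ii)$ fail, since already the $x=0$ term is $\gtrsim 1$ while $L^d\to0$. The first ingredient I would record is the elementary comparability estimate that, for $L\ge 1$ and $y\in C_{x,L}$, one has $\ps{y}\asymp\ps{x}$ with constants depending only on the fixed parameters $d,m$: indeed $|y-x|\le\tfrac{\sqrt d}{2L}\le\tfrac{\sqrt d}{2}=:\rho_0$, so $\ps{y}^2=m+|y|^2\le m+(|x|+\rho_0)^2\le 2(1+\rho_0^2/m)\ps{x}^2$, and symmetrically (splitting according to whether $|x|\ge 2\rho_0$ or not) $\ps{y}^{-q}\le C(d,m,q)\ps{x}^{-q}$.

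For $(i)$: from $f(x)\lesssim\ps{x}^{-q}$ and the comparability $\ps{x}^{-q}\lesssim\ps{y}^{-q}$ on $C_{x,L}$, I get $f(x)\lesssim L^d\int_{C_{x,L}}\ps{y}^{-q}\,\d y$; summing over $x\in\Z_L^d$ and using that the $C_{x,L}$ tile $\R^d$ yields $\sum_{x\in\Z_L^d}f(x)\lesssim L^d\int_{\R^d}\ps{y}^{-q}\,\d y\lesssim L^d$, the last integral being finite precisely because $q>d$. The resulting constant is independent of $L$ and of $f$, depending only on $q$ (and the fixed $d,m$).

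For $(ii)$: the first inequality \eqref{riemann2.1} is the Riemann-sum error bound and requires no restriction on $L$. Writing $\int_{\R^d}f=\sum_{x\in\Z_L^d}\int_{C_{x,L}}f$ and $\tfrac{f(x)}{L^d}=\int_{C_{x,L}}f(x)\,\d y$ gives
\[
\frac1{L^d}\sum_{x\in\Z_L^d}f(x)-\int_{\R^d}f=\sum_{x\in\Z_L^d}\int_{C_{x,L}}\bigl(f(x)-f(y)\bigr)\,\d y,
\]
and since $C_{x,L}$ is convex the mean value inequality gives $|f(x)-f(y)|\le\tfrac{\sqrt d}{2L}\sup_{C_{x,L}}|\nabla f|$ for $y\in C_{x,L}$; integrating over $C_{x,L}$ and summing produces the bound $\lesssim L^{-(d+1)}\sum_{x\in\Z_L^d}\sup_{C_{x,L}}|\nabla f|$. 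For the second inequality \eqref{riemann 2}, the hypothesis $|\nabla f(x)|\lesssim\ps{x}^{-q}$ together with the comparability $\sup_{C_{x,L}}\ps{\cdot}^{-q}\lesssim\ps{x}^{-q}$ gives $\sup_{C_{x,L}}|\nabla f|\lesssim\ps{x}^{-q}$, and applying $(i)$ to the function $\ps{\cdot}^{-q}$ (which trivially satisfies the hypothesis of $(i)$) bounds $\sum_{x\in\Z_L^d}\sup_{C_{x,L}}|\nabla f|\lesssim L^d$. Plugging this into \eqref{riemann2.1} yields $\tfrac1{L^d}\sum_{x}f(x)\le\int_{\R^d}f+O(L^{-1})$, and multiplying by $L^d$ gives $\sum_{x\in\Z_L^d}f(x)\lesssim L^d\int_{\R^d}f+L^{d-1}$, again with constant depending only on $q$.

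There is no serious obstacle in this lemma; the only points requiring care are (a) the uniformity of all implicit constants, which reduces entirely to the comparability $\ps{y}\asymp\ps{x}$ on $C_{x,L}$ — valid because $L\ge1$ keeps the cubes of uniformly bounded diameter — and to the convergence of $\int_{\R^d}\ps{y}^{-q}\,\d y$ for $q>d$; and (b) the harmless measure-zero overlap of the tiling cubes $C_{x,L}$, which does not affect any of the integral identities used.
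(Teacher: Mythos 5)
Your proof is correct. Part (ii) is essentially the paper's own argument: the same decomposition of the error $\frac{1}{L^d}\sum_x f(x)-\int f$ into integrals of $f(x)-f(y)$ over the tiling cubes, the mean value inequality, and then an application of part (i) to the majorant $\ps{\cdot}^{-q}$ of $|\nabla f|$.

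For part (i), however, you take a genuinely different route. The paper invokes the Poisson summation formula for the lattice $\Z_L^d$, applied to the majorant $\ps{\cdot}^{-q}$: since $q>d$ this function and all its derivatives lie in $L^1$, its Fourier transform $h$ decays faster than any polynomial, and $\sum_{\xi\in L\Z^d}|h(\xi)|$ is bounded uniformly for $L\geq 1$, yielding \eqref{riemann 1}. You instead dominate each lattice value directly by $L^d$ times the integral of $\ps{\cdot}^{-q}$ over the corresponding cube $C_{x,L}$, using the comparability $\ps{y}\asymp\ps{x}$ on cubes of side $\frac1L\leq 1$, and then sum over the tiling. Your argument is more elementary and self-contained (no Fourier analysis), and it makes explicit the point — left implicit in the paper both here and in the passage from $|\nabla f(x)|\lesssim\ps{x}^{-q}$ to $\sup_{C_{x,L}}|\nabla f|\lesssim\ps{x}^{-q}$ in part (ii) — that the uniformity of constants rests on the comparability of $\ps{\cdot}$ over cubes of uniformly bounded diameter. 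The paper's Poisson-summation argument is shorter on the page but relies on the rapid decay of the Fourier transform of $\ps{\cdot}^{-q}$, which itself requires the smoothness and integrability of that function and its derivatives; both approaches use $q>d$ in an essential way and give constants depending only on $q$ (and the fixed $d$, $m$).
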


\begin{proof}
The bound \eqref{riemann 1} is a consequence of the Poisson summation formula which implies
\begin{align*}
\sum_{x\in\Z_L^d}f(x) & \lesssim L^d \sum_{x\in L\mathbb{Z}^d} |h(x)|,
\end{align*}
where $h$ is the Fourier transform of $x\longmapsto \ps{x}^{-q}$. Since $q>d$, the function $x\longmapsto \ps{x}^{-q}$ and all its derivatives are in $L^1$ and thus $h$ decays faster than any polynomial, so that the sum $\sum_{x\in L\mathbb{Z}^d} |h(x)|$ can be bounded independently of $L$. Moreover, if $\int_{\R^d} f$ finite and denoting $C_{x,L}\vcentcolon=B\pth{ x,\frac{1}{2L}}$ we have
\begin{align*}
\left| \frac{1}{L^d}\sum_{x\in\Z_L^d}f(x) - \int_{\R^d} f\right| & \leq \sum_{x\in\Z_L^d}  \int_{C_{x,L}} \left| f(y)- f(x)\right|\d y 
\\&\lesssim \frac{1}{L^{d+1}} \sum_{x\in\Z_L^d}  \sup_{C_{x,L}} \left| \nabla f\right|
\end{align*}
If $|\nabla f(x)|\lesssim \ps{x}^{-q}$ this gives
\begin{align*}
\left| \frac{1}{L^d}\sum_{x\in\Z_L^d}f(x) - \int_{\R^d} f\right| & \lesssim  \frac{1}{L^{d+1}} \sum_{x\in\Z_L^d}  \ps{x}^{-q}
\\& \lesssim \frac{1}{L},
\end{align*}
where we applied \eqref{riemann 1} to $\ps{x}^{-q}$. This estimate imples \eqref{riemann 2} and concludes the proof of the lemma.
\end{proof}

\begin{lemma}\label{lem individual sums}
Let $i\in\llbracket 1,n''\rrbracket$ and fix $(x_{i'})_{i+1\leq i'\leq n''}\in\pth{\Z_L^d}^{n''-i}$, $(k_j)_{j\in \mathcal{N}'(C)\setminus(\mathcal{N}''(C)\sqcup\{r_1\})}\in\pth{\Z_L^d}^{\frac{n}{2}-n''}$ and $\xi\in\R$. The following estimate holds:
\begin{equation}\label{estimate sum Atildeji}
\sum_{x_i\in\Z_L^d}\tilde{A}_{j_i}\pth{x_i,(x_{i'})_{i+1\leq i'\leq n''},  (k_j)_{j\in\mathcal{N}'(C)\setminus(\mathcal{N}''(C)\sqcup\{r_1\})}, k,\xi}  \lesssim
\left\{
\begin{aligned}
L^d &, \quad \text{in case $\mathbf{C1}$,} 
\\ L^{d} \pth{\frac{\delta}{n_T}}^p &, \quad \text{in cases $\mathbf{C3}$ and $\mathbf{C4}$,} 
\\ L^{d} \pth{\frac{\delta}{n_T}}^p \e^{\a_0} &, \quad \text{in cases $\mathbf{C2}$ and $\mathbf{C5}$,} 
\end{aligned}
\right.
\end{equation}
for some $\a_0>0$ depending only on $\b$, $p$, $\ga$, $d$ and for $\e$ small enough compared to $\de$.
\end{lemma}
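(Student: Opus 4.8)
The plan is to treat the five cases $\mathbf{C1}$--$\mathbf{C5}$ separately, in each of them isolating how the summation variable $x_i$ enters $\tilde{A}_{j_i}$. After the change of variables $\Psi$ one has $\kappa(j_i)=x_i$, and Lemma \ref{triangle} guarantees that $\tilde{A}_{j_i}$ depends only on $x_i$ together with the frozen variables $(x_{i'})_{i'>i}$ and $(k_j)_{j\in\mathcal{N}'(C)\setminus(\mathcal{N}''(C)\sqcup\{r_1\})}$. Inspecting \eqref{def Aj}, $x_i$ can occur in at most three places: the decay factor $\ps{x_i}^{-p/2}$; the cut-off $\chi_{\parent(j_i)}$ when $\parent(j_i)$ is ternary (via \eqref{chi j}); and the resonance factor $|n_T\delta^{-1}-i(\xi+\omega^\rho_{\parent(j_i)})|^{-p}$ when $\parent(j_i)$ is ternary with $\mathtt{int}^{-1}(\parent(j_i))=\emptyset$, the dependence being through $\Delta_{\parent(j_i)}$ exactly as in the computation carried out in the proof of Lemma \ref{triangle}.

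Cases $\mathbf{C1}$, $\mathbf{C3}$, $\mathbf{C4}$ require no oscillation. In $\mathbf{C1}$, \eqref{def Aj} gives $\tilde{A}_{j_i}=\ps{x_i}^{-p/2}$; since $p>2d$ one has $p/2>d$, and Lemma \ref{lem:RiemanntoInt}(i) yields $\sum_{x_i\in\Z_L^d}\ps{x_i}^{-p/2}\lesssim L^d$. In $\mathbf{C3}$ and $\mathbf{C4}$ the parent is ternary; bounding $\chi_{\parent(j_i)}\leq 1$ and, when $\mathtt{int}^{-1}(\parent(j_i))=\emptyset$, bounding the resonance denominator below by $(n_T\delta^{-1})^p$, one gets in all situations $\tilde{A}_{j_i}\leq(\delta/n_T)^p\ps{x_i}^{-p/2}$, and the same summation produces $L^d(\delta/n_T)^p$.

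In case $\mathbf{C2}$ the $\e$-gain comes from the low-frequency cut-off. Here $\parent(j_i)\in\mathcal{N}_\low(C)\cap\mathcal{P}_2$ has one unmarked and two marked children, two of which belong to $\mathcal{N}'(C)$. Using the linear independence of $(K(j'))_{j'\in\mathcal{N}'(C)}$, the signed-combination property of Proposition \ref{prop LukSpo}, and the fact (Definition \ref{def rho tilde}) that the unmarked child is $<_{\tilde\rho}$-minimal among the children of $\parent(j_i)$, one shows that the unmarked child cannot lie in $\mathcal{N}'(C)$: otherwise, writing the relation $K(\text{unmarked})=K(\parent(j_i))-K(\text{marked}_1)-K(\text{marked}_2)$ and expanding the non-$\mathcal{N}'(C)$ terms by Proposition \ref{prop LukSpo} would express $K(\text{unmarked})$ as a combination of $(K(j'))_{j'\in\mathcal{N}'(C)}$ with all $j'$ strictly $\tilde\rho$-larger, contradicting linear independence. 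Hence the two children of $\parent(j_i)$ in $\mathcal{N}'(C)$ are its two marked children; $j_i$ (the $<_{\tilde\rho}$-minimal one, by definition of $\mathcal{N}''(C)$) is one of them, and the other marked child $j_m$ lies in $\mathcal{N}'(C)\setminus(\mathcal{N}''(C)\sqcup\{r_1\})$, so $\kappa(j_m)=k_{j_m}$ is one of the frozen variables. The cut-off then forces $\chi_{\parent(j_i)}\leq\chi(\e^{-\ga}|x_i+k_{j_m}|)\leq\mathbbm{1}_{|x_i+k_{j_m}|\leq 2\e^\ga}$; bounding the remaining factors of $\tilde{A}_{j_i}$ by $\ps{x_i}^{-p/2}\leq 1$ and $(\delta/n_T)^p$ (directly, or after the lower bound on the resonance denominator), and invoking \eqref{counting} (legitimate since $L\e^\ga=\e^{\ga-\b}\to\infty$), one obtains $\sum_{x_i\in\Z_L^d}\tilde{A}_{j_i}\lesssim(\delta/n_T)^p L^d\e^{\ga d}$, i.e. the claim with $\a_0\leq\ga d$.

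Case $\mathbf{C5}$ is the delicate one and is where the Klein--Gordon dispersion relation enters. Here $\parent(j_i)\in\mathcal{N}_\high(C)$ with $\mathtt{int}^{-1}(\parent(j_i))=\emptyset$, so $\chi_{\parent(j_i)}\leq 1$ is useless and the only possible gain sits in the resonance factor. Freezing the other variables, the computation in the proof of Lemma \ref{triangle} gives $\omega^\rho_{\parent(j_i)}=\e^{-2}\Delta_{\parent(j_i)}+c$ with $c$ independent of $x_i$, and $x_i\mapsto\Delta_{\parent(j_i)}(x_i)$ is a fixed signed sum of Japanese brackets of affine functions of $x_i$. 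The key input --- the novel combinatorial/analytic ingredient adapted to the inhomogeneous dispersion relation --- is a sublevel-set estimate of the form $\#\{x\in\Z_L^d:|\Delta_{\parent(j_i)}(x)-\theta|\leq w\}\lesssim L^d(w^{\a'}+L^{-\a''})$ for some $\a',\a''>0$, uniform in $\theta\in\R$ and $w>0$, whose proof is where the high-frequency condition on $\parent(j_i)$ must be exploited to prevent $x_i\mapsto\Delta_{\parent(j_i)}(x_i)$ from degenerating. Granting this, one bounds $|n_T\delta^{-1}-i(\xi+\omega^\rho_{\parent(j_i)})|^{-p}\leq((n_T\delta^{-1})^2+\e^{-4}(\Delta_{\parent(j_i)}(x_i)+\e^2(\xi+c))^2)^{-p/2}$, decomposes the $x_i$-sum dyadically according to the size of $\Delta_{\parent(j_i)}(x_i)+\e^2(\xi+c)$, inserts the sublevel-set bound at each scale and sums the resulting geometric series --- using that $p$ is large, that $n_T\leq|\ln\e|^3$ and $\delta\geq\e$ to absorb stray powers of $n_T\delta^{-1}$, and that $1/L=\e^\b$ to absorb the lattice error --- thereby arriving at $\sum_{x_i\in\Z_L^d}\tilde{A}_{j_i}\lesssim L^d(\delta/n_T)^p\e^{\a_0}$ for a suitable $\a_0>0$ depending only on $\b,p,\ga,d$. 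Establishing the uniform sublevel-set estimate, with an explicit positive exponent and uniformly over all admissible configurations of the frozen variables, is the main obstacle.
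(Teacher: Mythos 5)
Your treatment of cases $\mathbf{C1}$, $\mathbf{C3}$ and $\mathbf{C4}$ coincides with the paper's, and in case $\mathbf{C2}$ your observation that the unmarked child of $\parent(j_i)$ cannot belong to $\mathcal{N}'(C)$ (via \eqref{last property induced orders}, Proposition \ref{prop LukSpo} and linear independence) is correct and in fact shows that the second sub-case the paper treats there ($j_i$ unmarked) is vacuous; both routes land on the same $L^d(\de/n_T)^p\e^{\ga d}$ bound.

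Case $\mathbf{C5}$, however, contains a genuine gap, and it is the heart of the lemma. You reduce everything to a uniform sublevel-set estimate for $x\mapsto\De_{\parent(j_i)}(x)$ on $\Z_L^d$ and then state that establishing it ``is the main obstacle'' --- i.e.\ the decisive step is asserted, not proved. Two points make this more than a routine omission. First, your premise that ``$\chi_{\parent(j_i)}\le 1$ is useless'' is not how the gain is actually obtained: $\chi_\high$ is a sum of four terms (see the expansion after Definition \ref{def cutoff}), three of which carry at least two low-frequency indicators and already yield $L^d(\de/n_T)^p\e^{\ga d}$ by the same counting as in $\mathbf{C2}$; only the remaining term, which carries the lower bounds $|x_i+K(j_{i,1})(\ka)|,|x_i+K(j_{i,2})(\ka)|,|K(j_{i,1})(\ka)+K(j_{i,2})(\ka)|\ge\e^\ga$, requires the resonance analysis. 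Second, for that last term the nondegeneracy you would need for your sublevel-set bound is exactly the lower bound $|\K|\ge\e^\ga$ with $\K=x_i+K(j_{i,2})(\ka)$ a frozen quantity: when $\iota_1=-\iota_3$ and $\K\to 0$ the map $x\mapsto -\iota_1\ps{x}-\iota_3\ps{\K-x}$ is constant and the sublevel sets have full size, so no estimate of the form you postulate can hold uniformly without first extracting that constraint from $\chi_\high$. The paper closes this step not by a counting argument but by comparing the Riemann sum to an integral (Lemma \ref{lem:RiemanntoInt}(ii), which produces the $L^{d-1}\e^{-2}=L^d\e^{\b-2}$ lattice error you anticipate) and then computing the integral explicitly in Lemma \ref{lem:intTheta}, using the identity $\ps{a}-\ps{b}=(|a|^2-|b|^2)/(\ps{a}+\ps{b})$ to change variables along the level sets of the phase; the outcome is a gain $\ps{\K}^{\frac{p}{2}-d+2}(\de/n_T)^{p-1}\e^{2-\ga}$, with the loss $|\K|^{-1}\le\e^{-\ga}$ absorbed thanks to the high-frequency constraint. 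Your dyadic scheme could in principle replace this, but only once the quantitative nondegenerate sublevel-set estimate is actually proved, and that proof is essentially equivalent to the co-area computation of Lemma \ref{lem:intTheta}. As written, the lemma is not established in case $\mathbf{C5}$.
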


\begin{proof}
In this proof, we write $\tilde{A}_{j_i}$ instead of $\tilde{A}_{j_i}\pth{x_i,(x_{i'})_{i+1\leq i'\leq n''},  (k_j)_{j\in\mathcal{N}'(C)\setminus(\mathcal{N}''(C)\sqcup\{r_1\})}, k,\xi}  $. We start with the case $\mathbf{C1}$. According to \eqref{def A tilde ji} and \eqref{def Aj} we have $\tilde{A}_{j_i}=\ps{x_i}^{-\frac{p}{2}}$. Since $p>2d$ and thanks to \eqref{riemann 1} we obtain
\begin{align*}
\sum_{x_i\in\Z_L^d}\tilde{A}_{j_i} \lesssim L^d,
\end{align*}
which proves \eqref{estimate sum Atildeji} in the case $\mathbf{C1}$. We now consider the cases $\mathbf{C3}$ and $\mathbf{C4}$. From \eqref{def A tilde ji} and \eqref{def Aj}, and also using $\chi^p_{\parent(j_i)}\leq 1$ we simply obtain
\begin{align*}
\tilde{A}_{j_i} \lesssim \ps{x_i}^{-\frac{p}{2}} \pth{\frac{\de}{n_T}}^p.
\end{align*}
We proceed similarly as for the case $\mathbf{C1}$ and obtain
\begin{align*}
\sum_{x_i\in\Z_L^d}\tilde{A}_{j_i} \lesssim L^d \pth{\frac{\de}{n_T}}^p,
\end{align*}
which proves \eqref{estimate sum Atildeji} in the case $\mathbf{C3}$ and $\mathbf{C4}$.

\saut
We now consider the case $\mathbf{C2}$. First, \eqref{def A tilde ji} and \eqref{def Aj} implies that
\begin{align*}
\tilde{A}_{j_i} \lesssim \ps{x_i}^{-\frac{p}{2}} \chi^p_{\parent(j_i)} \pth{\frac{\de}{n_T}}^p,
\end{align*}
where we used $\left|n_T\delta^{-1}-i\pth{\xi - \omega^\rho_{\parent(j)}}\right|\leq n_T\de^{-1}$ if $\mathtt{int}^{-1}(\parent(j_i))=\emptyset$. By definition of the case $\mathbf{C2}$, we have $\siblings(j_i)=\{j_i,j_{i,1},j_{i,2}\}$ and we can assume that $j_{i,1}\in\mathcal{N}'(C)\setminus\mathcal{N}''(C)$ and $j_{i,2}\notin\mathcal{N}'(C)$. By definition of $\mathcal{N}''(C)$ we have $j_i<_{\tilde{\rho}}j_{i,1}$. We now distinguish two cases:
\begin{itemize}
\item If $j_i$ is marked, then according to \eqref{last property induced orders}, $j_{i,1}$ is marked. Thanks to Definitions \ref{def cutoff} and \ref{def marked} we have $\chi^p_{\parent(j_i)} \leq \mathbbm{1}_{|x_i + k_{j_{i,1}}|\leq 2 \e^\ga}$ and therefore
\begin{align*}
\sum_{x_i\in\Z_L^d}\tilde{A}_{j_i} & \lesssim \pth{\frac{\de}{n_T}}^p \#\enstq{ x_i \in \Z_L^d}{|x_i+k_{j_{i,1}}|\leq 2\e^\ga} 
\\& \lesssim \pth{\frac{\de}{n_T}}^p  L^d \e^{\ga d},
\end{align*}
where we used \eqref{counting}.
\item If $j_i$ is unmarked, then $j_{i,2}$ and $j_{i,2}$ must be the two marked children of $\parent(j_i)$ and thus Definitions \ref{def cutoff} and \ref{def marked} again implies $\chi^p_{\parent(j_i)} \leq \mathbbm{1}_{|K(j_{i,2})(\ka) + k_{j_{i,1}}|\leq 2 \e^\ga}$. Since $K(\parent(j_i))=K(j_i)+K(j_{i,1})+K(j_{i,2})$ this becomes $\chi^p_{\parent(j_i)} \leq \mathbbm{1}_{|K(\parent(j_i))(\ka) - x_i|\leq 2 \e^\ga}$. Since $\parent(j_i)>j_i$, the second point of Lemma \ref{lem rho tilde} implies that $\parent(j_i)>_{\tilde{\rho}}j_i$ and thus $K(\parent(j_i))(\ka)$ is a signed combination of $(x_{i'})_{i+1\leq i'\leq n''}$ and $(k_j)_{j\in \mathcal{N}'(C)\setminus(\mathcal{N}''(C))}$ and thus we again obtain
\begin{align*}
\sum_{x_i\in\Z_L^d}\tilde{A}_{j_i} & \lesssim  \pth{\frac{\de}{n_T}}^p\#\enstq{ x_i \in \Z_L^d}{|K(\parent(j_i))(\ka) - x_i|\lesssim 2\e^\ga} 
\\& \lesssim \pth{\frac{\de}{n_T}}^p  L^d \e^{\ga d}.
\end{align*}
\end{itemize}
By taking $0<\a_0<\ga d$, we have proved \eqref{estimate sum Atildeji} in the case $\mathbf{C2}$.

\saut
We now consider the case $\mathbf{C5}$. First, \eqref{def A tilde ji} and \eqref{def Aj} implies that
\begin{align*}
\tilde{A}_{j_i} \lesssim \frac{\chi_{\parent(j_i)}^p \ps{x_i}^{-\frac{p}{2}}}{\left|n_T\delta^{-1}-i\pth{\xi + \omega^\rho_{\parent(j_i)}}\right|^p}.
\end{align*}
According to Definition \ref{def cutoff} and the fact that $\parent(j_i)\in\mathcal{N}_{\high}(C)$, $\chi_{\parent(j_i)}^p$ is a sum of four terms: three terms corresponding to the situation with at least two low-frequency interactions among the three possible ones, and one term corresponding to three high-frequency interactions. More precisely, writing as above $\siblings(j_i)=\{j_i,j_{i,1},j_{i,2}\}$, we have
\begin{align*}
\sum_{x_i\in\frac{1}{L}\mathbb{Z}^d}\tilde{A}_{j_i} \lesssim B_1 + B_2 + B_3 + B_4,
\end{align*}
with
\begin{align*}
B_1 & \vcentcolon = \pth{\frac{\de}{n_T}}^p \sum_{x_i\in\Z_L^d}  \ps{x_i}^{-\frac{p}{2}} \mathbbm{1}_{| x_i + K(j_{i,1})(\ka) |\leq 2\e^\ga} \mathbbm{1}_{| x_i + K(j_{i,2})(\ka) |\leq 2\e^\ga} ,
\\ B_2 & \vcentcolon = \pth{\frac{\de}{n_T}}^p \sum_{x_i\in\Z_L^d}  \ps{x_i}^{-\frac{p}{2}} \mathbbm{1}_{| K(j_{i,1})(\ka) + x_i |\leq 2\e^\ga} \mathbbm{1}_{| K(j_{i,1})(\ka) + K(j_{i,2})(\ka) |\leq 2\e^\ga} ,
\\ B_3 & \vcentcolon = \pth{\frac{\de}{n_T}}^p \sum_{x_i\in\Z_L^d}  \ps{x_i}^{-\frac{p}{2}} \mathbbm{1}_{| K(j_{i,2})(\ka)+ x_i |\leq 2\e^\ga} \mathbbm{1}_{| K(j_{i,2})(\ka) + K(j_{i,1})(\ka) |\leq 2\e^\ga} ,
\\ B_4 & \vcentcolon = \sum_{x_i\in\Z_L^d} \frac{ \mathbbm{1}_{ | x_i + K(j_{i,1})(\ka) | \geq \e^\ga } \mathbbm{1}_{ | x_i + K(j_{i,2})(\ka)  | \geq \e^\ga }\mathbbm{1}_{ | K(j_{i,1})(\ka) + K(j_{i,2})(\ka)| \geq \e^\ga }}{\ps{x_i}^{\frac{p}{2}}\left|n_T\delta^{-1}-i\pth{\xi + \omega^\rho_{\parent(j_i)}}\right|^p} .
\end{align*}
We start by estimating $B_2$ and $B_3$. Since $K(j_{i,1})(\ka) + K(j_{i,2})(\ka)=K(\parent(j_i))(\ka) - x_i$ we have
\begin{align*}
B_2 + B_3 \lesssim \pth{\frac{\de}{n_T}}^p \sum_{x_i\in\Z_L^d}  \ps{x_i}^{-\frac{p}{2}} \mathbbm{1}_{| K(\parent(j_i))(\ka) - x_i |\leq 2\e^\ga} .
\end{align*}
As above, one can show that $K(\parent(j_i))(\ka)$ is a function of $(x_{i'})_{i+1\leq i'\leq n''}$ and $(k_j)_{j\in \mathcal{N}'(C)\setminus(\mathcal{N}''(C)\sqcup\{r_1\})}$. Therefore we obtain
\begin{align}\label{estim B2 B3}
B_2 + B_3 \lesssim \pth{\frac{\de}{n_T}}^p  L^d \e^{\a_0},
\end{align}
if $0<\a_0<\ga d$. For $B_1$, we claim that at least one of the $K(j_{i,q})(\ka)$ for $q=1,2$ only depends on $(x_{i'})_{i+1\leq i'\leq n''}$ and $(k_j)_{j\in \mathcal{N}'(C)\setminus\mathcal{N}''(C)}$. First, note that $j_{i,1}$ and $j_{i,2}$ cannot both belong to $\mathcal{N}'(C)$ since otherwise we would have $\parent(j_i)\in\mathcal{P}_3$, thus contradicting \eqref{P3vide}. Therefore, we can assume that $j_{i,1}\notin\mathcal{N}'(C)$. We now distinguish two cases:
\begin{itemize}
\item If $j_{i,2}\in\mathcal{N}'(C)$, then it cannot belong to $\mathcal{N}''(C)$ (since it already has a sibling in $\mathcal{N}''(C)$) so that $K(j_{i,2})(\ka)=k_{j_{i,2}}$.
\item If $j_{i,2}\notin\mathcal{N}'(C)$, then for $q=1,2$ and by definition of $\mathcal{N}'(C)$, $K(j_{i,q})$ is a signed combination of $\pth{(K(j))_{j\in\mathcal{N}'(C)\setminus\mathcal{N}''(C)},(K(j_{i'}))_{i\leq i' \leq n''}}$ (we used that $j_{i'}>_{\tilde{\rho}}j_{i,q}$ implies $i'\geq i$ to exclude the lower element of $\mathcal{N}''(C)$). More precisely, there exists numbers $s_q\in\{-1,0,1\}$ for $q=1,2$ such that
\begin{align}\label{K-sqK}
K(j_{i,q}) - s_q K(j_i) \in \mathrm{Vect}\pth{(K(j))_{j\in\mathcal{N}'(C)\setminus\mathcal{N}''(C)},(K(j_{i'}))_{i+1\leq i' \leq n''}}.
\end{align}
The relation $K(\parent(j_i))=K(j_i)+K(j_{i,1})+K(j_{i,2})$ and the fact that $K(\parent(j_i))\in\mathrm{Vect}\pth{(K(j))_{j\in\mathcal{N}'(C)\setminus\mathcal{N}''(C)},(K(j_{i'}))_{i+1\leq i' \leq n''}}$, we obtain
\begin{align*}
(1+s_1+s_2)K(j_i) \in \mathrm{Vect}\pth{(K(j))_{j\in\mathcal{N}'(C)\setminus\{j_i\}}}.
\end{align*}
The linear independence of $(K(j))_{j\in\mathcal{N}'(C)}$ implies $1+s_1+s_2=0$, so that there must exist $q_0\in\{1,2\}$ with $s_{q_0}=0$. Together with \eqref{K-sqK}, this implies that $K(j_{i,q_0})(\ka)$ only depends on $(x_{i'})_{i+1\leq i'\leq n''}$ and $(k_j)_{j\in \mathcal{N}'(C)\setminus\mathcal{N}''(C)}$.
\end{itemize}
We have proved the claim, and if we assume without loss of generality that $K(j_{i,1})(\ka)$ only depends on $(x_{i'})_{i+1\leq i'\leq n''}$ and $(k_j)_{j\in \mathcal{N}'(C)\setminus\mathcal{N}''(C)}$ we proceed as above and use the constraint on $x_i + K(j_{i,1})(\ka)$ to obtain
\begin{align}
B_1 & \lesssim \pth{\frac{\de}{n_T}}^p \sum_{x_i\in\Z_L^d}  \ps{x_i}^{-\frac{p}{2}} \mathbbm{1}_{| x_i + K(j_{i,1})(\ka) |\leq 2\e^\ga} \lesssim \pth{\frac{\de}{n_T}}^p  L^d \e^{\a_0}. \label{estim B1}
\end{align}
We now turn to the estimate for $B_4$. We use the claim we have proved to estimate $B_1$ and without loss of generality we assume that $K(j_{i,1})(\ka)$ only depends on $(x_{i'})_{i+1\leq i'\leq n''}$ and $(k_j)_{j\in \mathcal{N}'(C)\setminus\mathcal{N}''(C)}$. Since this also holds for $K(\parent(j_i))(\ka)$ and since $K(\parent(j_i))=K(j_i)+K(j_{i,1})+K(j_{i,2})$, the quantity $\mathcal{K}\vcentcolon = x_i + K(j_{i,2})(\ka)$ only depends on $(x_{i'})_{i+1\leq i'\leq n''}$ and $(k_j)_{j\in \mathcal{N}'(C)\setminus\mathcal{N}''(C)}$. We now have
\begin{align*}
B_4 & \lesssim \sum_{x_i\in\Z_L^d} \frac{  \mathbbm{1}_{ | \mathcal{K}  | \geq \e^\ga }}{\ps{x_i}^{\frac{p}{2}}\left|n_T\delta^{-1}-i\pth{\xi + \omega^\rho_{\parent(j_i)}}\right|^p}.
\end{align*}
As proved in the proof of Lemma \ref{triangle}, if $\mathtt{int}^{-1}(\parent(j_i))=\emptyset$ then $\omega^\rho_{\parent(j_i)}$ is of the form
\begin{align*}
\omega^\rho_{\parent(j_i)} = \e^{-2} \De_{\parent(j_i)} + F_i, 
\end{align*}
where $F_i$ is a function of $(x_{i'})_{i+1\leq i'\leq n''}$ and $(k_j)_{j\in \mathcal{N}'(C)\setminus\mathcal{N}''(C)}$. Moreover, \eqref{def De j} implies that
\begin{align*}
\De_{\parent(j_i)} & = \iota_0 \ps{K(\parent(j_i))(\ka)} - \iota_1 \ps{x_i} - \iota_2 \ps{K(j_{i,1})(\ka)} - \iota_3 \ps{K(j_{i,2})(\ka)},
\end{align*}
for some $\iota_0,\iota_1,\iota_2,\iota_3\in\{\pm\}$. Since both $K(\parent(j_i))(\ka)$ and $K(j_{i,1})(\ka)$ only depend on $(x_{i'})_{i+1\leq i'\leq n''}$ and $(k_j)_{j\in \mathcal{N}'(C)\setminus\mathcal{N}''(C)}$, we obtain
\begin{align*}
\omega^\rho_{\parent(j_i)} = \e^{-2} \pth{ - \iota_1 \ps{x_i} - \iota_3 \ps{\mathcal{K}-x_i}} + \tilde{F}_i, 
\end{align*}
where $\tilde{F}_i$ is a function of $(x_{i'})_{i+1\leq i'\leq n''}$ and $(k_j)_{j\in \mathcal{N}'(C)\setminus\mathcal{N}''(C)}$. Therefore we have $B_4 \lesssim \sum_{x\in \Z_L^d} \Theta_\e(x)$ where the function $\Theta_\e$ is defined on $\R^d$ by
\begin{align*}
\Theta_\e(x) \vcentcolon = \frac{  \mathbbm{1}_{ | \mathcal{K}  | \geq \e^\ga }}{\ps{x}^{\frac{p}{2}}\left|n_T\delta^{-1} - i\pth{  \e^{-2} \pth{ - \iota_1 \ps{x} - \iota_3 \ps{\mathcal{K}-x}} + \xi + \tilde{F}_i}\right|^p} .
\end{align*}
Since $|\nabla \Theta_\e(x)| \lesssim \e^{-2}\ps{x}^{-{\frac{p}{2}}}$ and $p>2d$ we obtain from \eqref{riemann 2} the bound
\begin{align*}
B_4 \lesssim L^{d-1} \e^{-2} + L^d \int_{\R^d} \Theta_\e.
\end{align*} 
Using $L=\e^{-\b}$ for the first term and Lemma \ref{lem:intTheta} below for the integral of $\Theta_\e$ we finally obtain
\begin{align*}
B_4 \lesssim L^{d} \e^{\b-2} + L^d \ps{\K}^{\frac{p}{2}-d+2} \pth{\frac{\de}{n_T}}^{p-1} \e^{2-\ga} .
\end{align*}
Recall that $\K=K(\parent(j_i))(\ka) - K(j_{i,2})(\ka)$, which together with $\ka(\mathcal{L}(C)_+)\subset B(0,R)$ implies that $|\K|\leq(1+ n(C)) R$. Together with \eqref{n<logepsilon} this implies $\ps{\K}^{\frac{p}{2}-d+2}\lesssim |\ln\e|^{3\pth{\frac{p}{2}-d+2}}$ and thus
\begin{align*}
B_4 \lesssim L^{d} \e^{\b-2} + L^d |\ln\e|^{3\pth{\frac{p}{2}-d+2}}\pth{\frac{\de}{n_T}}^{p-1} \e^{2-\ga} .
\end{align*}
Together with \eqref{estim B2 B3} and \eqref{estim B1} we have proved that in the case $\mathbf{C5}$ we have
\begin{align}\label{estim finale cas C5}
\sum_{x_i\in\frac{1}{L}\mathbb{Z}^d}\tilde{A}_{j_i} \lesssim L^{d} \e^{\b-2} + L^d |\ln\e|^{3\pth{\frac{p}{2}-d+2}}\pth{\frac{\de}{n_T}}^{p-1} \e^{2-\ga}+ \pth{\frac{\de}{n_T}}^p  L^d \e^{\a_0}.
\end{align}
Using again \eqref{n<logepsilon} to bound $n_T$ and and $\e\leq \de$ we have
\begin{align*}
L^d |\ln\e|^{3\pth{\frac{p}{2}-d+2}}\pth{\frac{\de}{n_T}}^{p-1}  \e^{2-\ga} & \lesssim \pth{\frac{\de}{n_T}}^{p} L^d |\ln\e|^{3\pth{\frac{p}{2}-d+3}} \e^{1-\ga}
\\& \lesssim \pth{\frac{\de}{n_T}}^{p} L^d \e^{\a_0},
\end{align*}
if $0<\a_0<1-\ga$ and where the implicit constant depends only on $p$ and $d$. Similarly, if $0<\a_0<\b-2$ and if $\e$ is small enough compared to $\de$ then $|\ln\e|^{3p}\e^{\b-2-\a_0}\leq \de^p$ which shows that $L^{d} \e^{\b-2}\leq \pth{\frac{\de}{n_T}}^p  L^d \e^{\a_0}$. Therefore the three terms on the RHS of \eqref{estim finale cas C5} are bounded by the third one, which thus proves \eqref{estimate sum Atildeji} in the case $\mathbf{C5}$ and concludes the proof of the lemma.
\end{proof}

We conclude this section by proving the technical lemma used in the previous proof.

\begin{lemma}\label{lem:intTheta} 
Let $p>2d$, $A\in\R^*$, $B\in\R$, $\e>0$, $\iota\in\{\pm\}$ and $\mathcal{K}\in\R^d\setminus\{0\}$. If $\iota=-$, assume that $ |\mathcal{K}|\geq \e^\ga$. We have  
\begin{align*}
\int_{\R^d} \frac{\d x}{\an{x}^{\frac{p}{2}} \left|A + i \varepsilon^{-2} ( \an{x} + \iota \an{x-\K} + B)\right|^p} \lesssim \ps{\K}^{\frac{p}{2}-d+2} |A|^{1-p}\e^{2-\ga},
\end{align*}
where the implicit constant only depends on $p$ and $d$.
\end{lemma}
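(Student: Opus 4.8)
The plan is to estimate the integral
\[
\mathcal{I} \vcentcolon= \int_{\R^d} \frac{\d x}{\an{x}^{\frac{p}{2}} \left|A + i \varepsilon^{-2} ( \an{x} + \iota \an{x-\K} + B)\right|^p}
\]
by using the level-set structure of the phase function $\phi(x) \vcentcolon= \an{x} + \iota \an{x-\K} + B$ together with a co-area type argument. First I would decompose $\R^d$ into dyadic shells $\{|x|\sim 2^j\}$ in order to exploit the decay $\an{x}^{-p/2}$, noting that because $p>2d$ the outer shells contribute a convergent geometric series; it therefore suffices to control, for each shell, the measure of the set where the large factor $|A + i\e^{-2}\phi(x)|^{-p}$ is not already tiny. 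On a fixed shell the second factor is comparable to $\min(|A|, \e^{-2}|\phi(x)|)^{-p}$, so the integral reduces to estimating $\int \an{x}^{-p/2}\min(|A|,\e^{-2}|\phi(x)|)^{-p}\d x$, and after pulling out $|A|^{-p}$ on the region $\{|\phi(x)|\le \e^2|A|\}$ and $\e^{2p}|A|^{-p}\cdot$(something) type bounds elsewhere we are led to controlling $|\{x : |\phi(x)|\le \lambda\}|$ for small $\lambda$, weighted by $\an{x}^{-p/2}$.

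The key geometric input is a lower bound on $|\nabla \phi|$ away from a small bad set. When $\iota=+$ one has $\nabla\phi(x) = \frac{x}{\an{x}} + \frac{x-\K}{\an{x-\K}}$, and since both summands are vectors of length $<1$ close to the respective unit vectors, $|\nabla\phi|$ is bounded below except in a neighborhood of the ray where $x$ and $x-\K$ point in nearly opposite directions — a bad set contained in a tube around the segment $[0,\K]$ of radius $O(\an{\K})$, on which one can simply use the trivial bound. When $\iota=-$, $\nabla\phi(x)=\frac{x}{\an{x}}-\frac{x-\K}{\an{x-\K}}$ and this vanishes only near $\K=0$; this is exactly where the hypothesis $|\K|\ge \e^\ga$ enters, giving $|\nabla\phi|\gtrsim |\K|\an{x}^{-1}\gtrsim \e^\ga \an{x}^{-1}$ on each shell, which is what produces the $\e^{-\ga}$ loss in the final bound. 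In both cases, on the good region I would foliate by the level sets of $\phi$ and use the co-area formula: $\int \an{x}^{-p/2}\mathbbm{1}_{|\phi|\le\lambda}\d x \lesssim \lambda \sup_{\text{shell}}\an{x}^{-p/2}\cdot \frac{|\text{shell}\cap\{\phi=c\}|}{|\nabla\phi|}$, and the $(d-1)$-dimensional level sets of $\phi$ in a shell of radius $2^j$ have surface measure $\lesssim 2^{j(d-1)}$.

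Putting the pieces together: on the good set, after the co-area reduction and summing the geometric series in the dyadic parameter, one gets a bound of the shape $|A|^{1-p}\e^{2}\cdot(\text{angular loss})$, where the angular loss is $1$ when $\iota=+$ and $\e^{-\ga}$ when $\iota=-$ (the latter from the $|\nabla\phi|^{-1}\lesssim \e^{-\ga}\an{x}$ bound); the bad-set contribution is handled trivially using $|A+i\e^{-2}\phi|\ge |A|$ and the fact that the tube around $[0,\K]$ has volume $\lesssim \an{\K}^{d-1}\cdot$(length)$\lesssim\an{\K}^{d}$, which after the $\an{x}^{-p/2}$ weight and $|A|^{-p}$ gives something absorbed into $\ps{\K}^{\frac{p}{2}-d+2}|A|^{1-p}\e^{2-\ga}$ for $\e$ small. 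I expect the main obstacle to be the careful bookkeeping of the bad set geometry when $\iota=+$ and making the weighted co-area estimate uniform across all dyadic shells while tracking the exact power $\ps{\K}^{\frac{p}{2}-d+2}$; the case $\iota=-$ is conceptually cleaner once one commits to using $|\K|\ge\e^\ga$, but one must check that the implied constants depend only on $p$ and $d$ and not on $A$, $B$, $\e$ or $\K$.
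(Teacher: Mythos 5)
Your strategy (dyadic shells, co-area formula, gradient lower bounds) is genuinely different from the paper's, which instead passes to cylindrical coordinates adapted to $\K$, uses the monotone change of variable $y=\an{x}+\an{x-\K}$ in the radial direction of the orthogonal complement, and for $\iota=-$ exploits the algebraic identity $\an{a}-\an{b}=\frac{|a|^2-|b|^2}{\an{a}+\an{b}}$ to rewrite the phase as $\frac{2|\K|x_\K-|\K|^2}{y}$ and then computes an exact one-dimensional integral in $x_\K$. As written, your sketch has two gaps that I do not think are repairable without essentially changing the method.

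First, the bad set for $\iota=+$. Near the minimum of $\an{x}+\an{x-\K}$ (attained at $x=\K/2$) the gradient vanishes, and your only tool there is the trivial bound $|A+i\e^{-2}\phi|\geq|A|$, giving a contribution of order $|A|^{-p}$ times a volume. The target is $|A|^{1-p}\e^{2-\ga}\ps{\K}^{\frac{p}{2}-d+2}$, so you would need the (weighted) volume of the bad set to be $\lesssim|A|\,\e^{2-\ga}$; nothing in the hypotheses provides this, and invoking ``$\e$ small'' goes the wrong way since the target carries a \emph{positive} power of $\e$. The factor $\e^{2}|A|^{1-p}$ must come from actually integrating $|A+i\e^{-2}(\,\cdot\,)|^{-p}$ along the level direction of the phase everywhere, including near the critical point — this is exactly what the paper's substitution $y=\an{x}+\an{x-\K}$ accomplishes (the Jacobian degeneracy at $r=0$ is absorbed by the polar factor $r^{d-2}$, using $d\geq3$).

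Second, for $\iota=-$ the claimed lower bound $|\nabla\phi|\gtrsim|\K|\an{x}^{-1}$ is false: along the axis through $0$ and $\K$ at large $|x|$ one has $\frac{x}{\an{x}}-\frac{x-\K}{\an{x-\K}}\sim m|\K|\an{x}^{-3}$, since $\phi$ tends to the constant $\pm|\K|$ at infinity in the axial directions (its level sets near those values are unbounded). With the correct bound $|\nabla\phi|^{-1}\lesssim|\K|^{-1}\an{x}^{3}$ your per-shell estimate becomes $\lambda|\K|^{-1}2^{j(d+2-\frac{p}{2})}$, and the dyadic sum diverges whenever $2d<p<2d+4$; closing it requires precisely the extraction of the factor $\ps{\K}^{\frac{p}{2}-d+2}$ that you defer as ``bookkeeping''. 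Relatedly, the $\e^{-\ga}$ loss does not come from a gradient bound at all: in the paper it appears only at the very last line, as $|\K|^{-1}\leq\e^{-\ga}$ applied after an exact integral in $x_\K$ has produced the factor $\e^{2}|A|^{1-p}|\K|^{-1}$.
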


\begin{proof} 
We set
\begin{align*}
\Upsilon(x) \vcentcolon = \frac{1}{\an{x}^{\frac{p}{2}}\left|A + i \varepsilon^{-2} ( \an{x} + \iota \an{x-\K} + B)\right|^p}.
\end{align*}
Since $\mathcal{K}\neq 0$ we have the orthogonal decomposition $\R^d=\mathrm{Vect}(\mathcal{K}) \overset{\perp}{\oplus} \R^{d-1}$ and we can consider the change of variables defined as the inverse of
\begin{align*}
(x_\K, r , \omega)\in \R\times \R_+ \times \mathbb{S}^{d-2} \longmapsto x_\K \frac{\K}{|\K|} + r \omega \in \mathrm{Vect}(\mathcal{K})\overset{\perp}{\oplus} \R^{d-1},
\end{align*}
where we also used $d\geq 3$ to define the polar change of variables in $\R^{d-1}$. We obtain
\begin{align*}
\int_{\R^d}\Upsilon(x)\d x = \int_\R\pth{ \int_{\R_+\times\mathbb{S}^{d-2}} \Upsilon\pth{ x_\K \frac{\K}{|\K|} + r \omega } r^{d-2}\d r \d\omega }\d x_\K.
\end{align*}
We recall that $\ps{x}=\sqrt{m+|x|^2}$, so that
\begin{align*}
&\Upsilon\pth{ x_\K \frac{\K}{|\K|} + r \omega } 
\\&\qquad = \pth{ m + x_\K^2 + r^2 }^{-\frac{p}{4}} \left| A + i \varepsilon^{-2} \pth{ \pth{ m + x_\K^2 + r^2 }^{\half} + \iota \pth{ m + (x_\K - |\K|)^2 + r^2}^{\half} + B} \right|^{-p}.
\end{align*}
Note that this expression does not depend on $\om$ anymore, so that the integral over $\mathbb{S}^{d-2}$ gives a constant depending on $d$. Moreover, for $x_\K\in\R$ fixed we define the function $\ffi$ by 
\begin{align*}
\ffi(r) = \pth{ m + x_\K^2 + r^2 }^{\half} + \pth{ m + (x_\K - |\K|)^2 + r^2}^{\half}.
\end{align*}
Since 
\begin{align*}
\ffi'(r) = \frac{r\ffi(r)}{\pth{ m + x_\K^2 + r^2 }^{\half} \pth{ m + (x_\K - |\K|)^2 + r^2}^{\half}},
\end{align*}
we can perform the change of variables $y=\ffi(r)$ in the integral over $\R_+$. We obtain
\begin{equation}\label{integrale de upsilon}
\begin{aligned}
& \int_{\R_+\times\mathbb{S}^{d-2}} \Upsilon\pth{ x_\K \frac{\K}{|\K|} + r \omega } r^{d-2}\d r \d\omega 
\\& \lesssim \int_{\ffi(0)}^{+\infty}  \left| A + i \varepsilon^{-2} \pth{ \pth{ m + x_\K^2 + r^2 }^{\half} + \iota \pth{ m + (x_\K - |\K|)^2 + r^2}^{\half} + B} \right|^{-p} 
\\&\hspace{3cm} \times r^{d-3}  \pth{ m + x_\K^2 + r^2 }^{\half-\frac{p}{4}}  \pth{ m + (x_\K - |\K|)^2 + r^2}^{\half} y^{-1} \d y.
\end{aligned}
\end{equation}
In the case $\iota=+$, \eqref{integrale de upsilon} becomes
\begin{align*}
& \int_{\R_+\times\mathbb{S}^{d-2}} \Upsilon\pth{ x_\K \frac{\K}{|\K|} + r \omega } r^{d-2}\d r \d\omega 
\\& \lesssim \int_{\ffi(0)}^{+\infty}  \left| A + i \varepsilon^{-2} \pth{ y + B} \right|^{-p}  r^{d-3}  \pth{ m + x_\K^2 + r^2 }^{\half-\frac{p}{4}}  \pth{ m + (x_\K - |\K|)^2 + r^2}^{\half} y^{-1} \d y.
\end{align*}
We use the obvious facts
\begin{align*}
r \leq \pth{ m + x_\K^2 + r^2 }^{\half}, \qquad \pth{ m + (x_\K - |\K|)^2 + r^2}^{\half} \leq y
\end{align*}
to obtain
\begin{align*}
\int_{\R_+\times\mathbb{S}^{d-2}} \Upsilon\pth{ x_\K \frac{\K}{|\K|} + r \omega } r^{d-2}\d r \d\omega & \lesssim \int_{\ffi(0)}^{+\infty}  \left| A + i \varepsilon^{-2} \pth{ y + B} \right|^{-p}  \pth{ m + x_\K^2 + r^2 }^{\frac{-\frac{p}{2}+d-2}{2}}  \d y.
\end{align*}
Since $-\frac{p}{2}+d-2<0$ we have $ \pth{ m + x_\K^2 + r^2 }^{\frac{-\frac{p}{2}+d-2}{2}} \leq  \pth{ m + x_\K^2 }^{\frac{-\frac{p}{2}+d-2}{2}} $ and therefore we obtain
\begin{align*}
\int_{\R^d}\Upsilon(x)\d x \lesssim \pth{ \int_\R \pth{ m + x_\K^2}^{\frac{-p+d-2}{2}} \d x_\K } \pth{ \int_{\ffi(0)}^{+\infty}  \left| A + i \varepsilon^{-2} \pth{ y + B} \right|^{-p}    \d y }.
\end{align*}
Since $-\frac{p}{2}+d-2<-2$ the integral over $x_\K$ is finite and by performing the change of variable $z=\frac{y}{\e^2 A}$ in the second integral we finally obtain
\begin{align}\label{intupsilon iota=+}
\int_{\R^d}\Upsilon(x)\d x \lesssim \e^2 |A|^{1-p}.
\end{align}
In the case $\iota=-$, we use the key identity
\begin{align}\label{key identity}
\ps{a} - \ps{b}  = \frac{|a|^2-|b|^2}{\ps{a} + \ps{b}}
\end{align}
which in our context becomes
\begin{align*}
\pth{ m + x_\K^2 + r^2 }^{\half} - \pth{ m + (x_\K - |\K|)^2 + r^2}^{\half} & =  \frac{2|\K|x_\K - |\K|^2}{y}. 
\end{align*}
Therefore, in the case $\iota=-$ \eqref{integrale de upsilon} becomes
\begin{align*}
& \int_{\R_+\times\mathbb{S}^{d-2}} \Upsilon\pth{ x_\K \frac{\K}{|\K|} + r \omega } r^{d-2}\d r \d\omega 
\\& \lesssim \int_{\ffi(0)}^{+\infty}  \left| A + i \varepsilon^{-2} \pth{ \frac{2|\K|x_\K - |\K|^2}{y} + B} \right|^{-p}  r^{d-3}  \pth{ m + x_\K^2 + r^2 }^{\half-\frac{p}{4}}  \pth{ m + (x_\K - |\K|)^2 + r^2}^{\half} y^{-1} \d y.
\end{align*}
Since $r\leq  \pth{ m + x_\K^2 + r^2 }^{\half}$ we have
\begin{align*}
&r^{d-3}  \pth{ m + x_\K^2 + r^2 }^{\frac{1-\frac{p}{2}}{2}}  \pth{ m + (x_\K - |\K|)^2 + r^2}^{\half} y^{-1}
\\&\qquad \lesssim y^{-\frac{p}{2}+d-2} \pth{ m + x_\K^2 + r^2 }^{\frac{-\frac{p}{2}+d-2}{2}}  \pth{ m + (x_\K - |\K|)^2 + r^2}^{\half} y^{\frac{p}{2}-d+1},
\end{align*}
where we also introduced artificially the factor $y^{-\frac{p}{2}+d-2}$. Since the triangle inequality $\ps{a+b}\lesssim \ps{a}+\ps{b}$ implies 
\begin{align*}
\pth{ m + (x_\K - |\K|)^2 + r^2}^{\half} & \lesssim \pth{ m + x_\K^2 + r^2 }^\half + \ps{\K} \quad \text{and} \quad y  \lesssim \pth{ m + x_\K^2 + r^2 }^\half + \ps{\K},
\end{align*}
we obtain
\begin{align*}
\pth{ m + x_\K^2 + r^2 }^{\frac{-\frac{p}{2}+d-2}{2}}  \pth{ m + (x_\K - |\K|)^2 + r^2}^{\half} y^{\frac{p}{2}-d+1} \lesssim \ps{\K}^{\frac{p}{2}-d+2}.
\end{align*}
Combining the above estimates and integrating first in $x_\K$ and then in $y$ we obtain
\begin{align*}
\int_{\R^d}\Upsilon(x)\d x & \lesssim  \ps{\K}^{\frac{p}{2}-d+2} \int_{\ffi(0)}^{+\infty} \pth{ \int_\R \left| A + i \varepsilon^{-2} \pth{ \frac{2|\K|x_\K - |\K|^2}{y} + B} \right|^{-p} \d x_\K}  y^{-\frac{p}{2}+d-2} \d y 
\\&\lesssim   \ps{\K}^{\frac{p}{2}-d+2} |A|^{1-p}\e^2 |\K|^{-1} \int_{\ffi(0)}^{+\infty} y^{-\frac{p}{2}+d-1}\d y 
\end{align*}
where the inner integral was estimated thanks to the change of variables $z=\frac{2|\K|}{\e^2 A y}x_\K$. Using now $-\frac{p}{2}+d-1<-1$ and $\ffi(0)>0$ we finally obtain
\begin{align}\label{intupsilon iota=-}
\int_{\R^d}\Upsilon(x)\d x \lesssim  \ps{\K}^{\frac{p}{2}-d+2} |A|^{1-p}\e^2 |\K|^{-1}.
\end{align}
Since $1\lesssim  |\K|^{-1}\ps{\K}^{\frac{p}{2}-d+2}$, we can combine \eqref{intupsilon iota=+} and \eqref{intupsilon iota=-} and obtain
\begin{align*}
\int_{\R^d}\Upsilon(x)\d x \lesssim  \ps{\K}^{\frac{p}{2}-d+2} |A|^{1-p}\e^{2-\ga} ,
\end{align*}
where we also used $|K|^{-1}\leq \e^{-\ga}$. This concludes the proof of the lemma.
\end{proof}

\begin{remark}
Note that Lemmas \ref{lem individual sums} and \ref{lem:intTheta} are the only place where we actually use the exact expression of the Klein-Gordon dispersion relation, in particular for the identity \eqref{key identity}.
\end{remark}

\subsubsection{Proof of Proposition \ref{prop high nodes}}\label{section proof high nodes}

We are now ready to conclude the proof of Proposition \ref{prop high nodes}.

\begin{proof}[Proof of Proposition \ref{prop high nodes}]
We start from the estimate \eqref{ACrho successive sums} for $A_C^\rho(k,\xi)$. Thanks to the last point of Lemma \ref{lem N''}, we can set
\begin{align*}
i_0 \vcentcolon = \max \enstq{ i\in\llbracket 1,n''\rrbracket }{\text{$\parent(j_i)\in\mathcal{N}_T(C)$ and $\mathtt{int}^{-1}(\parent(j_i))=\emptyset$} },
\end{align*}
and rewrite \eqref{ACrho successive sums} as
\begin{equation}\label{ACrho successive sums 2}
\begin{aligned}
&A_C^\rho(k,\xi) 
\\& \leq \La^{n+1}  L^{-\frac{nd}{2}} \pth{\frac{\de}{n_T}}^{p\pth{n_T-n''_T}} 
\\&\quad \times \sum_{(k_j)_{j\in\mathcal{N}'(C)\setminus(\mathcal{N}''(C)\sqcup\{r_1\})}\in\pth{\Z_L^d}^{\frac{n}{2}-n''}} \prod_{j\in \mathcal{N}'(C)\setminus ( \mathcal{N}''(C)\sqcup \{r_1\}) } \ps{k_j}^{-\frac{p}{2}}
\\& \hspace{6cm} \times\pth{\sum_{x_{n''}\in\Z_L^d}\tilde{A}_{j_{n''}}\pth{  \cdots \pth{ \sum_{x_{i_0}\in\Z_L^d}\tilde{A}_{j_{i_0}}R_{i_0} } \cdots    } }  .
\end{aligned}
\end{equation}
where we used the notation
\begin{align*}
R_{i_0} = \sum_{x_{i_0-1}\in\Z_L^d}\tilde{A}_{j_{i_0-1}}\pth{  \cdots \pth{ \sum_{x_{1}\in \Z_L^d}\tilde{A}_{j_{1}} } \cdots    } .
\end{align*}
We estimate successively each sum in $R_{i_0}$ using \eqref{estimate sum Atildeji} and obtain
\begin{align*}
| R_{i_0} | \leq \La^{n+1} L^{d(i_0-1)}\pth{\frac{\de}{n_T}}^{pn''_{T,i_0}} \e^{\a_0 n''_{\mathrm{gain},i_0}},
\end{align*} 
where
\begin{align*}
n''_{T,i_0} & \vcentcolon = \#\enstq{i\in\llbracket 1,i_0-1\rrbracket}{\text{$j_i$ in cases $\mathbf{C2}$, $\mathbf{C3}$, $\mathbf{C4}$ or $\mathbf{C5}$}},
\\ n''_{\mathrm{gain},i_0} & \vcentcolon = \#\enstq{i\in\llbracket 1,i_0-1\rrbracket}{\text{$j_i$ in cases $\mathbf{C2}$ or $\mathbf{C5}$}}.
\end{align*}
By plugging this into \eqref{ACrho successive sums 2} we obtain
\begin{equation}\label{ACrho successive sums 3}
\begin{aligned}
&A_C^\rho(k,\xi) 
\\& \leq \La^{n+1}  L^{d\pth{i_0-1 - \frac{n}{2}}} \pth{\frac{\de}{n_T}}^{p\pth{n_T-n''_T+n''_{T,i_0}}} \e^{\a_0 n''_{\mathrm{gain},i_0}} 
\\&\quad \times \sum_{(k_j)_{j\in\mathcal{N}'(C)\setminus(\mathcal{N}''(C)\sqcup\{r_1\})}\in\pth{\Z_L^d}^{\frac{n}{2}-n''}} \prod_{j\in \mathcal{N}'(C)\setminus ( \mathcal{N}''(C)\sqcup \{r_1\}) } \ps{k_j}^{-\frac{p}{2}}
\\&\hspace{6cm}\times\pth{\sum_{x_{n''}\in\Z_L^d}\tilde{A}_{j_{n''}}\pth{  \cdots \pth{ \sum_{x_{i_0}\in\Z_L^d}\tilde{A}_{j_{i_0}} } \cdots    } }  .
\end{aligned}
\end{equation}
We now consider the integral over $\R$ in \eqref{estim FC en fonction A} and use Hölder's inequality:
\begin{align}
\int_\R \frac{ A_C^\rho(k,\xi)^{\frac{1}{p}} }{ \left| n_T\de^{-1} - i\xi \right|}   \d\xi & \leq     \l \left| n_T\de^{-1} - i \bullet \right|^{-p^\star} \r_{L^{p'}(\R)}      \l \left| n_T\de^{-1} - i\bullet \right|^{p^\star - 1} A_C^\rho(k,\bullet)^{\frac{1}{p}} \r_{L^p(\R)} \non
\\&\lesssim \pth{\frac{\de}{n_T}}^{\frac{1}{pp'}}  \l \left| n_T\de^{-1} - i\bullet \right|^{p^\star - 1} A_C^\rho(k,\bullet)^{\frac{1}{p}} \r_{L^p(\R)} , \label{estim inter integrale sur xi}
\end{align}
where the bullet $\bullet$ is the $\xi$ variable, and where $p^\star\vcentcolon=\frac1{p'}\pth{1+\frac1{p}}$. Using \eqref{ACrho successive sums 3} and the fact that the $\tilde{A}_{j_i}$ for $i> i_0$ don't depend on $\xi$ (recall \eqref{def Aj} and the definition of $i_0$) we obtain
\begin{align*}
&\l \left| n_T\de^{-1} - i\bullet \right|^{p^\star - 1} A_C^\rho(k,\bullet)^{\frac{1}{p}} \r_{L^p(\R)}^p  
\\& \leq \La^{n+1}  L^{d\pth{i_0-1 - \frac{n}{2}}} \pth{\frac{\de}{n_T}}^{p\pth{n_T-n''_T+n''_{T,i_0}}} \e^{\a_0 n''_{\mathrm{gain},i_0}}
\\&\quad \times  \sum_{(k_j)_{j\in\mathcal{N}'(C)\setminus(\mathcal{N}''(C)\sqcup\{r_1\})}\in\pth{\Z_L^d}^{\frac{n}{2}-n''}}  \prod_{j\in \mathcal{N}'(C)\setminus ( \mathcal{N}''(C)\sqcup \{r_1\}) } \ps{k_j}^{-\frac{p}{2}}
\\&\quad \times \pth{\sum_{x_{n''}\in\Z_L^d}\tilde{A}_{j_{n''}}\pth{  \cdots\pth{\sum_{x_{i_0+1}\in\Z_L^d}\tilde{A}_{j_{i_0+1}} \pth{ \sum_{x_{i_0}\in\Z_L^d}\int_\R \left| n_T\de^{-1} - i\xi \right|^{p(p^\star - 1)}\tilde{A}_{j_{i_0}}\d\xi  }} \cdots    } } .
\end{align*}
The inner integral is estimated using \eqref{def Aj}, the definition of $i_0$ and Hölder's inequality:
\begin{align*}
& \sum_{x_{i_0}\in\Z_L^d} \int_\R \left| n_T\de^{-1} - i\xi \right|^{p(p^\star - 1)}\tilde{A}_{j_{i_0}}\d\xi 
\\&\qquad \lesssim \sum_{x_{i_0}\in\Z_L^d} \ps{x_{i_0}}^{-\frac{p}{2}} \l \left| n_T\de^{-1} - i\bullet \right|^{p(p^\star - 1)} \r_{L^{2p}(\R)} \l \left|n_T\delta^{-1}-i\pth{\bullet - \omega^\rho_{\parent(j)}}\right|^{-p} \r_{L^{\frac{2p}{2p-1}}(\R)}
\\&\qquad \lesssim L^d \pth{\frac{\de}{n_T}}^{p-\frac{1}{p'}},
\end{align*}
where we also used \eqref{riemann 1} and $p>2d$. Moreover, the remaining successive sums above (i.e over $x_{i}$ for $i_0+1\leq i \leq n''$) are again estimated using \eqref{estimate sum Atildeji}. This implies
\begin{align*}
&\l \left| n_T\de^{-1} - i\bullet \right|^{p^\star - 1} A_C^\rho(k,\bullet)^{\frac{1}{p}} \r_{L^p(\R)}^p  
\\& \leq \La^{n+1}  L^{d\pth{n'' - \frac{n}{2}}} \pth{\frac{\de}{n_T}}^{p n_T-\frac{1}{p'}} \e^{\a_0 n''_{\mathrm{gain}}}  \sum_{(k_j)_{j\in\mathcal{N}'(C)\setminus(\mathcal{N}''(C)\sqcup\{r_1\})}\in\pth{\Z_L^d}^{\frac{n}{2}-n''}}  \prod_{j\in \mathcal{N}'(C)\setminus ( \mathcal{N}''(C)\sqcup \{r_1\}) } \ps{k_j}^{-\frac{p}{2}}.
\end{align*}
where 
\begin{align*}
n''_{\mathrm{gain}} \vcentcolon = \#\enstq{i\in\llbracket 1,n''\rrbracket\setminus\{i_0\}}{\text{$j_i$ in cases $\mathbf{C2}$ or $\mathbf{C5}$}},
\end{align*}
where we used that 
\begin{align*}
\#\enstq{i\in\llbracket 1,n''\rrbracket\setminus\{i_0\}}{\text{$j_i$ in cases $\mathbf{C2}$, $\mathbf{C3}$, $\mathbf{C4}$ or $\mathbf{C5}$}} = n_T''-1,
\end{align*}
since the cases $\mathbf{C2}$, $\mathbf{C3}$, $\mathbf{C4}$ or $\mathbf{C5}$ cover $\parent(j_i)\in\mathcal{N}_T(C)$ and since $\parent(j_{i_0})\in\mathcal{N}_T(C)$ by definition of $i_0$. We use \eqref{riemann 1} and $p>2d$ to finally bound the sum over the $k_j$'s and we obtain
\begin{align*}
&\l \left| n_T\de^{-1} - i\bullet \right|^{p^\star - 1} A_C^\rho(k,\bullet)^{\frac{1}{p}} \r_{L^p(\R)}   \leq \La^{n+1}  \pth{\frac{\de}{n_T}}^{ n_T-\frac{1}{pp'}} \e^{\frac{\a_0}{p} n''_{\mathrm{gain}}} .
\end{align*}
By plugging this into \eqref{estim inter integrale sur xi} we obtain
\begin{align*}
\int_\R \frac{ A_C^\rho(k,\xi)^{\frac{1}{p}} }{ \left| n_T\de^{-1} - i\xi \right|}   \d\xi \leq  \La^{n+1}   \pth{\frac{\de}{n_T}}^{ n_T} \e^{\frac{\a_0}{p} n''_{\mathrm{gain}}} .
\end{align*}
We plug this into \eqref{estim FC en fonction A} and obtain
\begin{align*}
\sup_{t\in[0,\de]}\left| \widehat{F^\rho_C}(\e^{-2}t,k) \right| & \leq \La^{n+1}  \pth{\frac{\de}{n_T}}^{ n_T} \e^{\frac{\a_0}{p} n''_{\mathrm{gain}}+n_B}.
\end{align*}
By using $\e\leq\de$ and $n=n_B+2n_T$ we obtain
\begin{align*}
\sup_{t\in[0,\de]}\left| \widehat{F^\rho_C}(\e^{-2}t,k) \right| & \leq \La^{n+1}  \pth{\frac{\de}{n_T}}^{ \frac{n}{2}} \e^{\a_1\pth{n''_{\mathrm{gain}}+n_B}},
\end{align*}
where we defined $\alpha_1\vcentcolon = \min\pth{\half,\frac{\alpha_0}{p}}$. Recalling \eqref{decomposition FC} and using $\#\mathfrak{M}(\mathcal{N}_T(C))\leq n_T! \lesssim n_T^{n/2}$ we obtain
\begin{align}
\sup_{t\in[0,\de]}\left| \widehat{F_C}(\e^{-2}t,k) \right| & \leq \La^{n+1} \#\mathfrak{M}(\mathcal{N}_T(C)) \pth{\frac{\de}{n_T}}^{ \frac{n}{2}} \e^{\a_1\pth{n''_{\mathrm{gain}}+n_B}}\non
\\&\leq \La^{n+1} \de^{ \frac{n}{2}} \e^{\a_1\pth{n''_{\mathrm{gain}}+n_B}}.\label{high nodes estim FC inter}
\end{align}
In order to deduce \eqref{main estim FC high nodes} from \eqref{high nodes estim FC inter}, we manipulate the exponent $n''_{\mathrm{gain}}+n_B$. For that we define
\begin{align*}
n^1 & \vcentcolon = \# \enstq{ i\in\llbracket 1,n''\rrbracket }{\text{$j_i$ in case $\mathbf{C5}$ and $\parent(j_i)\in\mathcal{P}_1$}},
\\ n^2 & \vcentcolon = \# \enstq{ i\in\llbracket 1,n''\rrbracket }{\text{$j_i$ in case $\mathbf{C5}$ and $\parent(j_i)\in\mathcal{P}_2$}} +  \# \enstq{ i\in\llbracket 1,n''\rrbracket }{\text{$j_i$ in case $\mathbf{C2}$}}.
\end{align*}
We first estimate $n^1$. Since a node in $\mathcal{P}_1$ belongs either to $\mathcal{N}_B(C)$, $\mathcal{N}_\low(C)$ or $\mathcal{N}_\high(C)$ we have
\begin{align*}
\#\mathcal{P}_1 & \leq n_B + n_\low + \#\pth{ \mathcal{P}_1\cap\mathcal{N}_\high(C) }
\\& \leq n_B + n_\low + \#\enstq{i\in\llbracket 1,n''\rrbracket }{\parent(j_i)\in \mathcal{P}_1\cap\mathcal{N}_\high(C) },
\end{align*}
where we have also used the natural bijection between elements of $\mathcal{P}_1$ and their unique children in $\mathcal{N}''(C)$. Since to any $j\in\mathcal{N}_T(C)$ such that $\mathtt{int}^{-1}(j)\neq 0$ we can injectively associate a unique binary node, we have
\begin{align*}
 \#\enstq{i\in\llbracket 1,n''\rrbracket }{\parent(j_i)\in \mathcal{P}_1\cap\mathcal{N}_\high(C) } \leq n^1 + n_B.
\end{align*}
We have proven that
\begin{align}\label{estim n1}
n^1 \geq \#\mathcal{P}_1 - 2n_B - n_\low .
\end{align}
Together with \eqref{P2ternaire}, the exact same arguments implies that $\#\mathcal{P}_2 \leq n^2 + n_B$ so that
\begin{align}\label{estim n2}
n^2 \geq \#\mathcal{P}_2 - n_B.
\end{align}
Since we don't know if $j_{i_0}$ is in one of the cases $\mathbf{C2}$ or $\mathbf{C5}$ we have $n''_{\mathrm{gain}}\geq n^1 + n^2 - 1$ which in particular implies $2n''_{\mathrm{gain}}\geq n^1 + 2n^2 - 2$ (simply using $n^1\leq 2n^1$). Together with \eqref{estim n1} and \eqref{estim n2} this implies
\begin{align*}
2(n''_{\mathrm{gain}} + n_B) & \geq \#\mathcal{P}_1 + 2\#\mathcal{P}_2 - 2n_B - n_\low    - 2 
\\& \geq \frac{n}{2} - 2n_B - n_\low    - 2 ,
\end{align*}
where we used \eqref{cardP1P2}. Introducing artificially $n_\scb$ we obtain
\begin{align*}
2(n''_{\mathrm{gain}} + n_B) & \geq \frac{n-2n_\scb}{2} - 2n_B - (n_\low - n_\scb)    - 2.
\end{align*}
We now use the high nodes regime assumption \eqref{high nodes regime} to bound $-2n_B$ from below, we obtain:
\begin{align*}
2(n''_{\mathrm{gain}} + n_B) & \geq \frac{3}{10} \pth{n-2n_\scb} + 3(n_\low-n_\scb)  - 2
\\& \geq \frac{3}{10} \pth{n-2n_\scb}   - 2
\\& \geq \frac{n-2n_\scb}{10},
\end{align*}
where we used $n_\low-n_\scb\geq 0$ and \eqref{n-2nscb>10}. Recalling \eqref{high nodes estim FC inter}, this bound on $n''_{\mathrm{gain}} + n_B$ implies that
\begin{align*}
\sup_{t\in[0,\de]}\left|\widehat{F_C}(\e^{-2}t,k) \right| & \leq \La^{n+1} \de^{ \frac{n}{2}} \e^{\a_\high\pth{\frac{n}{2}-n_\scb}},
\end{align*}
where we have set $\a_\high\vcentcolon=\frac{\a_1}{10}$. This concludes the proof of Proposition \ref{prop high nodes}. 
\end{proof}

We conclude Section \ref{section high nodes regime} by combining the results of Propositions \ref{prop low nodes} and \ref{prop high nodes}, which together leads to Proposition \ref{prop |FC|} (after having set $\a\vcentcolon=\min\pth{\a_\low,\a_\high}$).

\subsection{Time derivative of \texorpdfstring{$X_n$}{Xn}}\label{section time derivative Xn}

In order to bound in a high probability setting the $L^\infty$ norm in time of $X_n$, we need to control in law its derivative in time. For this we state the following propositions.

\begin{prop}\label{prop dt X}
For all $n\in \N$, $\eta\in \{0,1\}$ and $\iota \in \{\pm\}$, we have 
\begin{align*}
\reallywidehat{\dr_tX^{\eta,\iota}_n} = \sum_{(A,\ffi,c) \in \mathcal A_n^{\eta,\iota}} \reallywidehat{\dr_tF_{(A,\ffi,c)}} ,
\end{align*}
together with the decomposition
\begin{align*}
\reallywidehat{\dr_tF_{(A,\ffi,c)}} & = \sum_{j_{\mathrm{max}}\in\{-1\}\sqcup \mathcal N_T^{\mathrm{max}}(A) } \reallywidehat{F_{(A,\ffi,c),j_{ \mathrm{max} }}},
\end{align*}
where 
\begin{align}\label{def NTmax}
\mathcal N_T^{\mathrm{max}}(A) \vcentcolon = \enstq{ j \in \mathcal N_T(A) }{ \nexists\, j' \in \mathcal N_T(A),\; j'>j }
\end{align}
and where $\reallywidehat{F_{(A,\ffi,c),j_{ \mathrm{max} }}}(\e^{-2}t,k)$ is defined for all $t\in\R$ and $k\in\Z_L^d$ by
\begin{itemize}
\item if $j_{\mathrm{max}}=-1$, then
\begin{align*}
\reallywidehat{F_{(A,\ffi,c), -1}}(\e^{-2}t,k) & \vcentcolon = -\frac{\e^{n_B}}{L^{\frac{nd}{2}}}   (-i)^{ n_T(A)+1} \sum_{\ka\in\mathcal D_k(A)} \Om_{-1} e^{i\Om_{-1}\e^{-2}t} \prod_{j\in \mathcal N(A)} q_j
\\&\hspace{2cm} \times \int_{I_A(t)} \prod_{j\in  \mathcal N_T(A)} e^{i \Om_j \e^{-2}t_j} \d t_j \prod_{\ell\in \mathcal L(A)} \reallywidehat{X^{c(\ell), \ffi(\ell)}_\init} (\ka(\ell)),
\end{align*}
\item if $j_{\mathrm{max}}\in  \mathcal N_T^{\mathrm{max}}(A) $, then
\begin{align*}
\reallywidehat{F_{(A,\ffi,c), j_{ \mathrm{max} }}}(\e^{-2}t,k) & \vcentcolon = \frac{\e^{n_B+2}}{L^{\frac{nd}{2}}}   (-i)^{ n_T(A)} \sum_{\ka\in\mathcal D_k(A)}  e^{i \pth{ \Om_{-1} + \Om_{ j_{\mathrm{max}}} } \e^{-2}t} \prod_{j\in \mathcal N(A)} q_j
\\&\hspace{2cm}\times    \int_{I^{ j_{\mathrm{max}} }_A(t)}  \prod_{j\in\mathcal{N}_T(A)\setminus\{ j_{\mathrm{max}} \}}  e^{i\Om_j \e^{-2}t_j}\d t_j  \prod_{\ell\in \mathcal L(A)} \reallywidehat{X^{c(\ell), \ffi(\ell)}_\init} (\ka(\ell)),
\end{align*}
where
\begin{align}\label{def I_A^jmax}
I_A^{j_{\mathrm{max}}}(t) \vcentcolon = \enstq{(t_j)_{j\in \mathcal N_T(A)\setminus \{j_{\mathrm{max}}\}} \in [0,t]^{n_T-1} }{ j\leq j' \Rightarrow t_j\leq t_{j'} }.
\end{align}
\end{itemize}
\end{prop}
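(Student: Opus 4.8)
The statement has two components: a representation formula for $\reallywidehat{\dr_t X^{\eta,\iota}_n}$ in terms of trees, and for each tree a decomposition of $\reallywidehat{\dr_t F_{(A,\ffi,c)}}$ indexed by a choice of ``maximal ternary node'' $j_{\mathrm{max}}\in\{-1\}\sqcup\mathcal N_T^{\mathrm{max}}(A)$. The first component is immediate: since $X^{\eta,\iota}_n=\sum_{(A,\ffi,c)\in\mathcal A_n^{\eta,\iota}} F_{(A,\ffi,c)}$ by Proposition \ref{prop:XtoF} and the sum is finite, we may differentiate term by term. Hence the whole content is the decomposition of $\reallywidehat{\dr_t F_{(A,\ffi,c)}}$, which I would prove directly by differentiating the explicit Fourier formula \eqref{fourier F e-2} from Proposition \ref{prop:FFourier}, specialized at time $\e^{-2}t$.

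\textbf{Differentiating the Fourier formula.} Starting from
\[
\widehat{F_{(A,\ffi,c)}}(\e^{-2}t,k) = \frac{\e^{n_B}}{L^{\frac{nd}{2}}}(-i)^{n_T}\sum_{\ka\in\mathcal D_k(A)} e^{i\e^{-2}\Om_{-1}t}\prod_{j\in\mathcal N(A)}q_j \int_{I_A(t)}\prod_{j\in\mathcal N_T(A)}e^{i\Om_j\e^{-2}t_j}\d t_j \prod_{\ell\in\mathcal L(A)}\reallywidehat{X^{c(\ell),\ffi(\ell)}_\init}(\ka(\ell)),
\]
the $t$-dependence sits in two places: the prefactor $e^{i\e^{-2}\Om_{-1}t}$ and the domain of integration $I_A(t)=\{(t_j)\in[0,t]^{n_T}: j\le j'\Rightarrow t_j\le t_{j'}\}$. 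By the product rule, $\dr_t$ produces two types of terms. The first is $i\e^{-2}\Om_{-1}$ times the same expression, which after factoring out a further $-i$ gives exactly $\reallywidehat{F_{(A,\ffi,c),-1}}$ (the extra factor $(-i)$ absorbs into $(-i)^{n_T+1}$ and the sign is as written). The second comes from differentiating the integral $\int_{I_A(t)}(\dots)\d t_j$ with respect to its upper limit. Here the key observation is that $I_A(t)$ is an iterated simplex: the outermost variables are precisely those $t_{j}$ with $j\in\mathcal N_T^{\mathrm{max}}(A)$ — i.e.\ ternary nodes with no ternary node strictly above them — each of which ranges freely in $[0,t]$ subject only to dominating its ternary descendants. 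Differentiating with respect to the upper endpoint $t$ therefore sets $t_{j_{\mathrm{max}}}=t$ for one such node $j_{\mathrm{max}}$ and leaves the remaining variables integrated over $I_A^{j_{\mathrm{max}}}(t)$ as defined in \eqref{def I_A^jmax}; setting $t_{j_{\mathrm{max}}}=t$ turns $e^{i\Om_{j_{\mathrm{max}}}\e^{-2}t_{j_{\mathrm{max}}}}$ into $e^{i\Om_{j_{\mathrm{max}}}\e^{-2}t}$, which multiplies the surviving prefactor to give $e^{i(\Om_{-1}+\Om_{j_{\mathrm{max}}})\e^{-2}t}$, and the extra power of $\e$ coming from $\dr_t(\e^{-2}t\cdot\text{stuff})$ being absent here (the Jacobian of the endpoint differentiation carries no $\e$) must be tracked: in fact the formula in the statement carries $\e^{n_B+2}$ rather than $\e^{n_B}$, which I expect to come from the relation $n=n_B+2n_T$ together with a renormalization of the $n_T$ time variables $\tau_j=\e^{-2}t_j$ already implicit in passing from \eqref{fourier of F} to \eqref{fourier F e-2}; I would verify this power count carefully. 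Summing over $j_{\mathrm{max}}\in\mathcal N_T^{\mathrm{max}}(A)$ and adding the $j_{\mathrm{max}}=-1$ term yields the claimed decomposition.

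\textbf{Main obstacle.} The routine part is the chain rule; the delicate part is the combinatorial/geometric claim that $\dr_t$ of the integral over the order-polytope $I_A(t)$ decomposes as a sum over $j_{\mathrm{max}}\in\mathcal N_T^{\mathrm{max}}(A)$ of boundary integrals over the facets $\{t_{j_{\mathrm{max}}}=t\}$, each of which equals $I_A^{j_{\mathrm{max}}}(t)$ in the remaining variables. This requires checking that the facets of $I_A(t)$ touching the ``upper wall'' $t_j=t$ are exactly indexed by the maximal ternary nodes, that these facets have disjoint interiors, and that their union is all of the relevant boundary — i.e.\ that no constraint $t_j\le t_{j'}$ with $j<j'$ forces $t_j=t$ unless $j$ is itself maximal. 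This is a straightforward consequence of the order structure (a node $j$ can equal $t$ only if it is maximal in the tree order restricted to $\mathcal N_T(A)$, since otherwise it is bounded above by $t_{j'}<t$ generically), but it deserves a clean statement. I would phrase it as: for a finite poset $P$ and the order polytope $\{x\in[0,t]^P: p\le p'\Rightarrow x_p\le x_{p'}\}$, the $t$-derivative of $\int$ over it equals $\sum_{p\ \mathrm{maximal}}\int_{\{x_p=t\}}$. Once this lemma is in place, the proof is an unwinding of definitions \eqref{def De j}–\eqref{qj T} and \eqref{Omegaj}–\eqref{Omega-1} to match the $q_j$ and $\Om_j$ factors exactly, which is mechanical.
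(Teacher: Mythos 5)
Your proposal is correct and follows essentially the same route as the paper: the first identity is term-by-term differentiation of \eqref{XtoF}, and the whole content is that $\dr_t$ of the integral over the order polytope $I_A(t)$ equals the sum over $j_{\mathrm{max}}\in\mathcal N_T^{\mathrm{max}}(A)$ of the facet integrals over $I_A^{j_{\mathrm{max}}}(t)$ with $t_{j_{\mathrm{max}}}=t$ — the paper proves exactly your poset lemma by decomposing $I_A(t)$ into simplices indexed by linear extensions via \eqref{I_C(t) up to negligible}, differentiating each iterated integral at its outermost endpoint, and regrouping through the bijection $\rho\mapsto(\rho(n_T),\rho_{|\llbracket 1,n_T-1\rrbracket})$ between $\mathfrak M(\mathcal N_T(A))$ and $\bigsqcup_{j_{\mathrm{max}}}\{j_{\mathrm{max}}\}\times\mathfrak M(\mathcal N_T(A)\setminus\{j_{\mathrm{max}}\})$. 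One correction on the power count you left unverified: the $\e^2$ does not come from renormalizing the $n_T$ time variables (that rescaling is already accounted for in passing from $\e^n$ to $\e^{n_B}$ in \eqref{fourier F e-2}); it comes from the fact that the proposition defines $\reallywidehat{F_{(A,\ffi,c),j_{\mathrm{max}}}}$ as $\dr_\tau F$ evaluated at $\tau=\e^{-2}t$, so $\reallywidehat{\dr_t F}(\e^{-2}t,k)=\e^2\,\frac{d}{dt}\bigl[\widehat F(\e^{-2}t,k)\bigr]$ by the chain rule. This overall factor $\e^2$ produces the $\e^{n_B+2}$ in the $j_{\mathrm{max}}\neq-1$ terms and cancels the $\e^{-2}$ arising from $\dr_t e^{i\Om_{-1}\e^{-2}t}$ in the $j_{\mathrm{max}}=-1$ term, where $(-i)^{n_T}\cdot i=-(-i)^{n_T+1}$ accounts for the sign.
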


\begin{proof} 
By directly differentiating \eqref{XtoF} we obtain
\begin{align*}
\dr_tX^{\eta,\iota}_n = \sum_{(A,\ffi,c) \in \mathcal A_n^{\eta,\iota}} \dr_tF_{(A,\ffi,c)} . 
\end{align*}
Since $\reallywidehat{\partial_t F_{(A,\ffi,c)} } = \partial_t \reallywidehat{F_{(A,\ffi,c)} }$ we want to differentiate with respect to time the formula \eqref{fourier of F}. Note that $t$ appears in $e^{i\Om_{-1} t}$ and in the time domain $I_A(t)$. In order to obtain the time derivative of the integral over $I_A(t)$, we can make use of \eqref{I_C(t) up to negligible} which implies
\begin{align*}
\int_{I_A(t)}  \prod_{j\in\mathcal{N}_T(A)} f_j(t_j)\d t_j & = \sum_{\rho\in\mathfrak{M}(\mathcal{N}_T(A))} \int_{0\leq t_{\rho(1)} < \cdots < t_{\rho(n_T)} \leq t}  \prod_{i=1}^{n_T} f_{\rho(i)}\pth{t_{\rho(i)}}  \d t_{\rho(i)},
\end{align*}
for some functions $f_j$. Rewriting the integrals on the RHS as successive time integrals we thus obtain
\begin{align*}
\dr_t &\pth{ \int_{I_A(t)}  \prod_{j\in\mathcal{N}_T(A)} f_j(t_j)\d t_j } 
 = \sum_{\rho\in\mathfrak{M}(\mathcal{N}_T(A))} f_{\rho(n_T)}(t) \int_{0\leq t_{\rho(1)} < \cdots < t_{\rho(n_T-1)} \leq t} \prod_{i=1}^{n_T-1} f_{\rho(i)}\pth{t_{\rho(i)}}  \d t_{\rho(i)}.
\end{align*}
By definition of the set $\mathcal N_T^{\mathrm{max}}(A)$ (see \eqref{def NTmax}), the following map is a bijection
\begin{align*}
\mathfrak{M}(\mathcal{N}_T(A)) & \longrightarrow \bigsqcup_{j_{\mathrm{max}}\in \mathcal{N}_T^{\mathrm{max}}(A)} \Big( \{ j_{\mathrm{max}}\}\times \mathfrak{M}\pth{\mathcal{N}_T(A)\setminus\{j_{\mathrm{max}}\}} \Big)
\\ \rho \qquad & \longmapsto \hspace{2.4cm} \pth{ \rho(n_T),\rho_{|_{ \left\llbracket 1, n_T-1\right\rrbracket }} }
\end{align*}
and therefore we can perform a change of variables in the sum over $\mathfrak{M}(\mathcal{N}_T(A))$ and get
\begin{align*}
\dr_t &\pth{ \int_{I_A(t)}  \prod_{j\in\mathcal{N}_T(A)} f_j(t_j)\d t_j } 
\\\qquad& = \sum_{j_{\mathrm{max}}\in \mathcal{N}_T^{\mathrm{max}}(A)} f_{j_{\mathrm{max}}}(t)  \sum_{\bar{\rho}\in  \mathfrak{M}(\mathcal{N}_T(A)\setminus\{ j_{\mathrm{max}} \})} \int_{0\leq t_{\bar{\rho}(1)} < \cdots < t_{\bar{\rho}(n_T-1)} \leq t} \prod_{i=1}^{n_T-1} f_{\bar{\rho}(i)}\pth{t_{\bar{\rho}(i)}}  \d t_{\bar{\rho}(i)}.
\end{align*}
Using again the formula \eqref{I_C(t) up to negligible} but now applied to the sets $\mathfrak{M}(\mathcal{N}_T(A)\setminus\{ j_{\mathrm{max}} \})$ we finally obtain
\begin{align*}
\dr_t &\pth{\int_{I_A(t)}  \prod_{j\in\mathcal{N}_T(A)} f_j(t_j)\d t_j }  = \sum_{j_{\mathrm{max}}\in \mathcal{N}_T^{\mathrm{max}}(A)} f_{j_{\mathrm{max}}}(t)  \int_{I^{ j_{\mathrm{max}} }_A(t)}  \prod_{j\in\mathcal{N}_T(A)\setminus\{ j_{\mathrm{max}} \}} f_j(t_j)\d t_j,
\end{align*}
where $I^{ j_{\mathrm{max}} }_A(t)$ is defined in \eqref{def I_A^jmax}. Applying this formula with $f_j(t_j)=e^{i\Om_jt_j}$ allows us to differentiate \eqref{fourier of F} with respect to time and conclude the proof of the proposition, after the usual dilation in time.
\end{proof}

From this proposition and the usual Wick formula, we obtain the following decomposition
\begin{align}\label{correlation dtX}
\E\pth{ \left| \reallywidehat{\partial_t  X_n^{\eta}}(\e^{-2}t,k)\right|^2} & = \sum_{C\in \mathcal C_{n,n}^{\eta,\eta,+,-}} \sum_{\substack{j_{\mathrm{max}} \in \{-1\} \sqcup \mathcal N_T^{\mathrm{max}}(A) \\ j'_{\mathrm{max}} \in \{-1\} \sqcup \mathcal N_T^{\mathrm{max}}(A')}} \reallywidehat{F_{C,j_{\mathrm{max}},j'_{\mathrm{max}}}}(\e^{-2}t,k),
\end{align}
where $\reallywidehat{F_{C,j_{\mathrm{max}},j'_{\mathrm{max}}}}(\e^{-2}t,k)$ is defined by
\begin{itemize}
\item if $j_{\mathrm{max}}=j'_{\mathrm{max}}=-1$, then
\begin{align*}
 \reallywidehat{F_{C,-1,-1}} (\e^{-2}t,k) & = -  \frac{\e^{n_B}}{L^{\frac{nd}{2}}}   (-i)^{ n_T}  \sum_{\ka\in\mathcal{D}_k(C)}\Om_{-1}(A) \Om_{-1}(A') e^{i\Om_{-1}(C)\e^{-2}t} \prod_{j\in \mathcal N(C)} q_{j}
\\&\hspace{2cm} \times\int_{I_{C}(t)} \prod_{j\in  \mathcal N_T(C)}  e^{i \Om_{j} \e^{-2} t_{j}} \d t_{j} \prod_{\ell\in\mathcal{L}(C)_-}  M^{c(\ell),c(\si(\ell))}(\ffi(\ell)\ka(\ell))^{\ffi(\ell)},
\end{align*}
\item if $j_{\mathrm{max}}=-1$ and $j'_{\mathrm{max}} \neq  -1$, then
\begin{align*}
 \reallywidehat{F_{C,-1,j'_{\mathrm{max}}} }(\e^{-2}t,k) & = i \frac{\e^{n_B+2}}{L^{\frac{nd}{2}}}   (-i)^{ n_T}  \sum_{\ka\in\mathcal{D}_k(C)}\Om_{-1}(A) e^{i(\Om_{-1}(C) + \Om_{j'_{\mathrm{max}}}(A'))t} \prod_{j\in \mathcal N(C)} q_{j}
\\&\hspace{1cm} \times\int_{I^{-1,j'_{\mathrm{max}}}_{C}(t)} \prod_{j\in  \mathcal N_T(C)\setminus\{j'_{\mathrm{max}}\}}  e^{i \Om_{j} \e^{-2} t_{j}} \d t_{j} \prod_{\ell\in\mathcal{L}(C)_-}  M^{c(\ell),c(\si(\ell))}(\ffi(\ell)\ka(\ell))^{\ffi(\ell)},
\end{align*}
where $I^{-1,j'_{\mathrm{max}}}_{C}(t) = I_A(t) \times I_{A'}^{j'_{\mathrm{max}}}(t)$,
\item if $j_{\mathrm{max}}\neq -1$ and $j'_{\mathrm{max}} =  -1$, then
\begin{align*}
 \reallywidehat{F_{C,j_{\mathrm{max}},-1} }(\e^{-2}t,k) & = i \frac{\e^{n_B+2}}{L^{\frac{nd}{2}}}   (-i)^{ n_T}  \sum_{\ka\in\mathcal{D}_k(C)}\Om_{-1}(A') e^{i(\Om_{-1}(C) + \Om_{j_{\mathrm{max}}}(A))t} \prod_{j\in \mathcal N(C)} q_{j}
\\&\hspace{1cm} \times\int_{I^{j_{\mathrm{max}},-1}_{C}(t)} \prod_{j\in  \mathcal N_T(C)\setminus\{j_{\mathrm{max}}\}}  e^{i \Om_{j} \e^{-2} t_{j}} \d t_{j} \prod_{\ell\in\mathcal{L}(C)_-}  M^{c(\ell),c(\si(\ell))}(\ffi(\ell)\ka(\ell))^{\ffi(\ell)},
\end{align*}
where $I^{j_{\mathrm{max}},-1}_{C}(t) =I_{A}^{j_{\mathrm{max}}}(t)\times I_{A'}(t) $,
\item if $j_{\mathrm{max}}\neq -1$ and $j'_{\mathrm{max}} \neq  -1$, then
\begin{align*}
 \reallywidehat{F_{C,j_{\mathrm{max}},j'_{\mathrm{max}}}} (\e^{-2}t,k) & =  \frac{\e^{n_B+4}}{L^{\frac{nd}{2}}}   (-i)^{ n_T}  \sum_{\ka\in\mathcal{D}_k(C)} e^{i(\Om_{-1}(C) + \Om_{j_{\mathrm{max}}}(A) + \Om_{j'_{\mathrm{max}}}(A'))t} \prod_{j\in \mathcal N(C)} q_{j}
\\& \times\int_{I^{j_{\mathrm{max}},j'_{\mathrm{max}}}_{C}(t)} \prod_{j\in  \mathcal N_T(C)\setminus\{j_{\mathrm{max}}, j'_{\mathrm{max}}\}}  e^{i \Om_{j} \e^{-2} t_{j}} \d t_{j} \prod_{\ell\in\mathcal{L}(C)_-}  M^{c(\ell),c(\si(\ell))}(\ffi(\ell)\ka(\ell))^{\ffi(\ell)}.
\end{align*}
where $I^{j_{\mathrm{max}},j'_{\mathrm{max}}}_{C}(t) = I_A^{j_{\mathrm{max}}}(t) \times I^{j'_{\mathrm{max}}}_{A'}(t)$.
\end{itemize}

\begin{remark}\label{remark counting time derivative}
Based on the fact that the cardinal of $N_T^{\mathrm{max}}(A)$ is bounded by $\frac{n(A)}{2}$ and thus grows linearly in $n(A)$ the result of Lemma \ref{counting self-coupled bushes} is unchanged. More precisely we have
\begin{align*}
\tilde{c}_{n,q} \leq \La^{n+1} \pth{\frac{n}{2}+2-q}!.
\end{align*}
where $\tilde c_{n,q}$ be the cardinal of the set
\begin{align*}
 \enstq{(C,j_1,j_2)}{C\in\mathcal{C}_{n_1,n_2},\; n_1+n_2=n,\; n_{\scb}(C)=q,\; j_1\in \{-1\}\sqcup \mathcal N_T^{\mathrm{max}}(A_1),\; j_2\in \{-1\} \mathcal N_T^{\mathrm{max}}(A_2)}
\end{align*}
where $A_1$ and $A_2$ are the two trees in $C$.
\end{remark}

The following proposition contains the key estimate satisfied by $\reallywidehat{F_{C,j_{\mathrm{max}},j'_{\mathrm{max}}}}$.

\begin{prop}\label{prop estimate FCjmax}
There exists $\La>0$ such that for all $C\in\mathcal{C}^{\eta,\eta,+,-}_{n,n}$ and for all $(j_{\mathrm{max}},j'_{\mathrm{max}})\in (\{-1\}\sqcup \mathcal N_T^{\mathrm{max}}(A))\times (\{-1\}\sqcup \mathcal N_T^{\mathrm{max}}(A'))$ (where $A$ and $A'$ are the two trees in $C$) we have
\begin{align*}
\sup_{t\in[0,\de]}\sup_{k\in\Z_L^d}\left|\reallywidehat{F_{C,j_{\mathrm{max}},j'_{\mathrm{max}}}}(\e^{-2}t,k) \right| \leq \La (\La\delta)^{\frac{n(C)}{2}} .
\end{align*}
Moreover, there exists $\a>0$ such that if $C$ satisfies 
\begin{align*}
n(C)-2n_\scb(C) \geq 80, \qquad n(c) \leq |\ln \varepsilon|^c
\end{align*}
then for any $\e\leq \de $ we have 
\begin{align}\label{estimate FCjmax}
\sup_{t\in[0,\de]}\sup_{k\in\Z_L^d}\left|\reallywidehat{F_{C,j_{\mathrm{max}},j'_{\mathrm{max}}}}(\e^{-2}t,k) \right| \leq \La (\La\delta)^{\frac{n(C)}{2}} \varepsilon^{\alpha_0 \pth{\frac{n(C)}{2}-n_\scb(C)}}.
\end{align}
\end{prop}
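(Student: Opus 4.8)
The plan is to reduce Proposition \ref{prop estimate FCjmax} to the already-established Proposition \ref{prop |FC|} by treating the quantities $\reallywidehat{F_{C,j_{\mathrm{max}},j'_{\mathrm{max}}}}$ as (finitely many) structurally identical variants of $\widehat{F_C}$. Recall from \eqref{correlation dtX} and the formulas following it that $\reallywidehat{F_{C,j_{\mathrm{max}},j'_{\mathrm{max}}}}$ differs from $\widehat{F_C}$ only by: (a) one or two extra powers of $\e$ (the factors $\e^{n_B+2}$ or $\e^{n_B+4}$ instead of $\e^{n_B}$); (b) one or two extra oscillating phases $e^{i\Om_{j_{\mathrm{max}}}\e^{-2}t}$ and $e^{i\Om_{j'_{\mathrm{max}}}\e^{-2}t}$ multiplying the whole expression; (c) in the case $j_{\mathrm{max}}=-1$ (or $j'_{\mathrm{max}}=-1$) an extra polynomial weight $\Om_{-1}(A)$ (resp. $\Om_{-1}(A')$); and (d) the time-simplex $I_C(t)$ replaced by $I_C^{j_{\mathrm{max}},j'_{\mathrm{max}}}(t)\subseteq [0,t]^{n_T-\#\{j_{\mathrm{max}},j'_{\mathrm{max}}\}\cap\mathcal N_T}$, i.e. one or two of the time integrations are removed. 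None of these modifications touches the sum over decorations $\ka\in\mathcal D_k(C)$, the product $\prod_{j\in\mathcal N(C)}q_j$, nor the cutoffs $\chi_j$, which are precisely the ingredients on which the entire proof of Proposition \ref{prop |FC|} rests (through Lemmas \ref{lem:récriturelow}, \ref{lem:holder}, and the low/high regime analysis).

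Concretely, I would first deal with the extra weights $\Om_{-1}(A)$ and the extra phases. For the low nodes regime, the proof of Proposition \ref{prop low nodes} only uses that the oscillating integral over $I_C(t)$ is bounded by $\de^{n_T(C)}$; replacing $I_C(t)$ by the smaller set $I_C^{j_{\mathrm{max}},j'_{\mathrm{max}}}(t)$ only improves this to $\de^{n_T(C)-1}$ or $\de^{n_T(C)-2}$, and the two or four extra factors of $\e\le\de$ compensate. The factor $\Om_{-1}(A)$ (and $\Om_{-1}(A')$), which is a sum of at most $n_B$ terms each of the form $\pm\ps{\ka(j)}\mp\ps{\ka(j')}\mp\ps{\ka(j'')}$, is bounded by $\La^{n+1}\prod_{\ell\in\mathcal L(C)_+}\ps{\ka(\ell)}$, and since $\ka(\mathcal L(C)_+)\subset B(0,R)$ on the support of the $M^{\eta,\eta'}$, this is bounded by $(\La(1+R))^{n+1}$, hence harmless up to enlarging the universal constant. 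For the high nodes regime, the structure of Lemma \ref{lem:holder} is the relevant template: I would redo the Hölder argument with one fewer (or two fewer) resolvent factors $(n_T\de^{-1}-i(\xi+\om^\rho_j))^{-1}$, absorb the extra phases into the bounded exponential prefactor exactly as $e^{n_T(C)\de^{-1}t}$ is absorbed there, use $t\le\de$ to bound $e^{i\Om_{j_{\mathrm{max}}}\e^{-2}t}$ trivially by $1$ (it has modulus one), and again handle $\Om_{-1}(A)$ via the $B(0,R)$ support. The loss of one or two resolvents costs a factor $(\de/n_T)^{-p}$ or $(\de/n_T)^{-2p}$ in the pointwise bound, which is paid back by the $\e^{2}$ or $\e^{4}$ prefactor together with $\e\le\de$, exactly as in the already-handled case $\mathbf{C5}$ where an $\e^{2}$ gain compensates a resolvent.

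Once these observations are in place, the combinatorial bookkeeping is unchanged: the estimates $n_B(C)+2(n_\low(C)-n_\scb(C))$ versus $\frac{n(C)-2n_\scb(C)}{10}$ split into exactly the same two regimes, and the counting of couplings in Lemma \ref{counting self-coupled bushes} is replaced by the version noted in Remark \ref{remark counting time derivative}, which only inflates the constant $\La$ since $\#\mathcal N_T^{\mathrm{max}}(A)\le n(A)/2$. The weak bound (first displayed inequality of Proposition \ref{prop estimate FCjmax}) follows from the byproduct remark after Proposition \ref{prop low nodes} applied to the modified quantities, and the refined bound \eqref{estimate FCjmax} with the same exponent $\a$ as in Proposition \ref{prop |FC|} (here called $\a_0$) follows from running Propositions \ref{prop low nodes} and \ref{prop high nodes} verbatim on $\reallywidehat{F_{C,j_{\mathrm{max}},j'_{\mathrm{max}}}}$. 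I would therefore write the proof as: (i) state the four structural modifications; (ii) check that each of Lemmas \ref{lem:récriturelow}--\ref{lem:holder} and Propositions \ref{prop low nodes}--\ref{prop high nodes} goes through with these modifications, indicating precisely where the extra $\e$-powers absorb the lost time integrations / resolvents and where the $B(0,R)$ support kills the $\Om_{-1}$ weights; (iii) conclude.

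The main obstacle I anticipate is purely one of careful verification rather than of new ideas: one must make sure that in the high nodes regime the removal of $j_{\mathrm{max}}$ from $\mathcal N_T(C)$ does not interfere with the construction of $\hat\rho$, $\tilde\rho$, $\mathcal N'(C)$, $\mathcal N''(C)$ and the case analysis $\mathbf{C1}$--$\mathbf{C5}$ in Section \ref{section individual sums}. The cleanest way around this is \emph{not} to re-run the order-theoretic machinery on the smaller index set, but rather to keep the full set $\mathcal N_T(C)$, bound the (now non-integrated) phase $e^{i\Om_{j_{\mathrm{max}}}\e^{-2}t}$ by $1$, and simply replace the corresponding resolvent factor $(n_T\de^{-1}-i(\xi+\om^\rho_{j_{\mathrm{max}}}))^{-1}$ in $A_C^\rho(k,\xi)$ by the trivial bound $\de/n_T$ before applying Hölder — in other words, treat $j_{\mathrm{max}}$ exactly as a ternary node whose resolvent we choose to discard. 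Then $\mathcal N''(C)$, the five cases, and Lemmas \ref{lem individual sums}--\ref{lem:intTheta} are used unchanged, and the only arithmetic to redo is the exponent count in the proof of Proposition \ref{prop high nodes}, where one verifies that subtracting a bounded number (at most $2$) of $\e^{2}$-type gains and of resolvents, while adding the $\e^{2}$ and $\e^{4}$ prefactors, leaves the final exponent $\a_\high(\frac{n(C)}{2}-n_\scb(C))$ intact after the usual slack in \eqref{n-2nscb>10}.
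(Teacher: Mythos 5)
Your proposal is correct and follows essentially the same route as the paper: the authors' own proof simply observes that $\reallywidehat{F_{C,j_{\mathrm{max}},j'_{\mathrm{max}}}}$ differs from $\widehat{F_C}$ only by the polynomially growing weights $\Om_{-1}(A)$, $\Om_{-1}(A')$ (harmless on the compact support of the $M^{\eta,\eta'}$) and by one or two missing oscillating integrals, each compensated by an extra factor of $\e^2$, so that the arguments of Sections \ref{section low nodes regime} and \ref{section high nodes regime} apply verbatim. Your more detailed discussion — in particular the device of keeping the full index set $\mathcal N_T(C)$ and discarding the resolvent associated to $j_{\mathrm{max}}$ rather than re-running the order-theoretic machinery — is a legitimate and careful elaboration of exactly this reduction.
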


\begin{proof} 
The proof of this proposition is identical to the one of Proposition \ref{prop |FC|}, i.e we consider the low nodes regime and the high nodes regime. The only two differences between the formulas for $\reallywidehat{F_{C,j_{\mathrm{max}},j'_{\mathrm{max}}}}$ and the formula for $\reallywidehat{F_{C}}$ are the presence of the terms $\Omega_{-1}(A)$ or $\Omega_{-1}(A')$ and the fact that there might be one or two oscillating integrals less. However since the factors $\Omega_{-1}(A)$ and $\Omega_{-1}(A')$ grow linearly in $n(C)$ (thanks to the compact support property) and since the missing oscillating integrals are automatically compensated by a better $\e$ power, the proofs of Section \ref{section low nodes regime} and \ref{section high nodes regime} apply without any change (and produce even better estimates since $\e\leq \de$).
\end{proof}

\subsection{Proof of Corollary \ref{coro Xn}}\label{section coro Xn}

We conclude Section \ref{section convergence} by proving its main result, i.e. Corollary \ref{coro Xn}.

\begin{proof}
First, we begin with proving the existence of $\Lambda>0$ such that for any $n\le |\ln \e|^{3}$, $t\in [0, \delta]$ and $k\in \Z_L^d$ we have
\begin{equation} \label{expectation Xn(k)}
\E\pth{ \left| \widehat{X^\eta_n}(\e^{-2}t, k) \right|^2} + \E\pth{ \left| \widehat{\dr_tX^\eta_n}(\e^{-2}t, k) \right|^2} \le \Lambda (\Lambda \delta)^\frac{n}{2}.
\end{equation}
We explain how to get the estimate for the first expectation, which is deduced from identity \eqref{correlation in terms of couplings}, Proposition \ref{prop |FC|} and Lemma \ref{counting self-coupled bushes}. The estimate for the latter follows analogously using the identity \eqref{correlation dtX}, Proposition \ref{prop estimate FCjmax} and Remark \ref{remark counting time derivative} instead.

For $n\le 80$, the statement follows straightforwardly. For $n>80$, we observe that
\begin{multline*}
\sum_{C\in\mathcal{C}^{\eta,\eta,+,-}_{n,n}} \left| \widehat{F_C}(\e^{-2}t,k)\right| = \sum_{0\le q\le \floor{\frac{n}{2}}-40} \sum_{\substack{C\in\mathcal{C}^{\eta,\eta,+,-}_{n,n} \\ n_\scb(C)=q}} \left|\widehat{F_C}(\e^{-2}t,k)\right|  + \sum_{ \floor{\frac{n}{2}}-40< q\le \floor{\frac{n}{2}}} \sum_{\substack{C\in\mathcal{C}^{\eta,\eta,+,-}_{n,n} \\ n_\scb(C)=q}} \left|\widehat{F_C}(\e^{-2}t,k)\right| 
\end{multline*}
The estimate of the latter sum in the above right hand side is again straightforward using \eqref{estimate good} and Lemma \ref{counting self-coupled bushes}:
\begin{align*}
\sum_{ \floor{\frac{n}{2}}-40< q\le \floor{\frac{n}{2}}} \sum_{\substack{C\in\mathcal{C}^{\eta,\eta,+,-}_{n,n} \\ n_\scb(C)=q}} \left|\widehat{F_C}(\e^{-2}t,k)\right| & \lesssim \La^{\frac{n}{2}+1} \de^{\frac{n}{2}} \sum_{\floor{\frac{n}{2}}-q<40}\Big(\frac{n}{2}+2-q\Big)!
\\& \lesssim \La^{\frac{n}{2}+1}\de^{\frac{n}{2}}\sum_{m\leq 40}(m+2)!
\\& \lesssim \La^{\frac{n}{2}+1}\de^{\frac{n}{2}}.
\end{align*} 
For the first sum, we use the improved bound \eqref{estimate FC} and get
\[
\aligned
\sum_{0\le q\le \floor{\frac{n}{2}}-40}\sum_{\substack{C\in\mathcal{C}^{\eta,\eta,+,-}_{n,n} \\ n_\scb(C)=q}} \left|\widehat{F_C}(\e^{-2}t,k)\right| & \lesssim \La^{\frac{n}{2}+1} \de^{\frac{n}{2}} \sum_{0\le q\le \floor{\frac{n}{2}}-40} \e^{\alpha(\frac{n}{2}-q)}\Big(\frac{n}{2}+2-q\Big)! \\
& \lesssim \La^{\frac{n}{2}+1} \de^{\frac{n}{2}} \sum_{40\le m\le \floor{\frac{n}{2}}}\e^{\alpha m}(m+3)!.
\endaligned
\]
Stirling's approximation of factorials and the restriction $n\le |\ln \e|^{3}$ then shows that the last sum is bounded by $(n\e^\a)^n\leq (|\ln\e|^{3}\e^\a)^n$ which is indeed uniformly bounded when $\e$ goes to 0. Therefore we have proved \eqref{expectation Xn(k)}.

Second, we observe that the identities \eqref{correlation in terms of couplings} and \eqref{correlation dtX}, the formula \eqref{fourier of FC} and the equivalent ones for $\reallywidehat{F_{C,j_{\mathrm{max}},j'_{\mathrm{max}}}}$, together with the assumption on the functions $M^{\eta, \eta'}$ to be supported in $B(0, R)$ imply that the functions
\begin{align*}
k\longmapsto\E\pth{ \left| \widehat{X^\eta_n}(\e^{-2}t, k) \right|^2} \quad \text{and} \quad k\longmapsto  \E\pth{ \left| \widehat{\dr_tX^\eta_n}(\e^{-2}t, k) \right|^2}
\end{align*}
are actually supported in a ball of $\Z_L^d$ of radius $\mathcal O(n)$. This implies that for any $s\ge 0$, we can get from \eqref{expectation Xn(k)} the following bound, for some new positive constant $\Lambda$,
\[
\E\pth{\l X^\eta_n(t,\cdot)\r^2_{H^s(L\T^d)} } + \E \pth{\l\partial_t X^\eta_n(t,\cdot)\r^2_{H^s(L\T^d)} }  \leq \Lambda^{\frac{n}{2}+1} \delta^{\frac{n}{2}}n^{2s+d}L^d.
\]
We successively apply Sobolev injection on $[0, \e^{-2}\delta]$, Fubini's theorem and integrate the above estimate in time to obtain that
\begin{align*}
\E \pth{ \sup_{t\in [0, \e^{-2}\delta]} \l X^\eta_n(t,\cdot)\r^2_{H^s(L\T^d)} } & \lesssim \E \left(\|X_n\|^2_{H^1([0, \e^{-2}\delta], H^s(L\T^d))}\right)
\\& \lesssim \int_0^{\e^{-2}\delta} \pth{\E\pth{\l X^\eta_n(t,\cdot)\r^2_{H^s(L\T^d)} } + \E \pth{\l\partial_t X^\eta_n(t,\cdot)\r^2_{H^s(L\T^d)} }   }\d t 
\\&\lesssim(\Lambda\de)^{\frac{n}{2}+1} n^{2s+d}L^{d+\frac{2}{\b}},
\end{align*}
where we used $\e = L^{-1/\beta}$. Finally, Markov's inequality and the restriction $n\le |\ln \e|^{3}$, allows us to conclude that for $a \vcentcolon= (\Lambda \delta)^{\frac{n}{2}+1}L^{d+\frac{2}{\beta}+2A}$
\[
\mathbb P\pth{\sup_{t\in [0, \e^{-2}\delta]} \l X^\eta_n(t,\cdot)\r^2_{H^s(L\T^d)} \ge a }\le L^{-A},
\]
which concludes the proof of the corollary.
\end{proof}

%%%%%%%%%%%%%%%%%%%%%%%%%%%%%%%%%%%%%%%%%%%%%%%%%%%%%%%%%%%%%%%%%%%%%%%%%%%%%%%%%%%%%%%%%%%%%%%%%%%

\section{Fixed point argument}\label{section remainder}

In this section we show how to construct a true solution to the fixed point \eqref{fixed point after normal form}. More precisely, we will prove the following.

\begin{theorem}\label{thm X as series}
Let $s>d/2$ and $A>0$ be fixed. There exists a set $\mathcal{E}_{L, A}$ of probability larger than $1-L^{-A}$ on which there exists a unique solution $(X^0, X^1)\in \mathcal{C}([0,\e^{-2}\delta]; H^s(L\T^d))$ to \eqref{fixed point after normal form} that admits a decomposition of the form 
\begin{equation} \label{full solution}
 X^\eta = \sum_{n\le N(L)} X^\eta_{n} + v^\eta   
\end{equation}
where $N(L)\vcentcolon =\floor{\log L}$, $X^\eta_n$ is defined in Definition \ref{def X_n} and satisfies \eqref{est X_n}  and $v$ satisfies
\[
\l v\r_{\mathcal{C}([0,\e^{-2}\delta]; H^s(L\T^d))}\lesssim L^{-\pth{\frac32d+\frac14+\frac1\beta+3A}}. 
\]
\end{theorem}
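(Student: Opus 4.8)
\textbf{Proof strategy for Theorem \ref{thm X as series}.} The plan is to look for a solution of \eqref{fixed point after normal form} of the form \eqref{full solution}, i.e. set $X^\eta = X^\eta_{\le N} + v^\eta$ with $X^\eta_{\le N} = \sum_{n\le N}X^\eta_n$ and $N = \floor{\log L}$, and to derive the equation satisfied by the remainder $v=(v^0,v^1)$. Plugging \eqref{full solution} into \eqref{fixed point after normal form}, using that $\sum_{n\ge 0}X^\eta_n$ formally solves the equation and that each $X^\eta_n$ satisfies the recursion of Definition \ref{def X_n}, all the terms involving only $X^\eta_n$ with $n\le N$ telescope except for a finite number of ``boundary'' terms where the sum of the indices exceeds $N$. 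This produces an equation of the form $(\mathrm{Id}-\mathfrak{L})(v) = \mathfrak{S} + \mathfrak{N}(v)$, where $\mathfrak{L}$ is the linear operator obtained by linearizing the quadratic and cubic nonlinearities of \eqref{fixed point after normal form} around $X^\eta_{\le N}$ (so $\mathfrak{L}$ has binary terms with one input $v$ and one input $X^{\bar\eta}_{\le N}$, and ternary terms with one input $v$ and two inputs among the $X_{\le N}$), $\mathfrak{S}$ collects the finitely many source terms built purely from $X_{\le N}$ with index sum $>N$, and $\mathfrak{N}(v)$ gathers the genuinely nonlinear-in-$v$ contributions (quadratic and cubic in $v$, with the remaining inputs being $X_{\le N}$). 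All of this is on the event $\mathcal E_{L,A}$ where Corollary \ref{coro Xn} holds, so that $\l X^\eta_n\r_{\mathcal C([0,\e^{-2}\de],H^s)}\lesssim (\La\de)^{n/4+1/2}L^{d/2+1/\beta+A}$ for all $n\le N$.

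The three analytic inputs I would establish in turn are: (i) a bound on $\l\mathfrak S\r_{\mathcal C([0,\e^{-2}\de],H^s)}$; (ii) the operator-norm bound on the iterates of $\mathfrak{L}$, namely \eqref{estim norm op intro}, $\l\mathfrak L^m\r_{\mathrm{op}}\lesssim \de^{m/2}m^{s+d}N^{s+d}L^{1/4+d+A}$; and (iii) a Lipschitz/contraction estimate on $\mathfrak N$ in a small ball of $\mathcal C([0,\e^{-2}\de],H^s)$. For (i), each source term in $\mathfrak S$ is a quadratic or cubic expression in the $X_{\le N}$ with total index between $N+1$ and $3N$; using the product law in $H^s$ ($s>d/2$), the boundedness of the multipliers $q^\eta,q^\eta_\iota$ from \eqref{q double}--\eqref{q triple} (which on $L\T^d$ costs a power $L^{-d/2}$ or $L^{-d}$ from the prefactors in \eqref{def B Fourier}--\eqref{def C Fourier}), the time integration over $[0,\e^{-2}\de]$ (costing $\e^{-2}\de = L^{2/\beta}\de$ for the cubic terms), and the Corollary \ref{coro Xn} bounds, one finds that the leading power of $L$ is offset by the geometric factor $(\La\de)^{N/4}$ raised to a power proportional to $N\sim\log L$; since $(\La\de)^{(\log L)/4} = L^{\frac14\log(\La\de)}$ is a negative power of $L$ for $\de$ small, $\mathfrak S$ is as small as $L^{-(3d/2+1/4+1/\beta+3A)}$ (up to adjusting constants), which is exactly the bound we want for $v$. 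For (iii), since $\mathfrak N(v)$ is at least quadratic in $v$ with all other inputs $X_{\le N}$, on a ball of radius $L^{-\mu}$ its Lipschitz constant is $\lesssim L^{-\mu}$ times the same kind of polynomial-in-$L$ factors as in $\mathfrak L$, hence $\ll 1$ for $L$ large; this is the routine part.

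The main obstacle is (ii), the bound \eqref{estim norm op intro} on $\l\mathfrak L^m\r_{\mathrm{op}}$, since naively $\l\mathfrak L\r_{\mathrm{op}}$ is a large power of $L$ and cannot be inverted directly. The strategy, as indicated in Section \ref{section intro exact}, is to expand the $n$-th iterate $\mathfrak L^n(v)^{\eta,\iota}$ as a sum over \emph{signed coloured linear trees} $\mathcal A^{\lin}_{n,\eta,\iota}$ (trees with one distinguished leaf carrying the input $v$, all other leaves carrying initial data), in the spirit of Proposition \ref{prop:XtoF}, write the Fourier transform of each $F^{\lin}_{(A,\ffi,c)}$ as in Proposition \ref{prop:FFourier}, and then re-run the whole combinatorial machinery of Section \ref{section convergence} — bushes, self-coupled bushes, the low/high ternary node dichotomy, oscillating integral representations, the spanning-tree/Łukaszewicz–Spohn change of variables — on these linear trees. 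Couplings here pair the ordinary leaves (the $v$-leaf is left free and is what the operator acts on), and the key point is that the same mechanisms producing the gains $\e^{2-\gamma}$ (high nodes), $\e^{\gamma d}$ (low nodes), and the factorial-killing structural constraints of self-coupled bushes survive verbatim, since none of them used that all leaves carried random data — only that the decoration/frequency constraints are structurally present. One obtains $\l F^{\lin}_{(A,\ffi,c)}\r \lesssim (\La\de)^{n/2}$ times a polynomial factor $n^{s+d}$ from counting the $H^s$-norm loss on the $\mathcal O(n)$-supported Fourier profile and $L^{1/4+d+A}$ from the worst-case power of $L$ (here $1/4 = 2/(2\beta)\cdot\frac18$-type bookkeeping coming from the single time integration $\e^{-2}$ combined with the $\beta>2$ constraint; the precise exponent is traced exactly as in the proof of Corollary \ref{coro Xn}). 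Summing over the $\le\La^n$ linear trees of size $n$ and over the $m$ compositions gives the extra $m^{s+d}$, yielding \eqref{estim norm op intro}. With (i), (ii), (iii) in hand, choosing $m\sim\log L$ and $N=\floor{\log L}$ and then $\de\le\de_0$ small enough makes $\l\mathfrak L^m\r_{\mathrm{op}}\le \frac12$ for $L$ large; inverting $\mathrm{Id}-\mathfrak L$ via the Neumann series $(\mathrm{Id}-\mathfrak L)^{-1} = \sum_{j=0}^{m-1}\mathfrak L^j\,(\mathrm{Id}-\mathfrak L^m)^{-1}$ and applying the Banach fixed point theorem to $v\mapsto(\mathrm{Id}-\mathfrak L)^{-1}(\mathfrak S+\mathfrak N(v))$ in the ball $\l v\r\le L^{-(3d/2+1/4+1/\beta+3A)}$ produces the unique solution $v$, hence the unique solution $X$ of the form \eqref{full solution} with the claimed bound. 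Uniqueness of $X$ in $\mathcal C([0,\e^{-2}\de],H^s)$ as a whole then follows from a standard Grönwall argument on the difference of two solutions, using $s>d/2$. This completes the proof of Theorem \ref{thm X as series}, and together with Corollary \ref{coro Xn} gives part (i) of Theorem \ref{main theorem}.
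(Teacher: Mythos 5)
Your proposal follows essentially the same route as the paper: the same decomposition $(\mathrm{Id}-\mathfrak{L})(v)=\mathfrak{S}+\mathfrak{N}(v)$ around $X_{\le N(L)}$, the same estimate of $\mathfrak{S}$ by regrouping index sums exceeding $N(L)$ and beating the polynomial in $L$ with $(\La\de)^{N(L)/2}$, the same linear-tree expansion of the iterates $\mathfrak{L}^m$ with the coupling/bush/low--high machinery re-run on linear trees, the same inversion of $\mathrm{Id}-\mathfrak{L}$ through $\mathfrak{L}^{N(L)}$, and the same contraction argument. The one caveat is your final sentence: the uniqueness asserted in the theorem is uniqueness within the class of solutions admitting the decomposition \eqref{full solution} with the stated bound on $v$ (it follows from the contraction), not an unconditional uniqueness via Gr\"onwall, which would fail deterministically over times of order $\e^{-2}\de$ since the $H^s$ norms involved are of size $L^{d/2}$.
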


We use for brevity the notation
\begin{align}\label{def XleqN}
X^\eta_{\leq N} \vcentcolon = \sum_{n=0}^N X_n^\eta.
\end{align}
We want $X^\eta$ defined by \eqref{full solution} to solve
\begin{align*}
X^\eta(t) & = X^\eta_0 +  \e \sum_{\substack{\iota_1,\iota_2\in\{\pm\}}} \pth{B^\eta_{(\iota_1,\iota_2)}\pth{t, X^{\bar{\eta},\iota_1}(t),X^{\bar{\eta},\iota_2}(t) } - B^\eta_{(\iota_1,\iota_2)}\pth{0, X^{\bar{\eta},\iota_1}_0,X^{\bar{\eta},\iota_2}_0 }}
\\&\quad - i \e^2 \int_0^t \sum_*\sum_{\iota_1,\iota_2,\iota_3\in\{\pm\}}  C^\eta_{*,(\iota_1,\iota_2,\iota_3)}\pth{ \tau , X^{\eta,\iota_1}(\tau),X^{\bar{\eta},\iota_2}(\tau),X^{\eta,\iota_3}(\tau) } \d\tau,
\end{align*}
where we already replaced $X^\eta_\init$ by $X^\eta_0$ (cf \eqref{def X0}). By plugging the ansatz \eqref{full solution} into this equation, we obtain the following fixed point formulation for the remainder $v$:
\begin{align}\label{eq remainder}
v^\eta & = \mathfrak{S}^\eta + \mathfrak{L}(v)^\eta + \mathfrak{N}(v)^\eta,
\end{align}
where the source term, linear term and nonlinear term are defined by
\begin{equation}\label{source term}
\begin{aligned}
\mathfrak{S}^\eta(t) & \vcentcolon = - X^\eta_{\leq N(L)}(t) + X^\eta_0 +  \e \sum_{\substack{\iota_1,\iota_2\in\{\pm\}}} \pth{ B^\eta_{(\iota_1,\iota_2)}\pth{t, X^{\bar{\eta},\iota_1}_{\leq N(L)}(t) ,X^{\bar{\eta},\iota_2}_{\leq N(L)}(t) } - B^\eta_{(\iota_1,\iota_2)}\pth{0, X^{\bar{\eta},\iota_1}_0,X^{\bar{\eta},\iota_2}_0 }}
\\&\quad  - i \e^2 \int_0^t \sum_*\sum_{\iota_1,\iota_2,\iota_3\in\{\pm\}}  C^\eta_{*,(\iota_1,\iota_2,\iota_3)}\pth{ \tau , X^{\eta,\iota_1}_{\leq N(L)}(\tau) ,X^{\bar{\eta},\iota_2}_{\leq N(L)}(\tau) ,X^{\eta,\iota_3}_{\leq N(L)}(\tau)  } \d\tau,
\end{aligned}
\end{equation}
\begin{equation}\label{linear term}
\begin{aligned}
\mathfrak{L}(v)^\eta(t) & \vcentcolon =  \e \sum_{\substack{\iota_1,\iota_2\in\{\pm\}}} \pth{B^\eta_{(\iota_1,\iota_2)}\pth{t, X^{\bar{\eta},\iota_1}_{\leq N(L)}(t) , v^{\bar{\eta},\iota_2}(t) } + B^\eta_{(\iota_1,\iota_2)}\pth{t, v^{\bar{\eta},\iota_1}(t),X^{\bar{\eta},\iota_2}_{\leq N(L)}(t) }}
\\&\quad - i \e^2 \int_0^t \sum_*\sum_{\iota_1,\iota_2,\iota_3\in\{\pm\}} \bigg(  C^\eta_{*,(\iota_1,\iota_2,\iota_3)}\pth{ \tau , X^{\eta,\iota_1}_{\leq N(L)}(\tau) ,X^{\bar{\eta},\iota_2}_{\leq N(L)}(\tau) , v^{\eta,\iota_3}(\tau) }
\\&\hspace{5cm} + C^\eta_{*,(\iota_1,\iota_2,\iota_3)}\pth{ \tau , X^{\eta,\iota_1}_{\leq N(L)}(\tau) , v^{\bar{\eta},\iota_2}(\tau),X^{\eta,\iota_3}_{\leq N(L)}(\tau)  }
\\&\hspace{5cm} + C^\eta_{*,(\iota_1,\iota_2,\iota_3)}\pth{ \tau , v^{\eta,\iota_1}(\tau),X^{\bar{\eta},\iota_2}_{\leq N(L)}(\tau) ,X^{\eta,\iota_3}_{\leq N(L)}(\tau)  } \bigg) \d\tau,
\end{aligned}
\end{equation}
\begin{equation}\label{nonlinear term}
\begin{aligned}
\mathfrak{N}(v)^\eta(t) & \vcentcolon =  \e \sum_{\substack{\iota_1,\iota_2\in\{\pm\}}} B^\eta_{(\iota_1,\iota_2)}\pth{t, v^{\bar{\eta},\iota_1}(t), v^{\bar{\eta},\iota_2}(t) }
\\&\quad - i \e^2 \int_0^t \sum_*\sum_{\iota_1,\iota_2,\iota_3\in\{\pm\}} \bigg( C^\eta_{*,(\iota_1,\iota_2,\iota_3)}\pth{ \tau , X^{\eta,\iota_1}_{\leq N(L)}(\tau) , v^{\bar{\eta},\iota_2}(\tau), v^{\eta,\iota_3}(\tau) }
\\&\hspace{5cm} + C^\eta_{*,(\iota_1,\iota_2,\iota_3)}\pth{ \tau , v^{\eta,\iota_1}(\tau),X^{\bar{\eta},\iota_2}_{\leq N(L)}(\tau) , v^{\eta,\iota_3}(\tau) }
\\&\hspace{5cm} + C^\eta_{*,(\iota_1,\iota_2,\iota_3)}\pth{ \tau , v^{\eta,\iota_1}(\tau), v^{\bar{\eta},\iota_2}(\tau),X^{\eta,\iota_3}_{\leq N(L)}(\tau)  } 
\\&\hspace{6cm} + C^\eta_{*,(\iota_1,\iota_2,\iota_3)}\pth{ \tau , v^{\eta,\iota_1}(\tau), v^{\bar{\eta},\iota_2}(\tau), v^{\eta,\iota_3}(\tau) } \bigg) \d\tau.
\end{aligned}
\end{equation}

As in \cite{denghani2021}, the main obstructions to the resolution of \eqref{eq remainder} are that the source term must be small (and not diverging with $L$ as $L^{d/2}$) and that the linear operator $\mathfrak{L}$ must be such that $\mathrm{Id}-\mathfrak{L}$ is invertible (where $\mathrm{Id}$ denotes the identity operator of $\mathcal C([0,\e^{-2}\delta],H^s(L\T^d))$). A naive use of \eqref{est X_n} shows that $\mathfrak{L}$ has operator norm of order $L^{d+2A}$, thus implying that $\mathrm{Id}-\mathfrak{L}$ is \textit{a priori} not invertible. However, by relying once again on tree expansions, we will prove that its $N(L)$-th iterate $\mathfrak{L}^{N(L)}$ does have a small operator norm. This will imply that $\mathrm{Id}-\mathfrak{L}^{N(L)}$ is invertible and thus that $\mathrm{Id}-\mathfrak{L}$ is in fact invertible, thanks to the formula
\begin{align}\label{I-L inverse}
\pth{ \mathrm{Id}-\mathfrak{L}}^{-1} =\pth{\mathrm{Id}-\mathfrak{L}^{N(L)}}^{-1} \sum_{n=0}^{N(L)-1}\mathfrak{L}^n.
\end{align}
Once this is done, one can solve \eqref{eq remainder} by showing that the mapping $v\longmapsto\pth{\mathrm{Id}-\mathfrak{L}}^{-1}\pth{ \mathfrak{S} +  \mathfrak{N}(v) }$ is a contraction from a ball of small radius to itself.
This is the content of the following proposition, which would then implies Theorem \ref{thm X as series}.

\begin{prop}\label{prop fixed point}
Let $s>d/2$ be fixed. There exists $L_0 = L_0(\beta,d)$ such that for any $L\geq L_0$ and any $A>0$, there exists a set $\mathcal E_{L,A}$ of probability $\ge 1-L^{-A}$ such that the mapping
\[
v\longmapsto\pth{\mathrm{Id}-\mathfrak{L}}^{-1}\pth{ \mathfrak{S} +  \mathfrak{N}(v) }
\]
is a contraction from the set 
\begin{align*}
\enstq{v\in \mathcal{C}([0,\e^{-2}\delta]; H^s(L\T^d)) }{ \l v\r_{\mathcal{C}([0,\e^{-2}\delta]; H^s(L\T^d))}\leq \La_0 L^{-\pth{\frac32d+\frac14+\frac1\beta+3A}} }
\end{align*}
to itself, where $\La_0\ge 1$ is some fixed constant independent of $R, L$.
\end{prop}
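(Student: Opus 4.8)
The plan is to establish Proposition \ref{prop fixed point} through four quantitative estimates, which together show that the map $v\longmapsto(\mathrm{Id}-\mathfrak{L})^{-1}(\mathfrak{S}+\mathfrak{N}(v))$ is a contraction of the stated ball into itself. First I would estimate the source term $\mathfrak{S}$ defined in \eqref{source term}. By the very construction of the sequence $(X^\eta_n)$ in Definition \ref{def X_n}, the partial sum $X^\eta_{\leq N}$ satisfies \eqref{fixed point after normal form} up to an error involving only terms $X^\eta_{n_1}$ (and their bilinear/trilinear combinations) whose total index exceeds $N$. Hence $\mathfrak{S}^\eta$ is a finite sum of $B^\eta$- and $C^\eta$-type expressions of the form $B^\eta_{(\iota_1,\iota_2)}(X^{\bar\eta,\iota_1}_{n_1},X^{\bar\eta,\iota_2}_{n_2})$ with $n_1+n_2\geq N+1$ and $C^\eta_{*,(\iota_1,\iota_2,\iota_3)}(X^{\eta,\iota_1}_{n_1},X^{\bar\eta,\iota_2}_{n_2},X^{\eta,\iota_3}_{n_3})$ with $n_1+n_2+n_3\geq N$, where each index is at most $N$. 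Combining the boundedness of $q^\eta$ and $q^\eta_\iota$ from Lemma \ref{lem qj}, the algebra property of $H^s(L\T^d)$ for $s>d/2$ (whose constant, after the convolution normalization, comes with a power of $L$ absorbed by our factors of $L^{-d/2}$), the factor $\e$ or $\e^2$ in front of each term, the bound $\e^2|I_A(t)|\lesssim \e^2\cdot(\e^{-2}\delta)\lesssim\delta$ for the time integral, and the key estimate \eqref{est X_n} from Corollary \ref{coro Xn} valid on $\mathcal{E}_{L,A}$, one gets a bound of the shape $\|\mathfrak{S}\|_{\mathcal{C}([0,\e^{-2}\delta];H^s)}\lesssim \sum_{n\geq N}(\Lambda\delta)^{n/4}L^{d+2/\beta+\text{(const)}A}$. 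Since $N=N(L)=\lfloor\log L\rfloor$, the geometric tail $(\Lambda\delta)^{N/4}$ beats any polynomial power of $L$, so for $L\geq L_0$ large we indeed obtain $\|\mathfrak{S}\|\lesssim L^{-(3d/2+1/4+1/\beta+3A)}$, which is $\le \Lambda_0 L^{-(3d/2+1/4+1/\beta+3A)}/4$ after adjusting $\Lambda_0$.

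Next I would estimate the nonlinear term $\mathfrak{N}$ from \eqref{nonlinear term}. Each summand is at least quadratic in $v$: the bilinear $B^\eta(v,v)$ term is multiplied by $\e=L^{-1/\beta}$, and the trilinear $C^\eta$-terms are quadratic or cubic in $v$, multiplied by $\e^2$, and involve at most one factor of $X^\eta_{\leq N}$, whose $H^s$ norm on $\mathcal{E}_{L,A}$ is $\lesssim N(L)\cdot(\Lambda\delta)^{1/2}L^{d/2+1/\beta+A}\lesssim L^{d/2+1/\beta+2A}$ by summing \eqref{est X_n} over $n\leq N(L)$ (the factor $N(L)$ absorbed into $L^{A}$). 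Using again the $H^s$ algebra property and the same $\e^2|I_A|\lesssim\delta$ bound for the time integral, one finds on the ball $\{\|v\|\leq \Lambda_0 L^{-\mu}\}$ with $\mu=3d/2+1/4+1/\beta+3A$ that $\|\mathfrak{N}(v)\|\lesssim \delta(L^{1/\beta}\|v\|^2 + L^{d/2+1/\beta+2A}\|v\|^2 + \|v\|^3)\lesssim \delta\Lambda_0^2 L^{d/2+1/\beta+2A-2\mu}$, and since $d/2+1/\beta+2A-2\mu = -(5d/2+1/2+1/\beta+4A)<-\mu$, this is again $\le \Lambda_0 L^{-\mu}/4$ for $L$ large. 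The Lipschitz estimate $\|\mathfrak{N}(v)-\mathfrak{N}(v')\|\lesssim (\text{small})\|v-v'\|$ on the ball is obtained identically, by expanding differences of products and bounding each factor by $L^{-\mu}$ or by $L^{d/2+1/\beta+2A}$.

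The main obstacle — and the heart of the argument — is controlling the inverse $(\mathrm{Id}-\mathfrak{L})^{-1}$, or equivalently establishing the operator norm bound \eqref{estim norm op intro} advertised in the introduction. A naive bound on $\mathfrak{L}$ itself, using \eqref{est X_n}, only gives $\|\mathfrak{L}\|_{\mathrm{op}}\lesssim L^{d+2A}$ (from the two factors $\e$ or $\e^2$ fighting the norm of $X^\eta_{\leq N}$ and the time integral), which does not allow inversion of $\mathrm{Id}-\mathfrak{L}$. The resolution, following \cite{denghani2021}, is to expand the $m$-th iterate $\mathfrak{L}^m$ as a sum over signed coloured \emph{linear} trees — trees in which exactly one leaf carries the argument $v$ and all others carry $X^\eta_{\leq N}$ components — and to run a diagrammatic/probabilistic analysis on this expansion entirely parallel to Section \ref{section convergence}. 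Concretely, $\mathfrak{L}^m(v)^{\eta,\iota}=\sum_{(A,\ffi,c)\in\mathcal{A}^{\lin}_{m,\eta,\iota}}F^{\lin}_{(A,\ffi,c)}$; when one computes the $H^s$ norm one takes the $\ell^2$-pairing that turns the $X^\eta_{\leq N}$-leaves into couplings (the single $v$-leaf remaining free), and the estimate \eqref{refined estimate FC}, adapted to linear trees, gives the systematic $\e$-gains from binary nodes, high ternary nodes and non-self-coupled bushes, while self-coupled bushes only produce the harmless structural constraints that avoid the factorial loss. This yields $\|\mathfrak{L}^m\|_{\mathrm{op}}\lesssim \delta^{m/2}m^{s+d}N^{s+d}L^{1/4+d+A}$, so taking both $m$ and $N=N(L)$ of order $\log L$ and $\delta\leq\delta_0$ small makes $\|\mathfrak{L}^{N(L)}\|_{\mathrm{op}}\lesssim L^{-\mu'}(\log L)^{2(s+d)}$ for some $\mu'>0$; in particular $\mathrm{Id}-\mathfrak{L}^{N(L)}$ is invertible with inverse of norm $\lesssim 1$, and then \eqref{I-L inverse} gives $\|(\mathrm{Id}-\mathfrak{L})^{-1}\|_{\mathrm{op}}\lesssim \sum_{n<N(L)}\|\mathfrak{L}^n\|_{\mathrm{op}}\lesssim L^{1/4+d+A}(\log L)^{2(s+d)}\lesssim L^{d+2A}$ for $L$ large. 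Combining this with the bounds on $\mathfrak{S}$ and $\mathfrak{N}$ above: the image of the ball has norm $\lesssim L^{d+2A}\cdot(\|\mathfrak{S}\|+\|\mathfrak{N}(v)\|)\lesssim L^{d+2A}\cdot L^{-\mu}$, and since $d+2A-\mu<-(d/2)<0$ and the target exponent $-\mu$ dominates $d+2A-\mu$, one adjusts $\Lambda_0$ so the image lands in the ball; likewise $L^{d+2A}\cdot(\text{Lipschitz constant of }\mathfrak{N})<1$ for $L$ large, giving the contraction. Picking $\mathcal{E}_{L,A}$ to be the high-probability set of Corollary \ref{coro Xn} (intersected over the finitely many values $n\leq N(L)$) completes the proof, and Theorem \ref{thm X as series} follows by the Banach fixed point theorem together with the bound $\|v\|\leq\Lambda_0 L^{-\mu}$.
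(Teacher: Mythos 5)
Your overall strategy coincides with the paper's: estimate $\mathfrak{S}$ through the telescoping structure of Definition \ref{def X_n} together with Corollary \ref{coro Xn} and the algebra property of $H^s$ for $s>d/2$, treat $\mathfrak{N}$ as quadratically small on the ball, and invert $\mathrm{Id}-\mathfrak{L}$ by bounding the iterates $\mathfrak{L}^{N(L)}$ via the linear-tree diagrammatic (Proposition \ref{prop norme itérées}) and the identity \eqref{I-L inverse}. All the right ingredients are present. However, your final combination step contains a genuine error: you first round both $\l \mathfrak{S}\r$ and $\l\mathfrak{N}(v)\r$ up to size $L^{-\mu}$ with $\mu=\tfrac32 d+\tfrac14+\tfrac1\beta+3A$, and only then apply $(\mathrm{Id}-\mathfrak{L})^{-1}$, obtaining $L^{d+2A}\cdot L^{-\mu}=L^{d+2A-\mu}$. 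You then claim this lands in the ball because ``$d+2A-\mu<0$ and the target exponent $-\mu$ dominates $d+2A-\mu$''. This is backwards: since $d+2A>0$ one has $L^{d+2A-\mu}\gg L^{-\mu}$, so the image as you have bounded it is \emph{larger} than the ball by the polynomial factor $L^{d+2A}$, and no adjustment of the constant $\La_0$ can absorb a power of $L$.

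The fix — which is exactly how the paper proceeds — is to keep the stronger intermediate bounds and only compare with $\La_0 L^{-\mu}$ \emph{after} applying the inverse. For the source term one must retain the full geometric gain, namely
\[
\l \pth{\mathrm{Id}-\mathfrak{L}}^{-1}\mathfrak{S}\r \lesssim (\La\de)^{\frac{N(L)}{2}}\, L^{\frac52 d+\frac14+\frac3\beta+5A},
\]
and choose $\de$ small enough (depending on $d,A$) so that $(\La\de)^{\frac{N(L)}{2}}$, with $N(L)=\floor{\ln L}$, beats this polynomial \emph{and} an extra factor $L^{\mu}$; stopping at $\l\mathfrak{S}\r\lesssim L^{-\mu}$ throws away precisely the margin needed to pay for $\l(\mathrm{Id}-\mathfrak{L})^{-1}\r_{\mathrm{op}}\lesssim L^{d+\frac14+2A}$. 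For the nonlinear term the same care is needed: your own intermediate bound $\l\mathfrak{N}(v)\r\lesssim \de\,\La_0^2\,L^{\frac{d}{2}+\frac1\beta+A-2\mu}$ (using $\l X_{\leq N(L)}\r\lesssim L^{\frac{d}{2}+\frac1\beta+A}$ from summing \eqref{est X_n}, with only one factor of $L^{A}$ — using $L^{2A}$ here as you wrote loses exactly the factor $L^{A}$ that makes the computation fail) is, after multiplication by $L^{d+\frac14+2A}$, of size $\de\,\La_0^2\,L^{-\mu}$, which fits in the ball once $\de\La_0$ is small; but this only works if the inverse is applied to the sharp bound rather than to $L^{-\mu}$. (The factor $L^{1/\beta}\l v\r^2$ in your bound for the bilinear part of $\mathfrak{N}$ should also read $L^{-1/\beta}\l v\r^2=\e\l v\r^2$, though this is harmless.) With these corrections the argument closes as in the paper.
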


This section is organized as follows: in Section \ref{section linear term} we estimate the operator norm of $\mathfrak{L}^N$ for any positive integer $N$ by introducing an adapted diagrammatic extending the one of Section \ref{subsec:diag}; Section \ref{proof fixed point} will be dedicated to the proof of Proposition \ref{prop fixed point}.

\subsection{Linear estimates}\label{section linear term}

In this section, we estimate the operator norm of the $N$-th iterate $\mathfrak{L}^N$. For this, we represent the action of the iterates $\mathfrak{L}^k$ with trees expansions.

\subsubsection{Representation of the iterates with linear trees}\label{section representation iterates}

Compared to the objects introduced in Section \ref{section trees}, we introduce a new type of leaf $\bullet$, a new type of binary nodes $B^\lin_\st$ and new types of ternary nodes $T_*^\lin$ for $*\in \{\lowl,\lowm,\lowr,\high\}$. They play roles similar to the ones they play in the diagrammatic used to bound $X_n$. We define
\begin{align*}
\tilde{\mathcal{A}}_{\leq N(L)} \vcentcolon = \bigsqcup_{0\leq n \leq N(L)} \tilde{\mathcal{A}}_n,
\end{align*}
where the sets of trees $\tilde{\mathcal{A}}_n$ are defined in Section \ref{section trees}. We now define what we call linear trees:
\begin{align*}
\tilde{\mathcal{A}}^\lin_0 & \vcentcolon = \{ \bullet \},
\\ \tilde{\mathcal{A}}^\lin_{n+1} & \vcentcolon = \bigsqcup_* \enstq{ T_*^\lin\pth{ A,A',A^\lin} , T_*^\lin\pth{ A,A^\lin,A'} , T_*^\lin\pth{ A^\lin,A,A'} }{\pth{A,A',A^\lin}\in\pth{\tilde{\mathcal{A}}_{\leq N(L)}}^2\times \tilde{\mathcal{A}}^\lin_n}
\\&\quad \sqcup \enstq{ B_\st^\lin\pth{ A,A^\lin},B_\st^\lin\pth{ A^\lin,A}  }{ \pth{A,A^\lin}\in \tilde{\mathcal{A}}_{\leq N(L)}\times \tilde{\mathcal{A}}^\lin_n },
\end{align*}
for $n\in\mathbb{N}$. As opposed to the sets $\tilde{\mathcal{A}}_n$, here the $n$ in $\tilde{\mathcal{A}}^\lin_{n}$ directly counts the number of linear nodes in a given linear tree. Moreover, each linear tree only has one $\bullet$ leaf. Note also that the ``linear" structure of a linear tree is a comb to which we attach standard trees from $\tilde{\mathcal{A}}_{\leq N(L)}$, $A_1$ to $A_4$ in the example below:
\begin{center}
\begin{tikzpicture}
\node{$\st^\lin$}
    child{node{$A_1$}}
    child{node{$*^\lin$}
        child{node{$A_2$}}
        child{node{$\st^\lin$}
        	    child{node{$\bullet$}}
            child{node{$A_4$}}}
        child{node{$A_3$}}
            }
;
\end{tikzpicture}
\end{center}
As done in Definition \ref{def ensembles liés à un arbre}, we need to define the relevant sets of branching nodes and leaves of a given linear tree $A^\lin\in\tilde{\mathcal{A}}^\lin_n$. The difference between a linear tree $A^\lin$ and a normal tree only lies in the presence of the new types of nodes and leaves, so that the first six sets from Definition \ref{def ensembles liés à un arbre} can be also defined for linear trees. In addition to these, we define
\begin{align*}
\mathcal{N}_\st^\lin(A^\lin) & \vcentcolon = \left\{ \text{linear standard binary nodes of $A^\lin$} \right\}, 
\\ \mathcal{N}_\st^\tot(A^\lin) & \vcentcolon = \mathcal{N}_\st^\lin(A^\lin) \sqcup \mathcal{N}_\st(A^\lin),
\\ \mathcal{N}_B^\tot(A^\lin) & \vcentcolon = \mathcal{N}_B(A^\lin)  \sqcup \mathcal{N}_\st^\lin(A^\lin),
\\ \mathcal{N}_*^\lin(A^\lin) & \vcentcolon = \left\{ \text{linear ternary nodes of $A^\lin$ of type $*$} \right\},
\\ \mathcal{N}_*^\tot(A^\lin) & \vcentcolon = \mathcal{N}_*^\lin(A^\lin) \sqcup \mathcal{N}_*(A^\lin),
\\ \mathcal{N}_\low^\lin(A^\lin) & \vcentcolon = \mathcal{N}_\lowl^\lin(A^\lin) \sqcup \mathcal{N}_\lowm^\lin(A^\lin) \sqcup \mathcal{N}_\lowr^\lin(A^\lin),
\\ \mathcal{N}_\low^\tot(A^\lin) & \vcentcolon = \mathcal{N}_\low^\lin(A^\lin) \sqcup \mathcal{N}_\low(A^\lin),
\\ \mathcal{N}_T^\lin(A^\lin) & \vcentcolon = \mathcal{N}_\low^\lin(A^\lin) \sqcup \mathcal{N}_\high^\lin(A^\lin),
\\ \mathcal{N}_T^\tot(A^\lin) & \vcentcolon = \mathcal{N}_T^\lin(A^\lin) \sqcup \mathcal{N}_T(A^\lin).
\end{align*}
The cardinals of these sets are denoted as in Definition \ref{def ensembles liés à un arbre}, for instance $n^\lin_\st(A^\lin)$ is the cardinal of $\mathcal{N}_\st^\lin(A^\lin)$ and $n_\low^\tot(A^\lin)$ is the cardinal of $\mathcal{N}_\low^\tot(A^\lin)$. We also define
\begin{align*}
\mathcal{N}^\lin(A^\lin) & \vcentcolon = \mathcal{N}_\st^\lin(A^\lin) \sqcup \mathcal{N}_T^\lin(A^\lin),
\\ \mathcal{N}^\tot(A^\lin) & \vcentcolon = \mathcal{N}^\lin(A^\lin) \sqcup \mathcal{N}(A^\lin),
\\ \mathcal{L}(A^\lin) & \vcentcolon = \left\{ \text{leaves of $A^\lin$ that are not $\bullet$} \right\}.
\end{align*}
The size of $A^\lin$ is given by $n(A^\lin)\vcentcolon=n_B^\tot(A^\lin) + 2 n_T^\tot(A^\lin)$, this matches with the cardinal of $\mathcal{L}(A^\lin)$ as it can be shown by induction. Finally, we can easily extend the definition of the sign and colour maps (recall Definition \ref{def sign map colour map}) to elements of $\tilde{\mathcal{A}}_n^\lin$. As before, a sign map 
\begin{align*}
\ffi : \mathcal{L}(A^\lin) \sqcup \{\bullet\} \sqcup \mathcal{N}^\tot(A^\lin) \longrightarrow \{\pm\}
\end{align*}
is arbitrary while a colour map
\begin{align*}
c : \mathcal{L}(A^\lin) \sqcup \{\bullet\} \sqcup \mathcal{N}^\tot(A^\lin) \longrightarrow \{\pm\}
\end{align*}
can be entirely deduced from the knowledge of the root's colour by asking that the rules defining the colour of a child with respect to the colour of its parent also apply to linear nodes. As in Definition \ref{def sign map colour map}, we define
\begin{align*}
\mathcal{A}^{\lin}_{n,\eta,\iota} \vcentcolon = \enstq{\text{$(A,\ffi,c)$ signed and coloured linear tree}}{A\in \tilde{\mathcal{A}}^\lin_n, \; \ffi(\text{root of $A$})=\iota, \; c(\text{root of $A$})=\eta},
\end{align*} 
for $n\in\mathbb{N}$, $\eta\in\{0,1\}$ and $\iota\in\{\pm\}$. Mimicking the recursive definition of the $F_{(A,\ffi,c)}$ (see Definition \ref{def FA}), we fix $v$ and define $F^\lin_{(A,\ffi,c)}$ by
\begin{itemize}
\item First, if $(A,\ffi,c)\in \bigsqcup_{0\leq n \leq N(L)}\mathcal{A}_{n}^{\eta,\iota}$, then set $F^\lin_{(A,\ffi,c)}=F_{(A,\ffi,c)}$ where $F_{(A,\ffi,c)}$ is defined in Definition \ref{def FA}.
\item Second, if $(A,\ffi,c)\in \mathcal{A}^{\lin}_{n,\eta,\iota}$, then $F^\lin_{(A,\ffi,c)}$ is defined recursively as follows:
\begin{itemize}
\item if $n=0$, set $F^\lin_{(\bullet,\iota,\eta)} = v^{\eta,\iota}$,
\item if $A=B^\lin_\st(A_1,A_2)$, set
\begin{align*}
F^\lin_{(A,\ffi,c)}(t) \vcentcolon = \e B^{\eta,\iota_r}_{\iota_r(\iota_1,\iota_2)} \pth{ t , F^\lin_{\pth{A_1,\iota_r \ffi_{|_{A_1}}, c_{|_{A_1}}}}(t), F^\lin_{\pth{A_2,\iota_r \ffi_{|_{A_2}}, c_{|_{A_2}}}}(t)},
\end{align*}
where $\iota_r$ and $\iota_1$, $\iota_2$ are the signs of the root of $A$ and the roots of $A_1$, $A_2$ in $A$ and $\eta$ is the colour of the root of $A$,
\item if $A=T_*^\lin(A_1,A_2,A_3)$, set
\begin{align*}
&F^\lin_{\pth{A,\ffi, c}} (t) 
\\& \vcentcolon = -i \iota_r \varepsilon^2 \int_0^t C_{*,\iota_r(\iota_1,\iota_2,\iota_3)}^{\eta,\iota_r}\pth{\tau, F^\lin_{\pth{A_1,\iota_r \ffi_{|_{A_1}},c_{|_{A_1}}}} (\tau), F^\lin_{\pth{A_2,\iota_r \ffi_{|_{A_2}},c_{|_{A_2}}} }(\tau), F^\lin_{\pth{A_3,\iota_r \ffi_{|_{A_3}}, c_{|_{A_3}}}} (\tau)}\d\tau ,
\end{align*}
where $\iota_r$ and $\iota_1$, $\iota_2$, $\iota_3$ are the signs of the root of $A$ and the roots of $A_1$, $A_2$, $A_3$ in $A$, and $\eta$ is the colour of the root of $A$.
\end{itemize}
\end{itemize}
The following proposition shows how we represent the iterates of the linear operator $\mathfrak{L}$ in terms of the $F^\lin_{(A,\ffi,c)}$.

\begin{prop}\label{prop representation iterates}
For $n\in\mathbb{N}$, $\eta\in\{0,1\}$ and $\iota\in\{\pm\}$ we have
\begin{align*}
\mathfrak{L}^{n}(v)^{\eta,\iota} & = \sum_{(A,\ffi,c)\in \mathcal{A}^{\lin}_{n,\eta,\iota}} F^{\lin}_{(A,\ffi,c)}.
\end{align*}
\end{prop}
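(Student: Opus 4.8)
The plan is to prove Proposition \ref{prop representation iterates} by induction on $n$, following closely the structure of the proof of Proposition \ref{prop:XtoF}. First I would settle the base case $n=0$: by definition $\mathcal{A}^{\lin}_{0,\eta,\iota} = \{(\bullet,\iota,\eta)\}$ and $F^{\lin}_{(\bullet,\iota,\eta)} = v^{\eta,\iota}$, while $\mathfrak{L}^0(v)^{\eta,\iota} = v^{\eta,\iota}$, so the two sides agree trivially.

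For the inductive step, assume the formula holds for some $n\in\mathbb{N}$ and apply $\mathfrak{L}$ once more, using $\mathfrak{L}^{n+1}(v) = \mathfrak{L}(\mathfrak{L}^n(v))$. Expanding the definition \eqref{linear term} of $\mathfrak{L}$ applied to $w \vcentcolon= \mathfrak{L}^n(v)$, we get a sum of terms: the quadratic $B$-contributions where $w$ sits in the left or right slot and $X_{\leq N(L)}$ in the other, and the cubic $C$-contributions where $w$ sits in one of the three slots and $X_{\leq N(L)}$ in the remaining two. The idea is to further expand $X^{\bar\eta,\iota'}_{\leq N(L)} = \sum_{0\le m\le N(L)} X^{\bar\eta,\iota'}_m$ and, using Proposition \ref{prop:XtoF}, rewrite each such piece as $\sum_{(A,\ffi,c)\in\mathcal{A}_m^{\bar\eta,\iota'}} F_{(A,\ffi,c)}$, hence as a sum over trees in $\tilde{\mathcal{A}}_{\leq N(L)}$; and to replace $w^{\eta',\iota'} = \mathfrak{L}^n(v)^{\eta',\iota'}$ by the inductive hypothesis $\sum_{(A^\lin,\ffi',c')\in\mathcal{A}^{\lin}_{n,\eta',\iota'}} F^{\lin}_{(A^\lin,\ffi',c')}$. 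This exactly matches the recursive definition of $\tilde{\mathcal{A}}^\lin_{n+1}$: a linear standard binary node $B^\lin_\st$ joining a tree of $\tilde{\mathcal{A}}_{\leq N(L)}$ and a linear tree of $\tilde{\mathcal{A}}^\lin_n$ (in either order), or a linear ternary node $T^\lin_*$ joining two trees of $\tilde{\mathcal{A}}_{\leq N(L)}$ and a linear tree of $\tilde{\mathcal{A}}^\lin_n$ (with the linear subtree in any of the three positions).

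The bookkeeping steps are then the same as in Proposition \ref{prop:XtoF}: perform the change of variables $\iota_i = \iota\iota_i'$ on the signs coming from the outermost node of $\mathfrak{L}$, use the identity $F_{(A,\ffi,c)}^\iota = F_{(A,\iota\ffi,c)}$ (Lemma \eqref{F^iota}) — which I would first check extends verbatim to $F^\lin$ since the recursion for $F^\lin$ has the same sign structure as that for $F$ — to absorb the conjugations into the sign maps, and then use the analogues of \eqref{fact SM bi} and \eqref{fact SM ter} (summing over pairs/triples of sign maps on subtrees with prescribed root signs is the same as summing over sign maps on the assembled tree with prescribed root sign) together with the recursive rule for the colour map to recognize the result as $\sum_{(A,\ffi,c)\in\mathcal{A}^{\lin}_{n+1,\eta,\iota}} F^{\lin}_{(A,\ffi,c)}$. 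One point of care is that the definition of $F^\lin$ requires the subtree in $\tilde{\mathcal{A}}_{\leq N(L)}$ to be treated by the ordinary rule $F^\lin = F$, which is consistent with the above since those factors come directly from Proposition \ref{prop:XtoF}.

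I do not expect a serious obstacle here; the statement is essentially a combinatorial reorganization identity and the machinery (sign-map summation identities, colour-map recursion, the $F^\iota$ identity) has already been set up for the nonlinear trees. The main thing to be careful about is matching multiplicities and orderings: one must verify that the three positions of the linear subtree under a ternary node (left/middle/right) and the two positions under a binary node are each accounted for exactly once, with no overcounting, when passing from the expanded form of $\mathfrak{L}(w)$ to the sum over $\tilde{\mathcal{A}}^\lin_{n+1}$ — this is precisely why the definition of $\tilde{\mathcal{A}}^\lin_{n+1}$ lists $T_*^\lin(A,A',A^\lin)$, $T_*^\lin(A,A^\lin,A')$, $T_*^\lin(A^\lin,A,A')$ and $B^\lin_\st(A,A^\lin)$, $B^\lin_\st(A^\lin,A)$ as separate families. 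A secondary minor check is that the colour constraints on the $\bar\eta$ versus $\eta$ slots in \eqref{linear term} are compatible with the colour-map recursion for linear nodes, which follows exactly as in the nonlinear case.
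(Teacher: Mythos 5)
Your proposal is correct and follows essentially the same route as the paper's proof: induction on $n$, expansion of $X_{\leq N(L)}$ via Proposition \ref{prop:XtoF}, the change of variables $\iota_i=\iota\iota_i'$ combined with the extension of \eqref{F^iota} to $F^\lin$, the sign-map identities \eqref{fact SM bi}--\eqref{fact SM ter} and the colour-map recursion, and finally the identification with the recursive definition of $\tilde{\mathcal{A}}^\lin_{n+1}$. The points you flag for care (the three/two positions of the linear subtree and the compatibility of the colour constraints) are exactly the ones the paper's argument handles, so no gap remains.
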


\begin{proof}
We prove the proposition by induction over $n\in\mathbb{N}$. For $n=0$, the proposition holds according to the definition of $F^\lin_{(\bullet,\iota,\eta)}$ above. If it holds for some $n\in\mathbb{N}$, then we compute $\mathfrak{L}^{n+1}(v)$ according to the definition \eqref{linear term} of $\mathfrak{L}$:
\begin{align*}
&\mathfrak{L}^{n+1}(v)^{\eta,\iota}(t) 
\\& =  \e \sum_{\substack{\iota_1,\iota_2\in\{\pm\}}} \pth{B^{\eta,\iota}_{(\iota_1,\iota_2)}\pth{t, X^{\bar{\eta},\iota_1}_{\leq N(L)}(t) , \mathfrak{L}^{n}(v)^{\bar{\eta},\iota_2}(t) } + B^{\eta,\iota}_{(\iota_1,\iota_2)}\pth{t, \mathfrak{L}^{n}(v)^{\bar{\eta},\iota_1}(t),X^{\bar{\eta},\iota_2}_{\leq N(L)}(t) }}
\\&\quad - i \iota \e^2 \int_0^t \sum_*\sum_{\iota_1,\iota_2,\iota_3\in\{\pm\}} \bigg(  C^{\eta,\iota}_{*,(\iota_1,\iota_2,\iota_3)}\pth{ \tau , X^{\eta,\iota_1}_{\leq N(L)}(\tau) ,X^{\bar{\eta},\iota_2}_{\leq N(L)}(\tau) , \mathfrak{L}^{n}(v)^{\eta,\iota_3}(\tau) }
\\&\hspace{5cm} + C^{\eta,\iota}_{*,(\iota_1,\iota_2,\iota_3)}\pth{ \tau , X^{\eta,\iota_1}_{\leq N(L)}(\tau) , \mathfrak{L}^{n}(v)^{\bar{\eta},\iota_2}(\tau),X^{\eta,\iota_3}_{\leq N(L)}(\tau)  }
\\&\hspace{5cm} + C^{\eta,\iota}_{*,(\iota_1,\iota_2,\iota_3)}\pth{ \tau , \mathfrak{L}^{n}(v)^{\eta,\iota_1}(\tau),X^{\bar{\eta},\iota_2}_{\leq N(L)}(\tau) ,X^{\eta,\iota_3}_{\leq N(L)}(\tau)  } \bigg) \d\tau.
\end{align*}
Thanks to Proposition \ref{prop:XtoF}, \eqref{fact SM 2} and the definition of $F^\lin_{(A,\ffi,c)}$ for $(A,\ffi,c)\in \bigsqcup_{0\leq n \leq N(L)}\mathcal{A}_{n}^{\eta,\iota}$ we have
\begin{align}\label{lolilol1}
X^{\eta',\iota'}_{\leq N(L)} = \sum_{A\in \tilde{\mathcal{A}}_{\leq N(L)}}\sum_{\ffi\in \mathrm{SM}(A,\iota')}F^\lin_{(A,\ffi,c_{\eta'})},
\end{align}
where the notations $\mathrm{SM}(A,\iota)$ and $c_\eta$ are defined in the proof of Proposition \ref{prop:XtoF}. Moreover, thanks to the induction hypothesis and \eqref{fact SM 2} we also have
\begin{align}\label{lolilol2}
\mathfrak{L}^{n}(v)^{\eta',\iota'} & =  \sum_{A\in\tilde{\mathcal{A}}^\lin_n}\sum_{\ffi\in \mathrm{SM}(A,\iota')} F^\lin_{(A,\ffi,c_{\eta'})}. 
\end{align}
Therefore, by inserting \eqref{lolilol1} and \eqref{lolilol2} into the above expression of $\mathfrak{L}^{n+1}(v)^{\eta,\iota}(t)$ and after having changed variables in the sums over $\{\pm\}$ from $\iota_i$ to $\iota \iota_i$ (which implies a similar change of variables in the sums over sign maps) we obtain
\begin{align*}
&\mathfrak{L}^{n+1}(v)^{\eta,\iota}(t)
\\& =  \e \sum_{\substack{(A',A)\in \tilde{\mathcal{A}}_{\leq N(L)} \times\tilde{\mathcal{A}}^\lin_n}} \sum_{\substack{\iota_1,\iota_2\in\{\pm\}\\ \ffi' \in \mathrm{SM}(A',\iota_1) \\\ffi\in\mathrm{SM}(A,\iota_2) }}
\\&\qquad\qquad \times  \pth{  B^{\eta,\iota}_{\iota(\iota_1,\iota_2)}\pth{t, F^\lin_{(A',\iota\ffi',c_{\bar{\eta}})}(t), F^\lin_{(A,\iota\ffi,c_{\bar{\eta}})}(t) } + B^{\eta,\iota}_{\iota(\iota_2,\iota_1)}\pth{t,  F^\lin_{(A,\iota\ffi,c_{\bar{\eta}})}(t),F^\lin_{(A',\iota\ffi',c_{\bar{\eta}})}(t) }  }
\\&\quad - i \iota\e^2 \int_0^t \sum_{\substack{*\\ A'\in \tilde{\mathcal{A}}_{\leq N(L)}\\ A''\in \tilde{\mathcal{A}}_{\leq N(L)} \\ A\in \tilde{\mathcal{A}}^\lin_n }} \sum_{\substack{\iota_1,\iota_2,\iota_3\in\{\pm\}\\ \ffi \in \mathrm{SM}(A,\iota_1) \\ \ffi' \in \mathrm{SM}(A',\iota_2) \\ \ffi'' \in \mathrm{SM}(A'',\iota_3) }}
\\&\qquad \hspace{1cm} \times  \bigg( C^{\eta,\iota}_{*,\iota(\iota_1,\iota_2,\iota_3)}\pth{ \tau , F^\lin_{(A,\iota\ffi,c_\eta)}(\tau), F^\lin_{(A',\iota\ffi',c_{\bar{\eta}})}(\tau) , F^\lin_{(A'',\iota\ffi'',c_\eta)}(\tau)  } 
\\&\qquad \hspace{4cm} + C^{\eta,\iota}_{*,\iota(\iota_2,\iota_3,\iota_1)}\pth{ \tau ,F^\lin_{(A',\iota\ffi',c_\eta)}(\tau) ,F^\lin_{(A'',\iota\ffi'',c_{\bar{\eta}})}(\tau) , F^\lin_{(A,\iota\ffi,c_\eta)}(\tau) } 
\\&\qquad \hspace{4cm}  +  C^{\eta,\iota}_{*,\iota(\iota_3,\iota_1,\iota_2)}\pth{ \tau ,F^\lin_{(A'',\iota\ffi'',c_\eta)}(\tau) , F^\lin_{(A,\iota\ffi,c_{\bar{\eta}})}(\tau),F^\lin_{(A',\iota\ffi',c_\eta)}(\tau) }\bigg) \d\tau.
\end{align*}
Using now \eqref{fact SM bi}-\eqref{fact SM ter} and their natural equivalent for linear nodes, the recursive definition of $F^\lin$ and the recursive rules defining a colour map we obtain
\begin{align*}
&\mathfrak{L}^{n+1}(v)^{\eta,\iota}(t)
\\& =  \sum_{\substack{(A',A)\in \tilde{\mathcal{A}}_{\leq N(L)} \times\tilde{\mathcal{A}}^\lin_n}} \pth{\sum_{\ffi\in \mathrm{SM}(B^\lin_\st(A',A),\iota)} F^\lin_{\pth{ B^\lin_\st(A',A),\ffi, c_\eta }}(t) + \sum_{\ffi\in \mathrm{SM}(B^\lin_\st(A,A'),\iota)} F^\lin_{\pth{ B^\lin_\st(A,A'),\ffi, c_\eta }}(t) } 
\\&\quad + \sum_{\substack{*\\ A'\in \tilde{\mathcal{A}}_{\leq N(L)}\\ A''\in \tilde{\mathcal{A}}_{\leq N(L)} \\ A\in \tilde{\mathcal{A}}^\lin_n }}  \left( \sum_{\ffi\in\mathrm{SM}(T^\lin_*(A,A',A''),\iota)}F^\lin_{\pth{T^\lin_*(A,A',A''),\ffi,c_\eta}}(t) \right.
\\&\hspace{2cm} +\left. \sum_{\ffi\in\mathrm{SM}(T^\lin_*(A',A'',A),\iota)}F^\lin_{\pth{T^\lin_*(A',A'',A),\ffi,c_\eta}}(t) + \sum_{\ffi\in\mathrm{SM}(T^\lin_*(A'',A,A'),\iota)}F^\lin_{\pth{T^\lin_*(A'',A,A'),\ffi,c_\eta}}(t) \right).
\end{align*}
Using now again \eqref{fact SM 2} and the definition of $\tilde{\mathcal{A}}^\lin_{n+1}$ we finally obtain
\begin{align*}
&\mathfrak{L}^{n+1}(v)^{\eta,\iota}(t) =  \sum_{(A,\ffi,c)\in \mathcal{A}^{\lin}_{n+1,\eta,\iota}} F^{\lin}_{(A,\ffi,c)},
\end{align*}
which concludes the proof of the proposition.
\end{proof}

\subsubsection{Fourier transform of the iterates}
In this section, we give the Fourier transform of $F^{\lin}_{(A,\ffi,c)}$. First, we extend the definition of decoration map and $k$-decoration from Definition \ref{def decoration tree} to linear trees, the only difference being the domain of definition and the recursive relation, which now takes into account the different definition of $\mathcal{L}(A^\lin)$. More precisely a decoration map $\ka$ of $A^\lin\in\tilde{\mathcal{A}}^\lin_n$ is now defined on $\mathcal{L}(A^\lin) \sqcup \{\bullet\} \sqcup \mathcal{N}^\tot(A^\lin)$ and the recursive relation is
\begin{align*}
\ka(j) = \sum_{\ell \in \mathcal{L}(A^\lin) \sqcup \{\bullet\}, \ell\leq j}\ka(\ell).
\end{align*}
We define $\mathcal{D}_{k,k_\bullet}(A^\lin)$ to be the set of $k$-decoration of $A^\lin$ such that $\ka(\bullet)=k_\bullet$. We also extend Definitions \ref{def:deltaj}, \ref{def int}, \ref{def:Omegaj} and \ref{def:qj} to the linear nodes of a linear tree. Finally, if $\mathcal{N}^\lin_T(A^\lin)\neq\emptyset$ then we define
\begin{align*}
\mathrm{int}(\bullet) \vcentcolon = \min \mathcal{N}^\lin_T(A^\lin),
\end{align*}
where the minimum is defined with respect to parentality order (note that the linear structure defined in Section \ref{section representation iterates} in particular implies that $\mathcal{N}^\lin_T(A^\lin)$ is totally ordered for the parentality order).

\begin{prop}\label{prop fourier transform of Flin}
Let $A$ be a linear tree, $\ffi$ and $c$ sign and colour maps on $A$, $t\in\R$ and $k\in\Z_L^d$.
\begin{itemize}
\item If $\mathcal{N}^\lin_T(A)=\emptyset$, then
\begin{align*}
\reallywidehat{F^\lin_{(A,\ffi,c)}}(t,k) = \frac{1}{L^{\frac{d}{2}}}\sum_{k_\bullet \in \Z_L^d} \reallywidehat{ v^{c(\bullet),\ffi(\bullet)}}(t,k_\bullet)\reallywidehat{G_{(A,\ffi,c)}}(t,k,k_\bullet)
\end{align*}
with $G_{(A,\ffi,c)}(t,\cdot,k_\bullet)$ defined in Fourier space by
\begin{align}\label{G_A bi}
\reallywidehat{G_{(A,\ffi,c)}}(t,k,k_\bullet) & = (-i)^{n_T(A)}\frac{\e^{n(A)}}{L^{(n(A)-1)\frac{d}{2}}} \sum_{\ka\in\mathcal{D}_{k,k_\bullet}(A)} e^{i\Om_{-1}t}
\\&\hspace{2cm} \times \prod_{j\in\mathcal{N}^\tot(A)}q_j \int_{I_A(t)}\prod_{j\in\mathcal{N}_T(A)}e^{i\Om_j t_j}\d t_j \prod_{\ell\in\mathcal{L}(A)} \reallywidehat{X^{c(\ell), \ffi(\ell)}_\init} (\ka(\ell)),\non
\end{align}
where $I_A(t)$ is defined in Proposition \ref{prop:FFourier}.
\item If $\mathcal{N}^\lin_T(A)\neq\emptyset$, then
\begin{align*}
\reallywidehat{F^\lin_{(A,\ffi,c)}}(t,k) = \frac{\e^2}{L^{\frac{d}{2}}} \sum_{k_\bullet \in \Z_L^d} \int_0^t \reallywidehat{ v^{c(\bullet),\ffi(\bullet)}}(t_\bullet,k_\bullet) \reallywidehat{G_{(A,\ffi,c)}}(t,t_\bullet,k,k_\bullet)\d t_\bullet
\end{align*}
with $G_{(A,\ffi,c)}(t,t_\bullet,\cdot,k_\bullet)$ defined in Fourier space by
\begin{align}
\reallywidehat{G_{(A,\ffi,c)}}(t,t_\bullet,k,k_\bullet) & =  (-i)^{n^\tot_T(A)} \frac{\e^{n(A)-2}}{L^{(n(A)-1)\frac{d}{2}}}  \sum_{\ka\in\mathcal{D}_{k,k_\bullet}(A)} e^{i(\Om_{-1}t} e^{i\Om_\bullet t_\bullet}\label{G_A ter}
\\&\quad \times \prod_{j\in\mathcal{N}^\tot(A)}q_j \int_{I_A(t,t_\bullet)} \prod_{j\in \mathcal{N}^\tot_T(A)\setminus\{\mathrm{int}(\bullet)\}} e^{i\Om_j t_j}\d t_j  \prod_{\ell\in\mathcal{L}(A)} \reallywidehat{X^{c(\ell), \ffi(\ell)}_\init} (\ka(\ell)),\non
\end{align}
where we use the notations $\Om_\bullet=\Om_{\mathrm{int}(\bullet)}$ and
\begin{equation*}
I_{A}(t,t_\bullet) \vcentcolon = \Bigg\{ (t_j)_{j\in \mathcal{N}^\tot_T(A)\setminus\{\mathrm{int}(\bullet)\}}\in[0,t]^{n_T^\tot(A)-1}\; \Bigg|\; j < j'\Rightarrow t_j<t_{j'} \;\;\text{and}\;\;
\begin{aligned}
&j>\mathrm{int}(\bullet)\Rightarrow t_j>t_\bullet 
\\& j<\mathrm{int}(\bullet)\Rightarrow t_j<t_\bullet 
\end{aligned}
\Bigg\}.
\end{equation*}
\end{itemize}
\end{prop}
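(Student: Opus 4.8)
The plan is to prove Proposition \ref{prop fourier transform of Flin} by induction on $n$, the number of linear nodes in $A^\lin$, exactly paralleling the proof of Proposition \ref{prop:FFourier}. The base case $n=0$ corresponds to $A^\lin=\bullet$, where $F^\lin_{(\bullet,\iota,\eta)}=v^{\eta,\iota}$, so $\reallywidehat{F^\lin_{(\bullet,\iota,\eta)}}(t,k)=\reallywidehat{v^{\eta,\iota}}(t,k)$; this matches the claimed formula in the $\mathcal{N}_T^\lin=\emptyset$ case with $G_{(\bullet,\iota,\eta)}$ reduced to $L^{d/2}\delta_{k-k_\bullet}$ (no ternary nodes, no $X_\init$ leaves, the single $k$-decoration sending $\bullet\mapsto k$). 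The inductive step splits according to whether the root of $A^\lin$ is a linear standard binary node $B^\lin_\st$ or a linear ternary node $T^\lin_*$, and, within the ternary case, whether the linear subtree sits to the left, middle, or right.

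First I would handle the $B^\lin_\st$ case. Write $A^\lin = B^\lin_\st(A_1,A^{\lin,2})$ (or with the arguments swapped), where $A_1\in\tilde{\mathcal{A}}_{\leq N(L)}$ is an ordinary tree and $A^{\lin,2}\in\tilde{\mathcal{A}}^\lin_{n-1}$. Apply the definition of $F^\lin$ for binary nodes, pass to Fourier using \eqref{def B Fourier}, perform the change of variable $k_i'=\iota_r k_i$ together with \eqref{q(iota)}--\eqref{De(iota)} and the sign-flip identity \eqref{F^iota} (which extends verbatim to linear trees since $F^\lin$ obeys the same structural form as $F$), and then insert the representation \eqref{fourier of F} for $\reallywidehat{F_{(A_1,\cdot)}}$ and the inductive hypothesis for $\reallywidehat{F^\lin_{(A^{\lin,2},\cdot)}}$. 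Since the root is binary, $\mathcal{N}_T^\lin(A^\lin)=\mathcal{N}_T^\lin(A^{\lin,2})$, so the ``$\emptyset$ vs.\ nonempty'' dichotomy is inherited from $A^{\lin,2}$. As in the proof of Proposition \ref{prop:FFourier}, one uses that a decoration of $A^\lin$ is uniquely determined by compatible decorations of $A_1$ and $A^{\lin,2}$ with matching root values, that $\mathcal{N}^\tot(A^\lin)=\mathcal{N}(A_1)\sqcup\mathcal{N}^\tot(A^{\lin,2})\sqcup\{r\}$ and $\mathcal{L}(A^\lin)=\mathcal{L}(A_1)\sqcup\mathcal{L}(A^{\lin,2})$, and that the root contributes $q_r=\tfrac{q^\eta(\ka(r_1),\ka(r_2))}{\De_r}$ and $\De_r(A^\lin)=\iota_r\De^{\iota_r(\iota_1,\iota_2)}_{\ka(r_1),\ka(r_2)}$; one also needs the powers of $\e$ and $L$ to bookkeep: $n(A^\lin)=n(A_1)+n(A^{\lin,2})+1$ (recalling that $n(A_1)$ counts $n_B+2n_T$ while $n(A^{\lin,2})=n_B^\tot+2n_T^\tot$) and $n_B^\tot$ increases by one. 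The time-integration domains combine via $I_{A^\lin}(t)=I_{A_1}(t)\times I_{A^{\lin,2}}(t)$ (resp.\ $I_{A_1}(t)\times I_{A^{\lin,2}}(t,t_\bullet)$), which is exactly what the formula for $G$ requires.

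Next I would handle the $T^\lin_*$ case, $A^\lin = T^\lin_*(A_1,A_2,A^{\lin,3})$ and its two variants with the linear subtree in the left or middle slot. Here the key new point is that the root itself is now a linear ternary node, so $\mathcal{N}_T^\lin(A^\lin)\neq\emptyset$ always, and $\mathrm{int}(\bullet)$ — the lowest linear ternary node above $\bullet$ — is either $r$ (if $\mathcal{N}_T^\lin(A^{\lin,3})=\emptyset$, i.e.\ the ``base'' sub-case) or unchanged from $A^{\lin,3}$. Applying \eqref{def C Fourier}, the same change of variables, \eqref{F^iota}, and then \eqref{fourier of F} for the two ordinary subtrees together with the inductive hypothesis for the linear subtree, one peels off the outermost time integral $\int_0^t(\cdots)d\tau$; identifying $\tau$ with $t_r$ and reorganizing the nested order constraints exactly as in Proposition \ref{prop:FFourier} produces $I_{A^\lin}(t)$ or $I_{A^\lin}(t,t_\bullet)$ — in the base sub-case the variable $t_r=t_\bullet$ is precisely the one that gets extracted and no longer integrated, which is why $\Om_\bullet=\Om_{\mathrm{int}(\bullet)}=\Om_r$ appears in the $e^{i\Om_\bullet t_\bullet}$ factor and the domain $I_{A^\lin}(t,t_\bullet)$ carries the constraints $t_j>t_\bullet$ for $j>\mathrm{int}(\bullet)$ and $t_j<t_\bullet$ for $j<\mathrm{int}(\bullet)$. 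The $\e$-power bookkeeping is $n(A^\lin)=n(A_1)+n(A_2)+n(A^{\lin,3})+2$ and $n_T^\tot$ increases by one; when $\mathcal{N}_T^\lin(A^{\lin,3})=\emptyset$ one must also check that the prefactor $\e^{n(A)-2}$ in \eqref{G_A ter} is consistent with the inductive input $G$ for $A^{\lin,3}$ being of the ``$\emptyset$'' type with prefactor $\e^{n(A^{\lin,3})}$.

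The main obstacle I anticipate is the careful combinatorial/clerical tracking in the $T^\lin_*$ base sub-case: correctly pinning down which time variable becomes $t_\bullet$, getting the two-sided nested-order constraints in $I_A(t,t_\bullet)$ to come out precisely as stated (rather than the one-sided $I_A(t)$), and matching the exponents of $\e$ and $L$ and the sign factors $(-i)^{n_T}$ versus $(-i)^{n_T^\tot}$ across the two regimes. None of this is conceptually hard — it is the same mechanism as in Proposition \ref{prop:FFourier}, only with an extra bookmarked leaf $\bullet$ and the associated distinguished node $\mathrm{int}(\bullet)$ — but it is where an error would most easily creep in, so I would write it out slot by slot (left/middle/right) and regime by regime ($\mathcal{N}_T^\lin(A^{\lin,3})$ empty or not) to be safe.
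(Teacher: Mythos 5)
Your proposal is correct and follows essentially the same route as the paper: an induction mirroring Proposition \ref{prop:FFourier}, with the same decoration splittings, sign/colour bookkeeping and time-domain reorganizations. The paper streamlines the case analysis by first establishing, via that same induction, a single unified formula in which the $\bullet$-leaf contributes $\reallywidehat{v^{c(\bullet),\ffi(\bullet)}}(t_{\mathtt{int}(\bullet)},\ka(\bullet))$ inside the integral over $I_A(t)$ (with the convention $t_{-1}=t$), and only afterwards performs the dichotomy on $\mathtt{int}(\bullet)$ --- factoring $v$ out of the integral when $\mathtt{int}(\bullet)=-1$, and peeling off the variable $t_{\mathtt{int}(\bullet)}=t_\bullet$ otherwise --- so that the two-case statement need not be propagated through every inductive step as in your plan.
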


\begin{proof}
We again follow the same steps as in Section \ref{subsec:diag} and compute the Fourier transform of $F^\lin_{(A,\ffi,c)}$ for $(A,\ffi,c)\in \mathcal{A}^\lin_{n,\eta,\iota}$. We crucially note that the recursive definition of the $F^\lin_{(A,\ffi,c)}$ is identical to the one of the $F_{(A,\ffi,c)}$ and therefore we can easily obtain the following formula:
\begin{align*}
&\reallywidehat{F^\lin_{(A,\ffi,c)}}(t,k) 
\\& = (-i)^{n_T(A)}\frac{\e^{n(A)}}{L^{ n(A)\frac{d}{2} }} \sum_{\ka\in\mathcal{D}_k(A)} e^{i\Om_{-1}t}
\\&\quad \times \prod_{j\in\mathcal{N}^\tot(A)} q_j \int_{I_A(t)} \reallywidehat{F^\lin_{(\bullet,\ffi(\bullet),c(\bullet))}}(t_{\mathrm{int}(\bullet)},\ka(\bullet)) \prod_{\ell\in\mathcal{L}(A)}\reallywidehat{F^\lin_{(\perp,\ffi(\ell),c(\ell))}}(t_{\mathrm{int}(\ell)},\ka(\ell))\prod_{j\in\mathcal{N}_T^\tot(A)}e^{i\Om_j t_j} \d t_j,
\end{align*}
where by convention $t_{-1}=t$. This formula is the generalization of \eqref{fourier of F} to the case where the leaves of a tree are allowed to depend on time and it can be proved identically. According to the definition of $F^\lin_{(A,\ffi,c)}$ and \eqref{def Fbot} we have $\reallywidehat{F^\lin_{(\perp,\ffi(\ell),c(\ell))}}(t_{\mathrm{int}(\ell)},\ka(\ell)) = \reallywidehat{X^{c(\ell), \ffi(\ell)}_\init} (\ka(\ell))$ and $F^\lin_{(\bullet,\ffi(\bullet),c(\bullet))}=v^{c(\bullet),\ffi(\bullet)}$ and thus this formula becomes
\begin{align*}
&\reallywidehat{F^\lin_{(A,\ffi,c)}}(t,k) 
\\& = (-i)^{n_T(A)}\frac{\e^{n(A)}}{L^{ n(A)\frac{d}{2} }} \sum_{\ka\in\mathcal{D}_k(A)} e^{i\Om_{-1}t}
\\&\quad \times \prod_{j\in\mathcal{N}^\tot(A)} q_j \prod_{\ell\in\mathcal{L}(A)}\reallywidehat{X^{c(\ell), \ffi(\ell)}_\init} (\ka(\ell)) \int_{I_A(t)} \reallywidehat{v^{c(\bullet),\ffi(\bullet)}}(t_{\mathrm{int}(\bullet)},\ka(\bullet)) \prod_{j\in\mathcal{N}_T^\tot(A)}e^{i\Om_j t_j} \d t_j,
\end{align*}
Now we distinguish cases between $\mathrm{int}(\bullet)=-1$ or $\mathrm{int}(\bullet)\in\mathcal{N}_T^\tot(A)$. Note that due to the structure of linear trees, the first case is equivalent to $\mathcal{N}^\lin_T(A)=\emptyset$ and the second to $\mathcal{N}^\lin_T(A)\neq\emptyset$, since the only nodes comparable to $\bullet$ are linear nodes. 
\begin{itemize}
\item In the case $\mathcal{N}^\lin_T(A)=\emptyset$, we have $t_{\mathrm{int}(\bullet)}=t$ and thus $\reallywidehat{v^{c(\bullet),\ffi(\bullet)}}(t_{\mathrm{int}(\bullet)},\ka(\bullet))$ is factorized outside the integral over $I_A(t)$ and using $\mathcal{D}_k(A)=\bigsqcup_{k_\bullet\in \Z_L^d}\mathcal{D}_{k,k_\bullet}(A)$ we directly obtain \eqref{G_A bi}.
\item In the case $\mathcal{N}^\lin_T(A)\neq\emptyset$, we use the fact that $(t_j)_{j\in\mathcal{N}_T^\tot(A)}\in I_A(t)$ if and only if $t_{\mathrm{int}(\bullet)}\in[0,t]$ and $(t_j)_{j\in\mathcal{N}_T^\tot(A)\setminus\{\mathrm{int}(\bullet)\}}\in I_A\pth{t,t_{\mathrm{int}(\bullet)}}$ (where $I_A\pth{t,t_{\mathrm{int}(\bullet)}}$ is defined in the proposition) to rewrite the time integral as
\begin{align*}
& \int_{I_A(t)} \reallywidehat{v^{c(\bullet),\ffi(\bullet)}}(t_{\mathrm{int}(\bullet)},\ka(\bullet)) \prod_{j\in\mathcal{N}_T^\tot(A)}e^{i\Om_j t_j} \d t_j 
\\&\qquad= \int_0^t \reallywidehat{v^{c(\bullet),\ffi(\bullet)}}(t_{\mathrm{int}(\bullet)},\ka(\bullet)) \pth{ \int_{I_A\pth{t,t_{\mathrm{int}(\bullet)}}} \prod_{j\in\mathcal{N}_T^\tot(A)\setminus\{\mathrm{int}(\bullet)\}}e^{i\Om_j t_j} \d t_j } \d t_{\mathrm{int}(\bullet)}.
\end{align*}
Changing variables from $t_{\mathrm{int}(\bullet)}$ to $t_\bullet$ and using $\mathcal{D}_k(A)=\bigsqcup_{k_\bullet\in \Z_L^d}\mathcal{D}_{k,k_\bullet}(A)$ we obtain \eqref{G_A ter}.
\end{itemize}
This concludes the proof of the proposition.
\end{proof}

In the estimates that we will provide it will be useful to discriminate the linear trees according to their number of leaves that are not $\bullet$. To that purpose we define
\begin{align*}
\tilde{\mathcal{A}}^\lin_{m,n} & \vcentcolon = \enstq{ A^\lin \in \tilde{\mathcal{A}}^\lin_{m}}{\#\mathcal{L}(A^\lin)=n}.
\end{align*}
Since the number of leaves that are not $\bullet$ corresponding to each linear node ranges from $1$ (in the case of a linear standard binary node with $\perp$ attached) to $2(N(L)+1)$ (in the case of a linear ternary node with two elements of $\tilde{\mathcal{A}}_{N(L)}$ attached) we have
\begin{align*}
\tilde{\mathcal{A}}^\lin_{m} & = \bigsqcup_{n=m}^{2m(N(L)+1)}\tilde{\mathcal{A}}^\lin_{m,n}.
\end{align*}
As Proposition \ref{prop fourier transform of Flin} suggests, we distinguish linear trees with respect to the presence or absence of ternary linear nodes and thus define:
\begin{align*}
\mathcal{A}^\bi_{m,n}(\eta,\iota,\eta_\bullet,\iota_\bullet) & \vcentcolon = \enstq{(A,\ffi,c)\in\tilde{\mathcal{A}}^\lin_{m,\eta,\iota}}{A \in \tilde{\mathcal{A}}^\lin_{m,n},\,\mathcal{N}^\lin_T(A)=\emptyset ,\, \ffi(\bullet)=\iota_\bullet,\,c(\bullet)=\eta_\bullet } ,
\\ \mathcal{A}^\ter_{m,n}(\eta,\iota,\eta_\bullet,\iota_\bullet) & \vcentcolon = \enstq{(A,\ffi,c)\in\tilde{\mathcal{A}}^\lin_{m,\eta,\iota}}{A \in \tilde{\mathcal{A}}^\lin_{m,n},\,\mathcal{N}^\lin_T(A)\neq\emptyset,\, \ffi(\bullet)=\iota_\bullet,\,c(\bullet)=\eta_\bullet } ,
\end{align*}
for $n\in\llbracket m,2m(N(L)+1)\rrbracket$. By combining the results of Propositions \ref{prop representation iterates} and \ref{prop fourier transform of Flin} we obtain the following kernel representation in Fourier space of the iterates of $\mathfrak{L}$.

\begin{corollary}
If $t\in[0,\de]$ and $k\in\Z_L^d$ we have
\begin{align}
&\reallywidehat{\mathfrak{L}^m(v)^\eta}(\e^{-2}t,k) \non
\\&\quad = \sum_{\substack{\eta_\bullet\in\{0,1\}\\\iota_\bullet\in\{\pm\} \\k_\bullet\in\Z_L^d}} \sum_{n=m}^{2m(N(L)+1)} \Bigg( \frac{1}{L^{\frac{d}{2}}} \reallywidehat{v^{\eta_\bullet,\iota_\bullet}}(\e^{-2}t,k_\bullet) \reallywidehat{Y^\bi_{m,n}}(\eta,+,\eta_\bullet,\iota_\bullet)(\e^{-2}t,k,k_\bullet)\label{itérées in terms Y}
\\&\quad\hspace{4.3cm} + \frac{1}{L^{\frac{d}{2}}} \int_0^t \reallywidehat{v^{\eta_\bullet,\iota_\bullet}}(\e^{-2}t_\bullet,k_\bullet) \reallywidehat{Y^\ter_{m,n}}(\eta,+,\eta_\bullet,\iota_\bullet)(\e^{-2}t,\e^{-2}t_\bullet,k,k_\bullet)\d t_\bullet \Bigg),\non
\end{align}
where
\begin{align}
Y^\bi_{m,n}(\eta,\iota,\eta_\bullet,\iota_\bullet) & \vcentcolon = \sum_{(A,\ffi,c)\in  \mathcal{A}^\bi_{m,n}(\eta,\iota,\eta_\bullet,\iota_\bullet)} G_{(A,\ffi,c)},\label{def Y bi}
\\ Y^\ter_{m,n}(\eta,\iota,\eta_\bullet,\iota_\bullet) & \vcentcolon = \sum_{(A,\ffi,c)\in  \mathcal{A}^\ter_{m,n}(\eta,\iota,\eta_\bullet,\iota_\bullet)} G_{(A,\ffi,c)}.\label{def Y ter}
\end{align}
\end{corollary}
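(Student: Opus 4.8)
The plan is to combine the tree representation of the iterates with the Fourier formula for each tree contribution, and then to reorganise the resulting sum. First I would fix $\eta\in\{0,1\}$ and apply Proposition~\ref{prop representation iterates} with $\iota=+$ to write $\reallywidehat{\mathfrak{L}^m(v)^{\eta}}=\sum_{(A,\ffi,c)\in \mathcal{A}^{\lin}_{m,\eta,+}}\reallywidehat{F^{\lin}_{(A,\ffi,c)}}$. For each individual signed and coloured linear tree $(A,\ffi,c)$ appearing here, Proposition~\ref{prop fourier transform of Flin} expresses $\reallywidehat{F^\lin_{(A,\ffi,c)}}(t,k)$ either as $\frac1{L^{d/2}}\sum_{k_\bullet}\reallywidehat{v^{c(\bullet),\ffi(\bullet)}}(t,k_\bullet)\,\reallywidehat{G_{(A,\ffi,c)}}(t,k,k_\bullet)$ when $\mathcal{N}_T^\lin(A)=\emptyset$, or as $\frac{\e^2}{L^{d/2}}\sum_{k_\bullet}\int_0^t\reallywidehat{v^{c(\bullet),\ffi(\bullet)}}(t_\bullet,k_\bullet)\,\reallywidehat{G_{(A,\ffi,c)}}(t,t_\bullet,k,k_\bullet)\d t_\bullet$ when $\mathcal{N}_T^\lin(A)\neq\emptyset$, with $G_{(A,\ffi,c)}$ normalised as in \eqref{G_A bi}--\eqref{G_A ter}.

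Next I would partition $\mathcal{A}^{\lin}_{m,\eta,+}$ according to three pieces of data: the number $n=\#\mathcal{L}(A)$ of leaves that are not $\bullet$, which by the identity $\tilde{\mathcal{A}}^\lin_m=\bigsqcup_{n=m}^{2m(N(L)+1)}\tilde{\mathcal{A}}^\lin_{m,n}$ ranges over $\llbracket m,2m(N(L)+1)\rrbracket$ (each of the $m$ linear nodes carries at most $2(N(L)+1)$ such leaves, coming from the attached trees of $\tilde{\mathcal{A}}_{\le N(L)}$, and at least one); whether or not $\mathcal{N}_T^\lin(A)=\emptyset$; and the colour $\eta_\bullet=c(\bullet)$ and sign $\iota_\bullet=\ffi(\bullet)$ of the distinguished leaf $\bullet$. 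By construction this splits the sum into the blocks $\mathcal{A}^\bi_{m,n}(\eta,+,\eta_\bullet,\iota_\bullet)$ (no ternary linear node) and $\mathcal{A}^\ter_{m,n}(\eta,+,\eta_\bullet,\iota_\bullet)$ (at least one). Within a single block the factor $\reallywidehat{v^{c(\bullet),\ffi(\bullet)}}=\reallywidehat{v^{\eta_\bullet,\iota_\bullet}}$ is common to all trees, so it may be pulled outside the sum, and what remains is $\sum_{(A,\ffi,c)}\reallywidehat{G_{(A,\ffi,c)}}$, which equals $\reallywidehat{Y^\bi_{m,n}(\eta,+,\eta_\bullet,\iota_\bullet)}$, respectively $\reallywidehat{Y^\ter_{m,n}(\eta,+,\eta_\bullet,\iota_\bullet)}$, by \eqref{def Y bi}--\eqref{def Y ter}.

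Finally I would carry out the time dilation $t\mapsto\e^{-2}t$. For the blocks with no ternary linear node this merely replaces $t$ by $\e^{-2}t$. For the blocks with a ternary linear node, the change of variable $t_\bullet\mapsto\e^{-2}t_\bullet$ in the inner integral turns $\int_0^{\e^{-2}t}$ into $\int_0^t$ and produces a Jacobian $\e^{-2}$ that exactly cancels the prefactor $\e^2$ of the ternary formula, leaving the prefactor $\frac1{L^{d/2}}$ announced in \eqref{itérées in terms Y}. Summing the contributions of all blocks yields the stated identity. The argument is a matter of bookkeeping once Propositions~\ref{prop representation iterates} and~\ref{prop fourier transform of Flin} are available; the only points requiring care are the exact range $\llbracket m,2m(N(L)+1)\rrbracket$ of the index $n$ and the cancellation of the $\e^2$ factor by the Jacobian of the time rescaling in the integral term, which together with the harmless dilation in the $\bullet$-leaf arguments of $v$ produce the precise normalisation in \eqref{itérées in terms Y}.
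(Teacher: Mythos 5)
Your proposal is correct and coincides with the paper's (unwritten) argument: the corollary is stated there as a direct combination of Propositions \ref{prop representation iterates} and \ref{prop fourier transform of Flin}, and your partition of $\mathcal{A}^{\lin}_{m,\eta,+}$ into the blocks $\mathcal{A}^{\bi}_{m,n}(\eta,+,\eta_\bullet,\iota_\bullet)$ and $\mathcal{A}^{\ter}_{m,n}(\eta,+,\eta_\bullet,\iota_\bullet)$, together with the cancellation of the $\e^2$ prefactor by the Jacobian of $t_\bullet\mapsto\e^{-2}t_\bullet$, is exactly the intended bookkeeping.
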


Therefore, we have reduced estimating the operator norm of the iterates of $\mathfrak{L}$ to estimating $Y^\bi_{m,n}(\eta,\iota,\eta_\bullet,\iota_\bullet)$ and $Y^\ter_{m,n}(\eta,\iota,\eta_\bullet,\iota_\bullet)$. This will be done using coupling of linear trees.

\subsubsection{Coupling between linear trees}

In order to estimate $\reallywidehat{Y^\bi_{m,n}}(\eta,\iota,\eta_\bullet,\iota_\bullet)$ and $\reallywidehat{Y^\ter_{m,n}}(\eta,\iota,\eta_\bullet,\iota_\bullet)$ we will express the quantities
\begin{align*}
\E\pth{ \left|\reallywidehat{Y^\bi_{m,n}}(\eta,\iota,\eta_\bullet,\iota_\bullet)(\e^{-2}t,k,k_\bullet)\right|^2 } \quad \text{and} \quad \E\pth{ \left|\reallywidehat{Y^\ter_{m,n}}(\eta,\iota,\eta_\bullet,\iota_\bullet)(\e^{-2}t,\e^{-2}t_\bullet,k,k_\bullet)\right|^2 } 
\end{align*}
as a sum over couplings of linear trees, similarly to Section \ref{section coupling between trees}. We first adapt the notions of this section:
\begin{itemize}
\item First, the notion of coupling between trees from Definition \ref{def:coupling} can be easily extended to linear trees: if $(A_1,\ffi_1,c_1)$ and $(A_2,\ffi_2,c_2)$ are two signed and coloured linear trees with $A_1,A_2\in \tilde{\mathcal{A}}_{m,n}^\lin$ for common $m,n\in\mathbb{N}$ then $C=\big( (A_1,\ffi_1,c_1),(A_2,\ffi_2,c_2),\si \big)$ is a coupling if $\si$ is an involution of $\mathcal{L}(C)\vcentcolon = \mathcal{L}(A_1)\sqcup\mathcal{L}(A_2)$ and a bijection from $\mathcal{L}(C)_-$ to $\mathcal{L}(C)_+$ (defined again by \eqref{def LC-}-\eqref{def LC+}). The definition is identical, except that each linear tree $A$ have the extra leaf $\bullet$ which we recall is not included in $\mathcal{L}(A)$. Finally, for $\star=\bi,\ter$ we define
\begin{equation*}
\mathcal{C}^\star_{m,n}(\eta,\eta_\bullet,\iota_\bullet)  \vcentcolon = \Bigg\{\big( (A_1,\ffi_1,c_1),(A_2,\ffi_2,c_2),\si \big) \:\text{coupling} \, \Bigg| \,  
\begin{aligned}
&(A_1,\ffi_1,c_1)\in \mathcal{A}^\star_{m,n}(\eta,+,\eta_\bullet,\iota_\bullet)
\\& (A_2,\ffi_2,c_2)\in \mathcal{A}^\star_{m,n}(\eta,-,\eta_\bullet,-\iota_\bullet) 
\end{aligned}
\Bigg\}.
\end{equation*}
\item The notion of a decoration of a coupling from Definition \ref{def:decocoup} extends directly to maps defined on $\mathcal{L}(C)\sqcup\mathcal{N}^\tot(C)\sqcup\{\bullet_1,\bullet_2\}$ (where $\bullet_i$ is the extra leaf of $A_i$) with the novelty that, in addition to the constraint \eqref{decoration coupling rule}, we ask that $\ka(\bullet_1)+\ka(\bullet_2)=0$. The set $\mathcal{D}_{k,k_\bullet}(C)$ for a given coupling $C=\big( (A_1,\ffi_1,c_1),(A_2,\ffi_2,c_2),\si \big)$ is then the set of decoration such that $\ka(r_1)=k$ and $\ka(\bullet_1)=k_\bullet$ where $r_1$ and $\bullet_1$ are respectively the root and the extra leaf of $A_1$.
\end{itemize}
The following proposition is the equivalent of Proposition \ref{prop:correlation} in the context of linear trees.

\begin{prop}\label{prop norme Y}
For all $\eta,\eta_\bullet\in\{0,1\}$, $\iota_\bullet\in\{\pm\}$ and $k,k_\bullet\in\mathbb{Z}_L^d$ we have
\begin{align*}
\E\pth{ \left|\reallywidehat{Y^\bi_{m,n}}(\eta,+,\eta_\bullet,\iota_\bullet)(\e^{-2}t,k,k_\bullet)\right|^2 } & = \sum_{C\in \mathcal{C}^\bi_{m,n}(\eta,\eta_\bullet,\iota_\bullet)} \reallywidehat{G_C}(\e^{-2}t,k,k_\bullet),
\\ \E\pth{ \left|\reallywidehat{Y^\ter_{m,n}}(\eta,+,\eta_\bullet,\iota_\bullet)(\e^{-2}t,\e^{-2}t_\bullet,k,k_\bullet)\right|^2 } & = \sum_{C\in \mathcal{C}^\ter_{m,n}(\eta,\eta_\bullet,\iota_\bullet)} \reallywidehat{G_C}(\e^{-2}t,\e^{-2}t_\bullet,k,k_\bullet),
\end{align*}
where if $C\in \mathcal{C}^\bi_{m,n}(\eta,\eta_\bullet,\iota_\bullet)$ then
\begin{align*}
\reallywidehat{G_C}(\e^{-2}t,k,k_\bullet) & = (-1)^{n_T(A')} (-i)^{n_T(C)} \frac{\e^{n_B^\tot(C)}}{L^{(n(C)-1)d}} \sum_{\ka\in\mathcal{D}_{k,k_\bullet}(C)} e^{i\e^{-2}\Om_{-1}t} \prod_{j\in\mathcal{N}^\tot(C)} q_j
\\&\hspace{3cm} \times \int_{I_C(t)} \prod_{j\in\mathcal{N}_T(C)} e^{i\e^{-2}\Om_j t_j} \d t_j \prod_{\ell\in\mathcal{L}(C)_+} M^{c(\si(\ell)),c(\ell)}(\ka(\ell))^{\ffi(\ell)},
\end{align*}
and if $C\in \mathcal{C}^\ter_{m,n}(\eta,\eta_\bullet,\iota_\bullet)$ then
\begin{align*}
\reallywidehat{G_C}(\e^{-2}t,\e^{-2}t_\bullet,k,k_\bullet) & = (-1)^{n_T(A')} (-i)^{n_T(C)} \frac{\e^{n_B^\tot(C)}}{L^{(n(C)-1)d}} \sum_{\ka\in\mathcal{D}_{k,k_\bullet}(C)} e^{i\e^{-2}\Om_{-1}t}e^{i\e^{-2}(\Om_\bullet +\Om_{\bullet'})t_\bullet} \prod_{j\in\mathcal{N}^\tot(C)} q_j
\\&\hspace{1cm} \times \int_{I_C(t,t_\bullet)} \prod_{j\in\mathcal{N}^\tot_T(C)\setminus\{\mathrm{int}(\bullet),\mathrm{int}(\bullet')\}} e^{i\e^{-2}\Om_j t_j} \d t_j \prod_{\ell\in\mathcal{L}(C)_+} M^{c(\si(\ell)),c(\ell)}(\ka(\ell))^{\ffi(\ell)},
\end{align*}
where in this last expression $\bullet$ and $\bullet'$ are the two extra leaves in the two linear trees $A$ and $A'$ composing the coupling, and where $I_C(t,t_\bullet) = I_A(t,t_\bullet) \times I_{A'}(t,t_\bullet)$.
\end{prop}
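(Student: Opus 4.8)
The plan is to adapt \emph{verbatim} the proof of Proposition \ref{prop:correlation}, with the representation formula \eqref{XtoF} replaced by the definitions \eqref{def Y bi}--\eqref{def Y ter} of $Y^\bi_{m,n}$ and $Y^\ter_{m,n}$, and the Fourier formula \eqref{fourier F e-2} replaced by the formulas \eqref{G_A bi} and \eqref{G_A ter} of Proposition \ref{prop fourier transform of Flin}. The two cases $\star=\bi,\ter$ are handled in parallel; the only genuine novelty in the ternary case is the external time variable $t_\bullet$, common to the two linear trees forming the coupling, which is treated exactly as the external variable $t$ in the proof of Proposition \ref{prop:FFourier}. First I would expand $\reallywidehat{Y^\star_{m,n}}(\eta,+,\eta_\bullet,\iota_\bullet)$ using \eqref{def Y bi}/\eqref{def Y ter} and, after taking the square modulus, write the expectation as a double sum over $(A_1,\ffi_1,c_1)\in\mathcal{A}^\star_{m,n}(\eta,+,\eta_\bullet,\iota_\bullet)$ and $(A_2,\ffi_2,c_2)\in\mathcal{A}^\star_{m,n}(\eta,+,\eta_\bullet,\iota_\bullet)$ of $\E\pth{\reallywidehat{G_{(A_1,\ffi_1,c_1)}}\,\overline{\reallywidehat{G_{(A_2,\ffi_2,c_2)}}}}$.

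The key preliminary step is a conjugation identity for $G$, playing the role of \eqref{F^iota} combined with the trick of evaluating at $-k$ that is used in the proof of Proposition \ref{prop:correlation}. Using the reality properties of $Q^\eta$ (which give \eqref{q(iota)}--\eqref{De(iota)} and their analogues for $q^\eta_\iota$, $\chi_*$, $\Om_{-1}$ and $\Om_\bullet$ under $(\ffi,\ka)\mapsto(-\ffi,-\ka)$), the identity $\overline{\widehat{X^{\eta,\iota}_\init}(k)}=\widehat{X^{\eta,-\iota}_\init}(-k)$, and $\overline{(-i)^{n_T}}=(-1)^{n_T}(-i)^{n_T}$, one shows that
\[
\overline{\reallywidehat{G_{(A,\ffi,c)}}(\,\cdot\,,k,k_\bullet)} = (-1)^{n_T(A)}\,\reallywidehat{G_{(A,-\ffi,c)}}(\,\cdot\,,-k,-k_\bullet),
\]
the colour map being unchanged since colours are insensitive to conjugation. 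This identity explains why the second tree of a coupling in $\mathcal{C}^\star_{m,n}(\eta,\eta_\bullet,\iota_\bullet)$ carries root sign $-$ and $\bullet$-sign $-\iota_\bullet$, and why the factor $(-1)^{n_T(A')}$ appears in $\reallywidehat{G_C}$.

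Next I would substitute \eqref{G_A bi}/\eqref{G_A ter} into $\reallywidehat{G_{(A_1,\ffi_1,c_1)}}\cdot(-1)^{n_T(A_2)}\reallywidehat{G_{(A_2,-\ffi_2,c_2)}}(\,\cdot\,,-k,-k_\bullet)$, merge the two decoration sums into a single sum over pairs $(\ka_1,\ka_2)\in\mathcal{D}_{k,k_\bullet}(A_1)\times\mathcal{D}_{-k,-k_\bullet}(A_2)$, and take the expectation of the product over the leaves of $\mathcal{L}(A_1)\sqcup\mathcal{L}(A_2)$. Here the $\bullet$-leaves are mere spectators: their wavenumbers $k_\bullet$ and $-k_\bullet$ are external, so the randomness lives entirely in the $\widehat{X^\eta_\init}=\mu^\eta_k$ attached to $\mathcal{L}(A_1)\sqcup\mathcal{L}(A_2)$, and Wick's formula \eqref{eq:Wick} produces a sum over pairings $\si$ of $\mathcal{L}(A_1)\sqcup\mathcal{L}(A_2)$ subject to $\ffi(\ell)+\ffi(\si(\ell))=0$, $\ka(\ell)+\ka(\si(\ell))=0$, together with the corresponding $M^{\cdot,\cdot}$ factors. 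Exactly as in the proof of Proposition \ref{prop:correlation}, a contributing pairing together with the constraints $\ka(r_1)=k$, $\ka(r_2)=-k$, $\ka(\bullet_1)=k_\bullet$, $\ka(\bullet_2)=-k_\bullet$ turns $\big((A_1,\ffi_1,c_1),(A_2,\ffi_2,c_2),\si\big)$ into a coupling of linear trees in $\mathcal{C}^\star_{m,n}(\eta,\eta_\bullet,\iota_\bullet)$, and the admissible pairs $(\ka_1,\ka_2)$ are in bijection with $\mathcal{D}_{k,k_\bullet}(C)$. It then remains to collect the powers of $\e$ and $L$, using $n(A_i)=n_B^\tot(A_i)+2n_T^\tot(A_i)$ and the time rescaling $t_j\mapsto\e^{-2}t_j$ (which converts the surplus $\e$-powers into the $\e^{-2}$ factors in the exponentials, exactly as in the passage from \eqref{fourier of F} to \eqref{fourier F e-2}), and to note that in the $\bi$ case there is no linear ternary node while in the $\ter$ case the nodes $\mathrm{int}(\bullet)$ and $\mathrm{int}(\bullet')$ carry the common external time $t_\bullet$, so that the remaining time integration runs over $I_C(t,t_\bullet)=I_A(t,t_\bullet)\times I_{A'}(t,t_\bullet)$. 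This yields precisely the stated expressions for $\reallywidehat{G_C}$.

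I expect the main obstacle to be purely bookkeeping: carefully tracking signs and complex conjugations (the reality identities under $(\ffi,\ka)\mapsto(-\ffi,-\ka)$, the factor $(-1)^{n_T(A')}$, and the flipped sign/$\bullet$-sign data of the second tree), and matching the $\e$- and $L$-exponents ($\e^{n_B^\tot(C)}$, $L^{-(n(C)-1)d}$) after the rescaling. There is no conceptual difficulty beyond what already appears in Propositions \ref{prop:FFourier} and \ref{prop:correlation}.
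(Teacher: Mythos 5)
Your proposal follows essentially the same route as the paper, whose own proof is only a two-line sketch: expand $\E(|Y^\star_{m,n}|^2)$ using \eqref{def Y bi}--\eqref{def Y ter}, rewrite the conjugate factor via a sign-flip identity for $G$, and apply Wick's formula exactly as in Proposition \ref{prop:correlation}, the $\bullet$-leaves being deterministic spectators and the common external time $t_\bullet$ being treated like $t$ in Proposition \ref{prop:FFourier}. Your level of detail exceeds the paper's, and the identification of contributing pairings with couplings in $\mathcal{C}^\star_{m,n}(\eta,\eta_\bullet,\iota_\bullet)$ and of admissible decoration pairs with $\mathcal{D}_{k,k_\bullet}(C)$ is exactly right.

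One bookkeeping point deserves care. The paper's stated identity is $G^{\iota}_{(A,\ffi,c)}=G_{(A,\iota\ffi,c)}$, i.e. $\overline{\reallywidehat{G_{(A,\ffi,c)}}(\cdot,k,k_\bullet)}=\reallywidehat{G_{(A,-\ffi,c)}}(\cdot,-k,-k_\bullet)$ \emph{without} your factor $(-1)^{n_T(A)}$. Your derivation accounts for $\overline{(-i)^{n_T}}=(-1)^{n_T}(-i)^{n_T}$ but not for the fact that each ternary $q_j$ carries an explicit prefactor $\ffi(j)$ (see \eqref{qj T}), so that under $(\ffi,\ka)\mapsto(-\ffi,-\ka)$ one has $q_j\mapsto-\overline{q_j}$ for ternary nodes (while binary $q_j\mapsto\overline{q_j}$); the resulting $(-1)^{n_T(A)}$ from the product of $q_j$'s cancels the one from the prefactor. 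With the paper's identity the product of the two prefactors is $(-i)^{n_T(C)}$, whereas your identity is what literally reproduces the $(-1)^{n_T(A')}$ displayed in the statement; the discrepancy is thus a matter of where this sign is absorbed (in the conjugation identity versus in the $q_j$'s attached to the second, sign-flipped tree), and it is harmless for everything downstream since only $|\reallywidehat{G_C}|$ is ever used. Still, you should verify the cancellation coming from \eqref{qj T} and \eqref{q triple} explicitly rather than attributing the sign solely to the conjugated power of $-i$.
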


\begin{proof}
The proof follows exactly the same lines as the one of Proposition \ref{prop:correlation}. We first expand the expectancies using \eqref{def Y bi}-\eqref{def Y ter} and obtain in the case of $Y^\bi_{m,n}$
\begin{align*}
&\E\pth{ \left|\reallywidehat{Y^\bi_{m,n}}(\eta,+,\eta_\bullet,\iota_\bullet)(\e^{-2}t,k,k_\bullet)\right|^2 } 
\\&\quad = \sum_{\substack{(A,\ffi,c)\in \mathcal{A}^\bi_{m,n}(\eta,+,\eta_\bullet,\iota_\bullet) \\ (A',\ffi',c')\in \mathcal{A}^\bi_{m,n}(\eta,-,\eta_\bullet,-\iota_\bullet)}}\E\pth{G_{(A,\ffi,c)}(\e^{-2}t,k,k_\bullet) G_{(A',\ffi',c')}(\e^{-2}t,-k,-k_\bullet)},
\end{align*}
where we have used $G^\iota_{(A,\ffi,c)}=G_{(A,\iota\ffi,c)}$ which can be proved directly using the Fourier expression \eqref{G_A bi}. Using then this expression again and the Wick formula to extract the nontrivial terms in the expectancy leads to the formula in the proposition. The proof is identical for $Y^\ter_{m,n}$.
\end{proof}

Proposition \ref{prop norme Y} reduces estimating $Y^\bi_{m,n}(\eta,+,\eta_\bullet,\iota_\bullet)$ and $Y^\ter_{m,n}(\eta,+,\eta_\bullet,\iota_\bullet)$ to estimating each $\reallywidehat{G_C}$, which will be done in a similar way as for $\widehat{F_C}$.

\subsubsection{Bushes and dual decorations}

In order to estimate the $G_C$ as defined in Proposition \ref{prop norme Y}, we first need to slightly adapt the notion of bushes introduced in Section \ref{section bushes} to coupling of linear trees:
\begin{itemize}
\item The notions of marked and unmarked children from Definition \ref{def marked} is extended to $j\in \mathcal{N}^\tot(C)\sqcup \mathcal{L}(C)\sqcup\{\bullet,\bullet'\}$ where $\bullet$ and $\bullet'$ are the two extra leaves in the coupling $C$, and $\offspring(j)$ is now defined for $j\in\mathcal{N}^\tot_\low(C)$ by the same formula \eqref{def offsprings}.
\item The notion of prebush from Definition \ref{def prebush} is extended to $j\in\mathcal{N}^\tot(C)\sqcup\mathcal{L}(C)\sqcup\{\bullet,\bullet'\}$ by defining $\mathtt{prebush}(j)$ for $j\in\mathcal{N}^\lin(C)$ by \eqref{def prebush node} and $\mathtt{prebush}(\bullet)=\mathtt{prebush}(\bullet')=\emptyset$. Note that if $\min\enstq{j>\bullet}{\text{$j$ marked}}$ exists, its prebush might be reduced to $\mathtt{prebush}(\bullet)$ and thus empty, but in this case the prebush of its other marked sibling is not empty.
\item The notion of bush from Definition \ref{def bush} is directly extended to $j\in\mathcal{N}^\tot_\low(C)$ and the properties of Proposition \ref{prop bush} still holds (with $\mathcal{N}_\low(C)$ replaced by $\mathcal{N}^\tot_\low(C)$).
\item Self-coupled bushes and $n_\scb(C)$ are defined as in Definition \ref{def self-coupled bushes}. Moreover, the result of Lemma \ref{counting self-coupled bushes} becomes (for $\star=\bi,\ter$)
\begin{align}\label{counting nscb linear coupling}
\#\mathcal{C}^\star_{m,n,q}(\eta,\eta_\bullet,\iota_\bullet) \leq \La^{n+1}(n+2-q)!,
\end{align}
where $\mathcal{C}^\star_{m,n,q}(\eta,\eta_\bullet,\iota_\bullet) $ is the set of coupling in $\mathcal{C}^\star_{m,n}(\eta,\eta_\bullet,\iota_\bullet) $ such that $n_\scb=q$. Indeed, to go from Lemma \ref{counting self-coupled bushes} to \eqref{counting nscb linear coupling}, simply note that if $C\in \mathcal{C}^\star_{m,n,q}(\eta,\eta_\bullet,\iota_\bullet)$ is a coupling of two linear trees, then these trees can be seen as regular trees both in $\tilde{\mathcal{A}}_{n}$.
\end{itemize}

Finally, we also need to adapt the dual point of view on decorations from Definition \ref{def linear map}. For a linear coupling, the definition itself is unchanged but recall that $\{\bullet,\bullet'\}$ don't belong to $\mathcal{L}(C)$ and this has several consequences. If $\ka$ is a decoration of a coupling, it is now uniquely determined by $\ka_{|_{\mathcal{L}_+(C)\sqcup\{\bullet,\bullet'\}}}$. Therefore, if $\ka\in\mathcal{D}_{k,k_\bullet}(C)$, it can be seen as an element of $(\R^d)^{\mathcal{L}(C)_+}$ (since $\ka(\bullet)$ and $\ka(\bullet')$ are fixed) and if $j\in\mathcal{L}(C)\sqcup\mathcal{N}^\tot(C)$ then
\begin{equation*}
K(j)(\ka) = \left\{
\begin{aligned}
&\ka(j) \qquad\qquad \text{if $j\notin\mathcal{N}^\lin(C)$,}
\\& \ka(j) - k_\bullet \qquad \text{if $j\in\mathcal{N}^\lin(A)$,}
\\& \ka(j)+k_\bullet \qquad \text{if $j\in\mathcal{N}^\lin(A')$,}
\end{aligned}
\right.
\end{equation*}
where $A$ and $A'$ are the two linear trees in the coupling. In particular since the root of a linear tree is always a linear tree, we have $K(r)(\ka)=k-k_\bullet$ and $K(r')(\ka)=-k+k_\bullet$ where $r$ and $r'$ are the roots of $A$ and $A'$.

\subsubsection{Low nodes regime for the linear response}

In this section, we prove the following proposition, which is the equivalent for $G_C$ of Proposition \ref{prop low nodes}.

\begin{prop}\label{prop low nodes GC}
There exists $\a_\low,\La>0$ such that if $C\in\mathcal{C}^\star_{m,n}(\eta,\eta_\bullet,\iota_\bullet)$ for $\star=\bi,\ter$ is a coupling satisfying
\begin{align*}
n_B^\tot(C) + 2 (n^\tot_\low(C) - n_\scb(C)) & \geq \frac{n(C)-n_\scb(C)}{5},
\\ n(C)-n_\scb(C) & \geq 40,
\end{align*}
then for any $\e\leq \de$ and any $k,k_\bullet\in\Z_L^d$ we have for all $t\in[0,\de]$
\begin{align*}
\left| \widehat{G_C}(\e^{-2}t,[\e^{-2}t_\bullet,]k,k_\bullet) \right| \leq \left\{
\begin{aligned}
&\La^{n+1} \de^{n[-2]} \e^{\a_\low(n(C)-n_\scb(C))}\ps{k_\bullet}^{-\half} \qquad \text{if $k\neq k_\bullet$,}
\\&\La^{n+1} L^d \de^{n[-2]} \e^{\a_\low(n(C)-n_\scb(C))}\ps{k_\bullet}^{-\half} \quad \text{if $k = k_\bullet$,}
\end{aligned}
\right.
\end{align*}
where the content of the brackets only appears when $\star=\ter$, in which case the estimate holds for all $0\leq t_\bullet \leq t \leq \de$.
\end{prop}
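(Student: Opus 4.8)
The plan is to adapt, essentially verbatim, the machinery developed in Section \ref{section low nodes regime} (Proposition \ref{prop low nodes}) to the linear coupling setting. The starting point is the explicit Fourier formula for $\widehat{G_C}$ given in Proposition \ref{prop norme Y}. As in Lemma \ref{lem:récriturelow}, I would first bound the oscillating integral over $I_C(t)$ (respectively $I_C(t,t_\bullet)$) by $\de^{n_T(C)}$ (respectively $\de^{n_T(C)-2}$, accounting for the two missing time integrals associated to $\mathrm{int}(\bullet)$ and $\mathrm{int}(\bullet')$, which however is compensated by the factor $\e^{2}$ sitting in front through $n_B^\tot(C)+2$ versus $n_B^\tot(C)$ — this is exactly the source of the $n[-2]$ and $\e$-bookkeeping), use Lemma \ref{lem qj} to bound the $q_j$'s uniformly, and use that the $M^{\eta,\eta'}$ are continuous with support in $B(0,R)$. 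The only new feature compared to Lemma \ref{lem:récriturelow} is that one leaf (namely $\bullet$, or equivalently $\ka(\bullet)=k_\bullet$) carries a prescribed value rather than being summed, so one must keep one explicit factor $\ps{k_\bullet}^{-1/2}$ — this can be extracted since $\bullet$ is a leaf attached to some node and $q_j$ decays like $\ps{\ka(j')}^{-1/2}$ in each child. This produces a rough estimate of the form $\La^{n+1}\e^{n_B^\tot(C)}\de^{n_T(C)[-2]}L^{-(n(C)-1)d}\ps{k_\bullet}^{-1/2}$ times a sum over decorations of a product of low-frequency cutoffs $\chi\big(|\sum_{\ell\in\offspring(j)}\ka(\ell)|\e^{-\ga}\big)$ over $j\in\mathcal{N}^\tot_\low(C)$, with the additional constraint $K(r)(\ka)=k-k_\bullet$.

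Next I would run the non-self-coupled-bushes analysis of Section \ref{section non-self-coupled-bushes}. The notions of $\mathfrak{K}_\bush(j)$, $\mathcal{N}'_\low(C)$, the rank lemma (Lemma \ref{lem rank}) and the triangular-basis lemma (Lemma \ref{lem basis of V(C)}) all go through unchanged once one uses the adapted dual decorations from the previous subsubsection — the key point, already noted in the excerpt, is that the two linear trees of $C$ can be seen as ordinary trees in $\tilde{\mathcal{A}}_n$, so all the combinatorics of bushes, prebushes and the linear space $V(C)$ apply, with $\mathcal{L}(C)_+$ now of cardinal $\tfrac n2 + 1$ in the sense appropriate to linear trees (here $n=n(C)$ in the linear counting, and the rough relation $n(C)=n_B^\tot(C)+2n_T^\tot(C)$ replaces $n=n_B+2n_T$). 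I would then reproduce Lemma \ref{lem:summationslow} and the change-of-variables argument of Section \ref{section basis of V(C) and change of variable}, completing $(\mathfrak{K}_\bush(j))_{j\in\mathcal{N}''_\low(C)}$ to a basis of $V(C)$ and expressing the root constraint $K(r)(\ka)=k-k_\bullet$ in these new coordinates. The two cases — whether the coefficients $a_i$ of $K(r)$ on the $\mathfrak{K}_\bush$ directions all vanish or not — are handled exactly as in the proof of Proposition \ref{prop low nodes}, using the counting bound \eqref{counting} and the triangular structure of the product of cutoffs to gain a factor $\La^d L^d \e^{\ga d}$ per summation over a non-self-coupled low node.

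The bookkeeping at the end is where I must be slightly careful, because the dichotomy $k\neq k_\bullet$ versus $k=k_\bullet$ plays the role that $k\neq 0$ versus $k=0$ played before: the root constraint reads $K(r)(\ka)=k-k_\bullet$, so when $k-k_\bullet\neq 0$ the argument of Section \ref{section successive summations} applies verbatim and the factors of $L$ cancel (using $p=\tfrac n2+1-n''_\low$ in the linear count), giving the bound without the $L^d$; when $k=k_\bullet$ the root constraint becomes $K(r)(\ka)=0$ which may be structurally satisfied and therefore provides no gain, costing one extra factor $L^d$, exactly as in the statement. I would also need to carry the normalization $L^{-(n(C)-1)d}$ through (rather than $L^{-nd/2}$), but since a coupling of two linear trees each with $n$ leaves and $\tfrac n2+1$ positive leaves has $(n(C)-1)d$ as the exponent of $L^{-1/2}$ matching $(\tfrac n2+1)\cdot 2 - 1 = n+1$ summation variables minus the root constraint, the counting closes just as before. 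The main obstacle, therefore, is not conceptual but purely a matter of getting the $L$-powers, the $n[-2]$ exponents, and the $\ps{k_\bullet}^{-1/2}$ factor to balance correctly; once the analogues of Lemmas \ref{lem rank}, \ref{lem basis of V(C)}, \ref{lem:summationslow} are in place, setting $\a_\low$ as in Proposition \ref{prop low nodes} (adjusted for the slightly different regime threshold $n(C)-n_\scb(C)\geq 40$ and the $\tfrac15$ instead of $\tfrac1{10}$) concludes the proof.
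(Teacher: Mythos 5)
Your proposal is correct and follows essentially the same route as the paper: the paper's own proof is exactly this adaptation of Proposition \ref{prop low nodes}, extracting $\ps{k_\bullet}^{-\half}$ from $q_{\parent(\bullet)}$, rerunning the bush/basis/change-of-variables machinery with root constraint $K(r)(\ka)=k-k_\bullet$, and splitting into the cases $k\neq k_\bullet$ (all $L$-powers cancel) and $k=k_\bullet$ (one extra $L^d$). The only detail you gloss over is that the low-frequency constraints themselves acquire shifts $\nu(j)k_\bullet\in\{0,\pm k_\bullet\}$ because $\bullet$ and $\bullet'$ are excluded from $\mathcal{L}(C)$ yet may sit below marked children; since $k_\bullet$ is fixed these shifts are absorbed into the functions $\Xi_{J_i}$ and do not affect the summation argument.
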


\begin{remark}\label{rm:AnalysisZero} 
Compared to the estimate \eqref{estimate FC} for $\widehat{F_C}(\e^{-2}t,k)$, we see in the above proposition an extra factor $L^d$ when $k=k_\bullet$. The analogue situation for $\widehat{F_C}(\e^{-2}t,k)$ is $k=0$ for which the convergence is obvious thanks to the assumption \eqref{assumption Q} and its consequence $\widehat{F_C}(\e^{-2}t,0)=0$. Note that this extra factor in the estimate for $\widehat{G_C}(\e^{-2}t,[\e^{-2}t_\bullet,]k,k_\bullet)$ is merely polynomial with a fixed degree, still allowing to conclude the fixed point argument, see Section \ref{proof fixed point}.
\end{remark}

\begin{proof}
The proof of Proposition \ref{prop low nodes GC} follows the same line as the one of Proposition \ref{prop low nodes} which was spread over all the subsections of Section \ref{section low nodes regime}. Therefore, we don't give as much details here but rather highlight the differences.

First, the result of Lemma \ref{lem:récriturelow} is slightly changed. Taking into account that $q_{\parent(\bullet)}\lesssim \ps{\ka(\bullet)}^{-\half}$ and the fact that by definition if $\ka\in\mathcal{D}_{k,k_\bullet}(C)$ we have $\ka(\bullet)=k_\bullet$ we obtain
\begin{align*}
& \left|  \widehat{G_C}(\e^{-2}t,[\e^{-2}t_\bullet,]k,k_\bullet) \right| \leq  \La^{n+1} \e^{n_B^\tot} \de^{n_T^\bullet} \ps{k_\bullet}^{-\half} L^{-(n-1)d} \sum_{\substack{\ka\in\mathcal{D}_{k,k_\bullet}(C)\\ \ka(\mathcal{L}(C)_+)\subset B(0,R) }} \prod_{j\in\mathcal{N}_\low^\tot(C)}\chi_j
\end{align*}
where $n_T^\bullet\vcentcolon=\#\pth{ \mathcal{N}_T^\tot(C)\setminus \{ \mathrm{int}(\bullet),\mathrm{int}(\bullet')\} }$. Now, if $j\in \mathcal{N}_\low^\tot(C)$ and if $j_1$ and $j_2$ are its marked children we have
\begin{align*}
\ka(j_1)+\ka(j_2) = \sum_{\ell\in\mathtt{offsprings}(j)}\ka(\ell) + \nu(j)k_\bullet,
\end{align*}
where
\begin{equation*}
\nu(j)  = 
\left\{
\begin{aligned}
0& \qquad \text{if $j\notin\mathcal{N}^\lin_\low(C)$,}
\\0& \qquad \text{if $j\in\mathcal{N}^\lin_\low(C)$ but $\bullet$ or $\bullet'$ are comparable to the unmarked child of $j$,}
\\ 1& \qquad \text{if $j\in\mathcal{N}^\lin_\low(C)$ and $\bullet$ is comparable to one of the marked children of $j$,}
\\ -1& \qquad \text{if $j\in\mathcal{N}^\lin_\low(C)$ and $\bullet'$ is comparable to one of the marked children of $j$.}
\end{aligned}
\right.
\end{equation*}
Therefore we have
\begin{align*}
& \left|  \widehat{G_C}(\e^{-2}t,[\e^{-2}t_\bullet,]k,k_\bullet) \right| 
\\&\leq  \La^{n+1} \e^{n_B^\tot} \de^{n_T^\bullet} \ps{k_\bullet}^{-\half} L^{-(n-1)d} \sum_{\substack{\ka\in\mathcal{D}_{k,k_\bullet}(C)\\ \ka(\mathcal{L}(C)_+)\subset B(0,R) }} \prod_{j\in\mathcal{N}_\low^\tot(C)}\chi\pth{\left|  \sum_{\ell\in\mathtt{offsprings}(j)}\ka(\ell) + \nu(j)k_\bullet \right| \e^{-\ga}}.
\end{align*}
The constructions of Section \ref{section non-self-coupled-bushes} apply directly to the present context of linear trees, the only difference being due to the terms $\nu(j) k_\bullet$ in the above formula. The result of Lemma \ref{lem:summationslow} is thus changed into the following: there exists a family $(J_i)_{1\leq i \leq n''_\low}$ of low nodes and functions $\Xi_{J_i}$ such that 
\begin{align*}
& \left|  \widehat{G_C}(\e^{-2}t,[\e^{-2}t_\bullet,]k,k_\bullet) \right| 
\\&\leq  \La^{n+1} \e^{n_B^\tot} \de^{n_T^\bullet} \ps{k_\bullet}^{-\half} L^{-(n-1)d}
\\& \hspace{3cm} \times \sum_{\substack{\ka\in\mathcal{D}_{k,k_\bullet}(C)\\ \ka(\mathcal{L}(C)_+)\subset B(0,R) }} \prod_{i=1}^{n''_\low}\chi\pth{\left| \mathfrak{K}_\bush(J_{i})(\ka) + \Xi_{J_{i}}\pth{ \pth{ \mathfrak{K}_\bush(J_{i'})(\ka) }_{i'\in\llbracket 1,i-1\rrbracket},k_\bullet } \right| \e^{-\ga}},
\end{align*}
where $\mathfrak{K}_\bush(j)$ is still defined by \eqref{def k bush} and $n''_\low\geq \frac{n_\low-n_\scb}{2}$. Since the $J_i$ are such that $\pth{\mathfrak{K}_\bush(J_{i})}_{1\leq i \leq n''_\low}$ is linearly independent, the change of variables from Section \ref{section basis of V(C) and change of variable} can still be performed here and we obtain
\begin{align*}
& \left|  \widehat{G_C}(\e^{-2}t,[\e^{-2}t_\bullet,]k,k_\bullet) \right| 
\\&\leq  \La^{n+1} \e^{n_B^\tot} \de^{n_T^\bullet} \ps{k_\bullet}^{-\half} L^{-(n-1)d}
\\& \hspace{1cm} \times \sum_{\substack{\pth{x_1,\dots,x_{n''_\low}}\in\pth{\Z_L^d}^{n''_\low}\\\pth{y_1,\dots,y_{p}}\in \pth{\Z_L^d\cap B(0,R)}^{ p }}} \prod_{i=1}^{n''_\low}\chi\pth{\left| x_i + \Xi_{J_{i}}\pth{ \pth{ x_{i'} }_{i'\in\llbracket 1,i-1\rrbracket},k_\bullet } \right| \e^{-\ga}}\mathbbm{1}_{\sum_{i=1}^{n''_\low}a_ix_i + \sum_{i=1}^p b_i y_i=k-k_\bullet},
\end{align*}
where $p=n-n''_\low$ and the only difference with \eqref{estim FC intermediate bis} coming from the fact that the condition $\ka\in\mathcal{D}_{k,k_\bullet}(C)$ now translates to $K(r)(\ka)=k-k_\bullet$. 

We now estimate the RHS of the above inequality, as we did in Section \ref{section successive summations}. If one of the $b_i$ is not zero, then the estimates can be performed in the same way: the sum over the $y_i$ gives a factor $L^{(p-1)d}$ and the sum over the $x_i$ gives a factor $(L^d\e^{\ga d})^{n''_\low}$. Using $p=n-n''_\low$ we thus obtain
\begin{align*}
& \left|  \widehat{G_C}(\e^{-2}t,[\e^{-2}t_\bullet,]k,k_\bullet) \right| \leq  \La^{n+1} \de^{n_T^\bullet} \ps{k_\bullet}^{-\half}  \e^{n''_\low \ga d + n_B^\tot}.
\end{align*}
Similarly if one of the $a_i$ is not zero the sums over the $y_i$ and the $x_i$ give a factor $L^{pd}$ and $(L^d\e^{\ga d})^{n''_\low-1}$ respectively so that we obtain
\begin{align*}
& \left|  \widehat{G_C}(\e^{-2}t,[\e^{-2}t_\bullet,]k,k_\bullet) \right| \leq  \La^{n+1} \de^{n_T^\bullet} \ps{k_\bullet}^{-\half}  \e^{(n''_\low-1) \ga d + n_B^\tot}.
\end{align*}
If all the $a_i$ and the $b_i$ vanish, then the sum over the $y_i$ gives a factor $L^{pd}$ and the sum over the $x_i$ gives a factor $(L^d\e^{\ga d})^{n''_\low}$ so that
\begin{align*}
& \left|  \widehat{G_C}(\e^{-2}t,[\e^{-2}t_\bullet,]k,k_\bullet) \right| \leq \La^{n+1}  \de^{n_T^\bullet} \ps{k_\bullet}^{-\half} L^{d}  \e^{n''_\low \ga d + n_B^\tot} \mathbbm{1}_{k=k_\bullet}.
\end{align*}
We can then conclude the proof of the main estimate in Proposition \ref{prop low nodes GC} from these three estimates as we did at the end of Section \ref{section successive summations}, noting that if $\star=\bi$ then $n_T^\bullet=n_T$ and if $\star=\ter$ then $n_T^\bullet=n_T^\tot -2$.
\end{proof}

\subsubsection{High nodes regime for the linear response}

In this section, we prove the following proposition, which is the equivalent for $G_C$ of Proposition \ref{prop high nodes}.

\begin{prop}\label{prop high nodes GC}
There exists $\a_\high,\La>0$ such that if $C\in\mathcal{C}^\star_{m,n}(\eta,\eta_\bullet,\iota_\bullet)$ for $\star=\bi,\ter$ is a coupling satisfying
\begin{align*}
n_B^\tot(C) + 2 (n^\tot_\low(C) - n_\scb(C)) & < \frac{n(C)-n_\scb(C)}{5},
\\ n(C)-n_\scb(C) & \geq 30,
\\ n(C) & \leq |\ln\e|^{3},
\end{align*}
then for any $\e\leq \de$ and any $k,k_\bullet\in\Z_L^d$ we have for all $t\in[0,\de]$

\begin{align*}
\left| \widehat{G_C}(\e^{-2}t,[\e^{-2}t_\bullet,]k,k_\bullet) \right| \leq \left\{
\begin{aligned}
&\La^{n+1} \de^{n[-2]} \e^{\a_\high(n(C)-n_\scb(C))}\ps{k_\bullet}^{-\half} \qquad \text{if $k\neq k_\bullet$,}
\\&\La^{n+1} L^d \de^{n[-2]} \e^{\a_\high(n(C)-n_\scb(C))}\ps{k_\bullet}^{-\half} \quad \text{if $k = k_\bullet$,}
\end{aligned}
\right.
\end{align*}
where the content of the brackets only appears when $\star=\ter$, in which case the estimate holds for all $0\leq t_\bullet \leq t \leq \de$.
\end{prop}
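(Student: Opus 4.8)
The plan is to mirror exactly the proof of Proposition \ref{prop high nodes}, which was carried out across Sections \ref{section oscillating integrals}--\ref{section proof high nodes}, and simply adapt every step to the linear setting already set up in the preceding subsections (couplings of linear trees, the dual decoration formalism with the constraints $\ka(\bullet_1)+\ka(\bullet_2)=0$ and $K(r_1)(\ka)=k-k_\bullet$, and the extra leaf contributions). First I would apply the oscillating-integral representation of Lemma \ref{lemma:oscillations}, which is purely an identity about products of exponentials integrated over $I_C(t)$ (or $I_C(t,t_\bullet)$ in the $\ter$ case), so it applies verbatim to $\widehat{G_C}$; this produces a decomposition $\widehat{G_C} = \sum_{\rho\in\mathfrak{M}(\mathcal{N}^\tot_T(C))} \widehat{G^\rho_C}$ together with the Hölder step of Lemma \ref{lem:holder}, giving for some $p>2d$ an estimate of the form
\begin{align*}
\sup_{t\in[0,\de]}\left|\widehat{G^\rho_C}(\e^{-2}t,[\e^{-2}t_\bullet,]k,k_\bullet)\right| \lesssim \La^{n+1}\e^{n_B^\tot}\ps{k_\bullet}^{-\half}\int_\R \frac{A_C^\rho(k,k_\bullet,\xi)^{1/p}}{|n_T^\tot\de^{-1}-i\xi|}\d\xi,
\end{align*}
where $A_C^\rho$ is the obvious analogue of \eqref{def ACrho}, carrying an extra $\ps{k_\bullet}^{-\half}$ from $q_{\parent(\bullet)}$ and an extra $\ps{k_\bullet}^{-\half}$ from $q_{\parent(\bullet')}$ (only one of which I keep, bounding the other by a constant using that $k_\bullet$ lies in $B(0,R)$-type constraints via the compact support of the $M^{\eta,\eta'}$), and with the total sets $\mathcal{N}^\tot$, $\mathcal{N}^\tot_T$, etc.\ replacing the corresponding sets.

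Next I would run the induced-order machinery of Section \ref{section induced orders}: Definitions \ref{def rho hat}--\ref{def rho tilde} and Lemmas \ref{lem rho hat}--\ref{lem rho tilde} only use the nature of nodes (binary/ternary, low/high, marked/unmarked) and the parentality order, all of which carry over to linear trees and their couplings; Proposition \ref{prop LukSpo} (the Łukkarinen--Spohn spanning-tree lemma) applies to any coupling since $V(C)$ is still spanned by $(K(\ell))_{\ell\in\mathcal{L}(C)_+}$ and $K(r_1)(\ka)=k-k_\bullet\neq 0$ in the relevant case $k\neq k_\bullet$. I would then reproduce Section \ref{section N''}: the family $\mathcal{N}'(C)$, $\mathcal{N}''(C)$, the sets $\mathcal{P}_i$, and Lemma \ref{lem N''}, with the only bookkeeping change being that $\#\mathcal{L}(C)_+ = n$ rather than $\frac{n}{2}+1$ (since $n$ in $\tilde{\mathcal{A}}^\lin_{m,n}$ counts leaves directly), and the high-nodes-regime thresholds in the hypotheses adjusted accordingly (the statement already uses $n(C)-n_\scb(C)$ rather than $n(C)-2n_\scb(C)$ and $\frac{n(C)-n_\scb(C)}{5}$ rather than $\frac{n(C)-2n_\scb(C)}{10}$, which is exactly the rescaling one expects). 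The change of variables of Section \ref{subsubsec:changevar} (Lemma \ref{triangle} and the map $\Psi$) goes through identically since it only relies on linear independence and compatibility of $\tilde\rho$ with parentality.

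The heart of the argument is the individual-sum estimate, i.e.\ the analogue of Lemma \ref{lem individual sums}, and this is the step I expect to be the main obstacle, not because of any new idea but because one must carefully track where $k_\bullet$ enters the various frequency maps $K(j)(\ka)$. For linear nodes $K(j)(\ka) = \ka(j)\mp k_\bullet$, so every cutoff $\chi_{\parent(j_i)}$ and every dispersion relation $\De_{\parent(j_i)}$ acquires a shift by a multiple of $k_\bullet$; since $k_\bullet$ is a fixed external parameter lying (effectively) in a compact set, it plays exactly the role of the fixed quantities $(k_j)_{j\in\mathcal{N}'(C)\setminus(\mathcal{N}''(C)\sqcup\{r_1\})}$ already appearing in Lemma \ref{lem individual sums}, so the five cases $\mathbf{C1}$--$\mathbf{C5}$ and their bounds are unchanged, including the crucial oscillatory-integral bound of Lemma \ref{lem:intTheta} which uses the Klein--Gordon identity \eqref{key identity}. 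The only genuinely new point is the case $k=k_\bullet$: then $K(r_1)(\ka)=0$, so the counting argument producing the factor $L^{-nd/2}$ (or its analogue) from the constraint at the root fails, and one loses exactly one power $L^d$, which is precisely the $L^d$ prefactor in the conclusion — this is the linear-tree analogue of the fact that $\widehat{F_C}(\e^{-2}t,0)=0$ could not be used here since we do not have an assumption like \eqref{assumption Q} for the $\bullet$-leaf. Finally I would assemble everything as in Section \ref{section proof high nodes}: define $i_0$ via the last point of Lemma \ref{lem N''}, estimate the successive sums, apply Hölder in $\xi$, sum over $\rho\in\mathfrak{M}(\mathcal{N}^\tot_T(C))$ absorbing $\#\mathfrak{M}(\mathcal{N}^\tot_T(C))\le (n_T^\tot)!\lesssim (n_T^\tot)^n$ into the geometric factor using $n\le|\ln\e|^3$, and manipulate the exponent $n''_{\mathrm{gain}}+n_B^\tot$ exactly as in the inequalities leading to \eqref{high nodes estim FC inter}, now using the rescaled high-nodes hypothesis to get the lower bound $2(n''_{\mathrm{gain}}+n_B^\tot)\ge \tfrac{n(C)-n_\scb(C)}{5}$ and hence the claimed $\e^{\a_\high(n(C)-n_\scb(C))}$ gain; the missing one or two oscillating integrals in the $\ter$ case are compensated, as in the proof of Proposition \ref{prop estimate FCjmax}, by the extra $\e^2$ powers, and the $\de^{n-2}$ versus $\de^{n}$ discrepancy is precisely this $\e^2\le\de^2$ trade-off.
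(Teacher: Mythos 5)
Your overall strategy is the right one and is the paper's: for $\star=\bi$ the argument of Proposition \ref{prop high nodes} indeed goes through with only bookkeeping changes ($\#\mathcal{N}'(C)=n$ instead of $\frac n2+1$, the extra $\ps{k_\bullet}^{-\half}$ from $q_{\parent(\bullet)}$, and the loss of one factor $L^d$ when $K(r_1)=k-k_\bullet=0$, which you correctly identify as the source of the $L^d$ in the $k=k_\bullet$ branch).

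There is, however, a genuine gap in your treatment of the $\ter$ case. You assert that Lemma \ref{lemma:oscillations} applies ``verbatim'' to the time domain $I_C(t,t_\bullet)$, but it does not: $I_A(t,t_\bullet)$ carries the constraints $t_j<t_\bullet$ for $j<\mathrm{int}(\bullet)$ and $t_j>t_\bullet$ for $j>\mathrm{int}(\bullet)$, with $t_\bullet$ a \emph{fixed} intermediate time and with $\mathrm{int}(\bullet)$ removed from the product of exponentials. This splits each tree's oscillatory integral into a product of two oscillatory integrals over $\mathcal{N}_T^{<_\rho}(A)$ and $\mathcal{N}_T^{>_\rho}(A)$, so the correct representation involves \emph{four} $\xi$-integrals (two per tree) and resonance moduli $\omega_j^\rho$ whose definition depends on whether $j$ lies before or after $\mathrm{int}(\bullet)$ in the order; one must also choose the orders $\rho$, $\rho'$ on $\mathcal{N}_T^\tot(A)$ and $\mathcal{N}_T^\tot(A')$ separately and glue them into a single order $\rho*\rho'$ before running the spanning-tree construction. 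A downstream consequence you miss is that $\mathrm{int}(\bullet)$ and $\mathrm{int}(\bullet')$ contribute no oscillating factor at all, so the definition of $\mathcal{N}''(C)$ must exclude nodes whose parent is $\mathrm{int}(\bullet)$ or $\mathrm{int}(\bullet')$ (these parents cannot yield a $\mathbf{C5}$-type gain), which degrades the counting in Lemma \ref{lem N''} to $\#\mathcal{P}_1+2\#\mathcal{P}_2\geq n-3$ and $n''\geq\frac{n-3}{2}$ and feeds the additive constants into the final numerology $2n''_{\mathrm{gain}}+2n_B^\tot\geq\frac35(n-n_\scb)-11$, which is why the hypothesis is $n-n_\scb\geq 30$. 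Your appeal to the proof of Proposition \ref{prop estimate FCjmax} does not cover this: there the removed nodes are \emph{maximal}, so the truncated domain retains the structure needed for Lemma \ref{lemma:oscillations}, whereas here the removed nodes sit in the middle of the order and pin a time variable. The fix is mechanical but it is a real missing step, not a verbatim transfer.
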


\begin{proof}
The proof of Proposition \ref{prop high nodes GC} follows the same line as the one of Proposition \ref{prop high nodes} which was spread over all the subsections of Section \ref{section high nodes regime}. Therefore, we don't give as much details here but rather highlight the differences. However, as opposed to the low node regime, here the cases $\star=\bi$ and $\star=\ter$ are different and we start by treating them differently before concluding the estimates for the two cases together.

\paragraph{The case $\star=\bi$.} This case requires almost zero modification since it corresponds to the situation where there are no linear ternary nodes and thus no oscillating integral coming from linear nodes. In particular, the results of Section \ref{section oscillating integrals} apply without any difference and we have
\begin{align*}
\widehat{G_C}(\e^{-2}t,k,k_\bullet) & = \sum_{\rho \in \mathfrak{M}(\mathcal{N}_T(C))} \widehat{G_C^\rho}(\e^{-2}t,k,k_\bullet)
\end{align*}
where $\widehat{G^\rho_C}(\e^{-2}t,k,k_\bullet)$ satisfies for all $t\in[0,\de]$
\begin{align*}
\left| \widehat{G^\rho_C}(\e^{-2}t,k,k_\bullet) \right| & \leq \La^{n+1} \ps{k_\bullet}^{-\half} \varepsilon^{n^\tot_B} \int_\R \frac{ A_C^\rho(k,k_\bullet,\xi)^{\frac{1}{p}} }{ \left| n_T\de^{-1} - i\xi \right|}   \d\xi
\end{align*}
for some $p>2d$ and where 
\begin{align*}
A_C^\rho(k,k_\bullet,\xi) & \vcentcolon = L^{-(n-1)d} \sum_{\substack{\ka\in\mathcal D_{k,k_\bullet}(C)\\ \ka(\mathcal{L}(C)_+)\subset B(0,R) }} \prod_{j\in (\mathcal{N}(C)\sqcup\mathcal{L}(C))\setminus \mathcal{R}(C)} \ps{\ka(j)}^{-\frac{p}{2}}  \prod_{j\in\mathcal{N}_T(C)} \frac{\chi_j^p}{\left|n_T\delta^{-1}-i(\xi - \omega^\rho_{j})\right|^p} ,
\end{align*}
where $\omega^\rho_{j}$ is defined as in \eqref{alternative omega rho j}. From a given $\rho \in \mathfrak{M}(\mathcal{N}_T(C))$ we can again construct a total order relation $<_{\tilde{\rho}}$ on $\mathcal{N}^\tot(C)\sqcup \mathcal{L}(C)$ as in Section \ref{section induced orders}, the only difference being that $\bullet$ and $\bullet'$ are not included in $\mathcal{L}(C)$. For this reason, the family $\mathcal{N}'(C)$ from Proposition \ref{prop LukSpo} is now of cardinal $n$.

\paragraph{The case $\star=\ter$.} This case requires a different treatment of oscillating integrals due to mainly to the difference between $I_C(t)$ and $I_C(t,t_\bullet)$, which we recall is defined to be $I_A(t,t_\bullet)\times I_{A'}(t,t_\bullet)$ (where $A$ and $A'$ are the two trees composing the coupling $C$). Therefore we have
\begin{align}
\int_{I_C(t,t_\bullet)}& \prod_{j\in\mathcal{N}^\tot_T(C)\setminus\{\mathrm{int}(\bullet),\mathrm{int}(\bullet')\}} e^{i\e^{-2}\Om_j t_j} \d t_j  \label{int sur I_C(.,.)}
\\& = \int_{I_A(t,t_\bullet)} \prod_{j\in\mathcal{N}^\tot_T(A)\setminus\{\mathrm{int}(\bullet)\}} e^{i\e^{-2}\Om_j t_j} \d t_j  \times \int_{I_{A'}(t,t_\bullet)} \prod_{j\in\mathcal{N}^\tot_T(A')\setminus\{\mathrm{int}(\bullet')\}} e^{i\e^{-2}\Om_j t_j} \d t_j \non
\end{align}
Note that there exists a negligible set $\mathcal{Z}_A$ such that
\begin{align}\label{IA(ttbullet)}
I_A(t,t_\bullet) = \mathcal{Z}_A \sqcup \bigsqcup_{\rho\in\mathfrak{M}(\mathcal{N}_T^\tot(A))} I^\rho_A(t,t_\bullet)
\end{align}
where $I^\rho_A(t,t_\bullet)$ is defined to be the set of $(t_j)_{j\in \mathcal{N}_T^\tot(A)\setminus \{\mathrm{int}(\bullet)\}} \in [0,t]^{n_T^\tot-1}$ such that
\begin{align*}
 0 \leq t_{\rho(1)} < \cdots < t_{\rho(\rho^{-1}(\mathrm{int}(\bullet))-1)} \leq t_\bullet \leq t_{\rho(\rho^{-1}(\mathrm{int}(\bullet))+1)} < \cdots < t_{\rho(n_T^\tot)} \leq t .
\end{align*}
By denoting
\begin{align*}
\mathcal{N}_T^{>_\rho}(A) & = \rho\pth{ \llbracket \rho^{-1}(\mathrm{int}(\bullet)) + 1, n_T^\tot \rrbracket }, \qquad n_T^{>_\rho} = \# \mathcal{N}_T^{>_\rho}(A),
\\ \mathcal{N}_T^{<_\rho}(A) & = \rho\pth{ \llbracket 1, \rho^{-1}(\mathrm{int}(\bullet)) - 1 \rrbracket }, \qquad n_T^{<_\rho} = \# \mathcal{N}_T^{<_\rho}(A),
\end{align*}
and
\begin{align*}
I_A^{\rho,>}(t,t_\bullet) & = \enstq{ (t_j)_{j\in \mathcal{N}_T^{>_\rho}(A)}\in[t_\bullet,t]^{n_T^{>_\rho}} }{ t_\bullet\leq t_{\rho(\rho^{-1}(\mathrm{int}(\bullet))+1)} < \cdots < t_{\rho(n_T^\tot)} \leq t },
\\  I_A^{\rho,<}(t_\bullet) & = \enstq{ (t_j)_{j\in \mathcal{N}_T^{<_\rho}(A)}\in[0,t_\bullet]^{n_T^{<_\rho}} }{  0 \leq t_{\rho(1)} < \cdots < t_{\rho(\rho^{-1}(\mathrm{int}(\bullet))-1)} \leq t_\bullet },
\end{align*}
we find that $I^\rho_A(t,t_\bullet)=I_A^{\rho,<}(t_\bullet) \times I_A^{\rho,>}(t,t_\bullet)$. We also note that
\begin{align*}
I_A^{\rho,>}(t,t_\bullet) & = (t_\bullet,\dots,t_\bullet) + I_A^{\rho,>}(t-t_\bullet,0).
\end{align*}
Therefore we have obtained
\begin{align*}
&\int_{I_A(t,t_\bullet)} \prod_{j\in\mathcal{N}^\tot_T(A)\setminus\{\mathrm{int}(\bullet)\}} e^{i\e^{-2}\Om_j t_j} \d t_j 
\\& = \sum_{\rho\in\mathfrak{M}(\mathcal{N}_T^\tot(A))} \prod_{j\in \mathcal{N}_T^{>_\rho}(A)} e^{i\e^{-2}\Om_j t_\bullet } \int_{I_A^{\rho,>}(t-t_\bullet,0)} \prod_{j\in\mathcal{N}_T^{>_\rho}(A)} e^{i\e^{-2}\Om_j t_j} \d t_j \times \int_{I_A^{\rho,<}(t_\bullet)} \prod_{j\in\mathcal{N}_T^{<_\rho}(A)} e^{i\e^{-2}\Om_j t_j} \d t_j.
\end{align*}
We can now apply the proof of Lemma \ref{lemma:oscillations} to these two integrals and get
\begin{align}
\int_{I_A(t,t_\bullet)} \prod_{j\in\mathcal{N}^\tot_T(A)\setminus\{\mathrm{int}(\bullet)\}} e^{i\e^{-2}\Om_j t_j} \d t_j  & = \sum_{\rho\in\mathfrak{M}(\mathcal{N}_T^\tot(A))} e^{i \om_\bullet^\rho t_\bullet} \frac{e^{\pth{n_T^\tot(A)-1}\de^{-1} t}}{2\pi} \label{int sur I_A(.,.)}
\\&\quad \times \int_{\R} \frac{e^{-i\xi (t-t_\bullet)}}{n_T^{>_\rho}(A)\de^{-1}-i\xi} \prod_{j\in\mathcal{N}_T^{>_\rho}(A)} \frac{1}{n_T^{>_\rho}(A)\de^{-1} - i (\xi-\om_j^\rho)}\; \d\xi \non 
\\&\quad \times \int_{\R} \frac{e^{-i\xi t_\bullet }}{n_T^{<_\rho}(A)\de^{-1}-i\xi} \prod_{j\in\mathcal{N}_T^{<_\rho}(A)} \frac{1}{n_T^{<_\rho}(A)\de^{-1} - i (\xi-\om_j^\rho)}\; \d\xi, \non
\end{align}
where we defined
\begin{equation*}
\om_\bullet^\rho \vcentcolon = \e^{-2} \sum_{j\in \mathcal{N}_T^{>_\rho}(A)}\Om_j \quad\text{and}\quad  \om_j^\rho \vcentcolon = 
\left\{
\begin{aligned}
\e^{-2} \sum_{\substack{j'\in \mathcal{N}_T^{<_\rho}(A)\\ j'\geq_\rho j}} \Om_{j'}, & \quad \text{if $j\in\mathcal{N}_T^{<_\rho}(A)$,}
\\ \e^{-2} \sum_{\substack{j'\in \mathcal{N}_T^\tot(A)\\ j'\geq_\rho j}} \Om_{j'}, & \quad \text{if $j\in\mathcal{N}_T^{>_\rho}(A)$,}
\end{aligned}
\right.
\end{equation*}
and where by convention in the RHS of \eqref{int sur I_A(.,.)} if one of the sets $\mathcal{N}_T^{>_\rho}(A)$ or $\mathcal{N}_T^{<_\rho}(A)$ is empty then we remove entirely the corresponding integral over $\R$. Of course, a formula similar to \eqref{int sur I_A(.,.)} also holds for $A'$. Inserting them into \eqref{int sur I_C(.,.)} and applying the proof of Lemma \ref{lem:holder} we finally obtain the following decomposition of $\widehat{G_C}(\e^{-2}t,\e^{-2}t_\bullet,k,k_\bullet)$ in the case $\star=\ter$:
\begin{align*}
\widehat{G_C}(\e^{-2}t,\e^{-2}t_\bullet,k,k_\bullet) & = \sum_{\substack{ \rho\in\\\rho'\in}} \widehat{G_C^{\rho,\rho'}}(\e^{-2}t,\e^{-2}t_\bullet,k,k_\bullet)
\end{align*}
where $\widehat{G_C^{\rho,\rho'}}(\e^{-2}t,\e^{-2}t_\bullet,k,k_\bullet)$ satisfies for all $0\leq t_\bullet \leq t \leq \de$:
\begin{align*}
&\left| \widehat{G_C^{\rho,\rho'}}(\e^{-2}t,\e^{-2}t_\bullet,k,k_\bullet) \right|
\\&\leq \La^{n+1} \ps{k_\bullet}^{-\half} \e^{n_B^\tot} \int_{\R^4} \frac{A^{\rho,\rho'}_C(k,k_\bullet,\xi_1,\xi_2,\xi_3,\xi_4)^{\frac{1}{p}}\d\xi_1\d\xi_2\d\xi_3\d\xi_4}{ \left| n_T^{>_\rho}(A)\de^{-1}-i\xi_1 \right| \left| n_T^{<_\rho}(A)\de^{-1}-i\xi_2 \right| \left| n_T^{>_{\rho'}}(A')\de^{-1}-i\xi_3 \right| \left| n_T^{<_{\rho'}}(A')\de^{-1}-i\xi_4 \right|} 
\end{align*}
for some $p>2d$ and where
\begin{align*}
A^{\rho,\rho'}_C&(k,k_\bullet,\xi_1,\xi_2,\xi_3,\xi_4)^{\frac{1}{p}} 
\\& \vcentcolon = L^{-(n-1)d} \sum_{\substack{\ka\in\mathcal D_{k,k_\bullet}(C)\\ \ka(\mathcal{L}(C)_+)\subset B(0,R) }}  \prod_{j\in (\mathcal{N}(C)\sqcup\mathcal{L}(C))\setminus \mathcal{R}(C)} \ps{\ka(j)}^{-\frac{p}{2}}
\\&\qquad \times \prod_{j\in \mathcal{N}_T^{>_\rho}(A)} \frac{\chi_j^p}{\left| n_T^{>_\rho}(A)\de^{-1}-i(\xi_1-\om_j^\rho) \right|^p}  \prod_{j\in \mathcal{N}_T^{<_\rho}(A)}\frac{\chi_j^p}{\left| n_T^{<_\rho}(A)\de^{-1}-i(\xi_2-\om_j^\rho) \right|^p}
\\&\qquad \times \prod_{j\in \mathcal{N}_T^{>_\rho}(A)} \frac{\chi_j^p}{\left| n_T^{>_{\rho'}}(A')\de^{-1}-i(\xi_3-\om_j^{\rho'}) \right|^p} \prod_{j\in \mathcal{N}_T^{<_\rho}(A)}\frac{\chi_j^p}{\left| n_T^{<_{\rho'}}(A')\de^{-1}-i(\xi_4-\om_j^{\rho'}) \right|^p}.
\end{align*}
Again, these formulas hold with the convention that if one of the sets $\mathcal{N}_T^{>_\rho}(A)$, $\mathcal{N}_T^{<_\rho}(A)$, $\mathcal{N}_T^{>_{\rho'}}(A')$ or $\mathcal{N}_T^{>_{\rho'}}(A')$ is empty then we remove entirely the corresponding integral over $\R$, and in such case $A^{\rho,\rho'}_C$ would only depend on three (or less) $\xi$'s. 

The content of Section \ref{section induced orders} must be changed. We have first to define an order given two orders $\rho$, $\rho'$ on $\mathcal{N}_T^\tot(A)$, $\mathcal{N}_T^\tot(A')$. Since the nodes in $A$ are not comparable to the nodes in $A'$, we build $\tilde \rho$ from the order $\rho*\rho'$ which is defined by
\[
j'<_{\rho*\rho'} j \; \iff \; \left \lbrace{\begin{array}{cc}
j'<_\rho j & \mathrm{if}\, j,j'\in\mathcal N_T^\tot(A) \\
j'<_{\rho'} j & \mathrm{if}\, j,j'\in \mathcal N_T^\tot(A') \\
j'\in \mathcal N_T^\tot(A'), j\in \mathcal N_T^\tot(A) & \mathrm{otherwise.}
\end{array}} \right.
\]
We define $\mathcal N'(C)$ as before and as in the case $\star=\bi$, it has size $n$.

\paragraph{The common endgame.} The end of the proof holds for both $\star=\bi$ and $\star=\ter$. We define the family $\mathcal{N}''(C)$ by
\begin{equation*}
\mathcal{N}''(C) = \Bigg\{ j\in\mathcal{N}'(C) \;\Bigg| \;
\begin{aligned}
& \mathtt{parent}(j)\notin\{\mathtt{int}(\bullet),\mathtt{int}(\bullet')\}
\\ & j = \min_{\tilde{\rho}}\pth{ \mathtt{siblings}(j)\cap \mathcal{N}'(C)} 
\end{aligned} \;\Bigg\}
\end{equation*}
while the definition of the sets $\mathcal{P}_i$ is the same as in Definition \ref{def N'' Pi}. The results of Lemma \ref{lem N''} must be changed to the following:
\begin{itemize}
\item[(i)] If $K(\mathtt{root}(A)) = 0$, then $K(\mathtt{root}(A')) =0$. In that case, if $k\neq k_\bullet$, then $\reallywidehat{G_C}(\e^{-2}t,[\varepsilon^{-2}t_\bullet,]k,k_\bullet) =0$ and the estimate of the proposition holds. Otherwise we have $\mathtt{root}(A) \in \mathcal N''(C)$ and $\mathtt{root}(A')\notin \mathcal N''(C)$.
\item[(ii)] The $\mathcal P_i$'s satisfy $\mathcal P_3 = \emptyset $, $ \mathcal P_2  \subset  \mathcal N_T(C)\setminus \{\mathtt{int}(\bullet),\mathtt{int}(\bullet')\}$ and $\# \mathcal P_1 +2\#\mathcal P_2  \geq  n-3$.
\item[(iii)] The cardinal $n''$ of $\mathcal{N}''(C)$ satisfy $n''\geq \frac{n-3}{2}$.
\end{itemize}
Moreover, there again exists at least one ternary node that is a parent of a node in $\mathcal N''(C)$, otherwise we would have $n_B^{\tot}(C) \geq \frac{n-1}4$ which would contradict $n_B^{\tot}(C)<\frac{n}5$ and $n\geq 30$ according to the high nodes regime assumption. In the case $K(\mathtt{root}(A))\neq 0$, the results in Section \ref{subsubsec:changevar} remain valid. But in the case $K(\mathtt{root}(A))= 0$ and $k=k_\bullet$ we have no choice but to sum on $n$ variables and we obtain
\begin{align*}
&A_C^\rho(k,k_\bullet,\xi,[\xi_2,\xi_3,\xi_4]) 
\\& \leq \La^{n+1}  L^{-(n-1)d} \pth{\frac{\de}{n_T}}^{p\pth{n_T^\bullet-n''_T}} 
\\&\quad  \sum_{(k_j)_{j\in\mathcal{N}'(C)\setminus\mathcal{N}''(C)}\in\pth{\frac{1}{L}\mathbb{Z}^d}^{n-n''}}   \prod_{j\in \mathcal{N}'(C)\setminus \mathcal{N}''(C) } \ps{k_j}^{-p}
\\&\hspace{6cm} \times \pth{\sum_{x_{n''}\in\mathbb{Z}_L^d}\tilde{A}_{j_{n''}}\pth{  \cdots \pth{ \sum_{x_1\in\mathbb{Z}_L^d}\tilde{A}_{j_1} } \cdots    } }  .
\end{align*}
The rest of the proof follows the same lines as what is done in Section \ref{section proof high nodes}. Regarding the numerology, we now remark that 
\[
2n''_{\mathrm{gain}} \geq n-3 - 4 n_B^\tot -8 -n_\low^\tot
\]
from which we deduce
\[
2n''_{\mathrm{gain}} + 2n_B^\tot \geq \frac35 (n - n_\scb) -11
\]
which is bigger that $\frac{n-n_\scb}{5}$ if $n-n_\scb\geq 30$. This concludes the proof of Proposition \ref{prop high nodes GC}.
\end{proof}

\subsubsection{Time derivative of $Y^\star_{m,n}$}

As we did for the $X_n$ in Section \ref{section time derivative Xn}, we need to estimate the derivative in time of the $Y^\star_{m,n}$ for $\star=\bi,\ter$. Recall that if $\mathcal{N}_T^\lin(A)=\emptyset$, then $\mathrm{int}(\bullet)$ is not defined and thus in \eqref{decomp dt G} below the set $\mathcal N_T^{\tot,\mathrm{max}}(A) \setminus \{ \mathrm{int}(\bullet) \}$ simply denotes $\mathcal N_T^{\tot,\mathrm{max}}(A) $.

\begin{prop}
We have
\begin{align*}
\reallywidehat{\dr_tY^\star_{m,n}}(\eta,\iota,\eta_\bullet,\iota_\bullet) & = \sum_{(A,\ffi,c)\in  \mathcal{A}^\star_{m,n}(\eta,\iota,\eta_\bullet,\iota_\bullet)} \reallywidehat{\dr_tG_{(A,\ffi,c)}}
\end{align*}
together with the decomposition
\begin{align}\label{decomp dt G}
\reallywidehat{\dr_tG_{(A,\ffi,c)}} & = \sum_{j_{ \mathrm{max} }\in\{-1\}\sqcup (\mathcal N_T^{\tot,\mathrm{max}}(A) \setminus \{ \mathrm{int}(\bullet) \} ) } \reallywidehat{ G_{(A,\ffi,c),j_{ \mathrm{max} }} }
\end{align}
where
\begin{align*}
\mathcal N_T^{\tot,\mathrm{max}}(A) \vcentcolon = \enstq{ j \in \mathcal N_T^{\tot}(A) }{ \nexists\, j' \in \mathcal N_T^\tot(A),\; j'>j }
\end{align*}
and where $\reallywidehat{ G_{(A,\ffi,c),j_{ \mathrm{max} }} }$ is defined by:
\begin{itemize}
\item[(i)] If $\star=\bi$, then
\begin{itemize}
\item if $j_{\mathrm{max}}=-1$, then
\begin{align*}
\reallywidehat{ G_{(A,\ffi,c),-1} } (\e^{-2}t,k,k_\bullet)& \vcentcolon = -(-i)^{n_T+1}\frac{\e^{n_B^\tot}}{L^{(n-1)\frac{d}{2}}} \sum_{\ka\in\mathcal{D}_{k,k_\bullet}(A)} \Om_{-1} e^{i\Om_{-1}\e^{-2}t}
\\&\quad \times \prod_{j\in\mathcal{N}^\tot(A)}q_j \int_{I_A(t)}\prod_{j\in\mathcal{N}_T(A)}e^{i\Om_j \e^{-2}t_j}\d t_j \prod_{\ell\in\mathcal{L}(A)} \reallywidehat{X^{c(\ell), \ffi(\ell)}_\init} (\ka(\ell)),
\end{align*}
\item if $j_{\mathrm{max}}\in\mathcal N_T^{\tot,\mathrm{max}}(A) $, then
\begin{align*}
\reallywidehat{ G_{(A,\ffi,c),j_{\mathrm{max}}} }& (\e^{-2}t,k,k_\bullet)
\\& \vcentcolon = (-i)^{n_T}\frac{\e^{n_B^\tot+2}}{L^{(n-1)\frac{d}{2}}} \sum_{\ka\in\mathcal{D}_{k,k_\bullet}(A)} e^{i\pth{ \Om_{-1}+\Om_{j_{\mathrm{max}}}}\e^{-2}t}
\\&\hspace{1cm} \times \prod_{j\in\mathcal{N}^\tot(A)}q_j     \int_{I^{ j_{\mathrm{max}} }_A(t)}  \prod_{j\in\mathcal{N}_T(A)\setminus\{ j_{\mathrm{max}} \}} e^{i\Om_j \e^{-2}t_j}\d t_j \prod_{\ell\in\mathcal{L}(A)} \reallywidehat{X^{c(\ell), \ffi(\ell)}_\init} (\ka(\ell))
\end{align*}
where $I^{ j_{\mathrm{max}} }_A(t)$ is defined as in \eqref{def I_A^jmax}.
\end{itemize}
\item[(ii)] If $\star=\ter$, then
\begin{itemize}
\item if $j_{\mathrm{max}}=-1$, then
\begin{align*}
&\reallywidehat{ G_{(A,\ffi,c),-1} } (\e^{-2}t,\e^{-2}t_\bullet,k,k_\bullet)
\\&\hspace{1.5cm} \vcentcolon = - (-i)^{n^\tot_T+1} \frac{\e^{n_B^\tot}}{L^{(n-1)\frac{d}{2}}}  \sum_{\ka\in\mathcal{D}_{k,k_\bullet}(A)} \Om_{-1} e^{i\Om_{-1}\e^{-2}t} e^{i\Om_\bullet \e^{-2}t_\bullet}
\\&\quad\hspace{1.5cm} \times \prod_{j\in\mathcal{N}^\tot(A)}q_j \int_{I_A(t,t_\bullet)} \prod_{j\in \mathcal{N}^\tot_T(A)\setminus\{\mathrm{int}(\bullet)\}} e^{i\Om_j \e^{-2}t_j}\d t_j  \prod_{\ell\in\mathcal{L}(A)} \reallywidehat{X^{c(\ell), \ffi(\ell)}_\init} (\ka(\ell)),
\end{align*}
\item if $j_{\mathrm{max}}\in\mathcal N_T^{\tot,\mathrm{max}}(A) \setminus \{ \mathrm{int}(\bullet) \}$, then
\begin{align*}
&\reallywidehat{ G_{(A,\ffi,c),j_{\mathrm{max}}} } (\e^{-2}t,\e^{-2}t_\bullet,k,k_\bullet)
\\&\hspace{1cm} \vcentcolon =  (-i)^{n^\tot_T} \frac{\e^{n_B^\tot+2}}{L^{(n-1)\frac{d}{2}}}  \sum_{\ka\in\mathcal{D}_{k,k_\bullet}(A)} e^{i\pth{\Om_{-1}+\Om_{j_{\mathrm{max}}}}\e^{-2}t}e^{i\Om_\bullet \e^{-2}t_\bullet}
\\&\quad\hspace{1cm} \times \prod_{j\in\mathcal{N}^\tot(A)}q_j     \int_{I_A^{j_{\mathrm{max}}  }(t,t_\bullet)} \prod_{j\in\mathcal{N}_T^\tot(A)\setminus\{\mathrm{int}(\bullet), j_{\mathrm{max}} \}} e^{i\Om_j \e^{-2}t_j} \d t_j \prod_{\ell\in\mathcal{L}(A)} \reallywidehat{X^{c(\ell), \ffi(\ell)}_\init} (\ka(\ell)),
\end{align*}
where
\end{itemize}
\end{itemize}
\begin{equation*}
I_A^{j_{\mathrm{max}}}(t,t_\bullet)  \vcentcolon =   
\Bigg\{ (t_j)_{j\in\mathcal{N}_T^\tot(A)\setminus\{\mathrm{int}(\bullet), j_{\mathrm{max}} \}} \in [0,t]^{n^\tot_T-2} \; \Bigg| \; j < j'\Rightarrow t_j<t_{j'} \;\;\text{and}\;\;
\begin{aligned}
&j>\mathrm{int}(\bullet)\Rightarrow t_j>t_\bullet 
\\& j<\mathrm{int}(\bullet)\Rightarrow t_j<t_\bullet 
\end{aligned}
\Bigg\}.
\end{equation*}
\end{prop}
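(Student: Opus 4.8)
The plan is to mimic the proof of Proposition~\ref{prop dt X}, treating $t_\bullet$ (in the $\ter$ case) as a passive parameter and differentiating only with respect to $t$. Since $Y^\star_{m,n}(\eta,\iota,\eta_\bullet,\iota_\bullet)$ is a \emph{finite} sum of the $G_{(A,\ffi,c)}$ (see \eqref{def Y bi}--\eqref{def Y ter}) and $\reallywidehat{\dr_tG_{(A,\ffi,c)}}=\dr_t\reallywidehat{G_{(A,\ffi,c)}}$, it suffices to differentiate in $t$ the two Fourier formulas \eqref{G_A bi} and \eqref{G_A ter} of Proposition~\ref{prop fourier transform of Flin}, and to exhibit the claimed $j_{\mathrm{max}}$-decomposition tree by tree.

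In the case $\star=\bi$ (so $\mathcal{N}_T^\lin(A)=\emptyset$ and $\mathrm{int}(\bullet)$ is undefined), the dependence of $\reallywidehat{G_{(A,\ffi,c)}}(t,k,k_\bullet)$ on $t$ is carried exactly as in \eqref{fourier of F}: by the exponential $e^{i\Om_{-1}t}$ and by the simplex $I_A(t)$. Hence the argument of Proposition~\ref{prop dt X} applies verbatim. The product rule isolates the derivative of $e^{i\Om_{-1}t}$, producing the $j_{\mathrm{max}}=-1$ term (the sign and power of $-i$ matching because $i\,(-i)^{n_T}=-(-i)^{n_T+1}$). For the derivative of $\int_{I_A(t)}\prod e^{i\Om_jt_j}\d t_j$ one uses the a.e.\ decomposition of $I_A(t)$ into the $\rho$-ordered simplices indexed by $\mathfrak{M}(\mathcal{N}_T(A))$ as in \eqref{I_C(t) up to negligible}: differentiating the associated nested time integrals pins the $\rho$-largest variable at $t$, factoring out $e^{i\Om_{\rho(n_T)}t}$. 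The bijection
\[
\mathfrak{M}(\mathcal{N}_T(A))\;\longleftrightarrow\;\bigsqcup_{j_{\mathrm{max}}\in\mathcal{N}_T^{\mathrm{max}}(A)}\{j_{\mathrm{max}}\}\times\mathfrak{M}(\mathcal{N}_T(A)\setminus\{j_{\mathrm{max}}\})
\]
then groups these $\rho$'s by their largest element, and re-applying \eqref{I_C(t) up to negligible} to $\mathcal{N}_T(A)\setminus\{j_{\mathrm{max}}\}$ reassembles the integral over $I_A^{j_{\mathrm{max}}}(t)$ of \eqref{def I_A^jmax}; this yields the stated formula (with $\mathcal{N}_T^\tot=\mathcal{N}_T$ and $\mathcal{N}_T^{\tot,\mathrm{max}}\setminus\{\mathrm{int}(\bullet)\}=\mathcal{N}_T^{\tot,\mathrm{max}}$ in this case).

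In the case $\star=\ter$, the factor $e^{i\Om_\bullet t_\bullet}$ and the dependence of $I_A(t,t_\bullet)$ on $t_\bullet$ are inert under $\dr_t$, so again only $e^{i\Om_{-1}t}$ and the $t$-dependence of $I_A(t,t_\bullet)$ contribute. Here one uses the decomposition $I_A(t,t_\bullet)=\mathcal{Z}_A\sqcup\bigsqcup_{\rho\in\mathfrak{M}(\mathcal{N}_T^\tot(A))}I_A^\rho(t,t_\bullet)$ already introduced in \eqref{IA(ttbullet)}, in which every $\rho$ totally orders $\mathcal{N}_T^\tot(A)$, the variable of $\mathrm{int}(\bullet)$ is pinned at $t_\bullet$, the variables $\rho$-below $\mathrm{int}(\bullet)$ lie in $[0,t_\bullet]$ (hence $t$-independent), and those $\rho$-above $\mathrm{int}(\bullet)$ form an increasing chain in $[t_\bullet,t]$ whose top reaches $t$. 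Differentiating in $t$ therefore evaluates only the $\rho$-largest variable at $t$, a node which is necessarily parentality-maximal in $\mathcal{N}_T^\tot(A)$ and distinct from $\mathrm{int}(\bullet)$ whenever the chain above $\mathrm{int}(\bullet)$ is nonempty. Grouping the surviving $\rho$'s by their top element $j_{\mathrm{max}}\in\mathcal{N}_T^{\tot,\mathrm{max}}(A)\setminus\{\mathrm{int}(\bullet)\}$ via the analogous bijection for $\mathcal{N}_T^\tot(A)\setminus\{j_{\mathrm{max}}\}$ (noting that $\mathrm{int}(\bullet)$ remains the minimal linear ternary node of the reduced set since $j_{\mathrm{max}}\neq\mathrm{int}(\bullet)$), and re-applying the decomposition \eqref{IA(ttbullet)} to that reduced index set, one reconstructs the integral over $I_A^{j_{\mathrm{max}}}(t,t_\bullet)$ and obtains the claimed expression.

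The only genuine bookkeeping point, and the main thing to be careful about, is the status of $\mathrm{int}(\bullet)$ in the $\ter$ case: since its time variable is fixed to $t_\bullet\le t$ it must be excluded from the index set over which $j_{\mathrm{max}}$ ranges, and when $\mathrm{int}(\bullet)$ happens to be the unique parentality-maximal ternary node — which can only occur when the sole linear ternary node sits at the root of the comb — the derivative of the time integral vanishes identically and only the $j_{\mathrm{max}}=-1$ term survives, consistently with $\mathcal{N}_T^{\tot,\mathrm{max}}(A)\setminus\{\mathrm{int}(\bullet)\}=\emptyset$. All remaining steps are the same routine manipulations of ordered time integrals used in Proposition~\ref{prop dt X} and Lemma~\ref{lemma:oscillations}, and every numerical constant falls out exactly as there.
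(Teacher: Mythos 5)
Your proof is correct and follows essentially the same route as the paper's: for $\star=\bi$ the argument of Proposition \ref{prop dt X} is repeated verbatim, and for $\star=\ter$ one uses the decomposition \eqref{IA(ttbullet)}, drops the orders placing $\mathrm{int}(\bullet)$ on top (whose integrals are $t$-independent), and regroups the remaining orders by their maximal element via the bijection onto $\mathcal N_T^{\tot,\mathrm{max}}(A)\setminus\{\mathrm{int}(\bullet)\}$. (Your parenthetical that the degenerate case forces the sole linear ternary node to sit at the root of the comb is slightly too strong — linear binary nodes may still lie above it — but this aside is not load-bearing.)
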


\begin{proof}
If $\star=\bi$ we can repeat the proof of Proposition \ref{prop dt X} now applied to \eqref{G_A bi}. If $\star=\ter$, we need to compute the derivative with respect to $t$ of the integral over $I_A(t,t_\bullet)$. Let $f_j$ be some functions of time, thanks to \eqref{IA(ttbullet)} and $I^\rho_A(t,t_\bullet)=I_A^{\rho,<}(t_\bullet) \times I_A^{\rho,>}(t,t_\bullet)$ we have
\begin{align*}
\int_{I_A(t,t_\bullet)} &\prod_{j\in\mathcal{N}_T^\tot(A)\setminus\{\mathrm{int}(\bullet)\}} f_j(t_j)\d t_j 
\\& = \sum_{\substack{\rho\in\mathfrak{M}(\mathcal{N}_T^\tot(A))\\\rho^{-1}(\mathrm{int}(\bullet))\leq n_T^\tot-1}} \int_{I_A^{\rho,<}(t_\bullet)} \prod_{j\in\mathcal{N}_T^{<_\rho}(A)} f_j(t_j)\d t_j 
\\&\hspace{4cm} \times \int_{t_\bullet\leq t_{\rho(\rho^{-1}(\mathrm{int}(\bullet))+1)} < \cdots < t_{\rho(n_T^\tot)} \leq t  } \prod_{i=\rho(\rho^{-1}(\mathrm{int}(\bullet))+1)}^{n_T^\tot} f_{\rho(i)}(t_{\rho(i)})\d t_{\rho(i)}
\\&\quad + \sum_{\substack{\rho\in\mathfrak{M}(\mathcal{N}_T^\tot(A))\\\rho^{-1}(\mathrm{int}(\bullet)) = n_T^\tot}} \int_{0\leq t_{\rho(1)} < \cdots < t_{\rho(n_T^\tot-1)} \leq t_\bullet}  \prod_{i=1}^{n_T^\tot-1} f_{\rho(i)}(t_{\rho(i)})\d t_{\rho(i)}.
\end{align*}
In this formula we note that the second sum does not depend on $t$ but only on $t_\bullet$, while in the first sum only the second integral depends on $t$. Therefore we have
\begin{align*}
&\dr_t \pth{ \int_{I_A(t,t_\bullet)} \prod_{j\in\mathcal{N}_T^\tot(A)\setminus\{\mathrm{int}(\bullet)\}} f_j(t_j)\d t_j  }
\\& = \sum_{\substack{\rho\in\mathfrak{M}(\mathcal{N}_T^\tot(A))\\\rho^{-1}(\mathrm{int}(\bullet))\leq n_T^\tot-1}} \int_{I_A^{\rho,<}(t_\bullet)} \prod_{j\in\mathcal{N}_T^{<_\rho}(A)} f_j(t_j)\d t_j 
\\&\hspace{3cm} \times \dr_t  \pth{\int_{t_\bullet\leq t_{\rho(\rho^{-1}(\mathrm{int}(\bullet))+1)} < \cdots < t_{\rho(n_T^\tot)} \leq t  } \prod_{i=\rho(\rho^{-1}(\mathrm{int}(\bullet))+1)}^{n_T^\tot} f_{\rho(i)}(t_{\rho(i)})\d t_{\rho(i)}} .
\end{align*}
Rewriting the second integral as successive time integrals as we did in the proof of Proposition \ref{prop dt X} we obtain
\begin{align*}
&\dr_t \pth{ \int_{I_A(t,t_\bullet)} \prod_{j\in\mathcal{N}_T^\tot(A)\setminus\{\mathrm{int}(\bullet)\}} f_j(t_j)\d t_j  }
\\& = \sum_{\substack{\rho\in\mathfrak{M}(\mathcal{N}_T^\tot(A))\\\rho^{-1}(\mathrm{int}(\bullet))\leq n_T^\tot-1}} \int_{I_A^{\rho,<}(t_\bullet)} \prod_{j\in\mathcal{N}_T^{<_\rho}(A)} f_j(t_j)\d t_j 
\\&\hspace{3cm} \times f_{\rho(n_T^\tot)}(t) \int_{t_\bullet\leq t_{\rho(\rho^{-1}(\mathrm{int}(\bullet))+1)} < \cdots < t_{\rho(n_T^\tot-1)} \leq t  } \prod_{i=\rho(\rho^{-1}(\mathrm{int}(\bullet))+1)}^{n_T^\tot-1} f_{\rho(i)}(t_{\rho(i)})\d t_{\rho(i)} .
\end{align*}
Now, by definition of $\mathcal{N}_T^{\tot,\mathrm{max}}(A)$ the following map is a bijection
\begin{align*}
 \enstq{ \rho\in\mathfrak{M}(\mathcal{N}_T^\tot(A))}{ \rho^{-1}(\mathrm{int}(\bullet))\leq n_T^\tot-1 } & \longrightarrow \bigsqcup_{j_{\mathrm{max}}\in \mathcal{N}_T^{\tot,\mathrm{max}}(A)\setminus\{ \mathrm{int}(\bullet) \}} \Big( \{ j_{\mathrm{max}}\}\times \mathfrak{M}\pth{\mathcal{N}_T^\tot(A)\setminus\{j_{\mathrm{max}}\}} \Big)
\\ \rho \hspace{3cm} & \longmapsto \hspace{3cm} \pth{ \rho(n_T^\tot),\rho_{|_{ \left\llbracket 1, n_T^\tot-1\right\rrbracket }} }
\end{align*}
which allows to perform a change of variable in the last expression and obtain
\begin{align*}
&\dr_t \pth{ \int_{I_A(t,t_\bullet)} \prod_{j\in\mathcal{N}_T^\tot(A)\setminus\{\mathrm{int}(\bullet)\}} f_j(t_j)\d t_j  }
\\&\qquad = \sum_{j_{\mathrm{max}}\in\mathcal{N}_T^{\tot,\mathrm{max}}(A)\setminus\{\mathrm{int}(\bullet)\}} f_{j_{\mathrm{max}}}(t) \sum_{\rho\in \mathfrak{M}\pth{\mathcal{N}_T^\tot(A)\setminus\{j_{\mathrm{max}}\}} } \int_{I_A^\rho(t,t_\bullet)} \prod_{j\in \mathcal{N}_T^\tot(A) \setminus\{j_{ \mathrm{max} }\} }f_j(t_j)\d t_j
\\&\qquad = \sum_{j_{\mathrm{max}}\in\mathcal{N}_T^{\tot,\mathrm{max}}(A)\setminus\{\mathrm{int}(\bullet)\}}  f_{j_{\mathrm{max}}}(t) \int_{I_A^{j_{\mathrm{max}}  }(t,t_\bullet)} \prod_{j\in\mathcal{N}_T^\tot(A)\setminus\{\mathrm{int}(\bullet), j_{\mathrm{max}} \}} f_j(t_j)\d t_j.
\end{align*}
where we used \eqref{IA(ttbullet)} now applied to the set $\mathfrak{M}\pth{\mathcal{N}_T^\tot(A)\setminus\{j_{\mathrm{max}}\}}$. We can then use this formula with $f_j(t_j)=e^{i\Om_j t_j}$ to differentiate \eqref{G_A ter} and conclude the proof of the proposition, after the usual dilation in time.
\end{proof}

From this proposition and the usual Wick formula, we obtain the following decomposition
\begin{align}
&\E\pth{ \left| \reallywidehat{\dr_t Y^\star_{m,n}}(\eta,+,\eta_\bullet,\iota_\bullet)(\e^{-2}t,[\e^{-2}t_\bullet],k,k_\bullet) \right|^2}\label{decomp EdtY^2}
\\&\hspace{2cm} = \sum_{C\in\mathcal{C}^\star_{m,n}(\eta,\eta_\bullet,\iota_\bullet)} \sum_{\substack{j_{ \mathrm{max} }\in\{-1\}\sqcup (\mathcal N_T^{\tot,\mathrm{max}}(A) \setminus \{ \mathrm{int}(\bullet) \} )\\ j'_{ \mathrm{max} }\in\{-1\}\sqcup (\mathcal N_T^{\tot,\mathrm{max}}(A') \setminus \{ \mathrm{int}(\bullet') \} )}} \reallywidehat{ G_{C,j_{ \mathrm{max} },j'_{ \mathrm{max} }} }(\e^{-2}t,[\e^{-2}t_\bullet],k,k_\bullet)\non
\end{align}
where $\reallywidehat{ G_{C,j_{ \mathrm{max} },j'_{ \mathrm{max} }} }(\e^{-2}t,[\e^{-2}t_\bullet],k,k_\bullet)$ is defined by
\begin{itemize}
\item[(i)] If $\star=\bi$, then,
\begin{itemize}
\item if $j_{\mathrm{max}}=j'_{\mathrm{max}}=-1$, then
\begin{align*}
&\reallywidehat{ G_{C,-1,-1} }(\e^{-2}t,k,k_\bullet) 
\\& = -(-i)^{n_T}\frac{\e^{n_B^\tot}}{L^{(n-1)d}} \sum_{\ka\in\mathcal{D}_{k,k_\bullet}(C)} \Om_{-1}(A)\Om_{-1}(A') e^{i\Om_{-1}(C)\e^{-2}t}
\\&\quad \times \prod_{j\in\mathcal{N}^\tot(C)}q_j \int_{I_C(t)}\prod_{j\in\mathcal{N}_T(C)}e^{i\Om_j \e^{-2}t_j}\d t_j\prod_{\ell\in\mathcal{L}(C)_-}  M^{c(\ell),c(\si(\ell))}(\ffi(\ell)\ka(\ell))^{\ffi(\ell)},
\end{align*}
\item if $j_{\mathrm{max}}=-1$ and $j'_{\mathrm{max}}\neq-1$, then
\begin{align*}
&\reallywidehat{ G_{C,-1,j'_{ \mathrm{max} }} }(\e^{-2}t,k,k_\bullet)
\\&=-(-i)^{n_T+1}\frac{\e^{n_B^\tot+2}}{L^{(n-1)d}} \sum_{\ka\in\mathcal{D}_{k,k_\bullet}(C)} \Om_{-1}(A) e^{i\pth{ \Om_{-1}(C)+\Om_{j'_{\mathrm{max}}}(A')}\e^{-2}t}
\\&\quad \times \prod_{j\in\mathcal{N}^\tot(C)}q_j \int_{I^{-1,j'_{\mathrm{max}}}_C(t)}\prod_{j\in\mathcal{N}_T(C)\setminus\{ j'_{\mathrm{max}} \}}e^{i\Om_j \e^{-2}t_j}\d t_j \prod_{\ell\in\mathcal{L}(C)_-}  M^{c(\ell),c(\si(\ell))}(\ffi(\ell)\ka(\ell))^{\ffi(\ell)},
\end{align*}
\item if $j_{\mathrm{max}}\neq-1$ and $j'_{\mathrm{max}}=-1$, then
\begin{align*}
&\reallywidehat{ G_{C,j_{ \mathrm{max} },-1} }(\e^{-2}t,k,k_\bullet)
\\&=-(-i)^{n_T+1}\frac{\e^{n_B^\tot+2}}{L^{(n-1)d}} \sum_{\ka\in\mathcal{D}_{k,k_\bullet}(C)} \Om_{-1}(A') e^{i\pth{ \Om_{-1}(C)+\Om_{j_{\mathrm{max}}}(A)}\e^{-2}t}
\\&\quad \times \prod_{j\in\mathcal{N}^\tot(C)}q_j \int_{I^{j_{\mathrm{max}},-1}_C(t)}\prod_{j\in\mathcal{N}_T(C)\setminus\{ j_{\mathrm{max}} \}}e^{i\Om_j \e^{-2}t_j}\d t_j \prod_{\ell\in\mathcal{L}(C)_-}  M^{c(\ell),c(\si(\ell))}(\ffi(\ell)\ka(\ell))^{\ffi(\ell)},
\end{align*}
\item if $j_{\mathrm{max}}\neq-1$ and $j'_{\mathrm{max}}\neq-1$, then
\begin{align*}
&\reallywidehat{ G_{C,j_{ \mathrm{max} },j'_{ \mathrm{max} }} }(\e^{-2}t,k,k_\bullet)
\\&=(-i)^{n_T}\frac{\e^{n_B^\tot+4}}{L^{(n-1)d}} \sum_{\ka\in\mathcal{D}_{k,k_\bullet}(C)} e^{i\pth{ \Om_{-1}(C)+\Om_{j_{\mathrm{max}}}(A) + \Om_{j'_{\mathrm{max}}}(A') }\e^{-2}t}  \prod_{j\in\mathcal{N}^\tot(C)}q_j   
\\&\quad \times  \int_{I^{ j_{\mathrm{max}},j'_{\mathrm{max}} }_C(t)}  \prod_{j\in\mathcal{N}_T(C)\setminus\{ j_{\mathrm{max}},j'_{\mathrm{max}} \}} e^{i\Om_j \e^{-2}t_j}\d t_j \prod_{\ell\in\mathcal{L}(C)_-}  M^{c(\ell),c(\si(\ell))}(\ffi(\ell)\ka(\ell))^{\ffi(\ell)},
\end{align*}
\end{itemize}
\item[(ii)] If $\star=\ter$, then,
\begin{itemize}
\item if $j_{\mathrm{max}}=j'_{\mathrm{max}}=-1$, then
\begin{align*}
&\reallywidehat{ G_{C,-1,-1} }(\e^{-2}t,\e^{-2}t_\bullet,k,k_\bullet)
\\&= -  (-i)^{n^\tot_T} \frac{\e^{n_B^\tot}}{L^{(n-1)d}}  \sum_{\ka\in\mathcal{D}_{k,k_\bullet}(C)} \Om_{-1}(A)\Om_{-1}(A') e^{i\Om_{-1}(C)\e^{-2}t} e^{i\pth{\Om_\bullet + \Om_{\bullet'}} \e^{-2}t_\bullet} \prod_{j\in\mathcal{N}^\tot(C)}q_j
\\&\quad \times  \int_{I_C(t,t_\bullet)} \prod_{j\in \mathcal{N}^\tot_T(A)\setminus\{\mathrm{int}(\bullet),\mathrm{int}(\bullet')\}} e^{i\Om_j \e^{-2}t_j}\d t_j  \prod_{\ell\in\mathcal{L}(C)_-}  M^{c(\ell),c(\si(\ell))}(\ffi(\ell)\ka(\ell))^{\ffi(\ell)},
\end{align*}
\item if $j_{\mathrm{max}}=-1$ and $j'_{\mathrm{max}}\neq-1$, then
\begin{align*}
&\reallywidehat{ G_{C,-1,j'_{ \mathrm{max} }} }(\e^{-2}t,\e^{-2}t_\bullet,k,k_\bullet)
\\&=- (-i)^{n^\tot_T+1} \frac{\e^{n_B^\tot+2}}{L^{(n-1)d}}  \sum_{\ka\in\mathcal{D}_{k,k_\bullet}(C)} \Om_{-1}(A) e^{i\pth{\Om_{-1}(C)+\Om_{j'_{\mathrm{max}}}(A')}\e^{-2}t} e^{i\pth{\Om_\bullet + \Om_{\bullet'}} \e^{-2}t_\bullet}
\\&\quad \times \prod_{j\in\mathcal{N}^\tot(C)}q_j \int_{I_C^{-1,j'_{\mathrm{max}}  }(t,t_\bullet)} \prod_{j\in \mathcal{N}^\tot_T(C)\setminus\{\mathrm{int}(\bullet),\mathrm{int}(\bullet'), j'_{\mathrm{max}}\}} e^{i\Om_j \e^{-2}t_j}\d t_j 
\\&\quad \times \prod_{\ell\in\mathcal{L}(C)_-}  M^{c(\ell),c(\si(\ell))}(\ffi(\ell)\ka(\ell))^{\ffi(\ell)},
\end{align*}
where $I_C^{-1,j'_{\mathrm{max}}  }(t,t_\bullet)=I_A(t,t_\bullet)\times I_{A'}^{j'_{\mathrm{max}}  }(t,t_\bullet)$,
\item if $j_{\mathrm{max}}\neq-1$ and $j'_{\mathrm{max}}=-1$, then
\begin{align*}
&\reallywidehat{ G_{C,j_{ \mathrm{max} },-1} }(\e^{-2}t,\e^{-2}t_\bullet,k,k_\bullet)
\\&=- (-i)^{n^\tot_T+1} \frac{\e^{n_B^\tot+2}}{L^{(n-1)d}}  \sum_{\ka\in\mathcal{D}_{k,k_\bullet}(C)} \Om_{-1}(A') e^{i\pth{\Om_{-1}(C)+\Om_{j_{\mathrm{max}}}(A)}\e^{-2}t} e^{i\pth{\Om_\bullet + \Om_{\bullet'}} \e^{-2}t_\bullet}
\\&\quad \times \prod_{j\in\mathcal{N}^\tot(C)}q_j \int_{I_C^{j_{\mathrm{max}},-1  }(t,t_\bullet)} \prod_{j\in \mathcal{N}^\tot_T(C)\setminus\{\mathrm{int}(\bullet),\mathrm{int}(\bullet'), j_{\mathrm{max}}\}} e^{i\Om_j \e^{-2}t_j}\d t_j 
\\&\quad \times \prod_{\ell\in\mathcal{L}(C)_-}  M^{c(\ell),c(\si(\ell))}(\ffi(\ell)\ka(\ell))^{\ffi(\ell)},
\end{align*}
where $I_C^{j_{\mathrm{max}},-1  }(t,t_\bullet)=I_A^{ j_{\mathrm{max}} }(t,t_\bullet)\times I_{A}(t,t_\bullet)$,
\item if $j_{\mathrm{max}}\neq-1$ and $j'_{\mathrm{max}}\neq-1$, then
\begin{align*}
&\reallywidehat{ G_{C,j_{ \mathrm{max} },j'_{ \mathrm{max} }} }(\e^{-2}t,\e^{-2}t_\bullet,k,k_\bullet)
\\&=  (-i)^{n^\tot_T} \frac{\e^{n_B^\tot+4}}{L^{(n-1)d}}  \sum_{\ka\in\mathcal{D}_{k,k_\bullet}(C)} e^{i\pth{\Om_{-1}(C)+\Om_{j_{\mathrm{max}}}+\Om_{j'_{\mathrm{max}}}}\e^{-2}t}e^{i\pth{\Om_\bullet + \Om_{\bullet'} }\e^{-2}t_\bullet}
\\&\quad \times \prod_{j\in\mathcal{N}^\tot(C)}q_j     \int_{I_C^{j_{\mathrm{max}},j'_{\mathrm{max}}  }(t,t_\bullet)} \prod_{j\in\mathcal{N}_T^\tot(C)\setminus\{\mathrm{int}(\bullet),\mathrm{int}(\bullet'), j_{\mathrm{max}},j'_{\mathrm{max}} \}} e^{i\Om_j \e^{-2}t_j} \d t_j 
\\&\quad \times \prod_{\ell\in\mathcal{L}(C)_-}  M^{c(\ell),c(\si(\ell))}(\ffi(\ell)\ka(\ell))^{\ffi(\ell)},
\end{align*}
where $I_C^{j_{\mathrm{max}},j'_{\mathrm{max}}  }(t,t_\bullet)=I_A^{j_{\mathrm{max}}  }(t,t_\bullet)\times I_{A'}^{j'_{\mathrm{max}}  }(t,t_\bullet)$.
\end{itemize}
\end{itemize}

\begin{remark}
Since the cardinal of $N_T^{\tot,\mathrm{max}}(A)$ grows linearly in $n(A)$, the cardinal of the set
\begin{align*}
 \enstq{(C,j_1,j_2)}{C\in\mathcal{C}^\star_{m,n,q}(\eta,\eta_\bullet,\iota_\bullet),\; j_i\in \{-1\}\sqcup( \mathcal N_T^{\tot,\mathrm{max}}(A_i)\setminus \{ \mathrm{int}(\bullet_i) \}), i=1,2}
\end{align*}
where $A_1$ and $A_2$ are the two trees in $C$, is bounded by $\La^{n+1} \pth{ n + 2 - q }!$ for some $\La>0$.
\end{remark}

The following proposition contains the key estimate satisfied by $\reallywidehat{G_{C,j_{\mathrm{max}},j'_{\mathrm{max}}}}$. It is proved as Propositions \ref{prop low nodes GC} and \ref{prop high nodes GC} (similarly to how Proposition \ref{prop estimate FCjmax} was proved as Proposition \ref{prop |FC|}).

\begin{prop}\label{prop G_Cjmax}
Set $\a_0=\min(\a_\low,\a_\high)$. There exists $\La>0$ such that for all $C\in\mathcal{C}^\star_{m,n}(\eta,\eta_\bullet,\iota_\bullet)$ for $\star=\bi,\ter$ satisfying
\begin{align*}
n(C)-n_\scb(C)\geq 40, \qquad n(C)\leq |\ln \e|^{3},
\end{align*}
and for all 
\begin{align*}
(j_{\mathrm{max}},j'_{\mathrm{max}})\in \Big( \{-1\}\sqcup( \mathcal N_T^{\tot,\mathrm{max}}(A)\setminus \{ \mathrm{int}(\bullet) \}) \Big) \times \Big( \{-1\}\sqcup( \mathcal N_T^{\tot,\mathrm{max}}(A')\setminus \{ \mathrm{int}(\bullet') \}) \Big),
\end{align*}
(where $A$ and $A'$ are the two trees in $C$) then for any $\e\leq \de$ and any $k,k_\bullet\in\Z_L^d$ we have for all $t\in[0,\de]$
\begin{align*}
\left| \reallywidehat{G_{C,j_{\mathrm{max}},j'_{\mathrm{max}}}}(\e^{-2}t,[\e^{-2}t_\bullet,]k,k_\bullet) \right| \leq \left\{
\begin{aligned}
&\La^{n+1} \de^{n[-2]} \e^{\a_\high(n(C)-n_\scb(C))}\ps{k_\bullet}^{-\half} \qquad \text{if $k\neq k_\bullet$,}
\\&\La^{n+1} L^d \de^{n[-2]} \e^{\a_\high(n(C)-n_\scb(C))}\ps{k_\bullet}^{-\half} \quad \text{if $k = k_\bullet$,}
\end{aligned}
\right.
\end{align*}
where the content of the brackets only appears when $\star=\ter$, in which case the estimate holds for all $0\leq t_\bullet \leq t \leq \de$.
\end{prop}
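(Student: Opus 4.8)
The plan is to mirror the proofs of Propositions~\ref{prop low nodes GC} and~\ref{prop high nodes GC}, splitting according to the low nodes regime $n_B^\tot(C) + 2(n^\tot_\low(C) - n_\scb(C)) \geq \frac{1}{5}(n(C)-n_\scb(C))$ and the high nodes regime $n_B^\tot(C) + 2(n^\tot_\low(C) - n_\scb(C)) < \frac{1}{5}(n(C)-n_\scb(C))$, and carefully tracking the only two ways in which the formulas for $\reallywidehat{G_{C,j_{\mathrm{max}},j'_{\mathrm{max}}}}$ differ from those for $\reallywidehat{G_C}$: the extra factors $\Om_{-1}(A)$ and/or $\Om_{-1}(A')$, and the removal of one or two ternary oscillating integrals (those carried by $j_{\mathrm{max}}$, $j'_{\mathrm{max}}$), the corresponding nodes being replaced by the bounded phases $e^{i\Om_{j_{\mathrm{max}}}\e^{-2}t}$, $e^{i\Om_{j'_{\mathrm{max}}}\e^{-2}t}$ and by two extra powers of $\e$ each in the prefactor. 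This is exactly the situation already treated when Proposition~\ref{prop estimate FCjmax} was deduced from Proposition~\ref{prop |FC|}.

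First I would dispose of the two modifications. On the support of $\prod_{\ell} M^{c(\ell),c(\si(\ell))}(\ffi(\ell)\ka(\ell))^{\ffi(\ell)}$ all node wavenumbers lie in a ball of radius $\lesssim n(C)R$, so Definition~\ref{def:deltaj} shows that $|\Om_{-1}(A)| + |\Om_{-1}(A')|$ is bounded by a polynomial in $n(C)$ with $R$-dependent coefficients; since $n(C)\leq |\ln\e|^3$, this is absorbed into the universal factor $\La^{n+1}$ present in all the estimates. As for the missing oscillating integrals, each one would have contributed a factor $\de$ when estimated over $[0,\de]$, and it is replaced by a factor $1$ (from $|e^{i\Om\e^{-2}t}|=1$) together with a factor $\e^2$ in the prefactor; since $\e\leq\de$ we have $\e^2\leq\de^2\leq\de$, so this substitution can only strengthen the bound. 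The counting inputs are unchanged: by the Remark preceding Proposition~\ref{prop G_Cjmax}, the number of triples $(C,j_1,j_2)$ with $n_\scb(C)=q$ is still bounded by $\La^{n+1}(n+2-q)!$, exactly as the count of couplings in~\eqref{counting nscb linear coupling}, because $\#\mathcal{N}_T^{\tot,\mathrm{max}}(A)$ grows linearly in $n(A)$.

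With these preliminaries, the low nodes regime goes through exactly as in the proof of Proposition~\ref{prop low nodes GC}: I would bound the remaining oscillating integral by a power of $\de$, bound the $q_j$'s by Lemma~\ref{lem qj}, reduce to a sum of cutoffs $\chi_j$ over non-self-coupled bushes, build a basis of $V(C)$ of the form $(\kfr_\bush(J_i), K(\ell))$, change variables accordingly, and sum successively exploiting the resulting triangular structure; the only bookkeeping change is that a node equal to $j_{\mathrm{max}}$ or $j'_{\mathrm{max}}$, being maximal among ternary nodes, has no ternary node above it and therefore does not disturb the time-ordering of the nodes that remain. The high nodes regime follows the proof of Proposition~\ref{prop high nodes GC}: Lemma~\ref{lemma:oscillations} and its $\ter$-version still apply, now on the time domains $I^{j_{\mathrm{max}}}_A(t)$, $I^{j_{\mathrm{max}}}_A(t,t_\bullet)$, producing fewer orders $\rho$ to sum over and fewer denominators $n_T\de^{-1}-i(\xi+\om^\rho_j)$; the induced orders $\hat\rho,\tilde\rho$ (Definitions~\ref{def rho hat},~\ref{def rho tilde}), the extraction of a basis compatible with $\tilde\rho$ (Proposition~\ref{prop LukSpo}), the families $\mathcal{N}'(C)$ and $\mathcal{N}''(C)$, the case analysis $\mathbf{C1}$--$\mathbf{C5}$, the Hölder inequalities in the $\xi$-variables and the key integral bound of Lemma~\ref{lem:intTheta} all transfer verbatim once $j_{\mathrm{max}}$ and $j'_{\mathrm{max}}$ are excluded from $\mathcal{N}_T(C)$, resp.\ $\mathcal{N}_T^\tot(C)$. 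Combining the two regimes and setting $\a_0 = \min(\a_\low,\a_\high)$ then yields the claim.

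I expect the main (and purely organizational) difficulty to lie in the $\ter$ case of the high nodes regime, where $j_{\mathrm{max}}$ and $j'_{\mathrm{max}}$ have to be removed on top of the already-excluded nodes $\mathrm{int}(\bullet),\mathrm{int}(\bullet')$: one must check that the construction of the induced orders and of $\mathcal{N}''(C)$ remains consistent, and that the numerology — cardinalities of the relevant $\mathfrak{M}$-sets, the bound $n''\geq\frac{n-3}{2}$, and the inequality $2n''_{\mathrm{gain}}+2n_B^\tot\geq\frac{3}{5}(n(C)-n_\scb(C)) - O(1)$ — survives the additional removals. Because $j_{\mathrm{max}},j'_{\mathrm{max}}$ are maximal ternary nodes distinct from $\mathrm{int}(\bullet),\mathrm{int}(\bullet')$, each removal alters the relevant cardinals only by a bounded amount and preserves compatibility with the parentality order, and the slack built into the thresholds $40$ and $30$ of Propositions~\ref{prop low nodes GC} and~\ref{prop high nodes GC} is exactly what makes the single hypothesis $n(C)-n_\scb(C)\geq 40$ of the present proposition enough. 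No new analytic input beyond Sections~\ref{section low nodes regime}--\ref{section high nodes regime} is needed.
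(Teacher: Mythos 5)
Your proposal is correct and follows essentially the same route as the paper, which proves this proposition by reducing to Propositions \ref{prop low nodes GC} and \ref{prop high nodes GC} after observing exactly the two differences you identify: the extra $\Om_{-1}$ factors (absorbed into $\La^{n+1}$ via compact support and $n(C)\leq|\ln\e|^3$) and the missing oscillating integrals (compensated by the extra powers of $\e\leq\de$). Your discussion is in fact somewhat more detailed than the paper's, which simply asserts that the arguments of the earlier propositions apply without change.
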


\subsubsection{Estimates for $Y^\star_{m,n}$ and the iterates} 

The previous results imply the following proposition.

\begin{prop} \label{Ybi and Yter estimates}
There exists $\Lambda>0$ such that, for any fixed $A>0$, with probability $\ge 1-L^{-A}$ and for all $n\leq |\ln\e|^{3}$ we have
\begin{align*}
 \l \reallywidehat{Y^{\bi}_{m,n}}(\eta, \iota, \eta_\bullet, \iota_\bullet) \r_{L^\infty_{t, k, k_\bullet}([0, \e^{-2}\delta]\times (\mathbb Z^d_L)^2)} &\leq \La^{n}\delta^{\frac{n}{2}} L^{\frac14 +\frac{d}{2} + A},
\\ \l \reallywidehat{Y^{\ter}_{m,n}}(\eta, \iota, \eta_\bullet, \iota_\bullet)\r_{L^\infty_t L^1_{t_\bullet} L^\infty_{k,k_\bullet}([0, \e^{-2}\delta]^2\times (\mathbb Z^d_L)^2)} & \leq \La^{n}\delta^{\frac{n}{2}} L^{\frac14 +\frac{d}{2} + \frac2\beta + A}.
\end{align*}
\end{prop}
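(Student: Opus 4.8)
The plan is to mirror the proof of Corollary~\ref{coro Xn}, replacing the diagrammatic of Section~\ref{subsec:diag} by its linear-tree version. The three ingredients are: the decomposition of the second moments $\E(|\reallywidehat{Y^\star_{m,n}}|^2)$ and $\E(|\reallywidehat{\dr_tY^\star_{m,n}}|^2)$ as sums over couplings of linear trees (Proposition~\ref{prop norme Y} and \eqref{decomp EdtY^2}); the pointwise bounds on $\reallywidehat{G_C}$ and $\reallywidehat{G_{C,j_{\mathrm{max}},j'_{\mathrm{max}}}}$ from Propositions~\ref{prop low nodes GC}, \ref{prop high nodes GC} and \ref{prop G_Cjmax}; and the counting estimate \eqref{counting nscb linear coupling}. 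First I would combine the low- and high-nodes estimates for $\reallywidehat{G_C}$ (and for $\reallywidehat{G_{C,j_{\mathrm{max}},j'_{\mathrm{max}}}}$) into a single bound featuring the $\e$-gain $\e^{\alpha(n(C)-n_\scb(C))}$ when $n(C)-n_\scb(C)$ is large, together with the weak bound valid in general, exactly as was done for $\reallywidehat{F_C}$ in Proposition~\ref{prop |FC|}.

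Summing over $C\in\mathcal C^\star_{m,n}(\eta,\eta_\bullet,\iota_\bullet)$, one then splits the sum according to whether $n_\scb(C)$ is within $\mathcal O(1)$ of its maximal value or not: the near-maximal part is estimated by the weak bound and the factorial $(n+2-q)!$ from \eqref{counting nscb linear coupling}, which contributes only boundedly many terms; the remaining part is estimated by the improved $\e$-gain, which beats the factorial via Stirling once $n\le|\ln\e|^3$. This produces, uniformly in the time and frequency variables,
\begin{align*}
\E\pth{\left|\reallywidehat{Y^\star_{m,n}}(\e^{-2}t,[\e^{-2}t_\bullet,]k,k_\bullet)\right|^2}+\E\pth{\left|\reallywidehat{\dr_tY^\star_{m,n}}(\e^{-2}t,[\e^{-2}t_\bullet,]k,k_\bullet)\right|^2}\lesssim(\La\de)^{\tfrac n2}\ps{k_\bullet}^{-1}\pth{1+L^d\mathbbm 1_{k=k_\bullet}},
\end{align*}
the brackets referring to the $\ter$ case. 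Here the factor $\ps{k_\bullet}^{-1/2}$ carried by each $\reallywidehat{G_C}$ is what makes the eventual supremum over $k_\bullet\in\Z_L^d$ harmless, while the extra $L^d$ present only when $k=k_\bullet$ is absorbed, after taking square roots, into the $L^{d/2}$ of the claimed bounds (cf. Remark~\ref{rm:AnalysisZero}).

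To pass from these pointwise second-moment bounds to the stated almost-sure $L^\infty$ estimates I would argue as in the proof of Corollary~\ref{coro Xn}. One uses that only the Gaussians $\mu^\eta_k$ with $|k|\le R$ are non-degenerate and that $\reallywidehat{G_C}\neq0$ forces $|k-k_\bullet|=\mathcal O(n)$, so that $\reallywidehat{Y^\star_{m,n}}$ is a polynomial of degree $2n$ in $\mathcal O(L^{\mathcal O(d)})$ Gaussians; Gaussian hypercontractivity (via Wick's formula) then upgrades the $L^2$ control to $L^{2p}$ control with a constant of size $\La^n$ — so that $p$ must be taken bounded, uniformly in $n\le|\ln\e|^3$. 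Sobolev embedding in $t$ on $[0,\e^{-2}\de]$, using the $\dr_tY^\star_{m,n}$ bound — and, for $Y^\ter$, a Cauchy--Schwarz step converting the $L^1_{t_\bullet}$ norm into an $L^2_{t_\bullet}$ norm, which is the source of the additional $L^{2/\beta}$ — removes the time variables; and Markov's inequality together with a union bound over the $\mathcal O(L^{\mathcal O(d)})$ relevant frequencies $(k,k_\bullet)$ and a time net gives the result. The powers of $L$ in the final bounds are then read off by the same bookkeeping that produces \eqref{estim norm op intro}.

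The main obstacle is the same as for $X_n$: removing the factorial growth, i.e. checking that the splitting of couplings by $n_\scb$ and the Stirling argument still close after the modifications imposed by the linear structure (the extra leaf $\bullet$, the kernel weight $\ps{k_\bullet}^{-1/2}$, the slightly weaker size thresholds in Propositions~\ref{prop low nodes GC}--\ref{prop high nodes GC}, and the $L^d$ term when $k=k_\bullet$). A secondary subtlety is that the moment exponent $p$ in the hypercontractivity/Markov step must stay bounded so that the constant $\La^n$ is not spoilt, which is precisely why the union bound over the $\mathcal O(L^{\mathcal O(d)})$ frequencies leaves a genuine (but harmless) polynomial loss in $L$.
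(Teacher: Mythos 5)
Your proposal is correct and follows essentially the same route as the paper: second-moment bounds via couplings of linear trees split according to $n_\scb$ with the Stirling argument, Gaussian hypercontractivity, control of the time supremum through the $\dr_tY^\star_{m,n}$ bounds, and a final Markov/summation-over-frequencies step, with the $L^{2/\beta}$ arising from trading the $L^1_{t_\bullet}$ norm for a higher-integrability norm on $[0,\e^{-2}\de]$. The only slip is cosmetic: since $\E\big(|\reallywidehat{Y^\star_{m,n}}|^2\big)=\sum_C\reallywidehat{G_C}$ is a \emph{single} sum of kernels each carrying $\ps{k_\bullet}^{-1/2}$, the second moment decays like $\ps{k_\bullet}^{-1/2}$ rather than $\ps{k_\bullet}^{-1}$, which is why the fixed moment exponent must be taken $p>4d$ (rather than $p>2d$) to sum over $k_\bullet$.
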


\begin{proof}
The $L^p$ norms in this proof are always taken with respect to the variables $t$ and/or $t_\bullet$ on the interval $[0, \e^{-2}\delta]$. We will omit to write such intervals to keep notations lighter.
Moreover, the estimates that follows are independent of the choice of $\eta$, $\iota$, $\eta_\bullet$ and $\iota_\bullet$, hence we will refer to $Y^\star_{m,n}(\eta, \iota, \eta_\bullet, \iota_\bullet)$ simply as $Y^\star_{m,n}$ for $\star=\bi, \ter$.

We begin by observing that, using Proposition \ref{prop norme Y}, \eqref{counting nscb linear coupling}, Propositions \ref{prop low nodes GC} and \ref{prop high nodes GC} and an argument analogous to the one detailed in the proof of Corollary \ref{coro Xn}, we can prove the existence of some $\Lambda>0$ such that for $\star=\bi, \ter$ and for any $n\in \N$ and $t, t_\bullet\in [0,\delta]$ we have the estimates
\begin{equation}\label{E|Y|2}
\E\pth{ \left| \reallywidehat{Y^{\star}_{m,n}}(\e^{-2}t,[\e^{-2}t_\bullet,]  k, k_\bullet) \right|^2  } \leq \left\{
\begin{aligned}
\an{k_\bullet}^{-1/2}\La^{2n+1}\delta^{n[-2]}  & \qquad  \mathrm{if}\, k\neq k_\bullet,
\\ \an{k_\bullet}^{-1/2}L^d\La^{2n+1}\delta^{n[-2]}  & \qquad \mathrm{if}\, k= k_\bullet.
\end{aligned}
\right.
\end{equation}
Furthermore, given the definition of $G_C$ in Proposition \ref{prop norme Y}, we remark that the expectation in the above left hand side is supported for $k, k_\bullet$ such that $k-k_\bullet$ belongs to a ball of $\mathbb Z^d_L$ centered at the origin, of radius $\mathcal O (n )$. 

Next, recall that Gaussian hypercontractivity (see for instance Theorem 5.1 from \cite{Janson}) gives the existence, for any $p<\infty$, of a constant $c(p)>0$ only depending on $p$, such that
\begin{align*}
\E\pth{ \left|\reallywidehat{Y^{\star}_{m,n}}(\e^{-2}t, [\e^{-2}t_\bullet ,]k,k_\bullet)\right|^p }\leq c(p)^{np} \pth{ \E \pth{ \left|\reallywidehat{Y^{\star}_{m,n}}(\e^{-2}t, [\e^{-2}t_\bullet ,]k,k_\bullet)\right|^2} }^{p/2}.
\end{align*}
Together with \eqref{E|Y|2} this implies
\begin{equation*}
\E\pth{ \left|\reallywidehat{Y^{\star}_{m,n}}(\e^{-2}t, [\e^{-2}t_\bullet ,]k,k_\bullet)\right|^p }\leq \left\{
\begin{aligned}
c(p)^{np} \an{k_\bullet}^{-\frac{p}{4}}\La^{(2n+1)\frac{p}{2}}\delta^{(n[-2])\frac{p}{2}} & \qquad  \mathrm{if}\, k\neq k_\bullet,
\\ c(p)^{np} \an{k_\bullet}^{-\frac{p}{4}}L^{\frac{dp}{2}}\La^{(2n+1)\frac{p}{2}}\delta^{(n[-2])\frac{p}{2}} &  \qquad \mathrm{if}\, k= k_\bullet.
\end{aligned}
\right.
\end{equation*}
The above estimate being uniform in $t$ and $t_\bullet$ we can integrate it in $t$ (and in $t_\bullet$ if $\star=\ter$) and obtain
\begin{multline}\label{expectation Lp norm Y}
    \E\pth{ \left\|\reallywidehat{Y^{\bi}_{m,n}}(\cdot,  k, k_\bullet) \right\|^p_{L^p_t}} + \E\pth{ \left\|\reallywidehat{Y^{\ter}_{m,n}}(\cdot,\cdot,  k, k_\bullet) \right\|^p_{L^p_{t,t_\bullet}}}\\
\le \left \lbrace{\begin{array}{cc}
c(p)^{np} \an{k_\bullet}^{-\frac{p}{4}}\La^{(2n+1)\frac{p}{2}}\delta^{(n[-1])\frac{p}{2}+1} \e^{-2[-2]},  & \mathrm{if}\, k\neq k_\bullet,\\
c(p)^{np} \an{k_\bullet}^{-\frac{p}{4}}L^{\frac{dp}{2}}\La^{(2n+1)\frac{p}{2}}\delta^{(n[-1])\frac{p}{2}+1}\e^{-2[-2]},  & \mathrm{if}\, k= k_\bullet.
\end{array}} \right.
\end{multline}
Looking at the decomposition \eqref{decomp EdtY^2} and Proposition \ref{prop G_Cjmax}, we see that we can treat $\reallywidehat{\dr_tY^{\star}_{m,n}}$ identically and thus obtain
\begin{multline}\label{expectation Lp norm dtY}
    \E\pth{ \left\|\reallywidehat{\dr_tY^{\bi}_{N(L),n}}(\cdot,  k, k_\bullet) \right\|^p_{L^p_t}} + \E\pth{ \left\|\reallywidehat{\dr_tY^{\ter}_{N(L),n}}(\cdot,\cdot,  k, k_\bullet) \right\|^p_{L^p_{t,t_\bullet}}}\\
\le \left \lbrace{\begin{array}{cc}
c(p)^{np} \an{k_\bullet}^{-\frac{p}{4}}\La^{(2n+1)\frac{p}{2}}\delta^{(n[-1])\frac{p}{2}+1} \e^{-2[-2]},  & \mathrm{if}\, k\neq k_\bullet,\\
c(p)^{np} \an{k_\bullet}^{-\frac{p}{4}}L^{\frac{dp}{2}}\La^{(2n+1)\frac{p}{2}}\delta^{(n[-1])\frac{p}{2}+1}\e^{-2[-2]},  & \mathrm{if}\, k= k_\bullet.
\end{array}} \right.
\end{multline}

Now, using successively the $\ell^1\hookrightarrow\ell^\infty$ injection, the Gagliardo-Niremberg inequality on $[0,\e^{-2}\de]$ (which yields an extra factor $(\e^{-2}\de)^{-1}$) and Hölder inequality we deduce the following estimate for $\reallywidehat{Y^{\bi}_{m,n}}$:
\begin{align*}
&\E\pth{ \sup_{t\in[0,\e^{-2}\de]} \sup_{k,k_\bullet\in\Z_L^d}\left| \reallywidehat{Y^{\bi}_{m,n}}(t,k,k_\bullet) \right|^p  } 
\\& \leq  \E\pth{ \sum_{k,k_\bullet\in\Z_L^d}\sup_{t\in[0,\e^{-2}\de]} \left| \reallywidehat{Y^{\bi}_{m,n}}(t,k,k_\bullet) \right|^p  }
\\&\lesssim \sum_{k,k_\bullet\in\Z_L^d} \E \pth{  \l \reallywidehat{\dr_tY^{\bi}_{m,n}} (\cdot,k,k_\bullet)\r_{L^p} \l \reallywidehat{Y^{\bi}_{m,n}}(\cdot,k,k_\bullet)\r_{L^p}^{p\pth{1-\frac{1}{p}}} +  (\e^{-2}\de)^{-1} \l \reallywidehat{Y^{\bi}_{m,n}}(\cdot,k,k_\bullet) \r_{L^p}^p  }
\\&\lesssim \sum_{k,k_\bullet\in\Z_L^d} \Bigg( \E\pth{ \l \reallywidehat{\dr_tY^{\bi}_{m,n}} (\cdot,k,k_\bullet)\r_{L^p}^p}^{\frac{1}{p}}  \E\pth{ \l \reallywidehat{Y^{\bi}_{m,n}} (\cdot,k,k_\bullet)\r_{L^p}^p}^{1-\frac{1}{p}}  
\\&\hspace{8cm}+  (\e^{-2}\de)^{-1} \E\pth{\l \reallywidehat{Y^{\bi}_{m,n}}(\cdot,k,k_\bullet) \r_{L^p}^p} \Bigg).
\end{align*}
For $\reallywidehat{Y^{\ter}_{m,n}}$, we first use the $L^p\hookrightarrow L^1$ injection on $[0,\e^{-2}\de]$ and then follow the same strategy as for $\reallywidehat{Y^{\bi}_{m,n}}$. This gives
\begin{align*}
&\E\pth{ \sup_{t\in[0,\e^{-2}\de]} \sup_{k,k_\bullet\in\Z_L^d}\l \reallywidehat{Y^{\ter}_{m,n}}(t,\cdot,k,k_\bullet) \r_{L^1_{t_\bullet}}^p  } 
\\&\leq (\e^{-2}\de)^{p-1} \E\pth{ \sup_{t\in[0,\e^{-2}\de]} \sup_{k,k_\bullet\in\Z_L^d}\l \reallywidehat{Y^{\ter}_{m,n}}(t,\cdot,k,k_\bullet) \r_{L^p_{t_\bullet}}^p  } 
\\&\lesssim \sum_{k,k_\bullet\in\Z_L^d} \Bigg((\e^{-2}\de)^{p-1}\E\pth{ \l \reallywidehat{\dr_tY^{\ter}_{m,n}} (\cdot,\cdot,k,k_\bullet)\r_{L^p_{t,t_\bullet}}^p}^{\frac{1}{p}}  \E\pth{ \l \reallywidehat{Y^{\ter}_{m,n}} (\cdot,\cdot,k,k_\bullet)\r_{L^p_{t,t_\bullet}}^p}^{1-\frac{1}{p}}  
\\&\hspace{8cm} +  (\e^{-2}\de)^{p-2}\E\pth{ \l \reallywidehat{Y^{\ter}_{m,n}}(\cdot,\cdot,k,k_\bullet) \r_{L^p_{t,t_\bullet}}^p } \Bigg).
\end{align*}
To estimate the expectations in the above estimates we use \eqref{expectation Lp norm Y} and \eqref{expectation Lp norm dtY} with $p>4d$ to ensure summability in $k_\bullet$, while summability follows from the fact that the expectations in \eqref{expectation Lp norm Y} and \eqref{expectation Lp norm dtY} are supported for $k-k_\bullet$ in a ball of radius $\mathcal{O}(n)$. Therefore we obtain
\begin{align}\label{estim Ybi final}
\E\pth{ \sup_{t\in[0,\e^{-2}\de]} \sup_{k,k_\bullet\in\Z_L^d}\left| \reallywidehat{Y^{\bi}_{m,n}}(t,k,k_\bullet) \right|^p  } \leq (nL)^d L^{\frac{dp}{2}} \La^{\pth{n+\half}p}\de^{\frac{np}{2}}
\end{align}
and
\begin{align}\label{estim Yter final}
\E\pth{ \sup_{t\in[0,\e^{-2}\de]} \sup_{k,k_\bullet\in\Z_L^d}\l \reallywidehat{Y^{\ter}_{m,n}}(t,\cdot,k,k_\bullet) \r_{L^1_{t_\bullet}}^p  } \leq (nL)^d L^{\frac{dp}{2}} \La^{\pth{n+\half}p} \de^{ \frac{(n+1)p}{2}-1}\e^{-2p}.
\end{align}
Setting
\begin{align*}
a_{n,\bullet}^\bi \vcentcolon = \La^{n+\half}\de^{\frac{n}{2}}  L^{\frac{d}{p} + \frac{d}{2} + A}, \quad a_{n,\bullet}^\ter \vcentcolon = \La^{n+\half}\de^{\frac{n+1}{2}-\frac{1}{p}}  L^{\frac{d}{p} + \frac{d}{2} + \frac{2}{\b} + A},
\end{align*}
the estimates \eqref{estim Ybi final}-\eqref{estim Yter final}, together with Markov's inequality and the assumptions $n\leq |\ln\e|^{3}$ and $\e=L^{-\frac{1}{\b}}$, finally imply 
\begin{align*}
&\mathbb{P}\pth{ \sup_{t\in[0,\e^{-2}\de]} \sup_{k,k_\bullet\in\Z_L^d}\left| \reallywidehat{Y^{\bi}_{m,n}}(t,k,k_\bullet) \right|^p \geq (a_{n,\bullet}^\bi)^p } 
\\&\hspace{3cm}+ \mathbb{P}\pth{ \sup_{t\in[0,\e^{-2}\de]} \sup_{k,k_\bullet\in\Z_L^d}\l \reallywidehat{Y^{\ter}_{m,n}}(t,\cdot,k,k_\bullet) \r_{L^1_{t_\bullet}}^p \geq (a_{n,\bullet}^\ter)^p } \leq L^{-A}.
\end{align*}
The conclusion of the proof follows using the fact that we have chosen $p>4d$ and from the fact that $\delta<1$.
\end{proof}

From this proposition and the representation formula \eqref{itérées in terms Y} we deduce the following.

\begin{prop}\label{prop norme itérées}
Let $s\geq 0$ and $A>0$. There exists $\La>0$, $\de_0>0$ (depending on $d$ and $A$) and a set $\mathcal{E}_{L,A}$ of probability larger than $1-L^{-A}$ such that if $0<\de\leq \de_0$ and when restricted to $\mathcal{E}_{L,A}$ then we have for all $m\leq N(L)$
\begin{align}\label{norme op L^m}
\l \mathfrak{L}^{m} \r_{\mathrm{op}} \leq \La (\La\sqrt{\de})^{\frac{m}{2}} L^{\frac{1}{4}+ d + A} (\ln L)^{s+d}  .
\end{align}
\end{prop}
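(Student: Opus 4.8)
The strategy is to bound the operator norm of $\mathfrak{L}^m$ via the Fourier-kernel representation \eqref{itérées in terms Y}, which expresses $\reallywidehat{\mathfrak{L}^m(v)^\eta}(\e^{-2}t,k)$ as a sum over $\eta_\bullet,\iota_\bullet,k_\bullet$ and over $n\in\llbracket m,2m(N(L)+1)\rrbracket$ of two terms: a ``bilinear-type'' term with kernel $\reallywidehat{Y^\bi_{m,n}}$ evaluated at time $t$, and a ``ternary-type'' term involving $\int_0^t \reallywidehat{v^{\eta_\bullet,\iota_\bullet}}(\e^{-2}t_\bullet,k_\bullet)\reallywidehat{Y^\ter_{m,n}}(\e^{-2}t,\e^{-2}t_\bullet,k,k_\bullet)\,\d t_\bullet$. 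First I would work on the event $\mathcal{E}_{L,A}$ furnished by Proposition \ref{Ybi and Yter estimates}, on which the $L^\infty_{t,k,k_\bullet}$ norm of $\reallywidehat{Y^\bi_{m,n}}$ is $\lesssim \La^n \de^{n/2} L^{1/4+d/2+A}$ and the $L^\infty_t L^1_{t_\bullet} L^\infty_{k,k_\bullet}$ norm of $\reallywidehat{Y^\ter_{m,n}}$ is $\lesssim \La^n \de^{n/2} L^{1/4+d/2+2/\beta+A}$. Note that $\e = L^{-1/\beta}$ gives $L^{2/\beta}=\e^{-2}$, which is exactly what compensates the time integral of length $\e^{-2}\de$ in the ternary term — so both contributions will end up with the same $L^{1/4+d+A}$-type prefactor after accounting for the $L^{d/2}$ normalization factors in \eqref{itérées in terms Y} against the sum over $k_\bullet$.

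The key steps, in order: (i) Fix $t\in[0,\e^{-2}\de]$ and $k\in\Z_L^d$; apply the triangle inequality in \eqref{itérées in terms Y}. For the bilinear term, bound $|\reallywidehat{v^{\eta_\bullet,\iota_\bullet}}(\e^{-2}t,k_\bullet)|$ and sum over $k_\bullet$ using Cauchy–Schwarz: $\sum_{k_\bullet}|\reallywidehat{v}(k_\bullet)| \lesssim (\ln L)^{?}\,L^{?}\,\|v\|_{H^s}$ via the standard Sobolev-type estimate $\l \hat v\r_{\ell^1(\Z_L^d)}\lesssim L^{d/2}\l \hat v \r_{\ell^2}\cdot(\ldots)$ — more precisely $\sum_{k_\bullet\in\Z_L^d}|\widehat{v}(k_\bullet)| \le \big(\sum \ps{k_\bullet}^{-2s}\big)^{1/2}\big(\sum\ps{k_\bullet}^{2s}|\widehat v|^2\big)^{1/2}$ and the first factor is $\lesssim L^{d/2}$ up to constants (here $s>d/2$ would be cleanest, but since the statement allows $s\geq 0$ one pays an extra $(\ln L)^{s+d}$, which is precisely the logarithmic factor in \eqref{norme op L^m}). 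For the ternary term, pull the $L^\infty_t L^1_{t_\bullet}$ structure of $\reallywidehat{Y^\ter}$ outside and bound $\int_0^t |\reallywidehat{v}(\e^{-2}t_\bullet,k_\bullet)| |\reallywidehat{Y^\ter}|\,\d t_\bullet \le \sup_{t_\bullet}|\reallywidehat v(\e^{-2}t_\bullet,k_\bullet)|\cdot \l \reallywidehat{Y^\ter}(\e^{-2}t,\cdot,k,k_\bullet)\r_{L^1_{t_\bullet}}$, then sum over $k_\bullet$ as before — the change of variables $t_\bullet \mapsto \e^{-2}t_\bullet$ absorbs a factor $\e^{-2}=L^{2/\beta}$, matching the extra $L^{2/\beta}$ already present in the ternary bound. (ii) Sum over $n\in\llbracket m, 2m(N(L)+1)\rrbracket$: since $N(L) = \floor{\log L}$, there are $\lesssim m\log L$ values of $n$, each contributing $\La^n\de^{n/2}$; because $n\geq m$ and $\de$ is small, $\sum_{n\geq m}(\La\sqrt\de)^n \lesssim (\La\sqrt\de)^m$ after choosing $\de_0$ small enough that $\La\sqrt{\de_0}<1/2$, and the polynomial factor $m\log L \lesssim (\log L)^2$ is absorbed into a slightly larger $\La$ and the $(\ln L)^{s+d}$ factor (or handled by shrinking $\de$ slightly, replacing $\de^{m/2}$ by $\de^{m/4}$, which is consistent with the $\sqrt\de^{m/2}$ appearing in the statement). (iii) Take the supremum over $k$ and over $t\in[0,\e^{-2}\de]$, then over $v$ with $\l v\r_{\mathcal{C}([0,\e^{-2}\de];H^s)}\le 1$, and finally take the $H^s$ norm of the output: here one uses that the Fourier support of $\reallywidehat{\mathfrak{L}^m(v)^\eta}(k)$ in $k$ inherits a ``radius $\mathcal{O}(m N(L))$ around the support of $v$'' property from the compact support of the $M^{\eta,\eta'}$ and from $Y^\star$, so passing from the $\ell^\infty_k$ bound to the $H^s$ bound costs only a polynomial-in-$mN(L)$ factor, again absorbable into $(\ln L)^{s+d}$ and $\La$.

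The main obstacle I expect is bookkeeping the interplay between the three sources of $L$-powers — the normalization $L^{-d/2}$ per $k_\bullet$ in \eqref{itérées in terms Y}, the $\ell^1$-to-$\ell^2$ Sobolev cost $L^{d/2}$ (or $L^{d/2}(\ln L)^{s+d}$ when $s$ is small) in the sum over $k_\bullet$, and the $L^{2/\beta}=\e^{-2}$ factor that the time integral produces in the ternary term — and verifying that they combine to exactly $L^{1/4+d+A}$ and not something worse, uniformly in $m\le N(L)$. One has to be careful that the sum over $n$ up to $2m(N(L)+1)$, with $m$ itself as large as $N(L)=\floor{\log L}$, does not generate an unbounded power of $\log L$: this is why it is essential that the per-$n$ factor is genuinely geometric ($\La^n\de^{n/2}$ with $\de$ small) so that the geometric decay beats the $\mathcal{O}((\log L)^2)$ number of terms and the $\mathcal{O}((mN(L))^{?})$ Fourier-support factors. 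Once $\de_0$ is fixed small enough in terms of $\La$, $d$, $A$ (and $\beta$), all these polynomial-in-$\log L$ losses are crushed into the stated $(\ln L)^{s+d}$, giving \eqref{norme op L^m}.
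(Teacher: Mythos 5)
Your overall architecture is the paper's: work on the event of Proposition \ref{Ybi and Yter estimates}, insert the kernel bounds for $\reallywidehat{Y^\bi_{m,n}}$ and $\reallywidehat{Y^\ter_{m,n}}$ into \eqref{itérées in terms Y}, let the rescaling $t_\bullet\mapsto\e^{-2}t_\bullet$ cancel the $L^{2/\beta}$ in the ternary bound, and beat the $\mathcal{O}((\log L)^2)$-term sum over $n\in\llbracket m,2m(N(L)+1)\rrbracket$ with the geometric factor $\La^n\de^{n/2}$ after shrinking $\de_0$. That part is fine. The gap is in how you pass from the pointwise kernel bounds to the $H^s\to H^s$ operator norm.

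First, your Cauchy--Schwarz step $\sum_{k_\bullet\in\Z_L^d}|\widehat v(k_\bullet)|\le\bigl(\sum_{k_\bullet}\ps{k_\bullet}^{-2s}\bigr)^{1/2}\l v\r_{H^s}$ with the claim that the first factor is $\lesssim L^{d/2}$ is false for $s\le d/2$ (the sum diverges), and the proposition is stated for all $s\ge 0$; an extra $(\ln L)^{s+d}$ does not repair a divergent sum. The finiteness comes from a different source: for fixed $k$ the kernel $\reallywidehat{Y^\star_{m,n}}(\cdot,k,\cdot)$ is supported in $k_\bullet$ in a ball of radius $\mathcal{O}(n)$ around $k$ (compact support of the $M^{\eta,\eta'}$), so the $k_\bullet$-sum has only $\lesssim(nL)^d$ terms regardless of $s$. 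You invoke this support property, but only in step (iii) and for the wrong purpose.

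Second, and more seriously, the plan ``get a uniform-in-$k$ bound on $\reallywidehat{\mathfrak{L}^m(v)}(t,k)$, then take the $H^s$ norm using that the output's Fourier support lies within $\mathcal{O}(mN(L))$ of $\mathrm{supp}\,\widehat v$'' cannot close: $\widehat v$ is not compactly supported (it is merely in $H^s$), so the output is supported on essentially all of $\Z_L^d$ and $\sum_k\ps{k}^{2s}\cdot(\mathrm{const})^2=\infty$. You must keep the coupling between $k$ and $k_\bullet$: write $\ps{k}^{2s}\lesssim\ps{k-k_\bullet}^{s}\ps{k_\bullet}^{s}\ps{k-k'_\bullet}^{s}\ps{k'_\bullet}^{s}$, use $|k-k_\bullet|\lesssim n$ to bound $\ps{k-k_\bullet}^s\le\ps{n}^s$, and perform Cauchy--Schwarz in the full double sum over $k$ and $k_\bullet$ so that the weight $\ps{k_\bullet}^s$ lands on $\widehat v$ and produces $\l v\r_{H^s}$ with the counting factor $n^d(n')^dL^{2d}$. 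This is exactly how the paper proceeds (it expands $\l\mathfrak{L}^m(v)\r_{H^s}^2$ directly into the three terms $S_\bi$, $S_{\mathrm{cross}}$, $S_\ter$ and estimates each this way); the factors $\ps{n}^{s}n^{d}$ summed against $\La^n\de^{n/2}$ over $n\le 2m(N(L)+1)$ are what generate the stated $(\ln L)^{s+d}$, not a Sobolev embedding cost.
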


\begin{proof}
Let $t\in[0,\e^{-2}\de]$ be fixed. Using for brevity the notation $I_m=\llbracket m,2m(N(L)+1)\rrbracket$, the representation formula \eqref{itérées in terms Y} and the Fourier version of $Y^{\star,\iota'}_{m,n}(\eta,\iota,\eta_\bullet,\iota_\bullet) = Y^{\star}_{m,n}(\eta,\iota'\iota,\eta_\bullet,\iota'\iota_\bullet) $ we obtain the decomposition
\begin{align*}
\l \mathfrak{L}^{m}(v)^\eta(t,\cdot) \r_{H^s(L\T^d)}^2  & = S_\bi + S_{\mathrm{cross}} + S_\ter,
\end{align*}
where
\begin{align*}
S_\bi & = \frac{1}{L^d}  \sum_{\substack{\eta_\bullet,\eta'_\bullet\in\{0,1\}\\\iota_\bullet,\iota'_\bullet\in\{\pm\} \\k,k_\bullet,k'_\bullet\in\Z_L^d\\ n,n'\in I_m\\ }} \ps{k}^{2s} \reallywidehat{v^{\eta_\bullet,\iota_\bullet}}(t,k_\bullet)  \reallywidehat{v^{\eta'_\bullet,-\iota'_\bullet}}(t,-k'_\bullet)
\\&\hspace{3cm} \times  \reallywidehat{Y^\bi_{m,n}}(\eta,+,\eta_\bullet,\iota_\bullet)(t,k,k_\bullet) \reallywidehat{Y^\bi_{m,n'}}(\eta,-,\eta'_\bullet,-\iota'_\bullet)(t,-k,-k'_\bullet),
\\S_\ter & = \frac{\e^4}{L^d}  \sum_{\substack{\eta_\bullet,\eta'_\bullet\in\{0,1\}\\\iota_\bullet,\iota'_\bullet\in\{\pm\} \\k,k_\bullet,k'_\bullet\in\Z_L^d\\ n,n'\in I_m\\ }} \ps{k}^{2s} \int_{[0,t]^2} \Bigg( \reallywidehat{v^{\eta_\bullet,\iota_\bullet}}(t_\bullet,k_\bullet) \reallywidehat{Y^\ter_{m,n}}(\eta,+,\eta_\bullet,\iota_\bullet)(t,t_\bullet,k,k_\bullet)
\\&\hspace{4.5cm} \times  \reallywidehat{v^{\eta'_\bullet,-\iota'_\bullet}}(t'_\bullet,-k'_\bullet) \reallywidehat{Y^\ter_{m,n}}(\eta,-,\eta'_\bullet,-\iota'_\bullet)(t,t'_\bullet,-k,-k'_\bullet) \Bigg) \d t_\bullet \d t'_\bullet ,
\\ S_{\mathrm{cross}} & = \frac{\e^2}{L^d}  \sum_{\substack{\eta_\bullet,\eta'_\bullet\in\{0,1\}\\\iota,\iota_\bullet,\iota'_\bullet\in\{\pm\} \\k,k_\bullet,k'_\bullet\in\Z_L^d\\ n,n'\in I_m\\ }} \ps{k}^{2s}  \reallywidehat{v^{\eta_\bullet,\iota_\bullet}}(t,k_\bullet) \reallywidehat{Y^\bi_{m,n'}}(\eta,\iota,\eta_\bullet,\iota_\bullet)(t,k,k_\bullet)
\\&\hspace{3cm}\times \int_0^t  \reallywidehat{v^{\eta'_\bullet,-\iota'_\bullet}}(t_\bullet,-k'_\bullet) \reallywidehat{Y^\ter_{m,n'}}(\eta,-\iota,\eta'_\bullet,-\iota'_\bullet)(t,t_\bullet,-k,-k'_\bullet) \d t_\bullet.
\end{align*}
In order to estimate $S_\bi$, $S_\ter$ and $S_{\mathrm{cross}}$, we will use the following simple key facts:
\begin{itemize}
\item Thanks to the definition of $N(L)$ (see the very beginning of Section \ref{section remainder}) and the link between $L$ and $\e$ (that is $L=\e^{-\b}$), if $m\leq N(L)$ then we have $2m(N(L)+1)\leq 3\b^{2}|\ln\e|^{2}$ which is less than $|\ln\e|^3$ if $L$ is large enough.
\item For any fixed $k\in\Z_L^d$, $ \reallywidehat{Y^\star_{m,n}}(\eta,\iota,\eta_\bullet,\iota_\bullet)(t,[t_\bullet,]k,\cdot)$ is supported in a ball centered at $k$ and of radius $\mathcal{O}(n)$. 
\end{itemize}
We proceed with estimating $S_\bi$. For any fixed $k\in\Z_L^d$ we use the change of variables $k_\bullet\longmapsto k_\bullet - k$ and $k'_\bullet\longmapsto k'_\bullet - k$ and bound $\ps{k}^{2s}$ by $\ps{k-k_\bullet}^s\ps{k_\bullet}^s\ps{k-k'_\bullet}^s\ps{k'_\bullet}^s$ to obtain the bound
\begin{align*}
S_\bi & \lesssim  \frac{1}{L^d} \sum_{\substack{\eta_\bullet,\eta'_\bullet\in\{0,1\}\\\iota_\bullet,\iota'_\bullet\in\{\pm\} }} \sum_{n,n'\in I_m} \sum_{k\in\Z_L^d}  \sum_{\substack{|k_\bullet| \lesssim n \\ |k'_\bullet|\lesssim n' }} \ps{k-k_\bullet}^s\ps{k-k'_\bullet}^s\left|\reallywidehat{v^{\eta_\bullet,\iota_\bullet}}(t,k_\bullet-k)\right| \left|  \reallywidehat{v^{\eta'_\bullet,-\iota'_\bullet}}(t,k-k'_\bullet) \right|
\\&\hspace{2.5cm} \times \ps{k_\bullet}^s \ps{k'_\bullet}^s \left| \reallywidehat{Y^\bi_{m,n}}(\eta,+,\eta_\bullet,\iota_\bullet)(t,k,k_\bullet-k) \right| \left| \reallywidehat{Y^\bi_{m,n'}}(\eta,-,\eta'_\bullet,-\iota'_\bullet)(t,-k,k-k'_\bullet) \right|
\\ & \lesssim  \frac{1}{L^d} \sum_{\substack{\eta_\bullet,\eta'_\bullet\in\{0,1\}\\\iota_\bullet,\iota'_\bullet\in\{\pm\} }} \sum_{n,n'\in I_m} \ps{n}^{s}\ps{n'}^s
\\&\hspace{0.5cm} \times  \l \reallywidehat{Y^{\bi}_{m,n}}(\eta, +, \eta_\bullet, \iota_\bullet) \r_{L^\infty_{t, k, k_\bullet}([0, \e^{-2}\delta]\times (\mathbb Z^d_L)^2)} \l \reallywidehat{Y^{\bi}_{m,n'}}(\eta, -, \eta_\bullet, -\iota_\bullet) \r_{L^\infty_{t, k, k_\bullet}([0, \e^{-2}\delta]\times (\mathbb Z^d_L)^2)}
\\&\hspace{0.5cm} \times \sum_{k\in\Z_L^d}  \sum_{\substack{|k_\bullet| \lesssim n \\ |k'_\bullet|\lesssim n' }} \ps{k-k_\bullet}^s\ps{k-k'_\bullet}^s\left|\reallywidehat{v^{\eta_\bullet,\iota_\bullet}}(t,k_\bullet-k)\right| \left|  \reallywidehat{v^{\eta'_\bullet,-\iota'_\bullet}}(t,k-k'_\bullet) \right|
\end{align*}
where we bound $\ps{k_\bullet}^s \ps{k'_\bullet}^s$ by $\ps{n}^{s}\ps{n'}^s$ and estimated the two $\reallywidehat{Y^{\bi}_{m,n}}$ and $\reallywidehat{Y^{\bi}_{m,n'}}$ in $L^\infty$. Exchanging the sums in the last line and using Cauchy-Schwartz inequality we obtain
\begin{align*}
\sum_{k\in\Z_L^d}  \sum_{\substack{|k_\bullet| \lesssim n \\ |k'_\bullet|\lesssim n' }} \ps{k-k_\bullet}^s\ps{k-k'_\bullet}^s & \left|\reallywidehat{v^{\eta_\bullet,\iota_\bullet}}(t,k_\bullet-k)\right| \left|  \reallywidehat{v^{\eta'_\bullet,-\iota'_\bullet}}(t,k-k'_\bullet) \right|
\\& \lesssim  \sum_{\substack{|k_\bullet| \lesssim n \\ |k'_\bullet|\lesssim n' }} \l v^{\eta_\bullet}(t,\cdot) \r_{H^s(L\T^d)} \l v^{\eta'_\bullet}(t,\cdot) \r_{H^s(L\T^d)} 
\\&\lesssim n^d (n')^d L^{2d} \l v(t,\cdot) \r_{H^s(L\T^d)}^2.
\end{align*}
Therefore, with probability larger than $1-L^{-A}$ we have
\begin{align*}
S_\bi & \lesssim  \l v(t,\cdot) \r_{H^s(L\T^d)}^2 L^{d} \sum_{\substack{\eta_\bullet,\eta'_\bullet\in\{0,1\}\\\iota_\bullet,\iota'_\bullet\in\{\pm\} }} \sum_{n,n'\in I_L} \ps{n}^{s}\ps{n'}^s n^d (n')^d 
\\&\hspace{0.5cm} \times  \l \reallywidehat{Y^{\bi}_{m,n}}(\eta, +, \eta_\bullet, \iota_\bullet) \r_{L^\infty_{t, k, k_\bullet}([0, \e^{-2}\delta]\times (\mathbb Z^d_L)^2)} \l \reallywidehat{Y^{\bi}_{m,n'}}(\eta, -, \eta_\bullet, -\iota_\bullet) \r_{L^\infty_{t, k, k_\bullet}([0, \e^{-2}\delta]\times (\mathbb Z^d_L)^2)}
\\&\lesssim  \l v \r_{\mathcal{C}\pth{[0,\e^{-2}\de], H^s(L\T^d)}}^2  L^{\half + 2d + 2A} \pth{\sum_{n\in I_m} n^{s+d}  \La^{n}\delta^{\frac{n}{2}} }^2
\end{align*}
where we used the result of Proposition \ref{Ybi and Yter estimates} since in the sums over $I_m$ we indeed have $n\leq |\ln\e|^3$. Choosing $\de$ small enough such that $\La \sqrt{\de}<1$  and recalling that $I_m=\llbracket m,2m(N(L)+1)\rrbracket$ and $m\leq N(L)$ we obtain
\begin{align*}
S_\bi & \lesssim  \l v \r_{\mathcal{C}\pth{[0,\e^{-2}\de], H^s(L\T^d)}}^2  L^{\half + 2d + 2A} N(L)^{2(s+d)} (\La\sqrt{\de})^m .
\end{align*}
The same procedure can be applied to $S_{\mathrm{cross}}$ and $S_\ter$ using also the second estimate in Proposition \ref{Ybi and Yter estimates} (note that $\e^2$ and $\e^4$ $S_{\mathrm{cross}}$ and $S_\ter$ will be compensated by the $L^{\frac{2}{\b}}$ in the $L^\infty_t L^1_{t_\bullet} L^\infty_{k,k_\bullet}$ estimate for $\reallywidehat{Y^{\ter}_{m,n}}$). We have thus proved that
\begin{align*}
\l \mathfrak{L}^{m}(v) \r_{\mathcal{C}\pth{[0,\e^{-2}\de], H^s(L\T^d)}} \lesssim  L^{\frac{1}{4}+ d + A} (\ln L)^{s+d} (\La\sqrt{\de})^{\frac{m}{2}}  \l v \r_{\mathcal{C}\pth{[0,\e^{-2}\de], H^s(L\T^d)}},
\end{align*}
which concludes the proof of the proposition.
\end{proof}

\subsection{Proof of Proposition \ref{prop fixed point}}\label{proof fixed point}

Now that we have estimated the operator norm of the iterates of the linear response, we can conclude the proof of Proposition \ref{prop fixed point}.

\begin{proof}[Proof of Proposition \ref{prop fixed point}]
The estimate \eqref{norme op L^m} applied with $m=N(L)$ shows that
\begin{align*}
    \l \mathfrak{L}^{N(L)}\r_{\mathrm{op}}\leq \La  L^{\frac{1}{4}\ln\pth{\de}+\half\ln\La+\frac{1}{4}+d+A}(\ln L)^{s+d}.
\end{align*}
Therefore, choosing $\de$ small enough depending only on $d$ and $A$ so that 
\begin{align*}
    \frac{1}{4}\ln\pth{\de}+\half\ln\La+\frac{1}{4}+d+A<0
\end{align*}
we can then choose $L_0$ large enough depending only on $s$, $d$ and $A$ so that $\l \mathfrak{L}^{N(L)}\r_{\mathrm{op}}<1$ for all $L\geq L_0$. Therefore, when restricted to the set $\mathcal{E}_{L,A}$, the operator $\mathrm{Id}-\mathfrak{L}^{N(L)}$ is invertible and, thanks to the identity \eqref{I-L inverse}, so is $\mathrm{Id}-\mathfrak{L}$ with
\begin{align*}
\l \pth{ \mathrm{Id}-\mathfrak{L}}^{-1} \r_{\mathrm{op}} &\lesssim \sum_{m=0}^{N(L)-1} \l \mathfrak{L}^m \r_{\mathrm{op}} \lesssim \La L^{\frac{1}{4}+ d + A} (\ln L)^{s+d}  \sum_{m=0}^{N(L)-1}  (\La\sqrt{\de})^{\frac{m}{2}} ,
\end{align*} 
where we again used \eqref{norme op L^m}. Choosing $\de$ small, bounding the sum by a universal constant and bounding $(\ln L)^{s+d}$ by $L^A$ for $L$ large enough we have proved that
\begin{align}\label{norme op 1-L}
    \l \pth{ \mathrm{Id}-\mathfrak{L}}^{-1} \r_{\mathrm{op}}\leq L^{d+\frac{1}{4}+2A}.
\end{align}
We now consider the mapping
\begin{align}\label{mapping}
v\longmapsto\pth{\mathrm{Id}-\mathfrak{L}}^{-1}\pth{ \mathfrak{S} +  \mathfrak{N}(v) }
\end{align}
where $\mathfrak{S}$ and $\mathfrak{N}(v)$ have been defined in \eqref{eq remainder} and \eqref{nonlinear term} respectively. We prove that for a particular choice of $\La_0>0$, the above map sends the set
\begin{align*}
Z_{s,A,\La_0} \vcentcolon = \enstq{v\in \mathcal{C}([0,\e^{-2}\delta]; H^s(L\T^d))}{ \|v\|_{\mathcal{C}([0,\e^{-2}\delta]; H^s(L\T^d))}\le \La_0 L^{-\pth{
\frac32d+\frac14+\frac1\beta+3A}} }
\end{align*}
onto itself. The contraction property follows similarly, we leave the details to the reader.

We first estimate the source term $\mathfrak{S}$. Starting from \eqref{source term} and using \eqref{def XleqN}, we find that for $\eta\in \{0,1\}$
\begin{align*}
\mathfrak{S}^\eta(t) & = - \sum_{n=0}^{N(L)} X_n^\eta(t) + X_0^\eta(t) - \e \sum_{\substack{\iota_1,\iota_2\in\{\pm\}}} B^\eta_{(\iota_1,\iota_2)}\pth{0, X^{\bar{\eta},\iota_1}_0,X^{\bar{\eta},\iota_2}_0 }
\\&\quad +  \e \sum_{n_1,n_2=0}^{N(L)} \sum_{\substack{\iota_1,\iota_2\in\{\pm\}}}  B^\eta_{(\iota_1,\iota_2)}\pth{t, X^{\bar{\eta},\iota_1}_{n_1}(t) , X^{\bar{\eta},\iota_2}_{n_2}(t) }
\\&\quad  - i \e^2 \int_0^t \sum_* \sum_{n_1,n_2,n_3=0}^N\sum_{\iota_1,\iota_2,\iota_3\in\{\pm\}}  C^\eta_{*,(\iota_1,\iota_2,\iota_3)}\pth{ \tau , X^{\eta,\iota_1}_{n_1}(\tau) ,X^{\bar{\eta},\iota_2}_{n_2}(\tau) ,X^{\eta,\iota_3}_{n_3}(\tau)  } \d\tau.
\end{align*}
Regrouping terms with respect to the value of $n_1+n_2$ or $n_1+n_2+n_3$ and using \eqref{def X1} and \eqref{def Xn+2} we obtain
\begin{align*}
\mathfrak{S}^\eta(t) & = \e  \sum_{\substack{0\leq n_1,n_2\leq N(L)\\ n_1+n_2 \geq N(L)\\\iota_1,\iota_2\in\{\pm\}}}   B^\eta_{(\iota_1,\iota_2)}\pth{t, X^{\bar{\eta},\iota_1}_{n_1}(t) , X^{\bar{\eta},\iota_2}_{n_2}(t) }
\\&\quad  - i \e^2 \int_0^t \sum_*  \sum_{\substack{0\leq n_1,n_2,n_3\leq N(L)\\n_1+n_2+n_3\geq N(L)-1\\\iota_1,\iota_2,\iota_3\in\{\pm\}}}  C^\eta_{*,(\iota_1,\iota_2,\iota_3)}\pth{ \tau , X^{\eta,\iota_1}_{n_1}(\tau) ,X^{\bar{\eta},\iota_2}_{n_2}(\tau) ,X^{\eta,\iota_3}_{n_3}(\tau)  } \d\tau.
\end{align*}
The bilinear (resp. trilinear) operators in the above right hand side, defined in
\eqref{def B Fourier} (resp. \eqref{def C Fourier}), are continuous operators from $\pth{\mathcal{C}([0,\e^{-2}\delta]; H^s(L\T^d))}^2$ (resp. $\pth{\mathcal{C}([0,\e^{-2}\delta]; H^s(L\T^d))}^3$) to $\mathcal{C}([0,\e^{-2}\delta]; H^s(L\T^d))$ whenever $s>d/2$. Indeed, although there are not multiplication in the physical space, the facts that the functions $Q^\eta$ are bounded and that the $L^1$ norm of the Fourier transform is bounded by the $H^s$ norm, we deduce the boundedness of the operators in a similar way as to prove that $H^s$ is a Banach algebra. Therefore, from Corollary \ref{coro Xn} we deduce that there exists some $\Lambda>0$ such that for any $t\in [0, \e^{-2}\delta]$, we have that
\[
\aligned
\e \sum_{\substack{0\leq n_1,n_2\leq N(L)\\ n_1+n_2 \geq N(L)\\\iota_1,\iota_2\in\{\pm\}}}   \l B^\eta_{(\iota_1,\iota_2)}\pth{t, X^{\bar{\eta},\iota_1}_{n_1}(t) , X^{\bar{\eta},\iota_2}_{n_2}(t) }\r_{H^s(L\T^d)}  \le \Lambda (\Lambda \delta)^{\frac{N(L)}{2}} L^{d+\frac{1}{\beta}+2A},
\endaligned
\]
recalling that assuming that $\delta$ is small enough ensures the convergence of the series $\sum (\Lambda \delta)^{n/2}$. Similarly, we have that
\begin{multline*}
   \e^2 \int_0^t \sum_* \sum_{\substack{0\leq n_1,n_2,n_3\leq N(L)\\n_1+n_2+n_3\geq N(L)-1\\\iota_1,\iota_2,\iota_3\in\{\pm\}}} \| C^\eta_{*,(\iota_1,\iota_2,\iota_3)}\pth{ \tau , X^{\eta,\iota_1}_{n_1}(\tau) ,X^{\bar{\eta},\iota_2}_{n_2}(\tau) ,X^{\eta,\iota_3}_{n_3}(\tau)  } \|_{H^s(L\T^d)}\d\tau \\
   \le  (\Lambda\delta)^{\frac{3N(L)}{4}+1}L^{3(\frac{d}{2}  +\frac1\beta+A)}.
\end{multline*}
We deduce the following estimates 
\[
\aligned
\l\mathfrak{S}\r_{\mathcal{C}([0,\e^{-2}\delta]; H^s(L\T^d))} & \leq \Lambda (\Lambda\delta)^{\frac{N(L)}{2}}L^{3(\frac{d}{2}  +\frac1\beta+A)}, \\
\l\pth{\mathrm{Id}-\mathfrak{L}}^{-1}\mathfrak{S}\r_{\mathcal{C}([0,\e^{-2}\delta]; H^s(L\T^d))}& \leq \Lambda(\Lambda\delta)^{\frac{N(L)}{2}} L^{\frac52d +\frac14+\frac3\beta+5A},
\endaligned
\]
where we used \eqref{norme op 1-L} for the second one. Therefore, since $N(L)\leq \ln L$, we can choose $\de$ small enough so that
\begin{align*}
    \frac{7}{2}d+ \half + \frac{4}{\b} + 8 A + \half \ln\La + \half \ln\de<0
\end{align*}
and obtain
\[
\l(\mathrm{Id}-\mathfrak{L})^{-1}\mathfrak{S}\r_{\mathcal{C}([0,\e^{-2}\delta]; H^s(L\T^d))}\leq \La L^{-\pth{\frac32d+\frac14+\frac1\beta+3A}}.
\]
As for the nonlinear term defined in \eqref{nonlinear term}, using again Corollary \ref{coro Xn} and $\e=L^{-\frac{1}{\b}}$, we find that for $v\in Z_{s,A,\La_0}$ we have
\begin{align*}
    \l \mathfrak{N}(v) \r_{\mathcal{C}([0,\e^{-2}\delta]; H^s(L\T^d))} & \lesssim \e \l v  \r_{\mathcal{C}([0,\e^{-2}\delta]; H^s(L\T^d))}^2 
    \\&\hspace{1cm} \times \pth{ 1 + \e \l X_{\leq N(L)} \r_{\mathcal{C}([0,\e^{-2}\delta]; H^s(L\T^d))} + \e \l v \r_{\mathcal{C}([0,\e^{-2}\delta]; H^s(L\T^d))}}
    \\&\lesssim  L^{-\pth{\frac{3}{2}d+\frac14+\frac1\beta+3A}} \La_0^2 L^{-\pth{ d+\frac14+\frac{2}{\b}+2A}}.  
\end{align*}
Using now \eqref{norme op 1-L} this gives
\begin{align*}
\l \pth{\mathrm{Id}-\mathfrak{L}}^{-1}(\mathfrak{N}(v))\r_{\mathcal{C}([0,\e^{-2}\delta]; H^s(L\T^d))} \lesssim L^{-\pth{\frac{3}{2}d+\frac14+\frac1\beta+3A}} \La_0^2 L^{-\frac{2}{\b}} .
\end{align*}
We have thus proved that there exists a universal constant $\La>0$ such that if $v\in Z_{s,A,\La_0}$ then
\begin{align*}
\l \pth{\mathrm{Id}-\mathfrak{L}}^{-1}\pth{ \mathfrak{S} +  \mathfrak{N}(v) } \r_{\mathcal{C}([0,\e^{-2}\delta]; H^s(L\T^d))} \leq \La L^{-\pth{ \frac32d+\frac14+\frac1\beta+3A}} \pth{ 1 +  \La_0^2 L^{-\frac{2}{\b}}}.
\end{align*}
We then choose $\La_0=\frac{\La}{2}$ and $L$ large enough so that $\La_0^2 L^{-\frac{2}{\b}}<1$, and the above estimate then shows that the mapping \eqref{mapping} sends $Z_{s,A,\La_0}$ to itself. We can prove similarly that it is in fact a contraction.
\end{proof}

This also concludes the proof of Theorem \ref{thm X as series}, after having applied the Banach fixed point theorem to \eqref{mapping}.

%%%%%%%%%%%%%%%%%%%%%%%%%%%%%%%%%%%%%%%%%%%%%%%%%%%%%%%%%%%%%%%%%%%%%%%%%%%%%%%%%%%%%%%%

\section{Resonant nodes and resonant system}\label{section resonant system}

In this section, we conclude the proof of Theorem \ref{main theorem} by showing that \eqref{cross system} is well-posed on $[0,\de]$ and approaches well the correlations associated to \eqref{fixed point after normal form} over the time interval $[0,\e^{-2}\de]$. In particular, we identify the resonant couplings (Sections \ref{section resonant nodes} and \ref{section resonant couplings}), i.e couplings whose contribution to the correlations survives in the limit $\e\to 0$. The well-posedness of \eqref{cross system} is proved in Section \ref{section WP cross system}, while the final comparison argument is performed in Section \ref{section comparison}.

\subsection{Resonant nodes}\label{section resonant nodes}

We start by giving the definition of resonant nodes.

\begin{definition}\label{def resonant nodes}
Let $C$ be a coupling with sign map $\ffi$ and let $j\in\mathcal{N}(C)$. We say that $j$ is a \textbf{resonant node} if the following holds:
\begin{itemize}
\item[(i)] $j\in\mathcal{N}_\low(C)$,
\item[(ii)] $\mathtt{bush}(j)$ is self-coupled, in the sense of Definition \ref{def self-coupled bushes},
\item[(iii)] we have
\begin{align*}
\ffi(j_1)=-\ffi(j_2) \quad \text{and} \quad \ffi(j_3)=\ffi(j),
\end{align*}
where $j_1$, $j_2$ are the marked children of $j$ and $j_3$ is its unmarked child.
\end{itemize}
\end{definition}

The goal of this section is to obtain appropriate estimates for $F_C$ defined in \eqref{correlations to coupling} 
according to whether or not all nodes in $C$ are resonant. The main result is the following.

\begin{prop}\label{prop nonresonant}
There exist $\a,\La>0$ such that, for all couplings $C$ with at least one non-resonant node, we have
\begin{align*}
\sup_{t\in[0,\de]}\left| \widehat{F_C}(\e^{-2}t,k) \right| \leq \La^{n(C)+1} \de^{\frac{n(C)}{2}} \e^\a,
\end{align*}
for all $\e\leq \de$ and $k\in \Z_L^d$.
\end{prop}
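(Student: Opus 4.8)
The plan is to leverage the dichotomy already established in Section \ref{section convergence} between the low nodes regime and the high nodes regime, but now to extract a uniform $\e$ gain from the single hypothesis that $C$ has a non-resonant node rather than from the quantitative regime assumptions \eqref{low nodes regime}--\eqref{n<logepsilon}. First I would reduce to the case $n(C)$ bounded by some universal constant versus $n(C)$ large; indeed if $n(C)$ is bounded then the weak estimate \eqref{estimate good} combined with the fact that a non-resonant node forces at least one genuine $\e$ power in the product of $q_j$'s or in one oscillating-integral/low-frequency estimate (as analysed below) already suffices, so the substantive content is the regime $n(C)$ large. In that regime, I would observe that if the number of self-coupled bushes $n_\scb(C)$ satisfies $n(C) - 2n_\scb(C) \geq 80$, then Proposition \ref{prop |FC|} directly gives the gain $\e^{\a(\frac{n(C)}{2}-n_\scb(C))}$, and since $n(C)-2n_\scb(C)\geq 80$ forces $\frac{n(C)}{2}-n_\scb(C)\geq 40$, this is bounded below by $\e^{40\a}$, which proves the claim. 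So the only remaining case is $n(C)-2n_\scb(C) < 80$, i.e. \emph{almost all low nodes have a self-coupled bush}; this is precisely the near-resonant regime and is where the hypothesis of a non-resonant node must be used directly.

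In that last case, I would revisit the proof of Proposition \ref{prop low nodes}: the key point is that the estimates \eqref{estim FC low nodes A}--\eqref{estim FC low nodes B} were obtained \emph{before} invoking the regime assumptions, and they show $\sup|\widehat{F_C}(\e^{-2}t,k)|\lesssim \La^{n+1}\e^{n_B(C)}\de^{n_T(C)}$ in general, with an improvement $\e^{n_B(C)+\ga n''_\low d}$ or $\e^{n_B(C)+\ga(n''_\low-1)d}$ whenever there exists a non-self-coupled bush, where $n''_\low$ controls the rank of the family $(\mathfrak K_\bush(j))$. Thus if $C$ has a binary node ($n_B(C)\geq 1$) or a low node whose bush is \emph{not} self-coupled, we immediately get an $\e$ gain that does not come with a factorial, and since $n(C)\leq|\ln\e|^3$ in the relevant range (the complementary range being handled by a crude argument), the factorial from the number of such couplings is absorbed. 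The genuinely new situation is: $C$ has no binary node, every low node has a self-coupled bush, and there is at least one \emph{high} ternary node or one low node whose children violate the sign condition (iii) of Definition \ref{def resonant nodes}. A high ternary node triggers the high nodes regime machinery of Section \ref{section high nodes regime}, which under $n(C)\leq|\ln\e|^3$ produces an $\e^{\a_0}$ gain per high node via Lemma \ref{lem individual sums} (cases $\mathbf{C2}$ and $\mathbf{C5}$), again without a factorial. For a low node with a self-coupled bush but violating (iii), I would use the sign constraint: a self-coupled bush imposes that the marked children carry opposite total momenta, and if additionally $\ffi(j_1)\neq -\ffi(j_2)$ or $\ffi(j_3)\neq\ffi(j)$ then the corresponding Wick pairing structure forces an extra independent momentum constraint that had not been counted, yielding a further factor $L^d\e^{\ga d}/L^d = \e^{\ga d}$ from one additional summation; this is the analogue of the analysis in Section \ref{section resonant couplings} (Lemma \ref{lemma 1 resonant nodes} and the surrounding discussion) showing that resonant nodes are exactly those whose sign structure is compatible with the pairing.

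The main obstacle I anticipate is the bookkeeping needed to guarantee that the $\e$ gain coming from the \emph{single} non-resonant node is not eaten by the factorial loss $(\frac{n(C)}{2}+2-n_\scb(C))!$ from Lemma \ref{counting self-coupled bushes}. The resolution, as in the proof of Corollary \ref{coro Xn}, is that one should not aim for a gain proportional to $n(C)$ here — a \emph{fixed} gain $\e^\a$ with $\a$ universal is enough for the statement — and the factorial is controlled because a non-resonant node, together with the structural constraints it imposes, reduces the effective number of free summation variables by one compared to the self-coupled count, and this reduction is compatible with, not in competition with, the self-coupled-bush counting. Concretely I would split $\sum_C|\widehat{F_C}|$ over couplings with a fixed non-resonant node and fixed $n_\scb=q$, bound the number of such couplings by $\La^{n+1}(\frac n2+2-q)!$ (this is unchanged from Lemma \ref{counting self-coupled bushes}), bound each $|\widehat{F_C}|$ by $\La(\La\de)^{n/2}\e^\a$ using the refined estimates above — here the point is that the estimate \emph{with} the fixed non-resonant node never produces a self-coupled-bush \emph{loss} beyond $q$, so the two are decoupled — and then sum in $q$ using Stirling and $n\leq|\ln\e|^3$ exactly as at the end of the proof of Corollary \ref{coro Xn}. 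For $n(C)>|\ln\e|^3$ the statement is vacuous in the way it will be applied (or can be discarded by the crude bound together with the summability already established), so I would not dwell on it. I expect the write-up to be essentially a careful re-reading of Sections \ref{section low nodes regime}--\ref{section time derivative Xn} with the regime hypotheses replaced by ``one non-resonant node'', plus the momentum-constraint argument for the low-node sign violation, which is the one genuinely new computation.
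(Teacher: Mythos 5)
Your first reductions (a binary node present; a low node with a non-self-coupled bush; or $n(C)-2n_\scb(C)\geq 80$ with $n(C)$ not too large) are sound, but the two cases you yourself identify as the substantive content do not close, and the framing drifts: the statement is a bound on a \emph{single} coupling $C$, so the discussion of Lemma \ref{counting self-coupled bushes}, factorials, Stirling and summation over $q$ is beside the point. The key idea you are missing is the paper's reduction to a \emph{minimal} non-resonant node $j$ (all nodes strictly below $j$ resonant): then every resonance factor below $j$ vanishes identically, the oscillating integral collapses exactly to $c\int_0^t e^{i\e^{-2}\Om_j\tau}\tau^{n_j-1}\,d\tau$ (Lemmas \ref{lem int osc coupling} and \ref{dernier lem inter}), and one extracts the factor $\min\bigl(\de^{n_T},\,\de^{n_T-1}\e^2/|\Om_j|\bigr)$ from that single node before doing any momentum-space analysis.

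Concretely, your mechanism for a low node $j$ whose bush is self-coupled but which violates the sign condition (iii) is wrong: a self-coupled bush forces $K(j_1)+K(j_2)=0$ \emph{identically}, so the cutoff $\chi_j\equiv 1$ and no ``extra independent momentum constraint'' can be produced by the Wick pairing — signs never enter the momentum relations. The gain must come from the resonance factor $\Om_j=(\ffi(j)-\ffi(j_3))\ps{\ka(j_3)}-(\ffi(j_1)+\ffi(j_2))\ps{\ka(j_1)}$, which is nonzero precisely when (iii) fails; and in the sub-case where the two sign coefficients coincide one still faces the degenerate phase $\ps{\ka(j_3)}-\ps{\ka(j_1)}$, requiring the linear independence of $K(j_1),K(j_3)$ (Lemma \ref{lemma 1 resonant nodes}) and the weighted integral estimate of Lemma \ref{lem:intTheta}. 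This is the ``genuinely new computation'' and your proposal does not contain it. Likewise, for a single non-resonant high node you invoke the machinery of Section \ref{section high nodes regime} (cases $\mathbf{C2}$/$\mathbf{C5}$ of Lemma \ref{lem individual sums}), but that machinery only yields a gain from high nodes that are parents of elements of $\mathcal{N}''(C)$ in case $\mathbf{C5}$, and the existence of such nodes (Lemma \ref{lem N''}(iv)) together with the closing numerology rests on the regime assumption \eqref{high nodes regime}, which you do not have; nothing forces your one high node to be seen by $\mathcal{N}''(C)$. The paper instead handles this node directly after the minimality reduction, splitting $\chi_j$ into its low-frequency pieces (each giving $\e^{\ga d}$ after one change of variables) and the genuinely high piece (controlled again by Lemma \ref{lem:intTheta}).
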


The proof of the above proposition is detailed in in Section \ref{section proof nonresonant} and it is based on intermediate lemmas contained in Section \ref{section intermediate resonant node}.

\subsubsection{Properties of resonant nodes}\label{section intermediate resonant node}

We gather here some properties of resonant nodes.

\begin{lemma}\label{lemma 1 resonant nodes}
Let $C$ be a coupling and $j\in\mathcal{N}(C)$ be such that 
\begin{equation}\label{assumption of the lemma}
\forall j' \in \mathcal{N}(C), \quad j'\leq j \Longrightarrow 
\left\{
\begin{aligned}
&j'\in\mathcal{N}_\low(C),
\\&\text{$\mathtt{bush}(j')$ is self-coupled.}
\end{aligned}
\right.
\end{equation}
\begin{itemize}
\item[(i)] If $j_1$ and $j_2$ are the marked children of $j$, then $K(j_1)+K(j_2)=0$ and $K(j)=K(\ell)$ for some $\ell\in\mathcal{L}(C)$ with $\ell\leq j$.
\item[(ii)] Moreover, if $j_3$ is the unmarked child of $j$, then $(K(j_1),K(j_3))$ is linearly independent.
\end{itemize}
\end{lemma}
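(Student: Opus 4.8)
\textbf{Plan of proof for Lemma \ref{lemma 1 resonant nodes}.}

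The plan is to prove both statements by induction on the height of $j$ in $C$, using the hypothesis \eqref{assumption of the lemma}, which says that every node below or equal to $j$ is a low ternary node with a self-coupled bush. First I would set up the notation: write $j_1, j_2$ for the marked children of $j$ and $j_3$ for the unmarked child, so that $K(j) = K(j_1) + K(j_2) + K(j_3)$ by \eqref{def K(j)} and the decoration rule \eqref{récurrence décoration}. The key structural fact to exploit is the relation between the bush of $j$ and the offspring: by Definition \ref{def bush}, $\mathtt{bush}(j) = \mathtt{prebush}(j_1) \sqcup \mathtt{prebush}(j_2)$, while by \eqref{def offsprings}, $\offspring(j)$ is the disjoint union of all leaves below $j_1$ and $j_2$. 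The first task is to show $K(j_1) + K(j_2) = 0$. Using \eqref{def K(j)} we have $K(j_1) + K(j_2) = \sum_{\ell \leq j_1} K(\ell) + \sum_{\ell \leq j_2} K(\ell) = \sum_{\ell \in \offspring(j)} K(\ell)$. Now I would invoke \eqref{lien offsprings bushes} from Proposition \ref{prop bush}, which decomposes $\offspring(j)$ as a disjoint union of $\mathtt{bush}(j')$ over certain low nodes $j' \leq j$ with $j' \nleq j_3$; by hypothesis \eqref{assumption of the lemma} every such $j'$ has a self-coupled bush, and for a self-coupled bush $B$ the coupling map $\sigma$ restricts to a bijection $B_+ \to B_-$, so $\sum_{\ell \in B} K(\ell) = \sum_{\ell \in B_+} K(\ell) + \sum_{\ell \in B_-} K(\ell) = \sum_{\ell \in B_+}(K(\ell) + K(\sigma(\ell))) = 0$ by the definition $K(\ell) = -K(\sigma(\ell))$ for $\ell \in \mathcal{L}(C)_-$. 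Summing over the disjoint bushes gives $K(j_1) + K(j_2) = 0$.

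For the claim $K(j) = K(\ell)$ for some leaf $\ell \leq j$: since $K(j) = K(j_1) + K(j_2) + K(j_3) = K(j_3)$, it suffices to iterate — $j_3$ is the unmarked child of the low node $j$, so by \eqref{assumption of the lemma} applied to nodes $\leq j_3$ (which are $\leq j$), either $j_3$ is itself a leaf, in which case we are done with $\ell = j_3$, or $j_3$ is again a low node with self-coupled bush and we repeat the argument, replacing $j$ by $j_3$. Since the tree is finite this descent terminates at a leaf $\ell$, and at each step $K$ is unchanged, so $K(j) = K(\ell)$. Alternatively, one can phrase this cleanly via \eqref{leaf below}: $\{\ell \in \mathcal{L}(C) : \ell \leq j\} = \bigsqcup_{j' \in \mathcal{N}_\low(C), j' \leq j} \mathtt{bush}(j') \sqcup \mathtt{prebush}(j)$, and summing $K$ over this set, all the bush contributions vanish (self-coupled), leaving $K(j) = \sum_{\ell \leq j} K(\ell) = \sum_{\ell \in \mathtt{prebush}(j)} K(\ell)$; one then checks, again by the descent through unmarked children, that $\mathtt{prebush}(j)$ is reduced to a single leaf — this uses that at each level the marked children's prebushes are absorbed into $\mathtt{bush}(j)$, so only the chain of unmarked descendants survives, terminating at one leaf.

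For part (ii), linear independence of $(K(j_1), K(j_3))$: note $K(j_3) = K(j) = K(\ell)$ for the leaf $\ell$ found above, and $K(\ell)$ is (up to sign) one of the basis vectors $K(\ell')$ with $\ell' \in \mathcal{L}(C)_+$, hence nonzero. Also $K(j_1) = \sum_{\ell' \in \mathtt{bush}(j)_+} \pm K(\ell')$ by the analogous descent (the leaves below $j_1$ that survive, after removing internal self-coupled bushes, form exactly the part of $\mathtt{bush}(j)$ coming from $j_1$), and since $\mathtt{bush}(j) \neq \emptyset$ by \eqref{bush non empty} and bushes are disjoint from the chain producing $\ell$ (because $\ell \leq j_3$ and $\mathtt{bush}(j)$ consists of leaves $\leq j_1$ or $\leq j_2$, while $j_1, j_2, j_3$ are siblings with disjoint descendant sets), the two vectors $K(j_1)$ and $K(j_3)$ are supported on disjoint subsets of the basis $(K(\ell'))_{\ell' \in \mathcal{L}(C)_+}$, with $K(j_3)$ nonzero; a nontrivial linear relation $aK(j_1) + bK(j_3) = 0$ would then force $b = 0$ (looking at the coordinate of $\ell$) and then $aK(j_1) = 0$, but we need $K(j_1) \neq 0$ as well — this follows since $\mathtt{bush}(j)$ is self-coupled and nonempty, so $\mathtt{bush}(j)_+ \neq \emptyset$, and $K(j_1) = \sum_{\ell' \in \mathtt{prebush}(j_1)} K(\ell')$ is a nonempty signed sum of distinct basis vectors restricted to $j_1$'s side, hence nonzero. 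The main obstacle I anticipate is the bookkeeping in part (ii): carefully justifying that the "surviving" leaf sets for $K(j_1)$ and $K(j_3)$ are genuinely disjoint and that $K(j_1) \neq 0$ requires threading together \eqref{leaf below}, \eqref{lien offsprings bushes}, \eqref{disjoint prebush} and the self-coupledness hypothesis without circularity; everything else is a direct consequence of the already-established bush machinery.
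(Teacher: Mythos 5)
Your part (i) is essentially the paper's argument: the identity $K(j_1)+K(j_2)=\sum_{\ell\in\offspring(j)}K(\ell)$, the decomposition \eqref{lien offsprings bushes} into self-coupled bushes each summing to zero, and the descent $K(j)=K(j_3)=\dots=K(\ell)$ through the chain of unmarked children (the paper packages this descent as an induction on the height of $j$, you phrase it via the singleton prebush; these are the same thing).

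For part (ii) you diverge from the paper, and your route has a small but real gap. The paper simply applies part (i) to each of the three children (they all satisfy \eqref{assumption of the lemma}), obtaining three \emph{distinct} leaves $\ell_i\le j_i$ with $K(j_i)=K(\ell_i)$; then $K(\ell_1)+K(\ell_2)=0$ forces $\ell_2=\sigma(\ell_1)$, so $\sigma(\ell_3)\notin\{\ell_1,\ell_2\}$ and $(K(\ell_1),K(\ell_3))$ is free. Your argument instead claims that $K(j_1)$ and $K(j_3)$ have disjoint supports in the basis $(K(\ell'))_{\ell'\in\mathcal{L}(C)_+}$ ``because $j_1,j_2,j_3$ have disjoint descendant sets.'' That justification is insufficient as written: the basis vector carried by a leaf $\ell$ is $K(\ell^+)$ with $\ell^+\in\{\ell,\sigma(\ell)\}\cap\mathcal{L}(C)_+$, and $\sigma$ can perfectly well pair a leaf below $j_3$ with a leaf below $j_1$, so disjointness of descendant sets does not by itself give disjointness of supports. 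The missing step is precisely the self-coupledness of $\mathtt{bush}(j)$: since $\sigma$ maps $\mathtt{bush}(j)=\mathtt{prebush}(j_1)\sqcup\mathtt{prebush}(j_2)$ bijectively onto itself, it also preserves its complement, so $\ell_3\le j_3$ implies $\sigma(\ell_3)\notin\mathtt{bush}(j)$, whereas the support of $K(j_1)$ lies in $\mathtt{bush}(j)$. A second, more minor point: your assertion that $K(j_1)$ is ``a nonempty signed sum of \emph{distinct} basis vectors, hence nonzero'' silently uses that $\mathtt{prebush}(j_1)$ contains no $\sigma$-pair; this is automatic once you observe (as for $j_3$) that $\mathtt{prebush}(j_1)$ is a singleton, but it should be said. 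With these two points made explicit your part (ii) closes; still, the paper's version, which reduces everything to three distinct leaves, avoids the support bookkeeping entirely and is the cleaner route.
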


\begin{proof}
We prove statement $(i)$ by induction on the height $h$ of $j$, which is the maximal distance between $j$ and a leaf $\ell\leq j$ in the graph representing the coupling. 
\begin{itemize}
\item If $h=1$, i.e if the children of $j$ are leaves, then $\mathtt{bush}(j)=\{\ell_1,\ell_2\}$ where $\ell_1$ and $\ell_2$ are its two marked children. By assumption $\mathtt{bush}(j)$ is self-coupled so  $\si(\ell_1)=\ell_2$ (where $\si$ denotes the coupling map of $C$) and $K(\ell_1)+K(\ell_2)=0$ (recall Definition \ref{def linear map}). Since $K(j)=K(\ell_1)+K(\ell_2)+K(\ell_3)$ (where $\ell_3$ is the unmarked children of $j$) we also obtain $K(j)=K(\ell_3)$.
\item Assume now $h>1$ and that $(i)$ holds for all nodes satisfying \eqref{assumption of the lemma} and of height strictly less than $h$.
 We write $\mathtt{children}(j)=\{j_1,j_2,j_3\}$ with $j_1$ and $j_2$ its marked children. Recalling \eqref{def offsprings}, \eqref{lien offsprings bushes} and \eqref{def k bush} we have
\begin{align*}
K(j_1)+K(j_2) = \sum_{\ell\in\mathtt{offsprings}(j)} K(\ell) = \mathfrak{K}_\bush(j) + \sum_{\substack{j'\in\mathcal{N}_\low(C)\\j'\leq j_1 \text{ or }j'\leq j_2}}\mathfrak{K}_\bush(j').
\end{align*}
By assumption on $j$, we have $\mathfrak{K}_\bush(j')=0$ for all $j'\leq j$ so that $K(j_1)+K(j_2)=0$. This implies $K(j)=K(j_3)$ and since $j_3$ satisfies \eqref{assumption of the lemma} and its height is strictly less than $h$, by induction there must exist $\ell\in\mathcal{L}(C)$ such that $\ell\leq j_3$, and thus $\ell\leq j$, satisfying $K(\ell)=K(j_3)=K(j)$.
\end{itemize}
In order to prove $(ii)$, we first observe that the children of $j$ satisfy \eqref{assumption of the lemma}. From $(i)$ we thus derive the existence of three distinct leaves $\ell_i$ such that $K(j_i)=K(\ell_i)$ for $i=1,2,3$. Since $K(\ell_1)+K(\ell_2)=K(j_1)+K(j_2)=0$, we deduce that $\ell_1=\si(\ell_2)$ and consequently $\si(\ell_3)\notin\{\ell_1,\ell_2\}$, which indeed shows that $K(\ell_3)$ and $K(\ell_1)$, and thus $K(j_3)$ and $K(j_1)$, are linearly independent.
\end{proof}

\begin{lemma}
Let $C$ be a coupling and $j\in\mathcal{N}(C)$ be such that 
\begin{equation}\label{assumption of the lemma 2}
\forall j' \in \mathcal{N}(C), \quad j'\leq j \Longrightarrow \text{$j'$ is resonant.}
\end{equation}
Then, for any $j'\leq j$ and any decoration map of $C$, we have $\De_{j'}=\Om_{j'}=0$ and
\begin{align*}
\int_{I_j(t)} \prod_{j'\leq j} e^{i\e^{-2}\Om_{j'} t_{j'}} \d t_{j'} = c_j t^{n_j},
\end{align*}
where we use the notations
\begin{align*}
n_j &  = \#  \enstq{j'\in\mathcal{N}(C)}{ j'\leq j},
\\ c_j &  = \frac{1}{n_j!} \# \mathfrak{M}\big( \enstq{j'\in\mathcal{N}(C)}{ j'\leq j} \big),
\\ I_j(t) &  = \enstq{(t_{j'})_{j'\in\mathcal{N}(C),j'\leq j}\in [0,t]^{n_j} }{ j'_1<j'_2 \Longrightarrow t_{j'_1}< t_{j'_2}}.
\end{align*}
\end{lemma}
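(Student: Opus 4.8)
The plan is to prove the statement in two stages: first show that $\Delta_{j'} = \Omega_{j'} = 0$ for every $j' \in \mathcal{N}(C)$ with $j' \le j$, and then deduce the value of the integral, which at that point no longer oscillates and is simply the Lebesgue volume of $I_j(t)$.

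For the vanishing of $\Delta_{j'}$: I would fix $j' \le j$, which by hypothesis is resonant, hence a low ternary node for which $\Delta_{j'}$ is given by \eqref{def De j}. Writing $\children(j') = \{j'_1,j'_2,j'_3\}$ with $j'_1,j'_2$ marked and $j'_3$ unmarked, I note that every $j'' \in \mathcal{N}(C)$ with $j'' \le j'$ also satisfies $j'' \le j$, hence is resonant, hence lies in $\mathcal{N}_\low(C)$ with self-coupled bush; thus $j'$ satisfies hypothesis \eqref{assumption of the lemma} of Lemma \ref{lemma 1 resonant nodes}. That lemma yields $K(j'_1) + K(j'_2) = 0$, and combined with the decoration rule $K(j') = K(j'_1) + K(j'_2) + K(j'_3)$ it gives $\kappa(j') = \kappa(j'_3)$ for every decoration $\kappa$ of $C$. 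Since the Japanese bracket is even, $\kappa(j'_1) = -\kappa(j'_2)$ forces $\langle\kappa(j'_1)\rangle = \langle\kappa(j'_2)\rangle$. Plugging these identities, together with the sign relations $\ffi(j'_1) = -\ffi(j'_2)$ and $\ffi(j'_3) = \ffi(j')$ from Definition \ref{def resonant nodes}(iii), into \eqref{def De j} makes the four terms cancel in pairs, so $\Delta_{j'} = 0$.

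Next I would handle $\Omega_{j'}$: by \eqref{Omegaj}, $\Omega_{j'} = \sum_{j''\,:\,\mathtt{int}(j'') = j'}\Delta_{j''}$, and any $j''$ with $\mathtt{int}(j'') = j'$ satisfies $j'' \le j'$ by Definition \ref{def int}, hence $j'' \le j$, hence is resonant, hence belongs to $\mathcal{N}_T(C)$. In particular no binary node can satisfy $\mathtt{int}(j'') = j'$, so the sum collapses to the single term $\Delta_{j'} = 0$ and $\Omega_{j'} = 0$. (The same remark shows $\{j' \in \mathcal{N}(C) : j' \le j\} \subseteq \mathcal{N}_T(C)$, so $\Omega_{j'}$ is meaningful for each factor of the integrand.) Since all $\Omega_{j'}$ with $j' \le j$ vanish, every exponential in the integrand equals $1$, so the integral reduces to $\mathrm{vol}(I_j(t))$; arguing exactly as for \eqref{I_C(t) up to negligible}, up to a Lebesgue-null set $I_j(t)$ is the disjoint union over $\rho \in \mathfrak{M}(\{j' \in \mathcal{N}(C) : j' \le j\})$ of the simplices $\{(t_{j'}) : 0 \le t_{\rho(1)} < \cdots < t_{\rho(n_j)} \le t\}$, each of volume $t^{n_j}/n_j!$, which sums to $c_j\, t^{n_j}$.

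The argument is short; the only points needing attention are the bookkeeping of marked versus unmarked children when invoking Lemma \ref{lemma 1 resonant nodes}, and the observation that $\mathtt{int}$ cannot point to $j'$ from a binary node situated below $j$. Neither is a genuine obstacle, and no delicate estimate enters here: the lemma is purely combinatorial and will be used to compute the surviving contribution of fully resonant couplings.
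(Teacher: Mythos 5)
Your proof is correct and follows essentially the same route as the paper's: invoke Lemma \ref{lemma 1 resonant nodes} to get $K(j'_1)+K(j'_2)=0$ and $K(j')=K(j'_3)$, combine with the sign conditions of Definition \ref{def resonant nodes} to cancel the four terms of $\Delta_{j'}$, note that the absence of binary nodes below $j$ gives $\Omega_{j'}=\Delta_{j'}$, and compute the now non-oscillating integral by the simplex decomposition of $I_j(t)$. No issues.
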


\begin{proof}
Assume $j$ satisfies \eqref{assumption of the lemma 2} and $j'\leq j$. In particular, $j'$ admits no binary nodes below it and $\De_{j'}=\Om_{j'}$. Moreover, $j'$ satisfies \eqref{assumption of the lemma} and from Lemma \ref{lemma 1 resonant nodes} we deduce that $K(j'_1)+K(j'_2)=0$ and $K(j')=K(j'_3)$, where $j'_1$ and $j'_2$ are the two marked children of $j'$ and $j'_3$ its unmarked child. Therefore, using the relation $K(j)(\ka)=\ka(j)$ and the sign properties of the resonant node $j'$, we deduce
\begin{align*}
\De_{j'} & = \ffi(j') \ps{ \ka(j') } - \ffi(j'_3) \ps{ \ka(j'_3) } - \ffi(j'_1) \ps{ \ka(j'_1) } - \ffi(j'_2) \ps{ \ka(j'_2) } 
\\& = \pth{ \ffi(j') - \ffi(j'_3)} \ps{ \ka(j') } - \pth{ \ffi(j'_1)  + \ffi(j'_2)   } \ps{ \ka(j'_1) }
\\& = 0.
\end{align*}
To evaluate the integral, we use the analogue of \eqref{I_C(t) up to negligible} for $I_j(t)$ and write
\begin{align*}
\int_{I_j(t)} \prod_{j'\leq j} e^{i\e^{-2}\Om_{j'} t_{j'}} \d t_{j'} & = \sum_{\rho\in\mathfrak{M}_j} \int_{0\leq t_{\rho(1)}<\cdots < t_{\rho(n_j)}\leq t } \prod_{i=1}^{n_j} e^{i\e^{-2}\Om_{\rho(i)} t_{\rho(i)}} \d t_{\rho(i)} 
\end{align*}
where $\mathfrak{M}_j$ denotes $\mathfrak{M}\big( \enstq{j'\in\mathcal{N}(C)}{ j'\leq j} \big)$. Since all $\Om_{\rho(i)}=0$, we conclude
\begin{align*}
\int_{I_j(t)} \prod_{j'\leq j} e^{i\e^{-2}\Om_{j'} t_{j'}} \d t_{j'} & = \sum_{\rho\in\mathfrak{M}_j} \int_{0\leq t_{\rho(1)}<\cdots < t_{\rho(n_j)}\leq t }  \d t_{\rho(i)}\cdots  \d t_{\rho(n_j)} 
\\& =  \sum_{\rho\in\mathfrak{M}_j} \frac{t^{n_j}}{n_j!}
\\& =  c_j t^{n_j}.
\end{align*}
\end{proof}

\begin{lemma}\label{lem int osc coupling}
Let $C$ be a coupling and $j\in\mathcal{N}_T(C)$ be such that 
\begin{equation}\label{assumption of the lemma 3}
\forall j' \in \mathcal{N}(C), \quad j'< j \Longrightarrow \text{$j'$ is resonant.}
\end{equation}
For all decoration maps of $C$ and for all $t\in[0,\de]$ we have
\begin{equation*}
\left|\int_{I_j(t)} \prod_{j'\leq j} e^{i\e^{-2}\Om_{j'} t_{j'}} \d t_{j'} \right| \leq \left\{
\begin{aligned}
\de^{n_j} & \quad \text{if $|\Om_j|\leq 3\e^2\de^{-1}$,}
\\ 3\de^{n_j-1} \frac{\e^2}{|\Om_j|} & \quad \text{otherwise.}
\end{aligned}
\right.
\end{equation*}
\end{lemma}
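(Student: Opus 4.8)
The plan is to isolate the oscillating integral over the nodes $j' \le j$ into two pieces: the sub-integral over the proper descendants $j' < j$ (all of which are resonant by assumption \eqref{assumption of the lemma 3}) and the outer time integration in the variable $t_j$ attached to the node $j$ itself. The key structural input is the previous lemma applied to each child of $j$: since every $j' < j$ is resonant, the set $\{j' \in \mathcal{N}(C) : j' < j\}$ splits according to the children of $j$, and on each such sub-tree $\Om_{j'} = 0$, so the inner integral evaluates to a monomial $c\, t_j^{m}$ in the single remaining variable $t_j$, where $m = n_j - 1$.

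First I would write, using the analogue of \eqref{I_C(t) up to negligible} that separates the largest variable $t_j$,
\[
\int_{I_j(t)} \prod_{j'\le j} e^{i\e^{-2}\Om_{j'} t_{j'}} \d t_{j'} = \int_0^t e^{i\e^{-2}\Om_j t_j}\pth{ \int_{I'_j(t_j)} \prod_{j'<j} e^{i\e^{-2}\Om_{j'} t_{j'}} \d t_{j'} } \d t_j,
\]
where $I'_j(s)$ is the order simplex on $\{j' < j\}$ with all variables in $[0,s]$. By the preceding lemma (applied componentwise to the children of $j$, whose descendant sets are disjoint and each consist only of resonant nodes), the inner integral equals $c\, t_j^{\,n_j-1}$ for a combinatorial constant $0 \le c \le 1$ (it is a sum over compatible orderings divided by $(n_j-1)!$, hence at most $1$). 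This reduces the estimate to bounding $\bigl| \int_0^t e^{i\e^{-2}\Om_j t_j} t_j^{\,n_j-1}\, \d t_j \bigr|$ for $t \in [0,\de]$.

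The remaining one-dimensional estimate is a routine integration-by-parts dichotomy. If $|\Om_j| \le 3\e^2\de^{-1}$ — i.e. the phase $\e^{-2}\Om_j$ is at most $3\de^{-1}$ in absolute value is \emph{not} what we want; rather, when $|\Om_j|$ is small we simply bound the integrand trivially by $t_j^{\,n_j-1} \le \de^{\,n_j-1}$ and integrate to get $\le \de^{\,n_j}$ (absorbing $c \le 1$). When $|\Om_j| > 3\e^2\de^{-1}$, I integrate by parts once in $t_j$: the boundary term at $t_j = t$ contributes $\frac{\e^2}{|\Om_j|} t^{\,n_j-1} \le \frac{\e^2}{|\Om_j|}\de^{\,n_j-1}$, the boundary term at $0$ vanishes (since $n_j \ge 1$), and the remaining integral $\frac{\e^2}{|\Om_j|}\int_0^t (n_j-1) t_j^{\,n_j-2} \d t_j$ is bounded by $\frac{\e^2}{|\Om_j|}\de^{\,n_j-1}$ as well; summing the at most two surviving terms and using $c\le 1$ gives the claimed $3\de^{\,n_j-1}\frac{\e^2}{|\Om_j|}$. (In the degenerate case $n_j = 1$ there are no descendant variables, the inner integral is $1$, and the same one-dimensional argument applies verbatim.)

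The only point requiring a little care — and the main obstacle, such as it is — is the bookkeeping that justifies factoring the inner integral as $c\, t_j^{\,n_j-1}$: one must check that the order simplex $I'_j(s)$ is precisely the product of the order simplices on the children's descendant sets (which holds because nodes in different children's subtrees are incomparable in the parentality order), and that the phases on all of $\{j' < j\}$ vanish — both of which follow directly from the previous lemma's conclusion $\De_{j'} = \Om_{j'} = 0$ for resonant nodes. Once that decomposition is in hand, the estimate is immediate from the scalar integration-by-parts computation above.
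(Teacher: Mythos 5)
Your argument is correct and follows essentially the same route as the paper: factor out the outermost variable $t_j$, apply the previous lemma to each child's subtree (all of whose nodes are resonant) to reduce the inner integral to $c\,t_j^{\,n_j-1}$ with $c\le 1$, and conclude with the trivial bound or a single integration by parts. The only slip is cosmetic: the boundary term at $t_j=0$ does \emph{not} vanish when $n_j=1$ (since $t_j^{\,n_j-1}=1$ there), though in that case the remaining integral term is absent and the total still fits within the factor $3$; the paper avoids the issue by choosing the antiderivative $\pth{e^{i\e^{-2}\Om_j\tau}-1}/(i\e^{-2}\Om_j)$, which vanishes at $\tau=0$.
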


\begin{proof}
By hypothesis, all children ${j_i}$, $i=1,2,3$, of $j$
 satisfy \eqref{assumption of the lemma 2} so the previous lemma implies that
\begin{align*}
\int_{I_{j_i}(t)} \prod_{j'\leq {j_i}} e^{i\e^{-2}\Om_{j'} t_{j'}} \d t_{j'} = c_{j_i} t^{n_{j_i}}
\end{align*}
and
\begin{align*}
\int_{I_j(t)} \prod_{j'\leq j} e^{i\e^{-2}\Om_{j'} t_{j'}} \d t_{j'} & = \int_0^t e^{i\e^{-2}\Om_{j}t_{j}}\pth{ \prod_{i=1,2,3}\int_{I_{j_i}(t_{j})} \prod_{j'\leq {j_i}} e^{i\e^{-2}\Om_{j'} t_{j'}} \d t_{j'} }\d t_j
\\& = c_{j_1}c_{j_2}c_{j_3}  \int_0^t e^{i\e^{-2}\Om_{j}t_{j}} t_{j}^{n_{j_1}+n_{j_2}+n_{j_3}} \d t_j.
\end{align*}
Since $n_{j_1}+n_{j_2}+n_{j_3}=n_j-1$, 
\begin{align*}
\int_{I_j(t)} \prod_{j'\leq j} e^{i\e^{-2}\Om_{j'} t_{j'}} \d t_{j'} & = c_{j_1}c_{j_2}c_{j_3}  \int_0^t e^{i\e^{-2}\Om_{j}\tau} \tau^{n_j-1} \d \tau
\end{align*}
Using $c_i\leq 1$ and $n_j\geq 1$, we obtain the following bound independently of the value of $\Omega_j$
\begin{align*}
\left| \int_{I_j(t)} \prod_{j'\leq j} e^{i\e^{-2}\Om_{j'} t_{j'}} \d t_{j'} \right| \leq \de^{n_j}.
\end{align*}
In addition, if $\Om_{j}\neq 0$ we can perform an integration by parts
\begin{align*}
\int_0^t e^{i\e^{-2}\Om_{j}\tau} \tau^{n_j-1} \d \tau & = \frac{e^{i\e^{-2}\Om_{j} t} - 1}{i\e^{-2}\Om_{j}} t^{n_j-1} - \int_0^t \frac{e^{i\e^{-2}\Om_{j} t}}{i\e^{-2}\Om_{j}} (n_j-1)\tau^{n_j-2}\d\tau,
\end{align*}
then simply bound the imaginary exponentials by 1 in order to obtain the following estimate
\begin{align*}
\left| \int_0^t e^{i\e^{-2}\Om_{j}\tau} \tau^{n_j-1} \d \tau \right| \leq 3\frac{\de^{n_j-1}}{\e^{-2}|\Om_{j}|}  ,
\end{align*}
and the conclusion of the proof.
\end{proof}

\begin{lemma}\label{dernier lem inter}
Let $C$ be a coupling such that $n_B(C)=0$ and such that there exists $j\in\mathcal{N}_T(C)$ satisfying \eqref{assumption of the lemma 3}. For all decoration maps of $C$ and all $t\in[0,\de]$, we have
\begin{equation}\label{borne int osc nB=0}
\left| \int_{I_C(t)} \prod_{j'\in\mathcal{N}_T(C)} e^{i\e^{-2}\Om_{j'} t_{j'}} \d t_{j'} \right| \leq \left\{
\begin{aligned}
\de^{n_T(C)} & \quad \text{if $|\Om_j|\leq 3\e^2\de^{-1}$,}
\\ 3\de^{n_T(C)-1} \frac{\e^2}{|\Om_j|} & \quad \text{otherwise.}
\end{aligned}
\right.
\end{equation}
\end{lemma}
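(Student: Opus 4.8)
\textbf{Proof plan for Lemma \ref{dernier lem inter}.} The strategy is to reduce the full oscillating integral over $I_C(t)$ to the integral over a sub-tree rooted at the given node $j$, where the previous lemmas apply, and to control the remaining time variables trivially. Since $C=(A_1,A_2,\si)$ has no binary nodes, we have $\mathcal{N}_T(C)=\mathcal{N}_T(A_1)\sqcup\mathcal{N}_T(A_2)$ and $I_C(t)=I_{A_1}(t)\times I_{A_2}(t)$, so the integral factorizes as a product of integrals over the two trees. Say $j\in\mathcal{N}_T(A_1)$; then the $A_2$ factor is trivially bounded by $\de^{n_T(A_2)}$ (each oscillating factor has modulus $1$ and the time domain has measure at most $\de^{n_T(A_2)}$). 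It thus suffices to bound the $A_1$ factor by the right-hand side of \eqref{borne int osc nB=0} with $n_T(C)$ replaced by $n_T(A_1)$.

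\textbf{Main step.} Inside $A_1$, I would split the set of ternary nodes into those that are $\le j$ and those that are not, using the parentality structure. By the assumption \eqref{assumption of the lemma 3}, every $j'<j$ is resonant, so $j$ satisfies the hypothesis of Lemma \ref{lem int osc coupling}. The key observation is that the time variables $(t_{j'})_{j'\le j}$ only interact with the variable $t_j$ through the ordering constraint $j'\le j \Rightarrow t_{j'}\le t_j$, while the remaining variables $(t_{j'})_{j'\in\mathcal{N}_T(A_1),\,j'\nleq j}$ are constrained only among themselves and by $t_{j'}>t_j$ when $j<j'$. Concretely, using the decomposition of $I_{A_1}(t)$ analogous to the one at the end of the proof of Proposition \ref{prop:FFourier} (writing the domain as: choose $t_j\in[0,t]$, then $(t_{j'})_{j'<j}\in I_j(t_j)$ wait — more precisely $(t_{j'})_{j'\le j}\in I_j(t_j)$ up to the innermost integration, and the nodes above $j$ range over a domain contained in $[t_j,t]$ of measure $\le \de^{m}$ where $m = n_T(A_1)-n_j$), I would bound all oscillating factors for $j'\nleq j$ by $1$ and the corresponding time domain by $\de^{m}$. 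This leaves exactly $\int_{I_j(t_j)}\prod_{j'\le j}e^{i\e^{-2}\Om_{j'}t_{j'}}\d t_{j'}$ integrated against $\d t_j$ over a subinterval of $[0,t]$, which is precisely the quantity estimated in Lemma \ref{lem int osc coupling} (note that the statement of that lemma already includes the outermost $\int_0^t\,\d t_j$, since $j\in I_j(t)$). Applying Lemma \ref{lem int osc coupling} gives the bound $\de^{n_j}$ or $3\de^{n_j-1}\e^2/|\Om_j|$ according to the size of $|\Om_j|$; multiplying by the $\de^m$ from the upper nodes and by the $\de^{n_T(A_2)}$ from $A_2$, and using $n_j+m+n_T(A_2)=n_T(C)$, yields exactly \eqref{borne int osc nB=0}.

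\textbf{Expected obstacle.} The only delicate point is bookkeeping: one must check carefully that the domain $I_{A_1}(t)$ really does split as claimed, i.e. that once $t_j$ is fixed, the variables below $j$ live in $I_j(t_j)$ independently of the variables not comparable to $j$ (which live in a set of measure $\le\de^m$ sitting inside $[t_j,t]^m$ for the ancestors of $j$ and inside $[0,t]^{\cdot}$ for the incomparable branches). This follows from the fact that in a tree the comparability relation restricted to $\{j'\le j\}$ is exactly the parentality order of the subtree at $j$, and that any $j'\nleq j$ with $t_{j'}$ appearing in the integral is either above $j$ (forcing $t_{j'}>t_j$) or incomparable (no constraint linking it to $t_j$ beyond the ambient $[0,t]$). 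No oscillation is needed from these extra variables, so bounding their contribution by the measure of their domain is lossless for our purposes. Once this combinatorial decomposition is in place, the estimate is immediate from Lemma \ref{lem int osc coupling}.
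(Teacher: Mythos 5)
Your overall strategy --- factor off the tree not containing $j$, then isolate the subtree below $j$, apply Lemma \ref{lem int osc coupling} to it, and bound everything else by the measure of its time domain --- is the same idea as the paper's, which implements it as an induction on the tree containing $j$, peeling off one ancestor of $j$ (together with its two other subtrees) at a time. The difference is only in the bookkeeping, but your one-shot Fubini decomposition has the order of integration backwards, and this creates a genuine gap in the case $|\Om_j|>3\e^2\de^{-1}$, which is the only case where anything nontrivial happens.

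Concretely: you condition on $t_j$, bound the integral over the variables $(t_{j'})_{j'\nleq j}$ by the measure $\de^m$ of their domain, and claim to be left with $\int_{I_j(t)}\prod_{j'\le j}e^{i\e^{-2}\Om_{j'}t_{j'}}\,\d t_{j'}$, to which Lemma \ref{lem int osc coupling} applies. But the conditional domain $D(t_j)$ of the variables $j'\nleq j$ depends on $t_j$ whenever $j$ has ancestors in $\mathcal N_T(C)$ (their times must exceed $t_j$), so what you are actually left with is $\int_0^t g(t_j)\,e^{i\e^{-2}\Om_j t_j}\,c\,t_j^{n_j-1}\,\d t_j$ with $g(t_j)=\int_{D(t_j)}\prod_{j'\nleq j}e^{i\e^{-2}\Om_{j'}t_{j'}}\,\d t_{j'}$ a non-constant function. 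Replacing $g$ by its sup $\de^m$ forces absolute values inside the $t_j$-integral and destroys exactly the oscillation needed to produce the factor $\e^2/|\Om_j|$; the claim that this step is ``lossless'' fails precisely in the case that matters (alternatively, integrating by parts with $g$ kept inside requires controlling $g'$, which you do not address). The fix is to take the Fubini in the opposite order: for fixed $(t_{j'})_{j'\nleq j}$, the variables $(t_{j'})_{j'\le j}$ range over $I_j(t_p)$, where $t_p$ is the time of the smallest ancestor of $j$ in $\mathcal N_T(C)$ (or $t$ if there is none); Lemma \ref{lem int osc coupling} bounds this inner integral uniformly in $t_p\in[0,\de]$, and the outer domain is then bounded by $\de^m$. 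This reordered version of your argument is correct, and is essentially what the paper's level-by-level induction accomplishes.
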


\begin{proof}
Let $A$ and $A'$ denote the two trees in $C$ and assume, without loss of generality, that $j$ satisfying \eqref{assumption of the lemma 3} is a ternary node in $A$. By definition of $I_C(t)$ we have 
\begin{align*}
\int_{I_C(t)} \prod_{j'\in\mathcal{N}_T(C)} e^{i\e^{-2}\Om_{j'} t_{j'}} \d t_{j'} & = \int_{I_A(t)} \prod_{j'\in\mathcal{N}_T(A)} e^{i\e^{-2}\Om_{j'} t_{j'}} \d t_{j'}  \times \int_{I_{A'}(t)} \prod_{j'\in\mathcal{N}_T(A')} e^{i\e^{-2}\Om_{j'} t_{j'}} \d t_{j'} .
\end{align*}
We bound the latter integral in the above right hand side simply by
\begin{align}\label{straightforward bound on oscillating integral}
\left| \int_{I_{A'}(t)} \prod_{j'\in\mathcal{N}_T(A')} e^{i\e^{-2}\Om_{j'} t_{j'}} \d t_{j'}  \right| \leq \de^{n_T(A')}.
\end{align}
In order to conclude the proof, it is therefore enough to prove the following statement by induction on $n\ge 2$: for all $n\ge 2$ and for all signed coloured trees $A$ of size $n$ with $n_B(A)=0$ and such that there exists  $j\in\mathcal{N}_T(A)$ satisfying \eqref{assumption of the lemma 3}, we have the bound \eqref{borne int osc nB=0} with $C$ replaced by $A$.
\begin{itemize}
\item If $n=2$ then $j$ is the root, $I_j(t)=I_A(t)$ and the result of Lemma \ref{lem int osc coupling} applies.
\item Assume the induction hypothesis is true up to some $n\ge 2$ and let $A$ be a signed coloured tree in $\tilde{A}_{n+2}$ with $n_B(A)=0$ and with some $j\in \mathcal{N}_T(A)$ satisfying \eqref{assumption of the lemma 3}. Then $A=T_*(A_1,A_2,A_3)$ for some $A_i\in\tilde{A}_{n_i}$ with $n_i\leq n$ and $n_1+n_2+n_3=n$. If $j=r_A$ coincides with the root of $A$, the result of Lemma \ref{lem int osc coupling} applies again. If $j\neq r_A$, we can assume without loss of generality that $j\in A_1$, then use the induction hypothesis on $A_1$ and \eqref{straightforward bound on oscillating integral} to the integrals over $I_{A_2}, I_{A_3}$ to derive
\begin{align*}
\left| \int_{I_A(t)} \prod_{j'\in\mathcal{N}_T(A)} e^{i\e^{-2}\Om_{j'} t_{j'}} \d t_{j'} \right| & = \left| \int_0^t e^{i\e^{-2}\Om_r \tau} \prod_{i=1}^3 \int_{I_{A_i}(t)} \prod_{j'\in\mathcal{N}_T(A_i)} e^{i\e^{-2}\Om_{j'} t_{j'}} \d t_{j'} \right|
\\& \leq \de^{1+n_T(A_2)+n_T(A_3)} \left| \int_{I_{A_1}(t)} \prod_{j'\in\mathcal{N}_T(A_1)} e^{i\e^{-2}\Om_{j'} t_{j'}} \d t_{j'} \right|
\\& \leq \de^{1+n_T(A_2)+n_T(A_3)} \de^{n_T(A_1)} \min \pth{ 1 , \frac{3\e^2}{\de |\Om_j|}}.
\end{align*}
 Since $n_T(A)= 1 + n_T(A_1)+ n_T(A_2)+n_T(A_3)$, this concludes the proof.
\end{itemize}
\end{proof}

\subsubsection{Proof of Proposition \ref{prop nonresonant}}\label{section proof nonresonant}

We will make use of the following simple lemma.

\begin{lemma}\label{lem span}
Let $C=(A,A',\si)$ be a coupling and $\mathcal{F}\subset\mathcal{L}(A)$. If $K(\mathtt{root}(A)) \in \mathrm{Span}\enstq{ K(\ell) }{\ell\in\mathcal{F}}$ then
\begin{align}\label{Kroot=}
K(\mathtt{root}(A)) = \sum_{\ell\in\mathcal{F},\si(\ell)\notin\mathcal{L}(A)}K(\ell)
\end{align}
and moreover $\#\enstq{\ell\in\mathcal{F}}{\si(\ell)\notin\mathcal{L}(A)}$ has the same parity of $\mathcal{L}(A)$.
\end{lemma}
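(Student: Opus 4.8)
The plan is to unravel the definition of $K(\mathtt{root}(A))$ in terms of leaf-variables and exploit the fact that every leaf of $A'$ paired by $\si$ to a leaf of $A'$ contributes nothing to $K(\mathtt{root}(A))$. First I would recall from Definition \ref{def linear map} that $K(\mathtt{root}(A)) = \sum_{\ell\in\mathcal{L}(A)} K(\ell)$, since all leaves of $A$ lie below the root, and that for $\ell\in\mathcal{L}(C)_-$ one has $K(\ell) = -K(\si(\ell))$, while for $\ell\in\mathcal{L}(C)_+$ the map $K(\ell)$ is the coordinate projection onto the $\ell$-component of $(\R^d)^{\mathcal{L}(C)_+}$. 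Thus $\big(K(\ell)\big)_{\ell\in\mathcal{L}(C)_+}$ is a free family, and any linear relation among the $K(\ell)$'s comes purely from the pairing identifications $K(\ell)+K(\si(\ell))=0$. Concretely, $K(\mathtt{root}(A))$ can be rewritten as a sum of projections onto the positive leaves: each positive leaf $\ell'\in\mathcal{L}(C)_+$ appears in $K(\mathtt{root}(A))$ with coefficient equal to the number of leaves of $A$ in the fibre $\{\ell', \si(\ell')\}$ — i.e.\ $+1$ if exactly one of $\ell',\si(\ell')$ is a leaf of $A$, and $0$ if both are (their contributions $K(\ell')$ and $K(\si(\ell'))=-K(\ell')$ cancel) or if neither is.

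Next I would use the hypothesis $K(\mathtt{root}(A)) \in \mathrm{Span}\{K(\ell)\mid \ell\in\mathcal{F}\}$ with $\mathcal{F}\subset\mathcal{L}(A)$. Since $\mathcal{F}\subseteq\mathcal{L}(A)$, every $K(\ell)$ with $\ell\in\mathcal{F}$ is $\pm$ a single projection $K(\ell^+)$ with $\ell^+\in\mathcal{L}(C)_+$ the positive representative of the fibre of $\ell$; so $\mathrm{Span}\{K(\ell)\mid\ell\in\mathcal{F}\}$ is spanned by those coordinate projections $K(\ell^+)$ with $\ell^+$ in the fibre of some leaf of $\mathcal{F}$. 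Comparing the expansion of $K(\mathtt{root}(A))$ in the free basis $(K(\ell'))_{\ell'\in\mathcal{L}(C)_+}$ with this span forces: every positive leaf $\ell'$ whose fibre meets $\mathcal{L}(A)$ in exactly one leaf must have that leaf in $\mathcal{F}$ (otherwise $K(\ell')$ would appear in $K(\mathtt{root}(A))$ but not be in the span). Running this backwards, $K(\mathtt{root}(A)) = \sum_{\ell\in\mathcal{F}, \si(\ell)\notin\mathcal{L}(A)} K(\ell)$: indeed the $\ell\in\mathcal{F}$ with $\si(\ell)\in\mathcal{L}(A)$ come in cancelling pairs (both $\ell$ and $\si(\ell)$ are then leaves of $A$, and since they have opposite signs, $K(\ell)+K(\si(\ell))=0$, so summing over $\mathcal{F}$ those contributions vanish in pairs modulo possible double-counting which I would handle by noting $\mathcal{F}$ need not be $\si$-stable — here one may either shrink $\mathcal{F}$ harmlessly or argue directly on fibres), and the $\ell\in\mathcal{F}$ with $\si(\ell)\notin\mathcal{L}(A)$ are exactly the ones contributing a surviving $\pm K(\ell')$, matched to the coefficient-$1$ terms in the expansion of $K(\mathtt{root}(A))$; a sign check using $\si(\ell)\in\mathcal{L}(A')$ so that $\ell$ and $\si(\ell)$ have opposite signs fixes the overall $+$ sign in \eqref{Kroot=}.

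Finally, for the parity statement: $\mathcal{L}(A)$ splits as the disjoint union of (a) leaves $\ell$ of $A$ with $\si(\ell)\in\mathcal{L}(A)$ and (b) leaves $\ell$ of $A$ with $\si(\ell)\in\mathcal{L}(A')$. The set (a) is $\si$-stable without fixed points (since $\si$ is a fixed-point-free involution), hence has even cardinality; therefore $\#(b) = \#\mathcal{L}(A) - \#(a)$ has the same parity as $\#\mathcal{L}(A)$. Now $\{\ell\in\mathcal{F}\mid\si(\ell)\notin\mathcal{L}(A)\}$ — after the reduction of the previous paragraph which identifies this set (or an appropriate representative of it, one per fibre) with the set (b) up to the same fibre-counting — has cardinality of the same parity as $\#(b)$, and hence of $\#\mathcal{L}(A)$, which is the claim. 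The main obstacle I anticipate is purely bookkeeping: handling the case where $\mathcal{F}$ is not $\si$-stable (so that a fibre contained in $\mathcal{L}(A)$ might have only one of its two leaves in $\mathcal{F}$) without breaking either \eqref{Kroot=} or the parity count; the cleanest fix is to phrase everything in terms of fibres $\{\ell,\si(\ell)\}$ and their intersection pattern with $\mathcal{L}(A)$ and with $\mathcal{F}$, reducing the whole lemma to an elementary counting statement about the free family $(K(\ell'))_{\ell'\in\mathcal{L}(C)_+}$.
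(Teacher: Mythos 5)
Your proposal is correct and follows essentially the same route as the paper: write $K(\mathtt{root}(A))=\sum_{\ell\in\mathcal{L}(A)}K(\ell)$, pass to the free family $(K(\ell'))_{\ell'\in\mathcal{L}(C)_+}$ to force every fibre meeting $\mathcal{L}(A)$ in exactly one leaf to have that leaf in $\mathcal{F}$, and deduce the parity from the evenness of the $\si$-stable part of $\mathcal{L}(A)$. The two wrinkles you flag are harmless: the coefficient of $K(\ell')$ is $\pm 1$ rather than the count of $A$-leaves in the fibre, but \eqref{Kroot=} is written in terms of $K(\ell)$ for $\ell\in\mathcal{L}(A)$ so the signs resolve automatically, and the possible non-$\si$-stability of $\mathcal{F}$ is irrelevant because the index set $\enstq{\ell\in\mathcal{F}}{\si(\ell)\notin\mathcal{L}(A)}$ never contains both elements of a fibre.
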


\begin{proof}
By assumption, there exist numbers $s_\ell$ such that 
\begin{align}\label{identity span K root}
K(\mathtt{root}(A)) = \sum_{\ell\in\mathcal{F}} s_\ell K(\ell)
\end{align}
Since $K(\mathtt{root}(A))=\sum_{\ell\in\mathcal{L}(A)}K(\ell)$, \eqref{identity span K root} rewrites $\sum_{\ell\in\mathcal{L}(A)} \mu_\ell K(\ell)=0$ where $\mu_\ell=1$ if $\ell\in\mathcal{L}(A)\setminus \mathcal{F}$ and $\mu_\ell=1-s_\ell$ if $\ell\in\mathcal{F}$. Using $K(\ell)=-K(\si(\ell))$, we can reformulate this equality as a relation involving only sums over positive distinct leaves
\begin{align*}
\sum_{\substack{\ell\in\mathcal{L}(A)_+ \\ \ell\notin\si(\mathcal{L}(A)_-) }} \mu_\ell K(\ell) + \sum_{\substack{\ell\in\mathcal{L}(A)_+ \\ \ell\in\si(\mathcal{L}(A)_-) }} (\mu_\ell - \mu_{\si(\ell)}) K(\ell) - \sum_{\substack{\ell\notin\mathcal{L}(A)_+ \\ \ell\in\si(\mathcal{L}(A)_-) }} \mu_{\si(\ell)}K(\ell) & = 0.
\end{align*}
But the maps $\{K(\ell)\, |\, \ell\in\mathcal{L}(A)_+\}$ are linearly independent, therefore all coefficients in the above left hand side are zero. Defining $\mathfrak{C}(A)\vcentcolon = \enstq{\ell\in\mathcal{L}(A)}{\si(\ell)\in\mathcal{L}(A)}$, this implies that for all $\ell \in\mathcal{L}(A)$ we have
\begin{align}\label{caractérisation centre}
\ell\in \mathfrak{C}(A) \Longrightarrow \mu_\ell =\mu_{\si(\ell)} \quad \text{and} \quad \ell\notin \mathfrak{C}(A) \Longrightarrow \mu_\ell = 0.
\end{align}
Now let $\ell\in\mathcal{F}$, so that $\mu_\ell= 1-s_\ell$. We distinguish three cases:
\begin{itemize}
\item If $\ell \in  \mathfrak{C}(A)$ and $\si(\ell)\in \mathcal{F}$, then $\mu_\ell=\mu_{\si(\ell)}$ and $\mu_{\si(\ell)}=1-s_{\si(\ell)}$ which imply $s_\ell=s_{\si(\ell)}$. Therefore, in this case we can remove $\ell$ and $\si(\ell)$ from the identity \eqref{identity span K root}. 
\item If $\ell \in  \mathfrak{C}(A)$ and $\si(\ell)\notin \mathcal{F}$, then $\mu_\ell=\mu_{\si(\ell)}$ and $\mu_{\si(\ell)}=1$ which imply $s_\ell=0$. Also in this case,we can remove $\ell$ from \eqref{identity span K root}.
\item If $\ell \notin  \mathfrak{C}(A)$, then $\mu_\ell = 0$ and so $s_\ell=1$.
\end{itemize}
We have thus proved \eqref{Kroot=}. Another consequence of the definition of $\mu_\ell$ and \eqref{caractérisation centre} is that $\mathcal{F}\cup \mathfrak{C}(A) = \mathcal{L}(A)$. This implies that 
\begin{align*}
\#\enstq{\ell\in\mathcal{F}}{\si(\ell)\notin\mathcal{L}(A)} = \#\mathcal{L}(A) - \#\mathfrak{C}(A),
\end{align*}
which concludes the proof of the lemma since $\#\mathfrak{C}(A)$ is even.
\end{proof}

We are now in the position to prove Proposition \ref{prop nonresonant}.

\begin{proof}[Proof of Proposition \ref{prop nonresonant}]
Let us first assume that $C$ has at least one binary node, i.e $n_B(C)\geq 1$. Then, when estimating the right hand side of \eqref{fourier of FC}, we simply bound the time integral by $\de^{n_T(C)}$, the $q_j$s by 1 and the $M^{\eta,\eta'}$ by some universal constant, then make the standard change of variable in the sum over $\mathcal{D}_k(C)$ using factor $L^{-\frac{n(C)d}{2}}$ and obtain that
\begin{align*}
\left| \reallywidehat{F_C}(\e^{-2}t,k) \right| & \leq \La^{n(C)+1} \de^{n_T(C)} \e^{n_B(C)}.
\end{align*}
Since $n(C)=n_B(C)+2n_T(C)$, $\e\leq \de$ and $n_B(C)\geq 1$, this gives
\begin{align*}
\left| \reallywidehat{F_C}(\e^{-2}t,k) \right| & \leq \La^{n(C)+1} \de^{\frac{n(C)}{2}} \e^{\half}.
\end{align*}

\saut
Assume now that $n_B(C)=0$ and that $C$ admits at least one non-resonant ternary node $j\in\mathcal{N}_T(C)$. Without loss of generality, we can assume that $j$ satisfies \eqref{assumption of the lemma 3} and belongs to $A$. Therefore, Lemma \ref{dernier lem inter} applies and \eqref{borne int osc nB=0} holds. Applying the inequality $\min\pth{1,\frac{a}{b}}\leq \frac{\sqrt{2}a}{\sqrt{a^2+b^2}}$ with $a=3\de^{-1}$ and $b=\e^{-2}\Om_j$, we obtain
\begin{align*}
\left|  \int_{I_C(t)} \prod_{j'\in\mathcal{N}_T(C)} e^{i\e^{-2}\Om_{j'} t_{j'}} \d t_{j'} \right| \leq \de^{n_T(C)-1} \frac{3\sqrt{2}}{\left| 3\de^{-1} + i \e^{-2} \Om_j \right|}.
\end{align*}
Plugging this into \eqref{fourier of FC}, using the compact support assumption for the $M^{\eta,\eta'}$ and bounding all the $q_{j'}$ for $j'\neq j$ by 1, we obtain
\begin{align}\label{inter bound FC}
\left|\widehat{F_C}(\e^{-2}t,k) \right| \leq  \La^{n(C)+1} \frac{\de^{n_T(C)-1}}{L^{n_T(C)d}}    \sum_{\substack{\ka\in\mathcal{D}_k(C)\\ \ka(\mathcal{L}(C)_+)\subset B(0,R)}}    \frac{ |q_j| }{\left| 3\de^{-1} + i \e^{-2} \Om_j \right|},
\end{align}
where we also used that $n(C)=2n_T(C)$. In order to estimate the RHS of this inequality we distinguish between the nature of $j$, i.e. if it is a low node or a high node.

\paragraph{Case 1: $j\in\mathcal{N}_\low(C)$.} We denote by $j_1$, $j_2$ its marked children and by $j_3$ its unmarked child, and we further distinguish between two situations.

\paragraph{Case 1.1: $\mathtt{bush}(j)$ is self-coupled.} In this case, $j$ satisfies satisfies \eqref{assumption of the lemma} and the first part of Lemma \ref{lemma 1 resonant nodes} implies
\begin{align*}
\Om_j & = (\ffi(j) - \ffi(j_3)) \ps{\ka(j_3)} - (\ffi(j_1) + \ffi(j_2)) \ps{\ka(j_1)} .
\end{align*}
Since $j$ is not resonant, then the quantities $\ffi(j) - \ffi(j_3)$ and $\ffi(j_1) + \ffi(j_2)$ (which take value in $\{-2,0,2\}$) cannot vanish simultaneously. If they differ from each other, then we have $|\Om_j|\gtrsim \ps{\ka(j_i)} \gtrsim m$ for some $i=1,3$ so that \eqref{inter bound FC} becomes
\begin{align*}
\left|\widehat{F_C}(\e^{-2}t,k) \right| \leq  \La^{n(C)+1} \de^{n_T(C)-1}    \e^2 ,
\end{align*}
which also concludes the proof since $\de^{n_T(C)-1}    \e^2\leq \de^{n_T(C)}    \e = \de^{\frac{n(C)}{2}}    \e$. Therefore the only remaining case is when they are both equal, so that $\Om_j  = 2\iota \pth{ \ps{\ka(j_3)} - \ps{\ka(j_1)}}$ for some $\iota\in\{\pm\}$. The second part of Lemma \ref{lemma 1 resonant nodes} implies that there exists leaves $\ell_1$ and $\ell_3$ such that $K(\ell_i)=K(j_i)$ for $i=1,3$ and $K(\ell_1)$ and $K(\ell_3)$ are linearly independent. 
\begin{itemize}
\item If $K(\mathtt{root}(A))\notin\mathrm{Span}\{K(\ell_1),K(\ell_3)\}$ then we can complete $\pth{K(\mathtt{root}(A)),K(\ell_1),K(\ell_3) }$ into a basis of $V(C)$ with its initial basis: there exists leaves $\ell_m\in\mathcal{L}(C)_+$ for $m=4,\dots, n_T(C)+1$ such that $\pth{K(\mathtt{root}(A)),K(\ell_1),K(\ell_3), K(\ell_4),\dots,K(\ell_{n_T(C)+1}) }$ is a basis of $V(C)$. By performing a change of variable similar to the one of Section \ref{section basis of V(C) and change of variable} and applying Lemma \ref{lem qj} to $q_j$, we deduce from \eqref{inter bound FC} that
\begin{align*}
&\left|\widehat{F_C}(\e^{-2}t,k) \right| 
\\& \leq  \La^{n(C)+1} \frac{\de^{n_T(C)-1}}{L^{n_T(C)d}}  \sum_{(k_1,k_3)\in\pth{ \Z_L^d\cap B(0,R) }^2}  \frac{ 1}{\ps{k_3}\ps{k_1}^2\left| 3\de^{-1} + i \e^{-2} 2\iota \pth{ \ps{k_3} - \ps{k_1}} \right|}
\\&\hspace{3cm}\times \sum_{(k_r,k_4,\dots,k_{n_T(C)+1})\in\pth{ \Z_L^d\cap B(0,R) }^{n_T(C)-1}}  \mathbbm{1}_{k_r=k}
\\&\leq \La^{n(C)+1} \de^{n_T(C)-1} L^{-2d} \sum_{k_3\in \Z_L^d\cap B(0,R)} \ps{k_3}^{-1} \pth{ \sum_{k_1\in \Z_L^d\cap B(0,R) }  \frac{ 1}{\ps{k_1}^2\left| 3\de^{-1} + i \e^{-2} 2\iota \pth{ \ps{k_3} - \ps{k_1}} \right|} }
\\&\leq \La^{n(C)+1} \de^{n_T(C)-1}  L^{-\pth{1+\frac{1}{p}}d}
\\&\qquad \times \sum_{k_3\in \Z_L^d\cap B(0,R)} \ps{k_3}^{-1} \pth{ \sum_{k_1\in \Z_L^d\cap B(0,R) }  \frac{ 1}{\ps{k_1}^{2p}\left| 3\de^{-1} + i \e^{-2} 2\iota \pth{ \ps{k_3} - \ps{k_1}} \right|^p} }^{\frac{1}{p}}
\end{align*}
where we used Hölder's inequality for some $p>\frac{d}{2}$. Using now Lemma \ref{lem:RiemanntoInt} and a similar proof than the one of \eqref{intupsilon iota=+} in the proof of Lemma \ref{lem:intTheta} we can prove that
\begin{align*}
 \sum_{k_1\in \Z_L^d\cap B(0,R) }  \frac{ 1}{\ps{k_1}^{2p}\left| 3\de^{-1} + i \e^{-2} 2\iota \pth{ \ps{k_3} - \ps{k_1}} \right|^p} \lesssim L^d \pth{ \e^{\b-2} + \de^{p-1}\e^2 },
\end{align*}
so that
\begin{align*}
\left|\widehat{F_C}(\e^{-2}t,k) \right| & \leq \La^{n(C)+1} \de^{n_T(C)-1}   \pth{ \e^{\b-2} + \de^{p-1}\e^2 }^{\frac{1}{p}} L^{-d} \sum_{k_3\in \Z_L^d\cap B(0,R)} \ps{k_3}^{-1} 
\\& \leq   \La^{n(C)+1} \de^{n_T(C)-1}   \pth{ \e^{\b-2} + \de^{p-1}\e^2 }^{\frac{1}{p}} L^{-\frac{d}{q}} 
\end{align*}
where we again used Hölder's inequality on the last sum for some $q>d$. By choosing in addition $q<2d$ and using $\e\leq \de$ one can show that this implies
\begin{align}\label{estim cas 1.1}
\left|\widehat{F_C}(\e^{-2}t,k) \right| & \leq  \La^{n(C)+1} \de^{n_T(C)} \e^{\min\pth{ \frac{\b-2}{p}, \frac{\b d}{q} }}.
\end{align}
\item If now $K(\mathtt{root}(A))\in\mathrm{Span}\{K(\ell_1),K(\ell_3)\}$, then $\#\mathcal{L}(A)=2n_T(A)+1$ and the result of Lemma \ref{lem span} show that $K(\mathtt{root})=K(\ell_i)$ for some $i\in\{1,3\}$, say $i=3$ without loss of generality. We then complete $\pth{ K(\ell_1),K(\ell_3)}$ into a basis of $V(C)$ as above, and the constraint on the root of $A$ translates as a constraint on $K(\ell_3)$. After the usual change of variable we again obtain from \eqref{inter bound FC}:
\begin{align*}
\left|\widehat{F_C}(\e^{-2}t,k) \right| \leq  \La^{n(C)+1} \de^{n_T(C)-1}  L^{-d}  \sum_{(k_1,k_3)\in\pth{\Z_L^d\cap B(0,R) }^2}    \frac{ \mathbbm{1}_{k_3=k} }{\ps{k_1}^2\ps{k_3}\left| 3\de^{-1} + i \e^{-2} 2\iota \pth{ \ps{k_3} - \ps{k_1}} \right|}.
\end{align*}
We can then conclude as above and obtain \eqref{estim cas 1.1} again.
\end{itemize}

\paragraph{Case 1.2: $\mathtt{bush}(j)$ is not self-coupled.} Since every node below $j$ is resonant, we have $K(j_1)+K(j_2)=\mathfrak{K}_{\bush}$ and thus $K(j_1)+K(j_2)$ is not identically zero. As above, Lemma \ref{lemma 1 resonant nodes} implies that there exists leaves $\ell_1$ and $\ell_3$ such that $K(\ell_i)=K(j_i)$ for $i=1,3$. Moreover, we have that $K(\mathtt{root}(A))\notin \mathrm{Span}\{K(\ell_1)+K(\ell_2)\}$ since if $K(\mathtt{root}(A))\in \mathrm{Span}\{K(\ell_1)+K(\ell_2)\}$ then Lemma \ref{lem span} would imply $K(\mathtt{root}(A))=K(\ell_i)$ for some $i=1,2$ and thus $K(\ell_i)\in\mathrm{Span}\{K(\ell_1)+K(\ell_2)\}$ which is absurd. Therefore we complete $\pth{ K(j_1)+K(j_2) , K(\mathtt{root}(A)) }$ into a basis $\pth{ K(j_1)+K(j_2) , K(\mathtt{root}(A)) , K(\ell_3),\dots,K(\ell_{n_T(C)+1} )}$ of $V(C)$ and obtain from \eqref{inter bound FC}:
\begin{align*}
\left|\widehat{F_C}(\e^{-2}t,k) \right| & \leq  \La^{n(C)+1} \frac{\de^{n_T(C)}}{L^{n_T(C)d}}    \sum_{\pth{k_1,k_2,k_3,\dots,k_{n_T(C)+1}} \in \pth{\Z_L^d\cap B(0,R) }^{n_T(C)+1}} \mathbbm{1}_{|k_1|\leq \e^\ga}  \mathbbm{1}_{k_2=k} 
\\&\leq   \La^{n(C)+1} \de^{n_T(C)} \e^{\ga d}.
\end{align*}

\paragraph{Case 2: $j\in\mathcal{N}_\high(C)$.} We denote again by $j_1$, $j_2$ and $j_3$ the children of $j$. As above, Lemma \ref{lemma 1 resonant nodes} implies that there exists distinct leaves $\ell_i$ such that $K(\ell_i)=K(j_i)$ for $i=1,2,3$. Since the leaves are distinct and satisfy $K(\ell_i)\neq 0$ (by definition of $V(C)$), we can assume that $K(j_1) + K(j_2)\neq 0$ and $K(j_1) + K(j_3)\neq 0$. Note that among the five situations characterizing a high node, either all the interactions are high, or $K(j_1) + K(j_2)$ or $K(j_1) + K(j_3)$ are low. Therefore \eqref{inter bound FC} implies
\begin{align*}
\left|\widehat{F_C}(\e^{-2}t,k) \right| & \leq B_1 + B_2 + B_3, 
\end{align*}
where
\begin{align*}
B_1 & \vcentcolon =  \La^{n(C)+1} \de^{n_T(C)} L^{-n_T(C)d}    \sum_{\substack{\ka\in\mathcal{D}_k(C)\\ \ka(\mathcal{L}(C)_+)\subset B(0,R)}}  \mathbbm{1}_{|K(j_1)(\ka) + K(j_2)(\ka) |\leq 2 \e^\ga} ,
\\ B_2 & \vcentcolon = \La^{n(C)+1} \de^{n_T(C)} L^{-n_T(C)d}    \sum_{\substack{\ka\in\mathcal{D}_k(C)\\ \ka(\mathcal{L}(C)_+)\subset B(0,R)}}  \mathbbm{1}_{|K(j_1)(\ka) + K(j_3)(\ka) |\leq 2 \e^\ga}   ,
\\ B_3 & \vcentcolon = \La^{n(C)+1} \de^{n_T(C)-1}L^{-n_T(C)d}   
\\&\quad \times  \sum_{\substack{\ka\in\mathcal{D}_k(C)\\ \ka(\mathcal{L}(C)_+)\subset B(0,R)}}    \frac{  \mathbbm{1}_{|K(j_1)(\ka) + K(j_2)(\ka) |\geq 2 \e^\ga}  \mathbbm{1}_{|K(j_1)(\ka) + K(j_3)(\ka) |\geq 2 \e^\ga}  \mathbbm{1}_{|K(j_3)(\ka) + K(j_2)(\ka) |\geq 2 \e^\ga} }{\ps{K(j_1)(\ka)}\ps{K(j_2)(\ka)}\ps{K(j_3)(\ka)} \left| 3\de^{-1} + i \e^{-2} \Om_j \right|}.
\end{align*} 
For $B_1$, we again note that $K(\mathtt{root}(A))\notin \mathrm{Span}\{K(j_1)+K(j_2)\}$ so that we can complete $\pth{ K(j_1)+K(j_2) , K(\mathtt{root}(A)) }$ into a basis of $V(C)$ and do as in Case 1.2. The case of $B_2$ is identical by considering $K(\ell_1)+K(\ell_3)$ instead. We thus obtain
\begin{align*}
B_1+B_2 \leq  \La^{n(C)+1} \de^{n_T(C)} \e^{\ga d}.
\end{align*}
For $B_3$, we first note that if $\pth{ K(j_1),K(j_2),K(j_3)}$, i.e. $\pth{ K(\ell_1),K(\ell_2),K(\ell_3)}$ is not linearly independent, then there must exists $i_1\neq i_2\in\{1,2,3\}$ such that $\si(\ell_{i_1})=\ell_{i_2}$ so that $K(j_{i_1})+K(j_{i_2})=0$ and $B_3=0$. We can thus assume that $\pth{ K(j_1),K(j_2),K(j_3)}$ is linearly independent.
\begin{itemize}
\item If $K(\mathtt{root}(A))\notin\mathrm{Span}\{K(j_1),K(j_2),K(j_3)\}$, then we complete $\pth{ K(j_1),K(j_2),K(j_3),K(\mathtt{root}(A))}$ into a basis of $V(C)$ and ultimately obtain
\begin{align*}
&B_3 
\\& \leq \La^{n(C)+1} \de^{n_T(C)-1}L^{-3d}   
\\& \times  \sum_{(k_1,k_2,k_3)\in\pth{\Z_L^d\cap B(0,R)}^3}    \frac{ \ps{k_1}^{-1}\ps{k_2}^{-1}\ps{k_3}^{-1} \mathbbm{1}_{|k_3 + k_2 |\geq 2 \e^\ga} }{ \left| 3\de^{-1} + i \e^{-2} \pth{ \ffi(j)\ps{k_1+k_2+k_3} - \ffi(j_1)\ps{k_1} - \ffi(j_2)\ps{k_2} - \ffi(j_3)\ps{k_3} }\right|}.
\end{align*}
By summing first on $k_1$ and using again Lemmas \ref{lem:RiemanntoInt} and \ref{lem:intTheta} we can conclude as in Case 1.1.
\item If $K(\mathtt{root}(A))\in\mathrm{Span}\{K(j_1),K(j_2),K(j_3)\}$, then Lemma \ref{lem span} implies that either $K(\mathtt{root}(A))=K(j_i)$ for some $i=1,2,3$ or $K(\mathtt{root}(A))=K(j_1)+K(j_2)+K(j_3)$. If say $K(\mathtt{root}(A))=K(j_1)$ we complete $\{K(j_1),K(j_2),K(j_3)\}$ into a basis and translate the constraint on the root to a constraint on $K(j_1)(\ka)$ to obtain
\begin{align*}
B_3 & \leq \La^{n(C)+1} \de^{n_T(C)-1}L^{-2d}   
\\&\quad \times  \sum_{(k_2,k_3)\in\pth{\Z_L^d\cap B(0,R)}^2}    \frac{ \ps{k_2}^{-1}\ps{k_3}^{-1} \mathbbm{1}_{|k + k_2 |\geq 2 \e^\ga}  }{ \left| 3\de^{-1} + i \e^{-2} \pth{ \ffi(j)\ps{k+k_2+k_3} - \ffi(j_1)\ps{k} - \ffi(j_2)\ps{k_2} - \ffi(j_3)\ps{k_3}} \right|}.
\end{align*}
If $K(\mathtt{root}(A))=K(j_1)+K(j_2)+K(j_3)$ we again complete $\{K(j_1),K(j_2),K(j_3)\}$ into a basis and translate the constraint on the root to a constraint on $K(j_1)(\ka)+K(j_2)(\ka)+K(j_3)(\ka)$ and obtain
\begin{align*}
B_3 & \leq \La^{n(C)+1} \de^{n_T(C)-1}L^{-2d}   
\\&\quad \times  \sum_{(k_2,k_3)\in\pth{\Z_L^d\cap B(0,R)}^2}    \frac{\ps{k_1}^{-1} \ps{k_2}^{-1}\ps{k_3}^{-1}  \mathbbm{1}_{|k-k_1 |\geq 2 \e^\ga}   }{ \left| 3\de^{-1} + i \e^{-2} \pth{ \ffi(j)\ps{k} - \ffi(j_1)\ps{k_1} - \ffi(j_2)\ps{k_2} - \ffi(j_3)\ps{k-k_1-k_2}} \right|}.
\end{align*}
In both cases, by summing first on $k_2$ we can conclude using Lemmas \ref{lem:RiemanntoInt} and \ref{lem:intTheta} as in Case 1.1
\end{itemize}
This concludes the proof of the proposition.
\end{proof}

\subsection{Resonant couplings}\label{section resonant couplings}

In this section, we introduce the notion of resonant couplings and ultimately show that their recursive structure is the one of binary trees.

\subsubsection{First properties and reduction}

In the following remark we introduce a useful way of denoting signed coloured tree.

\begin{remark}\label{rm:labellednodes} 
Let $(A,\ffi,c)$ be a signed coloured tree with $A \in \tilde{\mathcal{A}}_n$ with $n>0$ and assume that $\mathtt{root}(A)$ is a $*$-ternary node (resp. $*$-binary node). Then $A$ writes $\pth{T_*\pth{\tilde A_1,\tilde A_2,\tilde A_3}, \varphi, c}$ (resp. $\pth{B_*\pth{\tilde A_1,\tilde A_2}, \varphi, c}$). We allow ourselves to also write $A$ as a labbelled tree, that is to say, we allow the abuse of notation $A = (T_*(A_1,A_2,A_3),\iota,\eta)$ (resp. $A = (B_*(A_1,A_2),\iota,\eta)$) where for $i=1,2,3$ (resp. $i=1,2$), we set $A_i = \pth{\tilde A_i , \varphi_{|\mathcal N(\tilde A_i) \cup \mathcal L(\tilde A_i)} , c_{|\mathcal N(\tilde A_i) \cup \mathcal L(\tilde A_i)}}$ and $\iota = \varphi(\mathtt{root}(A))$, $\eta = c(\mathtt{root}(A))$. 
\end{remark}

\begin{definition}
For $n\in\mathbb{N}$, we define $\mathtt{Res}_n$ to be the set of couplings with $n$ nodes that are all resonant (in the sense of Definition \ref{def resonant nodes}). We also define
\begin{align*}
\mathtt{Res}(\iota,\iota',\eta,\eta') = \mathtt{Res}_n \cap \bigcup_{n_1,n_2\in\mathbb{N}} \mathcal C^{\iota,\iota',\eta,\eta'}_{n_1,n_2}.
\end{align*}
\end{definition}

Thanks to 
\[
\int_{I_C(t)} \prod_j dt_j = \# \mathfrak M(C) \frac{t^n}{n!},
\]
we can easily prove from \eqref{fourier of FC} that if $C \in \mathtt{Res}_n$ then we have
\begin{align}\label{FCresonant}
\widehat{F_C}(\varepsilon^{-2}t,k) = (-i)^n L^{-nd} \sum_{\kappa \in \mathcal D_k(C)} \prod_{j\in \mathcal N(C)} q_j \prod_{l\in \mathcal L_-(C)} M^{c(l),c(\sigma(l))}(-\kappa(l)) \# \mathfrak M(C) \frac{t^n}{n!}.
\end{align}

\begin{lemma} \label{lemma res coupling reduction}
Let $C \in \mathtt{Res}_n$ with $n>0$. There exists $n_1,n_2 \in \N$ such that $n_1 + n_2 = n-1$, $\iota \in \{\pm\}$, $\eta \in \{0,1\}$ and $(C_1,C_2) \in \mathtt{Res}_{n_1} \times \mathtt{Res}_{n_2}$ such that if we write $C_i = (A_i,A'_i,\sigma_i)$ and define $\sigma : \mathcal L(C_1)\sqcup \mathcal L(C_2) \rightarrow \mathcal L(C_1)\sqcup \mathcal L(C_2)$ by
\[
\sigma(\ell) = \left \lbrace{\begin{array}{cc}
\sigma_1(\ell)     & \mathrm{if}\, \ell\in \mathcal L(C_1), \\
\sigma_2(\ell)     & \mathrm{if}\, \ell\in \mathcal L(C_2),
\end{array}}\right.
\]
then we are in one of the following situations
\begin{align*}
    C = & ((T_\lowr (A_1,A_1',A_2),\iota,\eta),A'_2,\sigma)\\
    C = & ((T_\lowm (A_1,A_2,A_1'),\iota,\eta),A'_2,\sigma)\\
    C = & ((T_{\lowl} (A_2,A_1,A_1'),\iota,\eta),A'_2,\sigma)\\
    C = & (A_2, (T_\lowr(A_1,A_1',A_2') \iota, \eta),\sigma) \\
    C = & (A_2, (T_\lowm(A_1,A_2',A_1') \iota, \eta),\sigma) \\
    C = & (A_2, (T_\lowr(A_2',A_1,A_1') \iota, \eta),\sigma) .
\end{align*}
Moreover, if $\iota = \iota'$ then $\mathtt{Res}_n(\iota,\iota',\eta,\eta') = \emptyset$.
\end{lemma}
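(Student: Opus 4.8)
The statement asserts a recursive decomposition of resonant couplings. The key observation is that a coupling $C\in\mathtt{Res}_n$ with $n>0$ has at least one resonant node, and in particular the roots $r_1$ of $A$ and $r_2$ of $A'$ cannot both be leaves. The plan is to look at whichever of $r_1,r_2$ is a resonant node — since every node of $C$ is resonant by hypothesis, any branching node is resonant, hence low ternary by Definition~\ref{def resonant nodes}(i). First I would argue that at least one of the two roots is a branching node: if both were leaves then $C$ would have zero nodes, contradicting $n>0$. Without loss of generality treat the case where $r_1$ is a resonant (low ternary) node; the case where $r_2$ is the resonant root is symmetric and accounts for the last three displayed forms.

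So suppose $\mathtt{root}(A)$ is a resonant node, of type $*\in\{\lowl,\lowm,\lowr\}$. Using the labelled-tree notation of Remark~\ref{rm:labellednodes}, write $A=(T_*(\tilde A_1,\tilde A_2,\tilde A_3),\iota,\eta)$ so that the three subtrees hanging from the root are $\tilde A_1,\tilde A_2,\tilde A_3$ (left, middle, right). By Definition~\ref{def resonant nodes}(iii), among these three children exactly two are the marked ones $j_1,j_2$ and one is the unmarked $j_3$, and the positions of the marked/unmarked children are dictated by the type $*$ (Definition~\ref{def marked}): for $\lowr$ the marked children are left and middle, for $\lowm$ they are left and right, for $\lowl$ they are middle and right. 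Next I would invoke the self-coupledness of $\mathtt{bush}(\mathtt{root}(A))$ (Definition~\ref{def resonant nodes}(ii)): by Definition~\ref{def self-coupled bushes}, the coupling map $\sigma$ restricted to the leaves of $\mathtt{bush}(\mathtt{root}(A))$ is a bijection from its positive leaves to its negative leaves. Now $\mathtt{bush}(\mathtt{root}(A)) = \mathtt{prebush}(j_1)\sqcup\mathtt{prebush}(j_2)$ by \eqref{bush}, and by \eqref{prebush caracterisation} these prebushes do not see any marked node strictly below; the crucial point, which I would need to check carefully, is that $\sigma$ maps the leaves of the subtree rooted at $j_1$ bijectively onto the leaves of the subtree rooted at $j_2$ — i.e. the pair $(\tilde A_{j_1},\tilde A_{j_2})$ together with $\sigma$ restricted to their leaves forms a coupling, which is exactly $C_1$. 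This is the content foreshadowed by Lemma~\ref{lemma 1 resonant nodes}: since every node below $\mathtt{root}(A)$ is resonant, assumption \eqref{assumption of the lemma} holds at $j_1$ and at $j_2$ with $K(j_1)+K(j_2)=0$, and more is true — the self-coupledness forces $\sigma$ to pair leaves of $\tilde A_{j_1}$ with leaves of $\tilde A_{j_2}$, not within. Then $C_2\vcentcolon=(\tilde A_{j_3},A',\sigma')$, where $\sigma'$ is $\sigma$ restricted to the remaining leaves, is the second coupling; one checks it is a genuine coupling because the leaf-sign balance of $C$ minus that of $C_1$ (which is balanced) leaves $C_2$ balanced, and $\sigma'$ is still an involution pairing $\mathcal L(C_2)_-$ with $\mathcal L(C_2)_+$. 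Relabelling $\tilde A_{j_1}=A_1$, $\tilde A_{j_2}=A_1'$, $\tilde A_{j_3}=A_2$ and $A'=A_2'$, and reading off the position of $j_1,j_2,j_3$ from the type $*$, produces precisely the three displayed forms with $\mathtt{root}(A)$ a ternary node. Finally, every node of $C_1$ and $C_2$ is a node of $C$ hence resonant, so $C_1\in\mathtt{Res}_{n_1}$, $C_2\in\mathtt{Res}_{n_2}$ with $n_1+n_2=n-1$, as claimed.

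For the symmetric case — $\mathtt{root}(A)$ a leaf but $\mathtt{root}(A')$ a resonant node — the same argument applied to $A'$ yields the last three displayed forms, with the roles of the two trees of the coupling exchanged. For the last assertion, suppose $C\in\mathtt{Res}_n(\iota,\iota',\eta,\eta')$ is nonempty; I would proceed by induction on $n$. If $n=0$ then $C$ is the trivial coupling consisting of two leaves $r_1,r_2$ with $\sigma(r_1)=r_2$; the coupling condition (ii) in Definition~\ref{def:coupling} forces $\varphi_1(r_1)=-\varphi_2(r_2)$ (one positive, one negative leaf), i.e. $\iota=-\iota'$, so $\iota=\iota'$ is impossible. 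If $n>0$, apply the reduction just established: $C$ is of one of the six forms, say $C=((T_*(A_1,A_1',A_2),\iota,\eta),A_2',\sigma)$. The root of the first tree has sign $\iota$, and by the colour/sign rules and the resonance condition Definition~\ref{def resonant nodes}(iii) the sign of the unmarked child equals $\iota$; tracking this down to a leaf, and using the induction hypothesis on the sub-couplings $C_1,C_2$ (which are nonempty and have fewer nodes), one sees the root of the second tree $A_2'$ must carry the opposite sign $\iota'=-\iota$. The main obstacle in writing this out cleanly is the bookkeeping for the leaf-pairing inside $\mathtt{bush}(\mathtt{root}(A))$ — showing that self-coupledness of the bush forces $\sigma$ to respect the subtree partition $\mathcal L(\tilde A_{j_1})$ versus $\mathcal L(\tilde A_{j_2})$ rather than merely $\sigma(\mathtt{prebush}(j_1)_+ \sqcup \mathtt{prebush}(j_2)_+) = \mathtt{prebush}(j_1)_- \sqcup \mathtt{prebush}(j_2)_-$; this should follow by combining \eqref{prebush caracterisation}, \eqref{disjoint prebush} and Lemma~\ref{lemma 1 resonant nodes}, but it is the step that requires genuine care.
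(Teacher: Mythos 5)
Your decomposition is the same as the paper's: locate the resonant root (necessarily a low ternary node since every node is resonant), peel off the two marked subtrees as $C_1$ and the unmarked subtree together with the other tree of $C$ as $C_2$, note that resonance only depends on what happens below a node so all nodes of $C_1,C_2$ remain resonant, and prove the sign statement by induction on $n$ via the sign condition at the root. That structure is correct and matches the paper.

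However, the claim you single out as the crux --- that self-coupledness of $\mathtt{bush}(\mathtt{root}(A))$ forces $\sigma$ to pair leaves of $\tilde A_{j_1}$ with leaves of $\tilde A_{j_2}$, ``not within'' --- is false, and fortunately also unnecessary. Definition \ref{def:coupling} does not require the coupling map of $C_1=(A_1,A_1',\sigma_1)$ to pair leaves of $A_1$ with leaves of $A_1'$; it only requires $\sigma_1$ to be an involution of $\mathcal L(A_1)\sqcup\mathcal L(A_1')$ that is a bijection from its negative leaves onto its positive ones. Your stronger claim fails as soon as $A_1$ contains a resonant low node whose self-coupled bush lies entirely inside $A_1$: then $\sigma$ pairs those leaves within $A_1$. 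What actually needs checking is that the set $\mathcal L(A_1)\sqcup\mathcal L(A_1')=\offspring(\mathtt{root}(A))$ is stable under $\sigma$ with balanced signs, and this is immediate from \eqref{lien offsprings bushes}: the offspring set is a disjoint union of bushes of low nodes below the root, each self-coupled because every node of $C$ is resonant, and a disjoint union of self-coupled sets is self-coupled. Hence $C_1$ is a coupling, and its complement in $\mathcal L(C)$ yields $C_2$. Lemma \ref{lemma 1 resonant nodes} is not the relevant tool here (it concerns the linear maps $K(j)$, not the pairing structure), and the ``genuine care'' you anticipate dissolves once the false intermediate claim is dropped.
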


\begin{remark} 
We note that the different situations for $C$ are not exclusive.
\end{remark}

\begin{proof}
Assume $C = (A,A',\sigma)$ belongs to $\mathtt{Res}_n$ with $n>0$. Since $n>0$, either $\mathtt{root}(A)$ or $\mathtt{root}(A')$ is a node, and because this node is resonant, it is a low node. We assume that $\mathtt{root}(A)$ is a $\lowr$ node, which takes us to the first situation above (the other possibilities corresponding to the other cases). In this case, we get that $A = (T_\lowr (A_1,A_1',A_2),\iota,\eta) $ for some $\iota,\eta$ (recall Remark \ref{rm:labellednodes}). We have already seen that $\mathtt{offsprings}(\mathtt{root}(A))$ is self-coupled, which tells us that 
\[
C_1 = \pth{A_1,A_1',\sigma_{|\mathcal L(A_1) \cup \mathcal L(A_1')}} \quad \text{and} \quad C_2 = \pth{A_2,A',\sigma_{|\mathcal L(A_2) \cup \mathcal L(A'_2)}}
\]
are couplings. We remark that the bush of a given node $j$ is a subset of the leaves below $j$. Therefore, if $j\in \mathcal N_\low(A_1)$, the set $\mathtt{bush}(j)$ does not depend on whether we consider it in $A_1$ or in $A$. What we mean by this is that being resonant, although it is not a local property, depends only on what happens below a given node. Therefore the nodes in $C_1$ and $C_2$ are resonant and we deduce $C_i \in \mathtt{Res}_{n_i}$ with $n_1 + n_2 +1 = n$. 

We turn to the second part of the lemma and assume $\mathtt{Res}_n(\iota,\iota',\eta,\eta')\neq \emptyset$. We prove by induction on $n$ that $\iota = - \iota'$. If $n=0$, this is due to the definition of a coupling. If $n>0$, let $C \in \mathtt{Res}_n(\iota,\iota',\eta,\eta')$, thanks to the first part of the lemma we are in one of the situations described above. In all cases, due to the sign condition on resonant nodes, we have that $C_2 \in \mathtt{Res}_{n_2}(\iota,\iota',\eta_2,\eta_2')$. This yields by the induction hypothesis that $\iota' = -\iota$.
\end{proof}

\begin{definition} 
Let $C$ be a coupling.
\begin{itemize}
\item[(i)] We say that a node is a left-resonant (resp. middle-resonant, resp right-resonant) node as a resonant node whose unmarked child is its left (resp. middle, resp. right) child. 
\item[(ii)] We define $\mathcal R_n (\iota,-\iota,\eta,\eta') $ to be the set of couplings $C \in \mathtt{Res}_n(\iota,-\iota,\eta,\eta')$ whose nodes are all right-resonant.
\end{itemize}
\end{definition}

The following proposition shows that we can reduce resonant couplings to resonant couplings with only right-resonant nodes.

\begin{prop}\label{prop:reduceRes} 
Let $C\in \mathtt{Res}_n(\iota,-\iota,\eta,\eta')$. 
\begin{itemize}
\item[(i)] If $C$ has a middle-resonant node, then $F_C=0$.
\item[(ii)] If $C$ has no middle-resonant node, there exists $\tilde C \in \mathcal R_n (\iota,-\iota,\eta,\eta') $ such that $F_C = F_{\tilde C}$ and if $C = (A,A',\sigma)$ and $\tilde C = (\tilde A, \tilde A',\tilde \sigma)$, then $n_T(A) = n_T(\tilde A)$.
\item[(iii)] We have
\[
\sum_{C \in \mathtt{Res}_n(\iota,-\iota,\eta,\eta')} F_C = 2^n\sum_{C\in \mathcal R_n (\iota,-\iota,\eta,\eta')} F_C.
\]
\end{itemize}
\end{prop}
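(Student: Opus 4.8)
The plan is to read everything off the closed formula \eqref{FCresonant}, which is available because every node of a coupling $C\in\mathtt{Res}_n$ is resonant, hence low, hence ternary (so $\mathcal N(C)=\mathcal N_T(C)$, no binary nodes) and has self-coupled bush; in particular the hypothesis \eqref{assumption of the lemma} holds at every node of $C$, so Lemma \ref{lemma 1 resonant nodes} applies throughout. The whole proof then amounts to analysing how the right-hand side of \eqref{FCresonant} transforms when one reverses the order of the left and right subtrees of a low ternary node.

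For (i): a middle-resonant node $j$ is, by definition, a resonant node of type $\lowm$, whose marked children are its left child $\ell$ and its right child $r$ (its unmarked child being the middle one). Since every node below $j$ is resonant, \eqref{assumption of the lemma} holds at $j$ and Lemma \ref{lemma 1 resonant nodes}(i) gives $K(\ell)+K(r)=0$, so $\ka(\ell)=-\ka(r)$ for every $\ka\in\mathcal D_k(C)$. By \eqref{qj T} and \eqref{q triple}, $q_j$ is proportional to $q^{\overline{c(j)}}(\ka(\ell),\ka(r))=q^{\overline{c(j)}}(\ka(\ell),-\ka(\ell))$, which vanishes by \eqref{q double} and assumption \eqref{assumption Q} (namely $Q^{\overline{c(j)}}(\xi,-\xi)=0$). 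Hence $\prod_{j'\in\mathcal N(C)}q_{j'}=0$ for every decoration, and \eqref{FCresonant} gives $F_C=0$.

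For (ii) and (iii): define the \emph{flip} of a low ternary node by
\begin{align*}
T_\lowl(A_1,A_2,A_3)\ \longmapsto\ T_\lowr(A_3,A_2,A_1),\qquad T_\lowr(B_1,B_2,B_3)\ \longmapsto\ T_\lowl(B_3,B_2,B_1),
\end{align*}
keeping the sign and colour of every node and leaf. A flip fixes the middle child and fixes the unmarked child and the (unordered) pair of marked children, so it leaves every $\mathtt{prebush}$ and every $\mathtt{bush}$ of the coupling unchanged; consequently it preserves the property of being a resonant coupling, and it exchanges left-resonant $\lowl$ nodes with right-resonant $\lowr$ nodes — the colour rules are automatic since the left and right children of any ternary node carry the colour of the node, and the sign conditions in Definition \ref{def resonant nodes} translate into one another under the reversal. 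Moreover the amplitude of the flipped node is unchanged: $\chi_\lowl(k_1,k_2,k_3)=\chi_\lowr(k_3,k_2,k_1)$ directly from Definition \ref{def cutoff}, and $q^\eta_\iota(k_1,k_2,k_3)=q^\eta_\iota(k_3,k_2,k_1)$ from \eqref{q triple} (the numerator $\ps{k_1+k_3}$, the denominator, and $q^{\overline\eta}(k_1,k_3)q^\eta(k_1+k_3,k_2)$ are all symmetric in $k_1,k_3$). Since a flip only relabels leaves, it leaves $\sigma$, all signs and colours, the parentality order on $\mathcal N(C)$, the decoration sets $\mathcal D_k$, and hence $\#\mathfrak M(C)$, the product $\prod_{\ell\in\mathcal L_-(C)}M^{c(\ell),c(\sigma(\ell))}(-\ka(\ell))$, and $\prod_{j\in\mathcal N(C)}q_j$, all invariant; it also keeps each node ternary and does not move nodes between the two trees of the coupling. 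Thus a single flip preserves $\widehat{F_C}$ and the values $n_T(A)$, $n_T(A')$. Iterating over all $\lowl$ nodes yields (ii): if $C\in\mathtt{Res}_n(\iota,-\iota,\eta,\eta')$ has no middle-resonant node then all its nodes are $\lowl$ or $\lowr$, so flipping every $\lowl$ node to $\lowr$ produces $\tilde C\in\mathcal R_n(\iota,-\iota,\eta,\eta')$ with $F_{\tilde C}=F_C$ and the same $n_T$. For (iii), the couplings with a middle-resonant node contribute $0$ by (i); on the remaining ones the map "flip all $\lowl$ nodes to $\lowr$" is a surjection onto $\mathcal R_n(\iota,-\iota,\eta,\eta')$ whose fibre over a given $\tilde C$ consists exactly of the $2^n$ couplings obtained by independently flipping an arbitrary subset of the $n$ nodes of $\tilde C$ (these are pairwise distinct because they differ in some node's type, all lie in $\mathtt{Res}_n$, and all have $F=F_{\tilde C}$); summing over the fibres gives $\sum_{C\in\mathtt{Res}_n(\iota,-\iota,\eta,\eta')}F_C=\sum_{\tilde C\in\mathcal R_n(\iota,-\iota,\eta,\eta')}2^nF_{\tilde C}=2^n\sum_{\tilde C\in\mathcal R_n(\iota,-\iota,\eta,\eta')}F_{\tilde C}$.

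The main obstacle is the bookkeeping in the last paragraph: one must check carefully that reversing the left and right subtrees of a low node changes neither the cutoff factor $\chi_j$ nor the amplitude $q_j$ (this rests on the $(k_1,k_3)$-symmetry of $q^\eta_\iota$ in \eqref{q triple} together with the explicit form of the cutoffs in Definition \ref{def cutoff}), that it preserves the self-coupled/resonant structure (because bushes and prebushes depend only on the marked/unmarked splitting of children, which the flip respects), and that flips at distinct nodes commute, so that the fibres of the "flip-all" map have exactly $2^n$ elements and that every element of a fibre indeed lies in $\mathtt{Res}_n$ with no middle-resonant node.
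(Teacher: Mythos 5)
Your proof is correct. Parts (i) and (iii) follow the paper's own argument essentially verbatim: (i) is the vanishing of $q^{\overline{c(j)}}(\ka(\ell),\ka(r))$ forced by \eqref{assumption Q} once Lemma \ref{lemma 1 resonant nodes} gives $K(\ell)+K(r)=0$ at a middle-resonant node, and (iii) is the same $2^n$ fibre count over $\mathcal R_n(\iota,-\iota,\eta,\eta')$.

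For part (ii) you take a mildly different route. The paper proceeds by induction on $n$ through the recursive decomposition of Lemma \ref{lemma res coupling reduction}, rewriting $\tilde F_C$ via the sub-couplings $C_1,C_2$, performing the change of variables $k_1\mapsto -k_1$ (needed because the sub-coupling's two trees are swapped, and roots of the two trees of a coupling carry opposite decorations), and only then invoking the symmetries of $q^\eta_\iota$ and of the cutoffs. You instead define the left--right flip node by node and verify pointwise invariance of every factor of \eqref{FCresonant} under the canonical identification of $\mathcal D_k(C)$ with $\mathcal D_k(\tilde C)$ (which is the identity on the underlying set of leaves, so no change of variables is required). Both arguments rest on exactly the same facts: $q^\eta_\iota(k_1,k_2,k_3)=q^\eta_\iota(k_3,k_2,k_1)$ from \eqref{q triple}, $\chi_{\lowl}(k_1,k_2,k_3)=\chi_{\lowr}(k_3,k_2,k_1)$ from Definition \ref{def cutoff}, the fact that the flip fixes the middle child (so the subscript $\ffi(j)\ffi(m)$ in \eqref{qj T} and the colour rules are untouched) and fixes the marked/unmarked splitting (so bushes, resonance, $\#\mathfrak M(C)$ and the distribution of nodes between the two trees are untouched). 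Your global formulation makes (iii) slightly cleaner, since commuting flips give the $2^n$ fibres immediately; the paper's inductive formulation spares it the need to set up the identification of decoration sets. The content is the same and I see no gap.
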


\begin{remark}\label{rm:FinStageZero} 
This proposition reduces considerably the amount of resonant nodes that might contribute to the limit. We remark that without the assumption \eqref{assumption Q}, middle-resonant nodes would come into play. In what follows, we explain how to pass from resonant couplings to binary trees, making emerge the recursive structure of resonant couplings. For this, we use an algorithm that takes into account the type of resonant nodes we consider. With a larger number of types of resonant nodes, we get a larger number of cases in the algorithm.
\end{remark}

\begin{proof} 
For the first part of the proposition, simply note that \eqref{assumption Q} implies that if $k_1+k_3=0$, then $q_\iota(k_1,k_2,k_3)=0$ for all $k_2$ and $\iota$. Therefore, if $C$ has a middle-resonant node $j$, then $q_j=0$ and thus $F_C=0$.

For the second part of the proposition, we first note using \eqref{fourier of FC} that if $C = (A,A',\sigma)$ is a coupling then 
\[
\widehat{F_C}(t,k) = \reallywidehat{F_{(A',A,\sigma)}} (t,-k).
\]
Also note that we can rewrite $\widehat{F_C}(t,k)$ with the help of \eqref{FCresonant} as
\[
\widehat{F_C}(t,k) = \tilde F_C(k) \# \mathfrak M(C)\frac{t^n}{n!},
\]
where 
\begin{align*}
 \tilde F_C(k) & \vcentcolon = (-i)^n L^{-nd} \sum_{\kappa \in \mathcal D_k(C)} \prod_{j\in \mathcal N(C)} q_j \prod_{l\in \mathcal L_-(C)} M^{c(l),c(\sigma(l))}(-\kappa(l)).
\end{align*}
We wish to prove the second part of the proposition on $\tilde F_C$, $\# \mathfrak M(C)\frac{t^n}{n!}$ and $n_T(A)$ by induction. If $n=0$ and $C\in \mathtt{Res}_0(\iota,-\iota,\eta,\eta')$ then it only has right-resonant nodes. If now $n>0$ and $C\in \mathtt{Res}_n(\iota,-\iota,\eta,\eta')$ and $C=(A,A',\si)$, we are in the situation described in Lemma \ref{lemma res coupling reduction}. We describe the first three situations, that is, keeping the notations of the lemma,
\begin{align*}
    C = & ((T_\lowr (A_1,A_1',A_2),\iota,\eta),A'_2,\sigma),\\
    C = & ((T_\lowm (A_1,A_2,A_1'),\iota,\eta),A'_2,\sigma),\\
    C = & ((T_{\lowl} (A_2,A_1,A_1'),\iota,\eta),A'_2,\sigma),
\end{align*}
the others we may deduce by symmetry. In these three situations, we have 
\[
\tilde F_C(k) = -iL^{-d} \sum_{k_1 \in \Z^d_L} q_{\mathtt{root}(A)} \tilde F_{C_1}(k_1) \tilde F_{C_2} (k) .
\]
Indeed, $\kappa$ belongs to $\mathcal D_k(C) $ if and only if there exists $k_1\in  \Z^d_L$, $\kappa_1 \in \mathcal D_{k_1}(C_1)$ and $\kappa_2 \in \mathcal D_k(C_2)$ such that for all $\ell \in \mathcal L(C_1)$, we have $\kappa(\ell) = \kappa_1(\ell)$ and for all $\ell \in \mathcal L(C_2)$, we have that $\kappa(\ell) = \kappa_2(\ell)$. We treat each of the three situations separately:

\begin{itemize}
\item As for the first part of the proposition, if we are in the second situation then $q_{\mathtt{root}(A)} = 0$ and thus $F_C = 0$. 
\item We place ourselves in the first situation, that is to say $\mathtt{root}(A)$ is right-resonant. If $C_1$ or $C_2$ has a middle-resonant node then we have either $\tilde F_{C_1}=0$ or $\tilde F_{C_2} =0$ and thus $ F_C = 0$. Otherwise, the induction assumption implies that there exist couplings $\tilde C_1$ and $\tilde C_2$ with only right-resonant nodes such that $\tilde F_{\tilde C_1} = \tilde F_{C_1}  $ and $\tilde F_{\tilde C_2} =\tilde F_{C_2}$ and $\# \mathfrak M(\tilde C_1) = \# \mathfrak M(C_1)$ and $\# \mathfrak M(\tilde C_2) = \# \mathfrak M(C_2)$. We prove that  
\[
\# \mathfrak M(C) = \frac{n!}{n_T(A) n_1! n_2!}\# \mathfrak M(C_1) \# \mathfrak M(C_2).
\]
Indeed, we have that
\[
\# \mathfrak M(C) = \frac{n!}{n_T(A)! n_T(A_2')!} \# \mathfrak M(A) \# \mathfrak M(A_2').
\]
This last equality is due to the fact that an order $\rho$ on $\mathcal N_T(C)$ is obtained from an order $\rho_1$ on $\mathcal N_T(A)$ and an order $\rho_1'$ on $\mathcal{N}_T(A')$ by chosing $n_T(A)$ integers between $1$ and $n$, that is to say a strictly increasing map $\mu$ from $\llbracket 1,n_T(A) \rrbracket$ into $\llbracket1,n \rrbracket$. Setting $\mu'$ the only strictly increasing map from $\llbracket 1,n_T(A') \rrbracket$ into the complementary set in $\llbracket1,n \rrbracket$ of the image of $\mu$, we define $\rho (j) = \mu(\rho_1(j))$ if $j\in \mathcal N_T(A)$ and $\rho(j) = \mu'(\rho_1'(j))$ otherwise. For similar reasons, we have that 
\[
\# \mathfrak M(A) = \frac{(n_T(A)-1)!}{n_T(A_1) ! n_T(A_2)! n_T(A_3)!} \# \mathfrak M(A_1)\# \mathfrak M(A_2)\# \mathfrak M(A_3).
\]
Rearranging the product, we get that
\[
\# \mathfrak M(C) = \frac{n!}{n_T(A) n_1! n_2 !} \frac{n_1!}{n_T(A_1) ! n_T(A_1')! } \# \mathfrak M(A_1)\# \mathfrak M(A_1') \frac{n_2!}{n_T(A_2)! n_T(A_2')!} \# \mathfrak M(A_2) \# \mathfrak M(A'_2).
\]
We recognize
\[
\# \mathfrak M(C) = \frac{n!}{n_T(A) n_1! n_2!}\# \mathfrak M(C_1) \# \mathfrak M(C_2).
\]
Set $\tilde A = (T_\lowr(\tilde A_1,\tilde A_1', \tilde A_2),\iota,\eta)$ where $\tilde C_1 = (\tilde A_1,\tilde A_1', \tilde \sigma_1)$ and $\tilde C_2 = (\tilde A_2,\tilde A_2',\tilde \sigma_2)$ and $\tilde C = (\tilde A,\tilde A_2,\tilde \sigma)$ where $\tilde \sigma$ is such that $\tilde \sigma(\ell) = \tilde \sigma_1(\ell)$ if $\ell \in \mathcal L(\tilde C_1)$ and $\tilde \sigma(\ell) = \tilde \sigma_2(l)$ otherwise. We have that 
\begin{align*}
n_T(\tilde A)  =& 1+n_T(\tilde C_1) + n_T(\tilde A_2) \\
=& 1+n_T(C_1) + n_T(A_2) \\
= & n_T(A).
\end{align*}
Thanks to the induction formula on $\#\mathfrak M(C)$, we get $\# \mathfrak M(C) = \# \mathfrak M(\tilde C)$. Moreover, we have $q_{\mathtt{root}(A)} = q_{\mathtt{root}(\tilde A)}$ since
\begin{align*}
    q_{\mathtt{root}(A)} = & \chi_\lowr (k_1, -k_1 ,k) q_{\iota \varphi(\mathtt{root}(A_1'))}^{\eta}(k_1,-k_1,k),\\
    q_{\mathtt{root}(\tilde A)} = & \chi_\lowr (k_1, -k_1 ,k) q_{\iota \varphi(\mathtt{root}(\tilde A_1'))}^{\eta}(k_1,-k_1,k).
\end{align*}
By the induction assumption, we also have that $ \varphi(\mathtt{root}(\tilde A_1')) =  \varphi(\mathtt{root}( A_1'))$, so that we have finally proved that
\[
\tilde F_{\tilde C} = \tilde F_{C}.
\]

It remains to check that $\tilde C \in \mathcal R_n$. First, we check that it is indeed a coupling in the sense that it satisfies the colour condition. We have that $c(\mathtt{root}(\tilde A)) = \eta = c(\mathtt{root}(A))$. We deduce that 
\[
(c(\mathtt{root}(A_1)), c(\mathtt{root}(A_1')),c(\mathtt{root}(A_2))) = (\eta, \bar \eta, \eta).
\]
Therefore, $(c(\mathtt{root}(\tilde A_1)), c(\mathtt{root}(\tilde A_1')),c(\mathtt{root}(\tilde A_2))) = (\eta, \bar \eta, \eta)$ which yields that $\tilde C$ is indeed a coupling. We check that $\mathtt{root}(\tilde A)$ is a right-resonant node. By definition, it is a $\lowr$-ternary node. What is more, its offsprings set is self-coupled, which, knowing that all the bushes below are self-coupled, implies that the bush of $\mathtt{root}(\tilde A)$ is self-coupled. The sign of $\mathtt{root}(\tilde A)$ is $\iota$, so is the sign of $\mathtt{root}(A)$. We deduce that the sign of $\mathtt{root}(A_2)$ is $\iota$ and thus so is the sign of $\mathtt{root}(\tilde A_2)$ which makes $\mathtt{root}(\tilde A)$ a right-resonant node. 

\item Assume now that we are in the third situation that is to say that $\mathtt{root}(A)$ is left-resonant. If $C_1$ or $C_2$ has a middle-resonant node, then we have either $\tilde F_{C_1}=0$ or $\tilde F_{C_2} =0$ and thus $\tilde F_C = 0$. Otherwise, there exist couplings $\tilde C_1$ and $\tilde C_2$ with only right-resonant nodes such that $\tilde F_{\tilde C_1} = \tilde F_{C_1}  $ and $\tilde F_{\tilde C_2} =\tilde F_{C_2}$ and $\# \mathfrak M(\tilde C_1) = \# \mathfrak M(C_1)$ and $\# \mathfrak M(\tilde C_2) = \# \mathfrak M(C_2)$. 

Set $\tilde A = (T_\lowr( \tilde A_1', \tilde A_1, \tilde A_2),\iota,\eta)$ where $\tilde C_1 = ( \tilde A_1,\tilde A_1',\tilde \sigma_1)$ and $\tilde C_2 = (\tilde A_2, \tilde A_2',\tilde \sigma_2)$ and $\tilde C = (\tilde A,\tilde A_2',\tilde \sigma)$ where $\tilde \sigma$ is such that $\tilde \sigma(\ell) = \tilde \sigma_1(\ell)$ if $\ell \in \mathcal L(\tilde C_1)$ and $\tilde \sigma(\ell) = \tilde \sigma_2(\ell)$ otherwise. As above we have that $n_T(\tilde A)=n_T(A)$. Thanks to the induction formula on $\#\mathfrak M(C)$, we also get $\# \mathfrak M(C) = \# \mathfrak M(\tilde C)$. We also have that
\[
\tilde F_C(k) = -iL^{-d} \sum_{k_1 \in \Z^d_L} q_{\mathtt{root}(A)} \tilde F_{\tilde C_1}(k_1) \tilde F_{\tilde C_2} (k) 
\]
while
\[
\tilde F_{\tilde C}(k) = -iL^{-d} \sum_{k_1 \in \Z^d_L} q_{\mathtt{root}(\tilde A)} \tilde F_{\tilde C_3}(k_1) \tilde F_{\tilde C_2} (k) 
\]
where $\tilde C_3 = (\tilde A_1',\tilde A_1, \tilde \sigma_1)$. We have  
\[
q_{\mathtt{root}(\tilde A)} = \iota \chi_\lowr (k_1,-k_1,k) q_{\iota \varphi(\mathtt{root}(\tilde A_1))}^{\eta} (k_1,-k_1,k),
\]
where by the induction hypothesis $\varphi(\mathtt{root}(\tilde A_1)) = \varphi(\mathtt{root}( A_1))$. What is more,
\[
q_{\mathtt{root}( A)} = \iota \chi_{\lowl} (k,k_1,-k_1) q_{\iota \varphi(\mathtt{root}( A_1))}^{\eta} (k,k_1,-k_1).
\]
Using that $\tilde F_{\tilde C_1}(k_1) =\tilde F_{\tilde C_3}(-k_1)$, we get that
\[
\tilde F_{\tilde C}(k) = -iL^{-d} \sum_{k_1 \in \Z^d_L} \iota \chi_\lowr (k_1,-k_1,k) q_{\iota \varphi(\mathtt{root}( A_1))}^{\eta} (k_1,-k_1,k) \tilde F_{\tilde C_1}(-k_1) \tilde F_{\tilde C_2} (k) 
\]
We use that ternary $q$ is symmetric in its left and right variables and that 
\[
\chi_\lowr (k_1,-k_1,k) =  (1-\chi) (|k+k_1|\varepsilon^{-\gamma}) (1-\chi )(|k-k_1|\varepsilon^{-\gamma}) = \chi_{\lowl} (k,-k_1,k_1)
\]
to obtain
\[
\tilde F_{\tilde C}(k) = -iL^{-d} \sum_{k_1 \in \Z^d_L} \iota \chi_{\lowl} (k,-k_1,k_1) q_{\iota \varphi(\mathtt{root}( A_1))}^{\eta} (k,-k_1,k_1) \tilde F_{\tilde C_1}(-k_1) \tilde F_{\tilde C_2} (k) 
\]
We perform the change of variables $k_1\leftarrow -k_1$ to get the result. Checking that $\tilde C \in \mathcal R_n$ is done as in the first situation.
\end{itemize}

We now prove the third part of the proposition. We pass from $\mathtt{Res}_n$ to $\mathcal R_n$ by simply transforming right-resonant nodes into left-resonant nodes. Therefore a given $C \in \mathcal R_n$ corresponds to $2^n$ couplings in $\mathtt{Res}_n$ with no middle-resonant node. We obtain these $2^n$ couplings by deciding at each node if we do the reverse operation than the one we have described, that is transforming a left-resonant node into a right-resonant node.
\end{proof}

\begin{example}
We illustrate the transformation we perform in the above proof in the situations 
\begin{align*}
    C = & ((T_\lowr (A_1,A_1',A_2),\iota,\eta),A'_2,\sigma),\\
    C = & ((T_{\lowl} (A_2,A_1,A_1'),\iota,\eta),A'_2,\sigma),\\
    C = & (A_2, (T_\lowr(A_1,A_1',A_2') \iota, \eta),\sigma), \\
    C = & (A_2, (T_\lowr(A_2',A_1,A_1') \iota, \eta),\sigma) .
\end{align*}

\begin{tikzpicture}
\node[label=above:{$\iota$}]{$\textcolor{red}{\lowr}$}
child{node (A1) [label=below:{$A_1$}]{$\textcolor{red}{\bullet}$} }
child{node (A1') [label=below:{$A_1'$}]{$\textcolor{Dandelion}{\bullet}$} }
child{node (A2) [label=below:{$A_2,\iota$}]{$\textcolor{red}{\bullet}$} }
;
\draw (3.5,0) node (A2') [label={[xshift=0.4cm, yshift = -1cm]$A_2',-\iota$}] {$\textcolor{Dandelion}{\bullet}$} ; 
\path (A1) edge [bend right,<->,blue] (A1') ; 
\path (A2) edge [bend right,<->,blue] (A2') ;
\draw (4.5,-1) node {$\rightarrowtail$} ;
\node at (7.5,0) [label=above:{$\iota$}]{$\textcolor{red}{\lowr}$}
child{node (B1) [label=below:{$\tilde A_1$}]{$\textcolor{red}{\bullet}$} }
child{node (B1') [label=below:{$\tilde A_1'$}]{$\textcolor{Dandelion}{\bullet}$} }
child{node (B2) [label=below:{$\tilde A_2,\iota$}]{$\textcolor{red}{\bullet}$} }
;
\draw (10.5,0) node (B2') [label={[xshift=0.4cm, yshift = -1cm]$\tilde A_2',-\iota$}] {$\textcolor{Dandelion}{\bullet}$} ; 
\path (B1) edge [bend right,<->,blue] (B1') ; 
\path (B2) edge [bend right,<->,blue] (B2') ;
\end{tikzpicture}

\begin{tikzpicture}
\node[label=above:{$\iota$}]{$\textcolor{red}{\lowl}$}
child{node (A2) [label={[xshift=-0.2cm, yshift = -1cm]$A_2,\iota$}]{$\textcolor{red}{\bullet}$} }
child{node (A1) [label=below:{$A_1$}]{$\textcolor{Dandelion}{\bullet}$} }
child{node (A1') [label=below:{$A_1'$}]{$\textcolor{red}{\bullet}$} }
;
\draw (3.5,0) node (A2') [label={[xshift=0.6cm, yshift = -1cm]$A_2',-\iota$}] {$\textcolor{Dandelion}{\bullet}$} ; 
\path (A1) edge [bend right,<->,blue] (A1') ; 
\path (A2) edge [bend right=90,<->,blue] (A2') ;
\draw (5,-1) node {$\rightarrowtail$} ;
\node at (7,0) [label=above:{$\iota$}]{$\textcolor{red}{\lowr}$}
child{node (B1') [label=below:{$\tilde A_1'$}]{$\textcolor{red}{\bullet}$} }
child{node (B1) [label=below:{$\tilde A_1$}]{$\textcolor{Dandelion}{\bullet}$} }
child{node (B2) [label=below:{$\tilde A_2,\iota$}]{$\textcolor{red}{\bullet}$} }
;
\draw (10.5,0) node (B2') [label={[xshift=0.4cm, yshift = -1cm]$\tilde A_2',-\iota$}] {$\textcolor{Dandelion}{\bullet}$} ; 
\path (B1') edge [bend right,<->,blue] (B1) ; 
\path (B2) edge [bend right,<->,blue] (B2') ;
\end{tikzpicture}

\begin{tikzpicture}
\node at (3,0) [label=above:{$-\iota$}]{$\textcolor{red}{\lowr}$}
child{node (A1) [label=below:{$A_1$}]{$\textcolor{red}{\bullet}$} }
child{node (A1') [label=below:{$A_1'$}]{$\textcolor{Dandelion}{\bullet}$} }
child{node (A2') [label={[xshift=0.4cm, yshift = -1cm]$A_2',-\iota$}]{$\textcolor{red}{\bullet}$} }
;
\draw (0,0) node (A2) [label={[xshift=-0.5cm, yshift = -1cm]$A_2,\iota$}] {$\textcolor{Dandelion}{\bullet}$} ; 
\path (A1) edge [bend right,<->,blue] (A1') ; 
\path (A2) edge [bend right=90,<->,blue] (A2') ;
\draw (6,-1) node {$\rightarrowtail$} ;
\node at (10.5,0) [label=above:{$-\iota$}]{$\textcolor{red}{\lowr}$}
child{node (B1) [label=below:{$\tilde A_1$}]{$\textcolor{red}{\bullet}$} }
child{node (B1') [label=below:{$\tilde A_1'$}]{$\textcolor{Dandelion}{\bullet}$} }
child{node (B2') [label={[xshift=0.4cm, yshift = -1cm]$\tilde A_2',-\iota$}]{$\textcolor{red}{\bullet}$} }
;
\draw (7.5,0) node (B2) [label={[xshift=-0.6cm, yshift = -1cm]$\tilde A_2,\iota$}] {$\textcolor{Dandelion}{\bullet}$} ; 
\path (B1) edge [bend right,<->,blue] (B1') ; 
\path (B2) edge [bend right=90,<->,blue] (B2') ;
\end{tikzpicture}

\begin{tikzpicture}
\node at (3,0) [label=above:{$-\iota$}]{$\textcolor{red}{\lowl}$}
child{node (A2') [label=below:{$A_2',-\iota$}]{$\textcolor{red}{\bullet}$} }
child{node (A1) [label=below:{$A_1$}]{$\textcolor{Dandelion}{\bullet}$} }
child{node (A1') [label=below:{$A_1'$}]{$\textcolor{red}{\bullet}$} }
;
\draw (0,0) node (A2) [label={[xshift=-0.4cm, yshift = -1cm]$A_2,\iota$}] {$\textcolor{Dandelion}{\bullet}$} ; 
\path (A1) edge [bend right,<->,blue] (A1') ; 
\path (A2) edge [bend right,<->,blue] (A2') ;
\draw (6,-1) node {$\rightarrowtail$} ;
\node at (10.5,0) [label=above:{$-\iota$}]{$\textcolor{red}{\lowr}$}
child{node (B1) [label=below:{$\tilde A_1'$}]{$\textcolor{red}{\bullet}$} }
child{node (B1') [label=below:{$\tilde A_1$}]{$\textcolor{Dandelion}{\bullet}$} }
child{node (B2') [label={[xshift=0.4cm, yshift = -1cm]$\tilde A_2',-\iota$}]{$\textcolor{red}{\bullet}$} }
;
\draw (7.5,0) node (B2) [label={[xshift=-0.6cm, yshift = -1cm]$\tilde A_2,\iota$}] {$\textcolor{Dandelion}{\bullet}$} ; 
\path (B1) edge [bend right,<->,blue] (B1') ; 
\path (B2) edge [bend right=90,<->,blue] (B2') ;
\end{tikzpicture}

\end{example}

\subsubsection{The maps $\mathtt{BtoR}$ and $\mathtt{RtoB}$}\label{section btor rtob}

In the rest of this section, we will relate resonant couplings with only right-resonant nodes to binary trees.

\begin{definition}
Let $n\in\mathbb{N}^*$.
\begin{itemize}
\item We set $\mathcal R_n^{\mathtt{map}}$ the set of couples $(R,\rho)$ such that $R \in \mathcal R_n$ and $\rho$ is a striclty increasing injective map from $\mathcal N_T(R)$ to $\N^*$.
\item We also set $\mathcal R_n^{\mathtt{ord}}$ the set of couples $(R,\rho)$ such that $R \in \mathcal R_n$ and $\rho$ is a striclty increasing injective map from $\mathcal N_T(R)$ to $\llbracket 1,n_T(R) \rrbracket$.
\end{itemize}
\end{definition}

Our goal is to show that $\mathcal R_n^{\mathtt{ord}}$ is in bijection with some set of binary trees with ordered nodes. For that we will use $\mathcal R_n^{\mathtt{map}}$ as an intermediary. First, we introduce our new binary trees.

\begin{definition}
We introduce the following objects.
\begin{itemize}
\item[(i)] We define the set $\tilde{\mathcal B}_n$ of binary trees by induction as follows: we set $\tilde{\mathcal{B}}_0 = \{\perp\}$ and 
\[
\tilde{\mathcal{B}}_{n+1} = \enstq{ N_\lef(B_1,B_2), N_\righ(B_1,B_2) }{ (B_1,B_2)\in \tilde{\mathcal B}_{n_1}\times \tilde{\mathcal B}_{n_2}, \;n_1+n_2=n  },
\]
where $N_\lef$ and $N_\righ$ are new binary nodes. We denote by $\mathcal N_l(B)$ (resp. $\mathcal N_r(B)$) the set of nodes of type $N_\lef$ (resp. $N_\righ$).
\item[(ii)] If $B\in \tilde{\mathcal{B}}_n$, a sign map on $B$ is a map $\mathcal N(B) \sqcup \mathcal L(B)\longrightarrow\{\pm\}$ such that for any node, the sign of its left child matches its own.
\item[(iii)] If $B\in \tilde{\mathcal{B}}_n$, a colour map on $B$ is a map $c:\mathcal N(B) \sqcup \mathcal L(B)\longrightarrow\{0,1\}^2$ satisfying the following recursive rule: if $j\in\mathcal{N}(B)$ with left and right children denoted by $\ell$ and $r$, the colours of $\ell$ and $r$ can be read off in the following table (depending on the nature of $j$ and its colour):
\begin{equation}\label{eq:colortable}
\begin{array}{|c|c|c|c|c|} \hline
 & c(j) = {\bf 0} & c(j) = {\bf 1} & c(j) = \times & c(j) = \overline \times \\\hline
j \in \mathcal N_l(B) & ({\bf 0},\times) & ({\bf 1},\overline \times) & (\times, \times) & (\overline \times , \overline \times) \\\hline
j \in \mathcal N_r(B) & ({\bf 0},\times) & ({\bf 1},\overline \times) & (\times, \overline \times) & (\overline \times , \times) \\\hline
\end{array}
\end{equation}
where we use the notations ${\bf 0} = (0,0)$, ${\bf 1} = (1,1)$, $\times =(0,1)$ and $\overline{\times} = (1,0)$.
\end{itemize}
We denote by $\mathcal B_n$ the set of triplets $(B,\varphi,c)$ where $B \in \tilde{\mathcal{B}}_n$, $\ffi$ is a sign map on $B$ and $c$ a colour map on $B$.
\end{definition}

\begin{definition}
Let $n\in\mathbb{N}^*$.
\begin{itemize}
\item We set $\mathcal B_n^{\mathtt{map}}$ the set of couples $((B,\ffi,c),\rho)$ such that $(B,\ffi,c) \in \mathcal B_n$ and $\rho$ is a striclty increasing injective map from $\mathcal N(B)$ to $\N^*$.
\item We also set $\mathcal B_n^{\mathtt{ord}}$ the set of couples $((B,\ffi,c),\rho)$ such that $(B,\ffi,c) \in \mathcal B_n$ and $\rho$ is a striclty increasing injective map from $\mathcal N(B)$ to $\llbracket1,n\rrbracket$.
\end{itemize}
\end{definition}

We apply Remark \ref{rm:labellednodes} to $\mathcal B_n^{\mathtt{map}}$ in the following sense. If $((B,\ffi,c),\rho) \in \mathcal B_n^{\mathtt{map}}$ with $n>0$, then the tree $B$ writes $B = \pth{N_*\pth{\tilde B_1,\tilde B_2}, \varphi, c, \rho}$. We allow ourselves to also write $B = (N_*(B_1,B_2), \iota,E,M)$ with for $i = 1,2$, $B_i = \pth{\tilde B_i, \varphi_{|\mathcal N(B_i) \cup \mathcal L(B_i)}, c_{|\mathcal N(B_i) \cup \mathcal L(B_i)}, \rho_{|\mathcal N(B_i) }}$ and $(\iota, E, M) = (\varphi, c, \rho)(\mathtt{root}(B))$.

Moreover, we set $\mathcal C_{n_1,n_2}^{\mathtt{map}}$ (res. $\mathcal A_n^{\mathtt{map}}$) the set of couples $(C,\rho)$ (resp. $(A,\rho)$ such that $C \in \mathcal C_{n_1,n_2}$ (resp. $A \in \mathcal A_n$) and $\rho$ is a strictly increasing injective map from $\mathcal N(C)$ (resp. $\mathcal N(A)$) to $\N$. We allow ourselves to write $(A,\rho) \in \mathcal A_n^{\mathtt{map}}$ with $n>0$ as 
\[
(A,\rho) = T_*((A_1,\rho_1),(A_2,\rho_2),(A_3,\rho_3)),\iota, \eta, M)
\]
(resp. $B_*((A_1,\rho_1),(A_2,\rho_2)),\iota, \eta, M)$)  where $A = (T_*(A_1,A_2,A_3),\iota,\eta)$ (resp. $(B_*(A_1,A_2),\iota,\eta)$, where $\rho_i = \rho_{|\mathcal N(A_i)}$ and $M = \rho(\mathtt{root}(A))$. We remark that the images of $\rho_i$ do not intersect. If $(C,\rho) \in \mathcal C_{n_1,n_2}^{\mathtt{map}}$, we allow ourselves to write $(C,\rho)$ as $(C,\rho) = ((A,\rho_{|\mathcal N(A)}), (A', \rho_{|\mathcal N(A')}), \sigma)$ where $C = (A,A',\sigma)$.

\begin{definition}\label{def BtoR}
We define the function 
\begin{align*}
\mathtt{BtoR} : \bigcup_{n\in\mathbb{N}} \mathcal B_n^{\mathtt{map}} \longrightarrow \bigcup_{n_1,n_2\in\mathbb{N}} \mathcal C_{n_1,n_2}^{\mathtt{map}}
\end{align*}
recursively in the following way:
\begin{itemize}
\item If $n=0$, then an element $B$ of $\mathcal B_0$ is a triplet $(\perp, \iota, E)$ where $\iota \in \{\pm\}$ and $E \in \{0,1\}^2$. In that case $\mathtt{BtoR}(B)$ is the coupling 
\[
((\bot, \iota, \eta), (\bot, -\iota, \eta'),\sigma)
\]
where $(\eta,\eta') = E$ and $\sigma$ is the only function possible.
\item If $n>0$ then $B \in \mathcal B_n^{\mathtt{map}}$ writes $B = (N_* (B_1,B_2), \iota, E, M)$ with $* \in \{\lef,\righ\}$ and $B_i \in \mathcal{B}_{n_i}^{\mathtt{map}}$ with $n_1+ n_2 = n-1$. Define $C_i \vcentcolon = \mathtt{BtoR}(B_i)$ and write $C_i = (A_i, A'_i,\sigma_i)$ and $E = (\eta,\eta')$:
\begin{itemize}
\item If $* = \lef$, then we introduce $A = (N_\lowr(A_2,A_2',A_1),\iota,\eta,M)$. We set $C = (A, A'_1,\sigma)$ where $\sigma$ is defined as 
\[
\sigma(l) = \left \lbrace{\begin{array}{cc}
\sigma_1(l)  &\mathrm{if} \, l \in \mathcal L(C_1)\\
\sigma_2(l) &\mathrm{otherwise .}
\end{array}} \right.
\] 
\item If $* = \righ$, then we introduce $ A' = (N_\lowr ( A_2,  A'_2, A'_1), -\iota,\eta',M)$. We set $C = (A_1, A',\sigma)$ where $\sigma$ is defined as 
\[
\sigma(l) = \left \lbrace{\begin{array}{cc}
\sigma_1(l)  &\mathrm{if} \, l \in \mathcal L(C_1)\\
\sigma_2(l) &\mathrm{otherwise.}
\end{array}} \right.
\] 
\end{itemize}
\end{itemize}
\end{definition}

\begin{example}
We draw the transformation that $\mathtt{BtoR}$ performs on a few examples.
\begin{center}
\begin{tikzpicture}
\node[label=above:{$\lef,\iota,M$}]{$\textcolor{red}{\bullet \, \bullet}$}
child{node [label=below:{$B_1,\iota$}] {\textcolor{red}{$\bullet$}\,\textcolor{red}{$\bullet$}}}
child{node [label=below:{$B_2$}] {$\textcolor{red}{\bullet} \,\textcolor{Dandelion}{ \bullet}$}}
;
\draw (3.5,-1) node {$\rightarrowtail$} ;
\node at (6.5,0) [label=above:{$\iota,M$}]{$\textcolor{red}{\lowr}$}
child{node (B1) {$\textcolor{red}{\bullet}$} }
child{node (B1') {$\textcolor{Dandelion}{\bullet}$} }
child{node (B2) [label=below:{$\iota$}]{$\textcolor{red}{\bullet}$} }
;
\draw (10,0) node (B2') [label=above:{$-\iota$}] {$\textcolor{red}{\bullet}$} ; 
\path (B1) edge [bend right,<->,blue] (B1') ; 
\path (B2) edge [bend right,<->,blue] (B2') ;
\draw (5.9,-2.2) node {$\mathtt{BtoR}(B_2)$} ;
\draw (10,-1.1) node {$\mathtt{BtoR}(B_1)$} ;
\end{tikzpicture}
\end{center}

\begin{center}
\begin{tikzpicture}
\node[label=above:{$\lef,\iota,M$}]{$\textcolor{Dandelion}{\bullet} \, \textcolor{red}{\bullet}$}
child{node [label=below:{$B_1,\iota$}] {\textcolor{Dandelion}{$\bullet$}\,\textcolor{red}{$\bullet$}}}
child{node [label=below:{$B_2$}] {$\textcolor{Dandelion}{\bullet} \,\textcolor{red}{ \bullet}$}}
;
\draw (3.5,-1) node {$\rightarrowtail$} ;
\node at (6.5,0) [label=above:{$\iota,M$}]{$\textcolor{Dandelion}{\lowr}$}
child{node (B1) {$\textcolor{Dandelion}{\bullet}$} }
child{node (B1') {$\textcolor{red}{\bullet}$} }
child{node (B2) [label=below:{$\iota$}]{$\textcolor{Dandelion}{\bullet}$} }
;
\draw (10,0) node (B2') [label=above:{$-\iota$}] {$\textcolor{red}{\bullet}$} ; 
\path (B1) edge [bend right,<->,blue] (B1') ; 
\path (B2) edge [bend right,<->,blue] (B2') ;
\draw (5.9,-2.2) node {$\mathtt{BtoR}(B_2)$} ;
\draw (10,-1.1) node {$\mathtt{BtoR}(B_1)$} ;
\end{tikzpicture}
\end{center}

\begin{center}
\begin{tikzpicture}
\node[label=above:{$\righ,\iota,M$}]{$\textcolor{Dandelion}{\bullet} \, \textcolor{Dandelion}{\bullet}$}
child{node [label=below:{$B_1,\iota$}] {\textcolor{Dandelion}{$\bullet$}\,\textcolor{Dandelion}{$\bullet$}}}
child{node [label=below:{$B_2$}] {$\textcolor{Dandelion}{\bullet} \,\textcolor{red}{ \bullet}$}}
;
\draw (3.5,-1) node {$\rightarrowtail$} ;
\node at (7.5,0) [label=above:{$-\iota$}]{$\textcolor{Dandelion}{\lowr}$}
child{node (B1) {$\textcolor{Dandelion}{\bullet}$} }
child{node (B1') {$\textcolor{red}{\bullet}$} }
child{node (B2') [label=below:{$-\iota$}]{$\textcolor{Dandelion}{\bullet}$} }
;
\draw (4.5,0) node (B2) [label=above:{$\iota$}] {$\textcolor{Dandelion}{\bullet}$} ; 
\path (B1) edge [bend right,<->,blue] (B1') ; 
\path (B2) edge [bend right=90,<->,blue] (B2') ;
\draw (6.7,-2) node {$\mathtt{BtoR}(B_2)$} ;
\draw (5.6,-2.4) node {$\mathtt{BtoR}(B_1)$} ;
\end{tikzpicture}
\end{center}

\begin{center}
\begin{tikzpicture}
\node[label=above:{$\righ,\iota,M$}]{$\textcolor{red}{\bullet} \, \textcolor{Dandelion}{\bullet}$}
child{node [label=below:{$B_1,\iota$}] {\textcolor{red}{$\bullet$}\,\textcolor{Dandelion}{$\bullet$}}}
child{node [label=below:{$B_2$}] {$\textcolor{Dandelion}{\bullet} \,\textcolor{red}{ \bullet}$}}
;
\draw (3.5,-1) node {$\rightarrowtail$} ;
\node at (7.5,0) [label=above:{$-\iota$}]{$\textcolor{Dandelion}{\lowr}$}
child{node (B1) {$\textcolor{Dandelion}{\bullet}$} }
child{node (B1') {$\textcolor{red}{\bullet}$} }
child{node (B2') [label=below:{$-\iota$}]{$\textcolor{Dandelion}{\bullet}$} }
;
\draw (4.5,0) node (B2) [label=above:{$\iota$}] {$\textcolor{red}{\bullet}$} ; 
\path (B1) edge [bend right,<->,blue] (B1') ; 
\path (B2) edge [bend right=90,<->,blue] (B2') ;
\draw (6.7,-2) node {$\mathtt{BtoR}(B_2)$} ;
\draw (5.6,-2.4) node {$\mathtt{BtoR}(B_1)$} ;
\end{tikzpicture}
\end{center}
\end{example}

\begin{definition} 
We define the following sets:
\begin{itemize}
\item We define $\mathcal{R}^{\mathtt{map}}_n(\iota,\eta,\eta',M)$ to be the set of couples $(R,\rho)$ such that $(R,\rho) \in \mathcal R_n^{\mathtt{map}}$ with the sign of the root of the left tree of $R$ being $\iota$ and the colour of the root of the left tree of $R$ is $\eta$ and the colour of the root of its right tree is $\eta'$ and $\max \mathrm{Im}\rho = M$. 
\item We define $\mathcal B_n^{\mathtt{map}}(\iota, E,M)$ to be the set of couples $(B,\rho) \in \mathcal B_n^{\mathtt{map}}$ such that the colour of the root of $B$ is $E$, its sign is $\iota$ and $\max \mathrm{Im}\rho = M$.
\end{itemize}
By convention, we set $\mathcal B^{\mathtt{map}}_0(\iota, E,M) = \{(\bot,\iota,E)\}$ and 
\[
\mathcal{R}^{\mathtt{map}}_0(\iota,\eta,\eta',M) = \{((\bot,\iota,\eta),(\bot,-\iota,\eta'),\sigma)\}
\]
where $\sigma$ is trivial.
\end{definition}

At this stage, it is not clear that $\mathtt{BtoR}$ maps indeed $\cup_n \mathcal B_n^{\mathtt{map}}$ to couplings because of the colour condition and the properties that the maps $\rho$ need to satisfy (namely that it is increasing and injective). This the purpose of the next proposition.

\begin{prop}\label{prop:BtoR} 
The image of $\mathcal{B}^{\mathtt{map}}_n(\iota, E,M)$ by $\mathtt{BtoR}$ is included in $\mathcal{R}^{\mathtt{map}}_n(\iota,\eta,\eta',M)$ where $(\eta,\eta') = E$.
\end{prop}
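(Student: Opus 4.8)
The plan is to prove Proposition \ref{prop:BtoR} by induction on $n$, following exactly the recursive structure of the definition of $\mathtt{BtoR}$ in Definition \ref{def BtoR}. The base case $n=0$ is immediate from the conventions: $\mathcal{B}^{\mathtt{map}}_0(\iota,E,M)$ consists of the single element $(\bot,\iota,E)$, and by definition $\mathtt{BtoR}$ sends it to $((\bot,\iota,\eta),(\bot,-\iota,\eta'),\sigma)$ with $(\eta,\eta')=E$, which is precisely the single element of $\mathcal{R}^{\mathtt{map}}_0(\iota,\eta,\eta',M)$ (note $\mathrm{Im}\,\rho=\emptyset$ so the condition on $M$ is vacuous). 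One should also record that this is a genuine coupling: both leaves below are $\bot$ with opposite signs, so the trivial $\sigma$ is an involution exchanging $\mathcal L(C)_-$ and $\mathcal L(C)_+$.

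For the inductive step, I would take $(B,\rho)\in\mathcal{B}^{\mathtt{map}}_n(\iota,E,M)$ with $n>0$, write $B=(N_*(B_1,B_2),\iota,E,M)$ with $*\in\{\lef,\righ\}$ and $B_i\in\mathcal{B}^{\mathtt{map}}_{n_i}$, $n_1+n_2=n-1$, and apply the induction hypothesis to the two branches. The key points to verify are threefold. First, the colour bookkeeping: from the table \eqref{eq:colortable} one reads off that the children of the root have colours prescribed by $E$ and $*$; unpacking $E=(\eta,\eta')$ and tracking $\times=(0,1)$, $\overline\times=(1,0)$ one checks that the colours $\eta_i$, $\eta'_i$ of the roots of the $C_i=\mathtt{BtoR}(B_i)$ are exactly those needed so that, when one grafts $A_2,A'_2$ (resp. $A'_2$) onto a new $\lowr$ ternary node, the ternary colour rule (i) the left and right children of the node share the node's colour, (ii) the middle child has the opposite colour, is satisfied. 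This is a finite case check: two choices of $*$ times four choices of $E\in\{{\bf 0},{\bf 1},\times,\overline\times\}$, but the $\times/\overline\times$ cases and the ${\bf 0}/{\bf 1}$ cases are symmetric, so only a couple of genuinely distinct verifications are needed. Second, the sign bookkeeping: the new node gets sign $\iota$ (in the $\lef$ case) resp.\ $-\iota$ lives on the right tree (in the $\righ$ case), its unmarked child is the ``$A_1$'' piece whose root-sign is $\iota$ (matching, up to the left-child rule of binary trees), and the two marked children come from a coupling $C_1$ hence have opposite signs; so the sign condition (iii) of Definition \ref{def resonant nodes} for the new node holds and the new node is right-resonant, and moreover the overall pair of roots still has opposite signs so $C$ is a coupling. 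Third, the map $\rho$: by construction $\mathrm{Im}\,\rho_1$ and $\mathrm{Im}\,\rho_2$ are disjoint subsets of $\N^*$, $\rho(\mathtt{root}(B))=M$ with $M>\max(\mathrm{Im}\,\rho_1\cup\mathrm{Im}\,\rho_2)$ since $\rho$ is strictly increasing on $\mathcal N(B)$ and the root is the $<$-maximum; transporting this through $\mathtt{BtoR}$ (which identifies $\mathcal N(B)$ with $\mathcal N_T$ of the image coupling, the root of $B$ going to the new ternary node) shows the resulting map on $\mathcal N_T(\mathtt{BtoR}(B))$ is strictly increasing and injective with maximum $M$, i.e.\ $(R,\rho)\in\mathcal{R}^{\mathtt{map}}_n(\iota,\eta,\eta',M)$.

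The main obstacle I expect is purely organizational: keeping the four cases ($*=\lef$ vs $\righ$, crossed by the ``diagonal'' colours ${\bf 0},{\bf 1}$ vs the ``off-diagonal'' colours $\times,\overline\times$) straight, and correctly matching each against the grafting performed in Definition \ref{def BtoR} — in particular being careful that in the $\lef$ case the new $\lowr$ node is placed on the \emph{left} tree of $C$ (built from $A_2,A'_2,A_1$ with the root of $A'_1$ staying as the right tree), while in the $\righ$ case it is placed on the \emph{right} tree. There is no analytic content here; the proof is a bookkeeping induction, and the only real risk is a sign or colour transcription error, which is why I would present the colour verification by explicitly citing the relevant row of table \eqref{eq:colortable} in each case rather than leaving it to the reader. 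I would close by remarking that, conversely, one can define $\mathtt{RtoB}$ running the same recursion backwards (reading off from a right-resonant root whether the ``$A_1$'' branch sits as the unmarked child on the left or right tree of the coupling to decide between $N_\lef$ and $N_\righ$), and that $\mathtt{BtoR}$ and $\mathtt{RtoB}$ are mutually inverse, which is the bijection announced before Section \ref{section btor rtob} and will be used to transport the recursive (quadratic-tree) structure onto resonant couplings; but that is the content of the subsequent statements, not of Proposition \ref{prop:BtoR} itself.
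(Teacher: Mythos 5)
Your proposal is correct and follows essentially the same route as the paper's proof: an induction on $n$ tracking the colour table \eqref{eq:colortable}, the sign condition, and the properties of the order map (the paper strengthens the induction hypothesis to record $\mathrm{Im}\rho_R=\mathrm{Im}\rho_B$, which is exactly what makes your ``transporting through $\mathtt{BtoR}$'' step for the disjointness and maximality of the images precise). Two small points: in the $\lef$ case the two marked children of the new $\lowr$ node come from $C_2$, not $C_1$; and besides the sign condition you should also verify condition (ii) of Definition \ref{def resonant nodes}, namely that the bush of the new node is self-coupled — this holds because its offspring set is exactly $\mathcal L(C_2)$, which is self-coupled since $C_2$ is a coupling, and all lower bushes are self-coupled by the induction hypothesis.
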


\begin{proof} We proceed by induction on $n$. What we prove by induction is not only the proposition but also the fact that for all $(B,\rho_B) \in \mathcal{B}^{\mathtt{map}}_n(\iota, E,M)$, writing $(R,\rho_R) = \mathtt{BtoR}((B,\rho_B))$, we have that $\mathrm{Im}\rho_R = \mathrm{Im}\rho_B$. If $n = 0$, this is due to the definition of $\mathtt{BtoR}$ and the fact that $\mathrm{Im}\rho_B=\mathrm{Im}\rho_R = \emptyset$. 

If $n>0$, we keep the same notations as in Definition \ref{def BtoR}. We deal with the case $*=\lef$.  We assume that $B \in \mathcal B_n^{\mathtt{map}}(\iota, E,M)$ and $B_i \in \mathcal B^{\mathtt{map}}_{n_i}(\iota_i, E_i,M_i)$. By induction, we have that
$C_i \in \mathcal{R}^{\mathtt{map}}_{n_i}(\iota_i,\eta_i,\eta'_i,M_i)$ where $(\eta_i,\eta'_i) = E_i$. 

First, we get automatically that the number of nodes of $C$ is $n$. Then we have to check that the condition of colour is satisfied by the root $r$ of $A$. We do this in the case $\eta=0$. By definition, the colour of $r$ is $\eta$. In this case, the colour $E$ is either $(0,0)$ or $\times$. This implies that $E_2=\times = (0,1)$ and $E_1$ is either $(0,0)$ or $\times$. In any case, we have that the colour of the root of $A_2$ is $0$, the one of $A_2'$ is $1$ and the one of $A_1$ is $0$, which matches the colour condition on trees. The colour of the root of $A_2'$ is either $0$ or $1$. This yields that $C \in \mathcal C_{m_1,m_2}^{\mathtt{map}}(\iota,\iota',\eta,\eta')$ for some $\iota',m_1,m_2$.

Because $C_2$ is a coupling, the set $\mathtt{offsprings}(r)$ is self-coupled. Using that all the nodes below $r$ are resonant, and thus their bushes are self-coupled, this implies that the bush of $r$ is self-coupled. By definition of $\mathcal B_n$, we have that $\iota_1 = \iota$, which implies that the sign of the root of $A_1$ is $\iota$. Therefore, $r$ is resonant, and $\iota' = - \iota$.

Finally, $\rho_i$ respects the order of parentality and the images of $\rho_1$ and $\rho_2$ are disjoint (because their images are the sames as the images of the maps in $B_1,B_2$). For all $j<r$, we have that $\rho(j) \leq \max (M_1,M_2) < M$. This ensures that $\rho$ respects the order of parentality, is injective and, setting $B = (B',\rho_B)$, we have that $\mathrm{Im}\rho = \mathrm{Im}\rho_B$. Therefore, we have $C \in  \mathcal{R}_{n}^{\mathtt{map}}(\iota,\eta,\eta',M)$. 

The cases $\eta = 1$ and $* = \righ$ follows the same lines. The condition colour on the binary trees $\mathcal B_n$ is made to ensure that the image of $\mathtt{BtoR}$ is indeed included in couplings, and the sign condition is made to match the sign condition of resonant nodes.
\end{proof}

%%%%%%%%%%%%%%%%%%%%%%%%%%%%%%%%%%%%%%%%%%%%%%%%%

We now define a map which will be ultimately proved to be the inverse of $\mathtt{BtoR}$.

\begin{definition}
We define the function 
\begin{align*}
\mathtt{RtoB} : \bigcup_{n\in\mathbb{N}} \mathcal R_n^{\mathtt{map}} \longrightarrow \bigcup_{n\in\mathbb{N}} \mathcal B_{n}^{\mathtt{map}}
\end{align*}
recursively in the following way:
\begin{itemize}
\item If $n=0$, then an element $R$ of $\mathcal R_0$ is of the form $((\bot, \iota, \eta), (\bot, -\iota, \eta'),\sigma)$ where $\sigma$ is the trivial coupling, where $\iota \in \{\pm\}$ and $(\eta,\eta') \in \{0,1\}^2$. In that case $\mathtt{RtoB}(R)$ is the tree $(\bot,\iota,(\eta,\eta'))$. 
\item If $n>0$ then $R \in \mathcal R_n^{\mathtt{map}}$ writes $R = ((A,A',\sigma),\rho)$. 
\begin{itemize}
\item If $\rho(\mathtt{root}(A)) = \max \mathrm{Im}\rho$, then $C=(A,A',\sigma)$ writes $C =  ((T_\lowr (A_1,A_1',A_2),\iota,\eta),A'_2,\sigma)$ such that $C_i \vcentcolon= (A_i,A_i',\sigma_{|\mathcal L(A_i) \sqcup \mathcal L(A_i')}) \in \mathcal R_{n_i}$ with $n_1 + n_2 = n-1$. We have that $(C_i,\rho_{|\mathcal N(C_i)}) \in \mathcal R^{\mathtt{map}}_{n_i}$. We set $B_i = \mathtt{RtoB}((C_i,\rho_{|\mathcal N(C_i)}))$ and 
\[
\mathtt{RtoB}((C,\rho)) = (N_\lef (B_2,B_1),\iota, (\eta,\eta'),\max \mathrm{Im}\rho)
\]
where $\eta' = c(\mathtt{root}(A'))$. 
\item If $\rho(\mathtt{root}(A')) = \max \mathrm{Im}\rho$, then $C=(A,A',\sigma)$ writes $C =  (A_2,(T_\lowr (A_1,A_1',A_2'),-\iota,\eta'),\sigma)$ such that $C_i \vcentcolon= (A_i,A_i',\sigma_{|\mathcal L(A_i) \sqcup \mathcal L(A_i')}) \in \mathcal R_{n_i}$ with $n_1 + n_2 = n-1$. We have that $(C_i,\rho_{|\mathcal N(C_i)}) \in \mathcal R^{\mathtt{map}}_{n_i}$. We set $B_i = \mathtt{RtoB}((C_i,\rho_{|\mathcal N(C_i)}))$ and 
\[
\mathtt{RtoB}((C,\rho)) = (N_\righ (B_2,B_1),\iota, (\eta,\eta'),\max \mathrm{Im}\rho)
\]
where $\eta = c(\mathtt{root}(A))$. 
\end{itemize}
\end{itemize}
\end{definition}

Note that in the previous definition, we used implicitly the fact that if $n>0$ and $R = ((A,A',\sigma),\rho)\in \mathcal R_n^{\mathtt{map}}$ then either $\rho(\mathtt{root}(A)) = \max \mathrm{Im}\rho$ or $\rho(\mathtt{root}(A')) = \max \mathrm{Im}\rho$. This is true since $\mathrm{Im}\rho \subset \N$ is finite and not empty (because $n>0$) and thus $\max \mathrm{Im}\rho$ is well-defined, and also since, although $\mathtt{root}(A)$ or $\mathtt{root}(A')$ might not be nodes, both cannot be leaves. Hence, either $\rho(\mathtt{root}(A))$ or $\rho(\mathtt{root}(A'))$ is well-defined and maximal.

\begin{example}
We draw the transformation that $\mathtt{RtoB}$ performs on a few examples. Below, we always have $M'<M$.
\begin{center}
\begin{tikzpicture}
\node at (7.5,0) [label=above:{$\lef,\iota,M$}]{$\textcolor{Dandelion}{\bullet \, \bullet}$}
child{node [label={[xshift=-0.2cm,yshift=-1cm]$\mathtt{RtoB}(R_2),\iota$}] {\textcolor{Dandelion}{$\bullet$}\,\textcolor{Dandelion}{$\bullet$}}}
child{node [label={[xshift=0.2cm,yshift=-1cm]$\mathtt{RtoB}(R_1)$}] {$\textcolor{Dandelion}{\bullet} \,\textcolor{red}{ \bullet}$}}
;
\draw (5.5,-1) node {$\rightarrowtail$} ;
\node[label=above:{$\iota,M$}]{$\textcolor{Dandelion}{\lowr}$}
child{node (B1) {$\textcolor{Dandelion}{\bullet}$} }
child{node (B1') {$\textcolor{red}{\bullet}$} }
child{node (B2) [label=below:{$\iota$}]{$\textcolor{Dandelion}{\bullet}$} }
;
\draw (3.5,0) node (B2') [label=above:{$-\iota,M'$}] {$\textcolor{Dandelion}{\bullet}$} ; 
\path (B1) edge [bend right,<->,blue] (B1') ; 
\path (B2) edge [bend right,<->,blue] (B2') ;
\draw (-0.7,-2.2) node {$R_1$} ;
\draw (3.1,-1.1) node {$R_2$} ;
\end{tikzpicture}
\end{center}

\begin{center}
\begin{tikzpicture}
\node at (7.5,0) [label=above:{$\lef,\iota,M$}]{$\textcolor{red}{\bullet} \, \textcolor{Dandelion}{\bullet}$}
child{node [label={[xshift=-0.2cm,yshift=-1cm]$\mathtt{RtoB}(R_2),\iota$}] {\textcolor{red}{$\bullet$}\,\textcolor{Dandelion}{$\bullet$}}}
child{node [label={[xshift=0.2cm,yshift=-1cm]$\mathtt{RtoB}(R_1)$}] {$\textcolor{red}{\bullet} \,\textcolor{Dandelion}{ \bullet}$}}
;
\draw (5.5,-1) node {$\rightarrowtail$} ;
\node[label=above:{$\iota,M$}]{$\textcolor{red}{\lowr}$}
child{node (B1) {$\textcolor{red}{\bullet}$} }
child{node (B1') {$\textcolor{Dandelion}{\bullet}$} }
child{node (B2) [label=below:{$\iota$}]{$\textcolor{red}{\bullet}$} }
;
\draw (3.5,0) node (B2') [label=above:{$-\iota,M'$}] {$\textcolor{Dandelion}{\bullet}$} ; 
\path (B1) edge [bend right,<->,blue] (B1') ; 
\path (B2) edge [bend right,<->,blue] (B2') ;
\draw (-0.7,-2.2) node {$R_1$} ;
\draw (3.1,-1.1) node {$R_2$} ;
\end{tikzpicture}
\end{center}

\begin{center}
\begin{tikzpicture}
\node at (8,0)[label=above:{$\righ,\iota,M$}]{$\textcolor{red}{\bullet} \, \textcolor{red}{\bullet}$}
child{node [label={[xshift=-0.2cm,yshift=-1cm]$\mathtt{RtoB}(R_2),\iota$}] {\textcolor{red}{$\bullet$}\,\textcolor{red}{$\bullet$}}}
child{node [label={[xshift=0.2cm,yshift=-1cm]$\mathtt{RtoB}(R_1)$}] {$\textcolor{red}{\bullet} \,\textcolor{Dandelion}{ \bullet}$}}
;
\draw (6,-1) node {$\rightarrowtail$} ;
\node at (3,0) [label=above:{$-\iota$}]{$\textcolor{red}{\lowr}$}
child{node (B1) {$\textcolor{red}{\bullet}$} }
child{node (B1') {$\textcolor{Dandelion}{\bullet}$} }
child{node (B2') [label=below:{$-\iota$}]{$\textcolor{red}{\bullet}$} }
;
\draw (0,0) node (B2) [label=above:{$\iota,M'$}] {$\textcolor{red}{\bullet}$} ; 
\path (B1) edge [bend right,<->,blue] (B1') ; 
\path (B2) edge [bend right=90,<->,blue] (B2') ;
\draw (2.2,-2) node {$R_1$} ;
\draw (2.1,-2.6) node {$R_2$} ;
\end{tikzpicture}
\end{center}

\begin{center}
\begin{tikzpicture}
\node at (8,0)[label=above:{$\righ,\iota,M$}]{$\textcolor{Dandelion}{\bullet} \, \textcolor{red}{\bullet}$}
child{node [label={[xshift=-0.2cm,yshift=-1cm]$\mathtt{RtoB}(R_2),\iota$}] {\textcolor{Dandelion}{$\bullet$}\,\textcolor{red}{$\bullet$}}}
child{node [label={[xshift=0.2cm,yshift=-1cm]$\mathtt{RtoB}(R_1)$}] {$\textcolor{red}{\bullet} \,\textcolor{Dandelion}{ \bullet}$}}
;
\draw (6,-1) node {$\rightarrowtail$} ;
\node at (3,0) [label=above:{$-\iota$}]{$\textcolor{red}{\lowr}$}
child{node (B1) {$\textcolor{red}{\bullet}$} }
child{node (B1') {$\textcolor{Dandelion}{\bullet}$} }
child{node (B2') [label=below:{$-\iota$}]{$\textcolor{red}{\bullet}$} }
;
\draw (0,0) node (B2) [label=above:{$\iota,M'$}] {$\textcolor{Dandelion}{\bullet}$} ; 
\path (B1) edge [bend right,<->,blue] (B1') ; 
\path (B2) edge [bend right=90,<->,blue] (B2') ;
\draw (2.2,-2) node {$R_1$} ;
\draw (2.1,-2.6) node {$R_2$} ;
\end{tikzpicture}
\end{center}
\end{example}

It is not so clear that $\mathtt{RtoB}$ maps indeed $\cup_n \mathcal R_n^{\mathtt{map}}$ to $\cup_n \mathcal B_n^{\mathtt{map}}$ because it is not clear that the colour and sign conditions are respected. It is also unclear why the properties of the maps $\rho$ (namely that it is increasing and injective) should be respected.  This the purpose of next proposition.

\begin{prop}\label{prop:RtoB} 
The image of $\mathcal R_n^{\mathtt{map}}(\iota, \eta,\eta',M)$ by $\mathtt{RtoB}$ is included in $\mathcal{B}^{\mathtt{map}}_n(\iota,(\eta,\eta'),M)$.
\end{prop}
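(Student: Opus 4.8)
The plan is to prove Proposition~\ref{prop:RtoB} by induction on $n$, mirroring the structure of the proof of Proposition~\ref{prop:BtoR}, and in fact to prove along the way the stronger statement that for all $(R,\rho_R)\in\mathcal{R}_n^{\mathtt{map}}(\iota,\eta,\eta',M)$, writing $(B,\rho_B)=\mathtt{RtoB}((R,\rho_R))$, one has $\mathrm{Im}\rho_B=\mathrm{Im}\rho_R$. The base case $n=0$ is immediate from the definition of $\mathtt{RtoB}$ on $\mathcal{R}_0$: the image is $(\bot,\iota,(\eta,\eta'))$, which lies in $\mathcal{B}_0^{\mathtt{map}}(\iota,(\eta,\eta'),M)$ by the convention $\mathcal{B}_0^{\mathtt{map}}(\iota,E,M)=\{(\bot,\iota,E)\}$, and both images of $\rho$ are empty.

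For the inductive step with $n>0$, I would use the key observation (already noted after the definition of $\mathtt{RtoB}$) that exactly one of $\rho(\mathtt{root}(A))$, $\rho(\mathtt{root}(A'))$ equals $\max\mathrm{Im}\rho$, which selects one of the two branches in the definition of $\mathtt{RtoB}$. Consider the first branch, where $\rho(\mathtt{root}(A))=\max\mathrm{Im}\rho=M$. By Lemma~\ref{lemma res coupling reduction} applied to $C=(A,A',\sigma)$, and using that $C\in\mathcal{R}_n$ means $\mathtt{root}(A)$ is a right-resonant node (its unmarked child is its right child), $C$ writes $C=((T_\lowr(A_1,A_1',A_2),\iota,\eta),A_2',\sigma)$ with $C_1=(A_1,A_1',\sigma_{|\cdots})\in\mathcal{R}_{n_1}$, $C_2=(A_2,A_2',\sigma_{|\cdots})\in\mathcal{R}_{n_2}$, $n_1+n_2=n-1$. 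The restrictions $\rho_{|\mathcal{N}(C_i)}$ are strictly increasing injective maps (being restrictions), so $(C_i,\rho_{|\mathcal{N}(C_i)})\in\mathcal{R}^{\mathtt{map}}_{n_i}$, and I can apply the induction hypothesis to get $B_i\vcentcolon=\mathtt{RtoB}((C_i,\rho_{|\mathcal{N}(C_i)}))\in\mathcal{B}_{n_i}^{\mathtt{map}}(\iota_i,E_i,M_i)$ with $\mathrm{Im}\rho_{B_i}=\mathrm{Im}\rho_{|\mathcal{N}(C_i)}$. Here I would read off $\iota_1$: since $\mathtt{root}(A)$ is resonant, the sign condition (iii) of Definition~\ref{def resonant nodes} forces the sign of $\mathtt{root}(A_1)$ (the left marked child) to equal $-\iota_2$ where $\iota_2$ is the sign of the other marked child $\mathtt{root}(A_1')$, and more importantly, by the last clause of Lemma~\ref{lemma res coupling reduction}, $C_1$ has opposite root signs, so one checks that the sign of $\mathtt{root}(A_1)$ appearing in $\mathcal{R}^{\mathtt{map}}_{n_1}(\iota_1,\cdots)$ is precisely $\iota$ (the common sign of $\mathtt{root}(A)$ and its unmarked child $\mathtt{root}(A_2)$). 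This is exactly what makes $N_\lef(B_2,B_1)$ satisfy the sign condition for $\mathcal{B}_n$: the left child $B_2$ has root sign $\iota$ matching the root.

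The colour bookkeeping is the step I expect to require the most care: I would verify that $(N_\lef(B_2,B_1),\iota,(\eta,\eta'),M)$ satisfies the colour recursion encoded in table~\eqref{eq:colortable}. From the coupling colour rules on the ternary node $\mathtt{root}(A)=T_\lowr(A_1,A_1',A_2)$ one has $c(\mathtt{root}(A_1))=c(\mathtt{root}(A_2))=\eta$ and $c(\mathtt{root}(A_1'))=\bar\eta$; combined with $c(\mathtt{root}(A_2'))=c(\mathtt{root}(A'))=:\eta'$ and $c(\mathtt{root}(A_1'))=\bar\eta$, the colours $E_1=(c(\mathtt{root}(A_1)),c(\mathtt{root}(A_1')))=(\eta,\bar\eta)$ and $E_2=(c(\mathtt{root}(A_2)),c(\mathtt{root}(A_2')))=(\eta,\eta')$. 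I would then check case by case on the value of $(\eta,\eta')\in\{(0,0),(1,1),(0,1),(1,0)\}=\{\mathbf{0},\mathbf{1},\times,\overline\times\}$ that the pair $(E_2,E_1)=((\eta,\eta'),(\eta,\bar\eta))$ matches the entry of the $\mathcal{N}_l$ row of \eqref{eq:colortable} for $c(j)=(\eta,\eta')$: e.g.\ for $(\eta,\eta')=\times=(0,1)$ the table demands children colours $(\times,\times)$, and indeed $E_2=(0,1)=\times$ and $E_1=(0,\bar 0)=(0,1)=\times$; the other three cases are analogous, and the $\overline\times$, $\mathbf{1}$ cases use $\bar 1=0$. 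Finally, for the $\rho$-properties: since $\rho(\mathtt{root}(A))=M=\max\mathrm{Im}\rho$ and $\mathrm{Im}\rho=\{M\}\sqcup\mathrm{Im}\rho_{|\mathcal{N}(C_1)}\sqcup\mathrm{Im}\rho_{|\mathcal{N}(C_2)}$ with these last two disjoint and all strictly below $M$, the assembled map on $\mathcal{N}(N_\lef(B_2,B_1))$ (the root gets $M$, the subtrees get $\rho_{B_i}$) is strictly increasing, injective, and has image $\mathrm{Im}\rho$, so $\max\mathrm{Im}\rho_B=M$ and $(B,\rho_B)\in\mathcal{B}_n^{\mathtt{map}}(\iota,(\eta,\eta'),M)$. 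The second branch ($\rho(\mathtt{root}(A'))=\max\mathrm{Im}\rho$) is handled symmetrically, now producing an $N_\righ$ node; there the colour table row $\mathcal{N}_r$ is used, and one checks the $\times\mapsto(\times,\overline\times)$ entries appear because the right tree's ternary node swaps the colours of its marked children relative to the left case. This completes the induction.
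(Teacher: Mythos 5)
Your proof is correct and takes exactly the route the paper intends: the paper's own proof of Proposition \ref{prop:RtoB} consists of the single remark that it ``follows the same lines as the one of Proposition \ref{prop:BtoR}'', i.e. the induction on $n$ carrying the invariant $\mathrm{Im}\rho_B=\mathrm{Im}\rho_R$, together with the sign, colour-table and order verifications that you spell out. One small slip worth fixing: in the sign verification you assert that the sign of $\mathtt{root}(A_1)$ equals $\iota$, which is false in general (the marked children carry some unconstrained pair $\iota_1,-\iota_1$); the relevant fact, which you do state and use correctly in your conclusion, is that the \emph{unmarked} child $\mathtt{root}(A_2)$ has sign $\iota$ by Definition \ref{def resonant nodes}(iii), so that the left child $B_2$ of the $N_{\lef}$ node inherits the root's sign as the definition of $\mathcal B_n$ requires.
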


\begin{proof}
The proof follows the same lines as the one of Proposition \ref{prop:BtoR}.
\end{proof}

\begin{prop}
The map $\mathtt{BtoR}$ is the inverse of the map $\mathtt{RtoB}$.
\end{prop}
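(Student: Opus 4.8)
The plan is to prove that $\mathtt{BtoR}$ and $\mathtt{RtoB}$ are mutually inverse by a double induction on the size $n$, exploiting the recursive definitions of both maps. Since Propositions \ref{prop:BtoR} and \ref{prop:RtoB} already establish that $\mathtt{BtoR}$ maps $\mathcal{B}^{\mathtt{map}}_n(\iota,E,M)$ into $\mathcal{R}^{\mathtt{map}}_n(\iota,\eta,\eta',M)$ (with $E=(\eta,\eta')$) and $\mathtt{RtoB}$ maps back in the opposite direction, and moreover that both preserve $\mathrm{Im}\rho$, it suffices to check the two composition identities $\mathtt{RtoB}\circ\mathtt{BtoR}=\mathrm{Id}$ and $\mathtt{BtoR}\circ\mathtt{RtoB}=\mathrm{Id}$ on each graded piece.

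First I would handle the base case $n=0$: an element of $\mathcal{B}^{\mathtt{map}}_0$ is $(\bot,\iota,(\eta,\eta'))$, which $\mathtt{BtoR}$ sends to $((\bot,\iota,\eta),(\bot,-\iota,\eta'),\sigma)$ with $\sigma$ trivial, and $\mathtt{RtoB}$ sends this back to $(\bot,\iota,(\eta,\eta'))$; the reverse composition is identical. For the inductive step with $n>0$, take $(B,\rho_B)\in\mathcal{B}^{\mathtt{map}}_n$ and write $B=(N_*(B_1,B_2),\iota,E,M)$ with $*\in\{\lef,\righ\}$ and $B_i\in\mathcal{B}^{\mathtt{map}}_{n_i}$, $n_1+n_2=n-1$. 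By Definition \ref{def BtoR}, $\mathtt{BtoR}(B)$ has as its distinguished ternary node the root of $A$ (if $*=\lef$) or of $A'$ (if $*=\righ$), decorated with the value $M=\max\mathrm{Im}\rho_B$; since $\mathtt{BtoR}$ preserves $\mathrm{Im}\rho$ and the images of $\rho_1,\rho_2$ lie below $M$, this root does carry the maximal label. Hence $\mathtt{RtoB}$, applied to $\mathtt{BtoR}(B)$, enters precisely the branch of its definition corresponding to $*$, reconstructs $C_1=\mathtt{BtoR}(B_1)$ and $C_2=\mathtt{BtoR}(B_2)$ as the two sub-couplings (reading off $A_1,A_1',A_2,A_2'$ from the $\lowr$-structure exactly as they were assembled), applies the induction hypothesis $\mathtt{RtoB}(\mathtt{BtoR}(B_i))=B_i$, and recombines them with the same node type $N_*$, the same sign $\iota$, the same colour $E=(\eta,\eta')$, and the same label $M$; the small bookkeeping point is that $\mathtt{BtoR}$ attaches $B_2$ as the left subtree inside the $\lowr$-node and $B_1$ as the right leaf-side tree, while $\mathtt{RtoB}$ reads off $B_2$ from the children of the $\lowr$-node and $B_1$ from the remaining tree, so the pair $(B_1,B_2)$ is recovered in the correct order. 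The reverse composition $\mathtt{BtoR}\circ\mathtt{RtoB}=\mathrm{Id}$ is proved symmetrically: starting from $(R,\rho)\in\mathcal{R}^{\mathtt{map}}_n$ with $n>0$, the maximal label sits on the root of $A$ or of $A'$, which (by Lemma \ref{lemma res coupling reduction} specialized to right-resonant nodes) forces $R$ into one of the two $\lowr$-shapes, $\mathtt{RtoB}$ produces the corresponding $N_\lef$ or $N_\righ$ tree, and $\mathtt{BtoR}$ then rebuilds exactly $R$ using the inductive identity on the two pieces.

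The main obstacle I anticipate is purely organizational rather than conceptual: one must verify carefully that the ``reading off'' of sub-objects in $\mathtt{RtoB}$ is literally inverse to the ``assembling'' in $\mathtt{BtoR}$ at the level of the labelled-tree notation of Remark \ref{rm:labellednodes} — in particular that the left/right placement of $B_1$ and $B_2$, the distribution of the involution $\sigma$ onto $\sigma_1$ and $\sigma_2$, and the colour assignments dictated by the table \eqref{eq:colortable} all match the colour rules for couplings after one round of the recursion. Since Propositions \ref{prop:BtoR} and \ref{prop:RtoB} already certify that both maps are well-defined and land in the right graded pieces with $\mathrm{Im}\rho$ preserved, these checks reduce to a finite case analysis over $*\in\{\lef,\righ\}$ and $\eta\in\{0,1\}$, each case being a direct unfolding of the two definitions; no genuinely new estimate or combinatorial input is needed beyond what has already been set up.
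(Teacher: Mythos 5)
Your proposal is correct and follows essentially the same route as the paper: both rest on the observation that the two constructions are mirror images of each other by design, with the only genuine point to verify being that $\mathtt{RtoB}$ applied to $\mathtt{BtoR}(B)$ enters the branch ($\lef$ versus $\righ$) matching the root node type of $B$, which you resolve exactly as the paper does by noting that the distinguished ternary node carries the maximal label $M$ while the other root's label lies in $\mathrm{Im}(\rho_{B_1})\subseteq\llbracket 0,M-1\rrbracket$. Your write-up is more explicit about the induction and the left/right bookkeeping, but no new idea is involved.
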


\begin{proof}
Once Propositions \ref{prop:BtoR} and \ref{prop:RtoB} have been stated, it is explicit that the maps have been constructed to be the inverse of each other but for the fact if $((A,A',\sigma),\rho)$ is the image under $\mathtt{BtoR}$ of 
\[
B = (N_\lef(B_1,B_2),\iota,E,M)
\]
(resp. 
\[
B = (N_\righ(B_1,B_2),\iota,E,M) ),
\]
then $\rho(\mathtt{root}(A)) > \rho(\mathtt{root}(A'))$ (resp. $\rho(\mathtt{root}(A)) < \rho(\mathtt{root}(A'))$) or $A'$ (resp. $A$) is a leaf. However, this is due to the fact that $\rho(\mathtt{root}(A)) = M$ (resp. $\rho(\mathtt{root}(A')) = M$) and that if it exists $\rho(\mathtt{root}(A')) \in \mathrm{Im}(\rho_{B_1})$ (resp. $\rho(\mathtt{root}(A)) \in \mathrm{Im}(\rho_{B_1})$) where $\rho_{B_1}$ is the restriction of $\rho_B$ to $\mathcal N(B_1)$ and $B = (B',\rho_B))$. By definition, $\mathrm{Im}(\rho_{B_1}) \subseteq ([|0,M-1|])$.
\end{proof}

\begin{corollary} 
The set $\mathcal B_n^{\mathtt{ord}}(\iota,(\eta,\eta'))$ is in bijection with $\mathcal R_n(\iota,\eta,\eta')$.
\end{corollary}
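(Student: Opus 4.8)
The plan is to deduce this corollary directly from the results already established in this subsection. First I would recall the three sets in play: $\mathcal{B}_n^{\mathtt{map}}$, $\mathcal{R}_n^{\mathtt{map}}$ and their ``ordered'' counterparts $\mathcal{B}_n^{\mathtt{ord}}$, $\mathcal{R}_n^{\mathtt{ord}}$ where the injective increasing map has image exactly $\llbracket 1, n\rrbracket$ (resp. $\llbracket 1, n_T(R)\rrbracket$). The key input is that $\mathtt{BtoR}$ and $\mathtt{RtoB}$ are mutually inverse bijections between $\bigcup_n \mathcal{B}_n^{\mathtt{map}}$ and $\bigcup_n \mathcal{R}_n^{\mathtt{map}}$, and moreover, by Propositions \ref{prop:BtoR} and \ref{prop:RtoB}, they respect the stratification by $(\iota,\eta,\eta',M)$: more precisely $\mathtt{BtoR}$ maps $\mathcal{B}_n^{\mathtt{map}}(\iota,(\eta,\eta'),M)$ bijectively onto $\mathcal{R}_n^{\mathtt{map}}(\iota,\eta,\eta',M)$, since $\mathrm{Im}\rho_R = \mathrm{Im}\rho_B$ under the transformation. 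The remaining point is purely a counting/reindexing argument on the maps $\rho$.

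The core observation is that for a binary tree $B \in \tilde{\mathcal{B}}_n$ we have $\#\mathcal{N}(B) = n$, whereas for a resonant coupling $R \in \mathcal{R}_n$ we have $\#\mathcal{N}(R) = n$ as well but $\#\mathcal{N}_T(R) = n$ too (all nodes of a resonant coupling are low ternary nodes). Hence in both cases the relevant node set to be ordered has cardinality $n$. I would then argue as follows: given an element $(R,\rho) \in \mathcal{R}_n^{\mathtt{map}}$ whose associated map $\rho : \mathcal{N}_T(R) \to \mathbb{N}^*$ is injective and increasing, there is a unique order-preserving bijection $\phi_\rho : \mathrm{Im}\,\rho \to \llbracket 1,n\rrbracket$; composing, $(R,\phi_\rho \circ \rho)$ belongs to $\mathcal{R}_n^{\mathtt{ord}}$, and this ``normalization'' map $\mathcal{R}_n^{\mathtt{map}} \to \mathcal{R}_n^{\mathtt{ord}}$ is onto with the property that two elements of $\mathcal{R}_n^{\mathtt{map}}$ have the same image if and only if they only differ by a relabeling of the image of $\rho$ by an increasing bijection. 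Since $\mathtt{BtoR}$ preserves the image of $\rho$ exactly, it descends to a bijection at the level of the normalized (ordered) objects. Equivalently, one simply restricts $\mathtt{BtoR}$ and $\mathtt{RtoB}$ to the sub-collections where $\mathrm{Im}\,\rho = \llbracket 1,n\rrbracket$: this is stable under both maps (because $\max\mathrm{Im}\,\rho = M$ is preserved and, by the recursive construction, the images of the restrictions to the subtrees partition $\llbracket 1,M-1\rrbracket$), so $\mathtt{BtoR}$ restricts to a bijection $\mathcal{B}_n^{\mathtt{ord}} \to \mathcal{R}_n^{\mathtt{ord}}$ with inverse $\mathtt{RtoB}$.

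Finally I would match the indexing by $(\iota,\eta,\eta')$. On the side of binary trees, $\mathcal{B}_n^{\mathtt{ord}}(\iota,(\eta,\eta'))$ is the set of $((B,\ffi,c),\rho)$ with $\ffi(\mathtt{root}(B)) = \iota$ and $c(\mathtt{root}(B)) = (\eta,\eta')$ and $\mathrm{Im}\,\rho = \llbracket 1,n\rrbracket$. On the side of couplings, $\mathcal{R}_n(\iota,\eta,\eta')$ is the set of $R\in\mathcal{R}_n$ with the left-tree root sign equal to $\iota$ and left-tree (resp. right-tree) root colour equal to $\eta$ (resp. $\eta'$); since all $n$ nodes of $R$ are (right-)resonant, there is a unique order $\rho$ on $\mathcal{N}_T(R)$ with image $\llbracket 1,n\rrbracket$ compatible with parentality only when $\mathcal{N}_T(R)$ is totally ordered, but in general $\mathcal{R}_n(\iota,\eta,\eta')$ is in bijection with $\mathcal{R}_n^{\mathtt{ord}}(\iota,\eta,\eta')$ precisely because forgetting $\rho$ is a bijection $\mathcal{R}_n^{\mathtt{ord}}(\iota,\eta,\eta') \to \mathcal{R}_n(\iota,\eta,\eta')$? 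This is the subtle point and I expect it to be the only place requiring care: one must check that for a resonant coupling the enumeration $\rho$ is \emph{not} free but is in fact forced to be a bijection onto $\llbracket 1,n\rrbracket$, i.e. $\mathcal{R}_n^{\mathtt{ord}} \to \mathcal{R}_n$ is a bijection. Indeed this holds because $\mathcal{N}_T(R) = \mathcal{N}(R)$ has exactly $n$ elements and the condition ``$\rho$ strictly increasing with values in $\llbracket 1,n\rrbracket$'' forces $\rho$ to be a bijection, but a strictly increasing \emph{bijection} from a poset of size $n$ to $\llbracket 1,n\rrbracket$ need not be unique — so $\mathcal{R}_n^{\mathtt{ord}}$ genuinely carries more information than $\mathcal{R}_n$. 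The resolution, which I would spell out, is that Proposition \ref{prop:reduceRes} together with the recursive structure shows the poset $\mathcal{N}(R)$ for $R\in\mathcal{R}_n$ is in fact a \emph{linear} order (a caterpillar), for which the increasing bijection onto $\llbracket 1,n\rrbracket$ is unique; hence $\mathcal{R}_n^{\mathtt{ord}}(\iota,\eta,\eta') \cong \mathcal{R}_n(\iota,\eta,\eta')$ canonically. Combining this canonical identification with the restricted bijection $\mathtt{BtoR} : \mathcal{B}_n^{\mathtt{ord}}(\iota,(\eta,\eta')) \to \mathcal{R}_n^{\mathtt{ord}}(\iota,\eta,\eta')$ from the previous step gives the claimed bijection $\mathcal{B}_n^{\mathtt{ord}}(\iota,(\eta,\eta')) \cong \mathcal{R}_n(\iota,\eta,\eta')$, which is the statement of the corollary.
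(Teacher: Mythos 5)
Your first two paragraphs are correct and are exactly the intended argument: since $\mathtt{BtoR}$ and $\mathtt{RtoB}$ are mutually inverse, preserve the stratification by $(\iota,E,M)$, and satisfy $\mathrm{Im}\,\rho_R=\mathrm{Im}\,\rho_B$, and since $\#\mathcal N(B)=n=\#\mathcal N_T(R)$ for $B\in\tilde{\mathcal B}_n$ and $R\in\mathcal R_n$, restricting to the stratum $\mathrm{Im}\,\rho=\llbracket 1,n\rrbracket$ yields a bijection between $\mathcal B_n^{\mathtt{ord}}(\iota,(\eta,\eta'))$ and $\mathcal R_n^{\mathtt{ord}}(\iota,\eta,\eta')$. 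This ordered-to-ordered version is also the one actually invoked later, in the proof of Proposition \ref{prop:resonantinductionformula}; the symbol $\mathcal R_n(\iota,\eta,\eta')$ in the statement should be read as $\mathcal R_n^{\mathtt{ord}}(\iota,\eta,\eta')$.

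Your final paragraph, however, contains a genuine error. You claim that the parentality poset $\mathcal N(R)$ of a resonant coupling is a linear order, so that the compatible enumeration is unique and $\mathcal R_n^{\mathtt{ord}}\cong\mathcal R_n$ canonically. This is false. A coupling consists of \emph{two} trees, and nodes lying in different trees are never comparable for parentality; moreover, inside one tree a right-resonant node $T_\lowr(A_1,A_1',A_2)$ may carry nodes in both sibling subtrees $A_1$ and $A_1'$, and those are incomparable as well. Concretely, for $n=2$ take $R=(A,A',\sigma)$ with $A=T_\lowr(\bot,\bot,A_2)$ and the second coupling $(A_2,A')$ of size $1$ realized with its node sitting at the root of $A'$: then the two nodes of $R$ lie in different trees, $\#\mathfrak M(\mathcal N_T(R))=2$, and the forgetful map $\mathcal R_2^{\mathtt{ord}}\to\mathcal R_2$ is $2$-to-$1$ on this element. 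Since every $B\in\tilde{\mathcal B}_2$ is a comb (so $\#\mathcal B_2^{\mathtt{ord}}(\iota,E)=\#\mathcal B_2(\iota,E)$), one even gets $\#\mathcal B_2^{\mathtt{ord}}(\iota,E)=\#\mathcal R_2^{\mathtt{ord}}(\iota,\eta,\eta')>\#\mathcal R_2(\iota,\eta,\eta')$, so no bijection with the unordered set exists. The fix is simply to stop after your second paragraph and state the corollary for $\mathcal R_n^{\mathtt{ord}}(\iota,\eta,\eta')$; the orders $\rho$ are genuinely part of the data on both sides, and the factor $\#\mathfrak M$ is accounted for separately in Proposition \ref{prop:reduceRes} and in the proof of Proposition \ref{prop:resonantinductionformula}.
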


\subsubsection{Contributions of resonant couplings}

\begin{definition} 
We define:
\begin{itemize}
\item For any $(R,\rho_R) \in \mathcal R_n^{\mathtt{map}}$, we set 
\[
F_R^{\rho_R} (t,k) \vcentcolon  =  (-i)^n L^{-nd} \sum_{\kappa \in \mathcal D_k(C)} \prod_{j\in \mathcal N(R)} q_j \prod_{l\in \mathcal L_-(R)} M^{c(l),c(\sigma(l))}(-\kappa(l))  \frac{t^n}{n!}.
\]
\item For any $(B,\rho_B) \in \mathcal B_n^{\mathtt{map}}$, we set 
\[
G_B^{\rho_B} \vcentcolon = F_R^{\rho_R} 
\]
where $(R,\rho_R) = \mathtt{BtoR}(B,\rho_B)$.
\item For $B \in \mathcal B_n$, we set 
\[
G_B \vcentcolon = \sum_{\rho_B \in \mathfrak M(B)} G_B^{\rho_B}.
\]
\end{itemize}
\end{definition}

\begin{remark} We note that first, for any $R \in \mathcal{R}_n$, we have that 
\[
\widehat{F_R}(\varepsilon^{-2}t,k) = \sum_{\rho_R \in \mathfrak M(R)} F_R^{\rho_R}(t,k).
\]
What is more, the map $\mathtt{BtoR}$ does not need the order on nodes to be defined. As a consequence if $\rho_1$ and $\rho_2$ are two orders on the nodes of $B$, setting $(R_i,\rho'_i) = \mathtt{BtoR}(B,\rho_i)$, we have that $R_1 = R_2$. The reverse statement is not true.
%What is more, we remark that setting $(R,\rho_R) = \mathtt{BtoR}(B,\rho_B)$, we have that the image of $\rho_R$ is equal to the image of $\rho_B$, which makes the nodes of $R$ in bijection with the ones of $B$.
\end{remark}

In the rest of the subsection, we prove some recursive structure over $G_B$ using the recursive structure over $F_R$.

\begin{prop}\label{prop:recB}
Let $B \in \mathcal{B}_n(\iota,E)$. If $n = 0$, we have that
\[
G_B(t,k) = M^{E,\iota}(k).
\]
Otherwise, $B$ writes $(N_*(B_1,B_2),\iota,E)$ and if $* = \lef$, setting $E = (\eta,\eta')$, we have that
\[
G_B(t,k) = - \iota i  \int_{0}^t G_{B_1}(\tau,k)L^{-d}\sum_{k_2} \chi_\lowr(k_2,-k_2,k) q_{-\iota\varphi(\mathtt{root}(B_2)}^{\eta}(k_2,-k_2,k) G_{B_2}(\tau,k_2)\d\tau
\]
and if $* = \righ$, we have that
\[
G_B(t,k) =  \iota i  \int_{0}^t G_{B_1}(\tau,k)L^{-d}\sum_{k_2} \chi_\lowr(k_2,-k_2,k) q_{\iota\varphi(\mathtt{root}(B_2)}^{\eta'}(k_2,-k_2,-k) G_{B_2}(\tau,k_2)\d\tau . 
\]
\end{prop}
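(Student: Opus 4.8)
The plan is to prove Proposition \ref{prop:recB} by unwinding the definitions of $G_B$, $G_B^{\rho_B}$ and $\mathtt{BtoR}$, and then matching the recursive structure of $\mathtt{BtoR}$ against the recursive formula \eqref{FCresonant} for $\widehat{F_C}$ on resonant couplings. First I would treat the base case $n=0$: an element $B\in\mathcal B_0(\iota,E)$ is $(\bot,\iota,E)$ with $E=(\eta,\eta')$, its image under $\mathtt{BtoR}$ is the trivial coupling $C=((\bot,\iota,\eta),(\bot,-\iota,\eta'),\sigma)$, and $\mathfrak M(B)$ is a singleton; then \eqref{FCresonant} (or directly the definition of $F_R^{\rho_R}$) with $n=0$ gives $G_B(t,k)=M^{c(\ell),c(\sigma(\ell))}(-\kappa(\ell))$ for the unique $k$-decoration, i.e.\ $M^{\eta',\eta}(k)$ after tracking which leaf is the positive one; this matches $M^{E,\iota}(k)$ under the notational convention of the proposition. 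This is bookkeeping, not a real difficulty.

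For $n>0$, write $B=(N_*(B_1,B_2),\iota,E,\rho_B)$ with $B_i\in\mathcal B_{n_i}$ and $n_1+n_2=n-1$. The key observation is that the order maps $\rho_B\in\mathfrak M(B)$ are in bijection with the data of: a choice of which of the $n$ positions the root occupies — and since $<_{\tilde\rho}$ must be compatible with parentality, the root is necessarily the maximum — together with a shuffle of $\mathfrak M(B_1)$ and $\mathfrak M(B_2)$ into $\llbracket 1,n-1\rrbracket$. Equivalently, $\#\mathfrak M(B)=\binom{n-1}{n_1}\#\mathfrak M(B_1)\#\mathfrak M(B_2)$ and summing $G_B^{\rho_B}$ over $\rho_B\in\mathfrak M(B)$ reproduces, via the definition of $G_B^{\rho_B}=F_R^{\rho_R}$ and the explicit recursive construction of $\mathtt{BtoR}$ (Definition \ref{def BtoR}), the structure in which the root contributes a factor $q_{\mathtt{root}}=\chi_\lowr(k_2,-k_2,k)\,q^{\eta}_{\iota\varphi(\ldots)}(k_2,-k_2,k)$, the self-coupled bush at the root forces $K(j_1)=-K(j_2)$ (so the bush decoration is summed freely over $k_2$, producing the $L^{-d}\sum_{k_2}$), and the remaining data factorizes as $F^{\rho_1}_{R_1}(k_1)F^{\rho_2}_{R_2}(k_2)$ with $k_1$ constrained to equal $k$ by the decoration rule at the root (since $\mathtt{root}(A)$ being the left tree's root in the $*=\lef$ case has decoration $k$ and its unmarked child $A_1$ also has decoration $k$). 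The $\frac{t^n}{n!}$ factor splits as $\int_0^t \frac{\tau^{n_1+n_2}}{n_1!\,n_2!}\,d\tau$ times the binomial shuffle count, which is exactly how the time integral $-\iota i\int_0^t(\cdots)d\tau$ with $G_{B_1}(\tau,k)G_{B_2}(\tau,k_2)$ inside is produced. The factor $-\iota i$ is the contribution of the single new ternary node (a factor $-i$) together with the sign $\iota_r$ from \eqref{def FT}/\eqref{qj T}; in the $*=\righ$ case the root lands in the second tree of the coupling, $\mathtt{root}(A')$ has decoration $-k$, its unmarked child $A'_1$ carries $-k$, and one gets $+\iota i$ together with the colour $\eta'$ and the argument $(k_2,-k_2,-k)$ and sign $\iota\varphi(\mathtt{root}(B_2))$ — all of this is forced by propagating the colour/sign tables and the conjugation convention $\widehat{F_C}(t,k)=\widehat{F_{(A',A,\sigma)}}(t,-k)$.

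So the steps, in order, are: (1) dispatch the base case; (2) establish the shuffle bijection $\mathfrak M(B)\leftrightarrow\{\text{root maximal}\}\times\mathfrak M(B_1)\times\mathfrak M(B_2)\times\{\text{interleavings}\}$ and the identity $\frac{t^n}{n!}\cdot\binom{n-1}{n_1}=\int_0^t\frac{\tau^{n_1+n_2}}{n_1!n_2!}d\tau$; (3) use Definition \ref{def BtoR} to identify $R=\mathtt{BtoR}(B)$'s root node, its factor $q_{\mathtt{root}}$, and the self-coupled-bush constraint on its decoration, reading off the free sum $L^{-d}\sum_{k_2}$ over the bush variable and the rigid constraint $k_1=k$ (resp.\ $=\pm k$) at the root; (4) factorize $\prod_{j\in\mathcal N(R)}q_j\prod_{\ell\in\mathcal L_-(R)}M^{\cdots}$ along the root to get $F^{\rho_1}_{R_1}F^{\rho_2}_{R_2}$, recombine with the time integral, and sum over $\rho_B$. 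The main obstacle I expect is step (3)–(4): correctly tracking the signs, colours, and which of $k,-k,k_2,-k_2$ appears where, in particular reconciling the $\lowr$ cut-off $\chi_\lowr(k_2,-k_2,k)$ and the symmetry of $q^\eta_\iota$ in its outer variables, and handling the $*=\righ$ case where the coupling's conjugation flips $k\mapsto -k$ and swaps $\eta\leftrightarrow\eta'$. Everything else is a direct but careful unravelling of the recursive definitions already set up in Sections \ref{section resonant nodes} and \ref{section btor rtob}.
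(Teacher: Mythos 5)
Your proposal is correct and follows essentially the same route as the paper: base case by direct inspection, then for $n>0$ the factorization $G_B=\#\mathfrak M(B)\frac{t^n}{n!}\tilde F_R$, the recursive decomposition of decorations of $R=\mathtt{BtoR}(B)$ along its root node (free sum over $k_2$ from the self-coupled bush, constraint $k_1=k$ on the unmarked child), identification of $q_{\mathtt{root}}$ via the sign/colour properties of Proposition \ref{prop:BtoR}, and the conversion of $\#\mathfrak M(B)\frac{t^n}{n!}$ into the nested time integral via the shuffle count (your identity $\frac{t^n}{n!}\binom{n-1}{n_1}=\int_0^t\frac{\tau^{n_1+n_2}}{n_1!\,n_2!}\,d\tau$ is exactly the paper's $\#\mathfrak M(B)\frac{t^n}{n!}=\int_{I_B(t)}\prod_j dt_j$). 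The sign bookkeeping you describe for $*=\lef$ versus $*=\righ$ (net prefactor $-\iota i$ versus $+\iota i$, colour $\eta$ versus $\eta'$, argument $k$ versus $-k$) matches the paper's computation.
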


\begin{proof} The case $n=0$ follows from the definition of the different objects. Indeed, if $\iota = +$, then by definition 
\[
G_B(t,k) = M^{ E}(k) = M^{E,+}(k)
\]
and if $\iota=-$, we have 
\[
G_B(t,k) = \overline{M^E(-k)} = M^{E,-}(k).
\]
If $n>0$, let $\rho$ be any element of $\mathfrak{M}(B)$ and $R$ be such that there exists $\rho_R$ such that $(R,\rho_R) = \mathtt{BtoR}(B,\rho)$. We have that 
\[
G_B = \# \mathfrak{M}(B) G_B^\rho = \# \mathfrak{M}(B) F_R^{\rho_R},
\]
and
\[
F_R^{\rho_R} = \frac{t^n}{n!} \tilde F_R.
\]
We set 
\[
G_B = \# \mathfrak{M}(B) \frac{t^n}{n!} \tilde G_B.
\]
We therefore have that $\tilde G_B = \tilde F_R$. Assume that $B = (N_\lef (B_1,B_2),\iota,E)$. Then $R$ writes $(A,A'_1,\sigma)$ where $A = N_\lowr(A_2,A'_2,A_1)$ and where $R_i := (A_i,A_i',\sigma_{|\mathcal L(A_i) \cup \mathcal L(A'_i)}) = \mathtt{BtoR(B_i)}$. As we have already seen, we have that
\[
\tilde F_R(k) = -i \tilde F_{R_1}(k) L^{-d} \sum_{k_2} q_{\mathtt{root}(A)} \tilde F_{R_2}(k_2) 
\]
where we used that $\mathcal D_k(R)$ is the set of maps $\kappa : \mathcal L(R) \cup \mathcal N(R) \longrightarrow \Z_L^d$ such that $\kappa_{|\mathcal L(R_1) \cup \mathcal N(R_1)} \in \mathcal D_k(R_1)$ and such that there exists $k_2 \in \Z_L^d$ such that $\kappa_{|\mathcal N(R_2) \cup \mathcal L(R_2)} \in \mathcal D_{k_2}(R_2)$. With these notations, we have 
\[
q_{\mathtt{root}(A)} = \varphi(\mathtt{root}(A)) \chi_\lowr(k_2,-k_2,k) q^{c(\mathtt{root}(A)}_{\varphi(\mathtt{root}(A))\varphi(\mathtt{root}(A'_2))}(k_2,-k_2,k).
\]
Thanks to Proposition \ref{prop:BtoR}, we have that $\varphi(\mathtt{root}(A)) = \iota$, $\varphi(\mathtt{root}(A'_2)) = -\varphi(\mathtt{root}(A_2)) = -\varphi(\mathtt{root}(B_2))$ and $c(\mathtt{root}(A)) = \eta$.
We therefore have 
\[
\tilde G_B(k) = - \iota i \tilde G_{B_1}(k) L^{-d} \sum_{k_2} \chi_\lowr(k_2,-k_2,k) q^{\eta}_{-\iota\varphi(\mathtt{root}(B_2))}(k_2,-k_2,k) \tilde G_{B_2}(k_2).
\]
We now use that $\# \mathfrak{M}(B) \frac{t^n}{n!} = \int_{I_B(t)} \prod_{j\in \mathcal N(B)} dt_j$ where 
\[
I_B(t) = \{ (t_j)_{j\in \mathcal N(B)} \;|\; j<j' \Longrightarrow t_j < t_{j'} \}
\]
with the order of parentality on the nodes of $B$ to get the induction relation
\[
G_B(t,k) = - \iota i  \int_{0}^t G_{B_1}(\tau,k)L^{-d}\sum_{k_2} \chi_\lowr(k_2,-k_2,k) q_{-\iota\varphi(\mathtt{root}(B_2))}^{\eta}(k_2,-k_2,k) G_{B_2}(\tau,k_2)\d\tau
\]

We turn to the case $B = (N_\righ (B_1,B_2),\iota,E)$. Then $R= (A_1,A',\sigma)$ where $A' = N_\lowr(A_2,A'_2,A'_1)$ and where $R_i := (A_i,A_i',\sigma_{|\mathcal L(A_i) \cup \mathcal L(A'_i)}) = \mathtt{BtoR(B_i)}$. We have that
\[
\tilde F_R(k) = -i \tilde F_{R_1}(k) L^{-d} \sum_{k_2} q_{\mathtt{root}(A')} \tilde F_{R_2}(k_2) 
\]
where we used that $\mathcal D_k(R)$ is the set of maps $\kappa : \mathcal L(R) \cup \mathcal N(R) \rightarrow \Z_L^d$ such that $\kappa_{|\mathcal L(R_1) \cup \mathcal N(R_1)} \in \mathcal D_k(R_1)$ and such that there exists $k_2 \in \Z_L^d$ such that $\kappa_{|\mathcal N(R_2) \cup \mathcal L(R_2)} \in \mathcal D_{k_2}(R_2)$. With these notations, we have 
\[
q_{\mathtt{root}(A')} = \varphi(\mathtt{root}(A')) \chi_\lowr(k_2,-k_2,-k) q^{c(\mathtt{root}(A'))}_{\varphi(\mathtt{root}(A'))\varphi(\mathtt{root}(A'_2))}(k_2,-k_2,-k).
\]
Thanks to Proposition \ref{prop:BtoR}, we have that $\varphi(\mathtt{root}(A')) = -\iota$, $\varphi(\mathtt{root}(A'_2)) = -\varphi(\mathtt{root}(A_2)) = -\varphi(\mathtt{root}(B_2))$ and $c(\mathtt{root}(A')) = \eta'$.
We therefore have 
\[
\tilde G_B(k) =  \iota i \tilde G_{B_1}(k) L^{-d} \sum_{k_2} \chi_\lowr(k_2,-k_2,k) q^{\eta'}_{\iota\varphi(\mathtt{root}(B_2))}(k_2,-k_2,-k) \tilde G_{B_2}(k_2),
\]
and we conclude as before.
\end{proof}

\begin{notation} \label{notation X res n} We introduce the notation 
\[
\Xres{n}^{\iota,(\eta,\eta')}(t) = \sum_{C \in \mathtt{Res}_n(\iota,-\iota,\eta,\eta')} \hat F_C(\varepsilon^{-2}t).
\] 
\end{notation}

\begin{prop}\label{prop:resonantinductionformula} We have that $\Xres{0}^{\iota,E}(t,k) = M^{E,\iota}(k)$ and if $n>0$, we have
\begin{align*}
&\Xres{n}^{\iota,E} (t,k)
\\& = -2 \iota i \sum_{n_1 + n_2 = n-1} \sum_{\iota_2}  \int_{0}^t \Xres{n_1}^{\iota,E}(\tau,k) L^{-d} \sum_{k_2} \chi_\lowr(k_2,-k_2,k) 
\\& \hspace{3cm}\times \Big( q_{-\iota \iota_2}^\eta (k_2,-k_2,k)  \Xres{n_2}^{\iota_2,E_2^\lef}  - q_{\iota \iota_2}^{\eta'} (k_2,-k_2,-k)  \Xres{n_2}^{\iota_2,E_2^\righ}(\tau,k_2)  \Big)\d\tau.
\end{align*}
where $E = (\eta,\eta')$ and $E_2^*$ is the color of the right child of a $*$ node with color $E$.
\end{prop}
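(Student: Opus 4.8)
\textbf{Proof plan for Proposition \ref{prop:resonantinductionformula}.}

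The plan is to combine the reduction from $\mathtt{Res}_n$ to right-resonant couplings (Proposition \ref{prop:reduceRes}), the bijection $\mathtt{BtoR}/\mathtt{RtoB}$ between right-resonant couplings and binary trees (Section \ref{section btor rtob}), and the recursive formula for $G_B$ (Proposition \ref{prop:recB}). The first step is to express $\Xres{n}^{\iota,E}$ in terms of the quantities $G_B$. By the third point of Proposition \ref{prop:reduceRes} we have
\[
\Xres{n}^{\iota,E}(t,k) = \sum_{C\in\mathtt{Res}_n(\iota,-\iota,\eta,\eta')} \widehat{F_C}(\e^{-2}t,k) = 2^n \sum_{C\in\mathcal{R}_n(\iota,-\iota,\eta,\eta')} \widehat{F_C}(\e^{-2}t,k),
\]
and then using $\widehat{F_R}(\e^{-2}t,k) = \sum_{\rho_R\in\mathfrak{M}(R)} F_R^{\rho_R}(t,k)$ together with the bijection $\mathcal{B}_n^{\mathtt{ord}}(\iota,E)\leftrightarrow \mathcal{R}_n(\iota,-\iota,\eta,\eta')$ and the fact that $\mathtt{BtoR}$ is compatible with orders and a bijection at the level of $\mathcal{R}_n^{\mathtt{map}}$ and $\mathcal{B}_n^{\mathtt{map}}$, I would rewrite this as $2^n\sum_{B\in\mathcal{B}_n(\iota,E)} G_B(t,k)$. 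The only subtlety here is bookkeeping of the sums over orders: $\sum_{R\in\mathcal{R}_n}\sum_{\rho_R\in\mathfrak{M}(R)}$ should be matched with $\sum_{B\in\mathcal{B}_n}\sum_{\rho_B\in\mathfrak{M}(B)}$ via the bijection on $\mathcal{R}_n^{\mathtt{map}}$, recalling that $G_B = \sum_{\rho_B\in\mathfrak{M}(B)}G_B^{\rho_B}$ and $G_B^{\rho_B} = F_R^{\rho_R}$ where $(R,\rho_R) = \mathtt{BtoR}(B,\rho_B)$.

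The second step is to plug in the recursion for $G_B$ from Proposition \ref{prop:recB}. For $n>0$, splitting $\mathcal{B}_n(\iota,E)$ according to whether the root is of type $N_\lef$ or $N_\righ$ and according to the decomposition $B = (N_*(B_1,B_2),\iota,E)$ with $B_1\in\mathcal{B}_{n_1}(\iota,E)$ (note the colour and sign of $\mathtt{root}(B_1)$ coincide with those of $\mathtt{root}(B)$, by the definitions of sign and colour maps on $\mathcal{B}_n$ together with the table \eqref{eq:colortable}) and $B_2\in\mathcal{B}_{n_2}(\iota_2,E_2)$ with $n_1+n_2 = n-1$, one gets
\[
\sum_{B\in\mathcal{B}_n(\iota,E)}G_B(t,k) = \sum_{\substack{n_1+n_2=n-1 \\ *\in\{\lef,\righ\}}}\ \sum_{\substack{B_1\in\mathcal{B}_{n_1}(\iota,E)\\ B_2}} G_{(N_*(B_1,B_2),\iota,E)}(t,k),
\]
and then inserting the integral formula from Proposition \ref{prop:recB} (with the sign $-\iota i$ in the $N_\lef$ case and $+\iota i$ in the $N_\righ$ case, and the appropriate $\chi_\lowr$ and $q$ factors). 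The colour $E_2$ of $\mathtt{root}(B_2)$ is exactly the colour of the right child of a $*$ node of colour $E$, i.e.\ $E_2 = E_2^*$ in the notation of the statement, and the sign $\varphi(\mathtt{root}(B_2))$ ranges over $\{\pm\}$ as $\iota_2$; here one must be careful that the summation over $B_2$ factors as $\sum_{\iota_2}\sum_{B_2\in\mathcal{B}_{n_2}(\iota_2,E_2^*)}$, since the colour of the root of $B_2$ is forced but its sign is free.

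The third and final step is to reassemble the powers of $2$ and the subtree sums into $\Xres{}$ quantities. Writing $2^n = 2\cdot 2^{n_1}\cdot 2^{n_2}$ (valid since $n_1+n_2 = n-1$), distributing $2^{n_1}$ onto $\sum_{B_1}G_{B_1}$ and $2^{n_2}$ onto $\sum_{B_2}G_{B_2}$, and recognizing $2^{n_i}\sum_{B_i\in\mathcal{B}_{n_i}(\iota_i,E_i)}G_{B_i} = \Xres{n_i}^{\iota_i,E_i}$ from the first step applied at level $n_i$, we obtain
\[
\Xres{n}^{\iota,E}(t,k) = 2\,\Xres{n_1}^{\iota,E}\ \text{convolved against}\ \Xres{n_2}^{\iota_2,E_2^*},
\]
summed over $n_1+n_2=n-1$, $\iota_2$ and $*\in\{\lef,\righ\}$, with the kernels $\chi_\lowr(k_2,-k_2,k)q_{-\iota\iota_2}^\eta(k_2,-k_2,k)$ in the $N_\lef$ case and $-\chi_\lowr(k_2,-k_2,k)q_{\iota\iota_2}^{\eta'}(k_2,-k_2,-k)$ in the $N_\righ$ case, and an overall $-\iota i$; this is precisely the claimed formula. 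The case $n=0$ is immediate: $\mathtt{Res}_0(\iota,-\iota,\eta,\eta')$ consists of the single trivial coupling, whose $\widehat{F_C}$ is $M^{E,\iota}(k)$ by \eqref{FCresonant} (a single leaf, no nodes), matching $\Xres{0}^{\iota,E}(t,k)=M^{E,\iota}(k)$.

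The main obstacle I anticipate is not any hard estimate — everything is now combinatorial — but rather keeping the dictionary of signs and colours straight across the three bijections. In particular one must verify carefully that under $\mathtt{BtoR}$ the sign of $\varphi(\mathtt{root}(B_2))$ in Proposition \ref{prop:recB} indeed corresponds to $\iota_2$, the sign of the root of the \emph{left} tree of the subcoupling $\mathtt{BtoR}(B_2)$, and that the colour $c(\mathtt{root}(A))=\eta$ (resp.\ $c(\mathtt{root}(A'))=\eta'$) used in Proposition \ref{prop:recB} matches the first (resp.\ second) component of $E$; these follow from Propositions \ref{prop:BtoR} and \ref{prop:RtoB} but deserve to be spelled out. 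A secondary point is the factor-of-$2$ bookkeeping: one should check that the reduction Proposition \ref{prop:reduceRes}(iii) is applied consistently at every level and that the $\chi_\lowr$ appearing in Proposition \ref{prop:recB} is the same as the one in the kernel $q_j$ of the original $\lowr$ resonant node.
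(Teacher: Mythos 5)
Your plan is correct and coincides with the paper's own proof: the same chain $\mathtt{Res}_n \to \mathcal{R}_n \to \mathcal{B}_n^{\mathtt{ord}} \to \mathcal{B}_n$ giving $\Xres{n}^{\iota,E} = 2^n\sum_{B\in\mathcal{B}_n(\iota,E)}G_B$, followed by the root-splitting via Proposition \ref{prop:recB} and the factorization $2^n = 2\cdot 2^{n_1}\cdot 2^{n_2}$. The only step you leave implicit is the identity $\chi_\lowr(k_2,-k_2,-k)=\chi_\lowr(k_2,-k_2,k)$ needed to unify the two kernels into a single sum over $k_2$, which you have in effect already flagged as a point to verify.
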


\begin{proof} By definition 
\[
\Xres{n}^{\iota,E} (t)= \sum_{C\in \mathtt{Res}_n(\iota,-\iota,\eta,\eta')}\hat F_C(\e^{-2}t).
\]
According to Proposition \ref{prop:reduceRes}, we have that
\[
\Xres{n}^{\iota,E} (t)= 2^n\sum_{R\in \mathcal R_n(\iota,-\iota,\eta,\eta')}\hat F_R(\e^{-2}t).
\]
We may chose to order the nodes in each $R$ and get
\[
\Xres{n}^{\iota,E} = 2^n\sum_{(R,\rho_R)\in \mathcal R_n^{\mathtt{ord}}(\iota,\eta,\eta')} F_R^{\rho_R}.
\]
We use that $\mathcal R_n^{\mathtt{ord}}(\iota,\eta,\eta')$ and $\mathcal B_n^{\mathtt{ord}}(\iota,E)$ are in bijection to get
\[
\Xres{n}^{\iota,E} = 2^n\sum_{(B,\rho_B)\in \mathcal B_n^{\mathtt{ord}}(\iota,E)} G_B^{\rho_B}.
\]
We may sum back on $\rho_B$ and get
\begin{equation}\label{eq:Xresintermediaire}
\Xres{n}^{\iota,E} = 2^n\sum_{B\in \mathcal B_n(\iota,E)} G_B.
\end{equation}

For $n=0$, we use that $ \mathcal B_0(\iota,E)$ is reduced to one element and we use Proposition \ref{prop:recB}.

If $n>0$, we use the induction property in Proposition \ref{prop:recB}. For this, we use that $B\in \mathcal B_n(\iota,E)$ write $B = (N_*(B_1,B_2),\iota,E)$ in a unique way, with $B_i \in \mathcal B_{n_i}(\iota_i,E_i^*)$ where $n_1+n_2 = n-1$, where $\iota_1 = \iota$ and $(E_1^*, E_2^*)$ are the colors prescribed by Table \ref{eq:colortable}. The sign $\iota_2$ may be chosen freely. We note that according to the table, we have that $E_1^* = E$. We deduce
\begin{multline*}
\Xres{n}^{\iota,E} = \\
2^n\sum_{n_1 + n_2 = n-1} \sum_{\iota_2}\Big(\hspace{-0.2cm} \sum_{\substack{B_1 \in \mathcal B_{n_1}(\iota,E)\\B_2 \in \mathcal B_{n_2}(\iota_2,E_2^\lef)}}\hspace{-0.2cm} (- \iota i ) \int_{0}^t G_{B_1}(\tau,k) L^{-d} \sum_{k_2} \chi_\lowr(k_2,-k_2,k) q_{-\iota \iota_2}^\eta (k_2,-k_2,k)  G_{B_2}(\tau,k_2) d \tau \\
+\sum_{\substack{B_1 \in \mathcal B_{n_1}(\iota,E)\\B_2 \in \mathcal B_{n_2}(\iota_2,E_2^\righ)}} \iota i  \int_{0}^t G_{B_1}(\tau,k) L^{-d} \sum_{k_2} \chi_\lowr(k_2,-k_2,-k) q_{\iota \iota_2}^{\eta'} (k_2,-k_2,-k)  G_{B_2}(\tau,k_2) d \tau \Big) .
\end{multline*}
Using Equation \eqref{eq:Xresintermediaire}, this becomes
\begin{multline*}
\Xres{n}^{\iota,E} = 2\sum_{n_1 + n_2 = n-1} \sum_{\iota_2}\Big( - \iota i  \int_{0}^t \Xres{n_1}^{\iota,E}(\tau,k) L^{-d} \sum_{k_2} \chi_\lowr(k_2,-k_2,k) q_{-\iota \iota_2}^\eta (k_2,-k_2,k)  \Xres{n_2}^{\iota_2,E_2^\lef} d \tau \\
 \iota i  \int_{0}^t \Xres{n_1}^{\iota,E}(\tau,k) L^{-d} \sum_{k_2} \chi_\lowr(k_2,-k_2,-k) q_{\iota \iota_2}^{\eta'} (k_2,-k_2,-k)  \Xres{n_2}^{\iota_2,E_2^\righ}(\tau,k_2) d \tau  \Big).
\end{multline*}
Using the fact that 
\[
\chi_\lowr(k_2,-k_2,-k) = (1-\chi)(|k-k_2|\varepsilon^{-\gamma})(1-\chi)(|k+k_2|\varepsilon^{-\gamma}) = \chi_\lowr(k_2,-k_2,-k)
\]
and factorising yields
\begin{multline*}
\Xres{n}^{\iota,E} = -2 \iota i \sum_{n_1 + n_2 = n-1} \sum_{\iota_2}  \int_{0}^t \Xres{n_1}^{\iota,E}(\tau,k) L^{-d} \sum_{k_2} \chi_\lowr(k_2,-k_2,k) \\
\times \Big( q_{-\iota \iota_2}^\eta (k_2,-k_2,k)  \Xres{n_2}^{\iota_2,E_2^\lef}(\tau,k_2)  - q_{\iota \iota_2}^{\eta'} (k_2,-k_2,-k)  \Xres{n_2}^{\iota_2,E_2^\righ}(\tau,k_2) d \tau  \Big),
\end{multline*}
which concludes the proof of the proposition.
\end{proof}

\begin{lemma} 
For all $E = (\eta,\eta')$ we set $\bar E = (\eta',\eta)$. 
We have that
\[
\Xres{n}^{+,\bar E}(t,k) = \Xres{n}^{-,E}(t,-k) = \overline{\Xres{n}^{+,E}(t,k)}.
\]
\end{lemma}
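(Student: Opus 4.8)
The plan is to prove the two claimed identities by exhibiting, for each one, an explicit sign-preserving bijection on the set of resonant couplings under which the contribution $\widehat{F_C}$ transforms in the desired way. No induction on $n$ is really needed, although an alternative via Proposition \ref{prop:resonantinductionformula} is also available and I will sketch it at the end.

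For the first identity $\Xres{n}^{+,\bar E}(t,k) = \Xres{n}^{-,E}(t,-k)$, I would use the tree-swap map $C = (A,A',\sigma)\mapsto C^\top \vcentcolon= (A',A,\sigma)$. All the data entering \eqref{fourier of FC} — the sets $\mathcal N_B(C)$, $\mathcal N_T(C)$, the time domain $I_C(t)$, $\Om_{-1}(C)$, the $q_j$, the $\Om_j$ and the $M$-factors — are defined symmetrically in the two trees, and by Remark \ref{remark post def decocoup}$(ii)$ a $k$-decoration of $C$ is the same thing as a $(-k)$-decoration of $C^\top$; hence $\widehat{F_C}(\e^{-2}t,k) = \widehat{F_{C^\top}}(\e^{-2}t,-k)$ (this was already observed in the proof of Proposition \ref{prop:reduceRes}). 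Since the type of each node, its bush, its sign and the involution $\sigma$ are all unchanged, $C\mapsto C^\top$ maps resonant couplings to resonant couplings, and it is an involution; in terms of labels it is a bijection $\mathtt{Res}_n(+,-,\eta',\eta)\to\mathtt{Res}_n(-,+,\eta,\eta')$. Summing $\widehat{F_C}(\e^{-2}t,k)=\widehat{F_{C^\top}}(\e^{-2}t,-k)$ over $C\in\mathtt{Res}_n(+,-,\eta',\eta)$ and recalling Notation \ref{notation X res n} then gives the first identity with $\bar E=(\eta',\eta)$.

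For the second identity $\Xres{n}^{-,E}(t,-k)=\overline{\Xres{n}^{+,E}(t,k)}$, I would use the global sign-flip map $C\mapsto C'$, where $C'$ is obtained from $C$ by replacing its sign map $\ffi$ with $-\ffi$ and keeping the trees, the colour map and $\sigma$. One first checks this is a bijection $\mathtt{Res}_n(+,-,\eta,\eta')\to\mathtt{Res}_n(-,+,\eta,\eta')$: it is a valid coupling ($\mathcal L(C')_\pm=\mathcal L(C)_\mp$ and $\sigma$ is still an involution exchanging them), the colour map is untouched, and each of the three defining conditions of a resonant node in Definition \ref{def resonant nodes} is invariant under a global sign flip (in condition $(iii)$ both sides flip). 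Then one computes $\overline{\widehat{F_C}(\e^{-2}t,k)}$ from \eqref{fourier of FC}/\eqref{FCresonant} by the change of variables $\ka\mapsto-\ka$ on $\mathcal D_k(C)$ (which bijects onto $\mathcal D_{-k}(C)=\mathcal D_{-k}(C')$) and the reality of the data: the assumption $Q^\eta(-\xi_1,-\xi_2)=\overline{Q^\eta(\xi_1,\xi_2)}$ gives $\overline{q^\eta(a,b)}=q^\eta(-a,-b)$, hence $\overline{q^\eta_\iota(k_1,k_2,k_3)}=q^\eta_\iota(-k_1,-k_2,-k_3)$ since the prefactor in \eqref{q triple} is real and even; the cutoffs $\chi_*$ are even; $M^{\eta,\eta'}=\overline{M^{\eta',\eta}}$; and $\#\mathfrak M(C)=\#\mathfrak M(C')$. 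Tracking signs, each of the $n$ factors $q_j$ picks up a $(-1)$ on passing from $\overline{q_j(C,\ffi,c,\ka)}$ to $q_j(C',-\ffi,c,-\ka)$ (the node sign $\ffi(j)$ flips, while $\ffi(j)\ffi(m)$ does not), and the total $(-1)^n$ exactly cancels the $(-1)^n$ coming from $\overline{(-i)^{n}}=(-1)^{n}(-i)^{n}$; the product of $M$-factors is matched by re-indexing via $\sigma$ and using $M^{\eta,\eta'}=\overline{M^{\eta',\eta}}$ together with $\ka(\sigma(\ell))=-\ka(\ell)$. This gives $\overline{\widehat{F_C}(\e^{-2}t,k)}=\widehat{F_{C'}}(\e^{-2}t,-k)$, and summing over $C\in\mathtt{Res}_n(+,-,\eta,\eta')$ yields the second identity; the chain in the statement then follows from the two.

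The main obstacle is the bookkeeping in the second identity: one must be consistent about whether $\widehat{F_C}$ is written over $\mathcal L(C)_-$ with the conjugation convention of \eqref{fourier of FC} or in the reduced form \eqref{FCresonant}, and must carefully verify both the global sign cancellation and the $\sigma$-reindexing of the $M$-factors. If one prefers to avoid manipulating \eqref{fourier of FC} directly, the two identities can be proved simultaneously by induction on $n$ using the recursion of Proposition \ref{prop:resonantinductionformula} and the base case $\Xres{0}^{\iota,E}(t,k)=M^{E,\iota}(k)$: for the first identity the inductive step reduces to checking from Table \eqref{eq:colortable} that the right child of a $\righ$-node of colour $\bar E$ has the same colour as the right child of a $\lef$-node of colour $E$, and symmetrically, together with $\chi_\lowr(k_2,-k_2,k)=\chi_\lowr(k_2,-k_2,-k)$; for the second identity one applies the change of variables $k_2\mapsto-k_2$, $\iota_2\mapsto-\iota_2$ in the sum of Proposition \ref{prop:resonantinductionformula} and uses $\overline{q^\eta_\iota(k_1,k_2,k_3)}=q^\eta_\iota(-k_1,-k_2,-k_3)$ as above.
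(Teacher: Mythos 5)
Your proposal is correct, but your primary argument takes a genuinely different route from the paper's. The paper proves the lemma by induction on $n$: it conjugates the recursion of Proposition \ref{prop:resonantinductionformula}, applies the induction hypothesis to the inner factors $\Xres{n_2}$, changes variables $\iota_2\mapsto-\iota_2$, $k_2\mapsto-k_2$, and uses $\overline{q^\eta_\iota(-k_2,k_2,k)}=q^\eta_\iota(k_2,-k_2,-k)$, the evenness of $\chi_\lowr(k_2,-k_2,\cdot)$ and the colour identity $E_2^\righ=(\bar E)_2^\lef$ from Table \eqref{eq:colortable} to recognise the two right-hand sides --- this is exactly the ``alternative'' you sketch in your closing paragraph. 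Your main argument instead exhibits two explicit bijections on the set of resonant couplings (tree swap and global sign flip) under which $\widehat{F_C}$ transforms as required, and sums. I checked the delicate bookkeeping in the sign-flip step and it is right: since a resonant coupling has $n$ nodes, all low ternary, each $q_j$ contributes exactly one factor $-1$ in passing from $\overline{q_j(\ffi,\ka)}$ to $q_j(-\ffi,-\ka)$ (the prefactor $\ffi(j)$ flips while the subscript $\ffi(j)\ffi(m)$ does not), which cancels $\overline{(-i)^{n}}=(-1)^n(-i)^n$, and the $M$-factors match after reindexing $\mathcal L(C)_+$ by $\sigma$ and using $M^{\eta,\eta'}=\overline{M^{\eta',\eta}}$ together with $\ka(\sigma(\ell))=-\ka(\ell)$; the tree-swap half is just the identity $\widehat{F_C}(t,k)=\reallywidehat{F_{(A',A,\sigma)}}(t,-k)$ already recorded in the proof of Proposition \ref{prop:reduceRes}. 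The trade-off: your bijective proof is structural, needs neither the recursion nor the reduction to right-resonant couplings, and isolates exactly which symmetries of $q$, $\chi_*$ and $M$ are used; the paper's induction reuses machinery that is needed anyway for the derivation of the limiting system and avoids any direct manipulation of \eqref{fourier of FC} or \eqref{FCresonant}. Either argument is acceptable.
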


\begin{proof}
We prove this lemma by induction on $n$. For $n=0$, we have 
\[
\Xres{n}^{+,\bar E}(t,k) = \overline{M^{\bar E}(k)} = M^E(k) =  \overline{\Xres{n}^{+,E}(t,k)}
\]
and 
\[
\Xres{n}^{-,E}(t,-k) = \overline{M^{ E,-}(-k)} = M^E(k).
\]
By the induction relation of Proposition \ref{prop:resonantinductionformula}, we have that  
\begin{multline*}
\overline{\Xres{n}^{+,E}(t,k)} =  2 i \sum_{n_1 + n_2 = n-1} \sum_{\iota_2}  \int_{0}^t \overline{ \Xres{n_1}^{+,E}(\tau,k)} L^{-d} \sum_{k_2} \chi_\lowr(k_2,-k_2,k) \\
\times \Big( \overline{q_{-\iota_2}^\eta (k_2,-k_2,k)} \overline{\Xres{n_2}^{\iota_2,E_2^\lef}}(\tau,k_2)  - \overline{q_{ \iota_2}^{\eta'} (k_2,-k_2,-k)} \overline{ \Xres{n_2}^{\iota_2,E_2^\righ}(\tau,k_2)} d \tau  \Big).
\end{multline*}
We can use the induction hypothesis and obtain $\overline{\Xres{n_2}^{\iota_2,E_2^\lef}}(\tau,k_2) = \Xres{n_2}^{-\iota_2,E_2^\lef} (\tau,-k_2)$. We get
\begin{multline*}
\overline{\Xres{n}^{+,E}(t,k)} =  2 i \sum_{n_1 + n_2 = n-1} \sum_{\iota_2}  \int_{0}^t \overline{ \Xres{n_1}^{+,E}(\tau,k)} L^{-d} \sum_{k_2} \chi_\lowr(k_2,-k_2,k) \\
\times \Big( \overline{q_{-\iota_2}^\eta (k_2,-k_2,k)}  \Xres{n_2}^{-\iota_2,E_2^\lef}(\tau,-k_2)  - \overline{q_{ \iota_2}^{\eta'}} (k_2,-k_2,-k) \Xres{n_2}^{-\iota_2,E_2^\righ}(\tau,-k_2) d \tau  \Big).
\end{multline*}
We perform the change of variable $\iota_2 \leftarrow -\iota_2 $ and $k_2 \leftarrow -k_2$ and get 
\begin{multline}\label{eq:symétrieinter}
\overline{\Xres{n}^{+,E}(t,k)} =  2 i \sum_{n_1 + n_2 = n-1} \sum_{\iota_2}  \int_{0}^t \overline{ \Xres{n_1}^{+,E}(\tau,k)} L^{-d} \sum_{k_2} \chi_\lowr(-k_2,k_2,k) \\
\times \Big( \overline{q_{\iota_2}^\eta (-k_2,k_2,k) } \Xres{n_2}^{\iota_2,E_2^\lef}(\tau,k_2)  - \overline{q_{- \iota_2}^{\eta'}} (-k_2,k_2,-k) \Xres{n_2}^{\iota_2,E_2^\righ}(\tau,k_2) d \tau  \Big).
\end{multline}
The induction hypothesis then implies $\overline{ \Xres{n_1}^{+,E}(\tau,k)} = \Xres{n_1}^{-,E}(\tau,-k)$ and we use $\chi_\lowr(-k_2,k_2,k) = \chi_\lowr(k_2,-k_2,-k)$ and $\overline{q_{\iota_2}^\eta (-k_2,k_2,k)}=q_{\iota_2}^\eta (k_2,-k_2,-k)$ to obtain
\begin{multline*}
\overline{\Xres{n}^{+,E}(t,k)} =  2 i \sum_{n_1 + n_2 = n-1} \sum_{\iota_2}  \int_{0}^t  \Xres{n_1}^{-,E}(\tau,-k) L^{-d} \sum_{k_2} \chi_\lowr(k_2,-k_2,-k) \\
\times \Big( q_{\iota_2}^\eta (k_2,-k_2,-k)  \Xres{n_2}^{\iota_2,E_2^\lef}(\tau,k_2)  - q_{- \iota_2}^{\eta'} (k_2,-k_2,k) \Xres{n_2}^{\iota_2,E_2^\righ}(\tau,k_2) d \tau  \Big).
\end{multline*}
We recognize
\[
\overline{\Xres{n}^{+,E}(t,k)} = \Xres{n}^{-,E}(t,-k).
\]

We now go back to \eqref{eq:symétrieinter}, use the induction hypothesis to get that $\overline{ \Xres{n_1}^{+,E}(\tau,k)} = \Xres{n_1}^{+,\bar E}(\tau,k)$, use that $\chi_\lowr(-k_2,k_2,k)=\chi_\lowr(k_2,-k_2,k)$ and rearrange the sum to get
\begin{multline*}
\overline{\Xres{n}^{+,E}(t,k)} =  - 2 i \sum_{n_1 + n_2 = n-1} \sum_{\iota_2}  \int_{0}^t \Xres{n_1}^{+,\bar E}(\tau,k) L^{-d} \sum_{k_2} \chi_\lowr(k_2,-k_2,k) \\
\times \Big( \overline{q_{- \iota_2}^{\eta'} (-k_2,k_2,-k)} \Xres{n_2}^{\iota_2,E_2^\righ}(\tau,k_2) - \overline{q_{\iota_2}^\eta (-k_2,k_2,k) } \Xres{n_2}^{\iota_2,E_2^\lef}(\tau,k_2)  d \tau  \Big).
\end{multline*}
Using now $\overline{q_{- \iota_2}^{\eta'} (-k_2,k_2,-k)} = q_{- \iota_2}^{\eta'} (k_2,-k_2,k)$ and $E_2^\righ = (\bar E)_2^\lef$ for any $E$ (according to Table \eqref{eq:colortable}) we obtain
\begin{multline*}
\overline{\Xres{n}^{+,E}(t,k)} =  - 2 i \sum_{n_1 + n_2 = n-1} \sum_{\iota_2}  \int_{0}^t \Xres{n_1}^{+,\bar E}(\tau,k) L^{-d} \sum_{k_2} \chi_\lowr(k_2,-k_2,k) \\
\times \Big( q_{- \iota_2}^{\eta'} (k_2,-k_2,k) \Xres{n_2}^{\iota_2,(\bar E)_2^\lef}(\tau,k_2) - q_{\iota_2}^\eta (k_2,-k_2,-k)  \Xres{n_2}^{\iota_2,(\bar E)_2^\righ}(\tau,k_2)  d \tau  \Big),
\end{multline*}
and finally recognize
\[
\overline{\Xres{n}^{+,E}(t,k)} = \Xres{n}^{+,\bar E}(t,k).
\]
This concludes the proof of the induction.
\end{proof}

This lemma implies that a lot of quantities are redondant and we define the main ones:
\begin{align*}
    \rho^\eta_{L,n} & \vcentcolon = \Xres{n}^{+,(\eta,\eta)},
    \\ \rho^\times_{L,n} & \vcentcolon = \Xres{n}^{+, \times}.
\end{align*}
The following proposition gathers key recursive properties of $\rho^\eta_{L,n}$ and $\rho^\times{L,n}$.

\begin{prop}\label{recursive expression rhoLn}
The function $\rho^\eta_{L,n}$ is real-valued and $\pth{\rho^0_{L,n},\rho^1_{L,n}, \rho^{\times}_{L,n}}$ satisfies the following induction property: if $n=0$ we have $\rho^\eta_{L,0} = M^\eta$ and $\rho^\times_{L,0} = M^\times $ and if $n>0$ we have
\begin{align*}
\rho^0_{L,n}(t,k) = & 4 \sum_{n_1 + n_2 = n-1} \int_{0}^t \rho_{L,n_1}^{0}(\tau,k) L^{-d} \sum_{k_2} \chi_\lowr(k_2,-k_2,k) \mathrm{Im}\pth{ N^0(k,k_2)  \overline{\rho_{L,n_2}^{\times}(\tau,k_2)}} \d \tau,\\
\rho^1_{L,n}(t,k) = & 4 \sum_{n_1 + n_2 = n-1} \int_{0}^t \rho_{L,n_1}^{1}(\tau,k) L^{-d} \sum_{k_2} \chi_\lowr(k_2,-k_2,k) \mathrm{Im}\pth{ N^1(k,k_2)  \rho_{L,n_2}^{\times}(\tau,k_2)} \d \tau, \\
\rho^\times_{L,n} = &
2  i \sum_{n_1 + n_2 = n-1}  \int_{0}^t \rho_{L,n_1}^{ \times}(\tau,k) L^{-d} \sum_{k_2} \chi_\lowr(k_2,-k_2,k) \\
& \hspace{3cm}  \times \pth{ P(k,k_2)  \rho_{L,n_2}^{ \times}(\tau,k_2)+M(k,k_2) \overline{ \rho_{L,n_2}^{\times}(\tau,k_2)} } \d\tau .
\end{align*}
\end{prop}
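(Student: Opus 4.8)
The plan is to obtain Proposition~\ref{recursive expression rhoLn} directly from the recursion for $\Xres{n}^{\iota,E}$ proved in Proposition~\ref{prop:resonantinductionformula}, combined with the symmetry relations $\overline{\Xres{n}^{+,E}(t,k)}=\Xres{n}^{+,\bar E}(t,k)=\Xres{n}^{-,E}(t,-k)$ from the lemma preceding the statement, the colour table \eqref{eq:colortable}, and elementary symmetry and conjugation identities for the trilinear symbols $q^\eta_\iota$. First I would dispatch the easy points. Real-valuedness of $\rho^\eta_{L,n}=\Xres{n}^{+,(\eta,\eta)}$ follows at once from the symmetry lemma since $\overline{(\eta,\eta)}=(\eta,\eta)$, so $\overline{\rho^\eta_{L,n}}=\Xres{n}^{+,(\eta,\eta)}=\rho^\eta_{L,n}$ (alternatively it is visible on the recursion, whose right-hand side is manifestly real once $\rho^0_{L,n_1}$ is). The base case $n=0$ is the identity $\Xres{0}^{\iota,E}(t,k)=M^{E,\iota}(k)$ from Proposition~\ref{prop:resonantinductionformula}, evaluated at $\iota=+$, which gives $\rho^\eta_{L,0}=M^{\eta,\eta}=M^\eta$ and $\rho^\times_{L,0}=M^{0,1}=M^\times$.

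For $n>0$ I would apply Proposition~\ref{prop:resonantinductionformula} with $\iota=+$ in the three cases $E=(0,0)$, $E=(1,1)$, $E=\times=(0,1)$. Reading off the colours of the right children from \eqref{eq:colortable}: when $E=(0,0)$ one has $E_2^\lef=E_2^\righ=\times$; when $E=(1,1)$ one has $E_2^\lef=E_2^\righ=\overline\times$; when $E=\times$ one has $E_2^\lef=\times$ and $E_2^\righ=\overline\times$. Using the symmetry lemma I would rewrite $\Xres{n_2}^{+,\times}=\rho^\times_{L,n_2}$, $\Xres{n_2}^{+,\overline\times}=\overline{\rho^\times_{L,n_2}}$, $\Xres{n_2}^{-,\times}(\tau,k_2)=\overline{\rho^\times_{L,n_2}(\tau,-k_2)}$ and $\Xres{n_2}^{-,\overline\times}(\tau,k_2)=\rho^\times_{L,n_2}(\tau,-k_2)$; in the $\iota_2=-$ contributions I would then change variables $k_2\mapsto-k_2$, which is harmless since $\chi_\lowr(k_2,-k_2,k)$ is even in $k_2$ by Definition~\ref{def cutoff}. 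After this, I would use the symmetry $q^\eta_\iota(k_1,k_2,k_3)=q^\eta_\iota(k_3,k_2,k_1)$ (from \eqref{q triple} and the symmetry of $Q^\eta$) to bring the outer frequency $\pm k$ into the first slot, and the identity $\overline{q^\eta_\iota(k_1,k_2,k_3)}=q^\eta_\iota(-k_1,-k_2,-k_3)$ (from \eqref{q double}, \eqref{q triple} and $Q^\eta(-\cdot,-\cdot)=\overline{Q^\eta(\cdot,\cdot)}$). Collecting the coefficients of $\rho^\times_{L,n_2}(\tau,k_2)$ and of $\overline{\rho^\times_{L,n_2}(\tau,k_2)}$ reconstructs, for $E=(0,0)$, the coefficient $N^0(k,k_2)$ of \eqref{kernel N} in front of $\overline{\rho^\times_{L,n_2}}$ and $-\overline{N^0(k,k_2)}$ in front of $\rho^\times_{L,n_2}$, so the bracket equals $2i\,\mathrm{Im}\!\big(N^0(k,k_2)\overline{\rho^\times_{L,n_2}}\big)$; multiplying by the prefactor $-2i$ of Proposition~\ref{prop:resonantinductionformula} yields the stated factor $4$. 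The case $E=(1,1)$ is identical up to conjugation and produces $2i\,\mathrm{Im}\!\big(N^1(k,k_2)\rho^\times_{L,n_2}\big)$; in the case $E=\times$ the two distinct colours $E_2^\lef=\times$, $E_2^\righ=\overline\times$ produce exactly the kernels $P_-$ and $P_+$ of \eqref{kernel P} (which are the kernels denoted $P$ and $M$ in the statement) in front of $\rho^\times_{L,n_2}$ and $\overline{\rho^\times_{L,n_2}}$ respectively, and the prefactor $-2i$ combines with the overall minus sign to give $+2i$.

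The main obstacle is not conceptual — the recursive quadratic structure is already delivered by Proposition~\ref{prop:resonantinductionformula} — but organizational: one must first establish the symmetry and conjugation identities for the trilinear symbols $q^\eta_\iota$ by going back to the explicit formula \eqref{q triple} and the hypotheses on $Q^\eta$, and then apply them together with the colour table \eqref{eq:colortable} while tracking every sign (of $\iota_2$, of $k_2$ after the change of variables, and of the $\ps{\cdot}$ factors inside the $q^\eta_\iota$) so that the kernels $N^\eta$ and $P_\pm$ emerge in exactly the right positions. Assumption~\eqref{assumption Q} plays no further role at this stage, having already been used in Proposition~\ref{prop:reduceRes} to reduce to couplings with only right-resonant nodes.
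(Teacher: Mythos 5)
Your proposal is correct and follows essentially the same route as the paper: specialize the recursion of Proposition~\ref{prop:resonantinductionformula} to $\iota=+$ and $E\in\{(0,0),(1,1),\times\}$, expand the sum over $\iota_2$, use the conjugation/reflection symmetries of $\Xres{n}^{\iota,E}$ together with the change of variables $k_2\mapsto -k_2$ and the identities $\overline{q^\eta_\iota(k_1,k_2,k_3)}=q^\eta_\iota(-k_1,-k_2,-k_3)$ and the left/right symmetry of $q^\eta_\iota$, and then recognize the kernels $N^\eta$ and $P_\pm$ from \eqref{kernel N}--\eqref{kernel P}. The treatment of the base case and of real-valuedness via $\overline{(\eta,\eta)}=(\eta,\eta)$ also matches the paper's argument.
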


\begin{proof} We start with $\rho^0$. We have 
\begin{multline*}
\rho^0_{L,n}(t,k) = \Xres{n}^{+,(0,0)} = -2 i \sum_{n_1 + n_2 = n-1} \sum_{\iota_2}  \int_{0}^t \rho_{L,n}^{0}(\tau,k) L^{-d} \sum_{k_2} \chi_\lowr(k_2,-k_2,k) \\
\times \Big( q_{- \iota_2}^0 (k_2,-k_2,k)  \Xres{n_2}^{\iota_2,E_2^\lef}(\tau,k_2)  - q_{ \iota_2}^{0} (k_2,-k_2,-k)  \Xres{n_2}^{\iota_2,E_2^\righ}(\tau,k_2) \d \tau  \Big).
\end{multline*}
We remark that $E_2^\lef = E_2^\righ = \times$ and get
\begin{multline*}
\rho^0_{L,n}(t,k) = -2 i \sum_{n_1 + n_2 = n-1} \sum_{\iota_2}  \int_{0}^t \rho_{L,n}^{0}(\tau,k) L^{-d} \sum_{k_2} \chi_\lowr(k_2,-k_2,k) \\
\times \Big( q_{- \iota_2}^0 (k_2,-k_2,k)    - q_{ \iota_2}^{0} (k_2,-k_2,-k) \Big)  \Xres{n_2}^{\iota_2,\times}(\tau,k_2) \d \tau .
\end{multline*}
We develop the sum in $\iota_2$ and get
\begin{multline*}
\rho^0_{L,n}(t,k) = -2 i \sum_{n_1 + n_2 = n-1} \int_{0}^t \rho_{L,n}^{0}(\tau,k) L^{-d} \sum_{k_2} \chi_\lowr(k_2,-k_2,k) \\
\times \Big[\Big( q_{- }^0 (k_2,-k_2,k)    - q_{+}^{0} (k_2,-k_2,-k) \Big)  \Xres{n_2}^{+,\times}(\tau,k_2) + \Big( q_{+}^0 (k_2,-k_2,k)    - q_{-}^{0} (k_2,-k_2,-k) \Big)  \Xres{n_2}^{-,\times}(\tau,k_2)\Big] \d \tau .
\end{multline*}
We remark that $\Xres{n_2}^{+,\times}(\tau,k_2) = \rho^\times (\tau,k_2)$ and that $ \Xres{n_2}^{-,\times}(\tau,k_2) = \overline{\rho^\times (\tau,-k_2)}$ and we perform a change of variables into the second part of the sum to get
\begin{multline*}
\rho^0_{L,n}(t,k) = -2 i \sum_{n_1 + n_2 = n-1} \int_{0}^t \rho_{L,n_1}^{0}(\tau,k) L^{-d} \sum_{k_2} \chi_\lowr(k_2,-k_2,k) \\
\times \Big[\Big( q_{- }^0 (k_2,-k_2,k)    - q_{+}^{0} (k_2,-k_2,-k) \Big)  \rho_{L,n_2}^\times(\tau,k_2) + \Big( q_{+}^0 (-k_2,k_2,k)    - q_{-}^{0} (-k_2,k_2,-k) \Big)  \overline{\rho_{L,n_2}^{\times}(\tau,k_2)}\Big] \d \tau .
\end{multline*}
We rearrange the sum using that $q(-k_1,-k_2,-k_3) = \overline{q(k_3,k_2,k_1)}$ and obtain
\begin{multline*}
\rho^0_{L,n}(t,k) = -2 i \sum_{n_1 + n_2 = n-1} \int_{0}^t \rho_{L,n_1}^{0}(\tau,k) L^{-d} \sum_{k_2} \chi_\lowr(k_2,-k_2,k) \\
\times \Big[\Big( q_{- }^0 (k,-k_2,k_2)    - \overline{q_{+}^{0} (k,k_2,-k_2)} \Big)  \rho_{L,n_2}^\times(\tau,k_2) + \Big( q_{+}^0 (k,k_2,-k_2)    - \overline{q_{-}^{0} (k,-k_2,k_2)} \Big)  \overline{\rho_{L,n_2}^{\times}(\tau,k_2)}\Big] \d \tau .
\end{multline*}
that is to say, using the definition of $N^0(k,k_2)$,
\begin{multline*}
\rho^0_{L,n}(t,k) = 2 i \sum_{n_1 + n_2 = n-1} \int_{0}^t \rho_{L,n_1}^{0}(\tau,k) L^{-d} \sum_{k_2} \chi_\lowr(k_2,-k_2,k) \\
\times \pth{\overline{N^0(k,k_2)} \rho_{L,n_2}^\times(\tau,k_2) - N^0(k,k_2)  \overline{\rho_{L,n_2}^{\times}(\tau,k_2)}} \d \tau ,
\end{multline*}
which may be further expressed as
\[
\rho^0_{L,n}(t,k) = 4 \sum_{n_1 + n_2 = n-1} \int_{0}^t \rho_{L,n_1}^{0}(\tau,k) L^{-d} \sum_{k_2} \chi_\lowr(k_2,-k_2,k) \mathrm{Im}\pth{ N^0(k,k_2)  \overline{\rho_{L,n_2}^{\times}(\tau,k_2)}} \d \tau .
\]
The computation for $\rho^1$ is similar. 

We now turn to $\rho^\times$. We have 
 \begin{multline*}
\rho^\times_{L,n} = \Xres{n}^{+, \times} 
-2  i \sum_{n_1 + n_2 = n-1} \sum_{\iota_2}  \int_{0}^t \Xres{n_1}^{+,\bar \times}(\tau,k) L^{-d} \sum_{k_2} \chi_\lowr(k_2,-k_2,k) \\
\times \Big( q_{- \iota_2}^0 (k_2,-k_2,k)  \Xres{n_2}^{\iota_2,E_2^\lef}(\tau,k_2)  - q_{\iota_2}^{1} (k_2,-k_2,-k)  \Xres{n_2}^{\iota_2,E_2^\righ}(\tau,k_2) \d \tau  \Big).
\end{multline*}
We use that $E_2^\lef =  \times$ and that $E_2^\righ = \bar \times$ and get
 \begin{multline*}
\rho^\times_{L,n} = \Xres{n}^{+,\bar \times} =
-2  i \sum_{n_1 + n_2 = n-1} \sum_{\iota_2}  \int_{0}^t \rho_{L,n_1}^{ \times}(\tau,k) L^{-d} \sum_{k_2} \chi_\lowr(k_2,-k_2,k) \\
\times \Big( q_{- \iota_2}^0 (k_2,-k_2,k)  \Xres{n_2}^{\iota_2, \times}(\tau,k_2)  - q_{\iota_2}^{1} (k_2,-k_2,-k)  \Xres{n_2}^{\iota_2,\bar \times}(\tau,k_2)  \Big)\d \tau .
\end{multline*}
We develop the sum in $\iota_2$ and get
 \begin{multline*}
\rho^\times_{L,n} =
-2  i \sum_{n_1 + n_2 = n-1}  \int_{0}^t \rho_{L,n_1}^{ \times}(\tau,k) L^{-d} \sum_{k_2} \chi_\lowr(k_2,-k_2,k) \\
\times \Big( q_{-}^0 (k_2,-k_2,k)  \Xres{n_2}^{+, \times}(\tau,k_2)  - q_{+}^{1} (k_2,-k_2,-k)  \Xres{n_2}^{+,\bar \times}(\tau,k_2) \\
q_{+}^0 (k_2,-k_2,k)  \Xres{n_2}^{-, \times}(\tau,k_2)  - q_{-}^{1} (k_2,-k_2,-k)  \Xres{n_2}^{-,\bar\times}(\tau,k_2)\Big)\d\tau.
\end{multline*}
We write everything in terms of $\rho^\times$ and get
 \begin{multline*}
\rho^\times_{L,n} = 
-2  i \sum_{n_1 + n_2 = n-1}  \int_{0}^t \rho_{L,n_1}^{ \times}(\tau,k) L^{-d} \sum_{k_2} \chi_\lowr(k_2,-k_2,k) \\
\times \Big( q_{-}^0 (k_2,-k_2,k)  \rho_{L,n_2}^{ \times}(\tau,k_2)  - q_{+}^{1} (k_2,-k_2,-k) \overline{ \rho_{L,n_2}^{\times}(\tau,k_2)} \\
q_{+}^0 (k_2,-k_2,k)  \overline{\rho_{L,n_2}^{ \times}(\tau,-k_2)}  - q_{-}^{1} (k_2,-k_2,-k)  \rho_{L,n_2}^{\times}(\tau,-k_2)\Big)\d\tau.
\end{multline*}
We perform a change of variable in the second part of the sum and use the symmetries of $q$ to get
 \begin{multline*}
\rho^\times_{L,n} =
-2  i \sum_{n_1 + n_2 = n-1}  \int_{0}^t \rho_{L,n_1}^{ \times}(\tau,k) L^{-d} \sum_{k_2} \chi_\lowr(k_2,-k_2,k) \\
\times \Big( q_{-}^0 (k_2,-k_2,k)  \rho_{L,n_2}^{ \times}(\tau,k_2)  - \overline{q_{+}^{1} (k,k_2,-k_2)} \overline{ \rho_{L,n_2}^{\times}(\tau,k_2)} \\
q_{+}^0 (k,k_2,-k_2)  \overline{\rho_{L,n_2}^{ \times}(\tau,k_2)}  - \overline{q_{-}^{1} (k,-k_2,k_2)}  \rho_{L,n_2}^{\times}(\tau,k_2)\Big)\d\tau.
\end{multline*}
We finally obtain
 \begin{multline*}
\rho^\times_{L,n} =
-2  i \sum_{n_1 + n_2 = n-1}  \int_{0}^t \d\tau \rho_{L,n_1}^{ \times}(\tau,k) L^{-d} \sum_{k_2} \chi_\lowr(k_2,-k_2,k) \\
\times \Big[\Big( q_{-}^0 (k,-k_2,k_2)- \overline{q_{-}^{1} (k,-k_2,k_2)}\Big)  \rho_{L,n_2}^{ \times}(\tau,k_2)+\Big(q_{+}^0 (k,k_2,-k_2)  - \overline{q_{+}^{1} (k,k_2,-k_2)}\Big) \overline{ \rho_{L,n_2}^{\times}(\tau,k_2)}\Big] .
\end{multline*}
Using the definitions of $P_\pm$ this becomes
\begin{multline*}
\rho^\times_{L,n} =
2  i \sum_{n_1 + n_2 = n-1}  \int_{0}^t  \rho_{L,n_1}^{ \times}(\tau,k) L^{-d} \sum_{k_2} \chi_\lowr(k_2,-k_2,k) \\
\times \Big( P_-(k,k_2)  \rho_{L,n_2}^{ \times}(\tau,k_2)+P_+(k,k_2) \overline{ \rho_{L,n_2}^{\times}(\tau,k_2)}\Big)\d\tau ,
\end{multline*}
which concludes the proof of the proposition.
\end{proof}

\subsection{Analysis of the resonant system}\label{section WP cross system}

We consider the Cauchy problem for \eqref{cross system} with initial data $(\rho^\eta,\rho^\times)_{|_{t=0}} = (M^\eta,  M^\times)$. Note that by definition, for all $\zeta\in\R^d$ the maps $ \xi \longmapsto  N^\eta(\xi,\zeta)$ and $ \xi \longmapsto  P_\pm(\xi,\zeta)$ are continuous and bounded. The following proposition in particular implies the second part of Theorem \ref{main theorem}.

\begin{prop}
The Cauchy problem \eqref{cross system} is locally well-posed in $ W^{1,\infty}(\R^d;\R\times \R \times \C) \cap W^{1,1}(\R^d;\R\times \R \times \C)$. What is more, defining $(\rho_n^\eta,\rho_n^\times)_{n\in\mathbb{N}}$ by induction as
\[
\rho_0^\eta = M^\eta, \quad \rho_0^\times = M^\times
\]
and for $n\in \N$,
\begin{align}
    \rho^\eta_{n+1}(t,\xi) = &  4 \sum_{n_1 + n_2 = n}\int_{0}^t d\tau \rho_{n_1}^\eta (\tau,\xi) \int_{\R^d} d\zeta \mathrm{Im}\pth{N^\eta (\xi,\zeta) \rho_{n_2}^\times (\tau,\zeta)^{\iota(\eta)}} \label{rho eta n}\\
    \rho^\times_{n+1}(t,\xi) = & 2i \sum_{n_1+n_2 = n} \int_{0}^t d\tau \rho_{n_1}^\times (\tau,\xi) \int_{\R^d} d\zeta \Big[ P_-(\xi,\zeta) \rho_{n_2}^\times (\tau,\zeta) + P_+ (\xi,\zeta) \overline{\rho_{n_2}^\times(\tau,\zeta)} \Big], \label{rho x n}
\end{align}
where $\iota(\eta)$ is the sign of $(-1)^{\eta+1}$, we get that there exists $\Lambda >0$ such that for all $n$ and for 
\begin{align*}
\delta < \pth{ 64\Lambda \pth{\max_\eta\|N^\eta\|_{W^{1,\infty}_\xi L^\infty_\zeta} + \max_\pm \|P_\pm\|_{W^{1,\infty}_\xi L^\infty_\zeta}}}^{-1}    
\end{align*}
we have
\begin{align} 
   \|\rho_n^*\|_{L^\infty([0,\delta],W^{1,\infty}(\R^d) \cap W^{1,1}(\R^d))} &\leq \Lambda^{n+1} \delta^n c_n, \label{estimate rho* cauchy pb1} \\
 \l\rho^* - \sum_{k=0}^n\rho_k^*\r_{L^\infty([0,\delta],W^{1,\infty}(\R^d) \cap W^{1,1}(\R^d))}& \leq (\Lambda \delta)^{n} , \label{estimate rho* cauchy pb2}
\end{align}
where $c_n$ is the $n$-th Catalan number and $(\rho^0,\rho^1,\rho^\times)$ is the unique maximal solution to \eqref{cross system}.
\end{prop}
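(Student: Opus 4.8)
The statement is a standard Picard-type local well-posedness result, so the plan is to run a contraction mapping / Banach fixed point argument in the Banach space $Y \vcentcolon= W^{1,\infty}(\R^d;\R\times\R\times\C)\cap W^{1,1}(\R^d;\R\times\R\times\C)$, and simultaneously to identify the Picard iterates with the $(\rho^\eta_n,\rho^\times_n)$ defined by \eqref{rho eta n}--\eqref{rho x n}. First I would rewrite \eqref{cross system} in Duhamel form: since the right-hand sides are of the schematic form $\rho^*(t,\xi)\cdot \int_{\R^d}\big(\text{kernel}(\xi,\zeta)\,\rho^\times(t,\zeta)\,\text{or its conjugate}\big)\d\zeta$, the solution is a fixed point of the map $\Phi$ sending $(\rho^0,\rho^1,\rho^\times)$ to the triple whose components are $M^\eta$ (resp. $M^\times$) plus the time integral of these bilinear expressions. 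The key algebraic point making everything work is that each nonlinearity is \emph{bilinear} in the unknowns, with one factor being the unknown itself evaluated at $\xi$ and the other an integral against a bounded kernel — this is exactly the triangular/quadratic structure emphasized after Theorem \ref{main theorem}.

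\textbf{Key steps.} (1) Establish the product and integral estimates in $Y$: for the kernels $N^\eta, P_\pm \in W^{1,\infty}_\xi L^\infty_\zeta$ (which follows from their explicit formulas in terms of the bounded $Q^\eta$, $\an{\cdot}$ and the non-vanishing denominators $m\pm 2(\xi\cdot\zeta-\an\xi\an\zeta)\geq m$), and for $f\in Y$, the map $\xi\mapsto f(\xi)\int_{\R^d}N^\eta(\xi,\zeta)g(\zeta)\d\zeta$ is controlled in $W^{1,\infty}\cap W^{1,1}$ by $C\|N^\eta\|_{W^{1,\infty}_\xi L^\infty_\zeta}\|f\|_Y\|g\|_Y$, using that $W^{1,1}$ controls the $L^1_\zeta$ integral of $g$ and $W^{1,\infty}$ controls the sup; the derivative in $\xi$ hits either $f$ (keep the $L^1_\zeta$ integral) or the kernel (use $\partial_\xi N^\eta\in L^\infty$). (2) Using this bilinear estimate, show $\Phi$ maps the ball $B_R\subset C([0,\delta];Y)$ of radius $R\vcentcolon= 2\max(\|M^0\|_Y,\|M^1\|_Y,\|M^\times\|_Y)$ into itself and is a contraction there, provided $\delta$ is small depending only on $R$ and $\Lambda\vcentcolon= C\big(\max_\eta\|N^\eta\|_{W^{1,\infty}_\xi L^\infty_\zeta}+\max_\pm\|P_\pm\|_{W^{1,\infty}_\xi L^\infty_\zeta}\big)$; Banach's theorem gives existence and uniqueness on $[0,\delta]$, and the solution extends to a maximal solution by the usual continuation argument. (3) Prove by induction on $n$ that the $n$-th term $\sum_{k\le n}(\rho^*_k)$ is precisely the $n$-th Picard iterate $\Phi^n$ applied to the constant-in-time initial data; this is immediate from comparing \eqref{rho eta n}--\eqref{rho x n} with the Duhamel iteration, after matching the sign conventions $\iota(\eta)$ (so that $\rho^\times$ enters conjugated in the $\rho^1$ equation and un-conjugated in the $\rho^0$ equation, matching the $\mathrm{Im}(N^0\overline{\rho^\times})$ vs. $\mathrm{Im}(N^1\rho^\times)$ in \eqref{cross system}) and expanding the $P_\pm$ terms. (4) Prove \eqref{estimate rho* cauchy pb1} by induction: the bilinear estimate gives $\|\rho^*_{n+1}\|_{L^\infty_t Y}\le \Lambda\delta\sum_{n_1+n_2=n}\|\rho^*_{n_1}\|\,\|\rho^*_{n_2}\|$, and since the Catalan numbers satisfy $c_{n+1}=\sum_{n_1+n_2=n}c_{n_1}c_{n_2}$, the ansatz $\|\rho^*_n\|\le \Lambda^{n+1}\delta^n c_n$ is reproduced (choosing the constant $\Lambda$ to absorb the norms of $M^*$ at $n=0$). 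Since $c_n\le 4^n$, the series $\sum_n\|\rho^*_n\|$ converges for $\delta<(16\Lambda)^{-1}$, hence a fortiori under the stated smallness $\delta<(64\Lambda(\dots))^{-1}$, and its sum is the fixed point; \eqref{estimate rho* cauchy pb2} then follows from the geometric tail bound $\sum_{k>n}\Lambda^{k+1}\delta^k c_k\lesssim(\Lambda\delta\cdot 4)^n\lesssim(\Lambda\delta)^n$ after relabelling constants.

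\textbf{Main obstacle.} The genuinely delicate point is step (1): one must verify that the product-with-kernel-integral operator is bounded on $W^{1,1}\cap W^{1,\infty}$ — in particular that $\partial_\xi$ of the integral $\int N^\eta(\xi,\zeta)g(\zeta)\d\zeta$ makes sense and is bounded, which requires $N^\eta\in W^{1,\infty}_\xi L^\infty_\zeta$ together with $g\in L^1_\zeta$ (supplied by the $W^{1,1}$ component, recalling $M^{\eta,\eta'}\in W^{1,1}$), and that the resulting functions retain both $L^1$ and $L^\infty$ control under the nonlinear (bilinear) map. Everything else — the contraction, the identification of iterates, the Catalan bookkeeping — is routine once this estimate is in hand. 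One should also check that the $W^{1,\infty}\cap W^{1,1}$ regularity of $N^\eta$ and $P_\pm$ indeed holds; this reduces to differentiating the explicit expressions displayed after Theorem \ref{main theorem} and using the assumed boundedness of $Q^\eta$ together with $Q^\eta\in W^{1,\infty}$ and the uniform lower bound $m$ on the denominators.
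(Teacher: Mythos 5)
Your bilinear estimate in $W^{1,\infty}\cap W^{1,1}$, the Catalan recursion $c_{n+1}=\sum_{n_1+n_2=n}c_{n_1}c_{n_2}$ for \eqref{estimate rho* cauchy pb1}, and the local well-posedness by contraction all match the paper's argument. The gap is in your step (3): the partial sums $\sum_{k\le n}\rho^*_k$ are \emph{not} the Picard iterates $\Phi^n$ of the Duhamel map. Since the nonlinearity $F$ is quadratic, $\Phi^2$ applied to the data equals $\rho_0+\rho_1+\rho_2+\int_0^t F(\rho_1,\rho_1)\,d\tau$, which contains the cross term $\int F(\rho_1,\rho_1)$ (a piece of $\rho_3$) but is missing $\int[F(\rho_0,\rho_2)+F(\rho_2,\rho_0)]$ (the rest of $\rho_3$); so neither $\sum_{k\le 2}\rho_k$ nor $\sum_{k\le 3}\rho_k$ equals $\Phi^2$. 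Consequently your justification that "its sum is the fixed point" — and hence the whole of \eqref{estimate rho* cauchy pb2} via the geometric tail — does not follow as written. Two repairs are available. The cheap one within your framework: use absolute convergence (from \eqref{estimate rho* cauchy pb1} and $c_n\le 4^n$) and bilinearity of $F$ to compute the Cauchy product $\sum_{n\ge 0}\rho_{n+1}=\int_0^t F\big(\sum\rho_{n_1},\sum\rho_{n_2}\big)\,d\tau$, which shows the full sum satisfies the Duhamel equation, and then invoke the uniqueness you already established; the tail bound then gives \eqref{estimate rho* cauchy pb2} after relabelling $\Lambda$.

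The paper takes a different route for \eqref{estimate rho* cauchy pb2}, which you may find more robust: it sets $v^*=\rho^*-\sum_{n\le N}\rho^*_n$ and derives a closed system $v=\mathfrak{S}+\mathfrak{L}(v)+\mathfrak{Q}(v,v)$, where the source $\mathfrak{S}$ collects the interactions with $n_1+n_2\ge N$ (hence is $O((\Lambda\delta)^{N+1})$ by \eqref{estimate rho* cauchy pb1}), the linear operator $\mathfrak{L}$ has norm $\le 1/2$ under the stated smallness of $\delta$ so that $\mathrm{Id}-\mathfrak{L}$ is invertible, and a contraction on the ball of radius $(\Lambda\delta)^N$ closes the estimate. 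This avoids any identification of the series with the solution and is the same mechanism the paper uses in Section \ref{section remainder} for the PDE itself. Either repair is acceptable; as submitted, the step is not justified.
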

\begin{proof}
The local well-posedness of \eqref{cross system} follows from the fact that the quadratic function
\[
F(\rho^\eta, \rho^\times) := \begin{pmatrix}
4 \rho^\eta  \int d\zeta \mathrm{Im}\pth{N^\eta (\cdot,\zeta) \rho^\times (\zeta)^{\iota(\eta)}}\\ 
2i \rho^\times \int d\zeta \Big[ P_-(\cdot,\zeta) \rho^\times (\zeta) + P_+ (\cdot,\zeta) \overline{\rho^\times(\zeta)} \Big]
\end{pmatrix}
\]
is Lipschitz continuous on $W^{1,\infty}(\R^d)\cap W^{1,1}(\R^d)$. In particular, we observe that
\[
\aligned
&\|F(\rho^\eta, \rho^\times)^\eta\|_{L^\infty_tW^{1,\infty}_\xi\cap W^{1,1}_\xi}\le 8 \|\rho^\eta\|_{L^\infty_tW^{1,\infty}_\xi\cap W^{1,1}_\xi}\|N^\eta\|_{W^{1,\infty}_\xi L^\infty_\zeta}\|\rho^\times\|_{L^\infty_t L^1_\xi}\\
& \|F(\rho^\eta, \rho^\times)^\times\|_{L^\infty_tW^{1,\infty}_\xi\cap W^{1,1}_\xi}\le 8\|\rho^\times\|_{L^\infty_tW^{1,\infty}_\xi\cap W^{1,1}_\xi}\max_{\pm}\|P_\pm\|_{W^{1,\infty}_\xi L^\infty_\zeta}\|\rho^\times\|_{L^\infty_tL^1_\xi}.
\endaligned
\]
There exists then a unique maximal solution $(\rho^0, \rho^1, \rho^\times)\in L^\infty([0, \delta]; W^{1,\infty}(\R^d)\cap W^{1,1}(\R^d))$, where $\delta>0$ depends on the size of the data. 

As concerns the estimate for $\rho^*_n$, we prove by induction that
\[
\sup_{t\in [0,\delta]}\|\rho^*_n(t)\|_{W^{1,\infty}_\xi\cap W^{1,1}_\xi}\le c_n 8^n\delta^n\pth{\max_\eta\|N^\eta\|_{W^{1,\infty}_\xi L^\infty_\zeta} + \max_\pm \|P_\pm\|_{W^{1,\infty}_\xi L^\infty_\zeta}}^n \max_*\pth{\|\rho^*_0\|_{W^{1,\infty}_\xi\cap W^{1,1}_\xi}}^{n+1}
\]
where $c_n$ is the n-th Catalan number. Estimate \eqref{estimate rho* cauchy pb1} will follow by recalling that $c_n\le 4^n$ and choosing $\Lambda$ any fixed constant grater than $32(\max_\eta\|N^\eta\|_{W^{1,\infty}_\xi L^\infty_\zeta} + \max_\pm \|P_\pm\|_{W^{1,\infty}_\xi L^\infty_\zeta}) \max_*(\|\rho^*_0\|_{W^{1,\infty}_\xi\cap W^{1,1}_\xi})$. The above inequality is trivially true for $n=0$ and we suppose it holds true for any $\rho^*_m$ with $0\le m \le n$. From definitions \eqref{rho eta n}, \eqref{rho x n} and the above estimates on $F$, we deduce that for $t\in [0,\delta]$
\[
\aligned
& \|\rho^\eta_{n+1}(t)\|_{W^{1,\infty}_\xi\cap W^{1,1}_\xi} \le 8t\|N^\eta\|_{W^{1,\infty}_\xi L^\infty_\zeta}\sum_{n_1+n_2 = n}\|\rho^\eta_{n_1}\|_{L^\infty_tW^{1,\infty}_\xi\cap W^{1,1}_\xi}\|\rho^\times_{n_2}\|_{L^\infty_t L^1_\xi} \\
& \le 8^{n+1}t\delta^n\|N^\eta\|_{W^{1,\infty}_\xi L^\infty_\zeta}(\|N^\eta\|_{W^{1,\infty}_\xi L^\infty_\zeta} + \max_\pm \|P_\pm\|_{W^{1,\infty}_\xi L^\infty_\zeta})^n \max_*(\|\rho^*_0\|_{W^{1,\infty}_\xi\cap W^{1,1}_\xi})^{n+2} \sum_{n_1+n_2 = n}c_{n_1}c_{n_2} \\
& \le c_{n+1} (8\delta)^{n+1}  (\|N^\eta\|_{W^{1,\infty}_\xi L^\infty_\zeta} + \max_\pm \|P_\pm\|_{W^{1,\infty}_\xi L^\infty_\zeta})^{n+1}\max_*(\|\rho^*_0\|_{W^{1,\infty}_\xi\cap W^{1,1}_\xi})^{n+2}
\endaligned
\]
and similarly
\[
\aligned
\|\rho^\times_{n+1}(t)\|_{W^{1,\infty}_\xi\cap W^{1,1}_\xi} & \le 8t\max_\pm \|P_\pm\|_{W^{1,\infty}_\xi L^\infty_\zeta}\sum_{n_1+n_2=n} \|\rho^\times_{n_1}\|_{L^\infty_t W^{1,\infty}_\xi\cap W^{1,1}_\xi}\|\rho^\times_{n_2}\|_{L^\infty_tL^1_\xi} \\
& \le c_{n+1}(8\delta)^{n+1}  (\|N^\eta\|_{W^{1,\infty}_\xi L^\infty_\zeta} + \max_\pm \|P_\pm\|_{W^{1,\infty}_\xi L^\infty_\zeta})^{n+1}\max_*(\|\rho^*_0\|_{L^\infty_\xi\cap L^1_\xi})^{n+2}.
\endaligned
\]

Up to shortening the time interval to $[0,\delta]$ with $\delta<1/(2\Lambda)$, we immediately obtain from \eqref{estimate rho* cauchy pb1} that the series $\sum_n \rho^*_n$ converges in $L^\infty_t([0,\delta],W^{1,\infty}(\R^d)\cap W^{1,1}(\R^d))$. It remains to show that it converges to $\rho^*$ solution of \eqref{cross system} and to prove \eqref{estimate rho* cauchy pb2}. Let us define $v = (v^0, v^1, v^\times)$ as follows
\[
v^*:= \rho^* - \sum_{n\le N} \rho^*_n, \quad *\in \{0,1,\times\}
\]
From the equations in \eqref{cross system} and Definitions \eqref{rho eta n}, \eqref{rho x n}, we derive that $v$ solves the following system of equations
\[
v^* = \mathfrak{S}^* + \mathfrak{L}^*(v) + \mathfrak{Q}^*(v,v), \quad *\in \{0,1,\times\}
\]
with source term
\[
\aligned
& \mathfrak{S}^\eta = 4\sum_{\substack{0\le n_1, n_2\le N\\ N\le n_1+n_2\le 2N}} \int_0^t d\tau \, \rho_{n_1}^\eta (\tau,\xi) \int d\zeta \mathrm{Im}\pth{N^\eta (\xi,\zeta) \rho_{n_2}^\times (\tau,\zeta)^{\iota(\eta)}} \\
& \mathfrak{S}^\times = 2i\sum_{\substack{0\le n_1, n_2\le N\\ N\le n_1+n_2\le 2N}} \int_{0}^t d\tau \rho_{n_1}^\times (\tau,\xi) \int d\zeta \Big[ P_-(\xi,\zeta) \rho_{n_2}^\times (\tau,\zeta) + P_+ (\xi,\zeta) \overline{\rho_{n_2}^\times(\tau,\zeta)} \Big],
\endaligned
\]
linear term
\[
\aligned
\mathfrak{L}^\eta(v) = &4\sum_{0\le n\le N} \int_0^t d\tau \Big[\rho^\eta_n(\tau,\xi)\int d\zeta \Im \pth{N^\eta(\xi, \zeta) v^\times(\tau, \zeta)^{\iota(\eta)}} + v^\eta(\tau, \xi)\int d\zeta \Im \pth{N^{\eta}(\xi, \zeta)\rho^\times_n(\tau, \zeta)^{\iota(\eta)}}\Big]  \\
\mathfrak{L}^\times(v)  = &  2i\sum_{0\le n\le N}\int_0^t d\tau \rho^\times_n(\tau, \xi)\int d\zeta \Big[P_{-}(\xi, \zeta)v^\times(\tau, \zeta) + P_+(\xi, \zeta)\overline{v^\times(\tau, \zeta)}\Big] \\
& +2i\sum_{n\le N}\int_0^t d\tau v^\times(\tau, \xi)\int d\zeta \Big[P_{-}(\xi, \zeta)\rho^\times_n(\tau, \zeta) + P_+(\xi, \zeta)\overline{\rho^\times_n(\tau, \zeta)}\Big],
\endaligned
\]
and quadratic term
\[
\aligned
\mathfrak{Q}^\eta(v,v) & = 4\int_0^t v^\eta(\tau, \xi) \int d\zeta \Im \pth{ N^\eta(\xi, \zeta) v^\times(\tau, \zeta)^{\iota(\eta)}} \\
\mathfrak{Q}^\times(v,v) & = 2i \int_0^t v^\times(\tau, \xi)\int d\zeta \Big[P_{-}(\xi,\zeta) v^\times(\tau, \zeta) + P_+(\xi, \zeta)\overline{v^\times(\tau,\zeta)}\Big].
\endaligned
\]
From \eqref{estimate rho* cauchy pb1} and the fact that $\Lambda\delta<1/2$, we immediately derive that
\[
\|\mathfrak{S}^\eta\|_{L^\infty_tW^{1,\infty}_\xi\cap W^{1,1}_\xi} + \|\mathfrak{S}^\times\|_{L^\infty_tL^\infty_\xi\cap L^1_\xi}\le 8(\|N^\eta\|_{W^{1,\infty}_\xi L^\infty_\zeta} + \max_\pm\|P_\pm\|_{W^{1,\infty}_\xi L^\infty_\zeta})(\Lambda\delta)^{N+1}\sum_{n=0}^\infty (\Lambda \delta)^n
\]
\[
\aligned
\|\mathfrak{L}^\eta(v)\|_{L^\infty_tW^{1,\infty}_\xi\cap W^{1,1}_\xi} + \|\mathfrak{L}^\times(v)\|_{L^\infty_tW^{1,\infty}_\xi\cap W^{1,1}_\xi} & \le 8 (\|N^\eta\|_{{W^{1,\infty}_\xi L^\infty_\zeta}} + \max_\pm \|P_\pm\|_{{W^{1,\infty}_\xi L^\infty_\zeta}})\|v\|_{L^\infty_t W^{1,\infty}_\xi\cap W^{1,1}_\xi}\sum_{0\le n\le N}(\Lambda\delta)^{n+1}\\
& \le 16\Lambda\delta(\|N^\eta\|_{{W^{1,\infty}_\xi L^\infty_\zeta}} + \max_\pm \|P_\pm\|_{{W^{1,\infty}_\xi L^\infty_\zeta}})\|v\|_{L^\infty_t W^{1,\infty}_\xi\cap W^{1,1}_\xi}
\endaligned
\]
\[
\|\mathfrak{Q}^{\eta}(v,v)\|_{L^\infty_tW^{1,\infty}_\xi\cap W^{1,1}_\xi} + \|\mathfrak{Q}^{\times}(v,v)\|_{L^\infty_tW^{1,\infty}_\xi\cap W^{1,1}_\xi}\le 8\delta (\|N^\eta\|_{{W^{1,\infty}_\xi L^\infty_\zeta}} + \max_\pm \|P_\pm\|_{{W^{1,\infty}_\xi L^\infty_\zeta}})\|v\|^2_{L^\infty_tW^{1,\infty}_\xi\cap W^{1,1}_\xi}.
\]
Up to choosing a smaller $\delta$ as in the statement, we see that the norm of $\mathfrak{L}$ as an operator from $L^\infty_t(W^{1,\infty}_\xi\cap W^{1,1}_\xi)$ to itself is smaller than $1/2$. Therefore $\text{Id}-\mathfrak{L}$ is invertible, the norm of its inverse is bounded by 2 and the equation for $v$ turns into
\[
v = (\text{Id}-\mathfrak{L})^{-1}(\mathfrak{S} + \mathfrak{Q}(v,v)).
\]
Using the estimates recovered above, one can easily check that the operator in the above right hand side is a contraction on the ball 
\begin{align*}
    \enstq{v\in L^\infty_t([0,\de],W^{1,\infty}(\R^d)\cap W^{1,1}(\R^d))}{\| v\|_{L^\infty_t([0,\de], W^{1,\infty}\cap W^{1,1})}\le (\Lambda\delta)^N},
\end{align*}
which implies \eqref{estimate rho* cauchy pb2} and concludes the proof.
\end{proof}

\subsection{Final comparison}\label{section comparison}

In this section, we conclude the proof of Theorem \ref{main theorem} by proving its third point.

For $A>0$ and $L>0$, we denote by $\mathcal E_{L,A}$ the set of measure bigger than $1-L^{-A}$ on which the results of Proposition \ref{prop fixed point} hold. Restricting ourselves to this set, the solution $X^\eta$ of \eqref{fixed point} is (uniquely) well-defined up to times of order $\delta \varepsilon^{-2}$. We compare 
\[
\E\pth{\mathbbm{1}_{\mathcal E_{L,A}}\overline{\widehat{ X^\eta}(\varepsilon^{-2}t,k)} \widehat{X^{\eta'}}(\varepsilon^{-2}t,k)}
\] 
to
\[
\rho^{[\eta,\eta']}(t,k)
\]
where $[\eta,\eta'] = \eta$ if $\eta=\eta'$ or $[\eta,\eta'] = (\eta,\eta') = \times$ if $(\eta,\eta') = (0,1) = \times$, and where $t\in [0,\delta]$ and $k\in \Z_L^d$. We remark that $\rho^{[\eta,\eta']}(t,k)$ is well-defined since $\rho^*$ is continuous in time and phase space. More precisely, we prove that there exists $\nu>0$ such that
\begin{align*}
    \left| \E\Big(\mathbbm{1}_{\mathcal E_{L,A}}\overline{\widehat{X^\eta}(\varepsilon^{-2}t,k)} \widehat{X^{\eta'}}(\varepsilon^{-2}t,k)\Big) - \rho^{[\eta,\eta']}(t,k)  \right| \lesssim L^{-\nu},
\end{align*}
for all $t\in[0,\de]$ and all $k\in\Z_L^d$.

We have that 
\begin{align}\non
\E\Big(\mathbbm{1}_{\mathcal E_{L,A}}&\overline{\widehat{X^\eta}(\varepsilon^{-2}t,k)} \widehat{X^{\eta'}}(\varepsilon^{-2}t,k)\Big) - \rho^{[\eta,\eta']}(t,k)  \\\label{eq:remainder}
= & \E\pth{\mathbbm{1}_{\mathcal E_{L,A}}\overline{\widehat{ X^\eta}(\varepsilon^{-2}t,k)} \widehat{ X^{\eta'}}(\varepsilon^{-2}t,k)}  - \E\pth{\mathbbm{1}_{\mathcal E_{L,A}}\sum_{n_1,n_2 \leq N(L)}\overline{\widehat{X_{n_1}^\eta}(\varepsilon^{-2}t,k)} \widehat{ X_{n_2}^{\eta'}}(\varepsilon^{-2}t,k)} \\ \label{eq:smallmeasure}
+ & \E\pth{\mathbbm{1}_{\mathcal E_{L,A}}\sum_{n_1,n_2 \leq N(L)}\overline{\widehat{ X_{n_1}^\eta}(\varepsilon^{-2}t,k)} \widehat{ X_{n_2}^{\eta'}}(\varepsilon^{-2}t,k)} - \E\pth{\sum_{n_1,n_2 \leq N(L)}\overline{\widehat{ X_{n_1}^\eta}(\varepsilon^{-2}t,k)} \widehat{ X_{n_2}^{\eta'}}(\varepsilon^{-2}t,k)} \\\label{eq:resonantcouplings}
+ & \E\pth{\sum_{n_1,n_2 \leq N(L)}\overline{\widehat{ X_{n_1}^\eta}(\varepsilon^{-2}t,k)} \widehat{ X_{n_2}^{\eta'}}(\varepsilon^{-2}t,k)} - \sum_{n \leq N(L)}\Xres{n}^{+,(\eta,\eta')}(t,k) \\\label{eq:Riemannsumandcutoffs}
+ & \sum_{n \leq  N(L)}\rho_{L,n}^{[\eta,\eta']}(t,k) - \sum_{n \leq  N(L)}\rho_{n}^{[\eta,\eta']}(t,k)\\\label{eq:wellposedness}
+ & \sum_{n \leq  N(L)}\rho_{n}^{[\eta,\eta']}(t,k) - \rho^{[\eta,\eta']}(t,k)
\end{align}
with $\Xres{n}^{+,(\eta,\eta')}=\rho_{L,n}^{[\eta,\eta']}$ is introduced in Notation \ref{notation X res n} and expressed recursively in Proposition \ref{recursive expression rhoLn}, and $\rho_{n}^{[\eta,\eta']}$ is defined in \eqref{rho eta n}, \eqref{rho x n}. We discuss how to bound each of the above lines.

\paragraph{Line \eqref{eq:remainder}: smallness due to the remainder.} Using the result of Theorem \ref{thm X as series}, it corresponds to
\[
\E \pth{\mathbbm{1}_{\mathcal E_{L,A}} \overline{\widehat{{W}^\eta_1}(\e^{-2}t, k)}\widehat{{W}_2^{\eta'}}(\e^{-2}t,k)}
\]
where $W_1, W_2 \in \{ X_{\le N(L)}, v\}$, with at least one of the two equal to $v$. Using the estimates derived in Corollary \ref{coro Xn} and Theorem \ref{thm X as series}, together with the fact that $H^s$ is an algebra when $s>d/2$, we derive that
\[
\E \pth{\mathbbm{1}_{\mathcal E_{L,A}} \overline{\widehat{{W}^\eta_1}(\e^{-2}t, k)}\widehat{{W}_2^{\eta'}}(\e^{-2}t,k)} \lesssim L^{-\pth{d+\frac14+2A}},
\]
for some implicit constant independent of $ L$.

\paragraph{Line \eqref{eq:smallmeasure}: smallness due to high probability.} It corresponds to
\[
\sum_{n_1,n_2 \leq N(L)} \E\Big(\mathbbm{1}_{\mathcal E^\complement_{L,A}}\overline{\widehat{X_{n_1}^\eta}(\varepsilon^{-2}t,k)} \widehat{X_{n_2}^{\eta'}}(\varepsilon^{-2}t,k)\Big) 
\]
where $\mathcal E^\complement_{L,A}$ denotes the complement of $\mathcal E_{L,A}$. We estimate the above terms using H\"older's inequality twice and Gaussian hypercontractivity. Recalling also estimate \eqref{expectation Xn(k)}, we get that the above sum is
\[
\aligned
& \le \sum_{n_1, n_2\le N(L)}L^{-\frac{A}{2}} \Big(\E\big|\overline{\widehat{X_{n_1}^\eta}(\varepsilon^{-2}t,k)} \widehat{X_{n_2}^{\eta'}}(\varepsilon^{-2}t,k)\big|^2\Big)^{\frac12} \\
&\le \sum_{n_1, n_2\le N(L)}L^{-\frac{A}{2}} (\E|\widehat{X^\eta_{n_1}}(\e^{-2}t,k)|^4)^\frac14 (\E|\widehat{X^\eta_{n_2}}(\e^{-2}t,k)|^4)^\frac14\\
&\le  L^{-\frac{A}{2}} \sum_{n_1, n_2\le N(L)} c^{n_1}c^{n_2}(\E|\hat{X}^\eta_{n_1}(\e^{-2}t,k)|^2)^\frac12(\E|\hat{X}^\eta_{n_2}(\e^{-2}t,k)|^2)^\frac12 \\
& \le \Lambda  L^{-\frac{A}{2}} \pth{\sum_{n\le N(L)} (\Lambda\delta)^\frac{n}{4} c^n }^2
\endaligned
\]
for some $\Lambda, c>0$. If $\delta$ is small enough so that $\Lambda c^4\delta<1/2$, we conclude that 
\[
\sum_{n_1,n_2 \leq N(L)} \E\Big(\mathbbm{1}_{\mathcal E^\complement_{L,A}}\overline{\widehat{X_{n_1}^\eta}(\varepsilon^{-2}t,k)} \widehat{X_{n_2}^{\eta'}}(\varepsilon^{-2}t,k)\Big) \leq \Lambda L^{-\frac{A}{2}},
\]
for some new $\Lambda>0$.

\paragraph{Line \eqref{eq:resonantcouplings}: smallness due to resonances.} From \eqref{prop:correlation}, it corresponds to 
\[
\sum_{n_1, n_2\le N(L)}\sum_{C \in \mathcal{C}^{\eta,\eta', -,+}_{n_1,n_2}}\widehat{{F}_C}(\e^{-2}t,k) - \sum_{n\le N(L)}\sum_{C \in \mathtt{Res}_n(\iota,-\iota,\eta,\eta')} \widehat{F_C}(\e^{-2}t,k)
\]
where we recall that $\mathtt{Res}_n(\iota,\iota',\eta,\eta') = \mathtt{Res}_n \cap \cup_{n_1n_2} \mathcal C^{\iota,\iota',\eta,\eta'}_{n_1,n_2}$ and $\mathtt{Res}_n$ is the set of couplings with $n$ nodes, all resonant. The above difference can be rearranged as follows
\begin{equation}\label{eq:estimatingResCoup}
\sum_{n_1, n_2\le N(L)}\sum_{\substack{C\in \mathcal{C}^{\eta, \eta', -, +}_{n_1, n_2}\\ C \text{ has a non-resonant node}}} \widehat{F_C}(\e^{-2}t, k) - \sum_{\substack{n_1+n_2\le 2N(L)\\ n_1>N(L) \text{ or } n_2> N(L)}}\sum_{C\in \mathcal{C}^{\eta, \eta', -, +}_{n_1, n_2} \cap \cup_n \mathtt{Res}_n}\widehat{F_C}(\e^{-2}t, k).
\end{equation}
On the one hand, the first sum in \eqref{eq:estimatingResCoup} can be split into three additional terms:
\begin{align*}
S_1 & = \sum_{n_1+n_2<80} \sum_{\substack{C\in \mathcal{C}^{\eta, \eta', -, +}_{n_1, n_2}\\ C \text{ has a non-resonant node}}} \widehat{F_C}(\e^{-2}t, k),
\\ S_2 & = \sum_{\substack{n_1, n_2\le N(L)\\ n_1+n_2\ge 80}}\sum_{\lfloor \frac{n_1+n_2}{2}\rfloor-40<q\le \lfloor \frac{n_1+n_2}{2}\rfloor} \sum_{\substack{C\in \mathcal{C}^{\eta, \eta', -, +}_{n_1, n_2}\\ C \text{ has a non-resonant node}\\ n_{\scb}(C)=q}} \widehat{F_C}(\e^{-2}t, k),
\\ S_3  & = \sum_{\substack{n_1, n_2\le N(L)\\ n_1+n_2\ge 80}}\sum_{0\le q\le \lfloor \frac{n_1+n_2}{2}\rfloor-40} \sum_{\substack{C\in \mathcal{C}^{\eta, \eta', -, +}_{n_1, n_2}\\ C \text{ has a non-resonant node}\\ n_{\scb}(C)=q}} \widehat{F_C}(\e^{-2}t, k)
\end{align*}
We use Proposition \ref{prop nonresonant} and Lemma \ref{counting self-coupled bushes} (which counts the couplings with a given number of self-coupled bushes) to deduce that $S_1$ and $S_2$ are bounded by $\Lambda \e^\alpha$ for some large constant $\Lambda$, while we use \eqref{estimate FC} and again Lemma \ref{counting self-coupled bushes} to get the same bound for $S_3$ (cf. the same estimate in the proof of Corollary \ref{coro Xn}).

On the other hand, the second sum in \eqref{eq:estimatingResCoup} may be estimated using that if $C \in \mathtt{Res}_n$ then $n_\scb(C) = \frac{n}2$ and therefore the cardinal of $\mathtt{Res}_n(\iota,\iota',\eta,\eta')$ is bounded by some $\Lambda^n$. We deduce that it can be estimated using \eqref{estimate good} and the fact that either $n_1$ or $n_2$ are bigger than $N(L)$ as less than
\[
\aligned
S'_1 & = \sum_{\substack{n_1+n_2\le 2N(L)\\ n_1>N(L) \text{ or } n_2> N(L)}} \Lambda (\Lambda \delta)^{\frac{n_1+n_2}{2}} \le \Lambda (\Lambda\delta)^{\frac{N(L)}{2}}
\endaligned
\]
Given that $\Lambda\delta<1$ and that $N(L) = \lfloor{(\log L)\rfloor}$, $(\Lambda\delta)^{\frac{N(L)}{2}}$ is smaller than a negative power of $L$.

\paragraph{Line \eqref{eq:Riemannsumandcutoffs}: smallness due to recursive properties of resonances.} We recall the recursive expression obtained for $\rho^{[\eta, \eta']}_{L, n}$ in Proposition \ref{recursive expression rhoLn} as well as the definition \eqref{rho eta n}-\eqref{rho x n} of $\rho^{[\eta, \eta']}_{n}$. It can be easily proved by induction that the maps $\xi\mapsto \rho^{[\eta, \eta']}_{L,n}(t,\xi)$ and $\xi\mapsto \rho^{[\eta, \eta']}_{n}(t,\xi)$ are compactly supported in a ball $B(0, R)$. Furthermore, for any $n\in \N$ and $L>0$
\begin{equation} \label{estimate rhoLn}
    \sup_{t\in [0, \delta]}\sup_{k\in \mathbb{Z}^d_L}|\rho^{[\eta, \eta']}_{L, n}(t, k)|\le \Lambda (\Lambda\delta)^n c_n,
\end{equation}
where $\Lambda\gg 1$ is some fixed constant and $c_n$ denotes the $n$th Catalan number. 
Such estimate is the analogue of \eqref{estimate rho* cauchy pb1} and obtained using the same type of argument.

We next prove by induction on $n\le N(L)$ that there exists some $\Lambda, \alpha>0$ such that
\begin{equation}\label{estimate difference rhoLn rhon}
    \sup_{t\in [0, \delta]}\sup_{k\in \mathbb Z^d_L}|\rho^{[\eta, \eta']}_{L, n}(t,k) - \rho^{[\eta, \eta']}_{n}(t,k)| \le \Lambda L^{-\alpha} c_n (\Lambda \delta)^n.
\end{equation}
From this estimate it will follow that line \eqref{eq:Riemannsumandcutoffs} is bounded by $\Lambda L^{-\alpha}(\log L)$, for some new $\Lambda>0$.
For $n=0$, the two quantities coincide so let us assume that the above statement is true for any $0\le m\le n-1$. First, we observe that
\[
\aligned
&\rho^{[\eta, \eta]}_{L, n}(t, k)  = 4\sum_{n_1+n_2=n-1}\int_0^t \rho^{[\eta, \eta]}_{L,n_1}(\tau, k) L^{-d}\sum_{k_2}\Im \pth{N^{[\eta, \eta]}(k_2, k) \rho^\times_{n_2,L}(\tau, k_2)^{\iota(\eta)}}\ d\tau \\
& + 4\sum_{n_1+n_2=n-1}\int_0^t \rho^{[\eta, \eta]}_{L, n_1}(\tau, k) L^{-d}\sum_{k_2}(1-\chi_\lowr)(k_2, -k_2, k)\Im \pth{N^{[\eta, \eta]}(k_2, k) \rho^\times_{L, n_2}(\tau, k_2)^{\iota(\eta)}}\ d\tau
\endaligned\]
and that $(1-\chi_\lowr)(k_2, -k_2, k)$ vanishes when $|k\pm k_2|\ge 2\e^{\gamma}$. Therefore, by triangle inequality we have 
\begin{align}\non
\Big| &\rho^{[\eta, \eta]}_{L, n}(t, k)  - \rho^{[\eta, \eta]}_{ n}(t, k) \Big| \leq \\ 
& 4 \Big|\sum_{n_1+n_2=n-1}\int_0^t (\rho^{[\eta, \eta]}_{L,n_1}(\tau, k) - \rho^{[\eta, \eta]}_{n_1}(\tau, k)) L^{-d}\sum_{k_2}\Im \pth{N^{[\eta, \eta]}(k_2, k) \rho^\times_{n_2,L}(\tau, k_2)^{\iota(\eta)}}\ d\tau \Big|\label{line 1} \\
& + 4 \Big|\sum_{n_1+n_2=n-1}\int_0^t  \rho^{[\eta, \eta]}_{n_1}(\tau, k) L^{-d}\sum_{k_2}\Im \pth{N^{[\eta, \eta]}(k_2, k) (\rho^\times_{n_2,L}(\tau, k_2)^{\iota(\eta)} - \rho^\times_{n_2}(\tau, k_2)^{\iota(\eta)} )}\ d\tau \Big| \label{line 2}\\
& +4 \Big|\sum_{n_1+n_2=n-1}\int_0^t  \rho^{[\eta, \eta]}_{n_1}(\tau, k) L^{-d}\sum_{k_2}\Im \pth{N^{[\eta, \eta]}(k_2, k)  \rho^\times_{n_2}(\tau, k_2)^{\iota(\eta)} }\ d\tau  \label{line 3} \\\non
& \hspace{1cm} - \sum_{n_1+n_2=n-1}\int_0^t  \rho^{[\eta, \eta]}_{n_1}(\tau, k) \int d\zeta \Im \pth{N^{[\eta, \eta]}(\zeta, k)  \rho^\times_{n_2}(\tau, \zeta)^{\iota(\eta)} }\ d\tau  \Big| \\
& + 4\Big|\hspace{-0.4cm}\sum_{n_1+n_2=n-1}\int_0^t \rho^{[\eta, \eta]}_{L, n_1}(\tau, k) L^{-d}\sum_{k_2}(1-\chi_\lowr)(k_2, -k_2, k)\Im \pth{N^{[\eta, \eta]}(k_2, k) \rho^\times_{L, n_2}(\tau, k_2)^{\iota(\eta)}}\ d\tau \Big| \label{line 4}
\end{align}
To bound \eqref{line 1} and \eqref{line 2} we use the induction hypothesis, the fact that $\rho^{[\eta, \eta']}_{L, m}$ and $\rho^{[\eta, \eta']}_{m}$ are bounded by $ \Lambda^{m+1} \delta^m c_m$ (cf. \eqref{estimate rhoLn} and \eqref{estimate rho* cauchy pb1}) and that the sum over $k_2$ is restricted to a ball $B(0, R)$ of $\mathbb Z^d_L$. For \eqref{line 3}, we remark that $\zeta\mapsto N^{[\eta, \eta']}(\zeta, k)\Im \rho^\times_{n}(t, \zeta)$ satisfies the hypothesis of Lemma \ref{lem:RiemanntoInt}, hence using \eqref{estimate rho* cauchy pb1} we deduce that for $n_2\le n-1$
\begin{multline*}
\sup_{t\in [0, \delta]}\sup_{k\in \mathbb Z^d_L}\left| L^{-d}\sum_{k_2}\Im\Big(N^{[\eta, \eta']}(k_2, k)\rho^\times_{n_2}(\tau, k_2)\Big) - \int_{\mathbb R^d} d\zeta \ \Im\Big(N^{[\eta, \eta']}(\zeta, k)\rho^\times_{n_2}(\tau, \zeta)\Big)\right| \\
\lesssim L^{-d-1}\sum_{k_2\in \mathbb Z^d_L}\sup_{C_{k_2, L}} |\nabla\Big(N^{[\eta, \eta']}(\cdot, k )\rho^\times_{n_2}(\cdot)\Big)|\le L^{-1}\|N^{[\eta, \eta']}\|_{W^{1,\infty}_\xi L^\infty_\zeta} \Lambda (\Lambda\delta)^{n_2}c_{n_2}  R^d.
\end{multline*}
Plugging this into \eqref{line 3}, we find it is bounded by
\[
4\sum_{n_1+n_2=n-1} \Lambda (\Lambda\delta)^{n_1+n_2+1}c_{n_1}c_{n_2} L^{-1}\|N^{[\eta, \eta']}\|_{W^{1,\infty}_\xi L^\infty_\zeta} R^d 
\]
and finally by $\Lambda(\Lambda\delta)^n c_n L^{-\alpha}$ given the fact that $4L^{-1+\alpha}\|N^{[\eta, \eta']}\|_{W^{1,\infty}_\xi L^\infty_\zeta} R^d \le 1$ if $L$ is sufficiently large and $\alpha<1$.

Finally, to bound \eqref{line 4} we use estimate \eqref{estimate rhoLn} on $\rho^*_{L,n}$ and the fact that $(1-\chi_\lowr)(k_2, -k_2, k) = 0$ whenever $|k\pm k_2|\ge 2\e^{\gamma d}$ and derive that for $L$ sufficiently large
\[
\le \Lambda(\Lambda\delta)^n 2^d\e^{\gamma d}c_n \|N^{[\eta, \eta']}\|_{L^\infty}\le \Lambda (\Lambda\delta)^n L^{-\alpha},
\]
for some $\alpha>0$. This concludes the proof of \eqref{estimate difference rhoLn rhon}.

\paragraph{Line \eqref{eq:wellposedness}: smallness due to the iterative scheme.} It follows from \eqref{estimate rho* cauchy pb2} that this line is bounded by $(\Lambda \delta)^{2N(L)+2}$. Since $\Lambda\delta<1/2$, this tends to zero uniformly in $t, k$ as $L\rightarrow\infty$.

%%%%%%%%%%%%%%%%%%%%%%%%%%%%%%%%%%%%%%%%%%%%%%%%%%%%%%%%%%%%%%%%%%%%%%%%%%%%%%%%%

\appendix

\section{Proof of Proposition \ref{prop LukSpo}}\label{appendix LukSpo}

We give the different steps in the proof of the Section 5 of \cite{LS11} to get this result.

\noindent\textbf{Construction of the momentum graph of $C$.} Given a coupling $C$ and an order $\rho$, we build a \emph{momentum graph} $G$ in the following way:
\begin{itemize}
    \item The vertices of $G$ are the vertices of $C$ (its nodes and leaf vertices), to which we add a vertex that we call a \emph{root vertex} and $n$ vertices that we call \emph{fusion vertex}. 
    \item The edges of $G$ are the edges of $C$ to which we add two edges from the roots of $C$ to the root vertex and $n+2$ edges build in the following way: we associate to each minus leaf $\ell$ a fusion vertex $v_\ell$ in a bijective way, and we add the edges $\{\ell,v_\ell\}$ and $\{\sigma(\ell),v_\ell\}$.
\end{itemize}
 Note that $G$ is a connected graph. We write its vertex set $V$ and its edges set $E$.

\saut
\noindent\textbf{Signed vertices of the momentum graph.} For each vertex $v\in V$, we write $E_+(v)$ and $E_-(v)$ the following collections of edges: 
\begin{itemize}
    \item If $v$ is the root vertex, $E_+(v) = \emptyset$ and $E_-(v)$ are the edges from $v$ to the roots of $C$.
    \item If $v$ is one of the roots of $C$, $E_+(v)$ contains only the edge connecting $v$ to the root vertex and $E_-(v)$ is the collection of the edges connecting $v$ to its children in $C$.
    \item If $v$ is a node vertex that is not a root of $C$, $E_+(v)$ is the set containing only the edge connecting $v$ to its parent in $C$ and $E_-(v)$ is the collection of edges connecting $v$ to its children in $C$.
    \item If $v$ is a leaf vertex, $E_+(v)$ is the set containing only the edge connecting $v$ to its parent in $C$ and $E_-(v)$ contains only the unique edge connecting $v$ to its fusion vertex.
    \item If $v$ is a fusion vertex, $E_+(v)$ is the collection of edges connecting $v$ to its two leaves and $E_-(v)=\emptyset$.
\end{itemize}

\saut\noindent\textbf{Kirchoff rule.} If $j$ is a node or a leaf vertex of $C$, we set $e(j)$ the unique edge in $E_+(j)$. Note that each edge of $G$ is either a $e(j)$ or an edge connecting a leaf vertex to a fusion vertex. We associate elements of $V$ to the edges of $G$ in the following way:
\begin{itemize}
    \item If the edge $e\in E$ is of the form $e=e(j)$ for some $j$ then define $\mathfrak K(e)=K(j)$.
    \item If $e$ is connecting a leaf $\ell$ to its fusion vertex then define $\mathfrak K(e)=K(\ell)$.
\end{itemize}
The relationships between the $(K(j))_{j\in \mathcal N(C) \sqcup \mathcal L(C)}$ of Definition \ref{def linear map} can be expressed as a \emph{Kirchhoff rule} at each vertex $v\in V$:
\begin{align*}
\sum_{e\in E_+(v)} \mathfrak K(e) = \sum_{e\in E_-(v)} \mathfrak K(e).    
\end{align*}
Finally, we define an order on the set of edges $E$ of $G$. Define the map $\rho_G$ in the following way
\begin{itemize}
    \item If $e\in E$ is of the form $e(j)$ set $\rho_G(e)=\rho^{-1}(j)$.
    \item Denote $E_{fusion}$ the set of edges connecting a leaf vertex of $C$ to a fusion vertex, note that $|E_{fusion}|=n+2$. Define $\rho_{G|_{E_{fusion}}}$ to be any bijection from $E_{fusion}$ to $\llbracket -n-1,0\rrbracket$.
\end{itemize}
This defines a bijection $\rho_G:E\rightarrow \llbracket -n-1,n(C)+n+2\rrbracket$ and thus an order denoted $<_G$ with the following properties: 
\begin{itemize}
    \item[(i)] if $e\in E_{fusion}$ then $e<_G e(j)$ for all $j$ which is not the root vertex or any fusion vertex,
    \item[(ii)] if $j,j'$ are not the root vertex nor any fusion vertices then
    \begin{align*}
        j<_{\rho}j' \Longrightarrow j<_G j'.
    \end{align*}
\end{itemize}

\saut\noindent\textbf{Spanning tree of the momentum graph.} We build the \emph{spanning tree} of $G$. For this, we enumerate the edges of $G$, in the appropriate order
\[
e_1 <_G e_2 <_G \hdots <_G e_R
\]
where $R=n(C)+2n+4$ is the cardinal of $E$. We build a sequence of connected graphs $G_0 =(V_0,E_0)$, $G_1 =(V_1,E_1)$ up to $G_R=(V_R,E_R)$ such that $E_0 = V_0= \emptyset$ and $G_{r+1}$ is obtained from $G_r$ in the following way:
\begin{itemize}
    \item If adding the edge $e_{r+1}$ to $G_r$ creates a loop in $G_r$, we set $G_{r+1} = G_r$.
    \item If adding the edge $e_{r+1}$ to $G_r$ does not create a loop in $G_r$, we set $E_{r+1} = E_r \sqcup \{e_{r+1}\}$ and $V_{r+1} = V_r \cup \{v,v'\}$ where $e_{r+1} = \{v,v'\}$ in $G$.
\end{itemize}
This builds a graph $G_R=(E_R,V_R)$ with the following properties:
\begin{itemize}
    \item[(i)] By definition, $G_R$ contains no loop. Moreover, since $G$ is connected, so is $G_R$. Therefore, $G_R$ is a tree.
    \item[(ii)] Since $G$ is connected, we have $V_R=V$. 
    \item[(iii)] The property $V_R=V$ implies that $G_R$ is a maximal (for the inclusion order) tree included in $G$. The number of vertices of $G_R$ is by definition $4+n(C)+\frac32 n$. Since it is the tree, its number of edges, the cardinal of $E_R$ is $3+n(C)+\frac32 n$. Finally, the cardinal of $E$ is by definition $R = n(C)+2n+4$. Therefore, if $F \vcentcolon = E \setminus E_R$ then $|F|=\frac{n+2}{2}$. 
\end{itemize}
We root $G_R$ by choosing as its root the root vertex. This induces a (partial) order of parentality on $G_R$ (\emph{a priori} not similar to the order of parentality in $C$ or the order that we put on the edges). We denote this order on $V$ by $<_{G_R}$.

\saut\noindent\textbf{Generating family for $(\mathfrak K(e))_{e\in E}$.}
We claim that for any $e\in E$, the linear map $\mathfrak K(e)$ is a signed combination of $(\mathfrak K(f))_{f\in F}$. For this, we set $E(v) = E_+(v)\sqcup E_-(v)$ and define $\sigma_v(e)$ for $e\in E(v)$ as $e\in E_{\sigma_v(e)}(v)$. This allows to write the Kirchoff rule as
\begin{align}\label{kirchoff}
\forall v\in V, \quad \sum_{e\in E(v)} \sigma_v(e) \mathfrak K(e) = 0.
\end{align}
For all $v\in V$, we set $\mathcal P(v) = \{v' \in V\; |\; v'\leq_{G_R} v\}$. If $e\in E_R$ then $e$ is an edge in $G_R$ therefore $e$ writes $\{v,u\}$ with $v<_{G_R} u$, we claim that
\begin{equation}\label{eq:ke}
\mathfrak K(e) = -\sigma_v(e)\sum_{v'\leq_{G_R} v} \; \sum_{f\in  E(v')\cap F} \sigma_{v'}(f)\mathfrak K(f).
\end{equation}
This is proved by induction on the cardinal of $\mathcal P(v)$:
\begin{itemize}
    \item If $\mathcal P(v)$ is reduced to $\{v\}$, this means that $v$ is a leaf in $G_R$. Therefore we have that $E(v) \cap E_R = \{e\}$. In other words, $E(v)\cap F = E(v)\smallsetminus \{e\}$. The formula \eqref{eq:ke} then simply follows from the Kirchoff rule \eqref{kirchoff} at the vertex $v$.  
    \item Otherwise, the Kirchoff rule at $v$ yields
\begin{align*}
\mathfrak K(e) = -\sigma_v(e) \sum_{e' \in E(v)\setminus\{e\}} \sigma_v(e')\mathfrak K(e').
\end{align*}
Since $e\in E_R$, we can divide this sum as 
\begin{align*}
\mathfrak K(e) = -\sigma_v(e) \sum_{e' \in (E(v)\cap E_R)\setminus\{e\} } \sigma_v(e') \mathfrak K(e') - \sigma_v(e) \sum_{f\in E(v)\cap F}\sigma_{v}(f)\mathfrak K(f) .
\end{align*}
If $e'$ belongs to $E(v)\smallsetminus (F\sqcup\{e\}) = E(v) \cap E_R$, since $v$ can only have one parent in $G_R$, this implies that $e' = \{v',v\}$ with $v' <_{G_R} v$. We deduce by transitivity of $<_{G_R}$, that $\mathcal P(v') \subsetneq \mathcal P(v)$ and thus we can apply the induction hypothesis to $e'$. We deduce
\begin{align}
\mathfrak K(e) & = -\sigma_v(e) \sum_{e' \in (E(v)\cap E_R)\setminus\{e\} } \sigma_v(e') \pth{-\sigma_{v'}(e')\sum_{v''\leq_{G_R} v'} \; \sum_{f\in  E(v'')\cap F} \sigma_{v''}(f)\mathfrak K(f)} \label{intermediate}
\\&\quad - \sigma_v(e) \sum_{f\in E(v)\cap F}\sigma_{v}(f)\mathfrak K(f) . \nonumber
\end{align}
Note that by definition we have $\sigma_v(e')=-\sigma_{v'}(e')$ and thus $\sigma_v(e') (-\sigma_{v'}(e')) = 1$. Finally, note that summing over the $v''\leq_{G_R}v'$ for each child $v'$ of $v$ is equivalent to summing over the $v''<_{G_R}v$. This implies
\begin{align*}
    \sum_{e' \in (E(v)\cap E_R)\setminus\{e\} } \sigma_v(e') \pth{-\sigma_{v'}(e')\sum_{v''\leq_{G_R} v'} \; \sum_{f\in  E(v'')\cap F} \sigma_{v''}(f)\mathfrak K(f) }=  \sum_{v'<_{G_R} v} \; \sum_{f\in  E(v')\cap F} \sigma_{v'}(f)\mathfrak K(f).
\end{align*}
Plugging this into \eqref{intermediate} gives the desired formula. 
\end{itemize}
We have proved that \eqref{eq:ke} holds, i.e that each $\mathfrak K(e)$ is a linear combination of the $(\mathfrak K(f))_{f\in F}$. We can prove in addition that each $\mathfrak K(e)$ is a signed combination of the $(\mathfrak K(f))_{f\in F}$. For this, let us prove that a given $\mathfrak K(f)$ can only appear once in \eqref{eq:ke}. If $f\in F$ belongs to $E(v')\cap E(v'')$ for some $v'\leq_{G_R} v$ and $v''\leq_{G_R} v$ with $v'\neq v''$ then without loss of generality we can assume that $f_0\in E_+(v')\cap E_-(v'')$. This implies that $\si_{v'}(f)=-\si_{v''}(f)$ so that the contributions of $\mathfrak K(f)$ in \eqref{eq:ke} cancel. This proves indeed that each $\mathfrak K(e)$ is a signed combination of the $(\mathfrak K(f))_{f\in F}$.

\saut\noindent\textbf{Triangular structure for $(\mathfrak K(e))_{e\in E}$.}
We claim that for any $e\in E$, the linear map $\mathfrak K(e)$ is a signed combination of $(\mathfrak K(f))_{f\in F,f>_G e}$. Assume $e\in E_R$, write $e=\{v,u\}$ with $v<_{G_R} u$ and take $f<_G e$ such that $\mathfrak K(f)$ appears in the formula \eqref{eq:ke}, that is, that there exists $v'\leq_{G_R} v <_{G_R} u$ such that $v'$ is an extremity of $f$. Write $f=\{v',u'\}$, the vertex $v'$ and $u'$ are not necessarily comparable in $G_R$. There exists a unique path between $u'$ and $v$ in $G_R$. If $u'>_{G_R} v$ or if $u'$ is incomparable in $G_R$ to $v$ then the path between $u'$ and $v$ necessarily pass by their smallest common ancestor and thus by $u$. This implies that the path from $u'$ to $v$ concatenated with the path from $v$ to $v'$ and $f$ forms a loop that contains $e$. But this is not possible because $e>_G f$, which means that when we decided whether to add $f$ in the spanning tree, $e$ was not in the graph and thus $f$ was not closing this loop (and not any other, otherwise there would be a loop in $G_R$). Therefore $u'\leq_{G_R} v$. But since it is so, $\mathfrak K(f)$ appears twice in the formula \eqref{eq:ke}. As we have already noted, in this case, the contributions where $\mathfrak K(f)$ is involved cancel each other. We deduce that $\mathfrak K(e)$ is a signed combination of $(\mathfrak K(f))_{f\in F,f>_G e}$.

\saut\noindent\textbf{Final properties of $(\mathfrak K(f))_{f\in F}$.} Since the cardinal of $F$ is $\frac{n+2}{2}$, since the family $(\mathfrak K(f))_{f\in F}$ generates the vector subspace $V$ of $\mathcal L(\R^{d(n+2)/2}, \R^d)$ of dimension $\frac{n+2}{2}$, we deduce that $(\mathfrak K(f))_{f\in F}$ is a basis of $V$ and thus the $(\mathfrak K(f))_{f\in F}$ are linearly independant.

It remains to prove that if $f$ belongs to $F$, then there exists $j\in \mathcal N(C)\sqcup \mathcal L(C)$ such that $f=e(j)$. This is due to the construction of $G_R$. Since the edges that are not of this form are smaller than all the other, we start by deciding whether to add them to the spanning tree. But they are edges connecting a leaf to a fusion vertex and those by themselves can never form a loop. This means that they all belong to $E_R$ and thus all the $f\in F$ are of the form $e(j)$. We deduce that for all $j$, the linear map $K(j) = \mathfrak K(e(j))$ is a signed combination of the family $(K(q))_{e(q) \in F, q>_\rho j }$. Setting $\mathcal N'(C) = \{j\in \mathcal N(C)\cup \mathcal L(C)\; |\; e(j) \in F\}$ concludes the proof of Proposition \ref{prop LukSpo}.

%%%%%%%%%%%%%%%%%%%%%%%%%%%%%%%%%%%%%%%%%%%%%%%%%%%%%%%%%%%%%%%%%%%%%%%%%%%%%%%%%%%

\newcommand{\etalchar}[1]{$^{#1}$}

\end{document}